\patchcmd{\thmhead}{(#3)}{#3}{}{} 
\setlist[enumerate]{label=(\textit{\roman*}\hspace{.08em}),font=\rm,itemsep=.25em}
\newcommand{\itemii}{\mbox{(\textit{ii}\hspace{.08em})}}
\newcommand{\itemiv}{\mbox{(\textit{iv})}}
\newcommand{\hair}{\ifmmode\mskip1.5mu\else\kern0.08em\fi}
\tikzset{>=stealth}
\renewcommand{\to}{\mathrel{\tikz[baseline]\draw[ ->,line width=.4pt] (0ex,0.65ex) -- (3ex,0.65ex);}}
\renewcommand{\mapsto}{\mathrel{\tikz[baseline]\draw[|->,line width=.4pt] (0pt,0.65ex) -- (3ex,0.65ex);}}
\newcommand{\doublerightarrow}{\mathrel{\tikz[baseline]\path[->,line width=.4pt] (0ex,1ex) edge (3ex,1ex) (0ex,0.3ex) edge (3ex,0.3ex);}}
\newcommand{\triplerightarrow}{\mathrel{\tikz[baseline]\path[->,line width=.4pt] (0ex,1.35ex) edge (3ex,1.35ex) (0ex,0.65ex) edge (3ex,0.65ex) (0ex,-0.05ex) edge (3ex,-0.05ex);}}
\renewcommand{\hookrightarrow}{\mathrel{\tikz[baseline]\draw[{Hooks[right]}->,line width=.4pt] (0ex,0.65ex) -- (3ex,0.65ex);}}
\renewcommand{\hookleftarrow}{\mathrel{\tikz[baseline]\draw[{Hooks[left]}->,line width=.4pt] (3ex,0.65ex) -- (0ex,0.65ex);}}
\renewcommand{\leftrightarrow}{\mathrel{\tikz[baseline]\draw[<->,line width=.4pt] (0ex,0.65ex) -- (3.5ex,0.65ex);}}
\renewcommand{\rightsquigarrow}{\mathrel{\tikz[baseline]\draw[|->,line width=.4pt,line join=round,decorate,decoration={zigzag,segment length=4,amplitude=.9,pre=lineto,pre length=.5pt,post=lineto,post length=3pt}] (0ex,0.65ex) -- (3.5ex,0.65ex);}}
\renewcommand{\leftsquigarrow}{\mathrel{\tikz[baseline]\draw[|->,line width=.4pt,line join=round,decorate,decoration={zigzag,segment length=4,amplitude=.9,pre=lineto,pre length=.5pt,post=lineto,post length=3pt}] (3.5ex,0.65ex) -- (0ex,0.65ex);}}
\newcommand{\toarg}[1]{\mathrel{\tikz[baseline]\path[->,line width=.4pt] (0ex,0.65ex) edge node[above=-.4ex, overlay, font=\scriptsize] {$#1$} (3.5ex,.65ex);}}
\newcommand{\toarglong}[1]{\mathrel{\tikz[baseline]\path[->,line width=.4pt] (0ex,0.65ex) edge node[above=-.4ex, overlay, font=\scriptsize] {$#1$} (8ex,.65ex);}}
\newcommand{\tosim}{\mathrel{\tikz[baseline] \path[->,line width=.4pt] (0ex,0.65ex) edge node[above=-.9ex, overlay, font=\normalsize, pos=.45] {$\sim$} (3.5ex,.65ex);}}
\definecolor{stopcolour}{HTML}{2D67B1}
\definecolor{arccolour}{HTML}{00AB57}
\definecolor{ribboncolour}{HTML}{F7971C}
\renewcommand{\theta}{\vartheta}
\renewcommand{\phi}{\varphi}
\renewcommand{\emptyset}{\varnothing}
\numberwithin{equation}{section}
\newtheorem{theorem}[equation]{Theorem}
\newtheorem*{theorem*}{Theorem}
\newtheorem{proposition}[equation]{Proposition}
\newtheorem{lemma}[equation]{Lemma}
\newtheorem*{lemma*}{Lemma}
\newtheorem{corollary}[equation]{Corollary}
\newtheorem{conjecture}[equation]{Conjecture}
\newtheorem*{corollary*}{Corollary}
\theoremstyle{definition}
\newtheorem{definition}[equation]{Definition}
\newtheorem{example}[equation]{Example}
\newtheorem{notation}[equation]{Notation}
\theoremstyle{remark}
\newtheorem{remark}[equation]{Remark}
\renewcommand{\b}{\mathrm{b}}
\DeclareMathOperator{\val}{val}
\renewcommand{\H}{\mathrm{H}}
\DeclareMathOperator{\HH}{HH}
\DeclareMathOperator{\hocolim}{hocolim}
\DeclareMathOperator{\Hom}{Hom}
\DeclareMathOperator{\Mod}{Mod}
\newcommand{\id}{\mathrm{id}}
\newcommand{\op}{\mathrm{op}}
\DeclareMathOperator{\per}{per}
\newcommand{\s}{\mathrm{s}}
\DeclareMathOperator{\Sing}{Sing}
\DeclareMathOperator{\tw}{tw}
\newcommand{\mucirc}{\accentset{\circ}{\mu}}
\newcommand{\mutimes}{\accentset{\times}{\mu}}
\newcommand{\mubar}{\bar\mu}
\newcommand{\superimpose}[2]{{%
  \ooalign{%
    \hfil$\m@th#1\@firstoftwo#2$\hfil\cr
    \hfil$\m@th#1\@secondoftwo#2$\hfil\cr
  }%
}}
\DeclareRobustCommand{\vcirc}{\accentset{\circ}{v}}
\DeclareRobustCommand{\vtimes}{\accentset{\times}{v}}
\DeclareRobustCommand{\vbullet}{\accentset{\bullet}{v}}
\DeclareRobustCommand{\vodot}{\accentset{\odot}{v}}
\begin{document}


\title[Partially wrapped Fukaya categories of orbifolds]{Partially wrapped Fukaya categories of orbifold surfaces}

\author{Severin Barmeier}
\address{Mathematical Institute, University of Cologne, Weyertal 86-90, 50931 Köln, Germany}
\email{s.barmeier@gmail.com}
\author{Sibylle Schroll}
\address{Mathematical Institute, University of Cologne, Weyertal 86-90, 50931 Köln, Germany \textit{and} Norwegian University of Science and Technology (NTNU), Department of Mathematical Sciences, 7491 Trondheim, Norway}
\email{schroll@math.uni-koeln.de}
\author{Zhengfang Wang}
\address{Nanjing University, School of Mathematics, Nanjing 210093, Jiangsu, China \\ \textup{and} Universität Stuttgart, Institut für Algebra und Zahlentheorie, Pfaffen\-wald\-ring 57, 70569 Stuttgart, Germany}
\email{zhengfangw@nju.edu.cn}
\keywords{partially wrapped Fukaya categories, orbifold surfaces, orbit categories, skew-gentle algebras}

\subjclass[2010]{Primary 18G70 Secondary 53D37, 16E35}

\begin{abstract}
We give a complete description of partially wrapped Fukaya categories of graded orbifold surfaces with stops. We show that a construction via global sections of a natural cosheaf of A$_\infty$ categories on a Lagrangian core of the surface is equivalent to a global construction via the (equivariant) orbit category of a smooth cover. We therefore establish the local-to-global properties of partially wrapped Fukaya categories of orbifold surfaces closely paralleling a proposal by Kontsevich for Fukaya categories of smooth Weinstein manifolds.

From the viewpoint of Weinstein sectorial descent in the sense of Ganatra, Pardon and Shende, our results show that orbifold surfaces also have Weinstein sectors of type $\mathrm D$ besides the type $\mathrm A$ or type $\widetilde{\mathrm A}$ sectors on smooth surfaces.

We describe the global sections of the cosheaf explicitly for any generator given by an admissible dissection of the orbifold surface and we give a full classification of the formal generators which arise in this way. This shows in particular that the partially wrapped Fukaya category of an orbifold surface can always be described as the perfect derived category of a graded associative algebra. We conjecture that associative algebras obtained from dissections of orbifold surfaces form a new class of associative algebras closed under derived equivalence.
\end{abstract}

\maketitle

\setcounter{tocdepth}{1} 
\tableofcontents

\section*{Introduction}

In this paper we give a complete description of partially wrapped Fukaya categories of orbifold surfaces. Our results establish the equivalence of several natural descriptions of these Fukaya categories and we give a concrete description via formal generators. The underlying surfaces we consider are {\it graded orbifold surfaces with stops} which consist of an orbifold surface $S$ with nonempty boundary and a nonempty proper closed subset $\Sigma \subset \partial S$ together with a grading structure given by a (global) line field $\eta$. We show that the partially wrapped Fukaya category $\mathcal W (\mathbf S)$ of $\mathbf S = (S, \Sigma, \eta)$ admits the following three equivalent descriptions:
\begin{align*}
\mathcal W (\mathbf S) &\simeq \H^0 (\mathbf \Gamma (\mathcal T_{\mathrm G})^\natural) && \text{global sections of a cosheaf $\mathcal T_{\mathrm G}$ of A$_\infty$ categories} \\[-.25em]
&&& \text{on a ribbon graph $\mathrm G$ of $\mathbf S$} \\
&\simeq (\mathcal W (\widetilde{\mathbf S}) / \mathbb Z_2)^\natural && \text{orbit category of a double cover $\widetilde{\mathbf S}$ of $\mathbf S$} \\
&\simeq \H^0 (\tw (\mathbf A_\Delta)^\natural) && \text{twisted complexes of an arc system $\Delta$ for $\mathbf S$}
\end{align*}
The above are equivalences of triangulated categories induced by Morita equivalences at the pretriangulated level. Here $\H^0$ denotes the homotopy category (triangulated envelope) of a pretriangulated A$_\infty$ category (or, equivalently, of a $\Bbbk$-linear $\infty$-category) and $^\natural$ idempotent completion. In particular, our results show that any of the categories on the right-hand side may be taken as a definition of $\mathcal W (\mathbf S)$. Our point of departure is the definition of the Fukaya category as the category of global sections of a cosheaf of categories on a (usually singular) Lagrangian core. A (co)sheaf-theoretical approach to Fukaya categories of Weinstein manifolds was proposed by Kontsevich \cite{kontsevich2} and this proposal has led to much progress in (co)sheaf-theoretical techniques in Fukaya categories, see for example \cite{sibillatreumannzaslow,lee,haidenkatzarkovkontsevich,dyckerhoffkapranov,pascaleffsibilla} for the case of surfaces.

The theory of partially wrapped Fukaya categories was introduced by Auroux \cite{auroux1,auroux2} (see also \cite{sylvan,ganatrapardonshende1}). It refines the (fully) wrapped Fukaya categories of noncompact symplectic manifolds or Liouville domains of Abouzaid and Seidel \cite{abouzaidseidel}. The wrapping at infinity (partial or full) is needed for a workable intersection theory of (possibly noncompact) exact Lagrangians. The classical theories of Landau--Ginzburg models $f \colon X \to \mathbb C$ \cite{orlov} and Fukaya--Seidel categories of symplectic Lefschetz fibrations \cite{seidel2}, which have played a central role in the work motivated by Kontsevich's Homological Mirror Symmetry Conjecture \cite{kontsevich1}, can also be understood from this viewpoint. Stop removal functors relate partially wrapped Fukaya categories with different sets of stops, the empty stop corresponding to the fully wrapped case \cite{ganatrapardonshende2}.

The foundational work of Ganatra, Pardon and Shende \cite{ganatrapardonshende1,ganatrapardonshende2,ganatrapardonshende3} shows among many things that the categories forming the cosheaf in Kontsevich's proposal can themselves be viewed as partially wrapped Fukaya categories. Given an open cover $\{ X_i \}_{i \in I}$ of a Weinstein manifold $X$ by Weinstein sectors $X_i$, the partially wrapped Floer theory (pseudo-holomorphic disks and partial wrapping) entering the partially wrapped Fukaya categories of the individual sectors do not probe the larger space $X$. Yet, \cite{ganatrapardonshende1,ganatrapardonshende2} show that the wrapped Fukaya category of $X$ can be described as the homotopy colimit
\[
\hocolim_{J \subset I} \mathcal W \bigl( \textstyle \bigcap_{i \in J} U_i \bigr) \to \mathcal W (X)
\]
thus proving the local-to-global property of $\mathcal W (X)$ envisioned by Kontsevich \cite{kontsevich2} and shedding light on its Floer-theoretic origins.

In this paper we prove an analogous local-to-global description for surfaces with orbifold singularities and show its equivalence with the global-to-local construction via orbit categories. There is a growing body of evidence that Floer theoretic methods and Fukaya categories should admit an extension to symplectic manifolds with singularities and the goal of this paper is to lay solid foundations in the case of graded orbifold surfaces with stops. Symplectic manifolds with orbifold singularities and their Fukaya categories have made an appearance in various settings, for example in the Lagrangian intersection theory in the pillowcase (a $2$-sphere with four orbifold points of order $2$) and its relation to instanton knot homology by Hedden, Herald and Kirk \cite{heddenheraldkirk1,heddenheraldkirk2}, in the context of orbifold relative Fukaya categories for projective hypersurfaces appearing in work of Sheridan \cite{sheridan1,sheridan2}, or in the Lagrangian Floer theory on orbifolds in work of Chen, Ono and Wang \cite{chenonowang}. A central consideration is the Floer theory of invariant Lagrangians in symplectic manifolds with a finite group of symplectomorphisms as for example studied by Seidel and Smith \cite{seidelsmith} and Wu \cite{wu}.

A concrete description of the partially wrapped Fukaya category of graded smooth surfaces was given by Haiden, Katzarkov and Kontsevich \cite{haidenkatzarkovkontsevich}. In the setup of \cite{ganatrapardonshende1,ganatrapardonshende2} a ribbon graph for a graded smooth surface with stops $\mathbf S$ determines an open cover of Weinstein sectors symplectomorphic to a closed disk $\overline{\mathbb D}$ with $n$ stops in the boundary circle, where $n$ is the valency of the vertex in the ribbon graph. Our results show that a ribbon graph for a graded orbifold surface with stops determines an open cover of an orbifold surface by Weinstein sectors of type $\mathrm A_{n-1}$ (corresponding to a smooth disk $\overline{\mathbb D}$ with $n$ stops), type $\widetilde{\mathrm A}_{n-1}$ (corresponding to a smooth annulus with one full boundary stop $\mathrm S^1$ and $n$ stops in the other boundary component) or of type $\mathrm D_{n+1}$ (corresponding to an orbifold disk $\overline{\mathbb D} / \mathbb Z_2$ with $n$ stops).

Another motivation for our work is the study of the deformation theory of partially wrapped Fukaya categories of smooth surfaces. In a companion paper \cite{barmeierschrollwang} we prove that partially wrapped Fukaya categories of graded orbifold surfaces (described in this paper) are the geometric manifestation of algebraic deformations of partially wrapped Fukaya categories of graded {\it smooth} surfaces. This is a first step towards clarifying the role of stop data in the relation between A$_\infty$ deformations of Fukaya categories and partial compactifications of the underlying symplectic manifolds outlined in Seidel's ICM 2002 address \cite{seidel1}.

In \cite{haidenkatzarkovkontsevich} it was shown that partially wrapped Fukaya categories of smooth surfaces admit a formal generator whose endomorphism algebra is a graded gentle algebra. Gentle algebras are associative algebras given as path algebras of certain quivers with quadratic monomial relations. They were introduced by Assem and Skowroński \cite{assemskowronski} and (especially in the ungraded case) have a rich body of representation-theoretic literature. Ribbon graphs associated to (ungraded) gentle algebras appeared in \cite{schroll} as Brauer graphs associated to trivial extensions of gentle algebras. The corresponding smooth surfaces give a geometric model of the derived category of the gentle algebra \cite{opperplamondonschroll,amiotplamondonschroll} as well as of the (Abelian) module category \cite{baurcoelho,chang}. Gentle algebras arising from surfaces triangulations appear also in cluster theory \cite{assembruestlecharbonneaujodoinplamondon,labardini} and in (fully) wrapped Fukaya categories \cite{bocklandt1,bocklandt2,vandekreeke}. The results of \cite{haidenkatzarkovkontsevich} provided new impetus for studying also the graded case and graded gentle algebras and their relationship to partially wrapped Fukaya categories were further studied by Lekili and Polishchuk \cite{lekilipolishchuk2} also in the context of mirror symmetry for surfaces \cite{lekilipolishchuk1}. (See also \cite{abouzaidauroux,pascaleffsibilla} for mirror symmetry in the fully wrapped case.) The results of \cite{haidenkatzarkovkontsevich} also provide a large class of triangulated categories whose stability manifolds can be described explicit as moduli of quadratic differentials, see also  \cite{haiden,ikedaqiu,christhaidenqiu}.

That an extension of these results to orbifold surfaces should exist may be anticipated from the representation-theoretic literature on skew-gentle algebras which were introduced by Geiß and de la Peña \cite{geisspena}. If one regards a quiver of type $\mathrm A_n$ as the simplest kind of gentle algebra, then a quiver of type $\mathrm D_n$ is the simplest kind of skew-gentle algebra which is not gentle. Some of the above-mentioned results for gentle algebras and smooth surfaces have seen generalizations to skew-gentle algebras where the associated surface contains orbifold points of order $2$. For example, \cite{amiotbruestle,labardinifragososchrollvaldivieso} give a surface model for (ungraded) skew-gentle algebras in terms of orbifold surfaces, \cite{hezhouzhu} give a geometric model for the module category and \cite{qiuzhangzhou} a surface model for the derived category also in the graded case.

In our work, skew-gentle algebras appear as cohomology algebras of particular formal generators of $\mathcal W (\mathbf S)$ for a graded orbifold surface $\mathbf S$. In the case of graded smooth surfaces with stops, the algebras associated to formal arc systems on the surface are always gentle algebras as shown in \cite{haidenkatzarkovkontsevich}. Our classification of formal generators obtained from arc systems shows that this is no longer the case in the case of orbifold surfaces, that is, $\mathcal W (\mathbf S)$ admits formal generators whose resulting algebras need no longer be skew-gentle and we conjecture that the algebras obtained in this way form a class of associative algebras closed under derived equivalence.

\section{Main results}

We now give a summary of the main results of this paper. A {\it graded orbifold surface with stops} $\mathbf S = (S, \Sigma, \eta)$ consists of an orbifold surface $S$ with nonempty boundary, a proper nonempty closed subset $\Sigma \subset \partial S$ of stops and a grading structure $\eta$ given by a line field on $S$. In Lemma \ref{lemma:linefield} we show that the existence of a line field on $S$ is equivalent to all orbifold points having order $2$.

\subsection{Fukaya categories of orbifold disks as local model}

In Section \ref{section:disk} we give an explicit description of the partially wrapped Fukaya category of a disk with one orbifold point via the A$_\infty$ orbit category of its double cover. We describe the endomorphism algebra of a particular type of generator of this category and show that it carries a unique higher product (Proposition \ref{proposition:orbifolddisk}) which in combination with Theorem~\ref{theorem:moritaequivalenceadmissible} shows that it is derived equivalent to a linearly oriented quiver of type $\mathrm D$.

\begin{theorem}
[(Corollary~\ref{corollary:typeD} and Theorem~\ref{theorem:moritaequivalenceadmissible})]
Let $\mathbf S$ be a graded orbifold disk with $n$ stops and one orbifold point of order $2$.
Then we have a triangulated equivalence
\[
\mathcal W (\mathbf S) \simeq \per (A)
\]
where $A \simeq \Bbbk Q$ is the (ungraded) path algebra of a linearly ordered quiver $Q$ of type $\mathrm D_{n+1}$ and $\per (A)$ is the perfect derived category of $A$.
\end{theorem}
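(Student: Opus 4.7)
The plan is to reduce the statement to the two main ingredients cited in the excerpt: Theorem~\ref{theorem:moritaequivalenceadmissible}, which gives a Morita equivalence between $\mathcal W(\mathbf S)$ and the perfect derived category of the endomorphism $A_\infty$-algebra of a generator arising from an admissible dissection $\Delta$ of $\mathbf S$, and Proposition~\ref{proposition:orbifolddisk}, which computes this endomorphism algebra explicitly for the orbifold disk and pins down its $A_\infty$-structure. The role of the grading assumption (that all orbifold points have order $2$) enters through Lemma~\ref{lemma:linefield} and ensures that the orbit category construction applies to the chosen double cover $\widetilde{\mathbf S}$.

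First I would fix a convenient admissible dissection $\Delta$ of the orbifold disk $\mathbf S$. The natural choice is a ``fan'' of arcs at the $n$ boundary intervals together with the two arcs emanating from the orbifold point; by descending an $\mathrm A_{2n-1}$-type fan on the smooth double cover $\widetilde{\mathbf S}$ (a disk with $2n$ stops) we obtain $n+1$ arcs on the quotient, in bijection with the vertices of a $\mathrm D_{n+1}$ Dynkin diagram: the $n-1$ pairs of arcs swapped by the deck $\mathbb Z_2$ become the linear ``spine'' and the fixed middle arc splits, after idempotent completion of the $\mathbb Z_2$-orbit category, into the two ``leaves'' of the fork. The equivalence $\mathcal W(\mathbf S) \simeq (\mathcal W(\widetilde{\mathbf S})/\mathbb Z_2)^\natural$ from the main descent result, combined with the Haiden--Katzarkov--Kontsevich description $\mathcal W(\widetilde{\mathbf S}) \simeq \per(\Bbbk \mathrm A_{2n-1})$, makes this counting rigorous.

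Next, Proposition~\ref{proposition:orbifolddisk} identifies the cohomology of the endomorphism algebra $\mathbf A_\Delta$ of this dissection as the ordinary path algebra $\Bbbk Q$ of the linearly oriented $\mathrm D_{n+1}$ quiver, with generating morphisms given by the intersections at the common endpoints of adjacent arcs and at the orbifold point. The additional content of the proposition is that this graded algebra admits, up to $A_\infty$-quasi-isomorphism, only one $A_\infty$-structure extending it, namely the formal one. Combined with Theorem~\ref{theorem:moritaequivalenceadmissible} this gives $\mathcal W(\mathbf S) \simeq \per(\mathbf A_\Delta) \simeq \per(\Bbbk Q)$, proving the corollary.

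The main obstacle is the formality/uniqueness step inside Proposition~\ref{proposition:orbifolddisk}: the orbifold point produces short morphisms that could a priori support nontrivial higher products $\mu_k$ obstructing the identification with $\per(\Bbbk Q)$. Controlling these requires either a direct Hochschild cohomology vanishing argument for the would-be obstruction classes (showing that $\HH^{2,\,2-k}(\Bbbk Q)$ vanishes in the relevant bidegrees) or an explicit transfer of the $A_\infty$-structure from the orbit category of the $\mathrm A_{2n-1}$-generator on $\widetilde{\mathbf S}$, where formality is already known. Either route is local, so the computation is finite but delicate; once it is in hand, the remainder of the argument is a direct invocation of Theorem~\ref{theorem:moritaequivalenceadmissible}.
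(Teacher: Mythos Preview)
Your overall architecture is right, but you misread Proposition~\ref{proposition:orbifolddisk}. That proposition does \emph{not} assert that the minimal model of the endomorphism algebra is formal; it computes the minimal model explicitly and shows it carries exactly one nontrivial higher product $\mu_n$, where $n$ is the number of arcs between $\alpha$ and $\beta$ around the boundary (in the paper's internal parametrisation). For generic dissections this $\mu_n$ is a genuine higher product, so the algebra is not formal and your ``only one $A_\infty$-structure, namely the formal one'' claim is false as stated.

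The paper's actual move in Corollary~\ref{corollary:typeD} is to \emph{choose} the dissection so that $\alpha$ and $\beta$ are adjacent (the case $n=1$ in Proposition~\ref{proposition:orbifolddisk}). Then the unique higher product collapses to a differential $\mu_1(\s\bar p_1)=\s q$, so the endomorphism algebra is already a DG algebra. Its cohomology is the linearly oriented $\mathrm D$ quiver \emph{with radical-square-zero relations} $\Bbbk Q/I$, not the free path algebra $\Bbbk Q$. Formality of this DG algebra is then deduced from Theorem~\ref{theorem:formaldg} (the formal-dissection criterion), not from a Hochschild vanishing argument. Note that your proposed route via $\HH^{\geq 2}(\Bbbk Q)=0$ would work for the hereditary algebra $\Bbbk Q$, but does not apply to $\Bbbk Q/I$, which has nonvanishing $\HH^2$; this is precisely why the paper needs the dissection-based formality criterion here.

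The final passage from $\Bbbk Q/I$ to $\Bbbk Q$ in the theorem statement is a separate step: Theorem~\ref{theorem:moritaequivalenceadmissible} guarantees that different admissible dissections give Morita equivalent $A_\infty$ categories, and by varying the dissection one can reverse arrows and remove relations (as noted in the remark following Corollary~\ref{corollary:typeD}). Your description of the double cover is also slightly inverted: the boundary-parallel arcs $\gamma_i$ live on the orbifold and lift to \emph{pairs} $\gamma_i^\pm$ upstairs, while the two arcs $\alpha,\beta$ through the orbifold point lift to $\mathbb Z_2$-invariant arcs that each split into two summands in the orbit category.
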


In Section \ref{section:cosheaves} we use this local model as an input for a cosheaf construction of the partially wrapped Fukaya category of any graded orbifold surface in the spirit of Kontsevich's conjectural description for Weinstein manifolds \cite{kontsevich2} which was proved in \cite{ganatrapardonshende2}.

\subsection{Fukaya categories via cosheaves of A$_\infty$ categories}

We give two constructions which generalize the duality between ribbon graphs of a graded smooth surface and surface dissections into polygons to the case of orbifold surfaces. One is a ribbon graph with a set of distinguished vertices corresponding to the orbifold points which may be viewed as a Lagrangian core of the orbifold surface. The other is what we call a {\it ribbon complex} which is an analogue of a cell complex of dimension $2$ with a ribbon graph structure on its $1$-skeleton. The ribbon complex also contains orbifold $2$-cells which correspond to the orbifold points of the surface. Such a complex has recently been considered in \cite{bahrinotbohmsarkarsong} to compute the integral cohomology of orbifolds.

We define the {\it partially wrapped Fukaya category} of a graded orbifold surface $\mathbf S$ as the triangulated category
\[
\mathcal W (\mathbf S) := \H^0 (\mathbf \Gamma (\mathcal T_{\mathbb G}))
\]
associated to the global sections $\mathbf \Gamma (\mathcal T_{\mathbb G})$ of a cosheaf $\mathcal T_{\mathbb G}$ of pretriangulated A$_\infty$ categories on a ribbon complex $\mathbb G$ for $\mathbf S$ (Definition \ref{definition:fukayacategory}). These global sections are the {\it homotopy colimit} of a diagram of pretriangulated A$_\infty$ categories viewed as a diagram in the homotopy category of DG categories or (equivalently) in the $\infty$-category of $\Bbbk$-linear $\infty$-categories. More precisely, we have a cosheaf $\mathcal E_\Delta$ of A$_\infty$ categories on $\mathbb G = \mathbb G (\Delta)$ associated to a dissection $\Delta$ of $\mathbf S$ whose associated categories of twisted complexes form the cosheaf $\mathcal T_{\mathbb G} = \tw (\mathcal E_\Delta)^\natural$ where $^\natural$ denotes the idempotent completion. The cosheaf $\mathcal E_\Delta$ uses the local model of the orbifold disk from Section \ref{section:disk} as one of the inputs and up to Morita equivalence its category $\mathbf \Gamma (\mathcal E_\Delta)$ of global sections, and hence the partially wrapped Fukaya category of $\mathbf S$, is independent of the particular dissection $\Delta$ (Theorem \ref{theorem:moritaequivalenceglobalsection}).

We then show that there is an equivalent construction via a cosheaf on a (1-dimensional) Lagrangian core corresponding to a ribbon {\it graph} of $\mathbf S$.

\begin{theorem}[(Theorem \ref{theorem:ribbongraphinterpretation})]
\label{main:ribbongraph}
Let $\mathbf S = (S, \Sigma, \eta)$ be a graded orbifold surface with stops and let $\mathbb G$ be a ribbon complex for $\mathbf S$. For any ribbon {\it graph} $\mathrm G$ of $\mathbf S$ there exists a cosheaf $\mathcal T_{\mathrm G}$ of pretriangulated A$_\infty$ categories on $\mathrm G$ such that $\mathbf \Gamma (\mathcal T_{\mathbb G}) \simeq \mathbf \Gamma (\mathcal T_{\mathrm G})$. In particular, 
\[
\mathcal W (\mathbf S) = \H^0 (\mathbf \Gamma (\mathcal T_{\mathbb G})) \simeq \H^0 (\mathbf \Gamma (\mathcal T_{\mathrm G})).
\]
\end{theorem}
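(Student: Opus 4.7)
The plan is to build $\mathcal T_{\mathrm G}$ by a ``contraction'' procedure. A ribbon complex $\mathbb G$ for $\mathbf S$ differs from a ribbon graph $\mathrm G$ only in that each orbifold point of $\mathbf S$ is modelled by a $2$-cell attached along a small cycle of edges, whereas in $\mathrm G$ that cycle is collapsed to a single distinguished (orbifold) vertex. I would first define $\mathcal T_{\mathrm G}$ on open stars by assigning: to the star of a smooth vertex of valency $n$ the pretriangulated A$_\infty$ category associated to a smooth disk with $n$ stops (as in Haiden--Katzarkov--Kontsevich); to the star of an orbifold vertex of valency $n$ the pretriangulated A$_\infty$ category of the orbifold disk with $n$ stops, which by Section~\ref{section:disk} (and \ref{corollary:typeD}) is quasi-equivalent to $\per (\Bbbk Q)$ for $Q$ linear of type $\mathrm D_{n+1}$; and to the interior of each edge the category of the strip. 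Structure maps are given by restriction along inclusions of open sets, yielding a diagram on the face poset of $\mathrm G$.

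The main step is then to identify $\mathbf \Gamma (\mathcal T_{\mathbb G})$ with $\mathbf \Gamma (\mathcal T_{\mathrm G})$, both understood as homotopy colimits of the corresponding diagrams of pretriangulated A$_\infty$ categories. The strategy is to factor the comparison through the Morita-equivalent cosheaf $\mathcal E_\Delta$ associated to a dissection $\Delta$ of $\mathbf S$ adapted to $\mathrm G$, together with its twisted-complex completion $\tw (\mathcal E_\Delta)^\natural$. Concretely, I would choose $\Delta$ so that the orbifold $2$-cells of $\mathbb G$ are refined into sectors meeting at the orbifold point, then check that the sub-diagram of $\mathcal T_{\mathbb G}$ supported on each orbifold $2$-cell together with the edges of its boundary cycle already has homotopy colimit equal to the local orbifold disk category. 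This is precisely the local Fukaya category identified in Proposition~\ref{proposition:orbifolddisk}; inserting this partial colimit back into the diagram on $\mathbb G$ exhibits $\mathbf \Gamma (\mathcal T_{\mathbb G})$ as the homotopy colimit of the reduced diagram on $\mathrm G$, which is $\mathbf \Gamma (\mathcal T_{\mathrm G})$.

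To make this rigorous I would invoke a cofinality / Reedy argument: the collapse map $\mathbb G \to \mathrm G$ induces a functor between the face posets whose fibres over orbifold vertices are precisely the poset of faces of a $2$-disk with its boundary subdivision, and the restriction of the diagram $\mathcal T_{\mathbb G}$ to each such fibre computes, by the local orbifold disk model, exactly the category $\mathcal T_{\mathrm G}$ assigns to the orbifold vertex. Over smooth vertices the collapse is the identity, so the comparison is trivial there. Since homotopy colimits in the $\infty$-category of $\Bbbk$-linear $\infty$-categories (equivalently, in the homotopy category of DG categories via Toën's localisation) can be computed levelwise on such fibre decompositions, we obtain a natural Morita equivalence $\mathbf \Gamma (\mathcal T_{\mathbb G}) \simeq \mathbf \Gamma (\mathcal T_{\mathrm G})$, and passing to $\H^0$ yields the triangulated equivalence $\mathcal W (\mathbf S) \simeq \H^0 (\mathbf \Gamma (\mathcal T_{\mathrm G}))$.

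I expect the main obstacle to be the cofinality / fibre argument identifying the local homotopy colimit on each orbifold $2$-cell with the orbifold disk category. This is not purely formal because the gluing maps of $\mathcal T_{\mathbb G}$ along the boundary cycle of the $2$-cell must be shown to match the corresponding structure maps of $\mathcal T_{\mathrm G}$ at the orbifold vertex; this comparison has to be carried out after choosing explicit formal generators and checking compatibility with the A$_\infty$ products described in Section~\ref{section:disk}. Once the local identification is established, the global comparison follows by homotopy-colimit bookkeeping, so the heart of the proof is a local, combinatorial check at each orbifold vertex.
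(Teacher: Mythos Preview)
Your strategy is essentially the paper's: compute the homotopy colimit over $\mathbb G$ by first taking the partial colimit over each orbifold $2$-cell, then gluing along $\mathrm G$. The difference lies in how the two arguments handle the orbifold vertices and the underlying dissection.

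The paper avoids your ``main obstacle'' entirely by a definitional trick. Rather than assigning to the orbifold vertex $v_x^{\times}$ the orbifold disk category and then proving it agrees with the partial colimit over $d_x$, the paper simply \emph{defines} $\mathcal E^{\times}_\Delta(U^{\times}_{v_x^{\times}}) := \mathcal E_\Delta\bigl(\bigcup_{v \in \partial d_x} U_v\bigr)$. With this definition the coarser diagram on $\mathrm G$ is literally obtained from the finer one on $\mathbb G$ by taking partial homotopy colimits, so the global sections agree by the iterated-colimit property; there is no compatibility of gluing maps to verify, and no need to invoke cofinality or Proposition~\ref{proposition:orbifolddisk}. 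Your approach would work too, but you are creating work for yourself by insisting on an independent description of the local category. The second difference is that the paper does not start from an arbitrary ribbon complex: it first constructs a specific weakly admissible dissection $\Delta$ (by adding, for each pair of adjacent arcs at an orbifold point, an arc isotopic to their smoothening) whose ribbon complex has the property that all boundary vertices of each $2$-cell have valency $3$, with exactly one of type $\vtimes$. This construction shows that \emph{any} ribbon graph $G$ of $\mathbf S$ arises as $\mathrm G(\Delta)$ for such a $\Delta$, which is part~\itemi\ of the theorem and which you do not address.
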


This can be viewed as an analogue of Kontsevich's proposal for orbifold surfaces, generalizing the case of smooth surfaces described explicitly in \cite{haidenkatzarkovkontsevich}.

The Weinstein sectors of $\mathbf S$ corresponding to the open cover of the ribbon graph $\mathrm G$ contain sectors of type $\mathrm D$ (corresponding to orbifold disks) in addition to the sectors of type $\mathrm A$ and type $\widetilde{\mathrm A}$ for smooth surfaces (see \S\ref{subsection:core}).

\subsection{Fukaya categories via dissections of orbifold surfaces}

We compute the homotopy colimit used in the definition of $\mathcal W (\mathbf S)$ explicitly by introducing the notion of an {\it admissible dissection} (Definition \ref{definition:admissible}). This allows us to give a concrete description of $\mathcal W (\mathbf S)$ in terms of a generator and we describe all of the higher structures on its endomorphism algebra explicitly.

\begin{theorem}[(Proposition \ref{proposition:ainfinity} and Theorem \ref{theorem:moritapartiallywrapped})]
\label{theoremintroduction:admissible}
Let $\mathbf S = (S, \Sigma, \eta)$ be a graded orbifold surface with stops and let $\Delta$ be an admissible dissection of $\mathbf S$. Viewing the arcs in the dissection $\Delta$ as objects and the boundary and orbifold paths as morphisms, there is an A$_\infty$ category $\mathbf A_\Delta$ whose higher multiplications $\mu_n$ for $n \geq 1$ are of the following three types:
\begin{itemize}
\item $\mubar_2$, the (associative) concatenation of boundary or orbifold paths
\item $\mucirc_n$ for $n \geq 2$ arising from smooth disk sequences in the dissection $\Delta$
\item $\mutimes_n$ for $n \geq 1$ arising from orbifold disk sequences in $\Delta$.
\end{itemize}
Then $\tw (\mathbf A_\Delta)^\natural$ is equivalent to the homotopy colimit $\mathbf \Gamma (\mathcal T_{\mathbb G}) \simeq \mathbf \Gamma (\mathcal T_{\mathrm G})$ in Theorem \ref{main:ribbongraph} inducing a triangulated equivalence
\[
\mathcal W (\mathbf S) = \H^0 (\mathbf \Gamma (\mathcal T_{\mathbb G})) \simeq \H^0 (\tw (\mathbf A_\Delta)^\natural).
\]
\end{theorem}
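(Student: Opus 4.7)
The plan is to leverage the cosheaf description already set up in Section \ref{section:cosheaves}. An admissible dissection $\Delta$ determines a ribbon complex $\mathbb G(\Delta)$ whose $2$-cells are smooth polygons and orbifold $2$-cells, together with the cosheaf $\mathcal E_\Delta$ of A$_\infty$ categories whose local models are the ordinary disk category of Haiden--Katzarkov--Kontsevich on smooth cells and the type-$\mathrm D$ orbifold disk of Section \ref{section:disk} on orbifold cells. I would identify $\mathbf A_\Delta$ with the full A$_\infty$ subcategory of $\mathbf \Gamma(\mathcal E_\Delta)$ spanned by the arcs of $\Delta$, and then check that after passing to twisted complexes and idempotent completion this inclusion becomes an equivalence.

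First I would establish the A$_\infty$ structure on $\mathbf A_\Delta$ (Proposition \ref{proposition:ainfinity}). The objects are the arcs of $\Delta$, morphisms are boundary and orbifold paths, and one must verify that $\mubar_2$, $\mucirc_n$ and $\mutimes_n$ together satisfy the Stasheff relations. The strictly associative operation $\mubar_2$ encodes a small path category; $\mucirc_n$ is the disk operation of \cite{haidenkatzarkovkontsevich} induced by a smooth immersed polygon cobounded by $a_0,\dots,a_n$; and $\mutimes_n$ is the orbifold analogue, produced by sequences of arcs cobounding an orbifold disk of type $\mathrm D$. Almost every Stasheff equation reduces to the smooth case; the genuinely new checks are local, namely \itemi\ compatibility of $\mutimes_n$ with $\mubar_2$ when a boundary or orbifold path is concatenated at an orbifold stop, and \itemii\ cancellations between $\mucirc_m$ and $\mutimes_n$ sequences that share a common arc through an orbifold cell. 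Both reduce to computations inside the local type-$\mathrm D$ category of Proposition \ref{proposition:orbifolddisk}.

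For the Morita statement (Theorem \ref{theorem:moritapartiallywrapped}) I would construct the comparison A$_\infty$ functor
\[
F \colon \mathbf A_\Delta \longrightarrow \mathbf \Gamma(\mathcal E_\Delta)
\]
by sending each arc of $\Delta$ to the object on the corresponding glued $1$-cell of $\mathbb G(\Delta)$, sending boundary and orbifold paths to the evident morphisms in the colimit, and matching the $\mucirc_n, \mutimes_n$ with the disk operations assembled from the local cosheaf pieces. Quasi-fully-faithfulness of $F$ is a local statement: restricted to each $2$-cell it is the inclusion of admissible arcs into the local disk (respectively orbifold disk) category, which is quasi-fully-faithful by \cite{haidenkatzarkovkontsevich} on smooth cells and by Corollary \ref{corollary:typeD} on orbifold cells. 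Passing to twisted complexes and idempotent completion, this would give the equivalence $\tw(\mathbf A_\Delta)^\natural \simeq \mathbf \Gamma(\mathcal T_{\mathbb G})$ as soon as $F$ is shown to split-generate, at which point Theorem \ref{main:ribbongraph} also gives the stated equivalence with $\mathbf \Gamma(\mathcal T_{\mathrm G})$.

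The main obstacle I expect is precisely the split-generation step, because the local type-$\mathrm D$ category contains two extremal simple objects that are the $\mathbb Z_2$-halves of the arc through an orbifold point and are never themselves arcs of a dissection. Each such simple must be produced as an idempotent summand of a twisted complex over $\Delta$-arcs; the admissibility condition (Definition \ref{definition:admissible}) is designed for exactly this, and unpacking it reduces the question to a local check inside each orbifold cell of $\mathbb G(\Delta)$. Combined with a standard cofinality argument promoting local Morita equivalences of cosheaves to a Morita equivalence of global sections (together with Theorem \ref{theorem:moritaequivalenceglobalsection} for independence of $\Delta$), this completes the identification $\H^0(\tw(\mathbf A_\Delta)^\natural) \simeq \mathcal W(\mathbf S)$.
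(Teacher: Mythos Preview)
Your strategy differs from the paper's in an important way. The paper does \emph{not} construct a direct comparison functor $F \colon \mathbf A_\Delta \to \mathbf\Gamma(\mathcal E_\Delta)$ for an arbitrary admissible $\Delta$. Instead it proves separately that (a) all $\mathbf A_\Delta$ are Morita equivalent via edge contractions and expansions (Lemmas \ref{lemma:edgecontraction} and \ref{lemma:4gon}, Theorem \ref{theorem:moritaequivalenceadmissible}), (b) all $\mathbf\Gamma(\mathcal E_\Delta)$ are Morita equivalent (Theorem \ref{theorem:moritaequivalenceglobalsection}), and then (c) compares the two only at a \emph{special} dissection $\Delta_0$ whose ribbon complex has the simple star shape of Fig.~\ref{fig:ribbon}. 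At $\Delta_0$ the cosheaf is so simple that explicit cofibrant replacements (Bardzell resolutions) let one compute the homotopy colimit as a naive colimit, which is then identified with a cofibrant model of $\mathbf A_{\Delta_0}$ (Proposition \ref{proposition:globalsection}). Your direct approach would require a concrete model of the homotopy colimit for general $\Delta$ into which to map, and you have not indicated how to obtain one; the paper's reduction sidesteps exactly this difficulty.

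There are also two misreadings that create a spurious obstacle. First, admissibility (Definition \ref{definition:admissible}) is used in the paper to make the A$_\infty$ relations in $\mathbf A_\Delta$ hold, not for split-generation: it forces $\mutimes_{n-j+1} \bullet_i \mutimes_j = 0$ and $\mucirc_{n-j+1} \bullet_i \mutimes_j = 0$ for interior $i$, after which the remaining terms cancel in explicit pairs (proof of Proposition \ref{proposition:ainfinity} and Figs.~\ref{fig:cancel}--\ref{fig:cancel2}). Second, the local cosheaf category $\mathcal E_\Delta(U_{\vtimes})$ already has \emph{arcs of $\Delta$} as its objects, with the higher product $\mutimes_{n-1}$ built in; the $\mathbb Z_2$-halves $\widetilde\alpha^\pm$ appear only in the orbit-category computation of Section \ref{section:disk}, not in the cosheaf of Section \ref{section:cosheaves}. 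So the split-generation problem you anticipate --- producing those halves from $\Delta$-arcs --- does not arise in the comparison you actually need.
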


The multiplications $\mubar_2$ and $\mucirc_{\geq 2}$ already appeared in \cite{haidenkatzarkovkontsevich}. The products $\mutimes_{\geq 1}$ are a new type of higher product that appear for certain polygons (pseudo-holomorphic disks) whose arcs (Lagrangians) intersect at orbifold points.

Using the notion of \emph{tagged arc system} on an orbifold surface, an A$_\infty$ category was independently constructed in \cite{chokim}. This category appears to be Morita equivalent to the category $\mathbf A_\Delta$ in Theorem~\ref{theoremintroduction:admissible} (cf.\ Remarks \ref{remark:withck} and \ref{remark:tagging}).

\subsection{Fukaya categories via A$_\infty$ orbit categories}

Our definition of partially wrapped Fukaya categories of orbifold surfaces builds on the local-to-global principle of Fukaya categories for noncompact manifolds. On the other hand, a graded orbifold surface $\mathbf S$ with nonempty boundary arises as a global quotient of a (smooth) double cover $\widetilde{\mathbf S}$ equipped with an almost free $\mathbb Z_2$-action. The partially wrapped Fukaya category $\mathcal W (\widetilde{\mathbf S})$ inherits a $\mathbb Z_2$-action. Therefore it is natural to relate $\mathcal W (\mathbf S)$ to $\mathcal W (\widetilde{\mathbf S})$ via the notion of (A$_\infty$) orbit categories. The following result proves a global-to-local principle for partially wrapped Fukaya categories of orbifold surfaces.

\begin{theorem}[(Theorem \ref{theorem:doublecoverorbitcategory})]
\label{main:doublecoverorbitcategory}
Let $\mathbf S = (S, \Sigma, \eta)$ be a graded orbifold surface with stops and let $\widetilde{\mathbf S}$ be a double cover of $\mathbf S$. There exists a weakly admissible dissection $\Delta$ of $\mathbf S$ and a weakly admissible dissection $\widetilde \Delta$ of $\widetilde{\mathbf S}$ such that
\[
(\tw (\mathbf A_\Delta))^\natural \simeq (\tw (\mathbf A_{\widetilde \Delta}) / \mathbb Z_2)^\natural \simeq (\tw (\mathbf A_{\widetilde \Delta} / \mathbb Z_2))^\natural
\]
are equivalences of pretriangulated A$_\infty$ categories, the middle term being an A$_\infty$ orbit category. These equivalences induce a triangulated equivalence
\[
\mathcal W (\mathbf S) \simeq (\mathcal W (\widetilde{\mathbf S}) / \mathbb Z_2)^\natural.
\]
\end{theorem}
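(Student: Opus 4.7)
The first step is to choose compatible dissections on the two surfaces. Let $\pi\colon\widetilde{\mathbf S}\to\mathbf S$ be the double cover with deck involution $\sigma$. I would choose a weakly admissible dissection $\Delta$ of $\mathbf S$ whose preimage $\widetilde\Delta := \pi^{-1}(\Delta)$ is a weakly admissible dissection of $\widetilde{\mathbf S}$. By construction, $\widetilde\Delta$ is $\mathbb Z_2$-equivariant: an arc $a\in\Delta$ with both endpoints on $\Sigma$ lifts to two disjoint arcs swapped by $\sigma$, whereas an arc ending at an orbifold point $o\in S$ lifts to a single $\sigma$-invariant arc through the fixed point over $o$. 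The main constraint is at orbifold points, where one needs enough arcs of $\Delta$ incident to $o$ for their preimages to dissect a neighbourhood of the fixed point in $\widetilde{\mathbf S}$; this is exactly where the weaker "weakly admissible" hypothesis is useful in place of the stronger "admissible" one.

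\textbf{Orbit categories vs.\ twisted complexes.} The middle equivalence $(\tw(\mathbf A_{\widetilde\Delta})/\mathbb Z_2)^\natural \simeq (\tw(\mathbf A_{\widetilde\Delta}/\mathbb Z_2))^\natural$ is a general A$_\infty$ incarnation of Keller's theorem on orbit categories. The A$_\infty$ functor $\mathbf A_{\widetilde\Delta}/\mathbb Z_2 \to (\tw(\mathbf A_{\widetilde\Delta})/\mathbb Z_2)^\natural$ induced by the inclusion $\mathbf A_{\widetilde\Delta}\hookrightarrow\tw(\mathbf A_{\widetilde\Delta})$ has idempotent-complete pretriangulated target, and therefore extends to all of $\tw(\mathbf A_{\widetilde\Delta}/\mathbb Z_2)^\natural$ by the universal property. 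Quasi-fullness and quasi-faithfulness follow from the very definition of hom-complexes in an orbit category as direct sums indexed by $\mathbb Z_2$, which matches the contributions of $\id$ and $\sigma$ to the hom-complexes between twisted complexes on the other side.

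\textbf{Descent along the cover.} For the first equivalence $(\tw(\mathbf A_\Delta))^\natural \simeq (\tw(\mathbf A_{\widetilde\Delta}/\mathbb Z_2))^\natural$, I would construct an explicit A$_\infty$ functor $F\colon\mathbf A_\Delta \to (\mathbf A_{\widetilde\Delta}/\mathbb Z_2)^\natural$. On objects, an arc $a$ disjoint from orbifold points is sent to either of its two lifts $\widetilde a$ (identified in the orbit category by $\sigma$); an arc $a$ ending at an orbifold point is sent to one of the two summands $\widetilde a_\pm$ of its $\sigma$-invariant lift $\widetilde a$ under the decomposition given by the idempotents $\frac12(\id\pm\sigma)\in\Hom_{\mathbf A_{\widetilde\Delta}/\mathbb Z_2}(\widetilde a,\widetilde a)$ (tacitly assuming $\operatorname{char}\Bbbk\neq 2$). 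On morphisms, boundary paths lift uniquely, while orbifold paths of $\mathbf A_\Delta$ are carried to the $\sigma$-component of the orbit-category hom-space. Under this dictionary $\mubar_2$ descends from composition in $\mathbf A_{\widetilde\Delta}$; the products $\mucirc_n$ come from smooth disk sequences in $\widetilde\Delta$ that project to smooth disk sequences in $\Delta$; and the new products $\mutimes_n$ arising from orbifold disk sequences in $\Delta$ correspond to disk sequences in $\widetilde\Delta$ paired with their $\sigma$-translates, which contribute precisely to the $\sigma$-summand of the orbit hom-space.

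\textbf{Main obstacle.} The hard part is verifying that $F$ extends to a Morita equivalence of pretriangulated A$_\infty$ categories, and in particular that the $\mutimes_n$ relations on $\mathbf A_\Delta$ match those produced by the orbit-category structure. This is essentially a local statement: near each orbifold point $o$ the local model is the orbifold disk of Section~\ref{section:disk}, whose double cover is an annulus. One checks directly that the type $\mathrm D_{n+1}$ description of the orbifold disk arises, after idempotent completion, as the $\mathbb Z_2$-orbit category of the type $\widetilde{\mathrm A}_{n-1}$ description of its annulus cover, using Corollary~\ref{corollary:typeD} together with Theorem~\ref{theorem:moritaequivalenceadmissible}. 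Once this local match is established, the cosheaf description of Theorem~\ref{main:ribbongraph} globalizes the equivalence: on sectors corresponding to smooth vertices of the ribbon graph the covering is trivial and the comparison is tautological, while on sectors covering orbifold vertices the local model supplies the required equivalence. Taking $\H^0$ of the resulting equivalence of pretriangulated A$_\infty$ categories then yields $\mathcal W(\mathbf S) \simeq (\mathcal W(\widetilde{\mathbf S})/\mathbb Z_2)^\natural$.
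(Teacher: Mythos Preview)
Your proposal contains two concrete errors and takes a substantially different route from the paper.

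\textbf{Error in the local model.} The double cover of the orbifold disk $\overline{\mathbb D}/\mathbb Z_2$ is not an annulus: it is the smooth disk $\overline{\mathbb D}$ with $\mathbb Z_2$ acting by rotation through $\pi$, as described explicitly in \S\ref{subsection:orbifolddisk}. Your proposed comparison of a type $\mathrm D_{n+1}$ sector with an orbit category of a type $\widetilde{\mathrm A}_{n-1}$ sector is therefore the wrong local statement; the annular sectors of type $\widetilde{\mathrm A}$ arise from full boundary stops (vertices $\vodot$), not from orbifold points.

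\textbf{Error in the lifted dissection.} If $\Delta$ has more than one arc ending at an orbifold point $x$, their preimages are distinct $\sigma$-invariant arcs that all pass through the fixed point over $x$ and hence intersect there. In the framework of \cite{haidenkatzarkovkontsevich} for smooth surfaces, arc systems consist of pairwise disjoint arcs, so $\widetilde\Delta := \pi^{-1}(\Delta)$ is in general not a valid dissection of $\widetilde{\mathbf S}$. This is exactly why Section~\ref{section:disk} drops $\widetilde\beta$ from the arc system and realizes it only as a twisted complex.

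\textbf{How the paper proceeds instead.} The paper avoids both issues by choosing the special \emph{formal} dissection $\Delta_0$ of \S\ref{subsection:special}, which has exactly two arcs at each orbifold point. Their lifts are two isotopic $\sigma$-invariant arcs through each fixed point; discarding one from each pair produces a genuine $\mathbb Z_2$-invariant dissection $\widetilde\Delta$ of the smooth cover. Formality of $\Delta_0$ (Theorem~\ref{theorem:formaldg}) reduces $\mathbf A_{\Delta_0}$ to its cohomology, which is a graded skew-gentle algebra by Theorem~\ref{theorem:skewgentlealgebras}. On the other side, $\mathbf A_{\widetilde\Delta}/\mathbb Z_2$ is again a skew-gentle algebra, and an explicit isomorphism of graded categories is written down by sending each invariant lift $\widetilde\gamma$ to the sum $\gamma_x^+ \oplus \gamma_x^-$ of the two arcs at the corresponding orbifold point. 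The middle equivalence then comes from Proposition~\ref{proposition:groupactiontriangulated}. Thus the paper's argument operates entirely at the level of associative graded algebras and never needs to match higher products $\mutimes_n$ against orbit-category structures or invoke a cosheaf globalization.
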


This global point of view is the perspective taken in \cite{amiotplamondon}, in particular, to construct tilting objects in the derived categories of skew-gentle algebras.

\subsection{Classification of formal generators}

We give a full classification of the formal generators of $\mathcal W (\mathbf S)$ arising from dissections of $\mathbf S$. For this purpose we introduce the notion of a {\it DG dissection} and a {\it formal dissection} (Definition \ref{definition:DGformal}) and show the following.

\begin{theorem}[(Theorem \ref{theorem:dgtriangleequivalent} and Theorem \ref{theorem:formaldg})]
Let $\mathbf S = (S, \Sigma, \eta)$ be a graded orbifold surface with stops.
\begin{enumerate}
\item Let $\Delta$ be a DG dissection of $\mathbf S$. Then the A$_\infty$ category $\mathbf A_\Delta$ is a DG category and we have a triangulated equivalence
\[
\mathcal W (\mathbf S) \simeq \per (\mathbf A_\Delta)
\]
where $\per (\mathbf A_\Delta)$ denotes the perfect derived category of $\mathbf A_\Delta$.
\item Let $\Delta$ be a weakly admissible dissection and suppose that $\mathbf S$ is not a disk with two orbifold points and one stop in the boundary. Then the A$_\infty$ category $\mathbf A_\Delta$ is formal if and only if $\Delta$ is a formal dissection of $\mathbf S$. In this case we have triangulated equivalences
\[
\mathcal W (\mathbf S) \simeq \per (\mathbf A_\Delta) \simeq \per (\H^\bullet (\mathbf A_\Delta))
\]
where $\H^\bullet (\mathbf A_\Delta)$ is the (associative graded) cohomology algebra of $\mathbf A_\Delta$.
\end{enumerate}
\end{theorem}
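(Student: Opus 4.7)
The plan for both parts is to start from the description of the Fukaya category as $\mathcal W (\mathbf S) \simeq \H^0 (\tw (\mathbf A_\Delta)^\natural)$ coming from Proposition \ref{proposition:ainfinity} and Theorem \ref{theorem:moritapartiallywrapped}, and then to analyze when the A$_\infty$ structure on $\mathbf A_\Delta$ collapses to a DG structure (part \itemi) or is quasi-isomorphic to its cohomology algebra (part \itemii). Since the higher products $\mucirc_n$ and $\mutimes_n$ are given by explicit disk sequences in $\Delta$ by Theorem \ref{theoremintroduction:admissible}, the analysis reduces to a combinatorial check on the dissection, together with the gauge freedom available in the A$_\infty$ structure.

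For part \itemi, the first step is to verify that the combinatorial conditions picking out a DG dissection in Definition \ref{definition:DGformal} force every smooth disk sequence of length $\geq 3$ and every orbifold disk sequence of length $\geq 2$ in $\Delta$ to be incompatible with the grading, so that the corresponding $\mucirc_n$ and $\mutimes_n$ vanish identically. Granting this, $\mathbf A_\Delta$ has differential $\mu_1 = \mutimes_1$, binary product given by $\mubar_2$ together with the surviving $\mucirc_2$ (and $\mutimes_2$ if present), and $\mu_n = 0$ for $n \geq 3$, so it is an honest DG category. The triangulated equivalence $\H^0 (\tw (\mathbf A_\Delta)^\natural) \simeq \per (\mathbf A_\Delta)$ is then the standard identification of the idempotent-completed triangulated envelope of twisted complexes of a DG category with its perfect derived category.

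For part \itemii, the ``only if'' direction uses that any higher product $\mucirc_n$ ($n \geq 3$) or $\mutimes_n$ ($n \geq 2$) which cannot be killed by an A$_\infty$ gauge transformation descends to a nontrivial Massey product on $\H^\bullet (\mathbf A_\Delta)$; since a formal $\mathbf A_\Delta$ is A$_\infty$-quasi-isomorphic to $\H^\bullet (\mathbf A_\Delta)$ equipped with vanishing higher products, all such Massey products must vanish, which should match the combinatorial formality condition of Definition \ref{definition:DGformal}. For the ``if'' direction I would construct a minimal model via Kadeishvili's transfer of A$_\infty$ structure and show, using the formality hypothesis on $\Delta$, that every transferred higher operation on $\H^\bullet (\mathbf A_\Delta)$ can be gauged to zero. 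The final equivalence $\per (\mathbf A_\Delta) \simeq \per (\H^\bullet (\mathbf A_\Delta))$ is then an immediate consequence of A$_\infty$-quasi-equivalence. The main obstacle will be this ``if'' direction: making the gauge transformation explicit enough to control \emph{all} transferred higher operations requires a careful case analysis of polygons adjacent to orbifold points, and the excluded case of a disk with two orbifold points and one boundary stop must be handled separately, since in that configuration a formal dissection still leaves an essentially nontrivial $\mutimes$-type product that cannot be gauged away, confirming the sharpness of the stated exception.
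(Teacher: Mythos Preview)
There are two genuine gaps in your proposal.

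For part \itemi, you invoke Theorem \ref{theorem:moritapartiallywrapped} to identify $\mathcal W(\mathbf S)$ with $\H^0(\tw(\mathbf A_\Delta)^\natural)$, but that theorem requires $\Delta$ to be \emph{admissible}, and DG dissections need not be (see Fig.~\ref{fig:formalnonadmissible}). The paper addresses this head-on: for a non-admissible DG dissection it defines $\mathbf A_\Delta$ with additional products $\widehat\mu_1, \widehat\mu_2$ (Proposition \ref{proposition:dg}) and then proves the equivalence $\mathcal W(\mathbf S) \simeq \per(\mathbf A_\Delta)$ by relating $\Delta$ to an admissible dissection via an explicit zigzag of edge contractions and Morita equivalences (Theorem \ref{theorem:dgtriangleequivalent}). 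Your plan skips this entirely. Also, the vanishing of $\mucirc_{\geq 3}$ and $\mutimes_{\geq 3}$ has nothing to do with grading incompatibility: a DG dissection has no vertices of type $\vcirc$ and $\val(\vtimes_x) \leq 3$, so the relevant disk sequences simply do not exist combinatorially.

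For part \itemii, you have the role of the excluded case exactly backwards. The ``if'' direction (formal dissection $\Rightarrow$ formal $\mathbf A_\Delta$) holds unconditionally; the paper proves it by writing down an explicit A$_\infty$ quasi-equivalence $F = (F_1, F_2) \colon \H^\bullet(\mathbf A_\Delta) \to \mathbf A_\Delta$ with $F_1$ the inclusion and $F_2$ correcting the failure of $F_1$ to respect products near orbifold disk sequences of length $2$. The exception of the disk with two orbifold points and one stop pertains to the \emph{converse}: on that surface there exist \emph{non-formal} dissections $\Delta$ for which $\mathbf A_\Delta$ is nonetheless formal (Remark \ref{remark:disktwoorbifoldonestop}), because $\HH^2(\H^\bullet(\mathbf A_\Delta), \H^\bullet(\mathbf A_\Delta)) = 0$ makes the cohomology intrinsically formal. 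The paper's ``only if'' argument does not use Massey products abstractly; it exhibits, for each non-formal DG dissection on any other surface, an explicit full DG subcategory $\mathbf B$ with four or five objects, computes $\HH^2$ of $\mathbf B$ versus $\H^\bullet(\mathbf B)$, and concludes non-formality via Proposition \ref{proposition:formalcriterion}.
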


\subsection{Derived equivalences of associative algebras}

Our classification of formal dissections shows that $\mathcal W (\mathbf S)$ always admits a formal generator whose cohomology is a graded skew-gentle algebra.

\begin{theorem}[(Theorem \ref{theorem:skewgentlealgebras})]
Let $\mathbf S = (S, \Sigma, \eta)$ be a graded orbifold surface. Then $\mathbf S$ admits a formal dissection $\Delta$ such that $A = \H^\bullet (\mathbf A_\Delta)$ is a graded skew-gentle algebra and we have a triangulated equivalence
\[
\mathcal W (\mathbf S) \simeq \per (A).
\]
\end{theorem}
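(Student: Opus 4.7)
The plan is to combine Theorem~\ref{theorem:formaldg} with an explicit combinatorial construction. By part~\itemii{} of Theorem~\ref{theorem:formaldg}, once we exhibit a weakly admissible dissection $\Delta$ that is formal in the sense of Definition~\ref{definition:DGformal} and whose cohomology algebra is graded skew-gentle, the desired equivalence $\mathcal W (\mathbf S) \simeq \per (A)$ is automatic. So the whole argument reduces to producing such a dissection.

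First, I would build $\Delta$ by refining a decomposition of $\mathbf S$ into small pieces, chosen so that none of the higher products of $\mathbf A_\Delta$ can be nonzero for degree or combinatorial reasons. Concretely: away from the orbifold points, triangulate into smooth polygons whose sides that do not lie on the boundary are arcs of $\Delta$; near each orbifold point $p$ of order $2$, place the arcs in the model configuration coming from the local picture of Section~\ref{section:disk} (an arc ending at $p$ together with the surrounding cone of arcs of the orbifold disk). For such a $\Delta$ the only smooth disk sequences are the short ones giving $\mucirc_2 = \mubar_2$ and the only orbifold disk sequences contribute $\mutimes_1$, with no longer disk sequences appearing. Formality then follows from the combinatorial criterion of Definition~\ref{definition:DGformal}: the obstruction to formality is carried precisely by the higher $\mucirc_n$, $n \geq 3$, and $\mutimes_n$, $n \geq 2$, all of which vanish by construction.

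Second, I would compute $A = \H^\bullet (\mathbf A_\Delta)$ directly from $\Delta$ using Proposition~\ref{proposition:ainfinity}. The vertices of the underlying quiver of $A$ are the arcs of $\Delta$, the arrows are the boundary and orbifold paths between consecutive arcs, and the relations are the quadratic monomial relations coming from $\mubar_2$ together with, at each orbifold point, the relation produced by $\mutimes_1$ from the local orbifold-disk model. On the smooth part this reproduces the gentle-algebra pattern of Haiden--Katzarkov--Kontsevich, and at each orbifold point the local contribution (corresponding, after Corollary~\ref{corollary:typeD}, to the type $\mathrm D$ quiver algebra) is exactly the skew-gentle decoration. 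Matching these local pieces against the definition of Geiß--de la Peña shows that $A$ is a graded skew-gentle algebra.

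The main obstacle, and the step requiring the most care, is verifying that this construction goes through in every case --- including the small or degenerate surfaces (disks with few stops, orbifold disks, annuli with orbifold points, etc.) where the available combinatorics of $\Delta$ is constrained. The sporadic disk with two orbifold points and one stop in the boundary is already excluded in Theorem~\ref{theorem:formaldg}\itemii, so this case should be handled separately, likely by exhibiting a formal generator by hand and checking directly that its cohomology is skew-gentle. Once all special cases are covered, assembling the local contributions into a global skew-gentle algebra is a combinatorial bookkeeping argument using the ribbon graph structure.
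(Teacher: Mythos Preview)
Your overall strategy---exhibit a formal dissection and compute its cohomology---matches the paper's, but the execution has a misdirection at the key step. The paper does not use the type~$\mathrm D$ model of Corollary~\ref{corollary:typeD} as the local piece; that corollary describes the whole orbifold disk, not something to be glued into a larger surface. Instead, the paper takes the special dissection $\Delta_0$ (whose existence is Lemma~\ref{lemma:4gon}, cf.\ Fig.~\ref{fig:ribbon}), in which each orbifold point has exactly two arcs connecting to it and $\val(\vtimes_x)=2$. The local DG picture at $x$ is then the graded Kronecker quiver~\eqref{eq:kronecker} with differential $d(p_x)=q_x$, sitting inside a larger gentle quiver via arrows $p_0, p_1$ with $q_xp_0=p_1q_x=0$. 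Taking cohomology kills $q_x$ and $p_x$ but the cocycles $p_xp_0$ and $p_1p_x$ survive, producing a commutative-square relation; this is then matched explicitly against the admissible presentation of a skew-gentle algebra (splitting the special vertex into $i^\pm$). Your proposal describes the target correctly when you say only $\mutimes_1$ contributes, but that is inconsistent with invoking the ``cone of arcs'' from Section~\ref{section:disk}, which would give $\val(\vtimes_x)>2$ and hence fail the formality criterion of Definition~\ref{definition:DGformal}.

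Two smaller points. First, you only need part~\ref{formaldg1} of Theorem~\ref{theorem:formaldg} (the ``if'' direction), which has no excluded case; the disk with two orbifold points and one stop is excluded only from the converse, so your separate handling of it is unnecessary. Second, the existence of the dissection is not something you need to build by hand via triangulation: the special dissection $\Delta_0$ of \S\ref{subsection:special} already has all the required properties, and the paper simply invokes it.
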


We conjecture that the graded associative algebras arising from formal dissections of graded orbifold surfaces form a new class of algebras closed under derived equivalence.

\begin{conjecture}[(Conjecture \ref{conjecture:skewgentle})]
\label{main:skewgentle}
Let $A$ be a graded skew-gentle algebra and let $\mathbf S = (S, \Sigma, \eta)$ be the graded orbifold surface associated to $A$.

For any graded associative algebra $B$ which is (perfect) derived equivalent to $A$, there exists a formal dissection $\Delta$ of $\mathbf S$ such that $B \simeq \H^\bullet (\mathbf A_\Delta)$ as graded associative algebras.

In particular, the formal generators of $\mathcal W (\mathbf S)$ obtained from dissections give a class of algebras closed under derived equivalence and describe the complete class of graded associative algebras derived equivalent to $A$.
\end{conjecture}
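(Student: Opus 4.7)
The plan is to reduce the conjecture to a geometric classification of the formal compact generators of $\mathcal W (\mathbf S)$. A (perfect) derived equivalence $\per (A) \simeq \per (B)$ produces a compact generator $T$ of $\per (A) \simeq \mathcal W (\mathbf S)$ whose derived endomorphism algebra is quasi-isomorphic to $B$; since $B$ is an ordinary graded associative algebra (i.e.\ already has $\mu_{\neq 2} = 0$), the generator $T$ is formal with $\H^\bullet \End (T) \simeq B$ as graded algebras. The conjecture then amounts to showing that every formal compact generator of $\mathcal W (\mathbf S)$ is, up to shift and reordering of summands, of the form $\bigoplus_i X_{\gamma_i}$ for some formal dissection $\Delta = \{\gamma_i\}$ of $\mathbf S$, after which Theorem \ref{theorem:formaldg} identifies $B \simeq \End (T) \simeq \H^\bullet (\mathbf A_\Delta)$.

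The first step I would take is a classification of the indecomposable objects of $\mathcal W (\mathbf S)$ via the double cover. Theorem \ref{main:doublecoverorbitcategory} gives $\mathcal W (\mathbf S) \simeq (\mathcal W (\widetilde{\mathbf S}) / \mathbb Z_2)^\natural$, and the indecomposables of $\mathcal W (\widetilde{\mathbf S})$ on the smooth double cover are parametrised by graded curves (arcs or loops) with local systems by the surface/gentle-algebra dictionary of \cite{haidenkatzarkovkontsevich,opperplamondonschroll}. Passing to the $\mathbb Z_2$-orbit category and idempotent-completing yields indecomposables of $\mathcal W (\mathbf S)$ of three types: arcs on $\mathbf S$ disjoint from the orbifold locus (descending from disjoint pairs of arcs on $\widetilde{\mathbf S}$), arcs passing through orbifold points (descending from $\mathbb Z_2$-invariant arcs, which typically split into two summands after idempotent completion), and loop- or band-type objects.

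The second step is to impose the formality and generation hypotheses on $T$ and translate them into Ext-vanishing between summands. Loop and band objects on $\widetilde{\mathbf S}$ are known to carry non-trivial self-extensions in non-zero degrees; I would show that this obstruction survives the descent to $\mathcal W (\mathbf S)$, so that no loop-type summand can occur in a formal tilting object. Once these are ruled out, each summand of $T$ is of arc type; the non-crossing and generation conditions on the remaining arcs (read off from $\Ext^{\neq 0}$-vanishing and from $T$ being a generator) produce a weakly admissible dissection $\Delta$ of $\mathbf S$. Finally, the formality criterion of Theorem \ref{theorem:formaldg} matches precisely the vanishing which $T$ already satisfies, so $\Delta$ is formal, and part \itemii{} of that theorem then yields $\H^\bullet (\mathbf A_\Delta) \simeq \End (T) \simeq B$.

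The main obstacle lies in the second step, specifically in controlling, inside the idempotent completion of the $\mathbb Z_2$-orbit category, how $\mathbb Z_2$-invariant indecomposables of $\mathcal W (\widetilde{\mathbf S})$ split and how their Ext groups transform under descent. A $\mathbb Z_2$-invariant arc may decompose into two summands in $\mathcal W (\mathbf S)$, so a candidate generator $T$ need not lift to a $\mathbb Z_2$-equivariant generator of $\mathcal W (\widetilde{\mathbf S})$ on the nose, and the obstruction calculations that rule out loop summands and extract the arc combinatorics must ultimately be carried out directly on $\mathbf S$ rather than on $\widetilde{\mathbf S}$. A subsidiary difficulty, needed for the statement to be well-posed, is to check that the surface $\mathbf S = \mathbf S (A)$ is a derived invariant of $A$, so that the same $\mathbf S$ hosts both $A$ and $B$; this should follow from surface-invariance arguments analogous to those already developed for gentle algebras in the smooth case and would in fact be a byproduct of the classification of formal generators outlined above.
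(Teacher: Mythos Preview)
The statement you are attempting to prove is explicitly a \emph{conjecture} in the paper (Conjecture~\ref{conjecture:skewgentle}), not a theorem: the paper provides no proof, only remarks that the type~$\mathrm D$ disk case has recently been established in \cite{amiotplamondon} and that the smooth analogue (Conjecture~\ref{conjecture:gentle}) is already folklore and open in the graded case. There is therefore no ``paper's own proof'' against which to compare your attempt.

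Your proposal is not a proof but a research outline, and you are candid about this: you identify the main obstacle as controlling the splitting of $\mathbb Z_2$-invariant indecomposables under descent to the orbit category and computing their Ext groups directly on $\mathbf S$. This is indeed the heart of the difficulty. A few further gaps deserve mention. First, even in the smooth case the assertion that every formal compact generator of $\mathcal W(\widehat{\mathbf S})$ arises from a dissection is not proved in \cite{haidenkatzarkovkontsevich} and is precisely the content of Conjecture~\ref{conjecture:gentle}; your second step implicitly assumes an orbifold version of something already open for smooth surfaces. Second, ruling out band objects via non-trivial self-extensions is subtle: bands with one-dimensional local systems can have formal endomorphism algebras, and the obstruction is rather that they fail to be part of a \emph{generating} collection of pairwise Ext-orthogonal arcs, which requires a separate argument. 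Third, the derived-invariance of the orbifold surface $\mathbf S(A)$ is itself not established in the paper (only uniqueness up to diffeomorphism for a fixed skew-gentle presentation, via \cite{labardinifragososchrollvaldivieso}); a full proof of the conjecture would need this as an ingredient, not a byproduct.
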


Note that ungraded gentle algebras are closed under derived equivalence by a result of Schröer and Zimmermann \cite{schroeerzimmermann}. Moreover, it is shown in \cite{amiotplamondonschroll} (ungraded case) and \cite{jinschrollwang} (graded case) building on \cite{lekilipolishchuk2} that for a gentle algebra $A$, the homotopy class of the line field on the corresponding surface model gives a complete derived invariant of $A$ generalizing the combinatorial invariant by Avella-Alaminos and Geiss \cite{avellaalaminosgeiss}. In \S\ref{subsection:orbifolddiskthree} we give examples of associative algebras arising from formal dissections of an orbifold disk with three orbifold points. One of these algebras is skew-gentle. The other algebras are not and they do not appear to belong to a specific known class of (derived tame) associative algebras.

\medskip

Shortly before the completion of this paper, a category which appears to be equivalent to the A$_\infty$ category $\mathbf A_\Delta$ we define was constructed independently by Cho and Kim \cite{chokim} using a similar but slightly different approach (cf.\ Remark \ref{remark:withck}). Our independent work on applications to representation theory will now appear jointly \cite{barmeierchokimrhoschrollwang}. Even more recently, Amiot and Plamondon posted independent work studying the orbit category $\mathcal W (\widetilde{\mathbf S}) / \mathbb Z_2$ of the double cover $\widetilde{\mathbf S}$ of an orbifold surface \cite{amiotplamondon}. A consequence of our results is that this category is equivalent to the partially wrapped Fukaya category obtained via a cosheaf construction on any Lagrangian core of the orbifold surface as in Section \ref{section:cosheaves} (see Theorem \ref{theorem:doublecoverorbitcategory}).

\subsection{Structure of the paper}

In Section \ref{section:ainfinitytwisted} we recall some general properties of A$_\infty$ categories and twisted complexes. In \S\ref{subsection:orbitcategories} we introduce the notion of {\it A$_\infty$ orbit categories} generalizing the orbit categories in the sense of \cite{cibilsmarcos,keller2}. In Section \ref{section:orbifoldsurfaces} we define graded orbifold surfaces with stops whose partially wrapped Fukaya categories are the main subject of this paper. In Section \ref{section:disk}, we give a complete description of the partially wrapped Fukaya of a disk with a single orbifold point and any number of boundary stops. The results of Section \ref{section:disk} are used as the main input for the case of general orbifold surfaces. In Section \ref{section:arcsystems} we give the formal definitions of arcs, dissections and ribbon complexes for general graded orbifold surfaces with stops.

In Section \ref{section:cosheaves} we define the partially wrapped Fukaya category of a graded orbifold surface with stops in terms of the category of global sections of a cosheaf of pretriangulated A$_\infty$ categories on the ribbon complex associated to any dissection. In \S\ref{subsection:core} we give an equivalent construction on a ribbon graph in spirit of Kontsevich's conjecture \cite{kontsevich2}. In Section \ref{section:ainfinity} we construct an explicit A$_\infty$ category $\mathbf A_\Delta$ associated to an admissible dissection $\Delta$ which gives a concrete description of the partially wrapped Fukaya categories of graded orbifold surfaces. In Section \ref{section:formal}, we study formal generators of the partially wrapped Fukaya category. This defines a new family of algebras which are derived equivalent to graded skew-gentle algebras which we conjecture to be closed under derived equivalence. In \S\ref{subsection:orbifolddiskthree} we give concrete examples of new derived equivalences obtained via Fukaya categories. In Section \ref{section:doublecover} we show that the partially wrapped Fukaya categories of graded orbifold surface are equivalent to orbit categories of the partially Fukaya categories of their smooth double covers.

\begin{notation}
Throughout we work over a field $\Bbbk$ and we write $\otimes$ for $\otimes_\Bbbk$. For relating the Fukaya category of the orbifold surface to that of its double cover we need to assume $\operatorname{char} \Bbbk \neq 2$ (see \S\ref{subsection:partiallyorbifolddisk}) and for simplicity we shall make this assumption throughout.
\end{notation}

\section{A$_\infty$ categories and twisted complexes}
\label{section:ainfinitytwisted}

\subsection{A$_\infty$ categories} 
We first recall some notions on A$_\infty$ categories and twisted complexes. For details we refer to \cite{seidel2,keller0,lefevre}.

\subsubsection*{Sign convention and shorthands} To keep the signs to a minimum we shall work with a shifted version of A$_\infty$ categories. This reduces all signs to those coming from the Koszul sign rule. The price to pay for this is the presence of shifts which we denote by $\s$ and which we choose to keep track of explicitly throughout the paper. 

 Let $\mathbf A$ be the data of a set $\mathrm{Ob}(\mathbf A)$ of objects and for any two objects $X, Y$, a graded $\Bbbk$-vector space $\mathbf A (X, Y) = \bigoplus_{i\in \mathbb Z} \mathbf A (X, Y)^i$. We denote by $\s \mathbf A (X, Y)$ the $1$-shifted graded space, namely $\s \mathbf A (X, Y)^i = \mathbf A (X, Y)^{i+1}$. Denoting by $|a| = i$ the degree of $a \in \mathbf A (X, Y)^i$, the degree of $\s a$ is $|\s a| = |a| - 1$. To streamline the notation, we shall also use the following shorthand for the tensor product of shifted Hom-spaces in $\mathbf A$
\[
\s \mathbf A (X_{0 \dotsc n}) := \s \mathbf A (X_{n-1}, X_n) \otimes \s \mathbf A (X_{n-2}, X_{n-1}) \otimes \dotsb \otimes \s \mathbf A (X_0, X_1)
\]
and for elements therein we use the shorthand
\[
\s a_{j \dotsc i+1} := \s a_{j} \otimes \dotsb \otimes \s a_{i+1} \in \s \mathbf A (X_{i \dotsc j})
\]
where $1 \leq i < j \leq n$ and $a_k \in \mathbf A (X_{k-1}, X_k)$. The signs in the A$_\infty$ relations will involve the degree
\[
|\s a_{j \dotsc i+1}| = (|a_j| - 1) + \dotsb + (|a_{i+1}| - 1) = |a_j| + \dotsb + |a_{i+1}| - (j - i)
\]
where our shorthand (left) also comes in handy.

\begin{definition}
An {\it A$_\infty$ category} $\mathbf A$ consists of a set $\mathrm{Ob} (\mathbf A)$ of objects, a graded $\Bbbk$-vector space $\mathbf A (X, Y)$ for each pair $X, Y \in \mathrm{Ob}(\mathbf A)$, and a collection of maps of degree $1$
\[
\mu_n \colon \s \mathbf A (X_{0 \dotsc n}) \to \s \mathbf A (X_0, X_n)
\]
for each $n \geq 1$, satisfying the A$_\infty$ relations
\begin{equation}
\label{eq:ainfinityrelations}
\sum_{i=0}^{n-1}\sum_{j=1}^{n-i} (-1)^{|\s a_{i \dotsc 1}|} \mu_{n-j+1} (\s a_{n \dotsc i+j+1} \otimes \mu_j (\s a_{i+j \dotsc i+1}) \otimes \s a_{i \dotsc 1}) = 0.
\end{equation}
\end{definition}

\begin{remark}
\label{remark:ainfinity}
We may set
\[
(\mu_{n-j+1} \bullet_i \mu_j) (\s a_{n \dotsc 1}) = (-1)^{|\s a_{i \dotsc 1}|} \mu_{n-j+1} (\s a_{n \dotsc i+j+1} \otimes \mu_j (\s a_{i+j \dotsc i+1}) \otimes \s a_{i \dotsc 1})
\]
where $i$ is the number of tensor factors appearing before $\mu_j$, read from the right. If we now define $$\mu_{n-j+1} \bullet \mu_j = \sum_{i = 0}^{n-j} \mu_{n-j+1} \bullet_i \mu_j$$ then \eqref{eq:ainfinityrelations} can be written simply as $\mu \bullet \mu = 0$. Here $\mu = (\mu_n)_{n \geq 1}$ and $\mu \bullet \mu = (\sum_j \mu_{n-j+1} \bullet \mu_j)_{n \geq 1}$ is to be computed componentwise.
\end{remark}

An A$_\infty$ category $\mathbf A$ is {\it strictly unital} if for every object $X$ there exists $1_X \in \mathbf A (X, X)$ in degree $0$ such that 
\begin{enumerate}
\item $\mu_1 (\s 1_X) =0$
\item $\mu_2 (\s a \otimes \s 1_X) = \s a =(-1)^{|\s a|}\mu_2 (\s 1_Y \otimes \s a)$ for any $a \in \mathbf A (X, Y)$
\item $\mu_n (\dotsb \otimes \s 1_X \otimes \dotsb) = 0$ for $n \geq 3$. 
\end{enumerate}
Throughout this paper all A$_\infty$ categories will be strictly unital, unless otherwise stated.

\begin{remark}
If $\mu_n = 0$ for all $n \geq 3$ then an A$_\infty$ category $\mathbf A$ is a DG category (in the usual unshifted sense) with differential and composition maps given by 
\[
d a_1 = (-1)^{|a_1|} \s^{-1} \mu_1 (\s a_1), \qquad a_2 \cdot a_1 = (-1)^{|a_1|} \s^{-1} \mu_2 (\s a_2 \otimes \s a_1)
\]
for any $a_1 \in \mathbf A (X_0, X_1)$ and $a_2 \in \mathbf A (X_1, X_2)$.

As a quick sanity check for the sign convention, let $\mathbf A$ be a DG category with trivial differential. Then the composition (denoted by $\cdot$) of morphisms induces a map $$\mu_2 \colon \s \mathbf A(X_1, X_2) \otimes \s \mathbf A(X_0, X_1) \to \s \mathbf A(X_0, X_2)$$ 
given by $\mu_2(\s a_2 \otimes \s a_1) = (-1)^{|a_1|} \s (a_2 \cdot a_1)$. The associativity of the composition is precisely the condition $\mu \bullet \mu = 0$ since
\begin{align*}
&(\mu \bullet \mu) (\s a_3 \otimes \s a_2 \otimes \s a_1) \\
= {} & \undergroup{\mu_2 (\s a_3 \otimes \mu_2 (\s a_2 \otimes \s a_1))}_{\mu_2 \bullet_0 \mu_2} +  \undergroup{{}  (-1)^{|\s a_1|} \mu_2 (\mu_2 (\s a_3 \otimes \s a_2) \otimes \s a_1)}_{\mu_2 \bullet_1 \mu_2}\\
={} & (-1)^{|a_1| +|a_1a_2|}  \s (a_3\cdot (a_2 \cdot a_1)) + (-1)^{|\s a_1|+|a_1|+|a_2|} \s ((a_3 \cdot a_2) \cdot a_1).
\end{align*}
Since $|\s a_1| = |a_1| - 1$ we obtain $(\mu \bullet \mu) (\s a_3 \otimes \s a_2 \otimes \s a_1) = 0$.
\end{remark}

\begin{definition}\label{definition:ainfinityfunctorquasi}
An {\it A$_\infty$ functor} $F \colon \mathbf A \to \mathbf A'$ is given by a map $F \colon \mathrm{Ob}(\mathbf A) \to \mathrm{Ob}(\mathbf A')$ and a collection of maps of degree $0$
\[
F_n \colon \s \mathbf A (X_{0 \dotsc n}) \to \s \mathbf A' (F (X_0), F (X_n)), \quad n\geq 1
\]
satisfying the following relation for each $n \geq 1$
\begin{align*}
& \sum_{i=0}^{n-1} \sum_{j=1}^{n-i} (-1)^{|\s a_{i \dotsc 1}|} F_{n-j+1} (\s a_{n \dotsc i+j+1} \otimes \mu_j(\s a_{i+j \dotsc i+1}) \otimes \s a_{i \dotsc 1}) \\
 ={}& \sum_{\substack{1 \leq r \leq n \\ n = i_1 + \dotsb + i_r}}  \mu'_r (F_{i_r}(\s a_{n \dotsc i_1+\dotsb+i_{r-1}+1})  \otimes \dotsb \otimes F_{i_2}(\s a_{i_1+i_2 \dotsc i_1+1})  \otimes F_{i_1}(\s a_{i_1 \dotsc 1}))
\end{align*}
where $\mu'_r$ denotes the higher multiplication in $\mathbf A'$.

An A$_\infty$ functor $F \colon \mathbf A \to \mathbf A'$ is an {\it A$_\infty$ quasi-equivalence} if  $F_1$ induces a quasi-isomorphism of complexes between $\mathbf A(X, Y)$ and $\mathbf A'(F(X), F(Y))$ for any objects $X, Y$ and moreover, $\H^0(F)$ is an equivalence of categories. 
\end{definition}

\subsection{Formal A$_\infty$ categories}
\label{subsection:formal}

Let $\mathbf A$ be an A$_\infty$ category. The {\it cohomology category} $\H^\bullet(\mathbf A)$ is a graded (associative) category (with trivial higher products) whose objects are the same as those of $\mathbf A$ and whose morphism space from $X$ to $Y$ is given by the graded vector space 
$$\H^\bullet(\mathbf A)(X, Y) = \bigoplus_{i \in \mathbb Z} \H^i(\mathbf A(X, Y)).$$
The composition of $\H^\bullet(\mathbf A)$ is induced by $\mu_2$ of $\mathbf A$.

\begin{definition}
\label{definition:formal}
An A$_\infty$ category $\mathbf A$ is called {\it formal} if it is A$_\infty$-quasi-equivalent to the cohomology category $\H^\bullet (\mathbf A)$. 
\end{definition}

\begin{remark}
For any A$_\infty$ category $\mathbf A$, the homotopy transfer theorem (see for example \cite[\S 1.4]{lefevre}) endows $\H^\bullet(\mathbf A)$ with an A$_\infty$ structure (called the {\it A$_\infty$ minimal model} of $\mathbf A$) such that the resulting A$_\infty$ category is A$_\infty$-quasi-equivalent to $\mathbf A$. An A$_\infty$ category $\mathbf A$ is formal if and only if its A$_\infty$ minimal model is A$_\infty$-isomorphic to the cohomology category $\H^\bullet(\mathbf A)$ with trivial higher products.
\end{remark}

The following result can be used to show that an A$_\infty$ category $\mathbf A$ is not formal by finding a full subcategory which is not formal.

\begin{proposition}\label{proposition:formalcriterion}
Let $\mathbf A$ be a formal A$_\infty$ category. Then any full A$_\infty$ subcategory $\mathbf B \subset \mathbf A$ is formal.
\end{proposition}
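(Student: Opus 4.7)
The plan is to prove formality descends to full subcategories by running the homotopy transfer theorem of Remark~\ref{remark:ainfinity} (well, the one after Definition~\ref{definition:formal}) \emph{compatibly} for $\mathbf A$ and $\mathbf B$, so that the minimal model of $\mathbf B$ is the restriction of the minimal model of $\mathbf A$ to the objects of $\mathbf B$.

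More precisely, I would proceed in the following steps. First, invoke the characterization of formality from the remark: $\mathbf A$ is formal if and only if there is an A$_\infty$-isomorphism from the minimal model $\mathbf A_{\min}$ (whose objects are $\mathrm{Ob}(\mathbf A)$ and whose morphism spaces are $\H^\bullet(\mathbf A)(X,Y)$, equipped with the transferred A$_\infty$ structure) to the graded category $\H^\bullet(\mathbf A)$ with trivial higher products. Fix such an A$_\infty$-isomorphism $F \colon \mathbf A_{\min} \to \H^\bullet(\mathbf A)$, which by Definition~\ref{definition:ainfinityfunctorquasi} consists of a map on objects (which we take to be the identity) and a collection of maps $F_n$.

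Second, construct the minimal model $\mathbf B_{\min}$ using the restriction of the same transfer data. Since $\mathbf B$ is a full A$_\infty$ subcategory, we have $\mathbf B(X,Y) = \mathbf A(X,Y)$ as complexes for all $X, Y \in \mathrm{Ob}(\mathbf B)$, and the higher products $\mu_n$ of $\mathbf B$ agree with those of $\mathbf A$ on tuples of objects lying in $\mathbf B$. Choosing the Hodge-theoretic data (a projection $p_{X,Y}$ onto cohomology, an inclusion $i_{X,Y}$ of representatives, and a contracting homotopy $h_{X,Y}$) pairwise for each $(X,Y)$, the pairs with $X,Y \in \mathrm{Ob}(\mathbf B)$ give valid transfer data for $\mathbf B$. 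The tree formula for the transferred $\mu_n^{\min}$ only reads off morphism spaces between the chosen input objects, so the A$_\infty$ structure on $\mathbf B_{\min}$ is literally the restriction of the A$_\infty$ structure on $\mathbf A_{\min}$ to sequences of objects in $\mathbf B$.

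Third, restrict the A$_\infty$-isomorphism $F$ to objects of $\mathbf B$. Since the components $F_n \colon \s \mathbf A_{\min}(X_{0\dotsc n}) \to \s \H^\bullet(\mathbf A)(X_0, X_n)$ only see morphism spaces between the input objects $X_0, \ldots, X_n$, their restriction to tuples in $\mathrm{Ob}(\mathbf B)$ defines maps $F_n^{\mathbf B} \colon \s \mathbf B_{\min}(X_{0\dotsc n}) \to \s \H^\bullet(\mathbf B)(X_0, X_n)$. The A$_\infty$ functor relations of Definition~\ref{definition:ainfinityfunctorquasi} hold for these restrictions because they held for $F$ on every such tuple. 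Moreover, $F_1$ restricts to the identity on cohomology, so $F^{\mathbf B}$ is still an isomorphism. This produces an A$_\infty$-isomorphism $\mathbf B_{\min} \to \H^\bullet(\mathbf B)$, which by Remark~\ref{remark:ainfinity}-style characterization (the remark after Definition~\ref{definition:formal}) shows that $\mathbf B$ is formal.

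The main thing to be careful about is step two: one has to note that transfer data can be chosen \emph{pair by pair} on each Hom-complex, so there is no coherence obstruction to choosing it for $\mathbf A$ in such a way that it simultaneously works for $\mathbf B$. Once that is observed, everything else is a formal consequence of the fact that A$_\infty$ structures and A$_\infty$ functors between them are determined by their values on tuples of objects, and are therefore automatically preserved under restriction to a full subset of objects.
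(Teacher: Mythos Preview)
Your argument is correct and takes a genuinely different route from the paper's. You pass through minimal models: by choosing homotopy transfer data Hom-complex by Hom-complex, the minimal model of the full subcategory $\mathbf B$ is literally the restriction of $\mathbf A_{\min}$, and then you restrict the A$_\infty$-isomorphism $\mathbf A_{\min} \to \H^\bullet(\mathbf A)$ to the objects of $\mathbf B$. The paper instead works directly with the formality quasi-equivalence $F \colon \mathbf A \to \H^\bullet(\mathbf A)$ and never invokes homotopy transfer: it post-composes $F$ with the inverse $\H^\bullet(F)^{-1}$ of the induced equivalence of graded categories, so that the composite $\H^\bullet(F)^{-1} \circ F \colon \mathbf A \to \H^\bullet(\mathbf A)$ is the identity on objects, and then restricts this composite to $\mathbf B$. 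Both arguments rest on the same locality principle---A$_\infty$ structures and functors restrict to full subcategories once the object map is the identity---but the paper's version is shorter and avoids the transfer machinery, while yours has the conceptual bonus of making explicit that taking minimal models commutes with passing to a full subcategory.

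One small point to tighten: your assertion that the object map of $F \colon \mathbf A_{\min} \to \H^\bullet(\mathbf A)$ ``can be taken to be the identity'' is exactly the step the paper justifies with its $\H^\bullet(F)^{-1}$ trick; you should say a word about why this is harmless. Also, in step three, $F_1$ need not be the identity map on each $\H^\bullet(\mathbf A)(X,Y)$---it is merely a linear isomorphism (both sides being minimal)---but that is all you need for $F^{\mathbf B}$ to be an A$_\infty$-isomorphism, so the conclusion stands.
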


\begin{proof}
Since $\mathbf A$ is formal, there is an A$_\infty$ quasi-equivalence $F\colon \mathbf A \to \H^\bullet(\mathbf A)$ where the latter carries trivial higher products. In particular, $\H^\bullet(F)\colon \H^\bullet(\mathbf A) \tosim \H^\bullet(\mathbf A)$ is an equivalence of graded categories.  Consider the following composition of A$_\infty$ functors 
\[
\mathbf A  \toarg{F} \H^\bullet(\mathbf A) \toarglong{\H^\bullet(F)^{-1}} \H^\bullet(\mathbf A)
\] 
where $\H^\bullet(F)^{-1}$ is the inverse of the equivalence $\H^\bullet(F)$. Note that this composition acts as the identity on objects. It follows that its restriction to $\mathbf B$ induces an A$_\infty$ quasi-equivalence between $\mathbf B$ and $\H^\bullet(\mathbf B)$, which implies that $\mathbf B$ is formal. 
\end{proof}

\subsection{Twisted complexes}

Let $\mathbf A$ be an A$_\infty$ category. To define the A$_\infty$ category $\tw(\mathbf A)$ of twisted complexes, we first introduce the A$_\infty$ category $\mathbb Z \mathbf A$ whose objects are the pairs $(X, m)$ of an object $X$ in $\mathbf A$ and $m \in \mathbb Z$. We usually write $(X, m)$ as $\s^m X$. Morphism spaces are defined by 
\[
\mathbb Z \mathbf A (\s^m X, \s^l Y) = \s^{l-m} \mathbf A (X, Y). 
\]
The A$_\infty$ product of $\mathbb Z \mathbf A$ is obtained by extending the A$_\infty$ product of $\mathbf A$. Precisely, for any objects $\s^{m_0} X_0, \dotsc, \s^{m_n} X_n$ in $\mathbb Z \mathbf A$ and morphisms $a_k \in \mathbb Z \mathbf A (\s^{m_{k-1}} X_{k-1}, \s^{m_k} X_k)$ for $1 \leq k \leq n$ we define 
\[
\mu_n^{\mathbb Z \mathbf A} (\s a_{n \dotsc 1}) = (-1)^{\sum_{i>j} (m_{i-1}-m_i) |\s a_j|} \mu_n^{\mathbf A} ( \s^{m_{n-1}-m_n+1} a_n \otimes \dotsb \otimes \s^{m_0-m_1+1} a_1).
\] 

\begin{definition}
\label{definition:twisted}
A {\it twisted complex} $X^\bullet$ is given by a sequence
\[
(X^1, X^2, \dotsc, X^r) := (\s^{m_1} X_1, \s^{m_2} X_2, \dotsc, \s^{m_r} X_r)
\]
of objects in $\mathbb Z \mathbf A$, together with a matrix $\delta = (\delta^{ij})_{1 \leq i, j \leq r}$ of degree $1$ morphisms
\[
\delta^{ij} \in \mathbb Z \mathbf A (X^j, X^i) = \s^{m_i-m_j} \mathbf A (X_j, X_i)
\]
such that $\delta^{ij} = 0$ for all $i \geq j$ and 
\[
\sum_{n \geq 1} \mu^{\mathbb Z\mathbf A}_n (\s\delta \otimes \dotsb \otimes \s \delta) = 0.
\]
We may write $\delta^{ij} = \s^{m_i-m_j} \delta_{ij}$ for $\delta_{ij} \in \mathbf A (X_j, X_i)$.
\end{definition}

Twisted complexes form an A$_\infty$ category $\tw (\mathbf A)$ and the morphism space from $X^\bullet$ to $Y^\bullet$ in $\tw (\mathbf A)$ is given by 
\[
\bigoplus_{i,j} \mathbb Z\mathbf A (X^j, Y^i).
\]
The A$_\infty$ product of $\tw (\mathbf A)$ is given by the twisted higher product 
\[
\mu_n^{\tw(\mathbf A)}(\s a_{n\dotsc1})  = \sum_{i_0,\dotsc,i_n\geq0} \mu_{n+i_0+\dotsb+i_n}^{\mathbb Z \mathbf A} ((\s\delta)^{\otimes i_n} \otimes \s a_n  \otimes \dotsb \otimes   (\s\delta)^{\otimes i_1}  \otimes \s a_1 \otimes (\s\delta)^{\otimes i_0}).
\]

\begin{remark}
The homotopy category $\H^0(\tw (\mathbf A))$ is triangulated and its shift functor is induced by the functor $\s \colon \tw (\mathbf A) \to \tw (\mathbf A)$ defined as follows. For a twisted complex $X^\bullet$ with components $X^i = \s^{m_i} X_i$ and $\delta^{ij} = \s^{m_i-m_j} \delta_{ij} \colon X^j \to X^i$, the $1$-shifted object is  the twisted complex $\s X^\bullet$ with components $(\s X^\bullet)^i = \s^{m_i+1} X_i$ and $(\s \delta)^{ij} = -\s^{m_i-m_j} \delta_{ij} \colon \s X^j \to \s X^i$.

There is a natural embedding $\mathbf A \to \tw(\mathbf A)$ of A$_\infty$ categories, which sends an object $X$ in $\mathbf A$ to the twisted complex $X^\bullet$ with a single (unshifted) component $X^1 = X$ and $\delta = 0$. 
Let $f \colon X_0 \to X_1$ be a morphism of degree $0$ in $\mathbf A$ such that $\mu_1^{\mathbf A}(\s f) = 0$. The mapping cone of $f$ is defined to be the twisted complex $(\s X_0, X_1)$ with the differential $-f \colon \s X_0 \to  X_1$ \cite[Eq.~(3.28)]{seidel2}. This shows that twisted complexes with $r + 1$ components can be viewed as iterated mapping cones of $r$ morphisms.
\end{remark}

\subsection{Idempotent completion}
\label{subsection:idempotent}

\begin{definition}
\label{definition:idempotent}
Let $\mathbf C$ be a $\Bbbk$-linear category. Let $e \colon X \to X$ be an idempotent in $\mathbf C (X, X)$ for an object $X$ in $\mathbf C$. The {\it abstract image} of $e$ is the right $\mathbf C$-module, i.e.\ the functor $\mathcal Z_e \colon \mathbf C^\op \to \mathrm{Vect}_\Bbbk$ which sends $Y$ to $\mathbf C(Y, X) e = \{fe \mid f \in \mathbf C(Y, X)\}$. 

The {\it idempotent completion} $\mathbf C^\natural$ of $\mathbf C$ is the full subcategory of $\Mod(\mathbf C)$ consisting of $\mathbf C$-modules that are isomorphic to the abstract images of idempotents. We may show that if $e$ (resp.\ $e'$) is an idempotent of $X$ (resp.\ $X'$) then $$\mathbf C^\natural (\mathcal Z_e,\mathcal Z_{e'}) = e' \mathbf C(X, X')e.$$
\end{definition}

Note that $\mathbf C^\natural$ is additive and idempotent complete and the functor $\iota \colon \mathbf C \to \mathbf C^\natural$ given by $\iota (C) = \mathcal Z_{1_C}$ and $\iota (f) = f$ is fully faithful. Moreover, if $\mathbf C$ is triangulated, then there is a unique triangulated structure on $\mathbf C^\natural$ such that $\iota$ is an exact functor \cite[Theorem 1.5]{balmerschlichting}.

The notion of idempotent completion can be generalized to A$_\infty$ categories as follows.

\begin{definition}[{\cite[\S I.4]{seidel2}}]
\label{definition:idempotent2}
Let $\mathbf B$ be an A$_\infty$ category. An {\it idempotent up to homotopy} for an object $X$ of $\mathbf B$ is defined to be a non-unital A$_\infty$ functor $e \colon \Bbbk \to \mathbf B$ with $e (1) = X$. Equivalently, it is given by a sequence $(e_1, e_2, \dotsc)$ where each $e_i \in \mathbf B (X, X)^{1-i}$ for $i \geq 1$ satisfying the following condition (see \cite[Eq.~(4.2)]{seidel2})
\[
\sum_k \sum_{i_1+\dotsb+i_k=d} \mu_k (\s e_{i_k} \otimes \dotsb \otimes \s e_{i_1}) = \begin{cases}
e_{d-1} & \text{if $d$ is even}\\
0 & \text{otherwise.}
\end{cases}
\]
\end{definition}

Let $e$ be an idempotent up to homotopy. We associate to it an A$_\infty$ module $\mathcal Z_e$ of $\mathbf B$, called the {\it abstract image}, whose underlying space is given by
\[
\mathcal Z_e(Y) = \mathbf B(Y, X)[q], \quad \text{for any $Y \in \mathbf B$}
\]
the formal polynomial in one graded variable $q, |q|=-1$, with coefficient in $\mathbf B(Y, X)$. The {\it idempotent completion} $\mathbf B^\natural$ of an A$_\infty$ category $\mathbf B$ is the full A$_\infty$ subcategory of $\Mod(\mathbf B)$ consisting of all objects that are isomorphic, in $\H^0(\Mod(\mathbf B))$, to the abstract images of idempotents up to homotopy.  Let $e$ (resp.\ $e'$) be an idempotent up to homotopy of $X$ (resp.\ $X'$). Then there is a natural isomorphism
\begin{align*}
\H^0(\mathbf B^\natural ( \mathcal Z_e, \mathcal Z_{e'})) \simeq \H^0(e_1' \mathbf B(X, X') e_1).
\end{align*}

\begin{remark}
Let $\mathbf B$ be a $\Bbbk$-linear DG category. Let $e \colon X \to X$ be a usual idempotent of $X$, i.e.\ $e$ is a cocycle of degree zero such that $e^2= e$.  Clearly, $e$ can be viewed as an idempotent up to homotopy of $X$. The abstract image $\mathcal Z_{e}$ is a DG $\mathbf B$-module $\mathcal Z_{e}$ which sends an object $Y$ of $\mathbf B$ to  the complex $\mathbf B(Y, X) [q]$ with the differential  
\begin{align*}
d(a q^{2n}) =  a e q^{2n-1} \quad \text{and} \quad d(aq^{2n-1}) = a(1-e) q^{2n-2}
\end{align*}
Note that there is a natural quasi-isomorphism of complexes   $$\pi \colon \mathbf B(Y, X) [q] \to \mathbf B(Y, X)e$$ given by $\pi(aq^i) = 0$ for $i > 0$ and $\pi(a) =ae $. It follows that $\mathcal Z_{e}$ is quasi-isomorphic to the DG $\mathbf B$-module $\widetilde{\mathcal Z}_{e}$ sending $Y$ to $\mathbf B(Y, X)e$, which is much smaller. For this reason, we may use the latter DG modules $\widetilde{\mathcal Z}_e$ for defining the idempotent completion $\mathbf B^\natural$ of a DG category $\mathbf B$.
\end{remark}

Our primary interest lies in the idempotent completion of $\tw(\mathbf A)$ for an A$_\infty$ category $\mathbf A$.  The triangulated category $\H^0 (\tw (\mathbf A)^\natural)$  associated to any A$_\infty$ category $\mathbf A$ is called the {\it split-closed derived category} of $\mathbf A$ in \cite[Page 60]{seidel2}. By \cite{seidel2} there is a natural equivalence between triangulated categories 
\begin{equation}
\label{eq:idempotent}
\H^0 (\tw (\mathbf A))^{\natural} \simeq \H^0 (\tw (\mathbf A)^\natural).
\end{equation}
Namely, taking the idempotent completion commutes with passing to the homotopy category. 
 
\begin{remark}\label{remark:perfectderived}
Let $\mathbf A$ be an A$_\infty$ category. Denote by $\mathrm D(\mathbf A)$ the derived (i.e.\ homotopy) category of A$_\infty$ modules of $\mathbf A$. Then $\H^0 (\tw (\mathbf A)^\natural)$ is triangle equivalent to the {\it perfect derived category} $\per(\mathbf A)$, which is the smallest full subcategory of $\mathrm D(\mathbf A)$ containing $\mathbf A$ and closed under direct summands, see \cite[Corollary 4.9]{seidel2}. 
\end{remark}

\subsection{Morita equivalence}
It is known from \cite[\S I.3]{seidel2} that any A$_\infty$ functor $F\colon \mathbf A \to \mathbf A'$ induces an A$_\infty$ functor $\tw(F) \colon \tw(\mathbf A) \to \tw(\mathbf A')$.

\begin{definition}\label{definition:moritaequivalence}
A strictly unital A$_\infty$ functor $F\colon \mathbf A \to \mathbf A'$ is a {\it Morita equivalence} if the functor $\tw(F)^\natural \colon \tw(\mathbf A)^\natural \to \tw(\mathbf A')^\natural$ is an A$_\infty$ quasi-equivalence (cf.\ Definition \ref{definition:ainfinityfunctorquasi}). Here $^\natural$ denotes idempotent completion (cf.\ Definition \ref{definition:idempotent2}).

More generally, two A$_\infty$ categories $\mathbf A$ and $\mathbf A'$ are called {\it Morita equivalent} if there is a zigzag of Morita equivalences connecting $\mathbf A$ and $\mathbf A'$.
\end{definition}

\begin{remark}\label{remark:quasimoritaequivalence}
It follows from \cite[Lemma 3.25]{seidel2} that an A$_\infty$ quasi-equivalence $F\colon \mathbf A \to \mathbf A'$ induces a Morita equivalence between $\mathbf A$ and $\mathbf A'$. In particular, any formal A$_\infty$ category $\mathbf A$ is Morita equivalent to the graded category $\H^\bullet(\mathbf A)$. 
\end{remark}

In the following we recall certain properties of the twisted functor $\tw(F)$ for a {\it strict} A$_\infty$ functor $F$, as we will mainly use strict A$_\infty$ functors later.

Let $F \colon \mathbf A \to \mathbf A'$ be a strict A$_\infty$ functor, i.e.\ its components $F_n$ vanish for $n > 1$. Then the induced A$_\infty$ functor $\tw(F) \colon \tw (\mathbf A) \to \tw (\mathbf A')$ is also strict. Given a twisted complex $X^\bullet$ with components $X^i = \s^{m_i} X_i$ and $\delta^{ij} = \s^{m_i-m_j} \delta_{ij} \colon X^j \to X^i$ its image $\tw (F) (X^\bullet)$ under $\tw (F)$ is the twisted complex obtained from $X^\bullet$ by applying $F$ componentwise, i.e.\
\begin{align*}
(\tw(F) (X^\bullet))^i &= F (X^i) = \s^{m_i} F (X_i) \\
\delta^{ij}_{\tw (F) (X^\bullet)} &= F_1 (\delta^{ij}) = \s^{m_i-m_j} F_1 (\delta_{ij}).
\end{align*}
For a morphism $f \colon X^\bullet \to Y^\bullet$ with components $f^{ij} = \s^{l_i-m_j} f_{ij} \colon X^j \to Y^i$ we have
\[
(\tw (F) (f))^{ij} = F_1 (f^{ij}) = \s^{l_i - m_j} F_1 (f_{ij}).
\]
Note that if $F$ is fully faithful then so is $\tw(F)$.

The following result allows us to check Morita equivalences simply on objects.

\begin{lemma}\label{lemma:twistedgenerators}
Let $F \colon \mathbf A \subset \mathbf A'$ be an inclusion of A$_\infty$ categories. Then $F$ is a Morita equivalence if and only if each object in $\mathbf A'$ (viewed as a twisted complex) lies in the image of $\H^0(\tw(F)^\natural)$.
\end{lemma}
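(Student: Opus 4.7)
The plan is to reduce the Morita equivalence condition to pure essential surjectivity of $\H^0 (\tw (F)^\natural)$, which disposes of the other condition of Definition \ref{definition:ainfinityfunctorquasi} for free.

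First, I would observe that since $F$ is the inclusion of a full A$_\infty$ subcategory, it is strict and $F_1$ is (tautologically) an isomorphism on Hom-complexes, hence $F$ is fully faithful. As recorded just above the statement, $\tw (F)$ then inherits full faithfulness, and this property passes to idempotent completions, yielding a fully faithful A$_\infty$ functor $\tw (F)^\natural \colon \tw (\mathbf A)^\natural \to \tw (\mathbf A')^\natural$. In particular the quasi-isomorphism-on-Hom-complexes condition in Definition \ref{definition:ainfinityfunctorquasi} is automatic, so $F$ is a Morita equivalence if and only if $\H^0 (\tw (F)^\natural)$ is essentially surjective.

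The ``only if'' direction is then immediate: if $\H^0 (\tw (F)^\natural)$ is essentially surjective then in particular every object of $\mathbf A' \subset \tw (\mathbf A')^\natural$ lies in its essential image.

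For the ``if'' direction, assume every object of $\mathbf A'$ lies in the essential image of $\H^0 (\tw (F)^\natural)$ and let $\mathcal I \subset \H^0 (\tw (\mathbf A')^\natural)$ denote that essential image. I would argue that $\mathcal I$ is a triangulated subcategory closed under direct summands: closure under shifts and mapping cones follows because $\H^0 (\tw (F)^\natural)$ is a triangulated functor, while closure under direct summands uses idempotent completeness of the source $\tw (\mathbf A)^\natural$. Using the remark preceding \S\ref{subsection:idempotent}—that every twisted complex with $r+1$ components is an iterated mapping cone of $r$ morphisms between shifted objects—the hypothesis that $\mathbf A' \subset \mathcal I$ forces $\H^0 (\tw (\mathbf A')) \subset \mathcal I$. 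Combined with closure under summands and the identification \eqref{eq:idempotent}, this gives $\mathcal I = \H^0 (\tw (\mathbf A')^\natural)$, hence essential surjectivity.

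The main point requiring care is the closure of $\mathcal I$ under direct summands: this is precisely where the idempotent completion on the source side is indispensable, because a general object of $\tw (\mathbf A')^\natural$ is only a direct summand of an honest twisted complex over $\mathbf A'$, not a twisted complex itself.
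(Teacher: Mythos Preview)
Your proof is correct and follows essentially the same approach as the paper: both reduce Morita equivalence to essential surjectivity of $\H^0(\tw(F)^\natural)$ via full faithfulness, and then argue that the essential image is a thick subcategory containing $\mathrm{Ob}(\mathbf A')$, hence is all of $\H^0(\tw(\mathbf A')^\natural)$. The only difference is cosmetic: the paper cites Seidel's Lemmas 3.23 and 3.32 directly for full faithfulness and split-generation, whereas you unpack the thick-subcategory argument explicitly via iterated mapping cones and closure under summands.
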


\begin{proof}
Note that $\tw(F)^\natural$ is strict and fully faithful, see \cite[Lemma 3.23]{seidel2}. It remains to show that $\H^0(\tw(F)^\natural)$ is essentially surjective if and only if each object in $\mathbf A'$ lies in the image of $\H^0(\tw(F)^\natural)$. By \cite[Lemma 3.32]{seidel2} $\H^0(\tw(\mathbf A')^\natural)$ is split-generated by $\mathrm{Obj}(\mathbf A')$, namely  $$\mathrm{thick}(\mathrm{Obj}(\mathbf A')) = \H^0(\tw(\mathbf A')^\natural).$$ Here, $\mathrm{thick} (\mathrm{Obj}(\mathbf A')) $ is the smallest full triangulated subcategory of $ \H^0(\tw(\mathbf A')^\natural)$ containing $\mathrm{Obj}(\mathbf A')$ and closed under direct summands.
 On the other hand, the condition that $\mathrm{Obj}(\mathbf A')$ lies in the image of $\H^0(\tw(F)^\natural)$ is equivalent to 
\begin{flalign*}
&& \mathrm{thick}(\mathrm{Obj}(\mathbf A'))= \mathrm{im}(\H^0(\tw(F)^\natural)). && \qedhere
\end{flalign*}
\end{proof}

\subsection{A$_\infty$ orbit categories}
\label{subsection:orbitcategories}

We record here several properties of orbit categories for A$_\infty$ categories and their twisted complexes since at the time of writing we were unable to find a reference in the literature. The definition of A$_\infty$ orbit categories appeared in \cite[\S 5.1.3]{opperzvonareva} and as we were finalizing the paper, some of the properties of their categories of twisted complexes that we describe below were also obtained independently by Amiot and Plamondon \cite{amiotplamondon}. A standard reference for the classical (associative) case is \cite{cibilsmarcos}, but in the terminology we shall follow \cite{keller2} and call the skew categories of \cite{cibilsmarcos} (and their A$_\infty$ analogues) {\it orbit categories}.

Let $\mathbf A$ be an A$_\infty$ category and let $G$ be a finite group. An {\it action} of $G$ on $\mathbf A$ is a group homomorphism from $G$ to the group of strict A$_\infty$-automorphisms of $\mathbf A$. Each $g \in G$ thus gives a strict A$_\infty$-automorphism
\[
F_g \colon \mathbf A \to \mathbf A
\]
satisfying $F_g \circ F_h = F_{gh}$ for all $g, h \in G$ and $F_{1_G} = \id_{\mathbf A}$. We may write the action of $g \in G$ on an object $X$ of $\mathbf A$ as $g X = F_g (X)$ and on a morphism $f \in \mathbf A (X, Y)$ as $g f = F_g (f) \in \mathbf A (g X, g Y)$.

\begin{definition}[{\cite[Definition 5.6]{opperzvonareva}}]
The {\it orbit category} $\mathbf A / G$ is the A$_\infty$ category with the same objects as $\mathbf A$ and morphisms between two objects $X$ and $Y$ given by
\[
(\mathbf A / G) (X, Y) = \bigoplus_{g \in G} \mathbf A (X, g Y).
\]
Compositions
\[
\mu_n^{\mathbf A / G} \colon (\mathbf A / G) (X_{n-1}, X_n) \otimes \dotsb \otimes (\mathbf A / G) (X_0, X_1) \to (\mathbf A / G) (X_0, X_n)
\]
are defined for all $n \geq 1$ and induced componentwise by the higher multiplications of $\mathbf A$ by
\begin{align*}
\mathbf A (X_{n-1}, g_n X_n) \otimes \dotsb \otimes \mathbf A (X_0, g_1 X_1) &\to \mathbf A (X_0, g_1 \dotsb g_n X_n) \\
\phi_n \otimes \dotsb \otimes \phi_2 \otimes \phi_1 &\mapsto \mu_n^{\mathbf A} (F_{g_1 \dotsb g_{n-1}} (\phi_n) \otimes \dotsb \otimes F_{g_1} (\phi_2) \otimes \phi_1).
\end{align*}
\end{definition}

The following three results may be obtained directly from the definition. 

\begin{lemma}[{\cite[Lemma 5.7]{opperzvonareva}}]
The compositions $\mu_n^{\mathbf A / G}$ make $\mathbf A / G$ into an A$_\infty$ category.
\end{lemma}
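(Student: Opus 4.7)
The plan is to verify the A$_\infty$ relations $\mu^{\mathbf A/G} \bullet \mu^{\mathbf A/G} = 0$ by reducing them to the A$_\infty$ relations already known to hold in $\mathbf A$. The key structural input is that each $F_g$ is a \emph{strict} A$_\infty$ automorphism, equivalently
$$F_g(\mu_n^{\mathbf A}(\s a_n \otimes \dotsb \otimes \s a_1)) = \mu_n^{\mathbf A}(\s F_g(a_n) \otimes \dotsb \otimes \s F_g(a_1))$$
for all $n \geq 1$, together with the group action property $F_{gh} = F_g \circ F_h$, $F_{1_G} = \id_{\mathbf A}$, and the fact that $F_g$ is degree preserving.

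First I would rewrite the definition of $\mu_n^{\mathbf A/G}$ in a more symmetric form. Given morphisms $\phi_k \in \mathbf A(X_{k-1}, g_k X_k) \subset (\mathbf A/G)(X_{k-1}, X_k)$ and setting $h_k := g_1 g_2 \dotsb g_{k-1}$ with $h_1 = 1$, applying $F_{h_k}$ to each factor yields
$$\mu_n^{\mathbf A/G}(\s \phi_n \otimes \dotsb \otimes \s \phi_1) = \mu_n^{\mathbf A}(\s F_{h_n}(\phi_n) \otimes \dotsb \otimes \s F_{h_1}(\phi_1)) \in \mathbf A(X_0, h_{n+1} X_n),$$
where the right-hand side is the product in $\mathbf A$ of morphisms whose domains and codomains now line up on the nose.

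Next I would expand a generic $(i,j)$-summand of $(\mu^{\mathbf A/G} \bullet \mu^{\mathbf A/G})(\s \phi_n \otimes \dotsb \otimes \s \phi_1)$. Rewriting the inner product $\mu_j^{\mathbf A/G}(\s \phi_{i+j} \otimes \dotsb \otimes \s \phi_{i+1})$ using the symmetric form above (with transport factors $F_{g_{i+1}\dotsb g_{i+k-1}}$ relative to the inner starting position), this product lies in the $g_{i+1}\dotsb g_{i+j}$-component of $(\mathbf A/G)(X_i, X_{i+j})$. When substituted into the outer product its transport factor is $F_{h_{i+1}}$. Using the telescoping identity $F_{h_{i+1}} \circ F_{g_{i+1} \dotsb g_{i+k-1}} = F_{h_{i+k}}$ and the strict A$_\infty$ property to push $F_{h_{i+1}}$ through $\mu_j^{\mathbf A}$, the full $(i,j)$-summand in $\mathbf A/G$ equals the $(i,j)$-summand of $(\mu^{\mathbf A} \bullet \mu^{\mathbf A})$ applied to $\s F_{h_n}(\phi_n) \otimes \dotsb \otimes \s F_{h_1}(\phi_1)$. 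Signs match because each $F_{h_k}$ is degree preserving, so $|\s F_{h_k}(\phi_k)| = |\s \phi_k|$. Summing over all $(i,j)$ and invoking $\mu^{\mathbf A} \bullet \mu^{\mathbf A} = 0$ yields the desired relation; strict unitality carries over since $F_g(1_X) = 1_{gX}$ forces the unit of $X$ in the $g = 1$ summand to be a strict unit of $\mathbf A/G$.

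The main obstacle is the index and transport bookkeeping in the middle step: one has to verify that the transport factor $F_{h_{i+1}}$ attached to the inner product by the outer composition correctly combines with the factors $F_{g_{i+1}\dotsb g_{i+k-1}}$ living inside the inner product to reproduce exactly the $F_{h_{i+k}}$ that would appear had the length-$n$ product been applied directly. The twin assumptions of strictness (no A$_\infty$ correction terms when pushing $F_g$ through $\mu^{\mathbf A}$) and an honest group action (composition, not homotopy) are what make the identification exact rather than holding only up to higher coherence; if either were weakened the verification would become substantially more intricate and would require building $F_g$-equivariant A$_\infty$ coherence data.
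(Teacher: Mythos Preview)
Your proposal is correct and is precisely the direct verification the paper has in mind: the paper gives no proof of its own, merely stating that the result ``may be obtained directly from the definition'' and citing \cite[Lemma 5.7]{opperzvonareva}. Your rewriting via the cumulative transports $h_k = g_1\dotsb g_{k-1}$ and the telescoping identity $F_{h_{i+1}}\circ F_{g_{i+1}\dotsb g_{i+k-1}} = F_{h_{i+k}}$ is exactly the bookkeeping needed to reduce the $\mathbf A/G$ relations to those of $\mathbf A$, and your observation that strictness of each $F_g$ is what makes this go through without higher coherence data is the essential point.
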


\begin{lemma}
If $\mathbf A$ is an A$_\infty$ with a $G$-action, then the A$_\infty$ category $\tw \mathbf A$ of twisted complexes also inherits a natural $G$-action.
\end{lemma}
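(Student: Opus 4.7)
The plan is to simply apply the construction $\tw({-})$ discussed immediately above (for strict A$_\infty$ functors) to each automorphism $F_g$ and verify that the resulting assignment $g \mapsto \tw(F_g)$ is still a group homomorphism into strict A$_\infty$-automorphisms of $\tw(\mathbf A)$.

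First, for each $g \in G$ the strict A$_\infty$ functor $F_g \colon \mathbf A \to \mathbf A$ induces a strict A$_\infty$ functor $\tw(F_g) \colon \tw(\mathbf A) \to \tw(\mathbf A)$ given componentwise: on a twisted complex $X^\bullet$ with $X^i = \s^{m_i} X_i$ and $\delta^{ij} = \s^{m_i - m_j} \delta_{ij}$ one sets
\[
(\tw(F_g)(X^\bullet))^i = \s^{m_i} F_g(X_i), \qquad \delta^{ij}_{\tw(F_g)(X^\bullet)} = \s^{m_i - m_j} (F_g)_1(\delta_{ij}),
\]
and on a morphism $f$ with components $f^{ij} = \s^{l_i - m_j} f_{ij}$ one sets $(\tw(F_g)(f))^{ij} = \s^{l_i - m_j} (F_g)_1(f_{ij})$. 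That $\tw(F_g)(X^\bullet)$ is again a twisted complex, i.e.\ that the Maurer--Cartan equation is preserved, follows from the A$_\infty$ functor relations for $F_g$ specialized to the strict case: since $F_g$ has only a first component and is a chain map with respect to every $\mu_n^{\mathbf A}$, applying $(F_g)_1$ to $\sum_n \mu_n^{\mathbb Z \mathbf A}(\s\delta^{\otimes n})$ yields the corresponding sum for $(F_g)_1(\delta)$.

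Next, I would verify that the assignment $g \mapsto \tw(F_g)$ is a group homomorphism. Since $F_{1_G} = \id_{\mathbf A}$ clearly induces $\tw(\id_{\mathbf A}) = \id_{\tw(\mathbf A)}$, and since the componentwise description shows
\[
\tw(F_g) \circ \tw(F_h) = \tw(F_g \circ F_h) = \tw(F_{gh}),
\]
the assignment indeed lands in the group of strict A$_\infty$-automorphisms of $\tw(\mathbf A)$. Each $\tw(F_g)$ is an automorphism with inverse $\tw(F_{g^{-1}})$.

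There is no real obstacle here; the only thing to be careful about is the sign bookkeeping coming from the shifts in $\mathbb Z \mathbf A$ and the Koszul signs in the definition of $\mu_n^{\mathbb Z \mathbf A}$. These signs depend only on the degrees $m_i$ of the components and on $|\s a_j|$, both of which are preserved by $F_g$ (as $F_g$ has components of degree $0$ and acts trivially on the integer shifts). Hence the signs match on both sides, confirming that $\tw(F_g)$ is compatible with $\mu^{\tw(\mathbf A)}$ and that the induced $G$-action on $\tw(\mathbf A)$ is well defined and natural.
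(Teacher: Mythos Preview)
Your proposal is correct and matches the paper's approach: the paper states that the lemma ``may be obtained directly from the definition'' and later spells out the induced action as $g \cdot (\bigoplus_i \s^{m_i} X_i, \delta) = (\bigoplus_i \s^{m_i} g X_i, g\delta)$, which is exactly what applying $\tw(F_g)$ componentwise gives. Your explicit verification that $g \mapsto \tw(F_g)$ is a group homomorphism of strict A$_\infty$-automorphisms, together with the observation that the Koszul and shift signs depend only on data preserved by $F_g$, fills in the details the paper leaves to the reader.
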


\begin{proposition}
\label{proposition:triangulatedgroupaction}
If $\mathbf A$ is a pretriangulated A$_\infty$ category with a $G$-action, then $\H^0 (\mathbf A)$ is a triangulated category with $G$-action and
\[
\H^0 (\mathbf A / G) \simeq \H^0 (\mathbf A) / G.
\]
\end{proposition}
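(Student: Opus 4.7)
The plan is to verify the two assertions of the proposition separately: first that $\H^0(\mathbf A)$ carries a triangulated $G$-action, and second that this action is compatible with the orbit category construction in the sense that $\H^0(\mathbf A/G)$ is equivalent to the classical orbit category $\H^0(\mathbf A)/G$ in the sense of \cite{cibilsmarcos,keller2}.

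For the first claim, I would proceed as follows. Since $\mathbf A$ is pretriangulated, $\H^0(\mathbf A)$ is triangulated with shift and cones induced from those of twisted complexes. Each strict A$_\infty$ automorphism $F_g \colon \mathbf A \to \mathbf A$ commutes strictly with $\mu_1$ and $\mu_2$, so it descends to an additive functor $\H^0(F_g) \colon \H^0(\mathbf A) \to \H^0(\mathbf A)$. Strict A$_\infty$ functors extend componentwise to $\tw(\mathbf A)$ (as recalled before Lemma~\ref{lemma:twistedgenerators}) and thereby commute with the shift functor $\s$ and send mapping cones to mapping cones; hence $\H^0(F_g)$ is a triangle functor. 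The identities $F_g \circ F_h = F_{gh}$ and $F_{1_G} = \id_{\mathbf A}$ descend verbatim to $\H^0$, so $\H^0(F_{g^{-1}})$ is an inverse to $\H^0(F_g)$ and we obtain a $G$-action on $\H^0(\mathbf A)$ by exact autoequivalences.

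For the second claim, I would construct a $\Bbbk$-linear functor $\Phi \colon \H^0(\mathbf A/G) \to \H^0(\mathbf A)/G$ which is the identity on objects. On morphism spaces, the definition of the A$_\infty$ orbit category gives
\[
(\mathbf A/G)(X,Y) \;=\; \bigoplus_{g \in G} \mathbf A(X, gY),
\]
and since $\H^0$ commutes with finite direct sums we obtain natural isomorphisms
\[
\H^0\bigl((\mathbf A/G)(X,Y)\bigr) \;=\; \bigoplus_{g \in G} \H^0\bigl(\mathbf A(X, gY)\bigr) \;=\; \bigoplus_{g \in G} \H^0(\mathbf A)(X, gY),
\]
and the right-hand side is by definition $(\H^0(\mathbf A)/G)(X,Y)$. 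Taking these identifications as $\Phi$ on morphisms makes $\Phi$ bijective on Hom-spaces, so once functoriality is established $\Phi$ is an equivalence (in fact an isomorphism) of $\Bbbk$-linear categories.

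Functoriality amounts to checking compatibility of $\Phi$ with composition. The composition in $\H^0(\mathbf A/G)$ is induced by $\mu_2^{\mathbf A/G}$, whose $(g_1, g_2)$-component applied to $\phi_2 \in \mathbf A(X_1, g_2 X_2)$ and $\phi_1 \in \mathbf A(X_0, g_1 X_1)$ equals $\mu_2^{\mathbf A}(F_{g_1}(\phi_2) \otimes \phi_1) \in \mathbf A(X_0, g_1 g_2 X_2)$, by the formula in the definition of $\mathbf A/G$. Passing to $\H^0$, this is precisely the composition rule of the classical orbit category $\H^0(\mathbf A)/G$, where $F_{g_1}$ is replaced by the induced autoequivalence $\H^0(F_{g_1})$ established in the first step. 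Hence $\Phi$ respects composition, and the equivalence follows.

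I do not expect a serious obstacle: once the identifications of morphism spaces as $G$-indexed sums are made, everything is dictated by the compatibility of $\H^0$ with direct sums and the explicit formula for $\mu_2^{\mathbf A/G}$. The only care required is notational, namely to keep track of which factor $F_{g}$ acts on when writing out the composition in the $(g_1, g_2)$-summand, and to note that the higher products $\mu_n^{\mathbf A/G}$ for $n \geq 3$ are invisible to $\H^0$ so they play no role in the verification.
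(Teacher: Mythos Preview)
Your proposal is correct and is exactly the kind of direct verification the paper has in mind: the paper does not give a proof of this proposition but simply records it as one of three results that ``may be obtained directly from the definition.'' Your unpacking of that verification---descending the strict automorphisms $F_g$ to triangle autoequivalences of $\H^0(\mathbf A)$, identifying morphism spaces via $\H^0\bigl(\bigoplus_g \mathbf A(X,gY)\bigr) = \bigoplus_g \H^0(\mathbf A)(X,gY)$, and matching the composition formula for $\mu_2^{\mathbf A/G}$ with that of the classical orbit category---is precisely what is intended.
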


\subsection{Twisted complexes of A$_\infty$ orbit categories}

An action of $G$ on an A$_\infty$ category $\mathbf A$ naturally induces an action of $G$ on the A$_\infty$ category $\tw (\mathbf A)$ of twisted complexes. Concretely, given a twisted complex $(\bigoplus_i \s^{m_i} X_i, \delta)$ in $\tw (\mathbf A)$ for objects $X_i$ in $\mathbf A$, we set
\[
g (\textstyle\bigoplus_i \s^{m_i} X_i, \delta) = (\bigoplus_i \s^{m_i} g X_i, g \delta)
\]
where $g \delta = (g \delta_{ij})_{i,j}$.

The following result is similar to \cite[Example~2.6]{chen17}.

\begin{proposition}\label{proposition:groupactiontriangulated}
Let $G$ be a finite group acting on an A$_\infty$ category $\mathbf A$. Assume that $\mathrm{char}(\Bbbk)$ does not divide $|G|$. Then the natural functor
\[
(\tw (\mathbf A) / G)^\natural \to \tw (\mathbf A / G)^\natural
\]
is an A$_\infty$-quasi-equivalence.
\end{proposition}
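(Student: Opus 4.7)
The plan is to show that the natural A$_\infty$ functor $\Phi \colon \tw(\mathbf A)/G \to \tw(\mathbf A/G)$ is strict and fully faithful on hom-complexes, and that after idempotent completion every object of $\tw(\mathbf A/G)^\natural$ lies in the essential image of $\H^0(\Phi^\natural)$; the conclusion will then follow via Lemma \ref{lemma:twistedgenerators}. Here $\Phi$ is the identity on objects, since neither the orbit category nor the formation of twisted complexes alters the underlying set of objects.

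For the full faithfulness, the key point is a tautological identification of graded hom-spaces
\[
(\tw(\mathbf A)/G)(X^\bullet, Y^\bullet) = \bigoplus_{g \in G} \tw(\mathbf A)(X^\bullet, gY^\bullet) = \bigoplus_{g, i, j} \mathbb Z\mathbf A(X^j, (gY)^i)
\]
and
\[
\tw(\mathbf A/G)(X^\bullet, Y^\bullet) = \bigoplus_{i, j} \mathbb Z(\mathbf A/G)(X^j, Y^i) = \bigoplus_{g, i, j} \mathbb Z\mathbf A(X^j, (gY)^i).
\]
Defining $\Phi_1$ to be this identification and $\Phi_n = 0$ for $n \geq 2$, the A$_\infty$ relations for $\Phi$ reduce to checking that the higher products on both sides coincide. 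Both are built by the same formula from $\mu_n^{\mathbf A}$ with the same sequence of $G$-twists $F_{g_1 \dotsb g_{n-1}}, \dotsc, F_{g_1}, \id$, one coming from the definition of $\mathbf A/G$ composed with the twisted product on $\tw(\mathbf A/G)$, the other coming from the orbit-category construction applied to the twisted product on $\tw(\mathbf A)$. Hence $\Phi$, and then $\Phi^\natural$, is strict and fully faithful.

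For essential surjectivity of $\H^0(\Phi^\natural)$, by Lemma \ref{lemma:twistedgenerators} it suffices to realise every twisted complex $Z^\bullet \in \tw(\mathbf A/G)$ as a summand, in $\H^0(\tw(\mathbf A/G)^\natural)$, of an object in the image of $\Phi$. To this end I construct an induction $\widetilde Z^\bullet \in \tw(\mathbf A)$ as follows: writing $Z^\bullet = (\s^{m_i} X_i, \delta)$ with $\delta^{ij} = \sum_{g \in G}\delta^{ij}_g$ and $\delta^{ij}_g \in \s^{m_i - m_j}\mathbf A(X_j, gX_i)$, let $\widetilde Z^\bullet$ have components $\{\s^{m_i}(gX_i)\}_{(i,g) \in [r] \times G}$ and differentials $\widetilde\delta^{(i,h),(j,g)} = g \cdot \delta^{ij}_{g^{-1}h}$. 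The Maurer--Cartan equation for $\widetilde\delta$ in $\tw(\mathbf A)$ is then a direct reindexing of the Maurer--Cartan equation for $\delta$ in $\tw(\mathbf A/G)$ via the definition of $\mu_n^{\mathbf A/G}$. Pushing $\widetilde Z^\bullet$ forward through $\Phi$ and applying the isomorphisms $gX_i \simeq X_i$ in $\mathbf A/G$ (represented by $1_{X_i}$ in the appropriate summand of $(\mathbf A/G)(X_i, gX_i)$), one obtains an object of $\tw(\mathbf A/G)$ equipped with a $\Bbbk[G]$-action whose averaging idempotent $\tfrac{1}{|G|}\sum_g g$ is well defined since $\operatorname{char}\Bbbk$ does not divide $|G|$; its abstract image is isomorphic to $Z^\bullet$ in $\H^0(\tw(\mathbf A/G)^\natural)$.

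The main obstacle will be the careful bookkeeping of signs, shifts and $G$-twists needed to verify that $\widetilde Z^\bullet$ is indeed a twisted complex, that $\Phi(\widetilde Z^\bullet)$ decomposes into $|G|$ copies of $Z^\bullet$ after idempotent completion, and that the averaging idempotent really cuts out $Z^\bullet$. This is essentially an A$_\infty$ analogue of the classical Morita equivalence between skew-group categories and equivariantisation in the spirit of \cite{cibilsmarcos,keller2,amiotplamondon}; all remaining steps amount to unwinding definitions.
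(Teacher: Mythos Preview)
Your proposal is correct and follows essentially the same approach as the paper: construct the induced twisted complex $\widetilde Z^\bullet = \bigoplus_{g} g\cdot Z^\bullet$ with differential $g\cdot\delta^{ij}_{g^{-1}h}$, and exhibit $Z^\bullet$ as a direct summand of its image using the averaging idempotent (the paper does this via an explicit section--retraction pair $\varphi,\psi$ with $\psi\varphi=\id$, which is the same idempotent in disguise). The appeal to Lemma~\ref{lemma:twistedgenerators} is unnecessary---your argument already establishes directly that $\H^0(\Phi^\natural)$ is fully faithful and essentially surjective---and the claim that $\Phi(\widetilde Z^\bullet)$ splits into $|G|$ copies of $Z^\bullet$ is stronger than needed (one summand suffices).
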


\begin{proof}
The objects in $\tw (\mathbf A) / G$ are twisted complexes $X$ in $\tw(\mathbf A)$ where $\delta_X = (\delta_X^{ij})_{1\leq i, j\leq r}$ is a twisted differential on $(\s^{m_1} X_1, \dotsc, \s^{m_r} X_r)$ with $\delta_X^{ij} \in \s^{m_i-m_j} \mathbf A (X_j, X_i)$. 
Morphism spaces between two twisted complexes $X$ and $X'$ are given by 
\[
(\tw (\mathbf A) / G) (X, X') = \bigoplus_{g\in G} \bigoplus_{i,j} \s^{m'_i-m_j} \mathbf A(X_j,g\cdot X'_i).
\]

On the other hand, the objects in $\tw (\mathbf A / G)$ are given by sequences
\[
(\s^{m_1} Y_1, \dotsc, \s^{m_r} Y_r)
\]
of objects in $\mathbb Z \mathbf A$, together with a twisted differential $\delta_Y = (\delta_Y^{ij})_{1\leq i, j\leq r}$ where $$\delta_Y^{ij} =\sum_{g\in G} \delta^{ijg}_Y \in s^{m_i-m_j} \mathbf A(Y_j, g\cdot Y_i).$$

There is a natural functor $F \colon \tw (\mathbf A) / G \to \tw (\mathbf A / G)$ which sends an object $X=(\s^{m_1} X_1, \dotsc, \s^{m_r} X_r; \delta_X)$ to $Y=(\s^{m_1} X_1, \dotsc, \s^{m_r} X_r; \delta_Y)$ with 
$$
\delta_Y^{ijg} = 
\begin{cases}
 \delta_Y^{ij} & \text{if $g=1$ is the unit of $G$}\\
 0 & \text{otherwise.}
\end{cases}
$$
Note that $F$ is fully faithful. Then the idempotent completion $F^\natural\colon (\tw (\mathbf A) / G)^\natural \to \tw (\mathbf A / G)^\natural$ is also fully faithful. 

It remains to show that $\H^0 (F^\natural)$ is essentially surjective. Let $(Y, \delta_Y)$ be an object in $\tw (\mathbf A / G)$, i.e.\ $Y \in \mathbb Z \mathbf A$ and $\delta_Y = \sum_{g\in G} \delta_{Y, g} \in \mathbb Z \mathbf A(Y, g\cdot Y)$. 
Consider the twisted complex  $\widetilde Y = \bigoplus_{g \in G} g\cdot Y$ with the twisted differential  $\delta_{\widetilde Y} = \sum_{g, h \in G} g (\delta_{Y, g^{-1}h}) \in  \bigoplus_{g, h\in G} \mathbb Z\mathbf A(g\cdot Y, h\cdot Y)$. Similar to the proof of \cite[Proposition 2.4]{chen17} we may show that $(\widetilde Y, \delta_{\widetilde Y})$ is indeed an object in $\tw (\mathbf A) $. Note that there are two morphisms in $\tw (\mathbf A / G)^\natural$
\[
\varphi\colon (Y, \delta_Y) \to (\widetilde Y, \delta_{\widetilde Y}) \quad\text{and} \quad \psi \colon (\widetilde Y, \delta_{\widetilde Y}) \to (Y, \delta_Y), 
\]
where $\varphi$ is induced by 
$$\frac{1}{|G|} \sum_{g\in G} \id_Y \colon Y \to \bigoplus_{g\in G} g^{-1} g\cdot Y  \hookrightarrow \bigoplus_{h\in G} \bigoplus_{g\in G} hg\cdot Y  $$
and $\psi$ is induced by 
$$
\sum_{g\in G} \id_{g\cdot Y} \colon \bigoplus_{g\in G} g\cdot Y \to \bigoplus_{g\in G} g \cdot Y.
$$
Note that $\psi \circ \varphi = \id$, it follows that $(Y, \delta_Y)$ is a direct summand of $(\widetilde Y, \delta_{\widetilde Y})$ and thus $(Y, \delta_Y)$ lies in the image of $F^\natural$.
\end{proof}

\section{Orbifold surfaces}
\label{section:orbifoldsurfaces}

Recall that a smooth orbifold is ``a topological space which is locally modelled on the quotient of a smooth manifold by the action of a finite group'' \cite{satake,thurston} and this notion can be reformulated in terms of proper étale groupoids \cite{moerdijk} or Deligne--Mumford stacks in the category of smooth manifolds \cite{lerman}. We will consider closed smooth orbifold {\it surfaces} (i.e.\ smooth orbifolds of dimension $2$) with smooth boundary. As we will not look at the internal structure of the orbifolds nor at maps between them, it essentially suffices to consider compact topological surfaces $S$ with boundary $\partial S$ together with a finite set of ``singular'' orbifold points $\Sing (S) \subset S \smallsetminus \partial S$ with prescribed orders and invoke the existence of a smooth orbifold structure on $S$ with the prescribed set of orbifold points from any of the aforementioned viewpoints. The order of an orbifold point $x \in \Sing (S)$ is the order of the stabilizer of $x$ in a local chart.

\subsection{Line fields}

In the setting of smooth surfaces, the $\mathbb Z$-grading of the partially wrapped Fukaya category of a given surface is given by a line field which is a section of the projectivized tangent bundle \cite[\S 2.1]{haidenkatzarkovkontsevich}.

Similarly, a line field on a smooth orbifold surface $S$ is given by a smooth section
\[
\eta \in \Gamma (S, \mathbb P (\mathrm T S))
\]
of the projectivized tangent (orbi)bundle of $S$. The following lemma shows that the existence of a line field restricts the orbifold points to having order $2$.

\begin{lemma}
\label{lemma:linefield}
Let $S$ be a smooth orbifold surface with nonempty boundary. There exists a section $\eta \in \Gamma (S, \mathbb P (\mathrm T S))$ if and only if all orbifold points $x \in \Sing (S)$ have order $2$.
\end{lemma}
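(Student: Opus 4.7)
For the forward implication, I would argue by a purely local analysis at an orbifold point. Fix $x \in \Sing(S)$ of order $n \geq 2$ and work in a local orbifold chart $\pi \colon \tilde U \to \tilde U/\mathbb Z_n$, where $\tilde U \subset \mathbb C$ is a disk centred at some $\tilde x \in \pi^{-1}(x)$ on which $\mathbb Z_n$ acts by multiplication by $\zeta_n = \e^{2\pi \mathrm i / n}$. The pullback $\pi^* \eta$ is a $\mathbb Z_n$-invariant section of $\mathbb P(\mathrm T \tilde U)$; evaluating at $\tilde x$ yields a $\mathbb Z_n$-invariant line in $\mathrm T_{\tilde x} \tilde U = \mathbb C$. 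Under the induced action on $\mathbb P(\mathrm T_{\tilde x} \tilde U) \simeq \mathbb{RP}^1$, the generator $\zeta_n$ acts by rotation through $2\pi/n$, which fixes a line if and only if $n \mid 2$; combined with $n \geq 2$, this forces $n = 2$.

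For the backward implication, I would reduce the existence of a line field on the orbifold $S$ to existence on the underlying topological surface $|S|$, equipped with its canonical smooth structure, and then invoke classical topology. The key point is that at an order-$2$ orbifold point the nontrivial element of $\mathbb Z_2$ acts on $\mathrm T_{\tilde x}\tilde U$ by $-\id$, which acts trivially on the projectivization $\mathbb P(\mathrm T_{\tilde x}\tilde U)$; accordingly, any line field on a neighbourhood of $x$ in $|S|$ pulls back to a $\mathbb Z_2$-invariant line field on $\tilde U$, and conversely a section of $\mathbb P(\mathrm T S)$ restricts to a section on the smooth part and extends uniquely across the orbifold points. Since $|S|$ is a compact smooth surface with nonempty boundary it has the homotopy type of a $1$-complex, so $\H^2(|S|;\mathbb Z) = 0$, the $\mathrm S^1$-bundle $\mathbb P(\mathrm T |S|) \to |S|$ is trivial, and taking any global section produces the desired $\eta \in \Gamma(S, \mathbb P(\mathrm T S))$. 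As an explicit alternative, one can choose a nowhere-vanishing vector field on $|S|$ (whose existence is equivalent to the vanishing of the relative Euler class in $\H^2(|S|;\mathbb Z) = 0$) and take its associated line field, which near each orbifold point lifts to the standard horizontal line field on $\tilde U$ and is thereby automatically $\mathbb Z_2$-invariant.

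The main obstacle I anticipate is making the correspondence between smooth line fields on $S$ and smooth line fields on $|S|$ precise near the orbifold points, since the coordinate change between $\tilde U$ and $\tilde U / \mathbb Z_2$ is given by $z \mapsto z^2$ and can obscure smoothness in naive coordinates on the quotient. Once this identification is properly justified, the remaining arguments are a direct application of obstruction theory on a space of the homotopy type of a graph.
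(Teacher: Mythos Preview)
Your forward direction is correct and essentially identical to the paper's.

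For the backward direction there is a genuine gap: the claimed correspondence between orbifold line fields on $S$ and line fields on the underlying smooth surface $|S|$ fails. If $\lambda$ is a smooth line field on $|S|$ near $x$ and $\pi(z)=z^2$ is the orbifold chart, then $(\pi^*\lambda)(z) = (d\pi_z)^{-1}\lambda(z^2)$; since $d\pi_z$ is multiplication by $2z$, the constant field $\lambda = [\partial_w]$ pulls back to the line $[\bar z]$ at $z$, which is $\mathbb Z_2$-invariant on $\tilde U \smallsetminus \{0\}$ but has no limit at $z = 0$. Conversely the horizontal field $[\partial_z]$ on $\tilde U$, which \emph{is} an orbifold line field, pushes forward to $[z\,\partial_w]$ at $w = z^2$, which does not extend over $w = 0$ in $\mathbb P(\mathrm T|S|)$. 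The point is that the orbifold bundle $\mathbb P(\mathrm T S)$ and the bundle $\mathbb P(\mathrm T|S|)$ differ by a degree-$1$ twist at each orbifold point (this is precisely the winding number $1$ in Fig.~\ref{fig:winding}), so your flagged ``main obstacle'' is not a smoothness bookkeeping issue but a genuine mismatch of bundles; in particular your ``explicit alternative'' does not lift to the horizontal field as asserted.

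The paper handles this by removing small disks $\mathbb D_x$ to obtain a smooth surface $\widehat S$, invoking the mapping class group to realise a line field on $\widehat S$ with winding number exactly $1$ around each new boundary circle $\partial\mathbb D_x$, and observing that precisely such a field extends over the orbifold disks. Your obstruction-theoretic idea can be repaired by applying it to the underlying $\mathbb{RP}^1$-bundle $|\mathbb P(\mathrm T S)| \to |S|$ (which makes sense as an honest fibre bundle because $\mathbb Z_2$ acts trivially on the projectivized fibres) rather than to $\mathbb P(\mathrm T|S|)$: both admit sections since $|S|$ retracts to a graph, but only sections of the former are orbifold line fields.
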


\begin{proof}
The neighbourhood of an orbifold point $x \in \Sing (S)$ with (finite) stabilizer group $G$ is given by, say, the quotient $\mathbb D / G$ of the unit disk $\mathbb D \subset \mathbb R^2$. In this neighbourhood, a section of $\mathbb P (\mathrm T S)$ is given by a $G$-invariant section of $\mathbb P (\mathrm T \mathbb D)$. But the only finite subgroup acting faithfully on $\mathrm T_x \mathbb D$ whose induced action on $\mathbb P (\mathrm T_x \mathbb D) \simeq \mathbb R \mathbb P^1$ is trivial is $G \simeq \mathbb Z_2$. This proves the ``only if'' part.

The ``if'' part of the statement follows for example from considering the smooth compact surface $\widehat S = S \smallsetminus \bigcup_{x \in \Sing (S)} \mathbb D_x$ obtained from $S$ by removing a small (orbifold) disk $\mathbb D_x$ around each neighbourhood (cf.\ Remark \ref{remark:compactification}). Since $\widehat S$ has nonempty boundary, its tangent bundle can be trivialized, thus there exists a nonzero section of the tangent bundle inducing an everywhere-defined section of the projectivized tangent bundle $\mathbb P (\mathrm T \widehat S)$. By the action of elements of the (symplectic) mapping class group one may construct a line field with specified winding numbers around the boundary components of $\widehat S$, see e.g.\ \cite[\S 1]{lekilipolishchuk2}. In particular, there exists a line field $\widehat \eta$ on $\widehat S$ whose winding numbers around the (new) boundary components $\partial \mathbb D_x$ are $1$. Since the line field on an orbifold disk has winding number $1$ (see Fig.~\ref{fig:winding}), the line field $\widehat \eta$ extends to the interior of $\mathbb D_x$ for each $x \in \Sing (S)$, in particular, it extends to all of $S$.
\end{proof}

Letting $\mathbb D \subset \mathbb R^2$ denote the unit open disk, the orbifold surfaces in this paper are therefore locally modelled on $\mathbb D$ (around a smooth interior point), $\mathbb D / \mathbb Z_2$ (around an orbifold point) and $\mathbb D \cap \overline{\mathbb H}$ (around a point in the boundary), where $\overline{\mathbb H} \subset \mathbb R^2$ is the closed upper half-plane. We shall always think of $\mathbb Z_2$ as a multiplicative group with elements $\pm 1$.

\begin{figure}
\begin{tikzpicture}[x=1em,y=1em]
\begin{scope}
\node[font=\scriptsize,left] at (-3.5,3.5) {$\mathbb D$};
\draw[clip] (0,0) circle(4.5em);
\foreach \r in {-4.25,-4,...,4.25} {%
\draw[line width=.2pt] (-4.5,\r) -- (4.5,\r);
}
\end{scope}
\begin{scope}[shift={(13em,0)}]
\draw[<->,line width=.5pt] (0.2,-2) arc[start angle=-90,end angle=90,radius=2em];
\draw[fill=black] circle(.07em);
\node[font=\scriptsize,right] at (2.2,0) {identify};
\draw[clip] (0,4.5) -- (0,-4.5) arc[start angle=270,end angle=90,radius=4.5em] -- cycle;
\draw[line width=.5pt] (0,4.5) -- (0,-4.5) arc[start angle=270,end angle=90,radius=4.5em] -- cycle;
\foreach \r in {-4.25,-4,...,4.25} {%
\draw[line width=.2pt] (-4.5,\r) -- (4.5,\r);
}
\end{scope}
\begin{scope}[shift={(26em,0)}]
\node[font=\scriptsize,left] at (-3.5,3.5) {$\mathbb D / \mathbb Z_2$};
\draw[line width=.5pt] (0,0) circle(4.5em);
\node[font=\scriptsize] at (0,0) {$\times$};
\foreach \r in {0.25,0.5,...,4.25} {%
\draw[line width=.2pt,line cap=round] (0,0) ++(270:\r em) arc[start angle=-90,end angle=90,radius=\r em];
}
\clip (0,4.5) -- (0,-4.5) arc[start angle=270,end angle=90,radius=4.5em] -- cycle;
\foreach \r in {-4.25,-4,...,4.25} {%
\draw[line width=.2pt] (-4.5,\r) -- (4.5,\r);
}
\end{scope}
\end{tikzpicture}
\caption{The induced line field of winding number $1$ on a disk with one orbifold point.}
\label{fig:winding}
\end{figure}
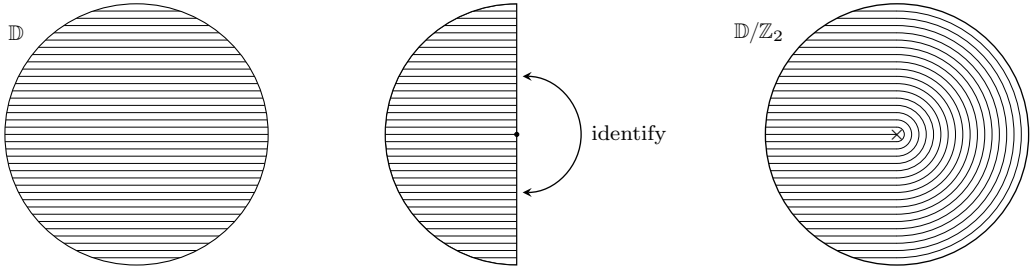

\subsection{Symplectic structure}

Orbifold surfaces with cyclic stabilizers carry a natural symplectic structure which is unique up to a global nonvanishing function. In a smooth local chart this structure is induced by the standard symplectic structure on the unit disk $\mathbb D \subset \mathbb R^2$ given by $\mathrm d x \wedge \mathrm d y$. In particular, this standard symplectic structure is invariant with respect to the $\mathbb Z_2$-action given by
\[
-1 \cdot \begin{pmatrix} x \\ y \end{pmatrix} = \begin{pmatrix} \cos \pi & -\sin \pi \\ \sin \pi & \cos \pi \end{pmatrix} \begin{pmatrix} x \\ y \end{pmatrix} = \begin{pmatrix} -x \\ -y \end{pmatrix}
\]
i.e.\ by rotation through $\pi$, and it induces a symplectic structure on the orbifold charts $\mathbb D / \mathbb Z_2$.

\subsection{Graded orbifold surfaces with stops}

In order to consider partially wrapped Fukaya categories of orbifold surfaces, we consider extra geometric data on orbifold surfaces, generalizing the graded surfaces with stops considered in \cite{haidenkatzarkovkontsevich}.

\begin{definition}
\label{definition:orbifoldsurface}
A {\it graded orbifold surface with stops} $\mathbf S = (S, \Sigma, \eta)$ consists of
\begin{itemize}
\item a smooth compact orbifold surface $S$ with nonempty smooth boundary $\partial S$
\item a nonempty closed subset $\Sigma \subsetneq \partial S$ whose connected components are either boundary components or closed intervals properly contained in boundary components
\item a grading structure given by a smooth line field $\eta$ on $S$.
\end{itemize}
We call a connected component of $\Sigma$ a {\it boundary stop} if it is homeomorphic to a closed interval and a {\it full boundary stop} if it is homeomorphic to $\mathrm S^1$ and we shall require that $\Sigma$ contains at least one boundary stop.
\end{definition}

\begin{remark}
The existence of the line field $\eta$ implies (Lemma \ref{lemma:linefield}) that $S$ only has orbifold points of order $2$. Note that $\eta$ restricts to a line field on the smooth locus $S \smallsetminus \Sing (S)$ with winding number $1$ around every $x \in \Sing (S)$ as illustrated in Fig.~\ref{fig:winding}.
\end{remark}

In illustrations we will mark stops by {\color{stopcolour} $\bullet$} and orbifold points by $\times$ as in Fig.~\ref{fig:orbifoldsurface}. Some notions do not depend on the grading structure $\eta$ in which case we may drop the adjective ``graded'' and refer to $(S, \Sigma)$ as an {\it orbifold surface with stops}.

The terminology of a (boundary) stop is used in the sense of \cite{sylvan,ganatrapardonshende1,ganatrapardonshende2}, where stops are extra data in the contact boundary of a Liouville domain. Although our surfaces are no longer smooth manifolds, the smooth locus $S \smallsetminus \Sing (S)$ is a smooth symplectic manifold and the wrapping occurs away from the orbifold points. This suggests the existence of a well-behaved partially wrapped Floer theory for orbifold surfaces as soon as one makes sense of the role of (interior) orbifold points. We will relate the partially wrapped Floer theory of orbifold surfaces to the usual partially wrapped Floer theory of their smooth double covers. See Section \ref{section:disk} for a detailed description of the case of a disk with a single orbifold point and Section \ref{section:doublecover} for the general case.

The term ``full boundary stop'' is introduced primarily for notational convenience. Equivalently one could simply exclude Lagrangian submanifolds having endpoints on the corresponding boundary components in which case no stop data would be necessary on these boundary components. For example, if $S$ is a graded smooth surface with boundary and $\Sigma = \partial S$ is the full boundary, then $\mathcal W (S, \Sigma, \eta)$ is the category of compact Lagrangians in $S$, as for example studied in \cite{lekiliperutz,lekilipolishchuk0} for $S$ a genus $1$ surface with $n \geq 1$ boundary components. Since these categories do not admit a description via gentle algebras, we assume $\Sigma \neq \partial S$ in Definition \ref{definition:orbifoldsurface}. The assumption that $\Sigma$ contain at least one boundary stop is mostly out of convenience, as it allows us to define special dissections $\Delta_0$ with particular simple ribbon complexes (see \S\ref{subsection:special} and Fig.~\ref{fig:ribbon}). Note that if $\Sigma' \subset \Sigma$ is a subset consisting of connected components of $\Sigma$, then there are {\it stop removal} functors which relate the two partially wrapped Fukaya categories with the two different sets $\Sigma'$ and $\Sigma$ of stop data. See \cite[\S 3.5]{haidenkatzarkovkontsevich} for the case of smooth surfaces and \cite{sylvan,ganatrapardonshende2} for the higher-dimensional case.

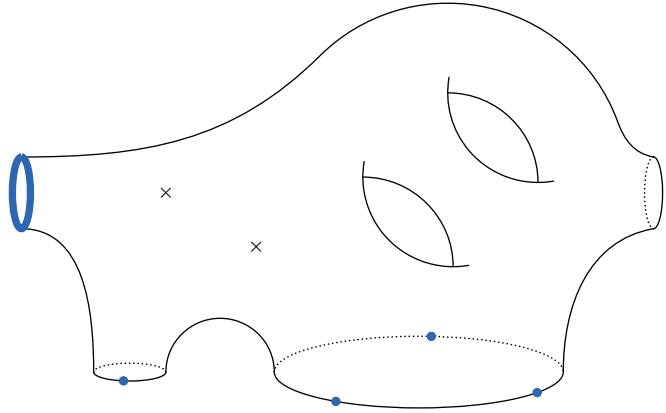
\begin{figure}
\begin{tikzpicture}[x=1.3em,y=1.3em,decoration={markings,mark=at position 0.99 with {\arrow[black]{Stealth[length=4.8pt]}}}, scale=.95]
\node[font=\scriptsize] at (-7,5) {$\times$};
\node[font=\scriptsize] at (-4.5,3.5) {$\times$};
\draw[line width=.5pt] (0,0) ++ (180:4 and 1) arc[start angle=180,end angle=360,x radius=4,y radius=1] to[out=90,in=190,looseness=1] ++(2.5,4) arc[start angle=-90,end angle=90,x radius=.25,y radius=1] to[out=170,in=290,looseness=1] ++(-1,1) arc[start angle=20,end angle=135,radius=5] to[out=225,in=0,looseness=1] (-11,6) ++ (0,-2) to[out=0,in=90] (-9,0) arc[start angle=180,end angle=360,x radius=1,y radius=.25] arc[start angle=180,end angle=0,radius=1.5];
\begin{scope}
\draw[line width=.5pt,line cap=round] (5.5,7) ++(200:5) ++(0:2.5) arc[start angle=0,end angle=90,radius=2.5];
\draw[line width=.5pt,line cap=round] (5.5,7) ++(200:5) ++(0:2.5) ++(90:2.5) ++(280:2.5) arc[start angle=280,end angle=170,radius=2.5];
\end{scope}
\begin{scope}[shift={(-2.35,-2.35)}]
\draw[line width=.5pt,line cap=round] (5.5,7) ++(200:5) ++(0:2.5) arc[start angle=0,end angle=90,radius=2.5];
\draw[line width=.5pt,line cap=round] (5.5,7) ++(200:5) ++(0:2.5) ++(90:2.5) ++(280:2.5) arc[start angle=280,end angle=170,radius=2.5];
\end{scope}
\draw[line width=.6pt,dash pattern=on 0pt off 1.5pt,line cap=round] (0,0) ++ (180:4 and 1) arc[start angle=180,end angle=0,x radius=4,y radius=1] (6.5,4) arc[start angle=270,end angle=90,x radius=0.25,y radius=1] (-9,0) arc[start angle=180,end angle=0,x radius=1,y radius=.25];
\draw[line width=.25em,color=stopcolour] (-11,6) arc[start angle=90,end angle=450,x radius=.25,y radius=1];
\draw[fill=stopcolour, color=stopcolour] (85:4 and 1) circle(.15em);
\draw[fill=stopcolour, color=stopcolour] (235:4 and 1) circle(.15em);
\draw[fill=stopcolour, color=stopcolour] (325:4 and 1) circle(.15em);
\draw[fill=stopcolour, color=stopcolour] (-8,0) ++(260:1 and .25) circle(.15em);
\end{tikzpicture}
\caption{An orbifold surface with stops $(S, \Sigma)$ where $S$ is a genus $2$ surface with two orbifold points and four boundary components and $\Sigma \subset \partial S$ consists of four boundary stops and one full boundary stop.}
\label{fig:orbifoldsurface}
\end{figure}

\begin{figure}
\begin{tikzpicture}[x=1em,y=1em, scale=.9]
\draw[line width=.5pt] (0,0) circle(6em);
\draw[line width=.5pt] (0,0) circle(2.5em);
\foreach \r in {2.75,3,...,5.75} {%
\draw[line width=.2pt] (0,0) ++(118:\r em) arc[start angle=118,end angle=422,radius=\r em];
}
\draw[line width=.2pt] (91.5:2.5em) -- ++(92:2.5em);
\path[line width=.2pt,out=92,in=-5] (91.5:2.5em) ++(92:2.5em) edge (99:6em);
\draw[line width=.2pt] (93:2.5em) -- ++(95:2.25em);
\path[line width=.2pt,out=95,in=5] (93:2.5em) ++(95:2.25em) edge (109:6em);
\draw[line width=.2pt] (94.5:2.5em) -- ++(98:2em);
\path[line width=.2pt,out=98,in=28] (94.5:2.5em) ++(98:2em) edge (118:5.75em);
\draw[line width=.2pt] (96:2.5em) -- ++(101:1.75em);
\path[line width=.2pt,out=101,in=28] (96:2.5em) ++(101:1.75em) edge (118:5.5em);
\draw[line width=.2pt] (97.5:2.5em) -- ++(104:1.5em);
\path[line width=.2pt,out=104,in=28] (97.5:2.5em) ++(104:1.5em) edge (118:5.25em);
\draw[line width=.2pt] (99:2.5em) -- ++(107:1.25em);
\path[line width=.2pt,out=107,in=28] (99:2.5em) ++(107:1.25em) edge (118:5em);
\draw[line width=.2pt] (100.5:2.5em) -- ++(110:1em);
\path[line width=.2pt,out=110,in=28] (100.5:2.5em) ++(110:1em) edge (118:4.75em);
\draw[line width=.2pt] (102:2.5em) -- ++(113:.75em);
\path[line width=.2pt,out=113,in=28] (102:2.5em) ++(113:.75em) edge (118:4.5em);
\draw[line width=.2pt] (103.5:2.5em) -- ++(116:.5em);
\path[line width=.2pt,out=116,in=28] (103.5:2.5em) ++(116:.5em) edge (118:4.25em);
\draw[line width=.2pt] (105:2.5em) -- ++(119:.25em);
\path[line width=.2pt,out=119,in=28] (105:2.5em) ++(119:.25em) edge (118:4em);
\path[line width=.2pt,out=122,in=28] (106.5:2.5em) edge (118:3.75em);
\path[line width=.2pt,out=125,in=28] (108:2.5em) edge (118:3.5em);
\path[line width=.2pt,out=128,in=28] (109.5:2.5em) edge (118:3.25em);
\path[line width=.2pt,out=128,in=28] (111:2.5em) edge (118:3em);
\path[line width=.2pt,out=128,in=28] (112.5:2.5em) edge (118:2.75em);
\draw[line width=.2pt] (88.5:2.5em) -- ++(88:2.5em);
\path[line width=.2pt,out=88,in=185] (88.5:2.5em) ++(88:2.5em) edge (81:6em);
\draw[line width=.2pt] (87:2.5em) -- ++(85:2.25em);
\path[line width=.2pt,out=85,in=175] (87:2.5em) ++(85:2.25em) edge (71:6em);
\draw[line width=.2pt] (85.5:2.5em) -- ++(82:2em);
\path[line width=.2pt,out=82,in=152] (85.5:2.5em) ++(82:2em) edge (62:5.75em);
\draw[line width=.2pt] (84:2.5em) -- ++(79:1.75em);
\path[line width=.2pt,out=79,in=152] (84:2.5em) ++(79:1.75em) edge (62:5.5em);
\draw[line width=.2pt] (82.5:2.5em) -- ++(76:1.5em);
\path[line width=.2pt,out=76,in=152] (82.5:2.5em) ++(76:1.5em) edge (62:5.25em);
\draw[line width=.2pt] (81:2.5em) -- ++(73:1.25em);
\path[line width=.2pt,out=73,in=152] (81:2.5em) ++(73:1.25em) edge (62:5em);
\draw[line width=.2pt] (79.5:2.5em) -- ++(70:1em);
\path[line width=.2pt,out=70,in=152] (79.5:2.5em) ++(70:1em) edge (62:4.75em);
\draw[line width=.2pt] (78:2.5em) -- ++(67:.75em);
\path[line width=.2pt,out=67,in=152] (78:2.5em) ++(67:.75em) edge (62:4.5em);
\draw[line width=.2pt] (76.5:2.5em) -- ++(64:.5em);
\path[line width=.2pt,out=64,in=152] (76.5:2.5em) ++(64:.5em) edge (62:4.25em);
\draw[line width=.2pt] (75:2.5em) -- ++(61:.25em);
\path[line width=.2pt,out=61,in=152] (75:2.5em) ++(61:.25em) edge (62:4em);
\path[line width=.2pt,out=58,in=152] (73.5:2.5em) edge (62:3.75em);
\path[line width=.2pt,out=55,in=152] (72:2.5em) edge (62:3.5em);
\path[line width=.2pt,out=52,in=152] (70.5:2.5em) edge (62:3.25em);
\path[line width=.2pt,out=52,in=152] (69:2.5em) edge (62:3em);
\path[line width=.2pt,out=52,in=152] (67.5:2.5em) edge (62:2.75em);
\draw[line width=.2pt] (90:6em) -- (90:2.5em);
\begin{scope}[shift={(-17em,0)}]
\draw[line width=.5pt] (0,0) circle(6em);
\draw[line width=.5pt] (0,0) circle(2.5em);
\foreach \r in {2.75,3,...,5.75} {%
\draw[line width=.2pt] (0,0) ++(0:\r em) arc[start angle=0,end angle=360,radius=\r em];
}
\end{scope}
\end{tikzpicture}
\caption{Two different grading structures on an annulus given by two non-isotopic line fields.}
\label{fig:grading}
\end{figure}
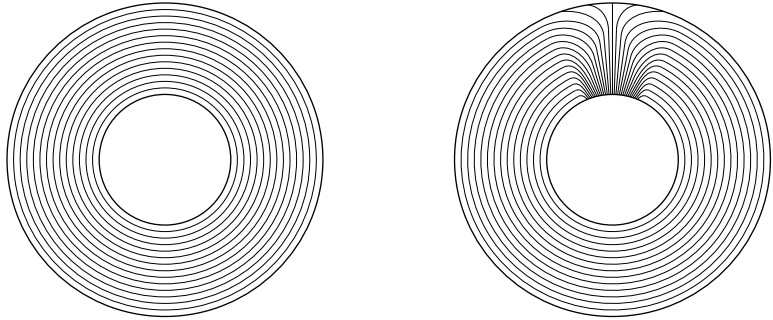

\section{The Fukaya category of an orbifold disk with one orbifold point}
\label{section:disk}

When describing the geometry of quotient spaces, one faces the choice of either working with objects on the quotient or with equivariant objects on the cover. (For example, a quasi-coherent sheaf on a quotient stack $[X / G]$ may be given by a $G$-equivariant sheaf on $X$.) In this section we shall start with the latter viewpoint and study $\mathbb Z_2$-equivariant higher structures in the partially wrapped Fukaya category of a smooth disk with stops.

It turns out that the resulting higher structures on the quotient orbifold disk can be described more efficiently when working directly on the orbifold disk. This remains true for general orbifold surfaces. In later sections we will reverse the direction, by starting with a concrete definition of the Fukaya category of a graded orbifold surface with stops. We then prove that our construction satisfies a universal property and is compatible with the $\mathbb Z_2$-equivariant theory of the double cover using more abstract arguments.

The main result of this section is Proposition \ref{proposition:orbifolddisk} which computes the unique higher structure on a dissection of the orbifold disk. In Corollary \ref{corollary:typeD} we note that the orbifold disk can be described by the perfect derived category of a type $\mathrm D$ quiver. From the viewpoint of a Weinstein sectorial cover (in the sense of \cite{ganatrapardonshende1,ganatrapardonshende2}) of an orbifold surface obtained from a ribbon graph a small neighbourhood of an orbifold point can thus be seen to correspond to a ``Weinstein sector of type $\mathrm D$'' (see also \S\ref{subsection:core}).

Some of the notions for this computation will only be formally introduced in later sections in which case we refer to the pertinent definitions. The reader familiar with partially wrapped Fukaya categories of smooth surfaces as in \cite{haidenkatzarkovkontsevich,lekilipolishchuk1,lekilipolishchuk2} should be able to follow this section on a first reading.

\subsection{An orbifold disk with stops and its double cover}
\label{subsection:orbifolddisk}

Consider the graded orbifold surface $\mathbf S = (S, \Sigma, \eta)$ consisting of a closed disk with a single orbifold point and $l + 1$ boundary stops, i.e.\ $S = \overline{\mathbb D} / \mathbb Z_2$ with $\Sing (S) = \{ x \}$ and $\Sigma = \sigma_0 \sqcup \dotsb \sqcup \sigma_l \subset \partial S \simeq \mathrm S^1$. 
The double cover of $\mathbf S$ is the graded surface with stops $\widetilde{\mathbf S} = (\widetilde S, \widetilde \Sigma, \widetilde \eta)$, where
\begin{itemize}
\item $\widetilde S = \overline{\mathbb D}$ is a smooth closed disk with $\mathbb Z_2$-action given by $-1 \cdot (x, y) = (-x, -y)$, i.e.\ by a rotation through $\pi$, whose only fixpoint is the origin
\item $\widetilde \Sigma = \sigma^\pm_0 \sqcup \dotsb \sqcup \sigma^\pm_l$ is a $\mathbb Z_2$-invariant set of $2l + 2$ boundary stops $\sigma_i^+, \sigma_i^-$ where $-1 \cdot \sigma^\pm_i = \sigma^\mp_i$
\item $\widetilde \eta$ is a $\mathbb Z_2$-invariant line field on $\widetilde S$ which (up to isotopy) may be taken to be the horizontal line field as in Fig.~\ref{fig:winding}.
\end{itemize}

\subsubsection{Arcs}

An arc system $\Gamma$ on the orbifold surface $\mathbf S$ (see \S\ref{subsection:arcsystem}) is given by a collection of arcs in $S$ which may include arcs connecting boundary points in $\partial S \smallsetminus \Sigma$ to the orbifold point.

Consider the arc system
\[
\Gamma = \{ \alpha, \beta, \gamma_1, \dotsc, \gamma_l \}
\]
where $\gamma_1, \dotsc, \gamma_l$ are arcs bounding $\sigma_1, \dotsc, \sigma_l$ and $\alpha, \beta$ are two arcs connecting boundary points in $\partial S \smallsetminus \Sigma$ to the orbifold point $x$ as illustrated in Fig.~\ref{fig:doublecover}.

In the double cover $\widetilde{\mathbf S}$, each $\gamma_i$ lifts to a pair of arcs $\gamma_i^+, \gamma_i^-$ bounding the stops $\sigma_i^+, \sigma_i^-$, respectively. The arcs $\alpha, \beta$ lift to $\mathbb Z_2$-invariant arcs $\widetilde\alpha, \widetilde\beta$ which intersect in the fixed point of the $\mathbb Z_2$-action (see the right-hand side of Fig.~\ref{fig:doublecover}).

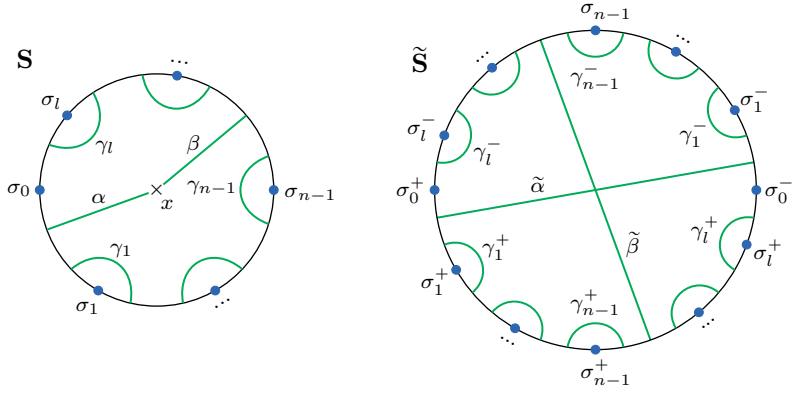
\begin{figure}
\begin{tikzpicture}[x=1em,y=1em,decoration={markings,mark=at position 0.55 with {\arrow[black]{Stealth[length=4.2pt]}}}]
\draw[line width=.5pt] circle(4em);
\node[font=\small] at (-4.5em,4.5em) {$\mathbf S$};
\node[font=\scriptsize,shape=circle,scale=.6,fill=white] (X) at (0,0) {};
\node[font=\scriptsize] at (0,0) {$\times$};
\draw[line width=.75pt,color=arccolour] (200:4em) to (X);
\draw[line width=.75pt,color=arccolour] (40:4em) to (X);
\node[font=\scriptsize] at (180:4.7em) {$\sigma_0$};
\node[font=\scriptsize] at (240:4.7em) {$\sigma_1$};
\node[font=\scriptsize] at (303:4.5em) {$.$};
\node[font=\scriptsize] at (297:4.5em) {$.$};
\node[font=\scriptsize] at (300:4.5em) {$.$};
\node[font=\scriptsize] at (-2:4.6em) {$\sigma\mathrlap{_{n-1}}$};
\node[font=\scriptsize] at (77:4.5em) {$.$};
\node[font=\scriptsize] at (80:4.5em) {$.$};
\node[font=\scriptsize] at (83:4.5em) {$.$};
\node[font=\scriptsize] at (140:4.7em) {$\sigma_l$};
\node[font=\scriptsize] at (306:0.6em) {$x$};
\node[font=\scriptsize] at (188:2em) {$\alpha$};
\node[font=\scriptsize] at (51:2em) {$\beta$};
\node[font=\scriptsize] at (240:2.4em) {$\gamma_1$};
\node[font=\scriptsize] at (2:1.9em) {$\gamma_{n-1}$};
\node[font=\scriptsize] at (140:2.3em) {$\gamma_l$};
\foreach \a in {240,300,0,80,140} {
\draw[fill=stopcolour, color=stopcolour] (\a:4em) circle(.15em);
\path[line width=.75pt,out=\a-160,in=\a+160,looseness=1.5,color=arccolour] (\a+17:4em) edge (\a-17:4em);
}
\draw[fill=stopcolour, color=stopcolour] (180:4em) circle(.15em);
\begin{scope}[xshift=15em]
\node[font=\small] at (-6em,4.5em) {$\widetilde{\mathbf S}$};
\draw[line width=.75pt, color=arccolour] (190:5.5em) to (10:5.5em);
\draw[line width=.75pt, color=arccolour] (290:5.5em) to (110:5.5em);
\draw[line width=.5pt] circle(5.5em);
\foreach \a in {210,240,270,310,340,30,60,90,130,160} {
\draw[fill=stopcolour, color=stopcolour] (\a:5.5em) circle(.15em);
\path[line width=.75pt,out=\a-170,in=\a+170,looseness=1.5,color=arccolour] (\a+10:5.5em) edge (\a-10:5.5em);
}
\draw[fill=stopcolour, color=stopcolour] (180:5.5em) circle(.15em);
\draw[fill=stopcolour, color=stopcolour] (0:5.5em) circle(.15em);
\node[font=\scriptsize] at (180:6.3em) {$\sigma_0^+$};
\node[font=\scriptsize] at (210:6.3em) {$\sigma_1^+$};
\node[font=\scriptsize] at (238:6em) {$.$};
\node[font=\scriptsize] at (240:6em) {$.$};
\node[font=\scriptsize] at (242:6em) {$.$};
\node[font=\scriptsize] at (270:6.3em) {$\sigma_{n\mathrlap{-1}}^+$};
\node[font=\scriptsize] at (308:6em) {$.$};
\node[font=\scriptsize] at (310:6em) {$.$};
\node[font=\scriptsize] at (312:6em) {$.$};
\node[font=\scriptsize] at (340:6.3em) {$\sigma_l^+$};
\node[font=\scriptsize] at (0:6.3em) {$\sigma_0^-$};
\node[font=\scriptsize] at (30:6.3em) {$\sigma_1^-$};
\node[font=\scriptsize] at (58:6em) {$.$};
\node[font=\scriptsize] at (60:6em) {$.$};
\node[font=\scriptsize] at (62:6em) {$.$};
\node[font=\scriptsize] at (90:6.3em) {$\sigma_{n\mathrlap{-1}}^-$};
\node[font=\scriptsize] at (128:6em) {$.$};
\node[font=\scriptsize] at (130:6em) {$.$};
\node[font=\scriptsize] at (132:6em) {$.$};
\node[font=\scriptsize] at (160:6.3em) {$\sigma_l^-$};
\node[font=\scriptsize] at (210:3.9em) {$\gamma_1^+$};
\node[font=\scriptsize] at (270:3.9em) {$\gamma_{n-1}^+$};
\node[font=\scriptsize] at (340:4em) {$\gamma_l^+$};
\node[font=\scriptsize] at (30:3.9em) {$\gamma_1^-$};
\node[font=\scriptsize] at (90:3.9em) {$\gamma_{n-1}^-$};
\node[font=\scriptsize] at (160:3.9em) {$\gamma_l^-$};
\node[font=\scriptsize, above=-.2ex] at (190:2em) {$\widetilde\alpha$};
\node[font=\scriptsize, right] at (110:-2em) {$\widetilde\beta$};
\end{scope}
\end{tikzpicture}
\caption{Arcs on a disk with a single orbifold point and $l+1$ stops and the corresponding $\mathbb Z_2$-invariant (pairs of) arcs in the double cover.}
\label{fig:doublecover}
\end{figure}

\subsubsection{Twisted complexes and morphisms in the double cover}

On graded smooth surfaces, the arc systems considered in \cite{haidenkatzarkovkontsevich} are by definition comprised of {\it pairwise disjoint} arcs. Since $\widetilde\alpha$ and $\widetilde\beta$ intersect in the double cover, we consider the (full formal) arc system
\[
\widetilde \Gamma = \{ \widetilde\alpha, \gamma_1^\pm, \dotsc, \gamma_l^\pm \}
\]
without $\widetilde \beta$.

\begin{notation}
\label{notation:smoothdisk}
We denote by $\mathbf A_{\widetilde \Gamma}$ the graded gentle algebra, viewed as a $\Bbbk$-linear graded category, associated the arc system $\widetilde \Gamma$ as in \cite{haidenkatzarkovkontsevich, lekilipolishchuk2}. Then
\[
\H^0 (\tw (\mathbf A_{\widetilde \Gamma})) \simeq \per (\mathbf A_{\widetilde \Gamma}) \simeq \mathcal W (\widetilde{\mathbf S})
\]
is the partially wrapped Fukaya category of the smooth disk $\widetilde{\mathbf S}$, where $\tw (\mathbf A_{\widetilde \Gamma})$ is the DG category of twisted complexes over $\mathbf A_{\widetilde \Gamma}$ and $\H^0 (\tw (\mathbf A_{\widetilde \Gamma})) \simeq \per (\mathbf A_{\widetilde \Gamma})$ is the associated (triangulated) perfect derived category of $\mathbf A_{\widetilde \Gamma}$, see Remark \ref{remark:perfectderived}.
\end{notation}

There are morphisms $p_1^\pm \in \mathbf A_{\widetilde \Gamma} (\widetilde \alpha, \gamma_1^\pm)$ and
\[
p_i^\pm \in \mathbf A_{\widetilde \Gamma} (\gamma_{i-1}^\pm, \gamma_i^\pm) \quad\text{for $1 < i \leq l$}
\]
given by boundary paths in $\widetilde{\mathbf S}$.

The lift $\widetilde\beta$ may be realized as the twisted complex
\begin{equation}
\label{eq:beta}
\widetilde\beta = \widetilde\alpha \oplus \bigoplus_{\substack{1 \leq i \leq n-1 \\ \ast \in \{ +, - \}}} \s^{|\s p^*_{i \dotsc 1}|} \gamma_i^*
\end{equation}
which should be viewed as an iterated mapping cone of the $p_i^\pm$'s. (Each mapping cone corresponds to the nonzero direct summand of the connected sum of the Lagrangians after a suitable Hamiltonian isotopy.) The twisted differential $\delta_{\widetilde \beta}$ is given by the maps appearing in the diagram
\begin{equation}
\label{eq:betamaps}
\begin{tikzpicture}[baseline=-2.6pt,description/.style={fill=white,inner sep=1.75pt}]
\matrix (m) [matrix of math nodes, row sep={1.2em,between origins}, text height=1.5ex, column sep={5em,between origins}, text depth=0.25ex, ampersand replacement=\&]
{
\&[.4em] \gamma_1^+ \& \gamma_{2}^+ \& \cdots \& \gamma_{n-1}^+ \\
\widetilde\alpha \& \& \& \& \\
\&       \gamma_1^- \& \gamma_{2}^- \& \cdots \& \gamma_{n-1}^- \\
};
\path[->,line width=.4pt]
(m-2-1) edge node[above=-.2ex,font=\scriptsize] {$p_1^+$} (m-1-2)
(m-2-1) edge node[below=-.4ex,font=\scriptsize] {$p_1^-$} (m-3-2)
(m-1-2) edge node[above=-.2ex,font=\scriptsize] {$p_{2}^+$} (m-1-3)
(m-3-2) edge node[below=-.5ex,font=\scriptsize] {$p_{2}^-$} (m-3-3)
(m-1-3) edge node[above=-.2ex,font=\scriptsize] {$p_{3}^+$} (m-1-4)
(m-3-3) edge node[below=-.5ex,font=\scriptsize] {$p_{3}^-$} (m-3-4)
(m-1-4) edge node[above=-.2ex,font=\scriptsize] {$p_{n-1}^+$} (m-1-5)
(m-3-4) edge node[below=-.5ex,font=\scriptsize] {$p_{n-1}^-$} (m-3-5)
;
\end{tikzpicture}
\end{equation}
where for simplicity we have ignored the extra shifts that arise by ordering the direct summands of \eqref{eq:beta} and making all entries of $\delta_{\widetilde \beta} $ of degree $1$. For example, the component of $\delta_{\widetilde \beta} $ from $\widetilde \alpha$ to $\s^{|\s p_1^+|} \gamma_1^+$ is given by $\s^{|\s p_1^+|} p_1^+ \in \s^{|\s p_1^+|} \mathbf A_{\widetilde \Gamma}(\widetilde \alpha, \gamma^+_1)$ which is indeed of degree
\[
|\s^{|\s p_1^+|} p_1^+| = |p_1^+| - |\s p_1^+| = |p_1^+| - (|p_1^+| - 1) = 1.
\]

Note that since $-1 \cdot \gamma_i^\pm = \gamma_i^\mp$ and $-1 \cdot p_i^\pm = p_i^\mp$ and $-1 \cdot \widetilde\alpha = \widetilde\alpha$ the twisted complex $\widetilde\beta$ is clearly $\mathbb Z_2$-invariant, as is also apparent from its graphical representation in Fig.~\ref{fig:doublecover}.

We now compute some of the morphism spaces involving $\widetilde\beta$ in the DG category $\tw (\mathbf A_{\widetilde\Gamma})$. Recall that the differential in the morphism spaces of $\tw (\mathbf A_{\widetilde \Gamma})$ is given by 
\begin{align}\label{align:differentialtwisted}
\mu_1^{\tw (\mathbf A_{\widetilde\Gamma})}(\s a) = \mu_2^{\mathbb Z \mathbf A_{\widetilde\Gamma}}( \s \delta_Y \otimes \s a) + \mu_2^{\mathbb Z \mathbf A_{\widetilde\Gamma}}(\s a \otimes \s \delta_X)
\end{align}
for any $a \in \tw (\mathbf A_{\widetilde \Gamma})(X, Y)$, where $\delta_X$ and $\delta_Y$ are the differentials of $X$ and $Y$ respectively and $\mu_2^{\mathbb Z \mathbf A_{\widetilde\Gamma}}$ is induced by the composition of $\mathbf A_{\widetilde\Gamma}$ (cf.\ Definition \ref{definition:twisted}).

\begin{lemma}\label{lemma:morphismspacetw}
\begin{enumerate}
\item The complex $\tw (\mathbf A_{\widetilde\Gamma}) (\widetilde\alpha, \widetilde\beta) = \Bbbk \{ \id_{\widetilde\alpha} \} \oplus  \Bbbk \{ \s^{|\s p_1^+|}p_1^+,  \s^{|\s p_1^-|}p_1^- \}$ is $3$-dimensional and the nonzero part of its differential is given by 
\[
\id_{\widetilde\alpha}\mapsto \s^{|\s p_1^+|}p_1^+ + \s^{|\s p_1^-|}p_1^-.
\]
So the cohomology $\H^0 (\tw (\mathbf A_{\widetilde\Gamma}) (\widetilde\alpha, \widetilde\beta))$ is $1$-dimensional and lies in degree $1$.  

\item The complex $\tw (\mathbf A_{\widetilde\Gamma}) (\widetilde\beta, \widetilde\alpha) = \Bbbk \{ \id_{\widetilde\alpha} \}$ lies in degree $0$. 

\item The complex $\tw (\mathbf A_{\widetilde\Gamma}) (\widetilde\beta, \widetilde\beta) = \Bbbk \{ \id_{\widetilde\alpha} \} \oplus \bigoplus_{\substack{1 \leq i \leq n-1 \\ \ast \in \{ +, - \}}} \Bbbk \{ \id_{\gamma_i^\ast}, \s^{|\s p_i^\ast|} p_i^{\ast} \}$ is $(2n-1)$-dimensional and the nonzero part of the differential is given by 
\begin{align*}
\id_{\widetilde\alpha} \mapsto \s^{|\s p_1^+|}p_1^+ + \s^{|\s p_1^-|}p_1^-  \quad  \quad  \id_{\gamma_i^{\ast}}  \mapsto \s^{|\s p_{i+1}^\ast|}p_{i+1}^\ast - \s^{|\s p_i^\ast|}p_i^\ast
\end{align*}
where $1\leq i \leq n-1$ and $\ast \in \{ +,- \}$ and we set $p_{n}^{\ast}=0$. Its cohomology is $1$-dimensional and lies in degree $0$. 
\item The complex $\tw (\mathbf A_{\widetilde\Gamma}) (\gamma_{n-1}^\ast, \widetilde\beta) = \Bbbk \{ \s^{|\s p_{n-1 \dotsc 1}^\ast|} \id_{\gamma_{n-1}^\ast} \}$ is $1$-dimensional and lies in degree $|\s p^\ast_{n-1 \dotsc 1}|$.
\end{enumerate}
\end{lemma}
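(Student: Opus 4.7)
The strategy is to exploit the explicit combinatorial description of $\mathbf A_{\widetilde\Gamma}$ as the graded gentle algebra associated to the arc system $\widetilde\Gamma$ on the smooth disk $\widetilde{\mathbf S}$, as in \cite{haidenkatzarkovkontsevich, lekilipolishchuk2}, whose Hom-spaces are spanned by boundary paths between arcs. Together with the definition of morphism spaces in $\tw(\mathbf A_{\widetilde\Gamma})$ (Definition \ref{definition:twisted}) and the differential formula \eqref{align:differentialtwisted}, this reduces each part of the lemma to an essentially finite-dimensional computation.

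For the underlying graded vector spaces, I would first identify in each of (\textit{i})--(\textit{iv}) which components $\widetilde\alpha$ or $\s^{|\s p_{j\dotsc 1}^\ast|}\gamma_j^\ast$ of $\widetilde\beta$ admit a nontrivial boundary-path morphism to or from the specified object in $\mathbf A_{\widetilde\Gamma}$. Reading off the gentle algebra from the local picture in Fig.~\ref{fig:doublecover}, the only such morphisms out of $\widetilde\alpha$ into a $\gamma$-arc are $p_1^\pm \colon \widetilde\alpha \to \gamma_1^\pm$, since any deeper boundary path must turn through the intervening arcs and is killed by the gentle relations. Symmetrically, morphisms from $\gamma_j^\pm$ back to $\widetilde\alpha$ vanish by orientation, and morphisms between $\gamma_i^\ast$ and $\gamma_j^{\ast'}$ vanish unless $\ast = \ast'$ and $j \in \{i, i+1\}$. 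This yields the bases stated in (\textit{i}), (\textit{ii}), (\textit{iii}); for (\textit{iv}), the only component of $\widetilde\beta$ admitting a nonzero morphism from $\gamma_{n-1}^\ast$ is $\gamma_{n-1}^\ast$ itself, already carrying the stated shift.

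To compute the differentials, I apply \eqref{align:differentialtwisted}, using that $\delta_{\widetilde\alpha} = 0$ and that $\delta_{\widetilde\beta}$ has the nonzero entries displayed in \eqref{eq:betamaps}. In case (\textit{i}), the only nonvanishing contribution to $\mu_1^{\tw}(\s\id_{\widetilde\alpha})$ is $\mu_2^{\mathbb Z\mathbf A}(\s\delta_{\widetilde\beta}\otimes\s\id_{\widetilde\alpha})$, which extracts the $p_1^+$ and $p_1^-$ components out of the $\widetilde\alpha$ summand. In case (\textit{iii}), each $\id_{\gamma_i^\ast}$ receives contributions from both terms of \eqref{align:differentialtwisted}: composition with $\delta_{\widetilde\beta}$ on the left yields $\s^{|\s p_{i+1}^\ast|}p_{i+1}^\ast$ (interpreted as $0$ when $i=n-1$), and composition on the right yields $-\s^{|\s p_i^\ast|}p_i^\ast$, the relative sign coming from the Koszul rule built into $\mu_2^{\mathbb Z\mathbf A}$. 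Cases (\textit{ii}) and (\textit{iv}) carry no differential since both terms of \eqref{align:differentialtwisted} produce morphisms landing in components absent from the target (respectively, source).

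The cohomology statements are then immediate linear algebra. In (\textit{iii}), the elements $\s^{|\s p_i^\ast|}p_i^\ast$ pair telescopingly with the identities, leaving a single surviving degree-$0$ class represented by $\id_{\widetilde\beta} = \id_{\widetilde\alpha} + \sum_{i,\ast}\id_{\gamma_i^\ast}$; in (\textit{i}) the single surviving class lies in degree $1$, represented by $\s^{|\s p_1^+|}p_1^+$ (equivalently $-\s^{|\s p_1^-|}p_1^-$). The principal obstacle is the careful bookkeeping of the shifts $\s^{|\s p_{i\dotsc 1}^\ast|}$ and of the Koszul signs produced by $\mu_2^{\mathbb Z\mathbf A}$, but no input beyond the gentle-algebra structure of $\mathbf A_{\widetilde\Gamma}$ is required.
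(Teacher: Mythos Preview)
Your proposal is correct and follows essentially the same approach as the paper, which simply says that the result is ``a straightforward computation that can easily be verified from the definitions'' using \eqref{align:differentialtwisted} and the explicit form of $\delta_{\widetilde\beta}$. You have spelled out precisely those definitions --- the boundary-path basis of the gentle algebra $\mathbf A_{\widetilde\Gamma}$, the resulting Hom-spaces in $\tw(\mathbf A_{\widetilde\Gamma})$, and the two contributions to the twisted differential --- so your argument is a faithful expansion of the paper's one-line proof.
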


\begin{proof}
This is a straightforward computation that can easily be verified from the definitions. The explicit formulas for the differentials follow from \eqref{align:differentialtwisted} and the twisted differential $\delta_{\widetilde \beta} = \sum_{\substack{1 \leq i \leq n-1 \\ \ast \in \{ +, - \}}} \s^{|\s p_i^\ast|} p_i^\ast$ of $\widetilde \beta$.
\end{proof}

\begin{remark}\label{remark:Z2action}
Since $\widetilde \alpha$ and $\widetilde \beta$ are $\mathbb Z_2$-invariant, it induces a $\mathbb Z_2$-action on $$\tw (\mathbf A_{\widetilde \Gamma}) (\widetilde\alpha, \widetilde\beta) = \Bbbk \{ \id_{\widetilde \alpha} \} \oplus  \Bbbk \{ \s^{|\s p_1^+|}p_1^+,  \s^{|\s p_1^-|}p_1^- \}$$ given by $-1\cdot \id_{\widetilde \alpha} = \id_{\widetilde \alpha}$ and $
-1\cdot \s^{|\s p_1^\pm|}p_1^\pm = \s^{|\s p_1^\mp|}p_1^\mp$. Note that the latter action is nontrivial. 
Similarly, there is also a $\mathbb Z_2$-action on 
\[
\tw (\mathbf A_{\widetilde\Gamma}) (\widetilde\beta, \widetilde\alpha) = \Bbbk \{ \id_{\widetilde \alpha} \}
\]
given by $-1\cdot \id_{\widetilde \alpha} = \id_{\widetilde \alpha}$. That is, the action is trivial. 
\end{remark}

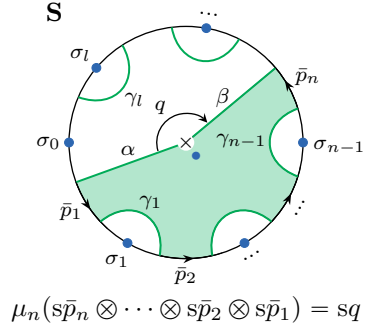
\begin{figure}
\begin{tikzpicture}[x=1em,y=1em,decoration={markings,mark=at position 0.55 with {\arrow[black]{Stealth[length=4.2pt]}}}]
\draw[fill=arccolour!30!white,line width=0pt] (200:4em) arc[start angle=200, end angle=240-17, radius=4em] to[out=240+160, in=240-160, looseness=1.5] (240+17:4em) arc[start angle=240+17, end angle=300-17, radius=4em] to[out=300+160, in=300-160, looseness=1.5] (300+17:4em) arc[start angle=300+17, end angle=360-17, radius=4em] to[out=360+160, in=360-160, looseness=1.5] (360+17:4em) arc[start angle=360+17, end angle=400, radius=4em] to (0,0);
\draw[->, line width=.5pt] (197:1em) arc[start angle=197, end angle=43, radius=1em];
\node[font=\scriptsize] at (126:1.5em) {$q$};
\draw[line width=.5pt] circle(4em);
\node[font=\small] at (-4.5em,4.5em) {$\mathbf S$};
\node[font=\scriptsize,shape=circle,scale=.6,fill=white] (X) at (0,0) {};
\node[font=\scriptsize] at (0,0) {$\times$};
\draw[line width=.75pt,color=arccolour] (200:4em) to (X);
\draw[line width=.75pt,color=arccolour] (40:4em) to (X);
\node[font=\scriptsize] at (180:4.7em) {$\sigma_0$};
\node[font=\scriptsize] at (240:4.7em) {$\sigma_1$};
\node[font=\scriptsize] at (303:4.5em) {$.$};
\node[font=\scriptsize] at (297:4.5em) {$.$};
\node[font=\scriptsize] at (300:4.5em) {$.$};
\node[font=\scriptsize] at (-2:4.6em) {$\sigma\mathrlap{_{n-1}}$};
\node[font=\scriptsize] at (77:4.5em) {$.$};
\node[font=\scriptsize] at (80:4.5em) {$.$};
\node[font=\scriptsize] at (83:4.5em) {$.$};
\node[font=\scriptsize] at (140:4.7em) {$\sigma_l$};
\node[font=\scriptsize, color=stopcolour] at (306:0.6em) {$\bullet$};
\node[font=\scriptsize] at (188:2em) {$\alpha$};
\node[font=\scriptsize] at (51:2em) {$\beta$};
\node[font=\scriptsize] at (240:2.4em) {$\gamma_1$};
\node[font=\scriptsize] at (2:1.9em) {$\gamma_{n-1}$};
\node[font=\scriptsize] at (140:2.3em) {$\gamma_l$};
\draw[line width=0pt,postaction={decorate}] (200:4em) arc[start angle=200, end angle=240-17+5, radius=4em];
\node[font=\scriptsize] at (211.5:4.6em) {$\bar p_1$};
\draw[line width=0pt,postaction={decorate}] (240+17:4em) arc[start angle=240+17, end angle=300-17+5, radius=4em];
\node[font=\scriptsize] at (270:4.6em) {$\bar p_2$};
\draw[line width=0pt,postaction={decorate}] (300+17:4em) arc[start angle=300+17, end angle=360-17+5, radius=4em];
\node[font=\scriptsize] at (330:4.5em) {$.$};
\node[font=\scriptsize] at (327:4.5em) {$.$};
\node[font=\scriptsize] at (333:4.5em) {$.$};
\draw[line width=0pt,postaction={decorate}] (17:4em) arc[start angle=17, end angle=40+5, radius=4em];
\node[font=\scriptsize] at (28.5:4.75em) {$\bar p_n$};
\foreach \a in {240,300,0,80,140} {
\draw[fill=stopcolour, color=stopcolour] (\a:4em) circle(.15em);
\path[line width=.75pt,out=\a-160,in=\a+160,looseness=1.5,color=arccolour] (\a+17:4em) edge (\a-17:4em);
}
\draw[fill=stopcolour, color=stopcolour] (180:4em) circle(.15em);
\node[font=\small] at (0,-5.75em) {$\mu_n (\s \bar p_n \otimes \dotsb \otimes \s \bar p_2 \otimes \s \bar p_1) = \s q$};
\end{tikzpicture}
\caption{The unique higher product on an orbifold disk where the ``orbifold stop'' near the orbifold point indicates that there is no morphism from $\beta$ to $\alpha$.}
\label{fig:doublecoverdisk}
\end{figure}

\subsection{The partially wrapped Fukaya category of an orbifold disk}
\label{subsection:partiallyorbifolddisk}

Let $\mathbf S$ be the graded orbifold disk as in \S\ref{subsection:orbifolddisk} and let $\mathbf A_{\widetilde \Gamma}$ be the graded gentle algebra in Notation \ref{notation:smoothdisk}. The $\mathbb Z_2$-action on $\mathbf A_{\widetilde\Gamma}$ induces a $\mathbb Z_2$-action on the DG category $\tw (\mathbf A_{\widetilde\Gamma})$. We now define
\[
\mathbf W (\mathbf S) := (\tw (\mathbf A_{\widetilde\Gamma})/\mathbb Z_2)^\natural
\]

\begin{definition}
We call the triangulated category
\[
\mathcal W (\mathbf S) := \H^0 (\mathbf W (\mathbf S))
\]
the {\it partially wrapped Fukaya category} of the orbifold disk $\mathbf S$.
\end{definition}

Here we use bold $\mathbf W$ to denote the pretriangulated DG category and calligraphic $\mathcal W$ for the associated triangulated category.

We now study generators of $\mathcal W (\mathbf S)$ for the orbifold disk $\mathbf S$. These concrete results will be used to define the partially wrapped Fukaya category of any graded orbifold surface with stops via a cosheaf of pretriangulated A$_\infty$ categories in the spirit of Kontsevich's conjectural proposal for describing Fukaya categories of Weinstein manifolds, proved in \cite{haidenkatzarkovkontsevich} for graded smooth surfaces with stops and in \cite{ganatrapardonshende2} in all dimensions. In Theorem \ref{theorem:doublecoverorbitcategory} we will relate the cosheaf construction to a global quotient construction using A$_\infty$ orbit categories.

We shall write the morphisms spaces $\mathbf W (\mathbf S) (X, Y)$ as column vectors, where the top entry corresponds to a morphism $X \to Y$ and the bottom entry to a morphism $X \to -1 \cdot Y$. The composition of morphisms (cf.\ \S\ref{subsection:orbitcategories}) is given by
\begin{align}
\label{align:compositionorbit}
\begin{pmatrix} a_1 \\ a_2 \end{pmatrix} \circ \begin{pmatrix} b_1 \\ b_2 \end{pmatrix} = \begin{pmatrix} a_1\circ b_1 + (-1\cdot a_2) \circ b_2 \\ a_2 \circ b_1 +  (-1\cdot a_1) \circ b_2 \end{pmatrix},
\end{align}
where $\circ$ on the right hand side of the equality is the composition in $\tw (\mathbf A_{\widetilde\Gamma})$.

Since $-1 \cdot \widetilde \alpha = \widetilde \alpha$ it follows that 
\[
\mathbf W (\mathbf S)(\widetilde \alpha, \widetilde \alpha) := \bigoplus_{g \in \mathbb Z_2} \tw (\mathbf A_{\widetilde\Gamma}) (\widetilde \alpha, g \cdot \widetilde \alpha) = \tw (\mathbf A_{\widetilde\Gamma})(\widetilde \alpha, \widetilde \alpha)^{\oplus 2}
\]
is $2$-dimensional with trivial differential. We can therefore find two primitive orthogonal idempotents in $\mathbf W (\mathbf S) (\widetilde \alpha, \widetilde \alpha)$ 
\[
\id_{\widetilde \alpha}^+ =  \frac{1}{2} \begin{pmatrix} \id_{\widetilde \alpha} \\ \id_{\widetilde \alpha} \end{pmatrix} \quad \text{and}  \quad  \id_{\widetilde \alpha}^- = \frac{1}{2} \begin{pmatrix} \id_{\widetilde \alpha}\\ -\id_{\widetilde \alpha} \end{pmatrix}.
\]
Similarly, since $-1 \cdot \widetilde \beta = \widetilde \beta$ we have $\mathbf W (\mathbf S)(\widetilde \beta, \widetilde \beta) = \tw (\mathbf A_{\widetilde\Gamma})(\widetilde \beta, \widetilde \beta)^{\oplus 2}$ with two primitive orthogonal idempotents
\[
\id_{\widetilde \beta}^+ = \frac{1}{2} \begin{pmatrix} \id_{\widetilde \beta} \\ \id_{\widetilde \beta}\end{pmatrix} \quad \text{and}  \quad  \id_{\widetilde \beta}^- = \frac{1}{2} \begin{pmatrix} \id_{\widetilde \beta} \\ -\id_{\widetilde \beta}\end{pmatrix}.
\]
Here $\id_{\widetilde \beta}$ is the identity on the twisted complex $\widetilde \beta$ \eqref{eq:beta} which is given by $\id_{\widetilde \alpha} +\sum_{\substack{1 \leq i \leq n-1 \\ \ast \in \{ +, - \}}} \id_{\gamma_i^{*}}$ in the complex $\tw (\mathbf A_{\widetilde\Gamma}) (\widetilde\beta, \widetilde \beta)$.

The existence of primitive orthogonal idempotents implies that both $\widetilde \alpha$ and $\widetilde \beta$ decompose into two direct summands and we shall write
$$
\widetilde \alpha = \widetilde \alpha^+\oplus \widetilde \alpha^- \quad \text{and} \quad \widetilde \beta = \widetilde \beta^+ \oplus \widetilde \beta^-
$$
where $\mathbf W (\mathbf S)  (\widetilde \alpha^\pm, \widetilde \alpha^\pm)  = \Bbbk \, \id_{\widetilde \alpha}^\pm$ and $\mathbf W (\mathbf S)(\widetilde\beta^\pm, \widetilde \beta^\pm)  = \id_{\widetilde \beta}^\pm \mathbf W (\mathbf S)(\widetilde \beta, \widetilde \beta) \id_{\widetilde \beta}^\pm$. 

Let us consider the following direct sum of objects in the DG category $\mathbf W(\mathbf S)$ 
$$
\Gamma = \widetilde \alpha^+ \oplus \bigoplus_{i=1}^l \gamma_i^+\oplus \widetilde \beta^-
$$
which by Lemma \ref{lemma:splitgenerator} turns out to be a split-generator of $\mathbf W(\mathbf S)$, i.e.\ $\mathrm{thick} (\Gamma) \simeq \H^0(\mathbf W (\mathbf S))= \mathcal W(\mathbf S)$. In particular, the perfect derived category of the DG endomorphism algebra of $\Gamma$ is triangle equivalent to $\mathcal W (\mathbf S)$. 

We will now describe the DG endomorphism algebra $\mathbf W(\mathbf S)(\Gamma, \Gamma)$ of $\Gamma$ concretely. To do this, we need the following result which describes some of the morphism spaces in $\mathbf W (\mathbf S)$.

\begin{lemma}
\label{lemma:morphisminvariant}
\begin{enumerate}
\item $\mathbf W (\mathbf S) (\widetilde \beta^\pm, \widetilde \alpha^\pm) =\Bbbk \begin{pmatrix} \id_{\widetilde \alpha} \\ \pm \id_{\widetilde \alpha} \end{pmatrix}  $ and $\mathbf W (\mathbf S) (\widetilde \beta^\pm, \widetilde \alpha^\mp)=0$
\item   Each of the two complexes $$\mathbf W (\mathbf S) (\widetilde \alpha^\pm, \widetilde \beta^\mp)  = \Bbbk  \begin{pmatrix} \s^{|\s p_1^+|}p_1^+-\s^{|\s p_1^-|}p_1^-\\ \mp\s^{|\s p_1^+|}p_1^+\pm\s^{|\s p_1^-|}p_1^-   \end{pmatrix}$$ is $1$-dimensional and the complexes $\mathbf W (\mathbf S) (\widetilde \alpha^\pm, \widetilde \beta^\pm)$ are acyclic.
\item $\mathbf W(\mathbf S) (\widetilde \alpha^+, \gamma_1^+) =  \Bbbk \begin {pmatrix} p_1^+\\ p_1^- \end{pmatrix} \quad \text{and} \quad \mathbf W(\mathbf S) (\widetilde \beta^-,  \gamma_n^+) =  \Bbbk \begin{pmatrix} \s^{-|\s p_{n-1 \dotsc 1}^+|}  p_n^+\\ -\s^{-|\s p_{n-1 \dotsc 1}^+|}  p_n^-\end{pmatrix}$.
\item \label{itemiv}For each $1\leq i \leq n-2$ the complex
$$
\mathbf W (\mathbf S)( \gamma_{i}^+, \widetilde \beta^-) =  \Bbbk \left\{ \begin{pmatrix}\s^{|\s p_{i \dotsc 1}^+|} \id_{\gamma_{i}^+} \\  -\s^{|\s p_{i \dotsc 1}^+|} \id_{\gamma_{i}^+} \end{pmatrix}, \begin{pmatrix}\s^{|\s p_{i+1 \dotsc 1}^+|} p_{i+1}^+ \\  -\s^{|\s p_{i+1 \dotsc 1}^+|} p_{i+1}^+ \end{pmatrix} \right\}
$$
whose differential is given by  
\[
\begin{pmatrix}\s^{|\s p_{i \dotsc 1}^+|} \id_{\gamma_{i}^+} \\  -\s^{|\s p_{i \dotsc 1}^+|} \id_{\gamma_{i}^+} \end{pmatrix} \mapsto  \begin{pmatrix}\s^{|\s p_{i+1 \dotsc 1}^+|} p_{i+1}^+ \\  -\s^{|\s p_{i+1 \dotsc 1}^+|} p_{i+1}^+ \end{pmatrix}
\]
is acyclic
and the complex 
$$\mathbf W (\mathbf S)( \gamma_{n-1}^+, \widetilde \beta^-) =  \Bbbk \begin{pmatrix}\s^{|\s p_{n-1 \dotsc 1}^+|} \id_{\gamma_{n-1}^+} \\  -\s^{|\s p_{n-1 \dotsc 1}^+|} \id_{\gamma_{n-1}^+} \end{pmatrix}$$ is $1$-dimensional. 
\end{enumerate}
\end{lemma}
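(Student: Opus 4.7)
The overall strategy is to unfold each morphism space of $\mathbf W(\mathbf S) = (\tw(\mathbf A_{\widetilde\Gamma})/\mathbb Z_2)^\natural$ using the definition of the A$_\infty$ orbit category from \S\ref{subsection:orbitcategories} together with the explicit computations in Lemma \ref{lemma:morphismspacetw}, and then to extract the appropriate eigenspaces via the idempotents $\id^{\pm}_{\widetilde\alpha}, \id^{\pm}_{\widetilde\beta}$ and the composition formula \eqref{align:compositionorbit}. Concretely, for any two objects $X, Y$ among $\widetilde\alpha,\widetilde\beta,\gamma_i^+$ the orbit-category Hom is
\[
(\tw(\mathbf A_{\widetilde\Gamma})/\mathbb Z_2)(X,Y) \;=\; \tw(\mathbf A_{\widetilde\Gamma})(X,Y)\;\oplus\;\tw(\mathbf A_{\widetilde\Gamma})(X,-1\cdot Y),
\]
and since $\widetilde\alpha$ and $\widetilde\beta$ are $\mathbb Z_2$-invariant while $-1\cdot\gamma_i^+ = \gamma_i^-$, the two summands in question are given either twice by the same space (for the $(\widetilde\alpha,\widetilde\beta)$-type pairs) or as a direct sum of two copies of $\tw(\mathbf A_{\widetilde\Gamma})(\widetilde\alpha,\gamma_1^+)$ and $\tw(\mathbf A_{\widetilde\Gamma})(\widetilde\alpha,\gamma_1^-)$ (for the $(\widetilde\alpha,\gamma_1^+)$-type pairs). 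Lemma \ref{lemma:morphismspacetw} then identifies each component explicitly.

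For part \itemi, since $\tw(\mathbf A_{\widetilde\Gamma})(\widetilde\beta,\widetilde\alpha) = \Bbbk\{\id_{\widetilde\alpha}\}$ lies in degree $0$ with trivial $\mathbb Z_2$-action, formula \eqref{align:compositionorbit} together with $\id^{\pm}_{\widetilde\alpha}\circ\binom{a_1\id_{\widetilde\alpha}}{a_2\id_{\widetilde\alpha}}\circ \id^{\pm}_{\widetilde\beta}$ immediately picks out the claimed $\pm/\pm$-diagonal line and kills the off-diagonal. Part \itemii\ is analogous, now starting from the $3$-dimensional complex of Lemma \ref{lemma:morphismspacetw}\itemi; here the key input is Remark \ref{remark:Z2action}, which says $-1\cdot\id_{\widetilde\alpha} = \id_{\widetilde\alpha}$ but $-1\cdot\s^{|\s p_1^\pm|}p_1^\pm = \s^{|\s p_1^\mp|}p_1^\mp$. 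Imposing both idempotent conditions then shows that the invariant combinations $(\id_{\widetilde\alpha};\id_{\widetilde\alpha})$ and $(p_1^+{+}p_1^-;p_1^+{+}p_1^-)$ on which the differential acts nontrivially live in $\mathbf W(\mathbf S)(\widetilde\alpha^{\pm},\widetilde\beta^{\pm})$, making those complexes acyclic, while the sign-reversed combination $p_1^+ - p_1^-$ lives in $\mathbf W(\mathbf S)(\widetilde\alpha^{\pm},\widetilde\beta^{\mp})$ with zero differential. Part \itemiii\ is similar but only requires one-sided idempotent compression: one unfolds $\bigoplus_g \tw(\mathbf A_{\widetilde\Gamma})(\widetilde\alpha, g\cdot\gamma_1^+) = \Bbbk\{p_1^+\}\oplus\Bbbk\{p_1^-\}$ and imposes the condition that $\binom{c_1 p_1^+}{c_2 p_1^-}$ be preserved by right composition with $\id^+_{\widetilde\alpha}$, yielding $c_1 = c_2$ (the analogous computation for $\widetilde\beta^-$ uses the twisted differential of $\widetilde\beta$ and the idempotent $\id^-_{\widetilde\beta}$). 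Part \itemiv\ proceeds along the same lines after first extending Lemma \ref{lemma:morphismspacetw}\itemiv\ to general $1 \leq i \leq n-2$: the relevant complex $\tw(\mathbf A_{\widetilde\Gamma})(\gamma_i^+,\widetilde\beta)$ is $2$-dimensional, spanned by the inclusion of the $\gamma_i^+$-summand and the boundary-path map into the $\gamma_{i+1}^+$-summand, with differential induced by the component $\s^{|\s p_{i+1}^+|}p_{i+1}^+$ of $\delta_{\widetilde\beta}$; taking the $-$-eigenspace under $\id^-_{\widetilde\beta}$ then yields the stated acyclic complex and its one-dimensional $i=n-1$ version.

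The main obstacle in the proof is bookkeeping: tracking the shifts $\s^{|\s p^\pm_{i\dots 1}|}$ appearing in the twisted complex $\widetilde\beta$, the Koszul signs produced by the orbit-category composition \eqref{align:compositionorbit} when morphisms have nonzero degree, and the swap $p^\pm \leftrightarrow p^\mp$ performed by the $\mathbb Z_2$-action on the shifted generators. Once one fixes clear conventions (for instance by writing every element of an orbit Hom as a column vector indexed by $g\in\mathbb Z_2$ and tabulating the action of $-1$ on the basis of each $\tw(\mathbf A_{\widetilde\Gamma})$-Hom space from Lemma \ref{lemma:morphismspacetw}), each of \itemi--\itemiv\ reduces to a direct matrix computation, and the differentials transfer from those of Lemma \ref{lemma:morphismspacetw} essentially unchanged since they are computed by left or right multiplication by $\delta_{\widetilde\beta}$, which is $\mathbb Z_2$-invariant.
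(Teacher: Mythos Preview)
Your proposal is correct and follows essentially the same approach as the paper: unfold the orbit-category Hom spaces as $\tw(\mathbf A_{\widetilde\Gamma})(X,Y)^{\oplus 2}$ via Lemma~\ref{lemma:morphismspacetw}, then compress by the idempotents $\id^\pm_{\widetilde\alpha}$, $\id^\pm_{\widetilde\beta}$ using the composition formula~\eqref{align:compositionorbit} and the $\mathbb Z_2$-action of Remark~\ref{remark:Z2action}. The paper carries out the explicit matrix computations for \itemi\ and \itemii\ and defers \itemiii\ and \itemiv\ to the reader, whereas you sketch all four uniformly; your observation that \itemiv\ requires first computing $\tw(\mathbf A_{\widetilde\Gamma})(\gamma_i^+,\widetilde\beta)$ for general $i$ (not just $i=n-1$ as in Lemma~\ref{lemma:morphismspacetw}\itemiv) is a point the paper leaves implicit.
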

\begin{proof}
Recall that the complex 
\[
\mathbf W (\mathbf S) (\widetilde \beta, \widetilde \alpha) = \tw (\mathbf A_{\widetilde\Gamma})(\widetilde \beta, \widetilde \alpha)^{\oplus 2} = \Bbbk \{ \id_{\widetilde \alpha}\}^{\oplus 2}
\]
is $2$-dimensional with trivial differential. By definition, we have 
\[
\mathbf W (\mathbf S) (\widetilde \beta^+, \widetilde \alpha^+) = \id_{\widetilde \alpha}^+ \mathbf W (\mathbf S)  (\widetilde \beta, \widetilde \alpha) \id_{\widetilde \beta}^+.
\]
Using the composition formula \eqref{align:compositionorbit} we obtain 
\begin{align*}
\id_{\widetilde \alpha}^\pm \circ \begin{pmatrix}\id_{\widetilde \alpha} \\0\end{pmatrix} \circ  \id_{\widetilde \beta}^\pm &= \frac{1}{4} \begin{pmatrix}\id_{\widetilde \alpha}\\  \pm \id_{\widetilde \alpha}\end{pmatrix} \circ \begin{pmatrix} \id_{\widetilde \alpha} \\ 0 \end{pmatrix}  \circ \begin{pmatrix} \id_{\widetilde \beta}\\ \pm \id_{\widetilde \beta}\end{pmatrix}\\
& =  \frac{1}{4} \begin{pmatrix}\id_{\widetilde \alpha}\\  \pm \id_{\widetilde \alpha}\end{pmatrix} \circ \begin{pmatrix} \id_{\widetilde \beta}\\ 0\end{pmatrix} +  \frac{1}{4} \begin{pmatrix}\id_{\widetilde \alpha}\\  \pm \id_{\widetilde \alpha}\end{pmatrix} \circ \begin{pmatrix} 0 \\ \pm \id_{\widetilde \beta}\end{pmatrix} \\
&=  \frac{1}{4} \begin{pmatrix}\id_{\widetilde \alpha}\\  \pm \id_{\widetilde \alpha}\end{pmatrix} + \frac{1}{4} \begin{pmatrix}\id_{\widetilde \alpha}\\  \pm \id_{\widetilde \alpha}\end{pmatrix}\\ 
& = \frac{1}{2} \begin{pmatrix}\id_{\widetilde \alpha}\\  \pm \id_{\widetilde \alpha}\end{pmatrix}
\end{align*}
where the third equality uses the fact that $\id_{\widetilde \beta} =\id_{\widetilde \alpha} +\sum_{\substack{1 \leq i \leq n-1 \\ \ast \in \{ +, - \}}} \id_{\gamma_i^{*}}$. We also have 
\[
\id_{\widetilde \alpha}^\pm \circ \begin{pmatrix} 0 \\ \id_{\widetilde \alpha} \end{pmatrix}\circ  \id_{\widetilde \beta}^\pm  = \pm \frac{1}{2} \begin{pmatrix}\id_{\widetilde \alpha}\\  \pm \id_{\widetilde \alpha}\end{pmatrix}
\]
since $\bigl( \begin{smallmatrix} \id_{\widetilde \alpha} \\ \pm \id_{\widetilde \alpha} \end{smallmatrix} \bigr)
 \circ \bigl( \begin{smallmatrix} 0 \\ \id_{\widetilde \alpha} \end{smallmatrix} \bigr) = \pm \bigl( \begin{smallmatrix} \id_{\widetilde \alpha} \\ \pm \id_{\widetilde \alpha} \end{smallmatrix} \bigr)
. $  
By a similar computation we may obtain $\id_{\widetilde \alpha}^\pm \circ \bigl( \begin{smallmatrix}\lambda_1 \id_{\widetilde \alpha} \\ \lambda_2 \id_{\widetilde \alpha} \end{smallmatrix} \bigr) \circ \id_{\widetilde \gamma}^\mp = 0$ for any $\lambda_1, \lambda_2 \in \Bbbk$.

Let us prove the second assertion. Recall that $\mathbf W (\mathbf S)(\widetilde \alpha, \widetilde \beta) = \tw (\mathbf A_{\widetilde\Gamma}) (\widetilde\alpha, \widetilde\beta)^{\oplus 2}$ and $\tw (\mathbf A_{\widetilde\Gamma}) (\widetilde\alpha, \widetilde\beta) = \Bbbk \{ \id_{\widetilde\alpha} \} \oplus  \Bbbk \{ \s^{|\s p_1^+|}p_1^+,  \s^{|\s p_1^-|}p_1^- \}.$
From
\[
\mathbf W (\mathbf S) ( \widetilde \alpha^\pm, \widetilde \beta^\mp) = \id_{\widetilde \beta}^\mp \mathbf W (\mathbf S)  ( \widetilde \alpha, \widetilde \beta) \id_{\widetilde \alpha}^\pm,
\]
 it follows that 
\begin{align*}
\id_{\widetilde \beta}^\mp \circ \begin{pmatrix}\s^{|\s p_1^+|}p_1^+\\ 0 \end{pmatrix}  \circ \id_{\widetilde \alpha}^\pm &= \frac{1}{4}  \begin{pmatrix} \id_{\widetilde \beta}\\ \mp \id_{\widetilde \beta} \end{pmatrix}\circ  \begin{pmatrix} \s^{|\s p_1^+|}p_1^+\\ 0 \end{pmatrix}  \circ  \begin{pmatrix} \id_{\widetilde \alpha}\\  \pm \id_{\widetilde \alpha} \end{pmatrix}\\
&=   \frac{1}{4}  \begin{pmatrix} \s^{|\s p_1^+|}p_1^+\\  \mp\s^{|\s p_1^+|}p_1^+ \end{pmatrix} \circ  \begin{pmatrix} \id_{\widetilde \alpha}\\ 0 \end{pmatrix}  + \frac{1}{4}  \begin{pmatrix} \s^{|\s p_1^+|}p_1^+\\ \mp \s^{|\s p_1^+|}p_1^+ \end{pmatrix}\circ   \begin{pmatrix} 0\\ \pm \id_{\widetilde \alpha} \end{pmatrix}\\
&= \frac{1}{4}  \begin{pmatrix} \s^{|\s p_1^+|}p_1^+\\ \mp\s^{|\s p_1^+|}p_1^+ \end{pmatrix}     +   \frac{1}{4}  \begin{pmatrix} -\s^{|\s p_1^-|}p_1^-\\  \pm\s^{|\s p_1^-|}p_1^-  \end{pmatrix}    \\
&= \frac{1}{4}  \begin{pmatrix} \s^{|\s p_1^+|}p_1^+-\s^{|\s p_1^-|}p_1^-\\ \mp\s^{|\s p_1^+|}p_1^+\pm \s^{|\s p_1^-|}p_1^-  \end{pmatrix}
\end{align*}
where the second equality uses the fact that $\id_{\widetilde \beta} =\id_{\widetilde \alpha} +\sum_{\substack{1 \leq i \leq n-1 \\ \ast \in \{ +, - \}}} \id_{\gamma_i^{*}}$
 and  the third equality follows since by the nontrivial action $-1\cdot \s^{|\s p_1^\pm|}p_1^\pm = \s^{|\s p_1^\mp|}p_1^\mp$ in Remark \ref{remark:Z2action} and the composition formula \eqref{align:compositionorbit} we have 
\[
 \begin{pmatrix} \s^{|\s p_1^\pm|}p_1^\pm\\ 0 \end {pmatrix} \circ  \begin{pmatrix} 0\\ \id_{\widetilde \alpha} \end{pmatrix} =  \begin{pmatrix} 0\\ \s^{|\s p_1^\mp|}p_1^\mp \end{pmatrix} \quad \text{and} \quad  \begin{pmatrix} 0\\  \s^{|\s p_1^\pm|}p_1^\pm \end{pmatrix} \circ  \begin{pmatrix} 0\\  \id_{\widetilde \alpha} \end{pmatrix} =  \begin{pmatrix} \s^{|\s p_1^\mp|}p_1^\mp\\  0 \end{pmatrix}.
\]
We may show that each of the two complexes $$ \mathbf W (\mathbf S) (\widetilde \alpha^\pm, \widetilde \beta^\mp)  = \Bbbk  \begin{pmatrix} \s^{|\s p_1^+|}p_1^+-\s^{|\s p_1^-|}p_1^-\\ \mp\s^{|\s p_1^+|}p_1^+\pm\s^{|\s p_1^-|}p_1^-   \end{pmatrix}$$  is $1$-dimensional and concentrated in degree $1$.

By a similar computation we obtain that each of the two complexes 
\begin{align*}
\id_{\widetilde \beta}^\pm \mathbf W (\mathbf S)  (\widetilde \alpha, \widetilde \beta) \id_{\widetilde \alpha}^\pm = \Bbbk \left\{ \begin{pmatrix} \id_{\widetilde \alpha}\\  \pm \id_{\widetilde \alpha} \end{pmatrix},  \begin{pmatrix} \s^{|\s p_1^+|}p_1^++ \s^{|\s p_1^-|}p_1^-\\ \pm\s^{|\s p_1^+|}p_1^+\pm \s^{|\s p_1^-|}p_1^- \end{pmatrix} \right\}
\end{align*}
is $2$-dimensional and the nonzero part of the differentials is
\[
\begin{pmatrix} \id_{\widetilde \alpha}\\  \pm \id_{\widetilde \alpha} \end{pmatrix}  \mapsto \begin{pmatrix} \s^{|\s p_1^+|}p_1^++ \s^{|\s p_1^-|}p_1^-\\ \pm\s^{|\s p_1^+|}p_1^+\pm \s^{|\s p_1^-|}p_1^- \end{pmatrix} .
\]
In particular, both of the two complexes $\id_{\widetilde \beta}^\pm \mathbf W (\mathbf S)  ( \widetilde \alpha, \widetilde \beta) \id_{\widetilde \alpha}^\pm$ are acyclic. 

The third and fourth assertions may be obtained in a similar way using Lemma \ref{lemma:morphismspacetw}.
\end{proof}

\begin{remark}
It follows from Lemma \ref{lemma:morphisminvariant} that 
\begin{align*}
\dim \mathcal W(\mathbf S) (\widetilde \beta^\pm, \widetilde \alpha^\pm) &= \dim \mathcal W(\mathbf S)(\widetilde \alpha^\pm,\widetilde \beta^\mp) = 1\\
\dim \mathcal W(\mathbf S) (\widetilde \beta^\pm, \widetilde \alpha^\mp) &= \dim \mathcal W(\mathbf S)(\widetilde \alpha^\pm,\widetilde \beta^\pm) = 0.
\end{align*}
\end{remark}

\begin{lemma}\label{lemma:splitgenerator}
The object $\Gamma = \widetilde \alpha^+ \oplus \bigoplus_{i=1}^l \gamma_i^+\oplus \widetilde \beta^-$ is a split-generator of $\mathbf W(\mathbf S)$.
\end{lemma}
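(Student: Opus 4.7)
The plan is to show that $\mathrm{thick}(\Gamma) = \H^0(\mathbf W (\mathbf S))$ by first reducing to a smaller candidate set of split-generators and then using the twisted complex description of $\widetilde\beta$ from \eqref{eq:beta} to see that $\widetilde\alpha^-$ lies in $\mathrm{thick}(\Gamma)$.

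First I would recall that $\widetilde\Gamma = \{\widetilde\alpha, \gamma_1^\pm, \dotsc, \gamma_l^\pm\}$ is a full formal arc system on the smooth disk $\widetilde{\mathbf S}$, so by \cite{haidenkatzarkovkontsevich} its objects split-generate $\H^0(\tw(\mathbf A_{\widetilde\Gamma})^\natural) \simeq \mathcal W(\widetilde{\mathbf S})$. Since idempotent completion and the passage to the orbit category preserve split-generation, the images of $\widetilde\alpha$ and $\gamma_i^\pm$ in $\mathcal W(\mathbf S) = \H^0((\tw(\mathbf A_{\widetilde\Gamma})/\mathbb Z_2)^\natural)$ split-generate together with their direct summands in the idempotent completion. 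By the discussion preceding the lemma, $\widetilde\alpha = \widetilde\alpha^+ \oplus \widetilde\alpha^-$ via the primitive orthogonal idempotents $\id^\pm_{\widetilde\alpha}$, and the $\mathbb Z_2$-action gives an isomorphism $\gamma_i^+ \simeq \gamma_i^- = -1 \cdot \gamma_i^+$ in the orbit category (the identity on $\gamma_i^+$ viewed as a morphism in the $g = -1$ summand). Hence $\mathcal W(\mathbf S)$ is already split-generated by $\widetilde\alpha^+, \widetilde\alpha^-, \gamma_1^+, \dotsc, \gamma_l^+$, and it only remains to show that $\widetilde\alpha^- \in \mathrm{thick}(\Gamma)$.

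The key input is the twisted complex \eqref{eq:beta}. Filtering $\widetilde\beta$ by the subcomplexes $F_k$ spanned by $\{\gamma_i^\pm : i > k\}$ (where $F_0$ contains all the $\gamma_i^\pm$ and $F_n = F_0 \oplus \widetilde\alpha = \widetilde\beta$), and using that the twisted differential \eqref{eq:betamaps} has no component entering $\widetilde\alpha$, gives a distinguished triangle in $\H^0(\tw(\mathbf A_{\widetilde\Gamma}))$
\[
F_0 \to \widetilde\beta \to \widetilde\alpha \to F_0[1],
\]
where $F_0$ lies in $\mathrm{thick}(\gamma_1^\pm, \dotsc, \gamma_l^\pm)$. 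This filtration is $\mathbb Z_2$-equivariant (the $\mathbb Z_2$-action preserves the underlying direct sum decomposition, swapping $\gamma_i^+ \leftrightarrow \gamma_i^-$ and fixing $\widetilde\alpha$), so it descends to a triangle in $\mathcal W(\mathbf S)$, and applying the projector $\id^-_{(-)}$ yields a distinguished triangle
\[
F_0^- \to \widetilde\beta^- \to \widetilde\alpha^- \to F_0^-[1]
\]
in $\mathcal W(\mathbf S)$, where $F_0^-$ is built iteratively out of the objects $\gamma_i^\pm$ and hence, using $\gamma_i^- \simeq \gamma_i^+$ in $\mathcal W(\mathbf S)$, lies in $\mathrm{thick}(\gamma_1^+, \dotsc, \gamma_l^+) \subset \mathrm{thick}(\Gamma)$. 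Since $\widetilde\beta^- \in \Gamma$ is also in $\mathrm{thick}(\Gamma)$, the 2-out-of-3 property for thick subcategories implies $\widetilde\alpha^- \in \mathrm{thick}(\Gamma)$, completing the proof.

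The main technical obstacle is justifying the decomposition of the triangle under the idempotent projectors $\id^\pm_{(-)}$: one has to check that the $\mathbb Z_2$-equivariant filtration of $\widetilde\beta$ in $\tw(\mathbf A_{\widetilde\Gamma})$ gives rise, in the idempotent completion of the orbit category, to triangles obtained by applying the projectors componentwise. This is standard once one observes that the idempotents $\id^\pm_{\widetilde\alpha}$, $\id^\pm_{\widetilde\beta}$ respect the filtration (indeed they are built from identity morphisms) and that distinguished triangles are preserved under the exact functors $\iota \colon \tw(\mathbf A_{\widetilde\Gamma})/\mathbb Z_2 \hookrightarrow (\tw(\mathbf A_{\widetilde\Gamma})/\mathbb Z_2)^\natural$ and under application of idempotent projectors.
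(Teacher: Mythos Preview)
Your proposal is correct and follows essentially the same approach as the paper: both reduce to showing $\widetilde\alpha^- \in \mathrm{thick}(\Gamma)$ and then use the distinguished triangle arising from the twisted complex description \eqref{eq:beta} of $\widetilde\beta$, split via the idempotent projectors, together with $\gamma_i^- \simeq \gamma_i^+$ in the orbit category. The paper additionally records that $\widetilde\beta^+ \in \mathrm{thick}(\Gamma)$ via the companion triangle $\gamma^+ \to \widetilde\beta^+ \to \widetilde\alpha^+$, but as your reduction shows this is not needed for the split-generation statement.
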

\begin{proof}
It suffices to show that $\widetilde \alpha^-, \widetilde \beta^+ \in \mathrm{thick}(\Gamma) \subset \mathcal W(\mathbf S)$. Consider the twisted complexes
$$\gamma^\pm = \bigoplus_{1 \leq i \leq n-1}  \s^{|\s p^\pm_{i \dotsc 1}|} \gamma_i^\pm \quad \text{with} \quad \delta= \sum_{i=2}^{n-1}  \s^{|\s p^\pm_i|} p^\pm_i
$$
in $\tw (\mathbf A_{\widetilde\Gamma})$. Clearly, $\gamma^+  \in \mathrm{thick}(\Gamma)$.  Since $-1\cdot \gamma^+ = \gamma^-$ it follows that $ \gamma^+ \simeq \gamma^-$ in $\mathcal W(\mathbf S)$ and thus $\gamma^- \in \mathrm{thick} (\Gamma)$. The distinguished triangle in $\H^0(\tw (\mathbf A_{\widetilde\Gamma}))$ 
\[
\gamma^+ \oplus \gamma^- \to  \widetilde \beta \to \widetilde \alpha
\]
splits into two distinguished triangles in $\mathcal W(\mathbf S)$

\begin{align*}
 \gamma^+ \to \widetilde \beta^+ \toarglong{\begin{pmatrix} \id_{\widetilde \alpha} \\  \id_{\widetilde \alpha} \end{pmatrix}} \widetilde \alpha^+  \quad  \text{and} \quad \gamma^- \to \widetilde \beta^- \toarglong{\begin{pmatrix} \id_{\widetilde \alpha} \\  -\id_{\widetilde \alpha} \end{pmatrix}}   \widetilde \alpha^-. 
\end{align*}
The first triangle yields $\widetilde \beta^+ \in \mathrm{thick}(\Gamma)$ and the second one yields $\widetilde \alpha^- \in \mathrm{thick}(\Gamma)$. 
\end{proof}

Thanks to Lemma \ref{lemma:morphisminvariant} we may now compute the DG endomorphism algebra $\mathbf W(\mathbf S)(\Gamma, \Gamma)$ of the split-generator $\Gamma $ of $\mathbf W(\mathbf S)$. It is given by the following DG quiver
\[
\begin{tikzpicture}[baseline=-2.6pt,description/.style={fill=white,inner sep=1pt,outer sep=0}]
\matrix (m) [matrix of math nodes, row sep=2.5em, text height=1.5ex, column sep=2em, text depth=0.25ex, ampersand replacement=\&, inner sep=3.5pt]
{
\widetilde \alpha^+ \& \gamma_1^+ \& \cdots \& \gamma_{n-2}^+ \& \gamma_{n-1}^+ \& \widetilde \beta^- \& \gamma_n^+ \& \cdots \& \gamma_l^+ \\
};
\node[circle, minimum size=2em] (A) at (m-1-2) {};
\node[circle, minimum size=2em] (B) at (m-1-4) {};
\node[circle, minimum size=2em] (C) at (m-1-6) {};
\path[->,line width=.4pt, font=\scriptsize] (m-1-1) edge node[above=-.4ex] {$\bar p_1$} (m-1-2);
\path[->,line width=.4pt, font=\scriptsize] (m-1-2) edge node[above=-.4ex] {$\bar p_2$} (m-1-3);
\path[->,line width=.4pt, font=\scriptsize] (m-1-3) edge node[above=-.4ex] {$\bar p_{n-2}$} (m-1-4);
\path[->,line width=.4pt, font=\scriptsize, out=45, in=135, looseness=1.02] (A.80) edge node[above=-.4ex] {$x_1, y_1$} (C.100);
\path[->,line width=.4pt, font=\scriptsize, out=45, in=135] (A.60) edge (C.120);
\path[->,line width=.4pt, font=\scriptsize, out=30, in=150] (B.55) edge node[above=-.4ex, pos=.45] {$x_{n-2}, y_{n-2}$} (C.140);
\path[->,line width=.4pt, font=\scriptsize, out=30, in=150] (B.35) edge (C.160);
\path[->,line width=.4pt, font=\scriptsize] (m-1-4) edge node[above=-.4ex, pos=.65] {$\bar p_{n-1} $} (m-1-5);
\path[->,line width=.4pt, font=\scriptsize] (m-1-5) edge node[above=-.4ex, pos=.25] {$\bar p_{n}$} (m-1-6);
\path[->,line width=.4pt, font=\scriptsize, out=90, in=60, looseness=19] (C.85) edge (C.65);
\path[->,line width=.4pt, font=\scriptsize, out=50, in=20, looseness=19] (C.45) edge (C.25);
\node[font=\scriptsize] at ($(C)+(55:2.3em)$) {.};
\node[font=\scriptsize] at ($(C)+(50:2.3em)$) {.};
\node[font=\scriptsize] at ($(C)+(60:2.3em)$) {.};
\node[font=\scriptsize] at ($(B)+(50:4.3em)$) {.};
\node[font=\scriptsize] at ($(B)+(50:4.3em)+(140:.2em)$) {.};
\node[font=\scriptsize] at ($(B)+(50:4.3em)+(140:-.2em)$) {.};
\path[->,line width=.4pt, font=\scriptsize] (m-1-6) edge node[above=-.2ex, pos=.75] {$\bar p_{n}'$} (m-1-7);
\path[->,line width=.4pt, font=\scriptsize] (m-1-7) edge node[above=-.4ex] {$\bar p_{n+1}$} (m-1-8);
\path[->,line width=.4pt, font=\scriptsize] (m-1-8) edge node[above=-.4ex] {$\bar p_l$} (m-1-9);
\path[->,line width=.4pt, font=\scriptsize, out=-25, in=-155] (m-1-1) edge node[below=-.4ex] {$q$} (C);
\end{tikzpicture}
\]
where $\bar p_i = \begin{pmatrix}  p_i^+\\ 0 \end{pmatrix}$ for $2\leq i \leq l$ and $i \neq n$,
\[
\bar p_1=\begin{pmatrix} p_1^+ \\ p_1^- \end{pmatrix}, \quad  \bar p_n= \begin{pmatrix} \s^{|\s p_{n-1 \dotsc 1}^+|} \id_{\gamma_{n-1}^+}\\  -\s^{|\s p_{n-1 \dotsc 1}^+|} \id_{\gamma_{n-1}^+} \end{pmatrix}, \quad \bar p_n'= \begin{pmatrix} \s^{-|\s p_{n-1 \dotsc 1}^+|}  p_n^+\\ -\s^{-|\s p_{n-1 \dotsc 1}^+|}  p_n^-\end{pmatrix},
\]
 and $q =  \begin{pmatrix} \s^{|\s p_1^+|}p_1^+-\s^{|\s p_1^-|}p_1^-\\ -\s^{|\s p_1^+|}p_1^++\s^{|\s p_1^-|}p_1^- \end{pmatrix}.$
 The differential is given by $
d(x_i) = y_i$ for each $1\leq i \leq n-2$, where $x_i, y_i$ are the basis elements appeared in Lemma \ref{lemma:morphisminvariant} \ref{itemiv}.

Let us describe the $(2n-1)$ loops at the vertex $\widetilde \beta^-$ of the above quiver, which correspond to the following basis elements
$$
\mathbf W (\mathbf S) (\widetilde\beta^-, \widetilde \beta^-) = \Bbbk \begin{pmatrix} \id_{\widetilde \alpha} \\ - \id_{\widetilde \alpha}\end{pmatrix} \oplus  \bigoplus_{\substack{1 \leq i \leq n-1 \\ \ast \in \{ +, - \}}} \Bbbk \left\{ \begin{pmatrix} \id_{\gamma_i^\ast} \\ -\id_{\gamma_i^\ast}\end{pmatrix}, \begin{pmatrix}\s^{|\s p_{i+1}^\ast|}p_{i+1}^\ast - \s^{|\s p_i^\ast|}p_i^\ast\\ - \s^{|\s p_{i+1}^\ast|}p_{i+1}^\ast + \s^{|\s p_i^\ast|}p_i^\ast\end{pmatrix} \right\}
$$
The differential is given by 
\begin{align*}
\begin{pmatrix}\id_{\widetilde \alpha}\\ - \id_{\widetilde \alpha}\end{pmatrix} \mapsto \begin{pmatrix} \s^{|\s p_1^+|}p_1^++\s^{|\s p_1^-|}p_1^- \\ - \s^{|\s p_1^+|}p_1^+- \s^{|\s p_1^+|}p_1^+\end{pmatrix} , \quad  \begin{pmatrix}\id_{\gamma_i^{\ast}}\\ -  \id_{\gamma_i^{\ast}}\end{pmatrix}   \mapsto \begin{pmatrix}\s^{|\s p_{i+1}^\ast|}p_{i+1}^\ast - \s^{|\s p_i^\ast|}p_i^\ast\\ - \s^{|\s p_{i+1}^\ast|}p_{i+1}^\ast + \s^{|\s p_i^\ast|}p_i^\ast\end{pmatrix}
\end{align*}
where $1\leq i \leq n-1$ and $\ast \in \{ +,- \}$ and we set $p_{n}^{\ast}=0$. In particular, we obtain that $\H^\bullet (\mathbf W(\mathbf S)(\widetilde \beta^-, \widetilde \beta^- )) = \mathcal W(\mathbf S)(\widetilde \beta^-, \widetilde \beta^-)$ is $1$-dimensional and represented by the unit of $\widetilde \beta$
\[
\id_{\widetilde \beta} =\id_{\widetilde \alpha} +\sum_{\substack{1 \leq i \leq n-1 \\ \ast \in \{ +, - \}}} \id_{\gamma_i^{*}}.
\]

As a result, the cohomology $\H^\bullet (\mathbf W(\mathbf S)(\Gamma, \Gamma)) = \bigoplus_{n\in \mathbb Z} \mathcal W(\mathbf S)(\Gamma, \s^n\Gamma)$ is given by the following graded quiver 
\[
\begin{tikzpicture}[baseline=-2.6pt,description/.style={fill=white,inner sep=1pt,outer sep=0}]
\matrix (m) [matrix of math nodes, row sep=2.5em, text height=1.5ex, column sep=2.2em, text depth=0.25ex, ampersand replacement=\&, inner sep=3.5pt]
{
\widetilde \alpha^+ \& \gamma_1^+ \& \cdots \& \gamma_{n-1}^+ \& \widetilde \beta^- \& \gamma_n^+ \& \cdots \& \gamma_l^+ \\
};
\node[circle, minimum size=2em] (C) at (m-1-5) {};
\path[->,line width=.4pt, font=\scriptsize] (m-1-1) edge node[above=-.4ex] {$\bar p_1$} (m-1-2);
\path[->,line width=.4pt, font=\scriptsize] (m-1-2) edge node[above=-.4ex] {$\bar p_2$} (m-1-3);
\path[->,line width=.4pt, font=\scriptsize] (m-1-3) edge node[above=-.4ex] {$\bar p_{n-1}$} (m-1-4);
\path[->,line width=.4pt, font=\scriptsize] (m-1-4) edge node[above=-.4ex] {$\bar p_n$} (m-1-5);
\path[->,line width=.4pt, font=\scriptsize] (m-1-5) edge node[above=-.2ex] {$\bar p_n'$} (m-1-6);
\path[->,line width=.4pt, font=\scriptsize] (m-1-6) edge node[above=-.4ex] {$\bar p_{n+1}$} (m-1-7);
\path[->,line width=.4pt, font=\scriptsize] (m-1-7) edge node[above=-.4ex] {$\bar p_l$} (m-1-8);
\path[->,line width=.4pt, font=\scriptsize, out=-25, in=-155] (m-1-1) edge node[below=-.4ex] {$q$} (C);
\end{tikzpicture}
\]
Let us describe the A$_\infty$ minimal model of the endomorphism algebra of $\Gamma$.

\begin{proposition}
\label{proposition:orbifolddisk}
After passing to the homotopy category $\H^0 (\mathbf W (\mathbf S)) = \mathcal W (\mathbf S)$, the object $\Gamma$ is a generator of the partially wrapped Fukaya category $\mathcal W (\mathbf S)$ of the orbifold disk $\mathbf S$.

Moreover, the only nonzero higher A$_\infty$ product in the A$_\infty$ minimal model of the endomorphism algebra $\mathcal W(\mathbf S)(\Gamma, \Gamma)$ of $\Gamma$ is given by 
\begin{align*}
\begin{tikzpicture}[baseline=-2.6pt,description/.style={fill=white,inner sep=2pt}]
\matrix (m) [matrix of math nodes, row sep=.5em, text height=1.5ex, column sep=0em, text depth=0.25ex, ampersand replacement=\&, column 2/.style={anchor=base west}]
{
\s \mathcal W (\mathbf S) (\gamma_{n-1}^+, \widetilde\beta^-) \otimes \s \mathcal W (\mathbf S) (\gamma_{1 \dotsc n-1}^+) \otimes \s \mathcal W (\mathbf S) (\widetilde\alpha^+, \gamma_1^+) \&[2em] \s \mathcal W (\mathbf S) (\widetilde\alpha^+, \widetilde\beta^-) \\
\s \bar p_n \otimes \s \bar p_{n-1 \dotsc 2} \otimes \s \bar p_1  \& \s q. \\
};
\path[->,line width=.4pt,font=\scriptsize]
(m-1-1) edge (m-1-2)
;
\path[|->,line width=.4pt]
(m-2-1) edge (m-2-2)
;
\end{tikzpicture}
\end{align*}
\end{proposition}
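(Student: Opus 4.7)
The generation claim is immediate from Lemma~\ref{lemma:splitgenerator}. For the A$_\infty$ structure, the plan is to apply homotopy transfer to the DG algebra $\mathbf{W}(\mathbf{S})(\Gamma, \Gamma)$---this is genuinely a DG algebra since the gentle algebra $\mathbf{A}_{\widetilde\Gamma}$ is graded associative and the orbit and strict idempotent completion constructions used here preserve the DG property. The cohomology is already described by the graded quiver preceding the proposition, extracted from Lemma~\ref{lemma:morphisminvariant}. One then chooses a strong deformation retract $(\pi, i, h)$ onto the cohomology whose homotopy $h$ is nonzero precisely on the acyclic subcomplexes identified in Lemma~\ref{lemma:morphisminvariant}; concretely, on each $\mathbf{W}(\mathbf{S})(\gamma^+_i, \widetilde\beta^-)$ for $1 \leq i \leq n-2$ we take $h$ as the inverse of the differential sending the identity basis element $(\s^{|\s p^+_{i\dotsc 1}|}\id_{\gamma^+_i}, -\s^{|\s p^+_{i\dotsc 1}|}\id_{\gamma^+_i})$ to the boundary-path basis element, with analogous choices on the acyclic pieces of $\mathbf{W}(\mathbf{S})(\widetilde\alpha^\pm, \widetilde\beta^\pm)$ and on $\mathbf{W}(\mathbf{S})(\widetilde\beta^-, \widetilde\beta^-)$.

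The key observation simplifying the transfer formula is that in the DG algebra the strict compositions $\bar p_{i+1} \cdot \bar p_i$ all vanish for $i \leq n - 2$, because $p^+_{j+1} p^+_j = 0$ in $\mathbf{A}_{\widetilde\Gamma}$ as forced by the twisted complex condition $\delta_{\widetilde\beta}^2 = 0$. Consequently, in the Kontsevich--Soibelman sum over planar binary trees for $\mu_n^{\infty}(\s\bar p_n \otimes \cdots \otimes \s\bar p_1)$, every contribution vanishes except that of the left-heavy comb: iteratively form $\bar p_n \cdot \bar p_{n-1} = (\s^{|\s p^+_{n-1\dotsc 1}|} p^+_{n-1}, -\s^{|\s p^+_{n-1\dotsc 1}|} p^+_{n-1})$, which is the coboundary basis element in the acyclic complex of Lemma~\ref{lemma:morphisminvariant}(iv), apply $h$ to get the corresponding identity primitive, multiply by $\bar p_{n-2}$, and repeat. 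An induction using the orbit composition formula~\eqref{align:compositionorbit} shows that the intermediate data retains the antisymmetric form $(X, -X)$ and after $k$ iterations equals $(\s^{|\s p^+_{n-k\dotsc 1}|}p^+_{n-k}, -\s^{|\s p^+_{n-k\dotsc 1}|}p^+_{n-k})$. At the final step one applies $h$ to obtain $(\s^{|\s p^+_1|}\id_{\gamma^+_1}, -\s^{|\s p^+_1|}\id_{\gamma^+_1})$ and multiplies by $\bar p_1 = (p^+_1, p^-_1)$; crucially, the nontrivial $\mathbb{Z}_2$-action $-1 \cdot p^+_1 = p^-_1$ from Remark~\ref{remark:Z2action} converts the antisymmetric form into the sign pattern $(\s^{|\s p^+_1|} p^+_1 - \s^{|\s p^-_1|} p^-_1, -\s^{|\s p^+_1|} p^+_1 + \s^{|\s p^-_1|} p^-_1)$, which is precisely the representative of $q$ from Lemma~\ref{lemma:morphisminvariant}(ii). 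Projecting to cohomology and reintroducing the output shift yields $\s q$.

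For vanishing of all other higher A$_\infty$ products, one argues by target: for any composable sequence of cohomology generators that is not $(\bar p_1, \ldots, \bar p_n)$, the target morphism space in $\mathcal{W}(\mathbf{S})$ is already zero. Specifically, Lemma~\ref{lemma:morphisminvariant}(iv) gives $\mathcal{W}(\mathbf{S})(\gamma^+_i, \widetilde\beta^-) = 0$ for $1 \leq i \leq n-2$, and the adjacent-arcs-only structure of the gentle algebra $\mathbf{A}_{\widetilde\Gamma}$ forces vanishings such as $\mathbf{A}(\widetilde\alpha, \gamma^+_i) = 0$ for $i \geq 2$, ruling out higher products landing anywhere else; alternatively, sequences confined to the subquivers $\widetilde\alpha^+ \to \gamma^+_1 \to \cdots \to \gamma^+_{n-1}$ or $\widetilde\beta^- \to \gamma^+_n \to \cdots \to \gamma^+_l$ lie in full subalgebras corresponding to graded gentle algebras of smooth disks, which are formal by~\cite{haidenkatzarkovkontsevich}. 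The loops at $\widetilde\beta^-$ are cohomologically trivial by the computation preceding the proposition, so they contribute nothing new.

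The main technical obstacle is the careful bookkeeping of signs, shifts and the interaction between the $\mathbb{Z}_2$-action and the idempotent decomposition $\id^\pm$ throughout the iterative transfer computation. The persistence of the antisymmetric form $(X, -X)$ (rather than the symmetric form $(X, X)$) of intermediate elements is essential: it ensures the final answer lies in the $\widetilde\beta^-$ component and yields $q$, rather than an acyclic element of $\mathcal{W}(\mathbf{S})(\widetilde\alpha^+, \widetilde\beta^+)$; likewise the swap $-1 \cdot p^+_1 = p^-_1$ at the last step is precisely what creates the crucial minus sign distinguishing $q$ from an exact cocycle.
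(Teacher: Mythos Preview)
Your approach is essentially the same as the paper's: invoke Lemma~\ref{lemma:splitgenerator} for generation, then construct a homotopy deformation retract from $\mathbf W(\mathbf S)(\Gamma,\Gamma)$ onto its cohomology with the homotopy $h$ supported on the acyclic pieces $\mathbf W(\mathbf S)(\gamma_i^+,\widetilde\beta^-)$ for $1\le i\le n-2$ (and on the loops at $\widetilde\beta^-$), and apply the homotopy transfer theorem. The paper's proof is terse and leaves the tree computation and the vanishing of all other higher products implicit; you have spelled out both, and your left-comb argument and target-space vanishing reasoning are correct. One small inaccuracy: your parenthetical mention of acyclic pieces $\mathbf W(\mathbf S)(\widetilde\alpha^\pm,\widetilde\beta^\pm)$ is superfluous, since $\widetilde\beta^+$ is not a summand of $\Gamma$ and so these do not occur in the endomorphism algebra---but this does not affect the argument.
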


\begin{proof}
The fact that $\Gamma$ generates $\mathcal W (\mathbf S)$ follows directly from Lemma \ref{lemma:splitgenerator}.

We now construct a homotopy deformation retract
\[
\begin{tikzpicture}[baseline=-2.6pt,description/.style={fill=white,inner sep=1pt,outer sep=0}]
\matrix (m) [matrix of math nodes, row sep=0em, text height=1.5ex, column sep=2.5em, text depth=0.25ex, ampersand replacement=\&, inner sep=3.5pt]
{
\bigoplus_{n\in \mathbb Z} \mathcal W(\mathbf S)(\Gamma, \s^n\Gamma) = \H^\bullet (\mathbf W(\mathbf S)(\Gamma, \Gamma)) \& \mathbf W(\mathbf S)(\Gamma, \Gamma)\\
};
\path[->,line width=.4pt,font=\scriptsize, transform canvas={yshift=.4ex}]
(m-1-1) edge node[above=-.4ex] {$\iota$} (m-1-2)
;
\path[->,line width=.4pt,font=\scriptsize, transform canvas={yshift=-.4ex}]
(m-1-2) edge node[below=-.4ex] {$\pi$} (m-1-1)
;
\path[->,line width=.4pt,font=\scriptsize, looseness=12, in=30, out=330]
($(m-1-2.east)+(0,-.2em)$) edge node[right=-.4ex] {$h$} ($(m-1-2.east)+(0,+.2em)$)
;
\end{tikzpicture}
\]
i.e.\ $\iota \circ \pi - \id = dh + hd$ and $\pi \circ \iota = \id$, which restricts to
\[
\begin{tikzpicture}[baseline=-2.6pt,description/.style={fill=white,inner sep=1pt,outer sep=0}]
\matrix (m) [matrix of math nodes, row sep=0em, text height=1.5ex, column sep=2.5em, text depth=0.25ex, ampersand replacement=\&, inner sep=3.5pt]
{
0 \& \mathbf W (\mathbf S)( \gamma_{i}^+, \widetilde \beta^-) \\
};
\path[->,line width=.4pt,font=\scriptsize, transform canvas={yshift=.4ex}]
(m-1-1) edge node[above=-.4ex] {$\iota$} (m-1-2)
;
\path[->,line width=.4pt,font=\scriptsize, transform canvas={yshift=-.4ex}]
(m-1-2) edge node[below=-.4ex] {$\pi$} (m-1-1)
;
\path[->,line width=.4pt,font=\scriptsize, looseness=12, in=30, out=330]
($(m-1-2.east)+(0,-.2em)$) edge node[right=-.4ex] {$h$} ($(m-1-2.east)+(0,+.2em)$)
;
\end{tikzpicture}
\]
for each $1\leq i \leq n-2$. Here, the nonzero part of $h$ is given by 
\begin{align*}
h \left( \begin{pmatrix}\s^{|\s p_{i+1 \dotsc 1}^+|} p_{i+1}^+ \\  -\s^{|\s p_{i+1 \dotsc 1}^+|} p_{i+1}^+ \end{pmatrix} \right) &= -\begin{pmatrix}\s^{|\s p_{i \dotsc 1}^+|} \id_{\gamma_{i}^+} \\  -\s^{|\s p_{i \dotsc 1}^+|} \id_{\gamma_{i}^+} \end{pmatrix}. 
\end{align*}
Then using the homotopy transfer theorem we obtain the desired higher product. 
\end{proof}

The following corollary shows that an orbifold disk can be described as the perfect derived category of a type $\mathrm D$ quiver.

\begin{corollary}
\label{corollary:typeD}
Let $\mathbf S$ be the graded orbifold disk with $l + 1$ stops as in \S\ref{subsection:orbifolddisk}. Then we have a triangulated equivalence
\[
\mathcal W (\mathbf S) \simeq \per (A)
\]
where $A \simeq \Bbbk Q / I$ is the (ungraded) path algebra of a linearly ordered quiver $Q$ of type $\mathrm D$ and $I$ is the ideal generated by all paths of length $2$. Here $\per (A)$ is the perfect derived category of $A$.
\end{corollary}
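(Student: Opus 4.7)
The plan is to replace the split-generator $\Gamma$ of Proposition~\ref{proposition:orbifolddisk} by the new split-generator
\[
\Gamma' = \widetilde\alpha^+ \oplus \widetilde\alpha^- \oplus \bigoplus_{i=1}^l \gamma_i^+
\]
whose endomorphism algebra will visibly be the claimed type~$\mathrm D$ algebra with trivial higher multiplications. To see that $\Gamma'$ still split-generates $\mathcal W(\mathbf S)$, I would adapt the argument of Lemma~\ref{lemma:splitgenerator}: the triangle $\gamma^- \to \widetilde\beta^- \to \widetilde\alpha^-$ used there places $\widetilde\beta^-$ in $\mathrm{thick}(\Gamma')$ once one observes that $\gamma^- \simeq \gamma^+$ in $\mathcal W(\mathbf S)$ via the natural $\mathbb Z_2$-identification available in the orbit category, and symmetrically for $\widetilde\beta^+$. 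Hence $\mathrm{thick}(\Gamma) \subset \mathrm{thick}(\Gamma') = \mathcal W(\mathbf S)$.

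Next, I would compute $\End_{\mathbf W(\mathbf S)}(\Gamma')$ in the spirit of Lemma~\ref{lemma:morphisminvariant}, applying the orbit composition formula~\eqref{align:compositionorbit} to the primitive idempotents $\id_{\widetilde\alpha}^{\pm}$. The essential inputs from the gentle algebra $\mathbf A_{\widetilde\Gamma}$ on the smooth double cover are that the only nonzero morphisms between the arcs in $\{\widetilde\alpha, \gamma_1^\pm, \dotsc, \gamma_l^\pm\}$ are the identities together with the single arrows $p_1^\pm \colon \widetilde\alpha \to \gamma_1^\pm$ and $p_{i+1}^\pm \colon \gamma_i^\pm \to \gamma_{i+1}^\pm$, and that any composition of two consecutive such arrows vanishes, since in the polygonal dissection of $\widetilde{\mathbf S}$ they sit at distinct boundary segments. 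Flanking by $\id_{\widetilde\alpha}^\pm$ then produces exactly one arrow $\widetilde\alpha^+ \to \gamma_1^+$ and one arrow $\widetilde\alpha^- \to \gamma_1^+$, no arrows among $\{\widetilde\alpha^+, \widetilde\alpha^-\}$, and the chain $\gamma_1^+ \to \gamma_2^+ \to \dotsb \to \gamma_l^+$, all with length-two compositions zero. This is precisely the ungraded path algebra, modulo the ideal $I$ of paths of length two, of the linearly oriented type~$\mathrm D_{l+2}$ quiver with fork $\widetilde\alpha^+ \to \gamma_1^+ \leftarrow \widetilde\alpha^-$ and tail $\gamma_1^+ \to \gamma_2^+ \to \dotsb \to \gamma_l^+$.

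To upgrade this identification of ungraded algebras to an equivalence of triangulated categories, I would observe that the cohomology $\H^\bullet(\End_{\mathbf W(\mathbf S)}(\Gamma'))$ is concentrated in a single cohomological degree, so all higher A$_\infty$ products $\mu_n$ with $n \geq 3$ vanish automatically for degree reasons ($|\mu_n| = 2 - n < 0$). Thus the A$_\infty$ minimal model of $\End(\Gamma')$ is formal and equals, as an ordinary associative algebra, $A = \Bbbk Q / I$. Combined with the split-generation of $\Gamma'$ and Remark~\ref{remark:perfectderived} this yields the desired triangulated equivalence
\[
\mathcal W(\mathbf S) \simeq \per(A).
\]

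The main obstacle I anticipate is the orbit-category bookkeeping in the computation of $\End(\Gamma')$: the splitting $\widetilde\alpha = \widetilde\alpha^+ \oplus \widetilde\alpha^-$ interacts nontrivially with the $\mathbb Z_2$-action on $p_1^+, p_1^-$ (see Remark~\ref{remark:Z2action}), so one must carefully verify both the vanishing of the cross-term $\mathbf W(\mathbf S)(\widetilde\alpha^+, \widetilde\alpha^-) = 0$ and the independent existence of the two arrows $\widetilde\alpha^\pm \to \gamma_1^+$ that produce the fork of the Dynkin diagram. Once these computations are in hand the type~$\mathrm D$ structure and the formality drop out cleanly.
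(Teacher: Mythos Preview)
Your approach is correct and genuinely different from the paper's. The paper keeps the generator $\Gamma = \widetilde\alpha^+ \oplus \bigoplus \gamma_i^+ \oplus \widetilde\beta^-$, specializes to $n=1$, computes the DG endomorphism algebra (which has a nonzero differential coming from $\mutimes_1$), takes cohomology, and then must invoke the forward reference to Theorem~\ref{theorem:formaldg} to establish formality. Your replacement of $\widetilde\beta^-$ by $\widetilde\alpha^-$ avoids the twisted complex $\widetilde\beta$ entirely: all summands of $\Gamma'$ are arcs (or direct summands thereof) with trivial twisted differential, so the DG endomorphism algebra of $\Gamma'$ in $\mathbf W(\mathbf S)$ already has $\mu_1 = 0$. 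This makes the argument more self-contained and sidesteps the forward reference.

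Two points deserve sharpening. First, your formality argument is over-engineered and its stated justification is not quite right. You assert that $\H^\bullet(\End_{\mathbf W(\mathbf S)}(\Gamma'))$ is concentrated in a single degree, but the degrees $|p_i^+|$ are determined by the line field and need not all coincide a priori. The correct (and simpler) reason is the one just mentioned: since every object in $\Gamma'$ has $\delta = 0$, the DG endomorphism algebra has zero differential and is therefore equal to its own cohomology with trivial higher products; no homotopy transfer is needed. Second, the claim that $A$ is the \emph{ungraded} path algebra requires justifying that all arrows can be placed in degree $0$. This follows because the type $\mathrm D_{l+2}$ quiver is a tree, so one can absorb the degrees $|p_i^+|$ by shifting the gradings of the arcs; equivalently one can argue as the paper does via the line field on the disk. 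You should make this step explicit.
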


\begin{proof}
By Proposition \ref{proposition:orbifolddisk} the object $\Gamma$ is a generator of $\mathcal W (\mathbf S)$ for any $n$ so that we may consider the case $n = 1$. The unique extra (``higher'') structure in Proposition \ref{proposition:orbifolddisk} is given by the differential $\mu_1 (\s p_1) = \s q$. The endomorphism algebra $\mathbf W (\mathbf S) (\Gamma, \Gamma)$ of $\Gamma$ is thus the DG algebra given by the quiver
\begin{equation*}
\begin{tikzpicture}[baseline=-2.6pt,description/.style={fill=white,inner sep=1pt,outer sep=0}]
\matrix (m) [matrix of math nodes, row sep=2.5em, text height=1.5ex, column sep=3em, text depth=0.25ex, ampersand replacement=\&, inner sep=3.5pt]
{
\widetilde \alpha^+ \& \widetilde \beta^- \& \gamma_1^+ \& \gamma_2^+ \& \dotsb \& \gamma_l^+ \\
};
\path[->,line width=.4pt, font=\scriptsize, transform canvas={yshift=.5ex}] (m-1-1) edge node[above] {$\bar p_1$} (m-1-2);
\path[->,line width=.4pt, font=\scriptsize, transform canvas={yshift=-.5ex}] (m-1-1) edge node[below] {$q$} (m-1-2);
\path[->,line width=.4pt, font=\scriptsize] (m-1-2) edge node[above] {$\bar p_1'$} (m-1-3);
\path[->,line width=.4pt, font=\scriptsize] (m-1-3) edge node[above] {$\bar p_2$} (m-1-4);
\path[->,line width=.4pt, font=\scriptsize] (m-1-4) edge node[above] {$\bar p_3$} (m-1-5);
\path[->,line width=.4pt, font=\scriptsize] (m-1-5) edge node[above] {$\bar p_l$} (m-1-6);
\end{tikzpicture}
\end{equation*}
with relations $\bar p_2 \bar p_1' = 0$ and $\bar p_{i+1} \bar p_i = 0$ for all $1 < i < l$. Its cohomology algebra $A = \H^\bullet (\mathbf W (\mathbf S) (\Gamma, \Gamma))$ is given by the quiver
\begin{equation}
\label{eq:typed}
\begin{tikzpicture}[baseline=-2.6pt,description/.style={fill=white,inner sep=1pt,outer sep=0}]
\matrix (m) [matrix of math nodes, row sep=2.5em, text height=1.5ex, column sep=3em, text depth=0.25ex, ampersand replacement=\&, inner sep=3.5pt]
{
\widetilde \alpha^+ \& \widetilde \beta^- \& \gamma_1^+ \& \gamma_2^+ \& \dotsb \& \gamma_l^+ \\
};
\path[->,line width=.4pt, font=\scriptsize] (m-1-1) edge[bend right=30] node[below] {$q'$} (m-1-3);
\path[->,line width=.4pt, font=\scriptsize] (m-1-2) edge node[above] {$\bar p_1'$} (m-1-3);
\path[->,line width=.4pt, font=\scriptsize] (m-1-3) edge node[above] {$\bar p_2$} (m-1-4);
\path[->,line width=.4pt, font=\scriptsize] (m-1-4) edge node[above] {$\bar p_3$} (m-1-5);
\path[->,line width=.4pt, font=\scriptsize] (m-1-5) edge node[above] {$\bar p_l$} (m-1-6);
\end{tikzpicture}
\end{equation}
with all quadratic monomial relations. Here the arrow $q'$ represents the nonzero cocycle $\bar p_1' q$. This quiver is of type $\mathrm D_{l+2}$. It follows from Theorem \ref{theorem:formaldg} below that $\mathbf W (\mathbf S) (\Gamma, \Gamma)$ is formal, i.e.\ it is A$_\infty$-quasi-isomorphic to its cohomology algebra which proves that
\[
\per (A) \simeq \H^0 (\tw (\mathbf W (\mathbf S) (\Gamma, \Gamma))^\natural) \simeq \mathcal W (\mathbf S).
\]
That the degrees of the arrows in \eqref{eq:typed} can be chosen to lie in degree $0$ follows from the observation that up to homotopy, the arcs can be chosen to be transverse to the line field and the line field can be chosen to be tranverse to the boundary everywhere except in the part of the boundary containing the stop $\sigma$ which does not contribute to the grading of the arrows (see \S\ref{subsection:grading}).
\end{proof}

\begin{remark}
The orbifold disk is a local model for the neighbourhood of an orbifold point in a general orbifold surface. Corollary \ref{corollary:typeD} shows that the corresponding partially wrapped Fukaya category can be described by a quiver of type $\mathrm D$. This can be used to construct a Weinstein sectorial cover of a general orbifold surface containing Weinstein sectors of type $\mathrm D$ in addition to the sectors of type $\mathrm A$ or type $\widetilde{\mathrm A}$ for smooth surfaces (see \S\ref{subsection:core}).

Although it is natural to consider the case of full quadratic monomial relations in the algebra $A = \Bbbk Q / I$, by changing the dissection of the orbifold disk one can reverse arrows in $Q$ and remove relations of $I$ without changing the derived equivalence class of $A$. This fact will be a straightforward consequence of our main results of this paper, in particular the description of $\mathcal W (\mathbf S)$ for any $\mathbf S$ via explicit A$_\infty$ categories in Section \ref{section:ainfinity} and the characterization of formal generators in Section \ref{section:formal}.
\end{remark}

Proposition \ref{proposition:orbifolddisk} gives higher structures in terms of curves in the orbit category of a double cover. If we had worked directly on the orbifold disk, then we could have simply defined
\begin{equation}
\label{eq:mutimesunique1}
\mu_n (\s \bar p_{n \dotsc 1}) = \s q
\end{equation}
as in Fig.~\ref{fig:doublecoverdisk}. (In the following sections, we will denote this special type of higher product by $\mutimes_n$ to indicate its relation to the orbifold point since we illustrate orbifold points by $\times$.) In the next few sections, we will take this type of higher structure as a local model and generalize it to arbitrary graded orbifold surfaces with stops.


Note that we may replace the arcs $\gamma_n, \dotsc, \gamma_l$ by arcs $\gamma'_n, \dotsc, \gamma'_l$, where $\gamma'_n$ is the cone of the morphism from $\beta$ to $\gamma_n$ and similarly $\gamma'_{n+1}$ is the cone of the morphism from $\gamma'_n$ to $\gamma_{n+1}$ and so on. One may check that $\alpha \toarg{q} \beta$ factorizes into $q = q_{l-n+2} \dotsb q_2 q_1$ and the unique higher product involving $\alpha, \beta, \gamma_1, \dotsc, \gamma_{n-1}, \gamma_n', \dotsc, \gamma_l'$ is still given by
\begin{equation}
\label{eq:mutimesunique2}
\mu_n (\s \bar p_{n \dotsc 1}) = \s q = \s q_{l-n+2} \dotsb q_2 q_1.
\end{equation}
See Fig.~\ref{fig:orbifolddisk} for an illustration (where in the figure $k = l-n+2$).

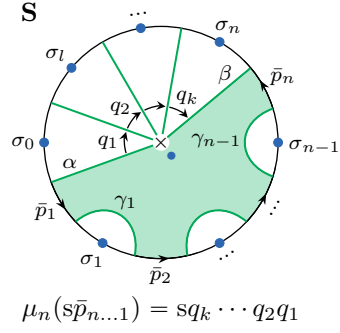
\begin{figure}
\begin{tikzpicture}[x=1em,y=1em,decoration={markings,mark=at position 0.55 with {\arrow[black]{Stealth[length=4.2pt]}}}]
\draw[fill=arccolour!30!white,line width=0pt] (200:4em) arc[start angle=200, end angle=240-17, radius=4em] to[out=240+160, in=240-160, looseness=1.5] (240+17:4em) arc[start angle=240+17, end angle=300-17, radius=4em] to[out=300+160, in=300-160, looseness=1.5] (300+17:4em) arc[start angle=300+17, end angle=360-17, radius=4em] to[out=360+160, in=360-160, looseness=1.5] (360+17:4em) arc[start angle=360+17, end angle=400, radius=4em] to (0,0);
\draw[->, line width=.5pt] (197:1.25em) arc[start angle=197, end angle=163, radius=1.25em];
\draw[->, line width=.5pt] (157:1.25em) arc[start angle=157, end angle=123, radius=1.25em];
\draw[->, line width=.5pt] (117:1.25em) arc[start angle=117, end angle=83, radius=1.25em];
\draw[->, line width=.5pt] (77:1.25em) arc[start angle=77, end angle=43, radius=1.25em];
\node[font=\scriptsize] at (180:1.8em) {$q_1$};
\node[font=\scriptsize] at (140:1.8em) {$q_2$};
\node[font=\scriptsize] at (60:1.8em) {$q_k$};
\draw[line width=.5pt] circle(4em);
\node[font=\small] at (-4.5em,4.5em) {$\mathbf S$};
\node[font=\scriptsize,shape=circle,scale=.6,fill=white] (X) at (0,0) {};
\node[font=\scriptsize] at (0,0) {$\times$};
\draw[line width=.75pt,color=arccolour] (200:4em) to (X);
\draw[line width=.75pt,color=arccolour] (40:4em) to (X);
\draw[line width=.75pt,color=arccolour] (160:4em) to (X);
\draw[line width=.75pt,color=arccolour] (120:4em) to (X);
\draw[line width=.75pt,color=arccolour] (80:4em) to (X);
\node[font=\scriptsize] at (180:4.7em) {$\sigma_0$};
\node[font=\scriptsize] at (240:4.7em) {$\sigma_1$};
\node[font=\scriptsize] at (303:4.5em) {$.$};
\node[font=\scriptsize] at (297:4.5em) {$.$};
\node[font=\scriptsize] at (300:4.5em) {$.$};
\node[font=\scriptsize] at (-2:4.6em) {$\sigma\mathrlap{_{n-1}}$};
\node[font=\scriptsize] at (60:4.6em) {$\sigma_n$};
\node[font=\scriptsize] at (103:4.5em) {$.$};
\node[font=\scriptsize] at (100:4.5em) {$.$};
\node[font=\scriptsize] at (97:4.5em) {$.$};
\node[font=\scriptsize] at (140:4.7em) {$\sigma_l$};
\node[font=\scriptsize, color=stopcolour] at (306:0.6em) {$\bullet$};
\node[font=\scriptsize] at (193:3.2em) {$\alpha$};
\node[font=\scriptsize] at (47:3.2em) {$\beta$};
\node[font=\scriptsize] at (240:2.4em) {$\gamma_1$};
\node[font=\scriptsize] at (2:1.9em) {$\gamma_{n-1}$};
\draw[line width=0pt,postaction={decorate}] (200:4em) arc[start angle=200, end angle=240-17+5, radius=4em];
\node[font=\scriptsize] at (211.5:4.6em) {$\bar p_1$};
\draw[line width=0pt,postaction={decorate}] (240+17:4em) arc[start angle=240+17, end angle=300-17+5, radius=4em];
\node[font=\scriptsize] at (270:4.6em) {$\bar p_2$};
\draw[line width=0pt,postaction={decorate}] (300+17:4em) arc[start angle=300+17, end angle=360-17+5, radius=4em];
\node[font=\scriptsize] at (330:4.5em) {$.$};
\node[font=\scriptsize] at (327:4.5em) {$.$};
\node[font=\scriptsize] at (333:4.5em) {$.$};
\draw[line width=0pt,postaction={decorate}] (17:4em) arc[start angle=17, end angle=40+5, radius=4em];
\node[font=\scriptsize] at (28.5:4.75em) {$\bar p_n$};
\foreach \a in {240,300,0} {
\draw[fill=stopcolour, color=stopcolour] (\a:4em) circle(.15em);
\path[line width=.75pt,out=\a-160,in=\a+160,looseness=1.5,color=arccolour] (\a+17:4em) edge (\a-17:4em);
}
\draw[fill=stopcolour, color=stopcolour] (180:4em) circle(.15em);
\draw[fill=stopcolour, color=stopcolour] (140:4em) circle(.15em);
\draw[fill=stopcolour, color=stopcolour] (100:4em) circle(.15em);
\draw[fill=stopcolour, color=stopcolour] (60:4em) circle(.15em);
\node[font=\small] at (0,-5.75em) {$\mu_n (\s \bar p_{n \dotsc 1}) = \s q_k \dotsb q_2 q_1$};
\end{tikzpicture}
\caption{The unique higher product on an orbifold disk where the orbifold path from $\alpha$ to $\beta$ factorizes as a composition of orbifold paths between arcs connecting to the orbifold point.}
\label{fig:orbifolddisk}
\end{figure}

\section{Arc systems, dissections and paths}
\label{section:arcsystems}

We now give the formal definitions of arcs, dissections, ribbon complexes and paths for an arbitrary graded orbifold surface $\mathbf S$ which we use in Section \ref{section:cosheaves} and Section \ref{section:ainfinity} to define A$_\infty$ categories whose homotopy categories of twisted complexes give the partially wrapped Fukaya category of $\mathbf S$.

\subsection{Arcs and arc systems}
\label{subsection:arcsystem}

\begin{definition}
An {\it arc} in an orbifold surface with stops $(S, \Sigma)$ is given by (the image of) a smooth embedding $\gamma \colon [0, 1] \to S \smallsetminus \Sigma$ such that the following conditions are satisfied:
\begin{itemize}
\item The interior of $\gamma$, i.e.\ the image of the open interval $(0,1)$, contains neither boundary nor orbifold points.
\item The endpoints\footnote{We do not consider any orientation on arcs and thus refer to both $\gamma (0)$ and $\gamma (1)$ as {\it endpoints} of $\gamma$ and denote the endpoints by $\partial \gamma$.} of $\gamma$ belong to $\Sing (S) \cup \partial S \smallsetminus \Sigma$ and the endpoints are only allowed to coincide if they are orbifold points. Moreover, we assume that if $\gamma$ intersects $\partial S$, the intersection is transverse.
\item $\gamma$ is not isotopic to an embedded interval in $\partial S \smallsetminus \Sigma$ nor homotopic to the trivial path at an orbifold point.
\end{itemize}
\end{definition}

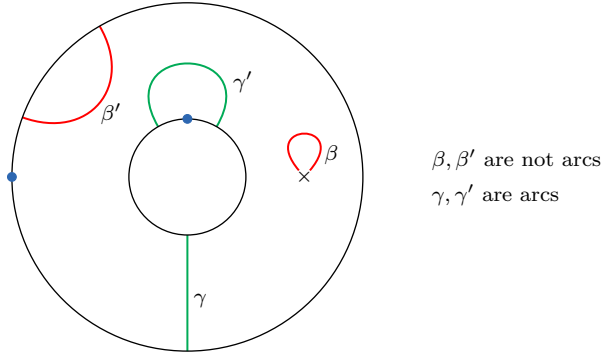
\begin{figure}
\begin{tikzpicture}[x=1em,y=1em]
\node[font=\scriptsize,right] at (8,.6) {$\beta, \beta'$ are not arcs};
\node[font=\scriptsize,right] at (8,-.6) {$\gamma, \gamma'$ are arcs};
\path[line width=.75pt,out=120,in=60,looseness=4.3,color=arccolour] (120:2em) edge (60:2em);
\path[line width=.75pt,color=arccolour] (270:2em) edge (270:6em);
\path[line width=.75pt,out=300,in=-20,looseness=1.5,color=red] (120:6em) edge (160:6em); 
\begin{scope}[xshift=4em]
\node[font=\scriptsize] at (40:1.2em) {$\beta$};
\end{scope}
\node[font=\scriptsize] at (60:3.8em) {$\gamma'$};
\node[font=\scriptsize,right=-.3ex] at (270:4.25em) {$\gamma$};
\node[font=\scriptsize] at (140:3.4em) {$\beta'$};
\draw[line width=.5pt] (0,0) circle(2em);
\draw[line width=.5pt] (0,0) circle(6em);
\draw[fill=stopcolour, color=stopcolour] (90:2em) circle(.15em);
\draw[fill=stopcolour, color=stopcolour] (180:6em) circle(.15em);
\node[font=\scriptsize,shape=circle,scale=.6,fill=white] (X) at (4,0) {};
\path[line width=.75pt,out=130,in=50,looseness=15,color=red] (X) edge (X);
\node[font=\scriptsize] at (4,0) {$\times$};
\end{tikzpicture}
\caption{Examples and non-examples of arcs on an annulus with one orbifold point and two boundary stops.}
\label{fig:full}
\end{figure}

Here and elsewhere we allow isotopies or homotopies of arcs to move the endpoints of the arc within $\partial S \smallsetminus \Sigma$ and to move the interior of the arc within $S \smallsetminus \Sing (S)$. If an arc has an orbifold point as endpoint, this endpoint has to be fixed by the isotopy or homotopy.

The following notion of an arc system on an orbifold surface extends the notion of an arc system on a smooth surface as in \cite{haidenkatzarkovkontsevich}. As we saw in Section \ref{section:disk}, it is necessary to consider arcs connecting to orbifold points. In the literature on cluster algebras related to orbifold surfaces (see for example \cite{feliksonshapirotumarkin,geuenichlabardini}) permitted arcs connecting to orbifold points come in pairs, one of which is usually marked with a ``tagging'' and flips or mutations are defined for tagged triangulations. In our setup, some smoothly isotopic arcs connecting to orbifold points give rise to different objects of the partially wrapped Fukaya category (e.g.\ the arcs $\widetilde \alpha^\pm$ in Section \ref{section:disk}). See also Remark \ref{remark:tagging}. Unlike in the case of cluster theory, we will not restrict ourselves to triangulations and we will not distinguish the curves by a tagging.

\begin{definition}
\label{definition:arcsystem}
An {\it arc system} $\Gamma$ in an orbifold surface with stops $(S, \Sigma)$ is given by a finite collection $\{ \gamma_1, \dotsc, \gamma_n \}$ of arcs such that any two arcs in $\Gamma$ are only allowed to intersect at orbifold points. (We allow $\Gamma$ to contain isotopic arcs.)
\end{definition}

\subsection{Dissections and ribbon complexes}
\label{subsection:ribbongraph}

If $\Gamma$ contains enough arcs to separate the stops and dissect $S$ into ``simple pieces'', then $(S, \Sigma)$ can be encoded into a ribbon graph with extra structure. These ``simple pieces'' are made precise in the following notion of dissection.

\begin{definition}
\label{definition:dissection}
Let $\Gamma$ be an arc system on an orbifold surface with stops $(S, \Sigma)$ as in Definition \ref{definition:arcsystem}. We decompose $S \smallsetminus \Gamma$ into its connected components as follows
\[
S \smallsetminus \Gamma = \bigsqcup_v P_v.
\]
Then $\Gamma$ is called a {\it dissection} of $(S, \Sigma)$ if the following hold:
\begin{enumerate}
\item Each $P_v$ is either a smooth disk or a smooth annulus, i.e.\ $P_v \cap \Sing (S) = \varnothing$.
\item If $P_v$ is a smooth disk then $P_v$ contains at most one single boundary stop, i.e.\ $P_v \cap \Sigma$ is either empty or consists of a single boundary stop $\sigma$.
\item If $P_v$ is a smooth annulus then $P_v$ contains a single full boundary stop, i.e.\ $P_v \cap \Sigma = \sigma \simeq \mathrm S^1$.
\item If $P_{v_{i_1}}, \dotsc, P_{v_{i_k}}$ are the connected components around an orbifold point, then at least one of them does not contain any (full) boundary stop. This condition allows us to introduce the orbifold stop for each orbifold point, see Definition \ref{definition:orbifoldstop}. 
\end{enumerate}
\end{definition}

Since the $P_v$ are connected components of $S \smallsetminus \Gamma$, they are open in the subspace topology of $S$, but may contain parts of the boundary of $S$. Note that the above definition implies that if $\Gamma$ is a dissection, then for each orbifold point $x \in \Sing (S)$ there is at least one arc in $\Gamma$ connecting to it and, moreover, there is at least one orbifold point which is connected to the boundary of $S$. (This is due to the fact that we only allow $P_v$ to be a smooth annulus, if it contains a full boundary stop.)

\subsubsection{Ribbon complex structure}

Let $\Gamma$ be a dissection. Then $\Gamma$ naturally determines a cell complex $\mathbb G (\Gamma)$ as follows:
\begin{itemize}
\item {\it Vertices}.\; The set $\mathbb G_0 (\Gamma)$ of vertices ($0$-cells) corresponds to the connected components $P_v$ in the decomposition $S \smallsetminus \Gamma = \bigsqcup_v P_v$.
\item {\it Edges}.\; The set $\mathbb G_1 (\Gamma) = \{ e_\gamma \}_{\gamma \in \Gamma}$ of edges ($1$-cells) corresponds to the set of arcs in $\Gamma$ as each $\gamma$ belongs to the boundary of exactly two connected components of $S \smallsetminus \Gamma$. The edge $e_\gamma$ then joins $v$ and $v'$ if $\gamma$ belongs to the boundaries of $P_v$ and $P_{v'}$. (Note that $P_v$ and $P_{v'}$ may coincide, in this case, $e_\gamma$ is a loop.)
\item {\it Cyclic order}.\; The cyclic order on the half-edges incident to $v \in \mathbb G_0 (\Gamma)$ is given by the cyclic order in which the corresponding arcs appear in the boundary of $P_v$. This equips the $1$-skeleton of $\mathbb G (\Gamma)$ with the structure of a {\it ribbon graph}.
\end{itemize}

We call $P_v$ an {\it $n$-gon} if its boundary contains exactly $n$ arcs or equivalently if the valency $\val (v)$ of $v$ equals $n$. If the same arc appears twice in the boundary of an $n$-gon, it is counted twice so that $\val (v)$ counts the number of {\it half-edges} incident to $v$.

\begin{remark}
\label{remark:embedribbongraph}
The ribbon graph $\mathbb G (\Gamma)$ can be embedded into the surface $S$ so that each vertex $v \in \mathbb G_0 (\Gamma)$ lies in the interior of the polygon $P_v$ and each edge $e_{\gamma}\in \mathbb G_1 (\Gamma)$ intersects exactly one arc, namely $\gamma\in \Gamma$.
\end{remark}

If $P_v$ does not contain a (full) boundary stop, i.e.\ if $P_v \cap \Sigma = \varnothing$, then $P_v$ is a smooth disk and we shall denote the vertex $v$ by $\vcirc$. The (full) boundary stops may be encoded into the ribbon graph as follows.

\subsubsection*{Boundary stops as linear orders}

If $P_v$ contains a boundary stop $\sigma$ and $\gamma$ and $\gamma'$ appear (counterclockwise) next to each other in the boundary of $P_v$, then we write $e_\gamma \lessdot_v e_{\gamma'}$ if $\gamma$ and $\gamma'$ are joined by a boundary segment containing $\sigma$ and we write $e_\gamma <_v e_{\gamma'}$ otherwise. In particular $<_v$ defines a {\it linear} order on the arcs in the boundary of $v$ which $\lessdot_v$ completes into the natural cyclic order induced by the orientation of the surface. We call $\lessdot_v$ the {\it missing relation} at $v$ since it is the (order) relation that is missing in the cyclic order at $v$. We indicate that $v$ has a missing relation by denoting $v$ by $\vbullet$.

\subsubsection*{Full boundary stops}

If $P_v$ contains a full boundary stop, we shall denote the vertex by $\vodot$ because we may view the corresponding $n$-gon as a ``punctured'' disk. We may view $\odot$ as a colouring of the vertex $v$.

For an illustration of the polygons corresponding to the vertices $\vcirc$, $\vbullet$ and $\vodot$ see the first three diagrams in Fig.~\ref{fig:vertices}.

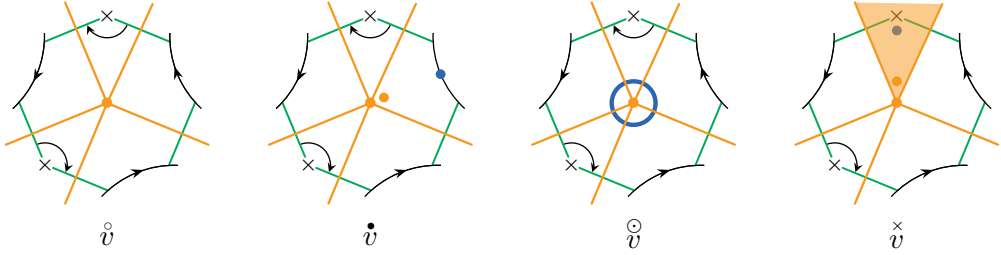
\begin{figure}[ht]
\begin{tikzpicture}[x=1em,y=1em,decoration={markings,mark=at position 0.55 with {\arrow[black]{Stealth[length=4.8pt]}}}]
\clip (-3.6,-5.1) rectangle (30.6,3.6);
\begin{scope}[xshift=9em]
\node[font=\footnotesize,shape=circle,scale=.6] (X) at (90:3em) {};
\node[font=\footnotesize] at (90:3em) {$\times$};
\node[font=\footnotesize,shape=circle,scale=.6] (Y) at (225:3em) {};
\node[font=\footnotesize] at (225:3em) {$\times$};
\draw[line width=.75pt, color=arccolour, line cap=round] (X) to (135:3em) (180:3em) to (Y) to (270:3em) (315:3em) to (0:3em) (45:3em) to (X);
\foreach \a in {0,135} {
\draw[-, line width=.5pt] (157.5+\a:6em) ++(2+\a:3.4em) arc[start angle=2+\a, end angle=-47+\a, radius=3.4em];
\draw[line width=.0pt,postaction={decorate}] ($(157.5+\a:6em)+(357+\a:3.4em)$) arc[start angle=357+\a, end angle=313+\a, radius=3.4em];
}
\draw[-, line width=.5pt] (157.5+225:6em) ++(2+225:3.4em) arc[start angle=2+225, end angle=-47+225, radius=3.4em];
\draw[fill=stopcolour, color=stopcolour] (157.5+225:2.6em) circle(.15em);
\draw[->, line width=.5pt] ($(90:3em)+(-25:.75em)$) arc[start angle=-25, end angle=-155, radius=.75em];
\draw[->, line width=.5pt] ($(90+135:3em)+(-25+135:.75em)$) arc[start angle=-25+135, end angle=-155+135, radius=.75em];
\node at (270:4.5em) {$\vbullet$};
\node[color=ribboncolour] at (0,0) {$\bullet$};
\draw[-, line width=.85, color=ribboncolour, line cap=round] (0,0) to (65.5:3.75em) (0,0) to (112.5:3.75em) (0,0) to (202.5:3.75em) (0,0) to (247.5:3.75em) (0,0) to (-22.5:3.75em);
\draw[fill=ribboncolour, color=ribboncolour] (22.5:.5em) circle(.15em);
\end{scope}
\begin{scope}[xshift=0em]
\node[font=\footnotesize,shape=circle,scale=.6] (X) at (90:3em) {};
\node[font=\footnotesize] at (90:3em) {$\times$};
\node[font=\footnotesize,shape=circle,scale=.6] (Y) at (225:3em) {};
\node[font=\footnotesize] at (225:3em) {$\times$};
\draw[line width=.75pt, color=arccolour, line cap=round] (X) to (135:3em) (180:3em) to (Y) to (270:3em) (315:3em) to (0:3em) (45:3em) to (X);
\foreach \a in {0,135,225} {
\draw[-, line width=.5pt] (157.5+\a:6em) ++(2+\a:3.4em) arc[start angle=2+\a, end angle=-47+\a, radius=3.4em];
\draw[line width=.0pt,postaction={decorate}] ($(157.5+\a:6em)+(357+\a:3.4em)$) arc[start angle=357+\a, end angle=313+\a, radius=3.4em];
}
\draw[->, line width=.5pt] ($(90:3em)+(-25:.75em)$) arc[start angle=-25, end angle=-155, radius=.75em];
\draw[->, line width=.5pt] ($(90+135:3em)+(-25+135:.75em)$) arc[start angle=-25+135, end angle=-155+135, radius=.75em];
\node at (270:4.5em) {$\vcirc$};
\node[color=ribboncolour] at (0,0) {$\bullet$};
\draw[-, line width=.85, color=ribboncolour, line cap=round] (0,0) to (65.5:3.75em) (0,0) to (112.5:3.75em) (0,0) to (202.5:3.75em) (0,0) to (247.5:3.75em) (0,0) to (-22.5:3.75em);
\end{scope}
\begin{scope}[xshift=18em]
\node[font=\footnotesize,shape=circle,scale=.6] (X) at (90:3em) {};
\node[font=\footnotesize] at (90:3em) {$\times$};
\node[font=\footnotesize,shape=circle,scale=.6] (Y) at (225:3em) {};
\node[font=\footnotesize] at (225:3em) {$\times$};
\draw[line width=.75pt, color=arccolour, line cap=round] (X) to (135:3em) (180:3em) to (Y) to (270:3em) (315:3em) to (0:3em) (45:3em) to (X);
\foreach \a in {0,135,225} {
\draw[-, line width=.5pt] (157.5+\a:6em) ++(2+\a:3.4em) arc[start angle=2+\a, end angle=-47+\a, radius=3.4em];
\draw[line width=.0pt,postaction={decorate}] ($(157.5+\a:6em)+(357+\a:3.4em)$) arc[start angle=357+\a, end angle=313+\a, radius=3.4em];
}
\draw[line width=.15em,color=stopcolour] (0,0) circle(.75em);
\draw[->, line width=.5pt] ($(90:3em)+(-25:.75em)$) arc[start angle=-25, end angle=-155, radius=.75em];
\draw[->, line width=.5pt] ($(90+135:3em)+(-25+135:.75em)$) arc[start angle=-25+135, end angle=-155+135, radius=.75em];
\node at (270:4.5em) {$\vodot$};
\node[color=ribboncolour] at (0,0) {$\bullet$};
\draw[-, line width=.85, color=ribboncolour, line cap=round] (0,0) to (65.5:3.75em) (0,0) to (112.5:3.75em) (0,0) to (202.5:3.75em) (0,0) to (247.5:3.75em) (0,0) to (-22.5:3.75em);
\end{scope}
\begin{scope}[xshift=27em]
\node[font=\footnotesize,shape=circle,scale=.6] (X) at (90:3em) {};
\node[font=\footnotesize] at (90:3em) {$\times$};
\node[font=\footnotesize,shape=circle,scale=.6] (Y) at (225:3em) {};
\node[font=\footnotesize] at (225:3em) {$\times$};
\draw[line width=.75pt, color=arccolour, line cap=round] (X) to (135:3em) (180:3em) to (Y) to (270:3em) (315:3em) to (0:3em) (45:3em) to (X);
\foreach \a in {0,135,225} {
\draw[-, line width=.5pt] (157.5+\a:6em) ++(2+\a:3.4em) arc[start angle=2+\a, end angle=-47+\a, radius=3.4em];
\draw[line width=.0pt,postaction={decorate}] ($(157.5+\a:6em)+(357+\a:3.4em)$) arc[start angle=357+\a, end angle=313+\a, radius=3.4em];
}
\draw[->, line width=.5pt] ($(90+135:3em)+(-25+135:.75em)$) arc[start angle=-25+135, end angle=-155+135, radius=.75em];
\draw[fill=stopcolour, color=stopcolour] (90:2.5em) circle(.15em);
\node at (270:4.5em) {$\vtimes$};
\node[color=ribboncolour] at (0,0) {$\bullet$};
\draw[-, line width=.85, color=ribboncolour, line cap=round] (0,0) to (202.5:3.75em) (0,0) to (247.5:3.75em) (0,0) to (-22.5:3.75em);
\draw[-, line width=.85, color=ribboncolour, line cap=round, fill=ribboncolour, fill opacity=.5] (65.5:3.75em) to (0,0) to (112.5:3.75em);
\draw[fill=ribboncolour, color=ribboncolour] (90:.75em) circle(.15em);
\end{scope}
\end{tikzpicture}
\caption{Polygons $P_v$ for different types of vertices in the ribbon complex $\mathbb G (\Gamma)$ associated to a dissection $\Gamma$.}
\label{fig:vertices}
\end{figure}

\subsubsection*{Orbifold structure via orbifold $2$-cells}

Although the above construction defines a ribbon graph (with coloured vertices and linear orders at the vertices of type $\vbullet$), it does not yet contain the data of the orbifold points of $S$. For a number of reasons (that will become apparent in Section \ref{section:cosheaves} and in \cite{barmeierschrollwang}) it is convenient to record the orbifold data by adding an orbifold $2$-cell for each orbifold point as follows.

Since we assume that $S \smallsetminus \Gamma$ contains no orbifold points, each $x \in \Sing (S)$ has at least one arc in $\Gamma$ connecting to it. Let $\gamma_{i_1}, \dotsc, \gamma_{i_k} \in \Gamma$ be arcs connecting to an orbifold point $x \in \Sing (S)$. Then the corresponding edges $e_{\gamma_{i_1}}, \dotsc, e_{\gamma_{i_k}} \in \mathbb G_1 (\Gamma)$ belong to a cycle of $\mathbb G (\Gamma)$. Let $v_{j_1}, \dotsc, v_{j_k}$ denote the corresponding vertices where $e_{\gamma_{i_l}}, e_{\gamma_{i_{l+1}}}$ belong to the boundary of $P_{v_{j_l}}$ for $\gamma_{i_{k+1}} := \gamma_{i_1}$. We have a cycle
\begin{equation}
\label{eq:cycle}
e_{\gamma_{i_1}} <_{v_{j_1}} e_{\gamma_{i_2}} <_{v_{j_2}} \dotsb <_{v_{j_{k-1}}} e_{\gamma_{i_k}} <_{v_{j_k}} e_{\gamma_{i_1}}.
\end{equation}

Whereas such a cycle usually corresponds to a {\it face} of the ribbon graph, i.e.\ to a boundary component of the corresponding surface, we now attach an orbifold $2$-cell $d_x \simeq \overline{\mathbb D} / \mathbb Z_2$ to this cycle with an attaching map of degree $1$ as illustrated in Fig.~\ref{fig:ribboncomplex}. We shall denote the set of $2$-cells by $\mathbb G_2 (\Gamma) = \{ d_x \}_{x \in \Sing (S)}$. We shall call the resulting complex $\mathbb G (\Gamma)$ a {\it ribbon complex}, as it is a type of (orbifold) cell complex of dimension $2$ with a ribbon graph structure on its $1$-skeleton. Such cell complexes with orbifold cells were used in \cite{bahrinotbohmsarkarsong} (under the name {\it {\itshape\bfseries q}-CW complexes}) to compute the integral cohomology of several classes of orbifolds. See also \cite[\S 3]{chen} for a related construction.

The ribbon complex $\mathbb G (\Delta)$ plays a central role in this paper. Although the ribbon complex has (orbifold) cells of dimension $2$ we show in \S\ref{subsection:core} that the partially wrapped Fukaya category can also be described via a ($1$-dimensional) ribbon {\it graph} for the orbifold surface.

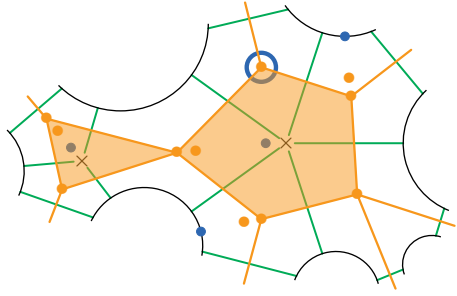
\begin{figure}
\begin{tikzpicture}[x=1em,y=1em,decoration={markings,mark=at position 0.55 with {\arrow[black]{Stealth[length=4.8pt]}}}]
\node[font=\footnotesize,shape=circle,scale=.6] (X) at (0,0) {};
\node[font=\footnotesize] at (0,0) {$\times$};
\node[font=\footnotesize,shape=circle,scale=.6] (Y) at (185:7em) {};
\node[font=\footnotesize] at (185:7em) {$\times$};
\draw[line width=.75pt, color=arccolour, line cap=round] (X) to (0:4em) (X) to (72:4em) (X) to (144:4em) (X) to (216:4em) (X) to (288:4em) ($(72:6.5em)+(-69:2.5em)$) to ($(0:6.5em)+(137:2.5em)$) ($(0:6.5em)+(244:2.5em)$) to ($(-40:6.5em)+(82:1em)$) ($(-40:6.5em)+(190:1em)$) to ($(-72:5.5em)+(20:1.5em)$) ($(-72:5.5em)+(154:1.5em)$) to ($(-144:6em)+(-6:2em)$) ($(-144:6em)+(125:2em)$) to (Y) ($(-144:6em)+(151:2em)$) to ($(185:11em)+(-21:2em)$) ($(185:11em)+(5:2em)$) to (Y) ($(185:11em)+(41:2em)$) to ($(144:7em)+(-132:3em)$) ($(144:7em)+(-105:3em)$) to (Y) ($(144:7em)+(-3:3em)$) to ($(72:6.5em)+(-141:2.5em)$);
\draw[-, line width=.5pt] (0:6.5em) ++(133:2.5em) arc[start angle=133, end angle=248, radius=2.5em];
\draw[-, line width=.5pt] (72:6.5em) ++(-65:2.5em) arc[start angle=-65, end angle=-145, radius=2.5em];
\draw[-, line width=.5pt] (-72:5.5em) ++(15:1.5em) arc[start angle=15, end angle=160, radius=1.5em];
\draw[-, line width=.5pt] (144:7em) ++(0:3em) arc[start angle=0, end angle=-135, radius=3em];
\draw[-, line width=.5pt] (185:11em) ++(45:2em) arc[start angle=45, end angle=-25, radius=2em];
\draw[-, line width=.5pt] (-144:6em) ++(-10:2em) arc[start angle=-10, end angle=155, radius=2em];
\draw[-, line width=.5pt] (-40:6.5em) ++(200:1em) arc[start angle=200, end angle=72, radius=1em];
\draw[line width=.15em,color=stopcolour] (108:2.75em) circle(.5em);
\draw[fill=stopcolour,color=stopcolour] ($(72:6.5em)+(-90:2.5em)$) circle(.15em);
\draw[fill=stopcolour,color=stopcolour] ($(-144:6em)+(14:2em)$) circle(.15em);
\draw[fill=stopcolour,color=stopcolour] (180:.7em) circle(.15em);
\draw[fill=stopcolour,color=stopcolour] ($(185:7em)+(130:.6em)$) circle(.15em);
\node[color=ribboncolour] at ($(185:7em)+(130:1.9em)$) {$\bullet$};
\node[inner sep=0] (A) at ($(185:7em)+(130:1.9em)$) {};
\node[color=ribboncolour] at ($(185:7em)+(130:1.3em)$) {$\bullet$};
\node[color=ribboncolour] at ($(185:7em)+(235:1.2em)$) {$\bullet$};
\node[inner sep=0] (B) at ($(185:7em)+(235:1.2em)$) {};
\node[color=ribboncolour] at (185:3.75em) {$\bullet$};
\node[inner sep=0] (C) at (185:3.75em) {};
\node[color=ribboncolour] at (185:3.1em) {$\bullet$};
\node[color=ribboncolour] at (108:2.75em) {$\bullet$};
\node[inner sep=0] (D) at (108:2.75em) {};
\node[color=ribboncolour] at (36:2.75em) {$\bullet$};
\node[inner sep=0] (E) at (36:2.75em) {};
\node[color=ribboncolour] at ($(36:2.75em)+(97:.6em)$) {$\bullet$};
\node[color=ribboncolour] at (-36:3em) {$\bullet$};
\node[inner sep=0] (F) at (-36:3em) {};
\node[color=ribboncolour] at (-108:2.75em) {$\bullet$};
\node[inner sep=0] (G) at (-108:2.75em) {};
\node[color=ribboncolour] at ($(-108:2.75em)+(192:.6em)$) {$\bullet$};
\draw[line width=0, fill=ribboncolour, draw=ribboncolour, opacity=.5] ($(185:7em)+(130:1.9em)$) -- ($(185:7em)+(235:1.2em)$) -- (185:3.75em) -- cycle;
\draw[line width=0, fill=ribboncolour, draw=ribboncolour, opacity=.5] (185:3.75em) -- (108:2.75em) -- (36:2.75em) -- (-36:3em) -- (-108:2.75em) -- cycle;
\draw[-, line width=.85, color=ribboncolour, line cap=round] (A) to (B) to (C) to (A) (C) to (D) to (E) to (F) to (G) to (C) (A) to ++(130:1em) (B) to ++(249:1.2em) (D) to ++(104:2.3em) (E) to ++(38:2.6em) (F) to ++(-18:3.5em) (F) to ++(-68:3.5em) (G) to ++(-105:2.3em);
\end{tikzpicture}
\caption{Two orbifold $2$-cells attached to a ribbon graph encoding two orbifold points.}
\label{fig:ribboncomplex}
\end{figure}

\begin{remark}
This correspondence between $S$ and $\mathbb G (\Gamma)$ extends the usual correspondence between ribbon graphs and dissected surfaces
\begin{equation}
\label{eq:duality}
\begin{tikzpicture}[x=6.5em,y=3em,baseline=-3.5em]
\node[align=center, anchor=north] at (-1.3,0) {$S$\strut \\[-.25em] {\small\it orbifold surface}\strut};
\node[align=center, anchor=north] at (-1.3,-1) {$\mathbb G (\Gamma)$\strut \\[-.25em] {\small\it ribbon complex}\strut};
\node[align=center, anchor=north] at (0,0) {$P_v \subset S \smallsetminus \Gamma$\strut \\[-.25em] {\footnotesize dimension $2$} \strut};
\node[align=center, anchor=north] at (0,-1) {$v \in \mathbb G_0 (\Gamma)$\strut \\[-.25em] {\footnotesize $0$-cell} \strut};
\node[align=center, anchor=north] at (1,0) {$\gamma \in \Gamma$\strut \\[-.25em] {\footnotesize dimension $1$} \strut};
\node[align=center, anchor=north] at (1,-1) {$e_\gamma \in \mathbb G_1 (\Gamma)$\strut \\[-.25em] {\footnotesize $1$-cell} \strut};
\node[align=center, anchor=north] at (2,0) {$x \in \Sing (S)$\strut \\[-.25em] {\footnotesize dimension $0$} \strut};
\node[align=center, anchor=north] at (2,-1) {$d_x \in \mathbb G_2 (\Gamma)$\strut \\[-.25em] {\footnotesize $2$-cell} \strut};
\end{tikzpicture}
\end{equation}
which one may view as a type of duality (see also \S~\ref{subsubsection:gradedribbon}). The $1$-cells $e_\gamma$ in the boundary of a $2$-cell $d_x$ are then dual to the arcs $\gamma$ connecting to $x$. Note that if we embed $\mathbb G (\Gamma)$ into $S$ as in Remark \ref{remark:embedribbongraph}, then the orbifold point $x$ lies in the interior of $d_x$ (see Fig.~\ref{fig:ribboncomplex}).
\end{remark}

One may now rephrase properties of $\Gamma$ in terms of properties of the associated ribbon complex $\mathbb G (\Gamma)$. For example if a $2$-cell $d_x$ has a loop as its boundary (a single edge connecting to a single vertex), then there is only one arc in $\Gamma$ connecting to $x$.

\subsubsection*{Orbifold stops}

Before proceeding, we shall consider one extra piece of structure on the ribbon complex $\mathbb G (\Gamma)$ of a dissection $\Gamma$.

\begin{definition}\label{definition:orbifoldstop}
An {\it orbifold stop} at an orbifold point $x \in \Sing (S)$ is a linear order at one of the vertices of type $\vcirc$ in the boundary of the $2$-cell $d_x$ such that the missing relation belongs to the cycle \eqref{eq:cycle}. (Note that by {\itemiv} of Definition \ref{definition:dissection} there is at least one such vertex.)
\end{definition}

We denote the corresponding vertex by $\vtimes$ (instead of $\vcirc$), see the last diagram of Fig.~\ref{fig:vertices} for an illustration. There are at least two reasons for considering this extra data.

For one, the choice of an orbifold stop is essentially equivalent to the choice of suitable direct summands in the orbit category of the $\mathbb Z_2$-invariant arcs in the double cover of $S$. Such a choice forces there to be one maximal path around an orbifold point as in Section \ref{section:disk} (see in particular Fig.~\ref{fig:orbifolddisk}). The orbifold stop indicates the missing morphism in one of the polygons around the orbifold point which in the figures we illustrate by {\color{stopcolour} $\bullet$} near each orbifold point $\times$.

A second reason can be given by viewing the partially wrapped Fukaya category of an orbifold surface as a deformation of the partially wrapped Fukaya category of a smooth surface, each orbifold point corresponding to a boundary component of winding number $1$ with one stop as in Remarks \ref{remark:compactification} and \ref{remark:deformationinterpretation}. (The details of this second perspective will be given in \cite{barmeierschrollwang}.)

For now, we may view the choice of orbifold stops simply as an extra piece of combinatorial data analogous to the linear order already chosen at the vertices of type $\vbullet$. Since there is a unique vertex with orbifold stop in the boundary of the $2$-cell $d_x$ for each orbifold point $x \in \Sing (S)$, we usually index the vertex by $x$ and write $\vtimes_x$.

\begin{remark}
\label{remark:valency}
Note that $\val(\vtimes_x) \geq 2$ for any $x \in \Sing (S)$ since we assume that there is at least one arc in $\Gamma$ connecting to $x$ and hence at least two half-edges connecting to $\vtimes_x$.
\end{remark}

\subsubsection*{Weakly admissible dissections}

We now introduce the notion of a {\it weakly admissible dissection} which also takes the orbifold stops into account. We use this notion in Section \ref{section:cosheaves} to define the partially wrapped Fukaya category $\mathcal W (\mathbf S)$ as global sections of a cosheaf of categories of the associated ribbon complex. The slightly stronger notion of {\it admissible dissection} will allow us to give all higher structures in $\mathcal W (\mathbf S)$ of orbifold surfaces explicitly (see Section \ref{section:ainfinity}).

\begin{definition}
\label{definition:admissible}
A {\it weakly admissible dissection} $\Delta$ consists of a dissection $\Gamma$ together with a choice of orbifold stop for each orbifold point $x \in \Sing (S)$. It is called {\it admissible} if for each orbifold $2$-cell $d_x$ there are at least two vertices in the boundary of $d_x$ (i.e.\ the boundary of $d_x$ is not a loop) and moreover one of the following is true:
\begin{itemize}
\item at least one vertex in the boundary of $d_x$ is of type $\vbullet$
\item at least one vertex in the boundary of $d_x$ is of type $\vodot$
\item at least three vertices in the boundary of $d_x$ are of type $\vtimes$.
\end{itemize}
\end{definition}

\begin{remark}
\label{remark:same}
If $S$ is a smooth surface, then $\Sing (S) = \varnothing$ and any dissection is vacuously weakly admissible and admissible.
\end{remark}

\begin{figure}[ht]
\begin{tikzpicture}[x=1em,y=1em]
\begin{scope}
\node[font=\scriptsize,shape=circle,scale=.6] (L) at (150:3em) {};
\node[font=\scriptsize,shape=circle,scale=.6] (R) at (30:3em) {};
\draw[line width=.5pt] (0,0) circle(7em);
\draw[line width=.15em,color=stopcolour] (270:1.75em) circle(.75em);
\foreach \a in {30,90,150,210,270,330} {
\draw[fill=stopcolour,color=stopcolour] (\a:7em) circle(.15em);
}
\path[line width=.75pt,color=arccolour] (L) edge (R);
\path[line width=.75pt,out=275,in=265,looseness=3,color=arccolour] (L) edge (R);
\path[line width=.75pt,out=-10,in=160,looseness=1,color=arccolour] (170:7em) edge (L);
\path[line width=.75pt,out=10,in=200,looseness=1,color=arccolour] (190:7em) edge (L);
\path[line width=.75pt,out=60,in=240,looseness=1,color=arccolour] (240:7em) edge (L);
\path[line width=.75pt,out=285,in=255,looseness=1.5,color=arccolour] (105:7em) edge (75:7em);
\path[line width=.75pt,out=225,in=195,looseness=1.5,color=arccolour] (45:7em) edge (15:7em);
\path[line width=.75pt,out=165,in=135,looseness=1.5,color=arccolour] (-15:7em) edge (-45:7em);
\path[line width=.75pt,out=180,in=-40,looseness=1,color=arccolour] (0:7em) edge (R);
\path[line width=.75pt,out=240,in=105,looseness=1,color=arccolour] (60:7em) edge (R);
\node[font=\scriptsize] at (150:3em) {$\times$};
\node[font=\scriptsize] at ($(150:3em)+(90:.7em)$) {$y$};
\node[font=\scriptsize] at (30:3em) {$\times$};
\node[font=\scriptsize] at ($(30:3em)+(140:.8em)$) {$x$};
\draw[fill=stopcolour,color=stopcolour] ($(150:3em)+(182:.85em)$) circle(.15em);
\draw[fill=stopcolour,color=stopcolour] ($(30:3em)+(40:.5em)$) circle(.15em);
\node[circle, fill=ribboncolour, minimum size=.3em, inner sep=0] (A1) at (270:1.75) {};
\node[circle, fill=ribboncolour, minimum size=.3em, inner sep=0] (A2) at (175:5.5) {};
\node[color=ribboncolour] at ($(175:5.5em)+(5:.5em)$) {$\bullet$};
\node[circle, fill=ribboncolour, minimum size=.3em, inner sep=0] (A3) at (33:4.8) {};
\node[color=ribboncolour] at ($(33:4.8)+(220:.5em)$) {$\bullet$};
\node[circle, fill=ribboncolour, minimum size=.3em, inner sep=0] (A4) at (90:6) {};
\node[circle, fill=ribboncolour, minimum size=.3em, inner sep=0] (A5) at (-30:6) {};
\node[circle, fill=ribboncolour, minimum size=.3em, inner sep=0] (A6) at (-60:5) {};
\node[circle, fill=ribboncolour, minimum size=.3em, inner sep=0] (A7) at (120:5) {};
\node[circle, fill=ribboncolour, minimum size=.3em, inner sep=0] (A8) at (210:5.5) {};
\node[circle, fill=ribboncolour, minimum size=.3em, inner sep=0] (A9) at (29:6) {};
\draw[-, line width=.85, color=ribboncolour, line cap=round] (A3) to (A6) to (A1) to (A7);
\draw[-, line width=.85, color=ribboncolour, line cap=round]  (A9) to (A3) to (A7);
\draw[-, line width=.85, color=ribboncolour, line cap=round]  (A4) to (A7) to (A2) to (A8) to (A6) to (A5);
\draw[line width=0, fill=ribboncolour, draw=ribboncolour, opacity=.5] (120:5) -- (175:5.5) -- (210:5.5) -- (-60:5)-- (270:1.75)-- (120:5)--cycle;
\draw[line width=0, fill=ribboncolour, draw=ribboncolour, opacity=.5] (120:5) -- (33:4.8)  -- (-60:5)-- (270:1.75)-- (120:5)--cycle;
\end{scope}
\begin{scope}[xshift=18em]
\node[font=\scriptsize] at ($(150:3em)+(90:.7em)$) {$y$};
\node[font=\scriptsize] at (30:3em) {$\times$};
\node[font=\scriptsize] at ($(30:3em)+(140:.8em)$) {$x$};
\node[font=\scriptsize] at (150:3em) {$\times$};
\node[font=\scriptsize,shape=circle,scale=.6] (L) at (150:3em) {};
\node[font=\scriptsize,shape=circle,scale=.6] (R) at (30:3em) {};
\node[font=\scriptsize] (R1) at ($(30:3em)+(180:7em)+(6:7em)$) {};
\node[font=\scriptsize] (R2) at ($(30:3em)+(180:7em)+(-6:7em)$) {};
\draw[line width=.5pt] (0,0) circle(7em);
\draw[line width=.15em,color=stopcolour] (270:2.75em) circle(.75em);
\foreach \a in {30,90,150,270,330} {
\draw[fill=stopcolour,color=stopcolour] (\a:7em) circle(.15em);
}
\path[line width=.75pt,color=arccolour] (240:7em) edge[out=60,in=265,looseness=1] (237:4.5em) (237:4.5em) edge[out=85,in=95,looseness=2] (303:4.5em) (303:4.5em) edge[out=275,in=120,looseness=1] (300:7em);
\path[line width=.75pt,color=arccolour] (L) edge (R);
\path[line width=.75pt,out=-10,in=160,looseness=1,color=arccolour] (170:7em) edge (L);
\path[line width=.75pt,out=10,in=200,looseness=1,color=arccolour] (210:7em) edge (L);
\path[line width=.75pt,out=180,in=-40,looseness=1,color=arccolour] (0:7em) edge (R);
\path[line width=.75pt,out=240,in=105,looseness=1,color=arccolour] (60:7em) edge (R);
\draw[fill=stopcolour,color=stopcolour] ($(150:3em)+(182:.9em)$) circle(.15em);
\draw[fill=stopcolour,color=stopcolour] ($(30:3em)+(-120:.6em)$) circle(.15em);
\path[line width=.75pt,out=345,in=315,looseness=1.5,color=arccolour] (160:7em) edge (140:7em);
\path[line width=.75pt,out=105,in=75,looseness=1.5,color=arccolour] (285:7em) edge (255:7em);
\path[line width=.75pt,out=285,in=255,looseness=1.5,color=arccolour] (105:7em) edge (75:7em);
\path[line width=.75pt,out=165,in=135,looseness=1.5,color=arccolour] (-15:7em) edge (-45:7em);
\node[circle, fill=ribboncolour, minimum size=.3em, inner sep=0] (A1) at (270:2.75) {};
\node[circle, fill=ribboncolour, minimum size=.3em, inner sep=0] (A2) at (185:6) {};
\node[color=ribboncolour] at ($(185:6)+(25:.7em)$) {$\bullet$};
\node[circle, fill=ribboncolour, minimum size=.3em, inner sep=0] (A3) at (20:5.5) {};
\node[color=ribboncolour] at ($(-10:1.5em)+(70:.7em)$) {$\bullet$};
\node[circle, fill=ribboncolour, minimum size=.3em, inner sep=0] (A4) at (90:6) {};
\node[circle, fill=ribboncolour, minimum size=.3em, inner sep=0] (A5) at (-30:6) {};
\node[circle, fill=ribboncolour, minimum size=.3em, inner sep=0] (A6) at (-90:6) {};
\node[circle, fill=ribboncolour, minimum size=.3em, inner sep=0] (A7) at (120:5) {};
\node[circle, fill=ribboncolour, minimum size=.3em, inner sep=0] (A9) at (-10:1.5) {};
\node[circle, fill=ribboncolour, minimum size=.3em, inner sep=0] (A0) at (150:6.5) {};
\draw[-, line width=.85, color=ribboncolour, line cap=round]  (A6) to (A1);
\draw[-, line width=.85, color=ribboncolour, line cap=round]  (A7) to (A2) to (A9); 
\draw[-, line width=.85, color=ribboncolour, line cap=round]  (A3) to (A7) to (A0);
\draw[-, line width=.85, color=ribboncolour, line cap=round]  (A4) to (A7) to (A9) to (A1)  to (A6)   (A3) to (A9) to (A5);
\draw[line width=0, fill=ribboncolour, draw=ribboncolour, opacity=.5] (185:6) -- (-10:1.5) -- (120:5) --cycle;
\draw[line width=0, fill=ribboncolour, draw=ribboncolour, opacity=.5] (120:5) -- (20:5.5) -- (-10:1.5) --cycle;
\end{scope}
\end{tikzpicture}
\caption{The left dissection is admissible (Definition \ref{definition:admissible}) and formal (Definition \ref{definition:DGformal}). The right dissection is not admissible since the boundary of the $2$-cell $d_y$ only contains two vertices of type $\vtimes$ and no vertices of type $\vbullet$ or $\vodot$. The right dissection also does not satisfy the condition for being formal since $\val (\vtimes_x) = 5$.}
\label{fig:dissection}
\end{figure}
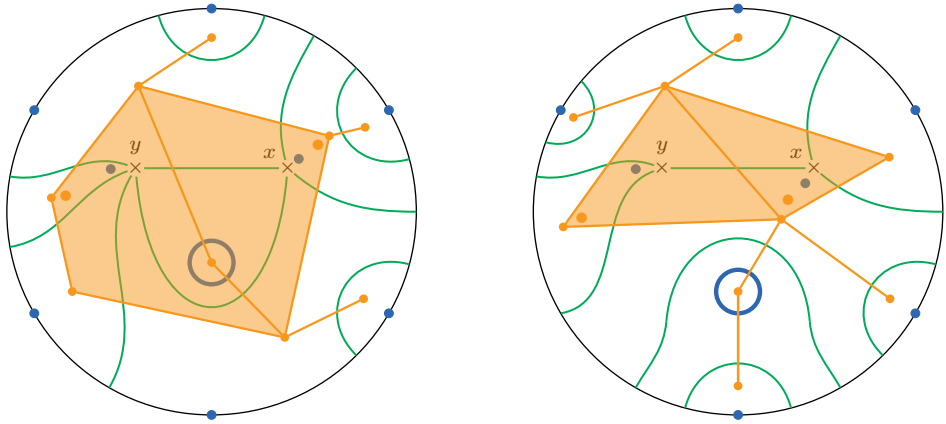

Although the definition of an admissible dissection might appear somewhat mysterious at this point, it reduces the number of possible higher multiplications that are created by introducing new higher multiplications around orbifold points (as in Section \ref{section:disk}) to a very manageable quantity (see Section \ref{section:ainfinity}). Note that the definition of admissibility is still ``local'' on the ribbon complex, as it only involves the $n$-gons surrounding a particular orbifold point. In particular, we do not need to assume any global shape of the arcs connecting to orbifold points.

\begin{definition}
If $\Delta$ is a weakly admissible dissection with underlying dissection $\Gamma$ we denote by $\mathbb G (\Delta)$ the ribbon complex obtained from $\mathbb G (\Gamma)$ by encoding also the missing relations corresponding to orbifold stops as linear orders at the vertices of type $\vtimes$.
\end{definition}

Note that the choice of orbifold stop ensures that there is at least one vertex of type $\vtimes$ in the boundary of each $2$-cell. Admissibility thus requires at least two more such vertices or at least one vertex with a (full) boundary stop. It is straightforward to see that any orbifold surface with stops does admit an admissible dissection.

\begin{remark}[(Partial compactifications and deformations)]
\label{remark:compactification}
We may view $(S, \Sigma)$ as the ``partial compactification'' of a smooth surface $(\widehat S, \widehat \Sigma)$. Concretely, we may put $\widehat S = S \smallsetminus \bigsqcup_{x \in \Sing (S)} \mathbb D_x$ where the $\mathbb D_x$'s are pairwise disjoint open disks containing the orbifold points $x \in \Sing (S)$. The boundary circles $\partial \mathbb D_x \subset S$ turn into new boundary components of $\widehat S$ and we set $\widehat \Sigma = \Sigma \cup \{ \sigma_x \}$ where each $\sigma_x \subset \partial \mathbb D_x$ is a single boundary stop representing the orbifold stop at $x$. Any weakly admissible dissection $\Delta$ of $(S, \Sigma)$ gives a dissection $\widehat \Delta$ of $(\widehat S, \widehat \Sigma)$ by restricting the arcs in $\Delta$ to $\widehat S$ (choosing the $\mathbb D_x$'s such that the arcs in $\Delta$ cross $\partial \mathbb D_x$ at most once transversely). Then $\mathbb G (\widehat \Delta)$ is simply the $1$-skeleton of $\mathbb G (\Delta)$, i.e.\ $\mathbb G (\widehat \Delta)$ is obtained by deleting the $2$-cells from $\mathbb G (\Delta)$.

Although the surface $\widehat S$ is already compact, we may view $S$ as a ``partial compactification'' of $\widehat S$ by gluing the orbifold disks $\mathbb D_x \simeq \mathbb D / \mathbb Z_2$ back in. If we view $(S, \Sigma)$ as the compact ``inner domain'' of a (noncompact) Weinstein manifold, this process indeed corresponds to an orbifold partial compactification of the Weinstein manifold in the usual sense.

The correspondence between $(S, \Sigma)$ with weakly admissible dissection $\Delta$ and $(\widehat S, \widehat \Sigma)$ with (weakly) admissible dissection $\widehat \Delta$ is illustrated for the special type of dissection of \S\ref{subsection:special} in Fig.~\ref{fig:compactification}.

Moreover, any line field $\eta$ on $S$ restricts to a line field on $\widehat S \subset S$ with winding number $1$ around each boundary component $\partial \mathbb D_x$. If we write $\widehat \eta = \eta \vert_{\widehat S}$ and $\mathbf S = (S, \Sigma, \eta)$ and $\widehat{\mathbf S} = (\widehat S, \widehat \Sigma, \widehat \eta)$, then the partially wrapped Fukaya category $\mathcal W (\mathbf S)$ is an {\it algebraic deformation} of $\mathcal W (\widehat{\mathbf S})$. This fact, and the relation to the Hochschild cohomology $\HH^2 (\mathcal W (\widehat{\mathbf S}), \mathcal W (\widehat{\mathbf S}))$, will be proven in \cite{barmeierschrollwang}. The fact that algebraic deformations of partially wrapped Fukaya categories of smooth surfaces is related to partial (orbifold) compactifications confirms a general expectation formulated in P.~Seidel's ICM 2002 address \cite{seidel1}. See also Remark \ref{remark:deformationinterpretation} for the relationship between the higher products of the A$_\infty$ category $\mathbf A_\Delta$ defined in Section \ref{section:ainfinity} and A$_\infty$ deformations.
\end{remark}

\begin{figure}
\begin{tikzpicture}[x=1em,y=1em,decoration={markings,mark=at position 0.55 with {\arrow[black]{Stealth[length=4.8pt]}}}]
\clip (-15.4,-9.4) rectangle (14.1,.3);
\begin{scope}[xshift=-.3em,yshift=-5em]
\path[->, line width=.4pt] (-3.75,.3) edge node[font=\scriptsize, above] {contraction} (-8.25,.3);
\path[->, line width=.4pt] (-8.25,-.3) edge node[font=\scriptsize, below=-.2ex] {expansion} (-3.75,-.3);
\path[->, line width=.4pt] (3.75,.3) edge node[font=\scriptsize, above] {contraction} (8.25,.3);
\path[->, line width=.4pt] (8.25,-.3) edge node[font=\scriptsize, below=-.2ex] {expansion} (3.75,-.3);
\end{scope}
\begin{scope}[xshift=-12em,rotate=80]
\node[font=\footnotesize,shape=circle,scale=.6] (X) at (0,0) {};
\node[font=\footnotesize] at (0,0) {$\times$};
\node[font=\footnotesize,shape=circle,scale=.6] (Y) at (185:7em) {};
\node[font=\footnotesize] at (185:7em) {$\times$};
\draw[line width=.75pt, color=arccolour, line cap=round] (X) to (144:4em) (X) to (216:4em) ($(-144:6em)+(125:2em)$) to (Y) ($(-144:6em)+(151:2em)$) to ($(185:11em)+(-21:2em)$) ($(185:11em)+(5:2em)$) to (Y) ($(185:11em)+(41:2em)$) to ($(144:7em)+(-132:3em)$) ($(144:7em)+(-105:3em)$) to (Y);
\draw[-, line width=.5pt] (144:7em) ++(-32:3em) arc[start angle=-32, end angle=-135, radius=3em];
\draw[-, line width=.5pt] (185:11em) ++(45:2em) arc[start angle=45, end angle=-25, radius=2em];
\draw[-, line width=.5pt] (-144:6em) ++(31:2em) arc[start angle=31, end angle=155, radius=2em];
\draw[fill=stopcolour,color=stopcolour] ($(185:7em)+(130:.6em)$) circle(.15em);
\node[color=ribboncolour] at ($(185:7em)+(130:1.9em)$) {$\bullet$};
\node[inner sep=0] (A) at ($(185:7em)+(130:1.9em)$) {};
\node[color=ribboncolour] at ($(185:7em)+(130:1.3em)$) {$\bullet$};
\node[color=ribboncolour] at ($(185:7em)+(235:1.2em)$) {$\bullet$};
\node[inner sep=0] (B) at ($(185:7em)+(235:1.2em)$) {};
\node[color=ribboncolour] at (185:3.75em) {$\bullet$};
\node[inner sep=0] (C) at (185:3.75em) {};
\draw[line width=0, fill=ribboncolour, draw=ribboncolour, opacity=.5] ($(185:7em)+(130:1.9em)$) -- ($(185:7em)+(235:1.2em)$) -- (185:3.75em) -- cycle;
\draw[-, line width=.85, color=ribboncolour, line cap=round] (A) to (B) to (C) to (A) (A) to ++(130:1em) (B) to ++(249:1.2em) (C) to ++(53:3em) (C) to ++(-52:2.5em);
\end{scope}
\begin{scope}[rotate=80]
\node[font=\footnotesize,shape=circle,scale=.6] (X) at (0,0) {};
\node[font=\footnotesize] at (0,0) {$\times$};
\node[font=\footnotesize,shape=circle,scale=.6] (Y) at (185:7em) {};
\node[font=\footnotesize] at (185:7em) {$\times$};
\draw[line width=.75pt, color=arccolour, line cap=round] (X) to (144:4em) (X) to (216:4em) ($(-144:6em)+(125:2em)$) to (Y) ($(-144:6em)+(151:2em)$) to ($(185:11em)+(-21:2em)$) ($(185:11em)+(5:2em)$) to (Y) ($(185:11em)+(41:2em)$) to ($(144:7em)+(-132:3em)$) ($(144:7em)+(-105:3em)$) to (Y) ($(144:7em)+(280:3em)$) to ($(-144:6em)+(90:2em)$);
\draw[-, line width=.5pt] (144:7em) ++(-32:3em) arc[start angle=-32, end angle=-135, radius=3em];
\draw[-, line width=.5pt] (185:11em) ++(45:2em) arc[start angle=45, end angle=-25, radius=2em];
\draw[-, line width=.5pt] (-144:6em) ++(31:2em) arc[start angle=31, end angle=155, radius=2em];
\draw[fill=stopcolour,color=stopcolour] ($(185:7em)+(135:.5em)$) circle(.15em);
\node[color=ribboncolour] at ($(185:7em)+(130:1.9em)$) {$\bullet$};
\node[inner sep=0] (A) at ($(185:7em)+(130:1.9em)$) {};
\node[color=ribboncolour] at ($(185:7em)+(133:1.1em)$) {$\bullet$};
\node[color=ribboncolour] at ($(185:7em)+(235:1.2em)$) {$\bullet$};
\node[inner sep=0] (B) at ($(185:7em)+(235:1.2em)$) {};
\node[color=ribboncolour] at (185:3em) {$\bullet$};
\node[color=ribboncolour] at (185:5.8em) {$\bullet$};
\node[inner sep=0] (C) at (185:5.8em) {};
\node[inner sep=0] (C') at (185:3em) {};
\node[inner sep=0] (D) at (130:2.2em) {};
\node[inner sep=0] (E) at (230:2.5em) {};
\draw[line width=0, fill=ribboncolour, draw=ribboncolour, opacity=.5] ($(185:7em)+(130:1.9em)$) -- ($(185:7em)+(235:1.2em)$) -- (185:5.8em) -- cycle;
\draw[-, line width=.85, color=ribboncolour, line cap=round] (A) to (B) to (C) to (A) (A) to ++(130:1em) (B) to ++(249:1.2em) (C) to (C') (C') to (D) (C') to (E);
\end{scope}
\begin{scope}[xshift=12em, rotate=80]
\node[font=\footnotesize,shape=circle,scale=.6] (X) at (0,0) {};
\node[font=\footnotesize] at (0,0) {$\times$};
\node[font=\footnotesize,shape=circle,scale=.6] (Y) at (185:7em) {};
\node[font=\footnotesize] at (185:7em) {$\times$};
\draw[line width=.75pt, color=arccolour, line cap=round] (X) to (144:4em) (X) to (216:4em) ($(-144:6em)+(125:2em)$) to (Y) ($(-144:6em)+(151:2em)$) to ($(185:11em)+(-21:2em)$) ($(185:11em)+(5:2em)$) to (Y) ($(185:11em)+(41:2em)$) to ($(144:7em)+(-132:3em)$) ($(144:7em)+(280:3em)$) to ($(-144:6em)+(90:2em)$);
\draw[-, line width=.5pt] (144:7em) ++(-32:3em) arc[start angle=-32, end angle=-135, radius=3em];
\draw[-, line width=.5pt] (185:11em) ++(45:2em) arc[start angle=45, end angle=-25, radius=2em];
\draw[-, line width=.5pt] (-144:6em) ++(31:2em) arc[start angle=31, end angle=155, radius=2em];
\draw[fill=stopcolour,color=stopcolour] ($(185:7em)+(75:.5em)$) circle(.15em);
\node[color=ribboncolour] at ($(185:7em)+(75:1.4em)$) {$\bullet$};
\node[inner sep=0] (A) at ($(185:7em)+(75:1.4em)$) {};
\node[color=ribboncolour] at ($(185:7em)+(75:.95em)$) {$\bullet$};
\node[color=ribboncolour] at ($(185:7em)+(235:1.2em)$) {$\bullet$};
\node[inner sep=0] (B) at ($(185:7em)+(235:1.2em)$) {};
\node[color=ribboncolour] at (185:3em) {$\bullet$};
\node[inner sep=0] (C') at (185:3em) {};
\node[inner sep=0] (D) at (130:2.2em) {};
\node[inner sep=0] (E) at (230:2.5em) {};
\draw[line width=0, fill=ribboncolour, draw=ribboncolour, opacity=.5] ($(185:7em)+(75:1.4em)$) to[bend left=60] ($(185:7em)+(235:1.2em)$) to[bend left=60] cycle;
\draw[-, line width=.85, color=ribboncolour, line cap=round] (A) to[bend left=60] (B) to[bend left=60] (A) to ++(148:2em) (B) to ++(249:1.2em) (A) to (C') (C') to (D) (C') to (E);
\end{scope}
\end{tikzpicture}
\caption{Edge contraction and expansion on ribbon complexes.}
\label{fig:edgecontraction}
\end{figure}
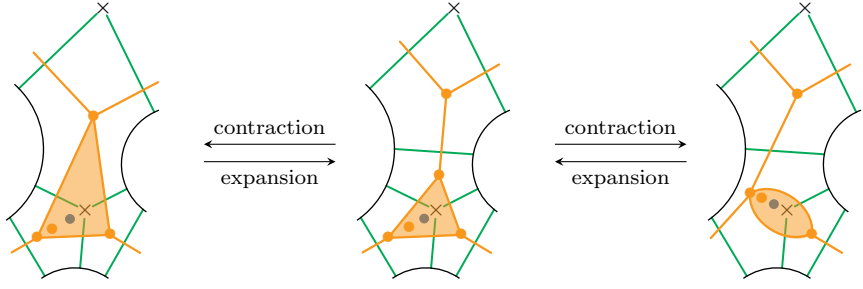

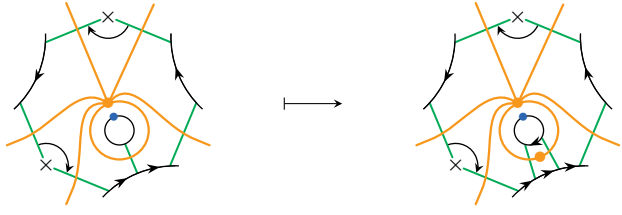
\begin{figure}
\begin{tikzpicture}[x=1em,y=1em,decoration={markings,mark=at position 0.55 with {\arrow[black]{Stealth[length=4.8pt]}}}]
\begin{scope}
\clip (-3.6,-3.6) rectangle (17.6,3.6);
\begin{scope}[xshift=0em]
\node[font=\footnotesize,shape=circle,scale=.6] (X) at (90:3em) {};
\node[font=\footnotesize] at (90:3em) {$\times$};
\node[font=\footnotesize,shape=circle,scale=.6] (Y) at (225:3em) {};
\node[font=\footnotesize] at (225:3em) {$\times$};
\draw[line width=.75pt, color=arccolour] (292.5:1.5em) -- (292.5:2.6em);
\draw[line width=.5pt] (292.5:1em) circle(.5em);
\node[font=\scriptsize, color=stopcolour] at (292.5:.5em) {$\bullet$};
\draw[line width=.75pt, color=arccolour, line cap=round] (X) to (135:3em) (180:3em) to (Y) to (270:3em) (315:3em) to (0:3em) (45:3em) to (X);
\foreach \a in {0,135,225} {
\draw[-, line width=.5pt] (157.5+\a:6em) ++(2+\a:3.4em) arc[start angle=2+\a, end angle=-47+\a, radius=3.4em];
}
\draw[line width=.0pt,postaction={decorate}] ($(157.5:6em)+(357:3.4em)$) arc[start angle=357, end angle=313, radius=3.4em];
\draw[line width=.0pt,postaction={decorate}] ($(157.5+225:6em)+(357+225:3.4em)$) arc[start angle=357+225, end angle=313+225, radius=3.4em];
\draw[line width=.0pt,postaction={decorate}] ($(157.5+135:6em)+(130:3.4em)$) arc[start angle=130, end angle=110, radius=3.4em];
\draw[line width=.0pt,postaction={decorate}] ($(157.5+135:6em)+(110:3.4em)$) arc[start angle=110, end angle=90, radius=3.4em];
\draw[->, line width=.5pt] ($(90:3em)+(-25:.75em)$) arc[start angle=-25, end angle=-155, radius=.75em];
\draw[->, line width=.5pt] ($(90+135:3em)+(-25+135:.75em)$) arc[start angle=-25+135, end angle=-155+135, radius=.75em];
\node[color=ribboncolour] at (0,0) {$\bullet$};
\draw[-, line width=.85, color=ribboncolour, line cap=round] (0,0) to (65.5:3.75em) (0,0) to (112.5:3.75em);
\draw[-, line width=.85, color=ribboncolour, line cap=round] (0,0) to[out=142.5, in=22.5] (202.5:3.2em) to (202.5:3.75em);
\draw[-, line width=.85, color=ribboncolour, line cap=round] (0,0) to[out=172.5, in=90-22.5, looseness=1.1] (247.5:3.2em) to (247.5:3.75em);
\draw[-, line width=.85, color=ribboncolour, line cap=round] (0,0) to[out=52.5, in=180-22.5, looseness=1.1] (-22.5:3.2em) to (-22.5:3.75em);
\draw[-, line width=.85, color=ribboncolour, line cap=round] (292.5:1em) circle(1em);
\end{scope}
\begin{scope}[xshift=14em]
\draw[line width=.0pt,postaction={decorate}, color=white] ($(292.5:1.5em)+(385.5:.5em)$) to ($(292.5:1.5em)+(202.5:.75em)$);
\node[font=\footnotesize,shape=circle,scale=.6] (X) at (90:3em) {};
\node[font=\footnotesize] at (90:3em) {$\times$};
\node[font=\footnotesize,shape=circle,scale=.6] (Y) at (225:3em) {};
\node[font=\footnotesize] at (225:3em) {$\times$};
\draw[line width=.75pt, color=arccolour] (280:1.45em) -- (283:2.67em);
\draw[line width=.75pt, color=arccolour] (305:1.45em) -- (302:2.67em);
\draw[line width=.5pt] (292.5:1em) circle(.5em);
\node[font=\scriptsize, color=stopcolour] at (292.5:.5em) {$\bullet$};
\draw[line width=.75pt, color=arccolour, line cap=round] (X) to (135:3em) (180:3em) to (Y) to (270:3em) (315:3em) to (0:3em) (45:3em) to (X);
\foreach \a in {0,135,225} {
\draw[-, line width=.5pt] (157.5+\a:6em) ++(2+\a:3.4em) arc[start angle=2+\a, end angle=-47+\a, radius=3.4em];
}
\draw[->, line width=.5pt] ($(90:3em)+(-25:.75em)$) arc[start angle=-25, end angle=-155, radius=.75em];
\draw[->, line width=.5pt] ($(90+135:3em)+(-25+135:.75em)$) arc[start angle=-25+135, end angle=-155+135, radius=.75em];
\node[color=ribboncolour] at (0,0) {$\bullet$};
\draw[-, line width=.85, color=ribboncolour, line cap=round] (0,0) to (65.5:3.75em) (0,0) to (112.5:3.75em);
\draw[-, line width=.85, color=ribboncolour, line cap=round] (0,0) to[out=142.5, in=22.5] (202.5:3.2em) to (202.5:3.75em);
\draw[-, line width=.85, color=ribboncolour, line cap=round] (0,0) to[out=172.5, in=90-22.5, looseness=1.1] (247.5:3.2em) to (247.5:3.75em);
\draw[-, line width=.85, color=ribboncolour, line cap=round] (0,0) to[out=52.5, in=180-22.5, looseness=1.1] (-22.5:3.2em) to (-22.5:3.75em);
\draw[-, line width=.85, color=ribboncolour, line cap=round] (292.5:1em) circle(1em);
\node[color=ribboncolour] at (292.5:2em) {$\bullet$};
\draw[line width=.0pt,postaction={decorate}] ($(157.5:6em)+(357:3.4em)$) arc[start angle=357, end angle=313, radius=3.4em];
\draw[line width=.0pt,postaction={decorate}] ($(157.5+225:6em)+(357+225:3.4em)$) arc[start angle=357+225, end angle=313+225, radius=3.4em];
\draw[line width=.0pt,postaction={decorate}] ($(157.5+135:6em)+(130:3.4em)$) arc[start angle=130, end angle=115, radius=3.4em];
\draw[line width=.0pt,postaction={decorate}] ($(157.5+135:6em)+(118:3.4em)$) arc[start angle=118, end angle=100, radius=3.4em];
\draw[line width=.0pt,postaction={decorate}] ($(157.5+135:6em)+(100:3.4em)$) arc[start angle=100, end angle=90, radius=3.4em];
\end{scope}
\begin{scope}[xshift=7em]
\draw[|->, line width=.4pt] (-1,0) to (1,0);
\end{scope}
\end{scope}
\end{tikzpicture}
\caption{Adding isotopic arcs to $\Delta$ is dual to subdividing edges in $\mathbb G (\Delta)$.}
\label{fig:subdividingloops}
\end{figure}

\subsubsection{Operations on ribbon complexes}
\label{subsubsection:operations}

Given a dissection $\Delta$, removing or adding arcs corresponds to {\it edge contraction} or {\it edge expansion} in $\mathbb G (\Delta)$, respectively. More precisely, we may contract any edge $e_\gamma$ connecting any vertex $v$ (of any type) to a vertex $\vcirc$ giving a new vertex $v'$. Conversely, we may split a vertex $v'$ into a pair $v, \vcirc$ of vertices connected by an edge. Note that the polygon $P_{\vcirc}$ contains no (full) boundary stops or missing relations, so $v$ and $v'$ are of the same type (one of $\vbullet, \vcirc, \vodot, \vtimes$). See Fig.~\ref{fig:edgecontraction} for an illustration and 

Note that adding an arc $\gamma'$ which is isotopic to an arc $\gamma$ of $\Delta$ creates a $2$-gon. On the level of the ribbon complex of $\Delta$, adding $\gamma'$ to $\Delta$ corresponds to subdividing the edge $e_\gamma$, creating a new vertex corresponding to the new $2$-gon between $\gamma$ and $\gamma'$. The case of subdividing a loop $e_\gamma$ of $\mathbb G (\Delta)$ is illustrated in Fig.~\ref{fig:subdividingloops}.

\subsection{Special dissections}
\label{subsection:special}

For any orbifold surface with stops $(S, \Sigma)$, there exist special admissible dissections whose associated ribbon complexes are of the particularly simple form illustrated in Fig.~\ref{fig:ribbon}. Such dissections are not unique, but their existence will be proven as part of the proof of Lemma \ref{lemma:4gon}. They can be obtained from any given dissection by choosing one polygon with a boundary stop $\sigma_0$ and performing edge contractions and edge expansions (i.e.\ adding and removing arcs) to bring the ribbon complex into the form in Fig.~\ref{fig:ribbon}. (Note that by Definition \ref{definition:orbifoldsurface} we assume that $\Sigma$ contain at least one boundary stop.) We shall use the notation $\Delta_0$ for any dissection whose ribbon graph $\mathbb G (\Delta_0)$ is of this special form. Local pictures for the different groupings of edges in $\mathbb G (\Delta_0)$ are illustrated in Fig.~\ref{fig:localribbon}.

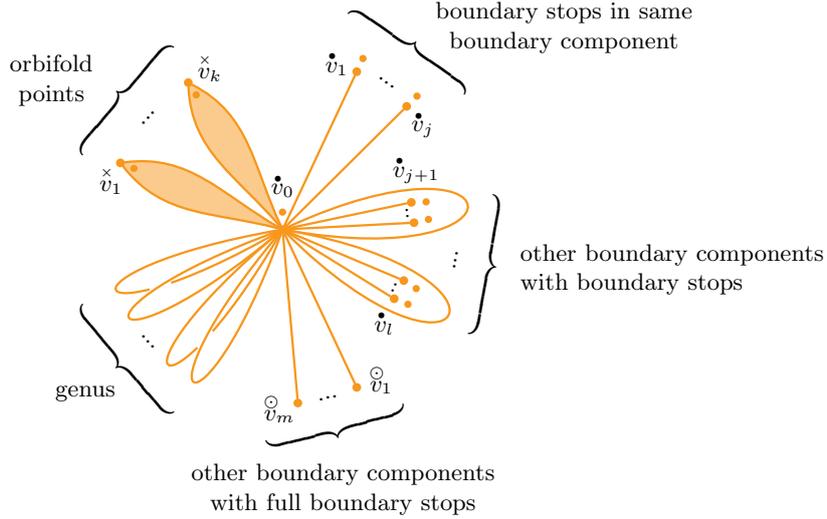
\begin{figure}
\begin{tikzpicture}
\node[circle, fill=ribboncolour, minimum size=.3em, inner sep=0] (Z) at (0,0) {};
\node[circle, fill=ribboncolour, minimum size=.25em, inner sep=0] at (90:.6em) {};
\node[circle, fill=ribboncolour, minimum size=.3em, inner sep=0] (A1) at (65:6em) {};
\node[circle, fill=ribboncolour, minimum size=.25em, inner sep=0] at (65:6.5em) {};
\node[circle, fill=ribboncolour, minimum size=.3em, inner sep=0] (A2) at (45:6em) {};
\node[circle, fill=ribboncolour, minimum size=.25em, inner sep=0] at (45:6.5em) {};
\node[circle, fill=ribboncolour, minimum size=.3em, inner sep=0] (B1) at (12:4.5em) {};
\node[circle, fill=ribboncolour, minimum size=.25em, inner sep=0] at (11:5em) {};
\node[circle, fill=ribboncolour, minimum size=.3em, inner sep=0] (B2) at (3:4.5em) {};
\node[circle, fill=ribboncolour, minimum size=.25em, inner sep=0] at (4:5em) {};
\node[rotate=97.5, font=\scriptsize] at (7.5:4.3em) {..};
\node[circle, fill=ribboncolour, minimum size=.3em, inner sep=0] (B3) at (337:4.5em) {};
\node[circle, fill=ribboncolour, minimum size=.25em, inner sep=0] at (336:5em) {};
\node[circle, fill=ribboncolour, minimum size=.3em, inner sep=0] (B4) at (328:4.5em) {};
\node[circle, fill=ribboncolour, minimum size=.25em, inner sep=0] at (329:5em) {};
\node[rotate=62.5, font=\scriptsize] at (332.5:4.3em) {..};
\node[circle, fill=ribboncolour, minimum size=.3em, inner sep=0] (C1) at (295:6em) {};
\node[circle, fill=ribboncolour, minimum size=.3em, inner sep=0] (C2) at (275:6em) {};
\node[circle, fill=ribboncolour, minimum size=.3em, inner sep=0] (D1) at (157.5:6em) {};
\node[circle, fill=ribboncolour, minimum size=.25em, inner sep=0] at (157.5:5.5em) {};
\node[circle, fill=ribboncolour, minimum size=.3em, inner sep=0] (D2) at (122.5:6em) {};
\node[circle, fill=ribboncolour, minimum size=.25em, inner sep=0] at (122.5:5.5em) {};
\node[font=\footnotesize] at (90:1.5em) {$\vbullet_0$};
\node[font=\footnotesize] at (72:6em) {$\vbullet_1$};
\node[font=\footnotesize] at (37:6em) {$\vbullet_j$};
\node[font=\footnotesize] at (24:5em) {$\vbullet_{j+1}$};
\node[font=\footnotesize] at (317:4.75em) {$\vbullet_l$};
\node[font=\footnotesize] at (164:6.1em) {$\vtimes_1$};
\node[font=\footnotesize] at (114:6.1em) {$\vtimes_k$};
\node[font=\footnotesize] at (303:6.2em) {$\vodot_1$};
\node[font=\footnotesize] at (269:6.2em) {$\vodot_m$};
\draw[-, line width=.85, color=ribboncolour, line cap=round] (Z) to (A1);
\draw[-, line width=.85, color=ribboncolour, line cap=round] (Z) to (A2);
\draw[-, line width=.85, color=ribboncolour, line cap=round] (Z) to[out=25, in=-10, looseness=220] (Z);
\draw[-, line width=.85, color=ribboncolour, line cap=round] (Z) to (B1);
\draw[-, line width=.85, color=ribboncolour, line cap=round] (Z) to (B2);
\draw[-, line width=.85, color=ribboncolour, line cap=round] (Z) to[out=350, in=315, looseness=220] (Z);
\draw[-, line width=.85, color=ribboncolour, line cap=round] (Z) to (B3);
\draw[-, line width=.85, color=ribboncolour, line cap=round] (Z) to (B4);
\draw[-, line width=.85, color=ribboncolour, line cap=round] (Z) to (C1);
\draw[-, line width=.85, color=ribboncolour, line cap=round] (Z) to (C2);
\draw[-, line width=.85, color=ribboncolour, line cap=round] (Z) to[out=240, in=220, looseness=350] node[shape=circle, fill=white, inner sep=3pt, pos=.25] {} (Z);
\draw[-, line width=.85, color=ribboncolour, line cap=round] (Z) to[out=250, in=230, looseness=350] (Z);
\draw[-, line width=.85, color=ribboncolour, line cap=round] (Z) to[out=210, in=190, looseness=350] node[shape=circle, fill=white, inner sep=3pt, pos=.25] {} (Z);
\draw[-, line width=.85, color=ribboncolour, line cap=round] (Z) to[out=220, in=200, looseness=350] (Z);
\draw[-, line width=.85, color=ribboncolour, line cap=round] (Z) to[out=115, in=-27.5] (D2);
\draw[-, line width=.85, color=ribboncolour, line cap=round] (Z) to[out=130, in=-87.5] (D2);
\draw[-, line width=.85, color=ribboncolour, line cap=round] (Z) to[out=150, in=7.5] (D1);
\draw[-, line width=.85, color=ribboncolour, line cap=round] (Z) to[out=165, in=-52.5] (D1);
\draw[line width=0, fill=ribboncolour, draw=ribboncolour, opacity=.5] (0,0) to[out=115, in=-27.5] (122.5:6em) to[out=-87.5, in=130] (0,0) -- cycle;
\draw[line width=0, fill=ribboncolour, draw=ribboncolour, opacity=.5] (0,0) to[out=150, in=7.5] (157.5:6em) to[out=-52.5, in=165] (0,0) -- cycle;
\begin{scope}[rotate=145]
\node[rotate=145] at (270:8em) {$\underbrace{\hspace{5em}}_{}$};
\node[font=\footnotesize, align=center, anchor=west] at (270:8.5em) {boundary stops in same \\ boundary component};
\node[rotate=145, font=\footnotesize] at (270:6.2em) {...};
\end{scope}
\begin{scope}[rotate=15]
\node[rotate=15] at (270:7.5em) {$\underbrace{\hspace{5em}}_{}$};
\node[font=\footnotesize, align=center, anchor=north] at (270:8em) {other boundary components \\ with full boundary stops};
\node[rotate=15, font=\footnotesize] at (270:6em) {...};
\end{scope}
\begin{scope}[rotate=80]
\node[rotate=80] at (270:7.5em) {$\underbrace{\hspace{5em}}_{}$};
\node[font=\footnotesize, align=left, anchor=west] at (270:7.9em) {other boundary components \\ with boundary stops};
\node[rotate=80, font=\footnotesize] at (270:6em) {...};
\end{scope}
\begin{scope}[rotate=-50]
\node[rotate=-50] at (270:7.5em) {$\underbrace{\hspace{5em}}_{}$};
\node[font=\footnotesize, align=left] at (270:8.8em) {genus};
\node[rotate=-50, font=\footnotesize] at (270:6em) {...};
\end{scope}
\begin{scope}[rotate=-130]
\node[rotate=-130] at (270:7.5em) {$\underbrace{\hspace{5em}}_{}$};
\node[font=\footnotesize, align=center, anchor=east] at (270:8em) {orbifold \\ points};
\node[rotate=-130, font=\footnotesize] at (270:6em) {...};
\end{scope}
\end{tikzpicture}
\caption{The ribbon complex $\mathbb G (\Delta_0)$ for the special (formal) admissible dissection $\Delta_0$ for a graded orbifold surface $\mathbf S$ with $k \geq 0$ orbifold points, $l + 1 \geq 1$ boundary stops and $m \geq 0$ full boundary stops.}
\label{fig:ribbon}
\end{figure}

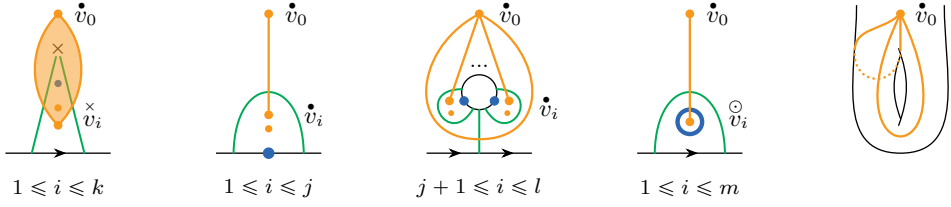
\begin{figure}
\begin{tikzpicture}[x=1em, y=1em, scale=1.2]
\begin{scope}[decoration={markings,mark=at position 0.58 with {\arrow[black]{Stealth[length=4.8pt]}}}]
\draw[line width=.5pt,postaction={decorate}] (-1.5,0) -- (1.5,0);
\node[font=\scriptsize,shape=circle,scale=.6] (X) at (0,3) {};
\node[font=\scriptsize] at (0,3) {$\times$};
\draw[line width=.75pt, color=arccolour, line cap=round] (X) -- (-.75,0);
\draw[line width=.75pt, color=arccolour, line cap=round] (X) -- (.75,0);
\node[circle, fill=ribboncolour, minimum size=.3em, inner sep=0] (T) at (0,4) {};
\node[circle, fill=ribboncolour, minimum size=.3em, inner sep=0] (B) at (0,.8) {};
\node[circle, fill=ribboncolour, minimum size=.25em, inner sep=0] at (0,1.3) {};
\node[circle, fill=stopcolour, minimum size=.25em, inner sep=0] at (0,2) {};
\draw[-, line width=.85, fill=ribboncolour, draw=ribboncolour, line cap=round, fill opacity=.5] (0,.8) to[bend right=45] (0,4) to[bend right=45] (0,.8) -- cycle;
\node[font=\footnotesize] at (1,1.1) {$\vtimes_i$};
\node[font=\footnotesize] at (.8,4) {$\vbullet_0$};
\node[font=\scriptsize] at (0,-1) {$1 \leq i \leq k$};
\end{scope}
\begin{scope}[xshift=6em]
\draw[line width=.5pt] (-1.5,0) -- (1.5,0);
\draw[line width=.75pt, color=arccolour, line cap=round] (-1,0) to[bend left=89, looseness=3] (1,0);
\draw[fill=stopcolour,color=stopcolour] (0,0) circle(.15em);
\node[circle, fill=ribboncolour, minimum size=.3em, inner sep=0] (T) at (0,4) {};
\node[circle, fill=ribboncolour, minimum size=.3em, inner sep=0] (B) at (0,1.1) {};
\node[circle, fill=ribboncolour, minimum size=.25em, inner sep=0] at (0,.7) {};
\draw[-, line width=.85, draw=ribboncolour, line cap=round] (B) to (T);
\node[font=\footnotesize] at (1.3,1.1) {$\vbullet_i$};
\node[font=\footnotesize] at (.8,4) {$\vbullet_0$};
\node[font=\scriptsize] at (0,-1) {$1 \leq i \leq j$};
\end{scope}
\begin{scope}[xshift=12em, decoration={markings,mark=at position 0.66 with {\arrow[black]{Stealth[length=4.8pt]}}}]
\draw[line width=.5pt,postaction={decorate}] (-1.5,0) -- (0,0);
\draw[line width=.5pt,postaction={decorate}] (0,0) -- (1.5,0);
\draw[line width=.5pt] (0,1.75) circle(.5em);
\draw[line width=.75pt, color=arccolour, line cap=round] (0,1.25) to (0,0);
\draw[line width=.75pt, color=arccolour, line cap=round] ($(0,1.75)+(250:.5em)$) to[bend left=120, looseness=5.5] ($(0,1.75)+(170:.5em)$);
\draw[line width=.75pt, color=arccolour, line cap=round] ($(0,1.75)+(10:.5em)$) to[bend left=120, looseness=5.5] ($(0,1.75)+(-70:.5em)$);
\draw[fill=stopcolour,color=stopcolour] ($(0,1.75)+(-30:.5em)$) circle(.12em);
\draw[fill=stopcolour,color=stopcolour] ($(0,1.75)+(210:.5em)$) circle(.12em);
\node[circle, fill=ribboncolour, minimum size=.3em, inner sep=0] (T) at (0,4) {};
\node[circle, fill=ribboncolour, minimum size=.3em, inner sep=0] (L) at (-.85,1.5) {};
\node[circle, fill=ribboncolour, minimum size=.3em, inner sep=0] (R) at (.85,1.5) {};
\node[circle, fill=ribboncolour, minimum size=.2em, inner sep=0] at (-.8,1.15) {};
\node[circle, fill=ribboncolour, minimum size=.2em, inner sep=0] at (.8,1.15) {};
\draw[-, line width=.85, draw=ribboncolour, line cap=round] (L) to (T) (R) to (T);
\draw[-, line width=.85, draw=ribboncolour, line cap=round] (T) to[in=180, out=210, looseness=1.5] (0,.4) to[out=0, in=-30, looseness=1.5] (T);
\node[font=\scriptsize] at (0,2.5) {...};
\node[font=\footnotesize] at (2,1.3) {$\vbullet_i$};
\node[font=\footnotesize] at (.9,4) {$\vbullet_0$};
\node[font=\scriptsize] at (0,-1) {$j+1 \leq i \leq l$};
\end{scope}
\begin{scope}[xshift=18em, decoration={markings,mark=at position 0.58 with {\arrow[black]{Stealth[length=4.8pt]}}}]
\draw[line width=.5pt,postaction={decorate}] (-1.5,0) -- (1.5,0);
\draw[line width=.75pt, color=arccolour, line cap=round] (-1,0) to[bend left=89, looseness=3] (1,0);
\draw[line width=.15em, color=stopcolour] (0,.9) circle(.35em);
\node[circle, fill=ribboncolour, minimum size=.3em, inner sep=0] (T) at (0,4) {};
\node[circle, fill=ribboncolour, minimum size=.3em, inner sep=0] (B) at (0,.9) {};
\draw[-, line width=.85, draw=ribboncolour, line cap=round] (B) to (T);
\node[font=\footnotesize] at (1.4,1.1) {$\vodot_i$};
\node[font=\footnotesize] at (.8,4) {$\vbullet_0$};
\node[font=\scriptsize] at (0,-1) {$1 \leq i \leq m$};
\end{scope}
\begin{scope}[xshift=24em]
\draw[line width=.5pt] (-1.25,4.25) to[out=270, in=180, looseness=1.1] (0,0) to[out=0, in=270, looseness=1.1] (1.25,4.25);
\draw[line width=.5pt, line cap=round] (-.05,3) to[bend left=20] (-.05,.8);
\draw[line width=.5pt, line cap=round] (0,2.8) to[bend right=20] (0,1);
\node[circle, fill=ribboncolour, minimum size=.3em, inner sep=0] (T) at (0,4) {};
\draw[-, line width=.85, draw=ribboncolour, line cap=round] (T) to[out=-65, in=245, looseness=108] (T) to (0,2.95) (T) to[in=85, out=220] (-1.3,3);
\draw[-, line width=.95, dash pattern=on 0pt off 2pt, draw=ribboncolour, line cap=round] (0,2.8) to[bend left=88, looseness=2] (-1.3,3);
\node[font=\footnotesize] at (.75,4) {$\vbullet_0$};
\end{scope}
\end{tikzpicture}
\caption{Local pictures on the surface for the five groups of edges in the ribbon complex $\mathbb G (\Delta_0)$ illustrated in Fig.~\ref{fig:ribbon}. In each picture, the straight boundary segment at the bottom belongs to the boundary component containing the distinguished boundary stop $\sigma_0$.}
\label{fig:localribbon}
\end{figure}

\begin{figure}
\begin{tikzpicture}
\begin{scope}[scale=.8]
\node[circle, fill=ribboncolour, minimum size=.3em, inner sep=0] (Z) at (0,0) {};
\node[circle, fill=ribboncolour, minimum size=.25em, inner sep=0] at (90:.6em) {};
\node[circle, fill=ribboncolour, minimum size=.3em, inner sep=0] (A1) at (65:6em) {};
\node[circle, fill=ribboncolour, minimum size=.25em, inner sep=0] at (65:6.5em) {};
\node[circle, fill=ribboncolour, minimum size=.3em, inner sep=0] (A2) at (45:6em) {};
\node[circle, fill=ribboncolour, minimum size=.25em, inner sep=0] at (45:6.5em) {};
\node[circle, fill=ribboncolour, minimum size=.3em, inner sep=0] (B1) at (12:4.5em) {};
\node[circle, fill=ribboncolour, minimum size=.25em, inner sep=0] at (11:5em) {};
\node[circle, fill=ribboncolour, minimum size=.3em, inner sep=0] (B2) at (3:4.5em) {};
\node[circle, fill=ribboncolour, minimum size=.25em, inner sep=0] at (4:5em) {};
\node[rotate=97.5, font=\scriptsize] at (7.5:4.3em) {..};
\node[circle, fill=ribboncolour, minimum size=.3em, inner sep=0] (B3) at (337:4.5em) {};
\node[circle, fill=ribboncolour, minimum size=.25em, inner sep=0] at (336:5em) {};
\node[circle, fill=ribboncolour, minimum size=.3em, inner sep=0] (B4) at (328:4.5em) {};
\node[circle, fill=ribboncolour, minimum size=.25em, inner sep=0] at (329:5em) {};
\node[rotate=62.5, font=\scriptsize] at (332.5:4.3em) {..};
\node[circle, fill=ribboncolour, minimum size=.3em, inner sep=0] (C1) at (295:6em) {};
\node[circle, fill=ribboncolour, minimum size=.3em, inner sep=0] (C2) at (275:6em) {};
\node[circle, fill=ribboncolour, minimum size=.3em, inner sep=0] (D1) at (157.5:6em) {};
\node[circle, fill=ribboncolour, minimum size=.25em, inner sep=0] at (157.5:5.4em) {};
\node[circle, fill=ribboncolour, minimum size=.3em, inner sep=0] (D2) at (122.5:6em) {};
\node[circle, fill=ribboncolour, minimum size=.25em, inner sep=0] at (122.5:5.4em) {};
\draw[-, line width=.85, color=ribboncolour, line cap=round] (Z) to (A1);
\draw[-, line width=.85, color=ribboncolour, line cap=round] (Z) to (A2);
\draw[-, line width=.85, color=ribboncolour, line cap=round] (Z) to[out=25, in=-10, looseness=173] (Z);
\draw[-, line width=.85, color=ribboncolour, line cap=round] (Z) to (B1);
\draw[-, line width=.85, color=ribboncolour, line cap=round] (Z) to (B2);
\draw[-, line width=.85, color=ribboncolour, line cap=round] (Z) to[out=350, in=315, looseness=173] (Z);
\draw[-, line width=.85, color=ribboncolour, line cap=round] (Z) to (B3);
\draw[-, line width=.85, color=ribboncolour, line cap=round] (Z) to (B4);
\draw[-, line width=.85, color=ribboncolour, line cap=round] (Z) to (C1);
\draw[-, line width=.85, color=ribboncolour, line cap=round] (Z) to (C2);
\draw[-, line width=.85, color=ribboncolour, line cap=round] (Z) to[out=240, in=220, looseness=280] node[shape=circle, fill=white, inner sep=3pt, pos=.25] {} (Z);
\draw[-, line width=.85, color=ribboncolour, line cap=round] (Z) to[out=250, in=230, looseness=280] (Z);
\draw[-, line width=.85, color=ribboncolour, line cap=round] (Z) to[out=210, in=190, looseness=280] node[shape=circle, fill=white, inner sep=3pt, pos=.25] {} (Z);
\draw[-, line width=.85, color=ribboncolour, line cap=round] (Z) to[out=220, in=200, looseness=280] (Z);
\draw[-, line width=.85, color=ribboncolour, line cap=round] (Z) to[out=115, in=-27.5] (D2);
\draw[-, line width=.85, color=ribboncolour, line cap=round] (Z) to[out=130, in=-87.5] (D2);
\draw[-, line width=.85, color=ribboncolour, line cap=round] (Z) to[out=150, in=7.5] (D1);
\draw[-, line width=.85, color=ribboncolour, line cap=round] (Z) to[out=165, in=-52.5] (D1);
\draw[line width=0, fill=ribboncolour, draw=ribboncolour, opacity=.5] (0,0) to[out=115, in=-27.5] (122.5:6em) to[out=-87.5, in=130] (0,0) -- cycle;
\draw[line width=0, fill=ribboncolour, draw=ribboncolour, opacity=.5] (0,0) to[out=150, in=7.5] (157.5:6em) to[out=-52.5, in=165] (0,0) -- cycle;
\begin{scope}[rotate=145]
\node[rotate=145, font=\footnotesize] at (270:6.2em) {...};
\end{scope}
\begin{scope}[rotate=15]
\node[rotate=15, font=\footnotesize] at (270:6em) {...};
\end{scope}
\begin{scope}[rotate=80]
\node[rotate=80, font=\footnotesize] at (270:6em) {...};
\end{scope}
\begin{scope}[rotate=-50]
\node[rotate=-50, font=\footnotesize] at (270:6em) {...};
\end{scope}
\begin{scope}[rotate=-130]
\node[rotate=-130, font=\footnotesize] at (270:6em) {...};
\end{scope}
\draw[<->, line width=.5pt] (-7.75em,1.5em) -- (-6.25em,1.5em);
\begin{scope}[x=1.2em, y=1.2em, xshift=-9.5em, yshift=-1em, decoration={markings,mark=at position 0.58 with {\arrow[black]{Stealth[length=4.8pt]}}}]
\draw[line width=.5pt,postaction={decorate}] (-1.5,0) -- (1.5,0);
\node[font=\scriptsize,shape=circle,scale=.6] (X) at (0,3) {};
\node[font=\scriptsize] at (0,3) {$\times$};
\draw[line width=.75pt, color=arccolour, line cap=round] (X) -- (-.75,0);
\draw[line width=.75pt, color=arccolour, line cap=round] (X) -- (.75,0);
\node[circle, fill=ribboncolour, minimum size=.3em, inner sep=0] (T) at (0,4) {};
\node[circle, fill=ribboncolour, minimum size=.3em, inner sep=0] (B) at (0,.8) {};
\node[circle, fill=ribboncolour, minimum size=.25em, inner sep=0] at (0,1.3) {};
\node[circle, fill=stopcolour, minimum size=.25em, inner sep=0] at (0,2) {};
\draw[-, line width=.85, fill=ribboncolour, draw=ribboncolour, line cap=round, fill opacity=.5] (0,.8) to[bend right=45] (0,4) to[bend right=45] (0,.8) -- cycle;
\end{scope}
\end{scope}
\begin{scope}[xshift=18em, scale=.8]
\node[circle, fill=ribboncolour, minimum size=.3em, inner sep=0] (Z) at (0,0) {};
\node[circle, fill=ribboncolour, minimum size=.25em, inner sep=0] at (90:.6em) {};
\node[circle, fill=ribboncolour, minimum size=.3em, inner sep=0] (A1) at (65:6em) {};
\node[circle, fill=ribboncolour, minimum size=.25em, inner sep=0] at (65:6.5em) {};
\node[circle, fill=ribboncolour, minimum size=.3em, inner sep=0] (A2) at (45:6em) {};
\node[circle, fill=ribboncolour, minimum size=.25em, inner sep=0] at (45:6.5em) {};
\node[circle, fill=ribboncolour, minimum size=.3em, inner sep=0] (B1) at (12:4.5em) {};
\node[circle, fill=ribboncolour, minimum size=.25em, inner sep=0] at (11:5em) {};
\node[circle, fill=ribboncolour, minimum size=.3em, inner sep=0] (B2) at (3:4.5em) {};
\node[circle, fill=ribboncolour, minimum size=.25em, inner sep=0] at (4:5em) {};
\node[rotate=97.5, font=\scriptsize] at (7.5:4.3em) {..};
\node[circle, fill=ribboncolour, minimum size=.3em, inner sep=0] (B3) at (337:4.5em) {};
\node[circle, fill=ribboncolour, minimum size=.25em, inner sep=0] at (336:5em) {};
\node[circle, fill=ribboncolour, minimum size=.3em, inner sep=0] (B4) at (328:4.5em) {};
\node[circle, fill=ribboncolour, minimum size=.25em, inner sep=0] at (329:5em) {};
\node[rotate=62.5, font=\scriptsize] at (332.5:4.3em) {..};
\node[circle, fill=ribboncolour, minimum size=.3em, inner sep=0] (C1) at (295:6em) {};
\node[circle, fill=ribboncolour, minimum size=.3em, inner sep=0] (C2) at (275:6em) {};
\node[circle, fill=ribboncolour, minimum size=.3em, inner sep=0] (D1) at (157.5:6em) {};
\node[circle, fill=ribboncolour, minimum size=.25em, inner sep=0] at (157.5:5.4em) {};
\node[circle, fill=ribboncolour, minimum size=.3em, inner sep=0] (D2) at (122.5:6em) {};
\node[circle, fill=ribboncolour, minimum size=.25em, inner sep=0] at (122.5:5.4em) {};
\draw[-, line width=.85, color=ribboncolour, line cap=round] (Z) to (A1);
\draw[-, line width=.85, color=ribboncolour, line cap=round] (Z) to (A2);
\draw[-, line width=.85, color=ribboncolour, line cap=round] (Z) to[out=25, in=-10, looseness=173] (Z);
\draw[-, line width=.85, color=ribboncolour, line cap=round] (Z) to (B1);
\draw[-, line width=.85, color=ribboncolour, line cap=round] (Z) to (B2);
\draw[-, line width=.85, color=ribboncolour, line cap=round] (Z) to[out=350, in=315, looseness=173] (Z);
\draw[-, line width=.85, color=ribboncolour, line cap=round] (Z) to (B3);
\draw[-, line width=.85, color=ribboncolour, line cap=round] (Z) to (B4);
\draw[-, line width=.85, color=ribboncolour, line cap=round] (Z) to (C1);
\draw[-, line width=.85, color=ribboncolour, line cap=round] (Z) to (C2);
\draw[-, line width=.85, color=ribboncolour, line cap=round] (Z) to[out=240, in=220, looseness=280] node[shape=circle, fill=white, inner sep=3pt, pos=.25] {} (Z);
\draw[-, line width=.85, color=ribboncolour, line cap=round] (Z) to[out=250, in=230, looseness=280] (Z);
\draw[-, line width=.85, color=ribboncolour, line cap=round] (Z) to[out=210, in=190, looseness=280] node[shape=circle, fill=white, inner sep=3pt, pos=.25] {} (Z);
\draw[-, line width=.85, color=ribboncolour, line cap=round] (Z) to[out=220, in=200, looseness=280] (Z);
\draw[-, line width=.85, color=ribboncolour, line cap=round] (Z) to[out=115, in=-27.5] (D2);
\draw[-, line width=.85, color=ribboncolour, line cap=round] (Z) to[out=130, in=-87.5] (D2);
\draw[-, line width=.85, color=ribboncolour, line cap=round] (Z) to[out=150, in=7.5] (D1);
\draw[-, line width=.85, color=ribboncolour, line cap=round] (Z) to[out=165, in=-52.5] (D1);
\begin{scope}[rotate=145]
\node[rotate=145, font=\footnotesize] at (270:6.2em) {...};
\end{scope}
\begin{scope}[rotate=15]
\node[rotate=15, font=\footnotesize] at (270:6em) {...};
\end{scope}
\begin{scope}[rotate=80]
\node[rotate=80, font=\footnotesize] at (270:6em) {...};
\end{scope}
\begin{scope}[rotate=-50]
\node[rotate=-50, font=\footnotesize] at (270:6em) {...};
\end{scope}
\begin{scope}[rotate=-130]
\node[rotate=-130, font=\footnotesize] at (270:6em) {...};
\end{scope}
\draw[<->, line width=.5pt] (-7.75em,1.5em) -- (-6.25em,1.5em);
\begin{scope}[x=1.2em, y=1.2em, xshift=-9.5em, yshift=-1em, decoration={markings,mark=at position 0.58 with {\arrow[black]{Stealth[length=4.8pt]}}}]
\draw[line width=.5pt,postaction={decorate}] (-1.5,0) -- (1.5,0);
\draw[line width=.5pt] (0,2.4) circle(.5em);
\draw[line width=.75pt, color=arccolour, line cap=round] (-.3,2.1) -- (-.75,0);
\draw[line width=.75pt, color=arccolour, line cap=round] (.3,2.1) -- (.75,0);
\node[circle, fill=ribboncolour, minimum size=.3em, inner sep=0] (T) at (0,4) {};
\node[circle, fill=ribboncolour, minimum size=.3em, inner sep=0] (B) at (0,.8) {};
\node[circle, fill=ribboncolour, minimum size=.25em, inner sep=0] at (0,1.3) {};
\node[circle, fill=stopcolour, minimum size=.25em, inner sep=0] at (0,2) {};
\draw[-, line width=.85, draw=ribboncolour, line cap=round] (0,.8) to[bend right=45] (0,4) to[bend right=45] (0,.8) -- cycle;
\end{scope}
\end{scope}
\end{tikzpicture}
\caption{The ribbon graph (right) obtained by removing the orbifold $2$-cells from the ribbon complex (left) associated to a special dissection $\Delta_0$.}
\label{fig:compactification}
\end{figure}
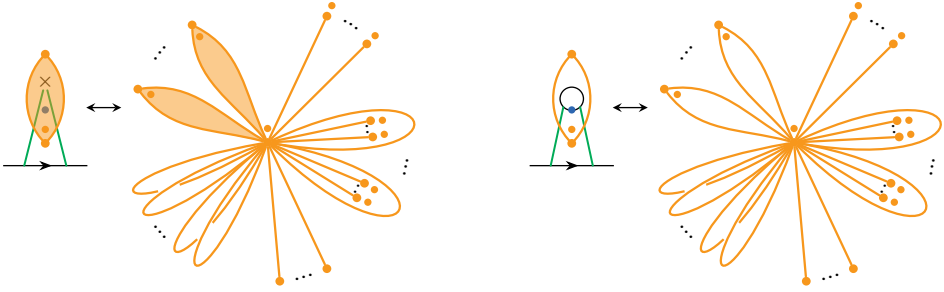

\subsection{Boundary paths and orbifold paths}

In order to describe the morphisms in the Fukaya category of orbifold surfaces, we need two different notions of paths between arcs which we call {\it boundary paths} and {\it orbifold paths}. The former already appeared in \cite{haidenkatzarkovkontsevich} (using a slightly different setup) and the latter are a natural generalization for orbifold surfaces where arcs may end in orbifold points. In Section \ref{section:doublecover} we show that orbifold paths correspond to morphisms of intersecting arcs in the double cover. In this subsection we describe the topological definition. The grading via the line field is discussed in \S\ref{subsection:grading}.

\subsubsection{Boundary paths}

Recall from Definition \ref{definition:orbifoldsurface} that our surfaces are oriented with smooth boundary, so that the boundary is also naturally oriented. A {\it boundary path} is an immersion $p \colon [0, 1] \to \partial S \smallsetminus \Sigma$ such that $p$ follows the orientation of $\partial S$. We consider boundary paths up to reparametrization. Note that if $\partial_j S$ is a boundary component with at least one boundary stop, then there is at most one boundary path with given starting and ending points in $\partial_j S \smallsetminus \Sigma$. If $\partial_j S \cap \Sigma = \varnothing$, then there are countably infinitely many boundary paths with given starting and ending points in $\partial_j S$.

Given two arcs $\gamma, \gamma' \in \Gamma$, each boundary path (up to reparametrization) from a point in $\gamma \cap \partial S$ to a point in $\gamma' \cap \partial S$ will be a morphism in $\mathbf A_\Delta(\gamma, \gamma')$. See \S~\ref{subsection:gradingofpaths} below for the degree of a boundary path.

\subsubsection{Orbifold paths}

Let $\gamma, \gamma' \in \Gamma$ be two arcs at an orbifold point $x \in \Sing (S)$. By an {\it orbifold path} from $\gamma$ to $\gamma'$ at $x$, we mean an anticlockwise angle locally from $\gamma$ to $\gamma'$ based at $x$, which does not pass through the orbifold stop. Note that at each orbifold point $x \in \Sing (S)$ there is a unique maximal orbifold path, i.e.\ any orbifold path at $x$ is its subpath.

We refer to boundary paths and orbifold paths collectively as {\it paths}. Note that under the duality \eqref{eq:duality} each path $p = p_1 \dotsb p_n$ between arcs in a dissection corresponds to a chain
\[
\gamma_0 <_{v_{i_1}} \gamma_1 <_{v_{i_2}} \dotsb <_{v_{i_n}} \gamma_n.
\]
In particular each $p_k$ is dual to a (linear or cyclic) relation $<_{v_{i_k}}$.

\subsection{Grading via line fields}
\label{subsection:grading}

Thus far the notions of arcs, dissections and ribbon complexes were defined for an orbifold surface with stops $(S, \Sigma)$. We now consider a grading structure on $(S, \Sigma)$ given by a line field $\eta \in \Gamma (S, \mathbb P (\mathrm T S))$, so that $\mathbf S = (S, \Sigma, \eta)$ is a {\it graded} orbifold surface with stops (cf.\ Definition \ref{definition:orbifoldsurface}). There are two equivalent definitions of gradings.

Following \cite{haidenkatzarkovkontsevich} the grading may be given as follows. Given a dissection $\Delta$ of $\mathbf S$, each arc $\gamma \colon [0, 1] \to S$ of $\Delta$ may be considered as a {\it graded arc} by fixing a homotopy $\tilde \gamma$ between the restriction of $\eta$ to (the image of) $\gamma$ and the line field $\dot \gamma$ along $\gamma$ determined by the tangent lines of $\gamma$. We consider $\tilde \gamma$ up to homotopy in the space $\Gamma ([0, 1], \gamma^* \mathbb P (\mathrm T S))$ of sections of the projectivized tangent bundle of $S$ restricted to $\gamma$.

The choice of a homotopy class $\tilde \gamma$ for $\gamma$ may be regarded as fixing a reference grading for $\gamma$, akin to choosing to consider a projective module as a perfect complex concentrated in degree $0$.

Let us denote by $\Pi_1 (X, x, y)$ the set of homotopy classes of paths from $x$ to $y$ in a topological space $X$, i.e.\ $\Pi_1 (X, x, y) = \Hom_{\Pi_1 (X)} (x, y)$ is the set of morphisms in the fundamental groupoid $\Pi_1 (X)$ of $X$. We may then write the grading of $\gamma$ more concisely as an element
\[
\tilde \gamma \in \Pi_1 (\Gamma ([0, 1], \gamma^* \mathbb P (\mathrm T S)), \gamma^* \eta, \dot \gamma).
\]

This definition also makes sense when considering any immersed curve $c \colon I \to S$, where $I$ is not necessarily $[0, 1]$ but any $1$-manifold, possibly disconnected and with boundary. That is, a {\it graded curve} is given by a triple $(I, c, \tilde c)$, where $c \colon I \to S$ is an immersion and $\tilde c \in \Pi_1 (\Gamma (S, \mathbb P (\mathrm T S)), c^* \eta, \dot c)$.

Let $(I, c, \tilde c)$ and $(I', c', \tilde c')$ be two graded curves which intersect transversely at a point $s \in S \smallsetminus \Sing (S)$. That is, $c (t_0) = s = c' (t_0')$ for some $t_0 \in I$, $t_0' \in I'$ and $\dot c (t_0) \neq \dot c' (t_0') \in \mathbb P (\mathrm T_s S)$. In this case, their {\it intersection index} is the integer
\[
\mathrm i_s (c, c') = \tilde c (t_0) \cdot \kappa \cdot \tilde c'{}^{-1} (t_0') \in \pi_1 (\mathbb P (\mathrm T_s S)) \simeq \mathbb Z
\]
where $\kappa \colon \mathbb P (\mathrm T_s S) \to \mathbb P (\mathrm T_s S)$ is given by counterclockwise rotation in $\mathrm T_s S$ mapping $\dot c (t_0)$ to $\dot c' (t_0')$ through an angle $< \pi$.

\subsubsection{Grading of paths}\label{subsection:gradingofpaths}

A boundary path may be graded as follows.

\begin{definition}
Let $p \colon [0, 1] \to \partial S \smallsetminus \Sigma$ be a boundary path from $p (0) \in \gamma \cap \partial S$ to $p (1) \in \gamma' \cap \partial S$ for two arcs $\gamma, \gamma'$. Then the {\it degree} of $p$ is
\[
|p| = \mathrm i_{p (0)} (\gamma, p) - \mathrm i_{p (1)} (\gamma', p)
\]
for an arbitrary grading of $p$.
\end{definition}

To grade orbifold paths, let $\mathrm S^1_x$ be an embedded circle enclosing a unique orbifold point $x$ which intersects each arc in $\Gamma$ at most once and does so transversely. (We may take $\mathrm S^1_x$ to be $\partial \mathbb D_x$ of Remark \ref{remark:compactification}.) Let $c_x \colon \mathrm S^1_x \to S$ be the embedding and let $c_x$ be equipped with an arbitrary grading $\tilde c_x$. Note such a grading exists since $\eta$ has winding number $1$ around $x$ whence $\Pi_1 (\Gamma (\mathrm S^1, c_x^* \mathbb P (\mathrm T S)), c_x^* \eta, \dot c_x) \neq \varnothing$.

\begin{definition}
Let $([0, 1], q, \tilde q)$ with $q \colon [0, 1] \to \mathrm S^1_x $ be (a reparametrization of) the graded curve obtained by restricting $(\mathrm S^1_x, c_x, \tilde c_x)$ to the counterclockwise segment of $\mathrm S^1_x$ connecting $\gamma \cap \mathrm S_x^1$ to $\gamma' \cap \mathrm S^1_x$ for two arcs $\gamma, \gamma'$ connecting to $x$. Then we may view $q$ as the orbifold path from $\gamma$ to $\gamma'$ whose {\it degree} is defined as
\[
|q| = \mathrm i_{q (0)} (\gamma, q) - \mathrm i_{q (1)} (\gamma', q).
\]
\end{definition}

\begin{remark}
Following Lekili and Polishchuk \cite[\S 2]{lekilipolishchuk2} an equivalent definition of the degree of a morphism can be given by the {\it winding numbers} of $\eta$, without considering gradings of arcs.

The line field $\eta$ restricts to a line field on each polygon $P_v$. Since different polygons are glued along the arcs in $\Delta$, we may change the line field up to homotopy so that $\eta$ is transverse to the arcs in $\Delta$. Up to homotopy, every line field can be obtained by gluing such line fields on the polygons. 

A line field on a polygon $P_v$, which is transverse to the arcs in $\Delta$, assigns an integer $\theta_i$ to each boundary segment and each segment of $\mathrm S^1_x$ lying in $P_v$, including those with (boundary or orbifold) stops. Here $\theta_i$ is the winding number of the boundary segment in $\partial S$ or the segment of $\mathrm S_x^1$ which is the signed count of how often the tangent line of the boundary or circle segment agrees with the line given by the restriction of $\eta$ to that segment. Letting $n = \val(v)$ these numbers satisfy the following topological constraint
\[
\theta_1 + \dotsb + \theta_n = n - 2
\]
deriving from the Poincaré--Hopf index formula. We have the following constraints:
\begin{itemize}
\item If $P_v$ is an $n$-gon with $v$ of type $\vcirc$ with paths $p_1, \dotsc, p_n$ in the boundary of $P_v$, we have
\[
|p_1| + \dotsb + |p_n| = n - 2.
\]
\item If $P_v$ is an $n$-gon with $v$ of type $\vtimes$ containing an orbifold stop at an orbifold point $x$ with paths $p_1, \dotsc, p_{n-1}$ in the boundary of $P_v$ and $q_1, \dotsc, q_k$ the orbifold paths around $x$, then
\[
|p_1| + \dotsb + |p_{n-1}| = |q_1| + \dotsb + |q_k| + 3 - n.
\]
This follows since $|p_1| + \dotsb + |p_{n-1}| + \theta_n = n - 2$ and $|q_1|+ \dotsb + |q_k| + \theta_n = \mathrm w_\eta(\mathrm S_x^1) = 1$, where $\theta_n$ is the winding number of the segment of $\mathrm S_x^1$ containing the orbifold stop. 
\end{itemize}

The degrees of the (boundary or orbifold) paths are then given by the winding numbers of the paths along $\eta$, i.e.\
\[
|p| = \mathrm w_\eta (p).
\]
\end{remark}

\subsubsection{Grading on ribbon complexes}
\label{subsubsection:gradedribbon}

All relevant information about $\Delta$ can be encoded into extra data for the ribbon complex $\mathbb G (\Delta)$ and the degrees for the paths are no exception. Each (boundary or orbifold) path $p$ between two arcs $\gamma, \gamma'$ appearing consecutively in the boundary of $P_v$ corresponds to an order relation $e_\gamma < e_{\gamma'}$ on the half-edges of $\mathbb G_1 (\Delta)$ corresponding to the arcs. We may encode the degree of $p$ given by the winding number $|p| = \mathrm w_\eta (p)$ by labelling this order relation by the corresponding degree, e.g.\ as $e_\gamma \overset{|p|}{<} e_{\gamma'}$, although we will not use this notation later. The analogous notion for smooth surfaces is called {\it S-graph} in \cite[\S 6]{haidenkatzarkovkontsevich}, but to keep the new terms to a minimum we shall continue to call $\mathbb G (\Delta)$ a {\it ribbon complex} for $\mathbf S$ (rather than {\it S-complex}).

\section{Partially wrapped Fukaya categories of orbifold surfaces}
\label{section:cosheaves}

In this section we define the partially wrapped Fukaya category of a graded orbifold surface with stops in terms of the category of global sections of a cosheaf of A$_\infty$ categories on (an open cover of) the ribbon complex $\mathbb G (\Delta)$ for any admissible dissection $\Delta$.

This is an analogue of a construction advocated in \cite{kontsevich2} where Kontsevich conjectured that the Fukaya category of a Weinstein manifold can be recovered as the category of global sections of a cosheaf of DG categories on a Lagrangian core. This conjecture was recently established by Ganatra, Pardon and Shende \cite{ganatrapardonshende2} who moreover show that the individual categories are partially wrapped Fukaya categories. Each open subset of the Lagrangian core corresponds to a ``Weinstein sector'' of the symplectic manifold and ``sectorial descent'' implies that the global partially wrapped Fukaya category can indeed be computed as the homotopy colimit of the categories associated to the open sets in an open cover.

Although in general, the Lagrangian core might have complicated singularities, in the case of smooth surfaces with at least one boundary component, a Lagrangian core is simply a ribbon graph whose ``singularities'' are the vertices with valency $> 2$. In this case Fukaya categories and mirror symmetry in terms of ribbon graph of surfaces have been studied in several works \cite{sibillatreumannzaslow,lee,haidenkatzarkovkontsevich,dyckerhoffkapranov,pascaleffsibilla}.

We now give an analogous description for orbifold surfaces, where the underlying topological space is the ribbon complex $\mathbb G (\Delta)$ defined in \S\ref{subsection:ribbongraph}. In \S\ref{subsection:core} we show that our construction is in fact a refinement of a similar construction on a ($1$-dimensional) ribbon graph which should be considered as a more apt replacement of the notion of (half-dimensional) Lagrangian core in the case of orbifold surfaces. The detour via the $2$-dimensional ribbon complex will pay off when we give an explicit description of $\mathcal W (\mathbf S)$ via generators in Sections \ref{section:ainfinity}--\ref{section:doublecover}.

\subsection{Cosheaves of A$_\infty$ categories}

We follow the exposition of \cite[\S 2.1]{pascaleffsibilla} (see also \cite[\S 1]{dyckerhoffkapranov}) with the mostly cosmetic difference that we consider not only DG categories but A$_\infty$ categories. (See \cite{pascaleff} for the equivalence between the homotopy theories of DG categories and A$_\infty$ categories.) We may view DG categories either as $1$-categories strictly enriched over cochain complexes of $\Bbbk$-vector spaces or as $\Bbbk$-linear $\infty$-categories. In order to talk about their homotopy theory, we may view them as objects in the homotopy category of DG categories. This homotopy category can be formed by inverting weak equivalences in the Morita model structure of DG categories \cite{tabuada,toen} and it forms an $\infty$-category which is equivalent to the $\infty$-category of DG categories viewed as $\Bbbk$-linear $\infty$-categories \cite{haugseng}. In this sense, the homotopy theory of the enriched $1$-categorical viewpoint agrees with the $\infty$-categorical description and homotopy (co)limits can be computed in either framework. We refer to \cite[\S 2.1]{pascaleffsibilla} for more details and further references.

A {\it precosheaf} $\mathcal E$ of A$_\infty$ categories on a topological space $X$ is a functor assigning to each open set $U \subset X$ an A$_\infty$ category $\mathcal E (U)$ and to each inclusion $U \subset V$ of open sets an inclusion functor $\mathcal E (U) \to \mathcal E (V)$. (This is dual to the restriction functor of a sheaf.)

We say that $\mathcal E$ satisfies {\it Čech descent} if for any open subset $U \subset X$ and any open cover $\mathfrak U = \{ U_i \}_{i \in I \subset \mathbb N}$ of $U$ the inclusion functor
\[
\Bigl( \dotsb \triplerightarrow \mathcal E (\mathfrak U \times_U \mathfrak U) \doublerightarrow \mathcal E (\mathfrak U) \Bigr) \to \mathcal E (U)
\]
realizes $\mathcal E (U)$ as the colimit of the semi-simplicial object formed by evaluating $\mathcal E$ on the Čech nerve of $\mathfrak U$. The Čech nerve $\mathrm N (\mathfrak U)$ is the (semi)simplicial set whose $n$-simplices consist of the intersection of $n + 1$ open sets in $\mathfrak U$. If $\mathcal E$ satisfies Čech descent, we call $\mathcal E$ a {\it cosheaf} of A$_\infty$ categories.

Below we will choose a finite open cover $\mathfrak U$ in such a way that $U_i \cap U_j \cap U_k \cap U_l = \varnothing$ for any quadruple of pairwise distinct open sets. In this case, it suffices to compute the colimit on the truncated Čech nerve
\begin{equation}
\label{eq:truncated}
\prod_{i < j < k} \mathcal E (U_i \cap U_j \cap U_k) \triplerightarrow \prod_{i < j} \mathcal E (U_i \cap U_j) \doublerightarrow \prod_i \mathcal E (U_i).
\end{equation}

\subsection{Open cover of ribbon complexes}

Let $\Delta$ be a weakly admissible dissection of $\mathbf S$ and let $\mathbb G (\Delta)$ be the associated ribbon complex as defined in \S\ref{subsection:ribbongraph}. A workable open cover of $\mathbb G (\Delta)$ can be obtained by further subdividing the cells of $\mathbb G (\Delta)$ as follows.

\subsubsection{Subdividing loops}
\label{subsubsection:loops}

We apply barycentric subdivision to any {\it loop} $e_\gamma$ in the $1$-skeleton of $\mathbb G (\Delta)$, i.e.\ to any edge ($1$-cell) $e_\gamma$ whose boundary vertices coincide. This results in a new vertex along the edge subdividing the interior into two $1$-cells. In terms of the dissection, barycentric subdivision of $e_\gamma$ can simply be understood as adding an extra arc to $\Delta$ which is isotopic to $\gamma$ (see \S\ref{subsubsection:operations} and Fig.~\ref{fig:subdividingloops}). Below we will simply assume that $\Delta$ contains enough arcs so that $\mathbb G (\Delta)$ does not contain any loops in its $1$-skeleton. This will not change the global section of the cosheaf defined in Section \ref{section:cosheaves}. See also Remark \ref{remark:smoothorbifolddisklength12} \ref{remark:item-iii} and Fig.~\ref{fig:differential} (right).

\subsubsection{Subdividing orbifold $2$-cells}
\label{subsubsection:2cells}

We now apply barycentric subdivision to the orbifold $2$-cells $d_x \in \mathbb G_2 (\Delta)$ where the barycenter is the orbifold point in $d_x$. If the boundary of $d_x$ contains $k$ vertices, then barycentric subdivision creates a new vertex $\widetilde v_x$ corresponding to the orbifold point of $d_x$ and $k$ edges connecting from $\widetilde v_x$ to the vertices in the boundary. The original $2$-cell is thus subdivided into $k$ smaller (smooth) $2$-cells. 

We denote the ribbon complex with subdivided $2$-cells by $\widetilde{\mathbb G} (\Delta)$.

\subsubsection{Open cover}
\label{subsubsection:opencover}

We now use the open cells in the subdivided ribbon complex $\widetilde{\mathbb G} (\Delta)$ to give an open cover of the original ribbon complex $\mathbb G (\Delta)$. Consider the following open sets:
\begin{itemize}
\item If $v \in \mathbb G_0 (\Delta)$ is a vertex of type $\vbullet$, $\vodot$ or $\vcirc$ we let $U_v$ be the union of $v$ and all open cells in $\widetilde{\mathbb G} (\Delta)$ whose boundary contains $v$.
\item If $v \in \mathbb G_0 (\Delta)$ is a vertex of type $\vtimes$ for an orbifold point $x \in \Sing (S)$ we let $U_v$ be the union of $v$ and $\widetilde v_x$ and all open cells in $\widetilde{\mathbb G} (\Delta)$ whose boundary contains either $v$ or $\widetilde v_x$. (Here $\widetilde v_x$ is the new vertex subdividing the $2$-cell $d_x \in \mathbb G_2 (\Delta)$ as in \S\ref{subsubsection:2cells}.)
\end{itemize}

\begin{definition}
Let $\Delta$ be a weakly admissible dissection and assume that $\Delta$ contains enough arcs so that the $1$-skeleton of $\mathbb G (\Delta)$ does not contain any loops (see \S\ref{subsubsection:loops}). We denote by
\[
\mathfrak U_\Delta = \{ U_v \}_{v \in \mathbb G_0 (\Delta)}
\]
the open cover of $\mathbb G (\Delta)$.
\end{definition}

Note that for each orbifold point $x \in \Sing (S)$ there is exactly one open set in $\mathfrak U_\Delta $ containing $x$, namely $U_{\vtimes_x}$ for $\vtimes_x \in \mathbb G_0 (\Delta)$.

\begin{proposition}
\label{proposition:cover}
Let $\Delta$ be a weakly admissible dissection and assume that $\Delta$ contains enough arcs so that the $1$-skeleton of $\mathbb G (\Delta)$ does not contain any loops. The open cover $\mathfrak U_\Delta = \{ U_v \}_{v \in \mathbb G_0 (\Delta)}$ has the following properties:
\begin{enumerate}
\item The open sets $U \in \mathfrak U_\Delta$ are connected and contractible, as are the connected components of their intersections.
\item If $v, w \in \mathbb G_0 (\Delta)$ are two distinct vertices, then $U_v \cap U_w$ consists of at most two $2$-cells glued along an open $1$-cell.
\item If $u, v, w \in \mathbb G_0 (\Delta)$ are three distinct vertices, then $U_u \cap U_v \cap U_w$ consists of at most  a single open $2$-cell in $\widetilde{\mathbb G} (\Delta)$.
\item The intersection of four or more pairwise distinct open sets in $\mathfrak U_\Delta$ is empty.
\end{enumerate}
\end{proposition}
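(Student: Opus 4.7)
The plan is to analyze the CW structure of $\widetilde{\mathbb G}(\Delta)$ explicitly. After barycentric subdivision, every $2$-cell is a triangle $T_{x,i} = \{\widetilde v_x, u_i, u_{i+1}\}$ arising from subdividing an orbifold $2$-cell $d_x$ (with cyclic boundary vertices $u_0, \dotsc, u_{k-1}$), so it contains precisely one vertex of the form $\widetilde v$; every $1$-cell has two distinct endpoints, since the no-loops assumption handles the original edges of $\mathbb G_1(\Delta)$ while the radial edges $[\widetilde v_x, u_i]$ are never loops ($\widetilde v_x$ being a new vertex of $\widetilde{\mathbb G}(\Delta)$). To each $v \in \mathbb G_0(\Delta)$ associate a \emph{vertex set} $V(v) \subset \widetilde{\mathbb G}_0(\Delta)$ by $V(v) = \{v\}$ for $v$ of type $\vbullet, \vodot, \vcirc$ and $V(\vtimes_x) = \{\vtimes_x, \widetilde v_x\}$ for orbifold vertices. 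Then by construction $U_v = V(v) \cup \{\text{open cells }c : \overline c \cap V(v) \neq \varnothing\}$, and the $V(v)$ are pairwise disjoint for distinct $v$.

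For statement (\textit{i}), each open star $\mathrm{st}^\circ(u) = \{u\} \cup \bigcup_{u \in \overline c} c$ is connected (each piece $\{u\} \cup c$ is a half-open cone with apex $u$) and deformation retracts onto $u$ via the standard cellular radial retraction, yielding contractibility of $U_v$ for non-orbifold $v$. For $v = \vtimes_x$, $U_v = \mathrm{st}^\circ(\vtimes_x) \cup \mathrm{st}^\circ(\widetilde v_x)$; the two open stars share the open $1$-cell $[\vtimes_x, \widetilde v_x]$, so their union is connected and deformation retracts onto this $1$-cell, hence contractible. Connectedness and contractibility of the components of the intersections will follow from the explicit description in the next step.

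For (\textit{ii})--(\textit{iv}) the heart of the argument is a dimension count: a cell of $\widetilde{\mathbb G}(\Delta)$ of dimension $d$ has at most $d+1$ vertices, and lies in $U_{v_1} \cap \dotsb \cap U_{v_k}$ iff its vertex set meets each of the pairwise disjoint sets $V(v_i)$. Thus any four-fold intersection is empty (no cell has four vertices), giving (\textit{iv}). For (\textit{iii}), only $2$-cells can contribute to a triple intersection, and the unique $\widetilde v_x$-vertex of any triangle $T$ must be matched to some $V(\vtimes_x)$, forcing one of $u, v, w$ to equal $\vtimes_x$ and fixing the remaining two vertices of $T$ as the other two of $u, v, w$; this determines $T$ uniquely. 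For (\textit{ii}), a case analysis on the types of $v$ and $w$ enumerates the $1$- and $2$-cells in $U_v \cap U_w$: at most one $1$-cell of $\mathbb G_1(\Delta)$ between $V(v)$ and $V(w)$ (after sufficient subdivision), at most two radial $1$-cells (when $v$ or $w$ is orbifold), and at most two triangles whose vertex sets meet both $V(v)$ and $V(w)$; each configuration reduces, via the incidence relations of the subdivision, to a pair of $2$-cells glued along a common open $1$-cell (with degenerate cases being subsets thereof), and each connected component is contractible.

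The main technical obstacle is the enumeration for (\textit{ii}) when one or both of $v, w$ is orbifold, because $V(\vtimes_x)$ contains two vertices and both open stars $\mathrm{st}^\circ(\vtimes_x)$ and $\mathrm{st}^\circ(\widetilde v_x)$ contribute cells to $U_{\vtimes_x}$, so one must track which cells of the intersection arise from each. I would organize this by fixing the orbifold $2$-cell(s) $d_x$ whose subdivision contributes to the intersection, enumerating cells accordingly, and treating $U_{\vtimes_x}$ as the open neighborhood of the $1$-cell $[\vtimes_x, \widetilde v_x]$ with which one can apply the standard incidence combinatorics around an edge.
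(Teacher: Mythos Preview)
Your proposal is correct and follows essentially the same approach as the paper: both arguments rest on the observation that an open cell of dimension $d$ in $\widetilde{\mathbb G}(\Delta)$ has at most $d+1$ vertices in its closure and hence lies in at most $d+1$ of the pairwise disjoint vertex sets $V(v)$, which immediately yields \itemiv\ and reduces \itemii--\itemiii\ to the local combinatorics of the subdivision. The paper's proof is considerably terser---it simply asserts the cell-count and the shape of the connected components---whereas you spell out the open-star description, the vertex-set formalism, and the case analysis for \itemii\ (particularly the orbifold case) in more detail; this added explicitness is helpful but not a different route.
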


\begin{proof}
The open sets $U_v$ contract onto the vertex $v$ which proves the first part of the first assertion. Since $\Delta$ does not contain any loops, any open $1$-cell has exactly two distinct vertices in its boundary, so it can appear in at most two distinct open sets of $\mathfrak U_\Delta$. Any open $2$-cell in $\widetilde{\mathbb G} (\Delta)$ has exactly three distinct vertices in its boundary, so it can appear in at most three open sets of $\mathfrak U_\Delta$. (See Example \ref{example:cosheaf} for a nonempty intersection of three open sets.) Hence the intersection of four or more pairwise distinct open sets is empty which proves the fourth assertion.

The connected components of an intersection of two distinct open sets in $\mathfrak U_\Delta$ consist at most of two $2$-cells glued along a single open $1$-cell, whence the connected components are contractible which proves the second assertion and the second part of the first assertion for intersections of two open sets. The connected components of an intersection of three pairwise distinct open sets consist at most of a single open $2$-cell which is also contractible proving the third assertion and the second part of the first assertion for intersections of three open sets.
\end{proof}

\subsection{Cosheaf on the open cover}

We now construct a cosheaf $\mathcal E_\Delta$ of A$_\infty$ categories on $\mathfrak U_\Delta$.

Let $u, v, w \in \mathbb G_0 (\Delta)$ be three distinct vertices. By Proposition \ref{proposition:cover}, the intersection $U_u \cap U_v \cap U_w$ is either empty or it consists of a single open $2$-cell in $\widetilde{\mathbb G} (\Delta)$. Two of its boundary $1$-cells are those added in the barycentric subdivision of $d_x \in \mathbb G_2 (\Delta)$ and the third $1$-cell corresponds to an arc $\gamma$ of the dissection $\Delta$. If $U_u \cap U_v \cap U_w \neq \varnothing$ we set
\[
\mathcal E_\Delta (U_u \cap U_v \cap U_w) = \underset{\gamma}{\bullet}
\]
to be the DG category with one object corresponding to the arc $\gamma$ with (scalar multiples of) its identity morphism. (Of course, $\mathcal E_\Delta (\varnothing)$ is defined as the empty category with no objects.)

Let $v, w \in \mathbb G_0 (\Delta)$ be two distinct vertices. By Proposition \ref{proposition:cover}, the intersection $U_v \cap U_w$ is either empty or it consists of at most two $2$-cells glued along a single open $1$-cell.

If the open $1$-cell belongs to $\mathbb G (\Delta)$, it corresponds to an arc $\gamma$ of $\Delta$ in which case we put again
\begin{align}\label{align:A1quiver}
\mathcal E_\Delta (U_v \cap U_w) = \underset{\gamma}{\bullet}
\end{align}
to be the category with one object corresponding to $\gamma$. 

If the open $1$-cell does not belong to $\mathbb G (\Delta)$, it glues two $2$-cells in the subdivision of a single $2$-cell $d_x$ of $\mathbb G (\Delta)$ for some orbifold point $x \in \Sing (S)$. The two $2$-cells correspond to two adjacent arcs $\gamma, \gamma'$ connecting to $x$ and the $1$-cell corresponds to the orbifold path between them. In this case we put
\begin{align}\label{align:A2quiver}
\mathcal E_\Delta (U_v \cap U_w) = \underset{\gamma}{\bullet} \toarg{q} \underset{\gamma'}{\bullet}
\end{align}
to be the DG category with two objects corresponding to $\gamma, \gamma'$ and one morphism corresponding to the orbifold path from $\gamma$ to $\gamma'$ at $x$. Note that this morphism has some degree (see \S\ref{subsection:grading}).

Let $v \in \mathbb G_0 (\Delta)$. If $\val (v) = n$ we define $\mathcal E_\Delta (U_v)$ according to its type as follows
\[
\begin{tikzpicture}[x=1em,y=1em]
\node[align=center, anchor=north] at (-5,0) {\it vertex\strut \\ \it type\strut};
\node[align=center, anchor=north] at (6,0) {$\mathcal E_\Delta (U_v)$\strut \\ \small\it as category\strut};
\node[align=center, anchor=north] at (17,0) {\it higher\strut \\ \it structure\strut};
\begin{scope}[yshift=-6em]
\node (v) at (-5,0) {$\vbullet$};
\node[shape=circle,inner sep=1.5pt] (0) at (0,0) {$\bullet$};
\node[shape=circle,inner sep=1.5pt] (1) at (4,0) {$\bullet$};
\node[shape=circle,inner sep=1.5pt] (d) at (8,0) {$...$};
\node[shape=circle,inner sep=1.5pt] (n) at (12,0) {$\bullet$};
\node at (17,0) {none};
\path[->, line width=.4pt] (0) edge node[font=\scriptsize, below=-.3ex] {$p_1$} (1) (1) edge node[font=\scriptsize, below=-.3ex] {$p_2$} (d) (d) edge node[font=\scriptsize, below=-.3ex] {$p_{n-1}$} (n);
\draw[dash pattern=on 0pt off 2pt, line width=.5pt, line cap=round] (1) ++(175:1.1em) arc[start angle=175, end angle=5, radius=1.1em] (7.8,0) ++(175:1.1em) arc[start angle=175, end angle=120, radius=1.1em] (8,0) ++(5:1.1em) arc[start angle=5, end angle=60, radius=1.1em];
\end{scope}
\begin{scope}[yshift=-12em]
\node (v) at (-5,1) {$\vodot$};
\node[shape=circle,inner sep=1.5pt] (0) at (0,0) {$\bullet$};
\node[shape=circle,inner sep=1.5pt] (1) at (4,0) {$\bullet$};
\node[shape=circle,inner sep=1.5pt] (d) at (8,0) {$...$};
\node[shape=circle,inner sep=1.5pt] (n) at (12,0) {$\bullet$};
\node at (17,1) {none};
\path[->, line width=.4pt] (0) edge node[font=\scriptsize, below=-.3ex] {$p_1$} (1) (1) edge node[font=\scriptsize, below=-.3ex] {$p_2$} (d) (d) edge node[font=\scriptsize, below=-.3ex] {$p_{n-1}$} (n) (n) edge[bend right=40] node[font=\scriptsize, above=-.3ex] {$p_n$} (0);
\draw[dash pattern=on 0pt off 2pt, line width=.5pt, line cap=round] (0) ++(4:1.4em) arc[start angle=5, end angle=36, radius=1.4em] (1) ++(175:1.1em) arc[start angle=175, end angle=5, radius=1.1em] (7.8,0) ++(175:1.1em) arc[start angle=175, end angle=120, radius=1.1em] (8,0) ++(5:1.1em) arc[start angle=5, end angle=60, radius=1.1em] (n) ++(176:1.4em) arc[start angle=176, end angle=144, radius=1.4em];
\end{scope}
\begin{scope}[yshift=-17em]
\node (v) at (-5,1) {$\vcirc$};
\node[shape=circle,inner sep=1.5pt] (0) at (0,0) {$\bullet$};
\node[shape=circle,inner sep=1.5pt] (1) at (4,0) {$\bullet$};
\node[shape=circle,inner sep=1.5pt] (d) at (8,0) {$...$};
\node[shape=circle,inner sep=1.5pt] (n) at (12,0) {$\bullet$};
\node at (17,1) {$\mucirc_n$};
\path[->, line width=.4pt] (0) edge node[font=\scriptsize, below=-.3ex] {$p_1$} (1) (1) edge node[font=\scriptsize, below=-.3ex] {$p_2$} (d) (d) edge node[font=\scriptsize, below=-.3ex] {$p_{n-1}$} (n) (n) edge[bend right=40] node[font=\scriptsize, above=-.3ex] {$p_n$} (0);
\draw[dash pattern=on 0pt off 2pt, line width=.5pt, line cap=round] (0) ++(4:1.4em) arc[start angle=5, end angle=36, radius=1.4em] (1) ++(175:1.1em) arc[start angle=175, end angle=5, radius=1.1em] (7.8,0) ++(175:1.1em) arc[start angle=175, end angle=120, radius=1.1em] (8,0) ++(5:1.1em) arc[start angle=5, end angle=60, radius=1.1em] (n) ++(176:1.4em) arc[start angle=176, end angle=144, radius=1.4em];
\end{scope}
\begin{scope}[yshift=-22em]
\node (v) at (-5,1) {$\vtimes$};
\node[shape=circle,inner sep=1.5pt] (0) at (0,0) {$\bullet$};
\node[shape=circle,inner sep=1.5pt] (1) at (4,0) {$\bullet$};
\node[shape=circle,inner sep=1.5pt] (d) at (8,0) {$...$};
\node[shape=circle,inner sep=1.5pt] (n) at (12,0) {$\bullet$};
\node[shape=circle,inner sep=1.5pt] (1') at (2.9,1.9) {$\bullet$};
\node[shape=circle,inner sep=1.5pt] (d') at (6,2.4) {$...$};
\node[shape=circle,inner sep=1.5pt] (n-1') at (9.1,1.9) {$\bullet$};
\node at (17,1) {$\mutimes_{n-1}$};
\path[->, line width=.4pt] (0) edge node[font=\scriptsize, below=-.3ex] {$p_1$} (1) (1) edge node[font=\scriptsize, below=-.3ex] {$p_2$} (d) (d) edge node[font=\scriptsize, below=-.3ex] {$p_{n-1}$} (n) (0) edge[bend left=6] node[font=\scriptsize, above=-.1ex] {$q_1$} (1'.203) (1') edge[bend left=6] node[font=\scriptsize, above=-.3ex] {$q_2$} (d'.180) (d'.0) edge[bend left=6] (n-1') (n-1') edge[bend left=6] node[font=\scriptsize, above=-.1ex] {$q_k$} (n);
\draw[dash pattern=on 0pt off 2pt, line width=.5pt, line cap=round] (1) ++(175:1.1em) arc[start angle=175, end angle=5, radius=1.1em] (7.8,0) ++(175:1.1em) arc[start angle=175, end angle=120, radius=1.1em] (8,0) ++(5:1.1em) arc[start angle=5, end angle=60, radius=1.1em];
\end{scope}
\end{tikzpicture}
\]
where each $\bullet$ denotes an object corresponding to an arc in $\Delta$, each arrow represents a morphism and the dotted lines represent zero relations as usual. See Fig.~\ref{fig:vertices} for illustrations of the polygons corresponding to the different types of vertices.

Here $\mucirc_n$ and $\mutimes_{n-1}$ are given by
\begin{align}
\mucirc_n (\s p_{i+n ... i+1}) &= \s \, \id_{\gamma_i} \qquad \text{for any $0 \leq i < n$} \label{eq:mucirc} \\
\mutimes_{n-1} (\s p_{n-1 ... 1}) &= \s q_k \dotsb q_2 q_1. \label{eq:mutimes}
\end{align}
Here \eqref{eq:mucirc} is the higher multiplication introduced in \cite[\S 3.3]{haidenkatzarkovkontsevich} where $\gamma_{i-1}$ is the object in the domain of the morphism $p_i$ and the codomain of the morphism $p_{i+n}$ and the indices are to be understood modulo $n$. Note that \eqref{eq:mutimes} is the higher product \eqref{eq:mutimesunique2} on an orbifold disk obtained in Proposition \ref{proposition:orbifolddisk}.

Note that the category $\H^0 (\tw (\mathcal E_\Delta (U_v))^\natural)$ can itself be viewed as the partially wrapped Fukaya category of a smooth disk with stops for the types $\vbullet$ and $\vcirc$, the partially wrapped Fukaya category of a smooth annulus with stops and one full boundary stop for type $\vodot$, and of an orbifold disk with a single orbifold point and stops in the boundary as in Section \ref{section:disk}. This is in line with the description in \cite{ganatrapardonshende2} where the local categories of the cosheaf on the Lagrangian are indeed partially wrapped Fukaya categories of the sectors.

Note that $\mathcal E_\Delta (U_v)$ contains objects corresponding to arcs in the boundary of $P_v$ and the inclusion functors
\begin{equation}
\label{eq:inclusionfunctors}
\begin{tikzpicture}[baseline=-2.6pt,description/.style={fill=white,inner sep=1pt,outer sep=0}]
\matrix (m) [matrix of math nodes, row sep=.5em, text height=1.5ex, column sep=2.8em, text depth=0.25ex, ampersand replacement=\&, inner sep=3.5pt]
{
                                          \& \mathcal E_\Delta (U_u \cap U_v) \& \mathcal E_\Delta (U_u) \\
\mathcal E_\Delta (U_u \cap U_v \cap U_w) \& \mathcal E_\Delta (U_u \cap U_w) \& \mathcal E_\Delta (U_v) \\
                                          \& \mathcal E_\Delta (U_v \cap U_w) \& \mathcal E_\Delta (U_w) \\
};
\path[->,line width=.4pt] (m-2-1.4) edge (m-1-2.185) (m-2-1) edge (m-2-2) (m-2-1.-4) edge (m-3-2.175) (m-1-2) edge (m-1-3) (m-1-2.-7) edge (m-2-3.175) (m-2-2.4) edge (m-1-3.190) (m-2-2.-4) edge (m-3-3.170) (m-3-2.7) edge (m-2-3.185) (m-3-2) edge (m-3-3);
\end{tikzpicture}
\end{equation}
send the objects in the intersection to the corresponding objects in $\mathcal E_\Delta (U_v)$.

Finally, for any union $U = \bigcup_{v \in V} U_v$ of open sets in $\mathfrak U_\Delta$, indexed by a subset $V \subset \mathbb G_0 (\Delta)$ of vertices, we set
\[
\mathcal E_\Delta (U) = \hocolim \Biggl( \prod_{u, v, w \in V} \!\! \mathcal E_\Delta (U_u \cap U_v \cap U_w) \triplerightarrow \! \prod_{v, w \in V} \mathcal E_\Delta (U_v \cap U_w) \doublerightarrow \prod_{v \in V} \mathcal E_\Delta (U_v) \Biggr)
\]
where the products are taken over distinct vertices in $V$ and the maps in the (truncated) semi-simplicial diagram on the right are induced by the inclusion functors \eqref{eq:inclusionfunctors}.

\begin{notation}
We denote by
\[
\mathbf \Gamma (\mathcal E_\Delta) = \mathcal E_\Delta (\mathbb G (\Delta)) = \mathcal E_\Delta \bigl( \textstyle\bigcup_{v \in \mathbb G_0 (\Delta)} U_v \bigr)
\]
the A$_\infty$ category of {\it global sections} of $\mathcal E_\Delta$.
\end{notation}

\begin{remark}
The cosheaf $\mathcal E_\Delta$ can be viewed as the restriction to $\mathfrak U_\Delta$ of a cosheaf $\mathcal E$ on $\mathbb G (\Delta)$ where for {\it any} open set $U \subset \mathbb G (\Delta)$, the category $\mathcal E (U)$ is the partially wrapped Fukaya category generated by the arcs in $\Delta$ intersected with the Weinstein sector of $\mathbf S$ corresponding to the open set $U$ (see also \S\ref{subsection:core}).
See also \S\ref{subsection:core} for a description in terms of a ribbon {\it graph} for $\mathbf S$.

In fact, $\mathcal E$ is seen to be a constructible cosheaf with respect to the stratification obtained from the subdivision $\widetilde{\mathbb G} (\Delta)$, although we will not need this fact.
\end{remark}

\begin{theorem}\label{theorem:moritaequivalenceglobalsection}
Let $\Delta, \Delta'$ be two weakly admissible dissections of $\mathbf S$. Then $\mathbf \Gamma (\mathcal E_\Delta)$ and $\mathbf \Gamma (\mathcal E_{\Delta'})$ are Morita equivalent.
\end{theorem}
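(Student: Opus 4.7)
The plan is to reduce the theorem to invariance of $\mathbf\Gamma(\mathcal E_\bullet)$ under a short list of elementary local moves between weakly admissible dissections, and then invoke connectivity of the ``space of dissections'' under these moves. Concretely, I would show that any two weakly admissible dissections $\Delta, \Delta'$ can be linked by a finite sequence of (a) \emph{subdivisions}: adding (or removing) an arc isotopic to an arc already in the dissection, which corresponds dually to subdividing (or contracting) an edge of $\mathbb G(\Delta)$ as in \S\ref{subsubsection:operations} and Fig.~\ref{fig:subdividingloops}; and (b) \emph{flips}: replacing a single arc $\gamma$ by another arc lying in the union of the one or two polygons adjacent to $\gamma$, together with the analogous flip for arcs at an orbifold point. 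Using such moves, every weakly admissible $\Delta$ can in particular be brought to a special admissible $\Delta_0$ of \S\ref{subsection:special}, so by transitivity it suffices to treat (a) and (b) individually.

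For move (a) the cosheaf $\mathcal E_{\Delta \cup \{\gamma'\}}$ differs from $\mathcal E_\Delta$ by a single new open set $U_{v_0}$ around a new vertex $v_0$ of type $\vcirc$ with $\val(v_0)=2$; the associated local category is $\bullet \to \bullet$ with $\mucirc_2(\s p_2 \otimes \s p_1) = \s\,\id$. The natural inclusion $\mathbf\Gamma(\mathcal E_\Delta) \hookrightarrow \mathbf\Gamma(\mathcal E_{\Delta\cup\{\gamma'\}})$ is strict and fully faithful, and the newly added generator is the mapping cone of an isomorphism built from the $\mucirc_2$ relation, so every object of one category is a split summand of a twisted complex built from the objects of the other. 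Lemma \ref{lemma:twistedgenerators} then yields the Morita equivalence. For move (b), the flip is supported inside the subsurface formed by the (at most two) polygons bordering the flipped arc, which is either (i) a smooth disk/annulus, in which case the Morita equivalence is the standard flip invariance of the smooth surface model of \cite{haidenkatzarkovkontsevich}, or (ii) a disk containing an orbifold point, in which case the explicit description in Proposition~\ref{proposition:orbifolddisk} and Corollary~\ref{corollary:typeD} identifies the partially wrapped category of the orbifold disk and realizes the post-flip arcs as split summands of twisted complexes over the pre-flip arcs. Outside the affected open sets the two cosheaves are canonically identified, and the restriction functors \eqref{eq:inclusionfunctors} intertwine the local Morita equivalence, so functoriality of the homotopy colimit promotes it to a Morita equivalence of global sections.

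The main obstacle is the case analysis for flips touching a vertex of type $\vtimes$: one must catalogue the possible reconfigurations of arcs near an orbifold point (including changes of which polygon carries the orbifold stop) and verify in each case both that weak admissibility is preserved and that the local cosheaf-theoretic data remains Morita equivalent. This reduces to showing that the local category near an orbifold point, which by Section~\ref{section:disk} is an orbifold disk of type $\mathrm D$, has a Morita class independent of the particular choice of dissection by arcs into and around the orbifold point. Once this is established, iterating the two elementary moves produces a zigzag of Morita equivalences between any $\mathbf\Gamma(\mathcal E_\Delta)$ and $\mathbf\Gamma(\mathcal E_{\Delta_0})$, giving the claimed Morita equivalence between $\mathbf\Gamma(\mathcal E_\Delta)$ and $\mathbf\Gamma(\mathcal E_{\Delta'})$.
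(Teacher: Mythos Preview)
Your reduction to elementary moves is reasonable, but the execution has a genuine gap: you never explain how to get your hands on $\mathbf\Gamma(\mathcal E_\Delta)$ concretely enough to verify the claimed inclusions and Morita equivalences. The global sections are defined as a homotopy colimit, hence only up to quasi-equivalence; to speak of a ``strict and fully faithful'' inclusion $\mathbf\Gamma(\mathcal E_\Delta)\hookrightarrow\mathbf\Gamma(\mathcal E_{\Delta\cup\{\gamma'\}})$ and then invoke Lemma~\ref{lemma:twistedgenerators}, you need explicit models for both sides. Likewise, your ``functoriality of the homotopy colimit'' step for flips is too loose: a local Morita equivalence is in general a zigzag, not a single functor compatible with the diagram, so it does not automatically induce a map of diagrams whose hocolim is a Morita equivalence. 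The paper sidesteps all of this by working entirely inside the cosheaf formalism: it reduces to a \emph{single edge contraction} at a vertex of type $\vcirc$ (your flips are unnecessary, since any two ribbon complexes are already connected by edge contraction and expansion alone), and then shows that contracting one edge does not change the hocolim. The key technical device you are missing is to take cofibrant replacements of the local categories $\mathcal E_\Delta(U_v)$ via the Bardzell resolution, after which the homotopy pushout $\mathcal E_\Delta(U_u\cup U_v)$ can be computed as the \emph{naive} colimit, and one checks directly that this naive gluing coincides with the cofibrant replacement of $\mathcal E_{\Delta'}(U'_w)$.

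There is also a circularity risk in your outline: the generator-level arguments you sketch (mapping cones, split summands, the type~$\mathrm D$ identification) are really statements about $\mathbf A_\Delta$ or about $\tw(\mathbf\Gamma(\mathcal E_\Delta))$, and the comparison $\mathbf A_\Delta\simeq\mathbf\Gamma(\mathcal E_\Delta)$ in Theorem~\ref{theorem:moritapartiallywrapped} itself relies on the present theorem. So you cannot import those later computations here without an independent bridge between the cosheaf model and a concrete generator model---which is exactly what the cofibrant-replacement computation provides.
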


\begin{proof}
Since the ribbon complexes of two dissections of $\mathbf S$ are related by a finite number of edge contractions and expansions, it suffices to prove the case when $\mathbb G (\Delta')$ is obtained from $\mathbb G (\Delta)$ by contracting a single edge $e_\gamma$ connecting two vertices $u, v$ with one of them, say $u$, of type $\vcirc$ resulting in a single vertex $w \in \mathbb G_0 (\Delta')$ of the same type as $v$. In terms of the dissections we have $\Delta = \Delta' \cup \{ \gamma \}$.

The universal property of the homotopy colimit implies that we may compute the global sections of $\mathcal E_\Delta$ iteratively, see e.g.\ \cite[\S 4.2.3]{lurie} and \cite[Example 2.5]{horevyanovski}. In particular, to compute $\mathbf \Gamma (\mathcal E_\Delta) = \mathcal E_\Delta \bigl( \bigcup_{v \in \mathbb G_0 (\Delta)} U_v \bigr)$ we may as a first step compute $\mathcal E_\Delta (U_u \cup U_v)$ as the homotopy pushout of the diagram
\begin{equation}
\label{eq:union}
\begin{tikzpicture}[baseline=-2.6pt,description/.style={fill=white,inner sep=1pt,outer sep=0}]
\matrix (m) [matrix of math nodes, row sep=0em, text height=1.5ex, column sep=3.5em, text depth=0.25ex, ampersand replacement=\&, inner sep=3.5pt]
{
\& \mathcal E_\Delta (U_u) \\
\mathcal E_\Delta (U_u \cap U_v) \& \\
\& \mathcal E_\Delta (U_v) \\
};
\path[->,line width=.4pt] (m-2-1.5) edge (m-1-2.185) (m-2-1.-5) edge (m-3-2.175);
\end{tikzpicture}
\end{equation}
We claim that $\mathcal E_{\Delta'} (U'_w)$ is quasi-equivalent to $\mathcal E_\Delta (U_u \cup U_v)$, where $U'_w \in \mathfrak U_{\Delta'}$ and $U_u, U_v \in \mathfrak U_\Delta$. In other words, we need to show that $\mathcal E_{\Delta'} (U'_w)$ represents the homotopy colimit of the diagram \eqref{eq:union}.

This can be shown by observing that after a cofibrant replacement, the homotopy colimit can be computed as the naive colimit. Rather than working with the (large) bar--cobar construction, we may use the Bardzell resolution in the monomial case built from overlaps of zero relations. This resolution was constructed in \cite{bardzell} and its homotopical properties were studied in \cite{tamaroff}. The case of non-monomial relations was studied in \cite{chouhysolotar}, see also \cite{barmeierwang} for a recursive formula of the differential. The non-monomial case can be obtained from the monomial case by homological perturbation.

For example, when $v$ is of type $\vtimes$, the cofibrant replacement of $\mathcal E_\Delta (U_v)$ is the DG category described by the DG quiver
\[
\begin{tikzpicture}[x=1em,y=1em]
\node[shape=circle,inner sep=1.5pt] (0) at (-4,0) {$\bullet$};
\node[shape=circle,inner sep=1.5pt] (1) at (0,0) {$\bullet$};
\node[shape=circle,inner sep=1.5pt] (2) at (4,0) {$\bullet$};
\node[shape=circle,inner sep=1.5pt] (d) at (8,0) {$...$};
\node[shape=circle,inner sep=1.5pt] (n) at (12,0) {$\bullet$};
\node[shape=circle,inner sep=1.5pt] (1') at (-.8,2.1) {$\bullet$};
\node[shape=circle,inner sep=1.5pt] (2') at (2.4,3.03) {$\bullet$};
\node[shape=circle,inner sep=1.5pt, rotate=-8] (d') at (5.6,3.1) {$...$};
\node[shape=circle,inner sep=1.5pt] (n-1') at (8.8,2.1) {$\bullet$};
\path[->, line width=.4pt] (0) edge node[font=\scriptsize, above=-.3ex] {$p_1$} (1) (1) edge node[font=\scriptsize, above=-.3ex] {$p_2$} (2) (2) edge node[font=\scriptsize, above=-.3ex] {$p_3$} (d) (d) edge node[font=\scriptsize, above=-.3ex] {$p_{n-1}$} (n) (0) edge[bend left=6] node[font=\scriptsize, above=-.1ex] {$q_1$} (1'.203) (1'.27) edge[bend left=6] node[font=\scriptsize, above=-.3ex] {$q_2$} (2'.185) (2'.13) edge[bend left=6] node[font=\scriptsize, above=-.2ex] {$q_3$} (d'.180) (d'.-3) edge[bend left=6] (n-1'.152) (n-1'.-20) edge[bend left=6] node[font=\scriptsize, above=-.1ex] {$q_k$} (n) (0.-20) edge[bend right=25] (2.202.5) (1.-20) edge[bend right=25] (d.202.5) (2.-20) edge[bend right=25] (n.202.5) (0.-40) edge[bend right=35] (d.230) (1.-40) edge[bend right=35] (n.230) (0.-60) edge[bend right=45] (n.258);
\end{tikzpicture}
\]
where for each subpath $p_j \dotsb p_{i+1} p_i$ of $p_{n-1} \dotsb p_2 p_1$ there is one parallel arrow of degree $|\s p_{j \dotsc i}| + 1$ which we may denote by $b_{i,j}$. This DG quiver has no relations, but it carries a nonzero differential $d$ which encodes the quadratic monomial relations $p_{i+1} p_i = 0$ and the higher multiplication $\mutimes_{n-1}$ of $\mathcal E_\Delta (U_v)$. Each $b_{i,j}$ can be viewed as a (higher) overlap of the initial monomial relations $p_{i+1} p_i$. (These higher overlaps are also known as higher ambiguities or as Anick chains after \cite{anick} although they can be traced back at least to \cite{greenhappelzacharia}.) The differential is zero on $p_l$'s and $q_l$'s and on the $b_{i,j}$'s it is given by
\begin{align*}
d (b_{i,j}) = \begin{cases}
  \sum_{i\leq k \leq j} (-1)^{|b_{k,j}|} b_{k,j} b_{i,k} & \text{if $(i, j) \neq (1, n-1)$} \\
 \sum_{1\leq k \leq n-1} (-1)^{|b_{k,j}|} b_{k,j} b_{i,k} + q_k \dotsb q_2 q_1 & \text{if $(i, j) = (1, n-1)$},
\end{cases}
\end{align*}
where we set $b_{i, i} = p_{i}$. (This is similar to the proof of \cite[Proposition 3.7]{haidenkatzarkovkontsevich}.)

Note that $\mathcal E_\Delta (U_u \cap U_v)$ consists either of a single object or of two objects and a single morphism which corresponds to an orbifold path. In particular $\mathcal E_\Delta (U_u \cap U_v)$ is described by the graded quiver of type $\mathrm A_1$ or $\mathrm A_2$ (see \eqref{align:A1quiver} and \eqref{align:A2quiver}) and hence already cofibrant. Gluing the cofibrant replacements of $\mathcal E_\Delta (U_u)$ and $\mathcal E_\Delta (U_v)$ along $\mathcal E_\Delta (U_u \cap U_v)$ in the naive sense, i.e.\ by identifying the objects and morphism, does not create any extra relations or overlaps. In particular, the naive colimit is still cofibrant and coincides with the cofibrant replacement of $\mathcal E_{\Delta'} (U'_w)$. This shows that $\mathcal E_{\Delta'} (U'_w)$ indeed represents the homotopy colimit of the diagram \eqref{eq:union}.

The statement now follows from the observation that all other categories in the diagram computing the global sections of $\mathcal E_\Delta$ and $\mathcal E_{\Delta'}$ are equivalent since $\mathbb G (\Delta)$ and $\mathbb G (\Delta')$ are related by a single edge contraction, so the global sections are canonically isomorphic.
\end{proof}

\subsection{The partially wrapped Fukaya category of an orbifold surface}

We may now define the partially wrapped Fukaya category of $\mathbf S$.

\begin{definition}
\label{definition:fukayacategory}
Let $\mathbf S = (S, \Sigma, \eta)$ be a graded orbifold surface with stops and let $\Delta$ be any weakly admissible dissection of $\mathbf S$. We define the {\it partially wrapped Fukaya category} of $\mathbf S$ as the triangulated category
\[
\mathcal W (\mathbf S) := \H^0 (\tw (\mathbf \Gamma (\mathcal E_\Delta))^\natural)
\]
where $\mathbf \Gamma (\mathcal E_\Delta)$ is the category of global sections of the cosheaf $\mathcal E_\Delta$ of A$_\infty$ categories associated to $\Delta$.
\end{definition}

Note that $ \tw (\mathbf \Gamma (\mathcal E_\Delta))^\natural$ and $\mathbf \Gamma (\tw (\mathcal E_\Delta)^\natural)$ are Morita equivalent, since $\mathbf A$ and $\tw(\mathbf A)$ are Morita equivalent for any A$_\infty$ category $\mathbf A$ \cite[Lemma 3.34]{seidel2} and are thus isomorphic in the homotopy category of DG categories.

In Section \ref{section:ainfinity} we give an explicit presentation of $\mathcal W (\mathbf S)$ as $\H^0 (\tw (\mathbf A_\Delta)^\natural)$, where $\mathbf A_\Delta$ is an A$_\infty$ category associated to an {\it admissible} dissection, all of whose higher products we give explicitly. In Section \ref{section:doublecover} we show that we also have an equivalence $\mathcal W (\mathbf S) \simeq (\mathcal W (\widetilde{\mathbf S}) / \mathbb Z_2)^\natural$ where the latter is the A$_\infty$ orbit category of a smooth double cover $\widetilde{\mathbf S}$ of $\mathbf S$.

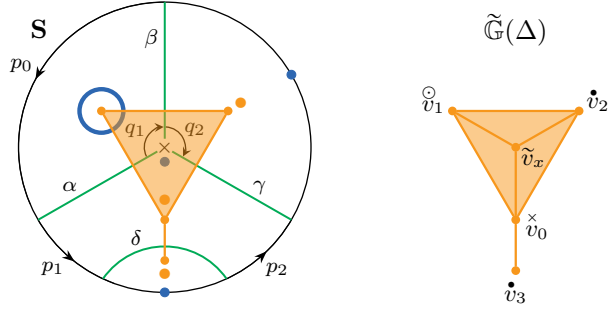
\begin{figure}
\begin{tikzpicture}[x=1em,y=1em,decoration={markings,mark=at position 0.55 with {\arrow[black]{Stealth[length=4.8pt]}}}]
\begin{scope}
\node[font=\small] at (-4.3em,4em) {$\mathbf S$\strut};
\node[font=\scriptsize,shape=circle,scale=.6] (X) at (0,0) {};
\draw[line width=0,postaction={decorate}] (90:5em) arc[start angle=90, end angle=203, radius=5em];
\draw[line width=0,postaction={decorate}] (210:5em) arc[start angle=210, end angle=250, radius=5em];
\draw[line width=0,postaction={decorate}] (295:5em) arc[start angle=295, end angle=330, radius=5em];
\draw[line width=.5pt] (0,0) circle(5em);
\draw[line width=.15em,color=stopcolour] (150:2.5em) circle(.75em);
\draw[fill=stopcolour,color=stopcolour] (30:5em) circle(.15em);
\draw[fill=stopcolour,color=stopcolour] (270:5em) circle(.15em);
\path[line width=.75pt,color=arccolour] (X) edge (330:5em);
\path[line width=.75pt,color=arccolour] (X) edge (210:5em);
\path[line width=.75pt,color=arccolour] (X) edge (90:5em);
\path[line width=.75pt,color=arccolour] (245:5em) edge[bend left=55, looseness=1.1] (295:5em);
\draw[->, line width=.5pt] (207:.7em) arc[start angle=207, end angle=93, radius=.7em];
\draw[->, line width=.5pt] (87:.7em) arc[start angle=87, end angle=-27, radius=.7em];
\node[font=\scriptsize, above] at (210:3.75em) {$\alpha$};
\node[font=\scriptsize, left=-.3ex] at (90:3.75em) {$\beta$};
\node[font=\scriptsize, above=-.2ex] at (-30:3.75em) {$\gamma$};
\node[font=\scriptsize, above=-.2ex] at (255:3.75em) {$\delta$};
\node[font=\scriptsize] at (150:1.15em) {$q_1$};
\node[font=\scriptsize] at (30:1.2em) {$q_2$};
\node[font=\scriptsize] at (150:5.65em) {$p_0$};
\node[font=\scriptsize] at (227.5:5.65em) {$p_1$};
\node[font=\scriptsize] at (312.5:5.7em) {$p_2$};
\node[font=\scriptsize] at (0,0) {$\times$};
\draw[fill=stopcolour,color=stopcolour] (270:.5em) circle(.15em);
\node[circle, fill=ribboncolour, minimum size=.3em, inner sep=0] (0) at (270:2.5em) {};
\node[circle, fill=ribboncolour, minimum size=.3em, inner sep=0] (1) at (150:2.5em) {};
\node[color=ribboncolour] at (30:3em) {$\bullet$};
\node[circle, fill=ribboncolour, minimum size=.3em, inner sep=0] (2) at (30:2.5em) {};
\node[color=ribboncolour] at (270:1.85em) {$\bullet$};
\node[circle, fill=ribboncolour, minimum size=.3em, inner sep=0] (3) at (270:3.9em) {};
\node[color=ribboncolour] at (270:4.4em) {$\bullet$};
\draw[line width=0, fill=ribboncolour, draw=ribboncolour, opacity=.5] (270:2.5em) -- (150:2.5em) -- (30:2.5em) -- cycle;
\draw[-, line width=.85, color=ribboncolour, line cap=round] (3) to (0) to (1) to (2) to (0);
\end{scope}
\begin{scope}[xshift=12em]
\node[circle, fill=ribboncolour, minimum size=.3em, inner sep=0] (X) at (0,0) {};
\node[circle, fill=ribboncolour, minimum size=.3em, inner sep=0] (0) at (270:2.5em) {};
\node[circle, fill=ribboncolour, minimum size=.3em, inner sep=0] (1) at (150:2.5em) {};
\node[circle, fill=ribboncolour, minimum size=.3em, inner sep=0] (2) at (30:2.5em) {};
\node[circle, fill=ribboncolour, minimum size=.3em, inner sep=0] (3) at (270:4.25em) {};
\draw[line width=0, fill=ribboncolour, draw=ribboncolour, opacity=.5] (270:2.5em) -- (150:2.5em) -- (30:2.5em) -- cycle;
\draw[-, line width=.85, color=ribboncolour, line cap=round] (3) to (0) to (1) to (2) to (0) to (X) to (1) (X) to (2);
\node[font=\footnotesize] at (285:2.8em) {$\vtimes_0$};
\node[font=\footnotesize] at (150:3.25em) {$\vodot_1$};
\node[font=\footnotesize] at (30:3.25em) {$\vbullet_2$};
\node[font=\footnotesize] at (270:5em) {$\vbullet_3$};
\node[font=\footnotesize] at (-30:.7em) {$\widetilde v_x$};
\node[font=\small] at (0,4em) {$\widetilde{\mathbb G} (\Delta)$\strut};
\end{scope}
\end{tikzpicture}
\caption{A formal dissection $\Delta$ of an annulus $\mathbf S$ with one orbifold point $x$, its ribbon complex $\mathbb G (\Delta)$ (superimposed on $\mathbf S$) and the subdivided ribbon complex $\widetilde{\mathbb G} (\Delta)$.}
\label{fig:orbifoldannulus}
\end{figure}

\begin{example}
\label{example:cosheaf}
Let $\mathbf S = (S, \Sigma, \eta)$ be an annulus $S$ with $\Sigma \subset \partial S$ consisting of one full boundary stop and two stops in the other boundary component and $\Sing (S) = \{ x \}$ consists of a single orbifold point. Let $\eta$ by any line field. Let $\Delta = \{ \alpha, \beta, \gamma, \delta \}$ be the formal admissible dissection illustrated in Fig.~\ref{fig:orbifoldannulus}.

The Čech nerve of the open cover $\mathfrak U_\Delta$ of the ribbon complex $\mathbb G (\Delta)$ can be visualized as follows
\[
\begin{tikzpicture}[x=1em, y=1em, scale=1.3]
\draw[->, line width=.5pt] (-20+1.95,-5.25-.3+.5) to (-10-1.5,0-.3-.5);
\draw[->, line width=.5pt] (-20+1.95,-5.25-.3) to (-10-1.5,-3.5-.3-.2);
\draw[->, line width=.5pt] (-20+1.95,-5.25-.3-.5) to (-10-1.5,-10.5-.3+.5);
\draw[->, line width=.5pt] (-10+1.75,-.3) to (-1.5,0-.3);
\draw[->, line width=.5pt] (-10+1.75,-.3-.5) to (-1.5,-3.5-.3+.25);
\draw[->, line width=.5pt] (-10+1.75,-3.5-.3+.25) to (-1.5,-.3-.5);
\draw[->, line width=.5pt] (-10+1.75,-3.5-.3-.25) to (-1.5,-7-.3+.25);
\draw[->, line width=.5pt] (-10+1.75,-7-.3+.25) to (-1.5,-.3-1);
\draw[->, line width=.5pt] (-10+1.75,-7-.3-.25) to (-1.5,-10.5-.3);
\draw[->, line width=.5pt] (-10+1.75,-10.5-.3+.25) to (-1.5,-3.5-.3-.25);
\draw[->, line width=.5pt] (-10+1.75,-10.5-.3-.35) to (-1.5,-7-.3-.25);
\begin{scope}
\node[font=\scriptsize, right] at (.1em,-1.3em) {$0$};
\draw[line width=0, fill=ribboncolour, draw=ribboncolour, opacity=.5] (270:1em) -- (150:1em) -- (0,0) -- cycle;
\draw[line width=0, fill=ribboncolour, draw=ribboncolour, opacity=.5] (150:1em) -- (30:1em) -- (0,0) -- cycle;
\draw[line width=0, fill=ribboncolour, draw=ribboncolour, opacity=.5] (270:1em) -- (30:1em) -- (0,0) -- cycle;
\node[circle, fill=ribboncolour, minimum size=.25em, inner sep=0] (X) at (0,0) {};
\node[circle, fill=ribboncolour, minimum size=.25em, inner sep=0] (0) at (270:1em) {};
\node[circle, fill=black!20, minimum size=.25em, inner sep=0] (1) at (150:1em) {};
\node[circle, fill=black!20, minimum size=.25em, inner sep=0] (2) at (30:1em) {};
\node[circle, fill=black!20, minimum size=.25em, inner sep=0] (3) at (270:1.75em) {};
\draw[-, line width=.85, color=ribboncolour, line cap=round] (3) to (0);
\draw[-, line width=.85, color=ribboncolour, line cap=round] (0) to (1);
\draw[-, line width=.85, color=black!10, line cap=round] (1) to (2);
\draw[-, line width=.85, color=ribboncolour, line cap=round] (2) to (0);
\draw[-, line width=.85, color=ribboncolour, line cap=round] (0) to (X);
\draw[-, line width=.85, color=ribboncolour, line cap=round] (X) to (1);
\draw[-, line width=.85, color=ribboncolour, line cap=round] (X) to (2);
\end{scope}
\begin{scope}[yshift=-3.5em]
\node[font=\scriptsize, right] at (.1em,-1.3em) {$1$};
\draw[line width=0, fill=ribboncolour, draw=ribboncolour, opacity=.5] (270:1em) -- (150:1em) -- (0,0) -- cycle;
\draw[line width=0, fill=ribboncolour, draw=ribboncolour, opacity=.5] (150:1em) -- (30:1em) -- (0,0) -- cycle;
\draw[line width=0, fill=black!10, draw=black!10, opacity=.5] (270:1em) -- (30:1em) -- (0,0) -- cycle;
\node[circle, fill=black!10, minimum size=.25em, inner sep=0] (X) at (0,0) {};
\node[circle, fill=black!10, minimum size=.25em, inner sep=0] (0) at (270:1em) {};
\node[circle, fill=ribboncolour, minimum size=.25em, inner sep=0] (1) at (150:1em) {};
\node[circle, fill=black!10, minimum size=.25em, inner sep=0] (2) at (30:1em) {};
\node[circle, fill=black!10, minimum size=.25em, inner sep=0] (3) at (270:1.75em) {};
\draw[-, line width=.85, color=black!10, line cap=round] (3) to (0);
\draw[-, line width=.85, color=ribboncolour, line cap=round] (0) to (1);
\draw[-, line width=.85, color=ribboncolour, line cap=round] (1) to (2);
\draw[-, line width=.85, color=black!10, line cap=round] (2) to (0);
\draw[-, line width=.85, color=black!10, line cap=round] (0) to (X);
\draw[-, line width=.85, color=ribboncolour, line cap=round] (X) to (1);
\draw[-, line width=.85, color=black!10, line cap=round] (X) to (2);
\end{scope}
\begin{scope}[yshift=-7em]
\node[font=\scriptsize, right] at (.1em,-1.3em) {$2$};
\draw[line width=0, fill=black!10, draw=black!10, opacity=.5] (270:1em) -- (150:1em) -- (0,0) -- cycle;
\draw[line width=0, fill=ribboncolour, draw=ribboncolour, opacity=.5] (150:1em) -- (30:1em) -- (0,0) -- cycle;
\draw[line width=0, fill=ribboncolour, draw=ribboncolour, opacity=.5] (270:1em) -- (30:1em) -- (0,0) -- cycle;
\node[circle, fill=black!10, minimum size=.25em, inner sep=0] (X) at (0,0) {};
\node[circle, fill=black!10, minimum size=.25em, inner sep=0] (0) at (270:1em) {};
\node[circle, fill=black!10, minimum size=.25em, inner sep=0] (1) at (150:1em) {};
\node[circle, fill=ribboncolour, minimum size=.25em, inner sep=0] (2) at (30:1em) {};
\node[circle, fill=black!10, minimum size=.25em, inner sep=0] (3) at (270:1.75em) {};
\draw[-, line width=.85, color=black!10, line cap=round] (3) to (0);
\draw[-, line width=.85, color=black!10, line cap=round] (0) to (1);
\draw[-, line width=.85, color=ribboncolour, line cap=round] (1) to (2);
\draw[-, line width=.85, color=ribboncolour, line cap=round] (2) to (0);
\draw[-, line width=.85, color=black!10, line cap=round] (0) to (X);
\draw[-, line width=.85, color=black!10, line cap=round] (X) to (1);
\draw[-, line width=.85, color=ribboncolour, line cap=round] (X) to (2);
\end{scope}
\begin{scope}[yshift=-10.5em]
\node[font=\scriptsize, right] at (.1em,-1.3em) {$3$};
\draw[line width=0, fill=black!10, draw=black!10, opacity=.5] (270:1em) -- (150:1em) -- (0,0) -- cycle;
\draw[line width=0, fill=black!10, draw=black!10, opacity=.5] (150:1em) -- (30:1em) -- (0,0) -- cycle;
\draw[line width=0, fill=black!10, draw=black!10, opacity=.5] (270:1em) -- (30:1em) -- (0,0) -- cycle;
\node[circle, fill=black!10, minimum size=.25em, inner sep=0] (X) at (0,0) {};
\node[circle, fill=black!10, minimum size=.25em, inner sep=0] (0) at (270:1em) {};
\node[circle, fill=black!10, minimum size=.25em, inner sep=0] (1) at (150:1em) {};
\node[circle, fill=black!10, minimum size=.25em, inner sep=0] (2) at (30:1em) {};
\node[circle, fill=ribboncolour, minimum size=.25em, inner sep=0] (3) at (270:1.75em) {};
\draw[-, line width=.85, color=ribboncolour, line cap=round] (3) to (0);
\draw[-, line width=.85, color=black!10, line cap=round] (0) to (1);
\draw[-, line width=.85, color=black!10, line cap=round] (1) to (2);
\draw[-, line width=.85, color=black!10, line cap=round] (2) to (0);
\draw[-, line width=.85, color=black!10, line cap=round] (0) to (X);
\draw[-, line width=.85, color=black!10, line cap=round] (X) to (1);
\draw[-, line width=.85, color=black!10, line cap=round] (X) to (2);
\end{scope}
\begin{scope}[xshift=-10em]
\node[font=\scriptsize, right] at (.1em,-1.3em) {$01$};
\draw[line width=0, fill=ribboncolour, draw=ribboncolour, opacity=.5] (270:1em) -- (150:1em) -- (0,0) -- cycle;
\draw[line width=0, fill=ribboncolour, draw=ribboncolour, opacity=.5] (150:1em) -- (30:1em) -- (0,0) -- cycle;
\draw[line width=0, fill=black!10, draw=black!10, opacity=.5] (270:1em) -- (30:1em) -- (0,0) -- cycle;
\node[circle, fill=black!10, minimum size=.25em, inner sep=0] (X) at (0,0) {};
\node[circle, fill=black!10, minimum size=.25em, inner sep=0] (0) at (270:1em) {};
\node[circle, fill=black!10, minimum size=.25em, inner sep=0] (1) at (150:1em) {};
\node[circle, fill=black!10, minimum size=.25em, inner sep=0] (2) at (30:1em) {};
\node[circle, fill=black!10, minimum size=.25em, inner sep=0] (3) at (270:1.75em) {};
\draw[-, line width=.85, color=black!10, line cap=round] (3) to (0);
\draw[-, line width=.85, color=ribboncolour, line cap=round] (0) to (1);
\draw[-, line width=.85, color=black!10, line cap=round] (1) to (2);
\draw[-, line width=.85, color=black!10, line cap=round] (2) to (0);
\draw[-, line width=.85, color=black!10, line cap=round] (0) to (X);
\draw[-, line width=.85, color=ribboncolour, line cap=round] (X) to (1);
\draw[-, line width=.85, color=black!10, line cap=round] (X) to (2);
\end{scope}
\begin{scope}[xshift=-10em,yshift=-3.5em]
\node[font=\scriptsize, right] at (.1em,-1.3em) {$02$};
\draw[line width=0, fill=black!10, draw=ribboncolour, opacity=.5] (270:1em) -- (150:1em) -- (0,0) -- cycle;
\draw[line width=0, fill=ribboncolour, draw=ribboncolour, opacity=.5] (150:1em) -- (30:1em) -- (0,0) -- cycle;
\draw[line width=0, fill=ribboncolour, draw=ribboncolour, opacity=.5] (270:1em) -- (30:1em) -- (0,0) -- cycle;
\node[circle, fill=black!10, minimum size=.25em, inner sep=0] (X) at (0,0) {};
\node[circle, fill=black!10, minimum size=.25em, inner sep=0] (0) at (270:1em) {};
\node[circle, fill=black!10, minimum size=.25em, inner sep=0] (1) at (150:1em) {};
\node[circle, fill=black!10, minimum size=.25em, inner sep=0] (2) at (30:1em) {};
\node[circle, fill=black!10, minimum size=.25em, inner sep=0] (3) at (270:1.75em) {};
\draw[-, line width=.85, color=black!10, line cap=round] (3) to (0);
\draw[-, line width=.85, color=black!10, line cap=round] (0) to (1);
\draw[-, line width=.85, color=black!10, line cap=round] (1) to (2);
\draw[-, line width=.85, color=ribboncolour, line cap=round] (2) to (0);
\draw[-, line width=.85, color=black!10, line cap=round] (0) to (X);
\draw[-, line width=.85, color=black!10, line cap=round] (X) to (1);
\draw[-, line width=.85, color=ribboncolour, line cap=round] (X) to (2);
\end{scope}
\begin{scope}[xshift=-10em,yshift=-7em]
\node[font=\scriptsize, right] at (.1em,-1.3em) {$03$};
\draw[line width=0, fill=black!10, draw=black!10, opacity=.5] (270:1em) -- (150:1em) -- (0,0) -- cycle;
\draw[line width=0, fill=black!10, draw=black!10, opacity=.5] (150:1em) -- (30:1em) -- (0,0) -- cycle;
\draw[line width=0, fill=black!10, draw=black!10, opacity=.5] (270:1em) -- (30:1em) -- (0,0) -- cycle;
\node[circle, fill=black!10, minimum size=.25em, inner sep=0] (X) at (0,0) {};
\node[circle, fill=black!10, minimum size=.25em, inner sep=0] (0) at (270:1em) {};
\node[circle, fill=black!10, minimum size=.25em, inner sep=0] (1) at (150:1em) {};
\node[circle, fill=black!10, minimum size=.25em, inner sep=0] (2) at (30:1em) {};
\node[circle, fill=black!10, minimum size=.25em, inner sep=0] (3) at (270:1.75em) {};
\draw[-, line width=.85, color=ribboncolour, line cap=round] (3) to (0);
\draw[-, line width=.85, color=black!10, line cap=round] (0) to (1);
\draw[-, line width=.85, color=black!10, line cap=round] (1) to (2);
\draw[-, line width=.85, color=black!10, line cap=round] (2) to (0);
\draw[-, line width=.85, color=black!10, line cap=round] (0) to (X);
\draw[-, line width=.85, color=black!10, line cap=round] (X) to (1);
\draw[-, line width=.85, color=black!10, line cap=round] (X) to (2);
\end{scope}
\begin{scope}[xshift=-10em,yshift=-10.5em]
\node[font=\scriptsize, right] at (.1em,-1.3em) {$12$};
\draw[line width=0, fill=black!10, draw=black!10, opacity=.5] (270:1em) -- (150:1em) -- (0,0) -- cycle;
\draw[line width=0, fill=ribboncolour, draw=ribboncolour, opacity=.5] (150:1em) -- (30:1em) -- (0,0) -- cycle;
\draw[line width=0, fill=black!10, draw=black!10, opacity=.5] (270:1em) -- (30:1em) -- (0,0) -- cycle;
\node[circle, fill=black!10, minimum size=.25em, inner sep=0] (X) at (0,0) {};
\node[circle, fill=black!10, minimum size=.25em, inner sep=0] (0) at (270:1em) {};
\node[circle, fill=black!10, minimum size=.25em, inner sep=0] (1) at (150:1em) {};
\node[circle, fill=black!10, minimum size=.25em, inner sep=0] (2) at (30:1em) {};
\node[circle, fill=black!10, minimum size=.25em, inner sep=0] (3) at (270:1.75em) {};
\draw[-, line width=.85, color=black!10, line cap=round] (3) to (0);
\draw[-, line width=.85, color=black!10, line cap=round] (0) to (1);
\draw[-, line width=.85, color=ribboncolour, line cap=round] (1) to (2);
\draw[-, line width=.85, color=black!10, line cap=round] (2) to (0);
\draw[-, line width=.85, color=black!10, line cap=round] (0) to (X);
\draw[-, line width=.85, color=black!10, line cap=round] (X) to (1);
\draw[-, line width=.85, color=black!10, line cap=round] (X) to (2);
\end{scope}
\begin{scope}[xshift=-20em,yshift=-5.25em]
\node[font=\scriptsize, right] at (.1em,-1.3em) {$012$};
\draw[line width=0, fill=black!10, draw=black!10, opacity=.5] (270:1em) -- (150:1em) -- (0,0) -- cycle;
\draw[line width=0, fill=ribboncolour, draw=ribboncolour, opacity=.5] (150:1em) -- (30:1em) -- (0,0) -- cycle;
\draw[line width=0, fill=black!10, draw=black!10, opacity=.5] (270:1em) -- (30:1em) -- (0,0) -- cycle;
\node[circle, fill=black!10, minimum size=.25em, inner sep=0] (X) at (0,0) {};
\node[circle, fill=black!10, minimum size=.25em, inner sep=0] (0) at (270:1em) {};
\node[circle, fill=black!10, minimum size=.25em, inner sep=0] (1) at (150:1em) {};
\node[circle, fill=black!10, minimum size=.25em, inner sep=0] (2) at (30:1em) {};
\node[circle, fill=black!10, minimum size=.25em, inner sep=0] (3) at (270:1.75em) {};
\draw[-, line width=.85, color=black!10, line cap=round] (3) to (0);
\draw[-, line width=.85, color=black!10, line cap=round] (0) to (1);
\draw[-, line width=.85, color=black!10, line cap=round] (1) to (2);
\draw[-, line width=.85, color=black!10, line cap=round] (2) to (0);
\draw[-, line width=.85, color=black!10, line cap=round] (0) to (X);
\draw[-, line width=.85, color=black!10, line cap=round] (X) to (1);
\draw[-, line width=.85, color=black!10, line cap=round] (X) to (2);
\end{scope}
\end{tikzpicture}
\]
where $0$ corresponds to $U_{\vtimes_0}$, $1$ corresponds to $U_{\vodot_1}$ and $02$ to the intersection $U_{\vtimes_0} \cap U_{\vbullet_2}$ and so forth.

Evaluating the cosheaf $\mathcal E_\Delta$ on this diagram of open sets we obtain the following diagram of A$_\infty$ categories
\[
\begin{tikzpicture}[x=1em, y=1em, scale=1.3]
\draw[->, line width=.5pt] (-20+2.75,-5.25+.5) to (-10-2.75,0-1);
\draw[->, line width=.5pt] (-20+2.75,-5.25) to (-10-2.75,-3.5-.2);
\draw[->, line width=.5pt] (-20+2.75,-5.25-.5) to (-10-2.75,-10.5+.5);
\draw[->, line width=.5pt] (-10+2.75,0) to (-2.75,0);
\draw[->, line width=.5pt] (-10+2.75,-.5) to (-2.75,-3.5+.25);
\draw[->, line width=.5pt] (-10+2.75,-3.5+.25) to (-2.75,-.5);
\draw[->, line width=.5pt] (-10+2.75,-3.5-.25) to (-2.75,-7+.25);
\draw[->, line width=.5pt] (-10+2.75,-7+.25) to (-2.75,-1.25);
\draw[->, line width=.5pt] (-10+2.75,-7-.25) to (-2.75,-10.5+.5);
\draw[->, line width=.5pt] (-10+2.75,-10.5+.5) to (-2.75,-3.5-.25);
\draw[->, line width=.5pt] (-10+2.75,-10.5) to (-2.75,-7-.25);
\begin{scope} 
\node[circle, fill=black, minimum size=.3em, inner sep=0, outer sep=2pt] (A) at (-2em,0) {};
\node[circle, fill=black, minimum size=.3em, inner sep=0, outer sep=2pt] (B) at (0,1em) {};
\node[circle, fill=black, minimum size=.3em, inner sep=0, outer sep=2pt] (C) at (2em,0) {};
\node[circle, fill=black, minimum size=.3em, inner sep=0, outer sep=2pt] (D) at (0,-1em) {};
\draw[->, line width=.5pt] (A) to (B);
\draw[->, line width=.5pt, color=black!15] (B) to[bend right=55] (A);
\draw[->, line width=.5pt] (B) to (C);
\draw[->, line width=.5pt] (A) to (D);
\draw[->, line width=.5pt] (D) to (C);
\draw[dash pattern=on 0pt off 2pt, line width=.8pt, line cap=round, color=black!15] (-2em,0) ++(35:.9em) arc[start angle=35, end angle=70, radius=.9em];
\draw[dash pattern=on 0pt off 2pt, line width=.8pt, line cap=round, color=black!15] (0,1em) ++(198:.9em) arc[start angle=198, end angle=170, radius=.9em];
\end{scope}
\begin{scope}[yshift=-3.5em] 
\node[circle, fill=black, minimum size=.3em, inner sep=0, outer sep=2pt] (A) at (-2em,0) {};
\node[circle, fill=black, minimum size=.3em, inner sep=0, outer sep=2pt] (B) at (0,1em) {};
\node[circle, fill=black!15, minimum size=.3em, inner sep=0, outer sep=2pt] (C) at (2em,0) {};
\node[circle, fill=black!15, minimum size=.3em, inner sep=0, outer sep=2pt] (D) at (0,-1em) {};
\draw[->, line width=.5pt] (A) to (B);
\draw[->, line width=.5pt] (B) to[bend right=55] (A);
\draw[->, line width=.5pt, color=black!15] (B) to (C);
\draw[->, line width=.5pt, color=black!15] (A) to (D);
\draw[->, line width=.5pt, color=black!15] (D) to (C);
\draw[dash pattern=on 0pt off 2pt, line width=.8pt, line cap=round] (-2em,0) ++(35:.9em) arc[start angle=35, end angle=70, radius=.9em];
\draw[dash pattern=on 0pt off 2pt, line width=.8pt, line cap=round] (0,1em) ++(198:.9em) arc[start angle=198, end angle=170, radius=.9em];
\end{scope}
\begin{scope}[yshift=-7em] 
\node[circle, fill=black!15, minimum size=.3em, inner sep=0, outer sep=2pt] (A) at (-2em,0) {};
\node[circle, fill=black, minimum size=.3em, inner sep=0, outer sep=2pt] (B) at (0,1em) {};
\node[circle, fill=black, minimum size=.3em, inner sep=0, outer sep=2pt] (C) at (2em,0) {};
\node[circle, fill=black!15, minimum size=.3em, inner sep=0, outer sep=2pt] (D) at (0,-1em) {};
\draw[->, line width=.5pt, color=black!15] (A) to (B);
\draw[->, line width=.5pt, color=black!15] (B) to[bend right=55] (A);
\draw[->, line width=.5pt] (B) to (C);
\draw[->, line width=.5pt, color=black!15] (A) to (D);
\draw[->, line width=.5pt, color=black!15] (D) to (C);
\draw[dash pattern=on 0pt off 2pt, line width=.8pt, line cap=round, color=black!15] (-2em,0) ++(35:.9em) arc[start angle=35, end angle=70, radius=.9em];
\draw[dash pattern=on 0pt off 2pt, line width=.8pt, line cap=round, color=black!15] (0,1em) ++(198:.9em) arc[start angle=198, end angle=170, radius=.9em];
\end{scope}
\begin{scope}[yshift=-10.5em] 
\node[circle, fill=black!15, minimum size=.3em, inner sep=0, outer sep=2pt] (A) at (-2em,0) {};
\node[circle, fill=black!15, minimum size=.3em, inner sep=0, outer sep=2pt] (B) at (0,1em) {};
\node[circle, fill=black!15, minimum size=.3em, inner sep=0, outer sep=2pt] (C) at (2em,0) {};
\node[circle, fill=black, minimum size=.3em, inner sep=0, outer sep=2pt] (D) at (0,-1em) {};
\draw[->, line width=.5pt, color=black!15] (A) to (B);
\draw[->, line width=.5pt, color=black!15] (B) to[bend right=55] (A);
\draw[->, line width=.5pt, color=black!15] (B) to (C);
\draw[->, line width=.5pt, color=black!15] (A) to (D);
\draw[->, line width=.5pt, color=black!15] (D) to (C);
\draw[dash pattern=on 0pt off 2pt, line width=.8pt, line cap=round, color=black!15] (-2em,0) ++(35:.9em) arc[start angle=35, end angle=70, radius=.9em];
\draw[dash pattern=on 0pt off 2pt, line width=.8pt, line cap=round, color=black!15] (0,1em) ++(198:.9em) arc[start angle=198, end angle=170, radius=.9em];
\end{scope}
\begin{scope}[xshift=-10em] 
\node[circle, fill=black, minimum size=.3em, inner sep=0, outer sep=2pt] (A) at (-2em,0) {};
\node[circle, fill=black, minimum size=.3em, inner sep=0, outer sep=2pt] (B) at (0,1em) {};
\node[circle, fill=black!15, minimum size=.3em, inner sep=0, outer sep=2pt] (C) at (2em,0) {};
\node[circle, fill=black!15, minimum size=.3em, inner sep=0, outer sep=2pt] (D) at (0,-1em) {};
\draw[->, line width=.5pt] (A) to (B);
\draw[->, line width=.5pt, color=black!15] (B) to[bend right=55] (A);
\draw[->, line width=.5pt, color=black!15] (B) to (C);
\draw[->, line width=.5pt, color=black!15] (A) to (D);
\draw[->, line width=.5pt, color=black!15] (D) to (C);
\draw[dash pattern=on 0pt off 2pt, line width=.8pt, line cap=round, color=black!15] (-2em,0) ++(35:.9em) arc[start angle=35, end angle=70, radius=.9em];
\draw[dash pattern=on 0pt off 2pt, line width=.8pt, line cap=round, color=black!15] (0,1em) ++(198:.9em) arc[start angle=198, end angle=170, radius=.9em];
\end{scope}
\begin{scope}[xshift=-10em,yshift=-3.5em] 
\node[circle, fill=black!15, minimum size=.3em, inner sep=0, outer sep=2pt] (A) at (-2em,0) {};
\node[circle, fill=black, minimum size=.3em, inner sep=0, outer sep=2pt] (B) at (0,1em) {};
\node[circle, fill=black, minimum size=.3em, inner sep=0, outer sep=2pt] (C) at (2em,0) {};
\node[circle, fill=black!15, minimum size=.3em, inner sep=0, outer sep=2pt] (D) at (0,-1em) {};
\draw[->, line width=.5pt, color=black!15] (A) to (B);
\draw[->, line width=.5pt, color=black!15] (B) to[bend right=55] (A);
\draw[->, line width=.5pt] (B) to (C);
\draw[->, line width=.5pt, color=black!15] (A) to (D);
\draw[->, line width=.5pt, color=black!15] (D) to (C);
\draw[dash pattern=on 0pt off 2pt, line width=.8pt, line cap=round, color=black!15] (-2em,0) ++(35:.9em) arc[start angle=35, end angle=70, radius=.9em];
\draw[dash pattern=on 0pt off 2pt, line width=.8pt, line cap=round, color=black!15] (0,1em) ++(198:.9em) arc[start angle=198, end angle=170, radius=.9em];
\end{scope}
\begin{scope}[xshift=-10em, yshift=-7em] 
\node[circle, fill=black!15, minimum size=.3em, inner sep=0, outer sep=2pt] (A) at (-2em,0) {};
\node[circle, fill=black!15, minimum size=.3em, inner sep=0, outer sep=2pt] (B) at (0,1em) {};
\node[circle, fill=black!15, minimum size=.3em, inner sep=0, outer sep=2pt] (C) at (2em,0) {};
\node[circle, fill=black, minimum size=.3em, inner sep=0, outer sep=2pt] (D) at (0,-1em) {};
\draw[->, line width=.5pt, color=black!15] (A) to (B);
\draw[->, line width=.5pt, color=black!15] (B) to[bend right=55] (A);
\draw[->, line width=.5pt, color=black!15] (B) to (C);
\draw[->, line width=.5pt, color=black!15] (A) to (D);
\draw[->, line width=.5pt, color=black!15] (D) to (C);
\draw[dash pattern=on 0pt off 2pt, line width=.8pt, line cap=round, color=black!15] (-2em,0) ++(35:.9em) arc[start angle=35, end angle=70, radius=.9em];
\draw[dash pattern=on 0pt off 2pt, line width=.8pt, line cap=round, color=black!15] (0,1em) ++(198:.9em) arc[start angle=198, end angle=170, radius=.9em];
\end{scope}
\begin{scope}[xshift=-10em, yshift=-10.5em] 
\node[circle, fill=black!15, minimum size=.3em, inner sep=0, outer sep=2pt] (A) at (-2em,0) {};
\node[circle, fill=black, minimum size=.3em, inner sep=0, outer sep=2pt] (B) at (0,1em) {};
\node[circle, fill=black!15, minimum size=.3em, inner sep=0, outer sep=2pt] (C) at (2em,0) {};
\node[circle, fill=black!15, minimum size=.3em, inner sep=0, outer sep=2pt] (D) at (0,-1em) {};
\draw[->, line width=.5pt, color=black!15] (A) to (B);
\draw[->, line width=.5pt, color=black!15] (B) to[bend right=55] (A);
\draw[->, line width=.5pt, color=black!15] (B) to (C);
\draw[->, line width=.5pt, color=black!15] (A) to (D);
\draw[->, line width=.5pt, color=black!15] (D) to (C);
\draw[dash pattern=on 0pt off 2pt, line width=.8pt, line cap=round, color=black!15] (-2em,0) ++(35:.9em) arc[start angle=35, end angle=70, radius=.9em];
\draw[dash pattern=on 0pt off 2pt, line width=.8pt, line cap=round, color=black!15] (0,1em) ++(198:.9em) arc[start angle=198, end angle=170, radius=.9em];
\end{scope}
\begin{scope}[xshift=-20em, yshift=-5.25em] 
\node[circle, fill=black!15, minimum size=.3em, inner sep=0, outer sep=2pt] (A) at (-2em,0) {};
\node[circle, fill=black, minimum size=.3em, inner sep=0, outer sep=2pt] (B) at (0,1em) {};
\node[circle, fill=black!15, minimum size=.3em, inner sep=0, outer sep=2pt] (C) at (2em,0) {};
\node[circle, fill=black!15, minimum size=.3em, inner sep=0, outer sep=2pt] (D) at (0,-1em) {};
\draw[->, line width=.5pt, color=black!15] (A) to (B);
\draw[->, line width=.5pt, color=black!15] (B) to[bend right=55] (A);
\draw[->, line width=.5pt, color=black!15] (B) to (C);
\draw[->, line width=.5pt, color=black!15] (A) to (D);
\draw[->, line width=.5pt, color=black!15] (D) to (C);
\draw[dash pattern=on 0pt off 2pt, line width=.8pt, line cap=round, color=black!15] (-2em,0) ++(35:.9em) arc[start angle=35, end angle=70, radius=.9em];
\draw[dash pattern=on 0pt off 2pt, line width=.8pt, line cap=round, color=black!15] (0,1em) ++(198:.9em) arc[start angle=198, end angle=170, radius=.9em];
\end{scope}
\end{tikzpicture}
\]
where the two parallel paths of length $2$ in the top right category $\mathcal E_\Delta (U_{\vtimes_0})$ are equal, i.e.\ the square commutes. The homotopy colimit of this diagram is the category $\mathbf \Gamma (\mathcal E_\Delta)$ of global sections of $\mathcal E_\Delta$ which is given by
\[
\begin{tikzpicture}[x=1em, y=1em]
\begin{scope}
\node[circle, fill=black, minimum size=.3em, inner sep=0, outer sep=2pt] (A) at (-4em,0) {};
\node[circle, fill=black, minimum size=.3em, inner sep=0, outer sep=2pt] (B) at (0,2em) {};
\node[circle, fill=black, minimum size=.3em, inner sep=0, outer sep=2pt] (C) at (4em,0) {};
\node[circle, fill=black, minimum size=.3em, inner sep=0, outer sep=2pt] (D) at (0,-2em) {};
\draw[->, line width=.5pt] (A) to (B);
\draw[->, line width=.5pt] (B) to[bend right=55] (A);
\draw[->, line width=.5pt] (B) to (C);
\draw[->, line width=.5pt] (A) to (D);
\draw[->, line width=.5pt] (D) to (C);
\node[font=\scriptsize] at (-4.65,0) {$\alpha$};
\node[font=\scriptsize] at (0,2.7) {$\beta$};
\node[font=\scriptsize] at (4.65,0) {$\gamma$};
\node[font=\scriptsize] at (0,-2.65) {$\delta$};
\node[font=\scriptsize] at (155:1.55em) {$q_1$};
\node[font=\scriptsize] at (30:2.7em) {$q_2$};
\node[font=\scriptsize] at (140:3.85em) {$p_0$};
\node[font=\scriptsize] at (210:2.7em) {$p_1$};
\node[font=\scriptsize] at (-30:2.75em) {$p_2$};
\draw[dash pattern=on 0pt off 2pt, line width=.8pt, line cap=round] (-4em,0) ++(35:1.5em) arc[start angle=35, end angle=68, radius=1.5em];
\draw[dash pattern=on 0pt off 2pt, line width=.8pt, line cap=round] (0,2em) ++(198:1.5em) arc[start angle=198, end angle=170, radius=1.5em];
\end{scope}
\begin{scope}[xshift=13em]
\node at (0,0) {$\mutimes_2 (\s p_2 \otimes \s p_1) = \s q_2 q_1$.};
\end{scope}
\end{tikzpicture}
\]
Note that the only ``higher'' multiplication is $\mutimes_2$ since $\Delta$ is a {\it formal} dissection (see Definition \ref{definition:DGformal}). This product comes from the orbifold disk sequence corresponding to the vertex $\vtimes_0$ (see Fig.~\ref{fig:orbifoldannulus}). In other words, the category of global sections is a DG category with zero differential. (See Section \ref{section:formal} for further results in this direction.)
\end{example}

\subsection{Lagrangian core and Weinstein sectors for orbifold surfaces}
\label{subsection:core}

For a graded {\it smooth} surface with stops, its partially wrapped Fukaya category can be computed via a cosheaf on a ($1$-dimensional) ribbon graph which may be embedded as a (singular) half-dimensional Lagrangian core into the underlying surface \cite{haidenkatzarkovkontsevich}. In our generalization to orbifold surfaces, we associate a ribbon {\it complex} $\mathbb G (\Delta)$ to any weakly admissible dissection $\Delta$ and $\mathbb G (\Delta)$ contains $2$-dimensional orbifold cells $d_x$ for $x \in \Sing (S)$. At first, this construction may not appear to be a direct analogue of Kontsevich's proposal \cite{kontsevich2} since thus far we have replaced the role of the Lagrangian core of the smooth surface by an object which contains $2$-dimensional cells and is therefore not Lagrangian (half-dimensional). In order to resolve this apparent discrepancy, we introduce the following notion of a ribbon {\it graph} for orbifold surfaces (some of whose vertices represent orbifold disks).

\begin{definition}
\label{definition:core}
Let $S$ be an orbifold surface with boundary. A {\it ribbon graph} for $S$ consists of a (coloured) ribbon graph $G$ such that $G$ embeds as a deformation retract into the underlying topological surface in such a way that for each $x \in \Sing (S)$, there is one vertex $v_x^{\times}$ of $G$ at $x$, where $^{\times}$ is viewed as a colouring of the vertex $v_x^{\times}$. (The vertex $v_x^{\times}$ plays a similar role to the vertex $\vtimes_x$ in the ribbon complex which is why we use almost indistinguishable notation, see Theorem \ref{theorem:ribbongraphinterpretation}.)

If $\mathbf S = (S, \Sigma, \eta)$ is a graded orbifold surface with stops, then a ribbon graph for $\mathbf S$ is a coloured ribbon graph with vertices of type $v^{\times}, \vodot, \vbullet, \vcirc$ with linear orders at the vertices of type $\vbullet$ (and gradings as in \S\ref{subsubsection:gradedribbon}).
\end{definition}

Note that the embedding of a ribbon graph $G$ into an orbifold surface $S$ is a Lagrangian core since as a topological surface (ignoring the orbifold structure) $S$ deformation retracts onto (the embedded) $G$.

The following theorem shows that for certain dissections $\Delta$, our ($2$-dimensional) ribbon complex $\mathbb G (\Delta)$ may be viewed as a refinement of a ribbon graph $\mathrm G (\Delta)$ for $\mathbf S$ which can indeed be regarded as a (half-dimensional) Lagrangian core of the orbifold surface $S$. The partially wrapped Fukaya category $\mathcal W (\mathbf S)$ can then be computed via a cosheaf on either object.

\begin{theorem}
\label{theorem:ribbongraphinterpretation}
Let $\mathbf S = (S, \Sigma, \eta)$ be a graded orbifold surface with stops and let $G$ be a ribbon graph for $\mathbf S$.
\begin{enumerate}
\item There exists a weakly admissible dissection $\Delta$ of $\mathbf S$ such that the ($1$-dimensional) ribbon graph $\mathrm G (\Delta)$, obtained from $\mathbb G (\Delta)$ by contracting each orbifold $2$-cell to a vertex, can be identified with $G$.

\item There exists an open cover $\mathfrak U^{\times}_\Delta$ of the ribbon graph $\mathrm G (\Delta)$ and a cosheaf $\mathcal E^{\times}_\Delta$ of A$_\infty$ categories on $\mathfrak U^{\times}_\Delta$ such that $\mathbf \Gamma (\mathcal E^{\times}_\Delta)$ is Morita equivalent to $\mathbf \Gamma (\mathcal E_\Delta)$.
In particular, we have that
\[
\mathcal W (\mathbf S) \simeq \H^0 (\tw (\mathbf \Gamma (\mathcal E^{\times}_\Delta))^\natural) \simeq \H^0 (\mathbf \Gamma (\tw (\mathcal E^{\times}_\Delta)^\natural))
\]
i.e.\ $\mathcal W (\mathbf S)$ can be viewed as the homotopy category of the global sections of the cosheaf $\tw (\mathcal E^{\times}_\Delta)^\natural$ on the open cover $\mathfrak U^{\times}_\Delta$ of the ribbon graph $G \simeq \mathrm G (\Delta)$.
\end{enumerate}
\end{theorem}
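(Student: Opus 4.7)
The plan is to produce $\Delta$ from $G$ by a direct dualization, then define the cosheaf $\mathcal{E}^{\times}_{\Delta}$ on a coarser open cover of $\mathrm{G}(\Delta) \simeq G$ whose value at the orbifold vertices absorbs the contribution of the orbifold $2$-cells, and finally verify that the two homotopy colimits agree.

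For part \itemi, given a ribbon graph $G$ for $\mathbf{S}$ embedded as a deformation retract, I would construct $\Delta$ as follows. For every edge $e$ of $G$ both of whose endpoints are of type $\vcirc$, $\vbullet$ or $\vodot$, take a single arc in $\mathbf{S}$ transverse to $e$ and disjoint from all other edges (just as in the smooth case of \cite{haidenkatzarkovkontsevich}). For an orbifold vertex $v_x^{\times}$ of valency $k$, each of the $k$ half-edges at $v_x^{\times}$ is the end of an edge whose other endpoint is a non-orbifold vertex; replace each such edge by an arc connecting the orbifold point $x$ to the boundary of $S$, transverse to the corresponding half-edge. The collection of all these arcs is a dissection $\Delta$ whose polygons correspond bijectively to the non-orbifold vertices of $G$ (as smooth disks or annuli) and whose orbifold $2$-cells $d_x$ correspond to the orbifold vertices $v_x^{\times}$. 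After a choice of orbifold stop for each $x$ (which can be made compatibly with the cyclic order around $v_x^{\times}$ so as to make $\Delta$ weakly admissible), the contraction of each $d_x$ to a vertex identifies $\mathbb{G}(\Delta)$ modulo its $2$-cells with $G$.

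For part \itemii, I would define the open cover $\mathfrak{U}^{\times}_{\Delta}$ of $\mathrm{G}(\Delta)$ vertex by vertex: for a non-orbifold vertex $v$, take $U^{\times}_{v} = U_{v}$ as before; for an orbifold vertex $v^{\times}_{x}$, take $U^{\times}_{v^{\times}_{x}}$ to be the open star of $v^{\times}_{x}$ in $\mathrm{G}(\Delta)$. On this cover I would set $\mathcal{E}^{\times}_{\Delta}(U^{\times}_{v}) = \mathcal{E}_{\Delta}(U_{v})$ for the non-orbifold vertices, and for $v^{\times}_{x}$ I would take $\mathcal{E}^{\times}_{\Delta}(U^{\times}_{v^{\times}_{x}})$ to be the A$_\infty$ category encoding the orbifold disk model, namely the category with one object per arc connecting to $x$, boundary morphisms and orbifold morphisms $q_i$ as in the $\vtimes$-row of the table in Section \ref{section:cosheaves}, and the single higher product $\mutimes_{n-1}$ of Proposition \ref{proposition:orbifolddisk}. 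On intersections I would take the same categories as for $\mathcal{E}_{\Delta}$ on the edges of $\mathrm{G}(\Delta)$ (namely the one-object or $A_{2}$-quivers of \eqref{align:A1quiver}, \eqref{align:A2quiver}), and extend by homotopy colimit over unions.

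To prove that $\mathbf\Gamma(\mathcal{E}^{\times}_{\Delta})$ and $\mathbf\Gamma(\mathcal{E}_{\Delta})$ are Morita equivalent, I would show that $\mathfrak{U}^{\times}_{\Delta}$ is a refinement-compatible coarsening of $\mathfrak{U}_{\Delta}$: the open set $U^{\times}_{v^{\times}_{x}}$ is covered by the open sets $U_{\vtimes_{x}}$ together with the open sets $U_{v'}$ for each vertex $v'$ of $\mathbb{G}(\Delta)$ in the boundary of $d_{x}$ which is not already one of the stars of the non-orbifold vertices of $G$. By the iterative computation of homotopy colimits (as used in the proof of Theorem \ref{theorem:moritaequivalenceglobalsection}) it suffices to check locally that the homotopy colimit of the restriction of $\mathcal{E}_{\Delta}$ to the cells surrounding $d_{x}$ is quasi-equivalent to $\mathcal{E}^{\times}_{\Delta}(U^{\times}_{v^{\times}_{x}})$. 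This in turn reduces to the computation of Section \ref{section:disk}: the boundary morphisms $p_{i}$ and orbifold morphisms $q_{i}$ assemble exactly as in the orbifold disk, and the unique higher product $\mutimes_{n-1}$ matches \eqref{eq:mutimesunique1}--\eqref{eq:mutimesunique2}. Once this local identification is in hand, the global Morita equivalence follows from functoriality of the homotopy colimit, and the displayed chain of equivalences for $\mathcal{W}(\mathbf{S})$ is immediate from Definition \ref{definition:fukayacategory} and the standard identification $\tw(-)^\natural \circ \mathbf\Gamma \simeq \mathbf\Gamma \circ \tw(-)^\natural$.

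The main obstacle is the local computation at $v^{\times}_{x}$: one must verify that the naive pushout description of $\mathcal{E}^{\times}_{\Delta}(U^{\times}_{v^{\times}_{x}})$ as an iterated gluing over the subdivided $2$-cell really produces the orbifold disk A$_\infty$ category together with its unique $\mutimes$-product, with no extraneous higher structure appearing. As in the proof of Theorem \ref{theorem:moritaequivalenceglobalsection}, I expect this to follow from a cofibrant replacement via Bardzell-type resolutions and an explicit description of the differential on higher overlaps, the key point being that the orbifold stop ensures there is precisely one maximal orbifold path producing a single obstruction and hence a single surviving higher product.
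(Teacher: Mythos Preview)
There is a gap in your construction for part \itemi. You propose that for each edge of $G$ incident to an orbifold vertex $v_x^\times$, one takes an arc from the orbifold point $x$ to $\partial S$. But every arc $\gamma$ with an endpoint at $x$ produces an edge $e_\gamma$ lying in the \emph{boundary cycle} of the $2$-cell $d_x \subset \mathbb G(\Delta)$ (this is exactly how $d_x$ is attached, see \S\ref{subsection:ribbongraph}), and these are precisely the edges that are collapsed when $d_x$ is contracted to a vertex. Thus your arcs from $x$ contribute nothing to the valency of $v_x^\times$ in $\mathrm G(\Delta)$, and you do not recover $G$. Concretely, if $G$ is the path $w_1 \text{ --- } v_x^\times \text{ --- } w_2$ on a disk with one orbifold point and two boundary stops, your $\Delta$ consists of two arcs from $x$; the resulting $\mathbb G(\Delta)$ has a single $2$-cell whose boundary uses every vertex and edge, and $\mathrm G(\Delta)$ collapses to a single isolated vertex. (You also tacitly assume no edge of $G$ joins two orbifold vertices, which need not hold.)

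The paper's construction avoids this by starting from an auxiliary dissection $\Delta'$ with arcs $\gamma_0,\dotsc,\gamma_k$ at $x$ and then \emph{adding}, for each consecutive pair, an arc $\gamma_i^\times$ isotopic to a smoothening of $\gamma_i \cup \gamma_{i+1}$; these smoothening arcs do not touch $x$, so the edges $e_{\gamma_i^\times}$ lie outside $\partial d_x$ and survive the contraction to become the half-edges at $v_x^\times$. For part \itemii\ the paper also takes a shorter route than you propose: rather than identifying the value at $v_x^\times$ explicitly as the orbifold-disk A$_\infty$ category and then verifying this against the local colimit, it simply \emph{defines} $\mathcal E^\times_\Delta(U^\times_{v_x^\times}) := \mathcal E_\Delta\bigl(\bigcup_{v\in\partial d_x} U_v\bigr)$ as the partial homotopy colimit, so that agreement of global sections follows at once from associativity of homotopy colimits; the local computation you flag as the main obstacle is bypassed entirely.
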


\begin{proof}
Let $\Delta'$ be a weakly admissible dissection of $\mathbf S$ such that no arc connects to more than one orbifold point. Let $\Delta$ be the weakly admissible dissection constructed from $\Delta'$ as follows. Let us denote by $\gamma_0, \dotsc, \gamma_k$ the $k + 1$ arcs in $\Delta'$ connecting to an orbifold point $x \in \Sing (S)$ and let $q_1, \dotsc, q_k$ denote the orbifold paths around $x$, where $q_i$ is an orbifold path from $\gamma_{i-1}$ to $\gamma_i$. For each $1 \leq i \leq k + 1$ let $\gamma_i^{\times}$ be an arc isotopic to a smoothening of $\gamma_i \cup \gamma_{i+1}$, where the indices are taken modulo $k + 1$, i.e.\ $\gamma^{\times}_k$ is isotopic to a smoothening of $\gamma_k \cup \gamma_0$ (see Fig.~\ref{fig:ribbongraphorbifold} for an example). We shall choose $\gamma^{\times}_i$ to be close enough to $\gamma_i \cup \gamma_{i+1}$ so as to not intersect any other arcs in $\Delta'$. Let $\Delta$ be the weakly admissible dissection obtained from $\Delta'$ by adding such arcs for all orbifold points. Then the vertices $v_0, \dotsc, v_k$ in the boundary of each $2$-cell in $\mathbb G (\Delta)$ are of type $\vcirc$ except for one special vertex, say $v_0$, of type $\vtimes$ which contains the orbifold stop at $x$. Moreover, $\val (v_i) = 3$ for all $0 \leq i \leq k$.

Now let $\mathrm G (\Delta)$ be the ribbon graph obtained by contracting each $2$-cell $d_x \in \mathbb G_2 (\Delta)$ to a vertex $v_x^{\times}$ which we shall mark by {\color{ribboncolour}$\pmb\times$}. Note that $\val (v_x^{\times}) = \# \{ v \in \partial d_x \}$. Then $\mathrm G (\Delta)$ is a ribbon graph of $\mathbf S$ and any ribbon graph of $\mathbf S$ can be obtained from some weakly admissible dissection $\Delta$ in this way.

Let $\mathfrak U^{\times}_\Delta = \{ U^{\times}_v \}_{v \in \mathbb G_0 (\Delta)}$ be the open cover of $\mathrm G (\Delta)$ whose open sets $U^{\times}_v$ are the union of $v$ and all open $1$-cells whose boundary contains $v$ (cf.\ \S\ref{subsubsection:opencover}).

Then we may define $\mathcal E^{\times}_\Delta$ on $\mathfrak U^{\times}_\Delta$ by setting
\[
\mathcal E^{\times}_\Delta (U^{\times}_v) =
\begin{cases}
\mathcal E_\Delta (\bigcup_{v \in \partial d_x} U_v) & \text{if } v = v_x^{\times} \text{ for some } x \in \Sing (S) \\
\mathcal E_\Delta (U_v) & \text{otherwise}.
\end{cases}
\]
Since the diagram $\mathcal E^{\times}_\Delta (\mathrm N (\mathfrak U^{\times}_\Delta))$, obtained by evaluating $\mathcal E^{\times}_\Delta$ on (the open sets in) the Čech nerve $\mathrm N (\mathfrak U^{\times}_\Delta)$, is obtained by computing a partial homotopy colimit of the (finer) diagram $\mathcal E_\Delta (\mathrm N (\mathfrak U_\Delta))$, their global sections are canonically isomorphic.
\end{proof}

\begin{figure}
\begin{tikzpicture}[x=1em,y=1em,decoration={markings,mark=at position 0.55 with {\arrow[black]{Stealth[length=4.8pt]}}}]
\begin{scope}
\node[font=\scriptsize,shape=circle,scale=.6] (X) at (0,0) {};
\draw[line width=0,postaction={decorate}] (90:5em) arc[start angle=90, end angle=203, radius=5em];
\draw[line width=0,postaction={decorate}] (210:5em) arc[start angle=210, end angle=250, radius=5em];
\draw[line width=0,postaction={decorate}] (295:5em) arc[start angle=295, end angle=330, radius=5em];
\draw[line width=.5pt] (0,0) circle(5em);
\draw[line width=.15em,color=stopcolour] (150:2.5em) circle(.75em);
\draw[fill=stopcolour,color=stopcolour] (30:5em) circle(.15em);
\draw[fill=stopcolour,color=stopcolour] (270:5em) circle(.15em);
\path[line width=.75pt,color=arccolour] (X) edge (330:5em);
\path[line width=.75pt,color=arccolour] (X) edge (210:5em);
\path[line width=.75pt,color=arccolour] (X) edge (90:5em);
\path[line width=.75pt,color=arccolour] (245:5em) edge[bend left=55, looseness=1.1] (295:5em);
\draw[->, line width=.5pt] (207:.7em) arc[start angle=207, end angle=93, radius=.7em];
\draw[->, line width=.5pt] (87:.7em) arc[start angle=87, end angle=-27, radius=.7em];
\node[font=\scriptsize] at (0,0) {$\times$};
\draw[fill=stopcolour,color=stopcolour] (270:.5em) circle(.15em);
\node[circle, fill=ribboncolour, minimum size=.3em, inner sep=0] (0) at (270:2.5em) {};
\node[circle, fill=ribboncolour, minimum size=.3em, inner sep=0] (1) at (150:2.5em) {};
\node[color=ribboncolour] at (30:3em) {$\bullet$};
\node[circle, fill=ribboncolour, minimum size=.3em, inner sep=0] (2) at (30:2.5em) {};
\node[color=ribboncolour] at (270:1.85em) {$\bullet$};
\node[circle, fill=ribboncolour, minimum size=.3em, inner sep=0] (3) at (270:3.9em) {};
\node[color=ribboncolour] at (270:4.4em) {$\bullet$};
\draw[line width=0, fill=ribboncolour, draw=ribboncolour, opacity=.5] (270:2.5em) -- (150:2.5em) -- (30:2.5em) -- cycle;
\draw[-, line width=.85, color=ribboncolour, line cap=round] (3) to (0) to (1) to (2) to (0);
\end{scope}
\begin{scope}[xshift=14em]
\node[font=\scriptsize,shape=circle,scale=.6] (X) at (0,0) {};
\draw[line width=0,postaction={decorate}] (90:5em) arc[start angle=90, end angle=203, radius=5em];
\draw[line width=0,postaction={decorate}] (210:5em) arc[start angle=210, end angle=250, radius=5em];
\draw[line width=0,postaction={decorate}] (295:5em) arc[start angle=295, end angle=330, radius=5em];
\draw[line width=0,postaction={decorate}] (202:5em) arc[start angle=202, end angle=214, radius=5em];
\draw[line width=0,postaction={decorate}] (210:5em) arc[start angle=210, end angle=222, radius=5em];
\draw[line width=0,postaction={decorate}] (322:5em) arc[start angle=322, end angle=334, radius=5em];
\draw[line width=0,postaction={decorate}] (330:5em) arc[start angle=330, end angle=342, radius=5em];
\draw[line width=0,postaction={decorate}] (82:5em) arc[start angle=82, end angle=94, radius=5em];
\draw[line width=0,postaction={decorate}] (90:5em) arc[start angle=90, end angle=102, radius=5em];
\draw[line width=.5pt] (0,0) circle(5em);
\draw[line width=.15em,color=stopcolour] (150:2.5em) circle(.75em);
\draw[fill=stopcolour,color=stopcolour] (30:5em) circle(.15em);
\draw[fill=stopcolour,color=stopcolour] (270:5em) circle(.15em);
\path[line width=.75pt,color=arccolour] (X) edge (330:5em);
\path[line width=.75pt,color=arccolour] (X) edge (210:5em);
\path[line width=.75pt,color=arccolour] (X) edge (90:5em);
\path[line width=.75pt,color=arccolour] (245:5em) edge[bend left=55, looseness=1.1] (295:5em);
\path[line width=.75pt,color=arccolour] (98:5em) edge[bend left=30, looseness=1.5] (202:5em);
\path[line width=.75pt,color=arccolour] (218:5em) edge[bend left=30, looseness=1.5] (322:5em);
\path[line width=.75pt,color=arccolour] (338:5em) edge[bend left=30, looseness=1.5] (442:5em);
\draw[->, line width=.5pt] (207:.7em) arc[start angle=207, end angle=93, radius=.7em];
\draw[->, line width=.5pt] (87:.7em) arc[start angle=87, end angle=-27, radius=.7em];
\node[font=\scriptsize] at (0,0) {$\times$};
\node[color=ribboncolour] at (30:3em) {$\bullet$};
\draw[fill=stopcolour,color=stopcolour] (270:.35em) circle(.12em);
\node[circle, fill=ribboncolour, minimum size=.3em, inner sep=0] (0') at (270:1.15em) {};
\node[circle, fill=ribboncolour, minimum size=.3em, inner sep=0] (1') at (150:1.15em) {};
\node[circle, fill=ribboncolour, minimum size=.3em, inner sep=0] (2') at (30:1.15em) {};
\node[circle, fill=ribboncolour, minimum size=.3em, inner sep=0] (0) at (270:2.5em) {};
\node[circle, fill=ribboncolour, minimum size=.3em, inner sep=0] (1) at (150:2.5em) {};
\node[color=ribboncolour] at (30:3em) {$\bullet$};
\node[circle, fill=ribboncolour, minimum size=.3em, inner sep=0] (2) at (30:2.5em) {};
\node[color=ribboncolour, font=\scriptsize] at (270:.7em) {$\bullet$};
\node[circle, fill=ribboncolour, minimum size=.3em, inner sep=0] (3) at (270:3.9em) {};
\node[color=ribboncolour] at (270:4.4em) {$\bullet$};
\draw[line width=0, fill=ribboncolour, draw=ribboncolour, opacity=.5] (270:1.15em) -- (150:1.15em) -- (30:1.15em) -- cycle;
\draw[-, line width=.85, color=ribboncolour, line cap=round] (3) to (0') to (1') to (2') to (0') to (0) (1') to (1) (2') to (2);
\end{scope}
\begin{scope}[xshift=28em]
\node[font=\scriptsize,shape=circle,scale=.6] (X) at (0,0) {};
\draw[line width=0,postaction={decorate}] (90:5em) arc[start angle=90, end angle=203, radius=5em];
\draw[line width=0,postaction={decorate}] (210:5em) arc[start angle=210, end angle=250, radius=5em];
\draw[line width=0,postaction={decorate}] (295:5em) arc[start angle=295, end angle=330, radius=5em];
\draw[line width=0,postaction={decorate}] (202:5em) arc[start angle=202, end angle=214, radius=5em];
\draw[line width=0,postaction={decorate}] (210:5em) arc[start angle=210, end angle=222, radius=5em];
\draw[line width=0,postaction={decorate}] (322:5em) arc[start angle=322, end angle=334, radius=5em];
\draw[line width=0,postaction={decorate}] (330:5em) arc[start angle=330, end angle=342, radius=5em];
\draw[line width=0,postaction={decorate}] (82:5em) arc[start angle=82, end angle=94, radius=5em];
\draw[line width=0,postaction={decorate}] (90:5em) arc[start angle=90, end angle=102, radius=5em];
\draw[line width=.5pt] (0,0) circle(5em);
\draw[line width=.15em,color=stopcolour] (150:2.5em) circle(.75em);
\draw[fill=stopcolour,color=stopcolour] (30:5em) circle(.15em);
\draw[fill=stopcolour,color=stopcolour] (270:5em) circle(.15em);
\path[line width=.75pt,color=arccolour] (X) edge (330:5em);
\path[line width=.75pt,color=arccolour] (X) edge (210:5em);
\path[line width=.75pt,color=arccolour] (X) edge (90:5em);
\path[line width=.75pt,color=arccolour] (245:5em) edge[bend left=55, looseness=1.1] (295:5em);
\path[line width=.75pt,color=arccolour] (98:5em) edge[bend left=30, looseness=1.5] (202:5em);
\path[line width=.75pt,color=arccolour] (218:5em) edge[bend left=30, looseness=1.5] (322:5em);
\path[line width=.75pt,color=arccolour] (338:5em) edge[bend left=30, looseness=1.5] (442:5em);
\draw[->, line width=.5pt] (207:.7em) arc[start angle=207, end angle=93, radius=.7em];
\draw[->, line width=.5pt] (87:.7em) arc[start angle=87, end angle=-27, radius=.7em];
\node[color=ribboncolour] at (30:3em) {$\bullet$};
\draw[fill=stopcolour,color=stopcolour] (270:.5em) circle(.15em);
\node[shape=circle, fill=ribboncolour, fill opacity=.3, scale=.6, inner sep=4pt] (v) at (0,0) {};
\node[color=ribboncolour] at (0,0) {$\pmb\times$};
\node[circle, fill=ribboncolour, minimum size=.3em, inner sep=0] (0) at (270:2.5em) {};
\node[circle, fill=ribboncolour, minimum size=.3em, inner sep=0] (1) at (150:2.5em) {};
\node[color=ribboncolour] at (0) {$\bullet$};
\node[circle, fill=ribboncolour, minimum size=.3em, inner sep=0] (2) at (30:2.5em) {};
\node[circle, fill=ribboncolour, minimum size=.3em, inner sep=0] (3) at (270:3.9em) {};
\node[color=ribboncolour] at (270:4.4em) {$\bullet$};
\draw[-, line width=.85, color=ribboncolour, line cap=round] (3) to (0) to (v) to (2) (v) to (1);
\end{scope}

\end{tikzpicture}
\caption{The ribbon complex $\mathbb G (\Delta')$ of Fig.~\ref{fig:orbifoldannulus} (left), the ribbon complex $\mathbb G (\Delta)$ (middle) of the dissection $\Delta$ obtained by adding arcs isotopic to the smoothenings of all three adjacent pairs of arcs in $\Delta'$ connecting to the orbifold point $x$, and the associated ribbon {\it graph} $\mathrm G (\Delta)$ (right) obtained by contracting the $2$-cell $d_x$ in $\mathbb G_2 (\Delta)$ to a single vertex $v_x^{\times} \in \mathrm G_0 (\Delta)$ denoted by {\color{ribboncolour}$\pmb\times$}.}
\label{fig:ribbongraphorbifold}
\end{figure}
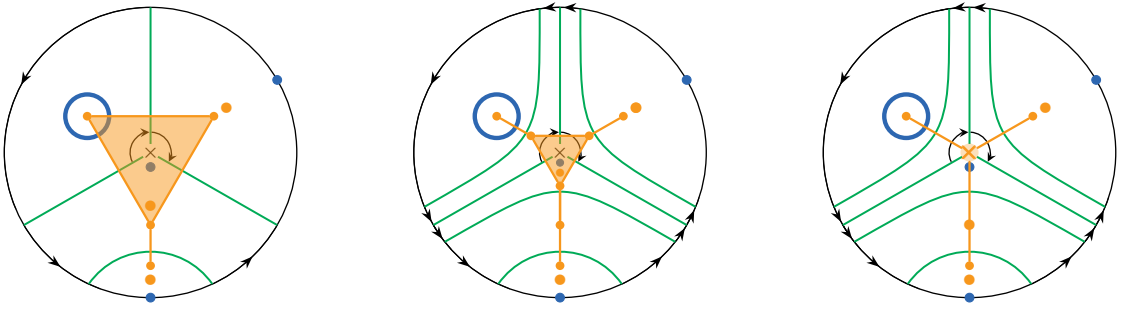

In terms of the notion of Weinstein sectors as in \cite{ganatrapardonshende1,ganatrapardonshende2}, the ribbon {\it graph} $\mathrm G (\Delta)$ of a graded orbifold surface with stops $\mathbf S$ can be covered by open sets $U_v^\times$ where each open set now corresponds to a Weinstein sector of a certain type. Let us assume that $\mathrm G (\Delta)$ contains no vertices of type $\vcirc$ which can be achieved by iterated edge contraction (cf.~\S\ref{subsection:ribbongraph} especially Fig.~\ref{fig:edgecontraction}). Under this assumption, the vertices of $\mathrm G (\Delta)$ are of type $\vbullet$, $\vodot$ and $v^\times$. For each vertex $v$ we have an open set $U^\times_v$ of $\mathrm G (\Delta)$. The A$_\infty$ category $\mathcal E_\Delta^\times (U_v^\times)$ can be viewed as the partially wrapped Fukaya category of the disk $\mathbf S_v$ obtained from the part of $\mathbf S$ given by (an open neighbourhood of) the polygon corresponding to the vertex $v$, together with extra boundary part with stops ensuring that $\mathcal W (\mathbf S_v)$ only records the local data of $\mathbf S$. Let $n = \val (v)$, then we have
\[
\mathcal E_\Delta^\times (U_v^\times) \simeq A
\]
where $A = \Bbbk Q / I$ is the path algebra of a linearly (or cyclically) oriented quiver $Q$ of type $\mathrm A_{n-1}$ (case $\vbullet$), type $\widetilde{\mathrm A}_{n-1}$ (case $\vodot$) or type $\mathrm D_{n+1}$ (case $v^\times$) with $I$ generated by all paths of length $2$ (cf.\ Corollary \ref{corollary:typeD}). Note also that besides the arcs dual to the edges of the ribbon graph $\mathrm G (\Delta)$ we may add two arcs connecting to the orbifold point to generate $\mathcal W (\mathbf S_v)$ to obtain the type $\mathrm D$ quiver. We then obtain triangulated equivalences
\[
\mathcal W (\mathbf S_v) \simeq \H^0 (\mathcal E_\Delta^\times (U_v^\times)) \simeq \per (A).
\]
See Fig.~\ref{fig:weinstein} for an illustration of the type $\mathrm D_{n+1}$ case. Type $\mathrm A_{n-1}$ and type $\widetilde{\mathrm A}_{n-1}$ can be described similarly. A type $\mathrm A_{n-1}$ sector contains one boundary stop and $n-1$ stops are added for $\mathbf S_v$. A type $\widetilde{\mathrm A}_{n-1}$ sector contains one full boundary stop (a circle) and $n$ stops are added for $\mathbf S_v$.

\begin{figure}
\begin{tikzpicture}[x=1.3em,y=1.3em,decoration={markings,mark=at position 0.99 with {\arrow[black]{Stealth[length=4.8pt]}}}, scale=.95]
\draw[line width=0pt, draw opacity=0, fill opacity=.1, fill=black] (-8,0) ++(205:1 and .25) arc[start angle=205, end angle=245, x radius=1, y radius=.25] to[bend left=90, looseness=1.8] ($(-8,0)+(275:1 and .25)$) arc[start angle=275, end angle=318, x radius=1, y radius=.25] to[bend left=89, looseness=1.95] (199:4 and 1) arc[start angle=199, end angle=228.5, x radius=4, y radius=1] to[bend left=90, looseness=1.8] (242:4 and 1) arc[start angle=242, end angle=293, x radius=4, y radius=1] to[out=90, in=-10, looseness=.8] (.65,2.95) to[bend left=30] (-1.45,4.7) to[in=92, out=132, looseness=1.07] ($(-8,0)+(205:1 and .25)$) -- cycle;
\draw[line width=0pt, draw opacity=0, fill opacity=.1, fill=black] (97:4 and 1) to[out=90, in=170, looseness=.73] (.65,2.95) to[bend left=26] (-1.35,4.4) to[out=300, in=90, looseness=1.1] (113:4 and 1);
\node[font=\scriptsize] at (-7,5) {$\times$};
\node[font=\scriptsize] at (-4.5,3.5) {$\times$};
\draw[line width=.5pt] (0,0) ++ (180:4 and 1) arc[start angle=180,end angle=360,x radius=4,y radius=1] to[out=90,in=190,looseness=1] ++(2.5,4) arc[start angle=-90,end angle=90,x radius=.25,y radius=1] to[out=170,in=290,looseness=1] ++(-1,1) arc[start angle=20,end angle=135,radius=5] to[out=225,in=0,looseness=1] (-11,6) ++ (0,-2) to[out=0,in=90] (-9,0) arc[start angle=180,end angle=360,x radius=1,y radius=.25] arc[start angle=180,end angle=0,radius=1.5];
\begin{scope}
\draw[line width=.5pt,line cap=round] (5.5,7) ++(200:5) ++(0:2.5) arc[start angle=0,end angle=90,radius=2.5];
\draw[line width=.5pt,line cap=round] (5.5,7) ++(200:5) ++(0:2.5) ++(90:2.5) ++(280:2.5) arc[start angle=280,end angle=170,radius=2.5];
\end{scope}
\begin{scope}[shift={(-2.35,-2.35)}]
\draw[line width=.5pt,line cap=round] (5.5,7) ++(200:5) ++(0:2.5) arc[start angle=0,end angle=90,radius=2.5];
\draw[line width=.5pt,line cap=round] (5.5,7) ++(200:5) ++(0:2.5) ++(90:2.5) ++(280:2.5) arc[start angle=280,end angle=170,radius=2.5];
\end{scope}
\draw[line width=.6pt,dash pattern=on 0pt off 1.5pt,line cap=round] (0,0) ++ (180:4 and 1) arc[start angle=180,end angle=0,x radius=4,y radius=1] (6.5,4) arc[start angle=270,end angle=90,x radius=0.25,y radius=1] (-9,0) arc[start angle=180,end angle=0,x radius=1,y radius=.25];
\draw[line width=.25em,color=stopcolour] (-11,6) arc[start angle=90,end angle=450,x radius=.25,y radius=1];
\draw[fill=stopcolour, color=stopcolour] (85:4 and 1) circle(.15em);
\draw[fill=stopcolour, color=stopcolour] (235:4 and 1) circle(.15em);
\draw[fill=stopcolour, color=stopcolour] (325:4 and 1) circle(.15em);
\draw[fill=stopcolour, color=stopcolour] (-8,0) ++(260:1 and .25) circle(.15em);
\draw[line width=.75pt, color=arccolour] (-8,0) ++(235:1 and .25) to[bend left=90, looseness=1.8] ($(-8,0)+(285:1 and .25)$);
\draw[line width=.75pt, color=arccolour] (-8,0) ++(305:1 and .25) to[bend left=90, looseness=2] (205:4 and 1);
\draw[line width=.75pt, color=arccolour] (-8,0) ++(220:1 and .25) to[out=90, in=120, looseness=1.2] (-1.1,4);
\draw[line width=.75pt, color=arccolour!70!black] (-1.1,4) to[out=300, in=90] (110:4 and 1);
\draw[line width=.75pt, color=arccolour] (225:4 and 1) to[bend left=90, looseness=1.8] (245:4 and 1);
\draw[line width=.75pt, color=arccolour] (290:4 and 1) to[out=90, in=-10, looseness=.8] (.4,3);
\draw[line width=.75pt, color=arccolour!70!black] (100:4 and 1) to[out=90, in=170, looseness=.7] (.4,3);
\begin{scope}[xshift=17em, yshift=6em, x=1em, y=1em]
\draw[line width=0pt, draw opacity=0, fill opacity=.1, fill=black] (-17:4) to[bend left=57, looseness=1.3] (17:4) arc[start angle=17, end angle=72-17, radius=4] to[bend left=57, looseness=1.3] (72+17:4) arc[start angle=72+17, end angle=144-17, radius=4] to[bend left=57, looseness=1.3] (144+17:4) arc[start angle=144+17, end angle=216-17, radius=4] to[bend left=57, looseness=1.3] (216+17:4) arc[start angle=216+17, end angle=288-17, radius=4] to[bend left=57, looseness=1.3] (288+17:4) arc[start angle=288+17, end angle=360-17, radius=4] -- cycle;
\node[font=\small] at (-4,4) {$\mathbf S_v$};
\node at (0,0) {$\times$};
\foreach \a in {0,72,144,216,288} {
\draw[line width=.5pt, line cap=round, opacity=.2] (\a-17:4) arc[start angle=\a-17, end angle=\a+17, radius=4];
\draw[fill=stopcolour, color=stopcolour] (\a:4) circle(.15em);
\draw[fill=stopcolour, color=white, opacity=.5, line width=0] (\a:4) circle(.18em);
\draw[line width=.75pt, color=arccolour] (\a-20:4) to[bend left=60, looseness=1.3] (\a+20:4);
\draw[line width=.5pt, line cap=round] (\a+17:4) arc[start angle=\a+17, end angle=\a+72-17, radius=4];
}
\end{scope}
\end{tikzpicture}
\caption{A Weinstein sector (shaded) of type $\mathrm D_{n+1}$ on the orbifold surface $\mathbf S$ of Fig.~\ref{fig:orbifoldsurface} containing $n$ arcs and one orbifold point (left). On the right the associated surface $\mathbf S_v$ containing extra parts of the boundary with stops (light).}
\label{fig:weinstein}
\end{figure}
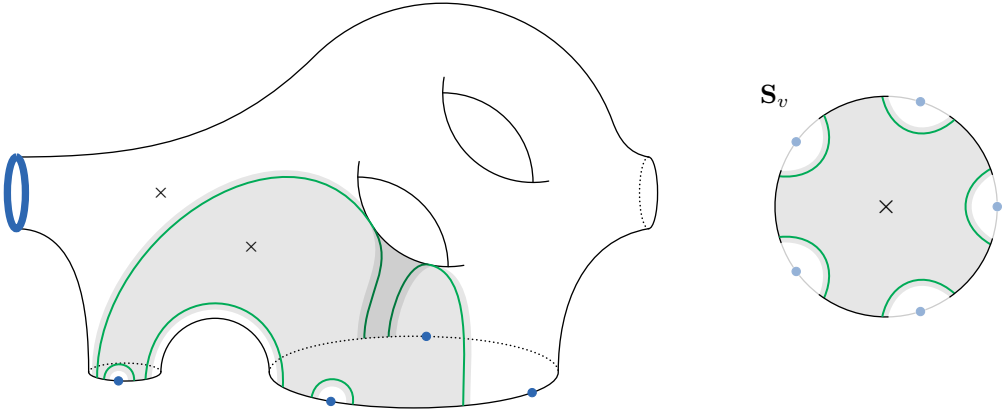

\section{Explicit A$_\infty$ categories from admissible dissections}
\label{section:ainfinity}

Given an admissible dissection $\Delta$ (see Definition \ref{definition:admissible}) we now construct an explicit $\Bbbk$-linear A$_\infty$ category $\mathbf A_\Delta$ whose associated triangulated category $\H^0 (\tw (\mathbf A_\Delta)^\natural)$ is equivalent to $\mathcal W (\mathbf S)$. In other words, we show how to compute the homotopy colimit
\[
\mathcal W (\mathbf S) = \mathbf \Gamma (\mathcal E_\Delta) = \hocolim (\mathrm N (\mathcal E_\Delta (\mathfrak U_\Delta)))
\]
explicitly using any admissible dissection of $\mathbf S$. (See also \S\ref{subsection:notstrongDG} for an extension of this definition to {\it DG dissections} which are only weakly admissible.)

For admissible dissections, the higher products of $\mathbf A_\Delta$ are essentially the products $\mucirc_n$ and $\mutimes_{n-1}$ which we already saw in the definition of $\mathcal E_\Delta (U_v)$ for $U_v \in \mathfrak U_\Delta$. The admissibility of $\Delta$ then implies that the additional higher products one needs to describe $\mathcal W (\mathbf S)$ are obtained from linearly extending $\mucirc$ with respect to the associative composition of morphisms in the first and last entries.

The explicit nature of the higher products on $\mathbf A_\Delta$ allows us in Section \ref{section:formal} to give rather straightforward conditions on $\Delta$ such that $\mathbf A_\Delta$ is a {\it formal} A$_\infty$ category, i.e.\ A$_\infty$-quasi-equivalent to its cohomology (Definition \ref{definition:formal}). This gives rise to explicit derived equivalences between associative algebras arising from what we call {\it formal dissections}.

\subsection{Objects and morphisms}
\label{subsection:objectsandmorphisms}

Let $\mathbf S = (S, \Sigma, \eta)$ be a graded orbifold surface with stops and let $\Delta$ be an admissible dissection. Fix a coefficient field $\Bbbk$ and let $\mathbf A_\Delta$ be the $\Bbbk$-linear A$_\infty$ category whose objects are the arcs in $\Delta$.

Given two arcs $\gamma$ and $\gamma'$ in $\Delta$, a $\Bbbk$-linear basis of morphisms from $\gamma$ to $\gamma'$ is given by (boundary and orbifold) paths from $\gamma$ to $\gamma'$ together with the identity morphism (denoted by $\id_{\gamma}$) in case $\gamma = \gamma'$. 

\subsection{Composition and higher products}
\label{subsection:higher}

We define three types of A$_\infty$ products on $\mathbf A_\Delta$ denoted by
\begin{itemize}
\item $\mubar_2$ for the (associative) concatenation of paths
\item $\mucirc_n$ for $n \geq 2$, where $\circ$ stands for a smooth disk sequence
\item $\mutimes_n$ for $n \geq 1$, where $\times$ stands for an orbifold disk sequence.
\end{itemize}

\subsubsection{Composition}

Let $p, q$ be two paths defining morphisms from $\gamma$ to $\gamma'$ and $\gamma'$ to $\gamma''$, respectively. If $p$ and $q$ can be concatenated, because they are both consecutive boundary paths or consecutive angles around an orbifold point, we define $\mubar_2 (\s q \otimes \s p) = (-1)^{|p|} \s q p$ where $q p$ denotes the concatenation of $p$ and $q$. In particular, we have
\[
(-1)^{|p|} \mubar_2 (\s \, \id_{\gamma'} \otimes \s p) = \s p = \mubar_2 (\s p \otimes \s \, \id_{\gamma}).
\]

\subsubsection{A$_\infty$ products}
\label{subsection:Aproduct}

In addition to the above composition, the remaining A$_\infty$ products are obtained from {\it smooth disk sequences} and {\it orbifold disk sequences}. The former were already introduced in \cite{haidenkatzarkovkontsevich} (under the name {\it disk sequence}) to define $n$-ary higher operations for $n \geq 2$. The latter higher products are defined for $n \geq 1$.

\begin{definition}
\label{definition:disksequence}
Let $\Delta$ be a weakly admissible dissection of $\mathbf S = (S, \Sigma, \eta)$. A {\it smooth disk sequence} of length $n$ is an $n$-gon dual to the vertex of type $\vcirc$ obtained from contracting all edges in a subtree of $\mathbb G (\Delta)$ all of whose vertices are of type $\vcirc$. (In other words, a smooth disk sequence is an $n$-gon in $S$ cut out by a subset of the arcs in $\Delta$ containing neither orbifold points or orbifold stops in its interior nor any (full) boundary stops in its boundary.) We may denote a smooth disk sequence of length $n$ as 
\[
\gamma_0 \overset{p_1}{\frown} \gamma_1 \overset{p_2}{\frown} \dotsb\overset{p_{n-1}}{\frown} \gamma_{n-1} \overset{p_{n}}{\frown} \gamma_0
\]
where $\gamma_0, p_1, \gamma_1, p_2, \dotsc, \gamma_{n-1}, p_{n}$ are the consecutive arcs and paths in the boundary of the $n$-gon. In particular, they are cyclic. 

An {\it orbifold disk sequence} of length $n$ is an $n$-gon in $S$ which is dual to the vertex of type $\vtimes$ obtained from contracting all edges in a subtree of $\mathbb G (\Delta)$ one of whose vertices is of type $\vtimes$ and all of whose other vertices are of type $\vcirc$. We may denote an orbifold disk sequence of length $n$ as 
\[
\gamma_0 \overset{p_1}{\smile} \gamma_1 \overset{p_2}{\smile} \dotsb\overset{p_{n-1}}{\smile} \gamma_{n-1} 
\]
where $\gamma_0, p_1, \gamma_1, p_2, \dotsc, \gamma_{n-2}, p_{n-1}, \gamma_{n-1}$ are the consecutive arcs and paths in the boundary of the $n$-gon such that the orbifold stop lies between $\gamma_0$ and $\gamma_{n-1}$.

An example of a smooth disk sequence (on the left) and an orbifold disk sequence (on the right) is given in Fig.~\ref{fig:smoothorbifolddisksequence}.
\end{definition}

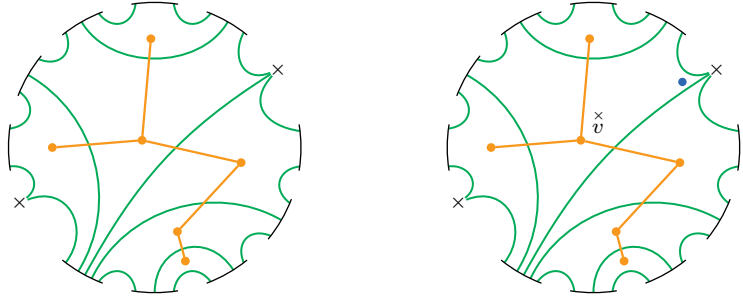
\begin{figure}
\begin{tikzpicture}[x=1em,y=1em]
\begin{scope}[scale=.5]
\node[font=\scriptsize] at (32.5:10) {$\times$};
\node[font=\scriptsize,shape=circle,scale=.6] (X) at (32.5:10) {};
\node[font=\scriptsize,shape=circle,scale=.6] (Y) at (202.5:10) {};
\node[font=\scriptsize] at (202.5:10) {$\times$};
\foreach \s in {82.5,112.5, ...,172.5} {%
\draw[line width=.75pt, color=arccolour, looseness=1.5] (\s:10) to[in=\s+150, out=\s+195] (\s-15:10);
}
\draw[line width=.75pt, color=arccolour, looseness=1.5] (Y) to[in=52.5, out=22.5] (202.5+30:10);
\draw[line width=.75pt, color=arccolour, looseness=1.5] (202.5-15:10) to[in=37.5, out=7.5] (Y);
\draw[line width=.75pt, color=arccolour, looseness=1.5] (X) to[in=180+52.5+10, out=180+32.5-10] (52.5:10);
\draw[line width=.75pt, color=arccolour, looseness=1.5] (6.5:10) to[in=180+32.5+15, out=180+6.5-15] (X);
\draw[line width=.75pt, color=arccolour, looseness=1] (241.5:10) to[out=241.5+180, in=32.5+180] (X);
\foreach \s in {262.5,292.5,...,352.5}{%
\draw[line width=.75pt, color=arccolour, looseness=1.5] (\s:10) to[in=\s+150, out=\s+195] (\s-15:10);
}
\foreach \a/\b in {120/60,150/238.5,244.5/330} {
\draw[line width=.75pt, color=arccolour, looseness=1] (\a:10) to[out=\a+180, in=\b+180] (\b:10);
}
\foreach \a/\b in {270/302.5} {
\draw[line width=.75pt, color=arccolour, looseness=1.5] (\a:10) to[out=\a+180, in=\b+180] (\b:10);
}
\foreach \s in {52.5,82.5,...,172.5} {%
\draw[line width=.5pt, line cap=round] (\s-1.5:10) arc[start angle=\s-1.5, end angle=\s+16.5, radius=10em];
}
\foreach \s in {232.5,262.5,...,352.5} {%
\draw[line width=.5pt, line cap=round] (\s-1.5:10) arc[start angle=\s-1.5, end angle=\s+16.5, radius=10em];
}
\node[circle, fill=ribboncolour, minimum size=.3em, inner sep=0] (A1) at (150:1) {};
\node[circle, fill=ribboncolour, minimum size=.3em, inner sep=0] (A2) at (180:7) {};
\node[circle, fill=ribboncolour, minimum size=.3em, inner sep=0] (A3) at (-10:6) {};
\node[circle, fill=ribboncolour, minimum size=.3em, inner sep=0] (A4) at (92:7.5) {};
\node[circle, fill=ribboncolour, minimum size=.3em, inner sep=0] (A5) at (-75:6) {};
\node[circle, fill=ribboncolour, minimum size=.3em, inner sep=0] (A6) at (-75:8.1) {};
\draw[-, line width=.85, color=ribboncolour, line cap=round]  (A2) to (A1) to (A3);
\draw[-, line width=.85, color=ribboncolour, line cap=round]  (A1) to (A4); 
\draw[-, line width=.85, color=ribboncolour, line cap=round]  (A3) to (A5) to (A6);
\end{scope}
\begin{scope}[scale=.5, xshift=30em]
\node[font=\scriptsize] at (32.5:10) {$\times$};
\draw[fill=stopcolour,color=stopcolour] ($(32.5:10)+(200:2.5)$) circle(.25em);
\node[font=\scriptsize,shape=circle,scale=.6] (X) at (32.5:10) {};
\node[font=\scriptsize,shape=circle,scale=.6] (Y) at (202.5:10) {};
\node[font=\scriptsize] at (202.5:10) {$\times$};
\foreach \s in {82.5,112.5, ...,172.5} {%
\draw[line width=.75pt, color=arccolour, looseness=1.5] (\s:10) to[in=\s+150, out=\s+195] (\s-15:10);
}
\draw[line width=.75pt, color=arccolour, looseness=1.5] (Y) to[in=52.5, out=22.5] (202.5+30:10);
\draw[line width=.75pt, color=arccolour, looseness=1.5] (202.5-15:10) to[in=37.5, out=7.5] (Y);
\draw[line width=.75pt, color=arccolour, looseness=1.5] (X) to[in=180+52.5+10, out=180+32.5-10] (52.5:10);
\draw[line width=.75pt, color=arccolour, looseness=1.5] (6.5:10) to[in=180+32.5+15, out=180+6.5-15] (X);
\draw[line width=.75pt, color=arccolour, looseness=1] (241.5:10) to[out=241.5+180, in=32.5+180] (X);
\foreach \s in {262.5,292.5,...,352.5}{%
\draw[line width=.75pt, color=arccolour, looseness=1.5] (\s:10) to[in=\s+150, out=\s+195] (\s-15:10);
}
\foreach \a/\b in {120/60,150/238.5,244.5/330} {
\draw[line width=.75pt, color=arccolour, looseness=1] (\a:10) to[out=\a+180, in=\b+180] (\b:10);
}
\foreach \a/\b in {270/302.5} {
\draw[line width=.75pt, color=arccolour, looseness=1.5] (\a:10) to[out=\a+180, in=\b+180] (\b:10);
}
\foreach \s in {52.5,82.5,...,172.5} {%
\draw[line width=.5pt, line cap=round] (\s-1.5:10) arc[start angle=\s-1.5, end angle=\s+16.5, radius=10em];
}
\foreach \s in {232.5,262.5,...,352.5} {%
\draw[line width=.5pt, line cap=round] (\s-1.5:10) arc[start angle=\s-1.5, end angle=\s+16.5, radius=10em];
}
\node[circle, fill=ribboncolour, minimum size=.3em, inner sep=0] (A1) at (150:1) {};
\node[circle, fill=ribboncolour, minimum size=.3em, inner sep=0] (A2) at (180:7) {};
\node[circle, fill=ribboncolour, minimum size=.3em, inner sep=0] (A3) at (-10:6) {};
\node[circle, fill=ribboncolour, minimum size=.3em, inner sep=0] (A4) at (92:7.5) {};
\node[circle, fill=ribboncolour, minimum size=.3em, inner sep=0] (A5) at (-75:6) {};
\node[circle, fill=ribboncolour, minimum size=.3em, inner sep=0] (A6) at (-75:8.1) {};
\node[font=\scriptsize] at (80:1.8em) {$\vtimes$};
\draw[-, line width=.85, color=ribboncolour, line cap=round]  (A2) to (A1) to (A3);
\draw[-, line width=.85, color=ribboncolour, line cap=round]  (A1) to (A4); 
\draw[-, line width=.85, color=ribboncolour, line cap=round]  (A3) to (A5) to (A6);
\end{scope}
\end{tikzpicture}
\caption{Subtrees of the ribbon graphs of two dissections which after contracting the edges (dually, removing the internal arcs) give a smooth disk sequence (left) and an orbifold disk sequence (right).}
\label{fig:smoothorbifolddisksequence}
\end{figure}

Let $\gamma_0 \overset{p_1}{\frown} \gamma_1 \overset{p_2}{\frown} \dotsb\overset{p_{n-1}}{\frown} \gamma_{n-1} \overset{p_{n}}{\frown} \gamma_0$ be a smooth disk sequence of length $n$. Then we define
\begin{align}
\label{align:smoothdiskproduct}
\mucirc_n (\s p_i \otimes \dotsb \otimes \s p_1 \otimes \s p_n \otimes \dotsb \otimes \s p_{i+1}) = \s \, \id_{\gamma_i}
\end{align}
for all $0 \leq i < n$. Let $\gamma_0 \overset{p_1}{\smile} \gamma_1 \overset{p_2}{\smile} \dotsb\overset{p_{n-1}}{\smile} \gamma_{n-1} $ be an orbifold disk sequence of length $n$. Let $q$ be the orbifold path from $\gamma_0$ to $\gamma_{n-1}$ at the orbifold point $x$. Denote by $r$ the unique maximal orbifold path at $x$ so that $r = q'' q q'$, where $q', q''$ are orbifold subpaths (possibly trivial) of $p$. Then 
\begin{align}\label{align:orbifolddiskproduct}
\mutimes_{n-1} ( \s p_{n-1} \otimes \dotsb\otimes \s p_1) = (-1)^{|q''|} \s q.
\end{align}

\begin{remark}
\label{remark:smoothorbifolddisklength12}
\begin{enumerate}
\item If $\gamma_0 \overset{p_1}{\frown} \dotsb\overset{p_{n-1}}{\frown} \gamma_{n-1} \overset{p_n}{\frown} \gamma_0
$ is a smooth disk sequence and $p'$ can be concatenated nontrivially with the path $p_{i+1}$, then
\begin{align}\label{align:signmucircle}
\mucirc_n (\s p_i \otimes \dotsb \otimes \s p_1 \otimes \s p_n \otimes \dotsb \otimes \s p_{i+1}p') &= (-1)^{|p'|} \s p'.
\end{align}
Similarly, if $p'' p_i$ is the (nontrivial) concatenation of $p_i$ with a path $p''$ then
\begin{align*}
\mucirc_n (\s p'' p_i \otimes \dotsb \otimes \s p_1 \otimes \s p_n \otimes \dotsb \otimes \s p_{i+1}) &= \s p''.
\end{align*}

\item An orbifold disk sequence $\gamma_0 \overset{p_1}{\smile} \gamma_1 $ of length $2$ produces  a differential
\[
\mutimes_1 (\s p_1) = \s q
\]
as illustrated on the left in Fig.~\ref{fig:differential}.
\item \label{remark:item-iii} Let $\gamma_0 \overset{p_1}{\frown} \gamma_1 \overset{p_2}{\frown}  \gamma_0$ be a smooth disk sequence of length $2$ as illustrated on the right of Fig.~\ref{fig:differential}.  Then  $\gamma_1$ and $\gamma_2$ are isotopic and  
\[
\mucirc_2(\s p_2 \otimes \s p_1) = \s \, \id_{\gamma_0} \quad \text{and} \quad \mucirc_2(\s p_1 \otimes \s p_2) = \s \, \id_{\gamma_1}.
\]
Using this product $\mucirc_2$ we may show that $\gamma_1$ is isomorphic to $\gamma_2$ in $\H^0(\tw(\mathbf A_\Delta))$. That is, we may remove one of the two arcs $\gamma_1$ and $\gamma_2$ to obtain a new admissible dissection $\Delta'$ such that the natural inclusion $\mathbf A_{\Delta'} \subset \mathbf A_{\Delta}$ is a Morita equivalence. Conversely, we may also add more isotopic arcs to any dissection to ensure that the arcs contained in the boundary of an $n$-gon are all different, compare \S\ref{subsubsection:loops}. Isotopic arcs forming a $2$-gon as on the right of Fig.~\ref{fig:differential} are isomorphic in $\H^0(\tw(\mathbf A_\Delta))$. Note, however, that the objects corresponding to two isotopic arcs with an orbifold stop in between them are non-isomorphic and such a configuration gives rise to a differential in $\mathbf A_\Delta$ as described in {\itemii}.
\end{enumerate}
\end{remark}

\begin{figure}[ht]
\begin{tikzpicture}[x=1em,y=1em,decoration={markings,mark=at position 0.55 with {\arrow[black]{Stealth[length=4.8pt]}}}]
\begin{scope}
\draw[line width=.5pt,postaction={decorate}] (1,0) -- (-1,0);
\draw[line width=.5pt,postaction={decorate}] (-1,-4) -- (1,-4);
\draw[line width=.75pt, color=arccolour, line cap=round] (-1,0) -- (-1,-4);
\draw[line width=.75pt, color=arccolour, line cap=round] (1,0) -- (1,-4);
\node[font=\scriptsize,left=-.3ex] at (-1.2,-2) {$\gamma_0$};
\node[font=\scriptsize,right=-.3ex] at (1.2,-2) {$\gamma_1$};
\node[font=\scriptsize] at (0,-4.6) {$p_1$};
\node[font=\scriptsize] at (0,.6) {$p_2$};
\node[font=\small] at (3.5,-2) {$\leftrightarrow $};
\node[font=\small] at (9.5,-1){$\mucirc_2(\s p_2 \otimes \s p_1) = \s \, \id_{\gamma_0}$};
\node[font=\small] at (9.5,-3){$\mucirc_2(\s p_1 \otimes \s p_2) = \s \, \id_{\gamma_1}$};
\end{scope}
\begin{scope}[xshift=-8em]
\draw[line width=.5pt,postaction={decorate}] ($(252:4.5)+(-1,0)$) -- ($(288:4.5)+(1,0)$);
\node[font=\scriptsize] at (0,0) {$\times$};
\draw[->, line width=.5pt] (249:1.5em) arc[start angle=249, end angle=-69, radius=1.5em];
\node[font=\scriptsize,shape=circle,scale=.6] (X) at (0,0) {};
\draw[line width=.75pt, color=arccolour, line cap=round] (X) -- (252:4.5);
\draw[line width=.75pt, color=arccolour, line cap=round] (X) -- (288:4.5);
\draw[line width=.75pt, color=arccolour, line cap=round] (X) -- (190:2em);
\draw[dash pattern=on 0pt off 1.3pt, line width=.8pt, line cap=round, color=arccolour] (190:2.1em) -- (190:2.5em);
\draw[line width=.75pt, color=arccolour, line cap=round] (X) -- (128:2em);
\draw[dash pattern=on 0pt off 1.3pt, line width=.8pt, line cap=round, color=arccolour] (128:2.1em) -- (128:2.5em);
\draw[line width=.75pt, color=arccolour, line cap=round] (X) -- (350:2em);
\draw[dash pattern=on 0pt off 1.3pt, line width=.8pt, line cap=round, color=arccolour] (350:2.1em) -- (350:2.5em);
\node[font=\scriptsize] at (79:.7) {.};
\node[font=\scriptsize] at (59:.7) {.};
\node[font=\scriptsize] at (39:.7) {.};
\node[font=\scriptsize,left=-.3ex] at (252:3) {$\gamma_0$};
\node[font=\scriptsize,right=-.3ex] at (288:3) {$\gamma_1$};
\node[font=\scriptsize,color=stopcolour] at (270:1em) {$\bullet$};
\node[font=\scriptsize] at (270:5) {$p_1$};
\node[font=\scriptsize] at (90:2) {$q$};
\node[font=\small] at (-7,-2) {$\mutimes_1 (\s p_1) = \s q \quad \leftrightarrow$};
\end{scope}
\end{tikzpicture}
\caption{An orbifold disk sequence of length $2$ contributing to the differential (left) and a smooth disk sequence of length $2$, where the paths $p_1$ and $p_2$ can be boundary paths or orbifold paths (right).}
\label{fig:differential}
\end{figure}

\begin{proposition}
\label{proposition:ainfinity}
Let $\Delta$ be an admissible dissection of $\mathbf S = (S, \Sigma, \eta)$. Then equipped with the (higher) products $\mubar_2$, $\mucirc_{\geq 2}$ and $\mutimes_{\geq 1}$ the category $\mathbf A_\Delta$ is a strictly unital A$_\infty$ category. 
\end{proposition}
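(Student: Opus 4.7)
The plan is to reduce the statement to two independent checks: strict unitality, and the A$_\infty$ relation $\mu \bullet \mu = 0$ in the shorthand of Remark~\ref{remark:ainfinity}. Strict unitality is almost a triviality by construction: the products $\mucirc_n$ with $n \geq 2$ and $\mutimes_n$ with $n \geq 1$ take only nontrivial boundary or orbifold paths as inputs, so for $n \geq 3$ any insertion of an identity $\id_\gamma$ into $\mu_n$ automatically yields zero. Unitality then reduces to the unit axioms for $\mubar_2$, which are built into the sign $(-1)^{|p|}$ used in the definition of concatenation.

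The core of the proof is the A$_\infty$ relation, which I would verify by a case analysis organized by the types of the inner and outer operations. The purely associative case $\mubar_2 \bullet \mubar_2 = 0$ is the strict associativity of path concatenation, where the only composable pairs are consecutive boundary paths or consecutive orbifold angles at a fixed orbifold point. The relations involving $\mubar_2$ and $\mucirc_n$, including the extended formula \eqref{align:signmucircle}, are exactly those established for graded smooth surfaces in \cite{haidenkatzarkovkontsevich}, and the same argument applies here since these products do not involve any orbifold data. The relations $\mucirc_n \bullet \mucirc_m = 0$ and the mixed relations $\mucirc_n \bullet \mubar_2 + \mubar_2 \bullet \mucirc_n$ similarly transcribe verbatim from the smooth case.

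The genuinely new content concerns the products $\mutimes_n$. Here I would first treat the relations of the form $\mubar_2 \bullet \mutimes_n + \mutimes_n \bullet \mubar_2 = 0$, which encode the fact that an orbifold disk sequence may be extended on either end by a boundary or orbifold path and the result is compatible with the formula \eqref{align:orbifolddiskproduct}; the signs $(-1)^{|q''|}$ are designed precisely so that the two contributions cancel. Next, the mixed relations $\mutimes_n \bullet \mucirc_m + \mucirc_m \bullet \mutimes_n = 0$ correspond geometrically to pairs of adjacent polygons (one orbifold $n$-gon and one smooth $m$-gon) sharing an arc; removing that arc from the dissection produces a single orbifold $(n+m-2)$-gon, and both terms of the A$_\infty$ relation compute $\mutimes_{n+m-2}$ applied to the combined path sequence, yielding the required cancellation. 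Finally, relations of the form $\mutimes_n \bullet \mutimes_m$ only arise when two orbifold disk sequences share an edge; by Definition~\ref{definition:disksequence}, an orbifold disk sequence is associated to a subtree containing a unique vertex of type $\vtimes$, so two such sequences at the same orbifold point $x$ cannot share an edge without producing a contradiction with the subtree condition.

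The main obstacle, and the step where admissibility of $\Delta$ is essential, will be ruling out spurious contributions to $\mu \bullet \mu$ near an orbifold point. Without admissibility one could imagine configurations in which the orbifold $2$-cell $d_x$ has too few boundary vertices, or only vertices of type $\vtimes$, so that a chain of paths along $\partial d_x$ admits two distinct decompositions into an $\mutimes$-part and an associative tail, producing unwanted nonzero terms in the A$_\infty$ relation. Definition~\ref{definition:admissible} is exactly the combinatorial hypothesis that forbids this: either the presence of a vertex of type $\vbullet$ or $\vodot$ interrupts the orbifold path (forcing one of the candidate inputs to fail the composability condition for $\mutimes$), or the presence of at least three $\vtimes$-vertices in $\partial d_x$ means that any orbifold subpath shorter than the maximal one avoids the orbifold stop in a unique way, so that the formula \eqref{align:orbifolddiskproduct} is well-defined and the putative double-counting terms live on vertices that contradict the subtree condition. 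Once these configurations are excluded, the remaining A$_\infty$ verifications reduce to the cases above, and the proposition follows.
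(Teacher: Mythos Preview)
Your reduction to the smooth case (for terms involving only $\mubar_2$ and $\mucirc_{\geq 2}$) and your treatment of strict unitality are fine. The genuine gap is in the pairing of the remaining terms: the cancellations you propose do not match the actual ones, and as stated they do not work.

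Concretely, you claim that $\mubar_2 \bullet \mutimes_n + \mutimes_n \bullet \mubar_2 = 0$ and that $\mucirc_m \bullet \mutimes_n + \mutimes_n \bullet \mucirc_m = 0$, each pair canceling within itself. Neither holds. Take for instance $\mubar_2 \bullet_0 \mutimes_{n-1}$ applied to $\s p_n \otimes \dotsb \otimes \s p_1$, where $p_n$ is an orbifold path concatenable with the output $q$ of $\mutimes_{n-1}(\s p_{n-1}\otimes\dotsb\otimes\s p_1)$. There is no compensating term of the form $\mutimes_{n-1}\bullet_i\mubar_2$ on the same input: $p_n$ does not concatenate with $p_{n-1}$ on the boundary side. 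What actually cancels this term is $\mutimes_{n-j+1}\bullet_{n-j}\mucirc_j$, coming from cutting the orbifold polygon along an internal arc into a smaller orbifold polygon and a smooth polygon. In general the correct pairings are \emph{cross-type}: $\mucirc_{n-j+1}\bullet_i\mutimes_j$ (only $i=0$ or $i=n-j$ survive) cancels against $\mutimes_{n-1}\bullet\mubar_2$, while $\mutimes_{n-j+1}\bullet_i\mucirc_j$ cancels against $\mutimes_{n-1}\bullet\mubar_2$ for middle $i$ and against $\mubar_2\bullet\mutimes_{n-1}$ for $i\in\{0,n-j\}$. Your claim that both $\mutimes_n\bullet\mucirc_m$ and $\mucirc_m\bullet\mutimes_n$ ``compute $\mutimes_{n+m-2}$'' is also not right: the output of $\mucirc_m$ on a full disk sequence is an identity (killed by unitality inside $\mutimes$), and on an extended sequence it is a path via \eqref{align:signmucircle}, not an orbifold-disk output.

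The role of admissibility is also misidentified. The paper uses it for a sharp vanishing: the output of $\mutimes_j$ is (a subpath of) the maximal orbifold path at $x$, and admissibility forces this path to lie only in polygons carrying a stop or missing relation, so it can never serve as an interior input to another $\mutimes$ or to $\mucirc$. This yields $\mutimes_{n-j+1}\bullet_i\mutimes_j=0$ for all $i$ and $\mucirc_{n-j+1}\bullet_i\mutimes_j=0$ for $0<i<n-j$. Once these vanishings are in hand, only the cross-type pairs above remain, and they cancel by the ``cut the polygon along an internal arc'' mechanism (Figs.~\ref{fig:cancel} and~\ref{fig:cancel2}). Your argument should be reorganized around this scheme.
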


\begin{figure}
\begin{tikzpicture}[x=1em,y=1em,decoration={markings,mark=at position 0.55 with {\arrow[black]{Stealth[length=4.8pt]}}}]
\begin{scope}
\node[font=\footnotesize,shape=circle,scale=.6] (X) at (90:4em) {};
\node[font=\footnotesize] at (90:4em) {$\times$};
\node[font=\scriptsize,color=stopcolour] at ($(90:4em)+(240:.75em)$) {$\bullet$};
\draw[line width=.75pt, color=arccolour, line cap=round] ($(150:4em)+(30:1em)$) to (X);
\draw[line width=.75pt, color=arccolour, line cap=round] ($(150:4em)+(-90:1em)$) to ($(210:4em)+(90:1em)$);
\draw[line width=.75pt, color=arccolour, line cap=round] ($(210:4em)+(-30:1em)$) to ($(270:4em)+(150:1.5em)$);
\draw[line width=.75pt, color=arccolour, line cap=round] ($(270:4em)+(30:1.5em)$) to ($(-30:4em)+(210:1em)$);
\draw[line width=.75pt, color=arccolour, line cap=round] ($(-30:4em)+(90:1em)$) to ($(30:4em)+(270:1em)$);
\draw[line width=.75pt, color=arccolour, line cap=round] ($(30:4em)+(150:1em)$) to (X);
\draw[line width=.75pt, color=arccolour, line cap=round] ($(270:4em)+(90:1.5em)$) to (X);
\draw[->, line width=.5pt] ($(90:4em)+(207:1em)$) arc[start angle=207, end angle=-27, radius=1em];
\node[font=\scriptsize] at (90:5.5em) {$q$};
\draw[->, line width=.5pt] ($(90:4em)+(-33:1em)$) arc[start angle=-33, end angle=-87, radius=1em];
\node[font=\scriptsize] at ($(90:4em)+(-60:1.6em)$) {$r$};
\draw[->, line width=.5pt] ($(150:4em)+(30:1em)$) arc[start angle=30, end angle=-90, radius=1em];
\node[font=\scriptsize] at (150:3.8em) {$p_1$};
\draw[->, line width=.5pt] ($(210:4em)+(90:1em)$) arc[start angle=90, end angle=-30, radius=1em];
\node[font=\scriptsize] at (207:4em) {$.$};
\node[font=\scriptsize] at (210:4em) {$.$};
\node[font=\scriptsize] at (213:4em) {$.$};
\draw[->, line width=.5pt] ($(270:4em)+(150:1.5em)$) arc[start angle=150, end angle=90, radius=1.5em];
\node[font=\scriptsize] at (258:3.8em) {$p_j$};
\draw[->, line width=.5pt] ($(270:4em)+(90:1.5em)$) arc[start angle=90, end angle=30, radius=1.5em];
\node[font=\scriptsize] at (283:3.8em) {$p_{j+1}$};
\draw[->, line width=.5pt] ($(-30:4em)+(210:1em)$) arc[start angle=210, end angle=90, radius=1em];
\node[font=\scriptsize] at (-27:4em) {$.$};
\node[font=\scriptsize] at (-30:4em) {$.$};
\node[font=\scriptsize] at (-33:4em) {$.$};
\draw[->, line width=.5pt] ($(30:4em)+(270:1em)$) arc[start angle=270, end angle=150, radius=1em];
\node[font=\scriptsize] at (30:3.8em) {$p_n$};
\end{scope}
\begin{scope}[xshift=12em,yshift=.7em]
\node[font=\footnotesize] at (0,0) {$\s p_n \otimes \dotsb \otimes \s p_{j+1} \otimes \s p_j \otimes \dotsb \otimes \s p_1$};
\node[font=\footnotesize] at (0,-.4) {$\undergroup{\phantom{\s p_{j+1} \otimes \s p_j}}$};
\node[font=\footnotesize] at (0,-1.3) {${\mubar_2}$};
\node[font=\footnotesize] at (0,-1.7) {$\undergroup{\phantom{\s p_n \otimes \dotsb \otimes \s p_{j+1} \otimes \s p_j \otimes \dotsb \otimes \s p_1}}$};
\node[font=\footnotesize] at (0,-2.7) {${\mutimes_{n-1}}$};
\node[font=\footnotesize] at (3.7,.4) {$\overgroup{\phantom{\s p_{j} \otimes \dotsb \otimes \s p_1}}$};
\node[font=\footnotesize] at (4.25,1.5) {${\mutimes_j}$};
\node[font=\footnotesize] at (0,2) {$\overgroup{\phantom{\s p_n \otimes \dotsb \otimes \s p_{j+1} \otimes \s p_j \otimes \dotsb \otimes \s p_1}}$};
\node[font=\footnotesize] at (0,3) {${\mucirc_{n-j+1}}$};
\node[font=\small, anchor=west] at (7,1.8) {$\mucirc_{n-j+1} \bullet_0 \mutimes_j$};
\node[font=\small, anchor=west] at (7,-1.6) {$\mutimes_{n-1} \bullet_{j-1} \mubar_2$};
\end{scope}
\end{tikzpicture}
\caption{Composable morphisms $p_1, \dotsc, p_n$ in an orbifold disk and two possible ways of applying nested higher products which cancel and therefore satisfy the A$_\infty$ equations on $\s p_n \otimes \dotsb \otimes \s p_1$. Here, for simplicity, we omit the arcs in the interior of a disk sequence.}
\label{fig:cancel}
\end{figure}
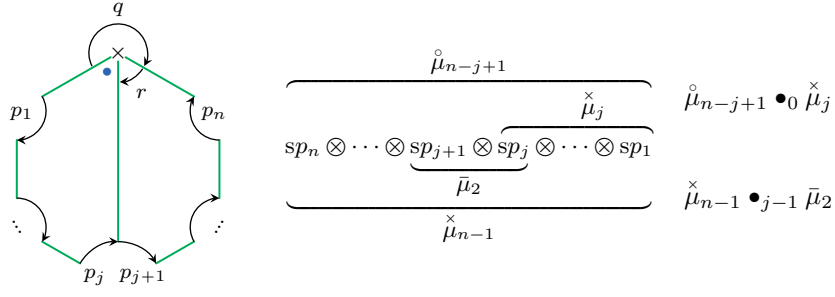

\begin{figure}
\begin{tikzpicture}[x=1em,y=1em,decoration={markings,mark=at position 0.55 with {\arrow[black]{Stealth[length=4.8pt]}}}]
\begin{scope}[xshift=0em]
\node[font=\footnotesize,shape=circle,scale=.6] (X) at (90:3em) {};
\node[font=\footnotesize] at (90:3em) {$\times$};
\node[font=\scriptsize,color=stopcolour] at ($(90:3em)+(-60:.75em)$) {$\bullet$};
\draw[line width=.75pt, color=arccolour, line cap=round] ($(150:3em)+(30:.75em)$) to (X);
\draw[line width=.75pt, color=arccolour, line cap=round] ($(150:3em)+(-90:.75em)$) to ($(210:3em)+(90:.75em)$);
\draw[line width=.75pt, color=arccolour, line cap=round] ($(210:3em)+(-30:.75em)$) to ($(270:3em)+(150:1em)$);
\draw[line width=.75pt, color=arccolour, line cap=round] ($(270:3em)+(30:1em)$) to ($(-30:3em)+(210:.75em)$);
\draw[line width=.75pt, color=arccolour, line cap=round] ($(-30:3em)+(90:.75em)$) to ($(30:3em)+(270:.75em)$);
\draw[line width=.75pt, color=arccolour, line cap=round] ($(30:3em)+(150:.75em)$) to (X);
\draw[line width=.75pt, color=arccolour, line cap=round] ($(270:3em)+(90:1em)$) to (X);
\draw[->, line width=.5pt] ($(90:3em)+(207:.75em)$) arc[start angle=207, end angle=-27, radius=.75em] to[in=76] ++(-0.001,-0.001);
\node[font=\scriptsize] at (90:4.25em) {$q$};
\draw[->, line width=.5pt] ($(90:3em)+(267:.75em)$) arc[start angle=267, end angle=213, radius=.75em] to[in=-45] ++(-0.001,0.001);
\node[font=\scriptsize] at ($(90:3em)+(240:1.25em)$) {$r$};
\draw[->, line width=.5pt] ($(150:3em)+(30:.75em)$) arc[start angle=30, end angle=-90, radius=.75em] to[in=10] ++(-0.01,0);
\node[font=\scriptsize] at (150:2.9em) {$p_1$};
\draw[->, line width=.5pt] ($(210:3em)+(90:.75em)$) arc[start angle=90, end angle=-30, radius=.75em] to[in=70] ++(0,-0.001);
\node[font=\scriptsize] at (207:2.8em) {$.$};
\node[font=\scriptsize] at (210:2.8em) {$.$};
\node[font=\scriptsize] at (213:2.8em) {$.$};
\draw[->, line width=.5pt] ($(270:3em)+(150:1em)$) arc[start angle=150, end angle=90, radius=1em];
\node[font=\scriptsize, left=-.8em] at (257:3em) {$p_{n-j}$};
\draw[->, line width=.5pt] ($(270:3em)+(90:1em)$) arc[start angle=90, end angle=30, radius=1em];
\node[font=\scriptsize, right=-.8em] at (284:3em) {$p_{n-j+1}$};
\draw[->, line width=.5pt] ($(-30:3em)+(210:.75em)$) arc[start angle=210, end angle=90, radius=.75em] to[in=190] ++(0.005,0);
\node[font=\scriptsize] at (-27:2.8em) {$.$};
\node[font=\scriptsize] at (-30:2.8em) {$.$};
\node[font=\scriptsize] at (-33:2.8em) {$.$};
\draw[->, line width=.5pt] ($(30:3em)+(270:.75em)$) arc[start angle=270, end angle=150, radius=.75em] to[in=250] ++(0.001,0.001);
\node[font=\scriptsize] at (27:3em) {$p_n$};
\node[font=\small, align=center, anchor=north] at (0,-3.5em) {$\mucirc_{n-j+1} \bullet_{n-j} \mutimes_j$ \strut \\ $\mutimes_{n-1} \bullet_{n-j-1} \mubar_2$ \strut};
\end{scope}
\begin{scope}[xshift=10em]
\node[font=\footnotesize,shape=circle,scale=.6] (X) at (90:3em) {};
\node[font=\footnotesize] at (90:3em) {$\times$};
\node[font=\scriptsize,color=stopcolour] at (90:2.5em) {$\bullet$};
\draw[line width=.75pt, color=arccolour, line cap=round] (70:3em) to (X) to (110:3em) (130:3em) to (150:3em) (170:3em) to (190:3em) (220:3em) to (240:3em) (260:3em) to (280:3em) (300:3em) to (320:3em) (350:3em) to (10:3em) (30:3em) to (50:3em) (205:2.3em) to (335:2.3em);
\draw[->, line width=.5pt] ($(90:3em)+(190:.75em)$) arc[start angle=190, end angle=-10, radius=.75em] to[in=92] ++(-0.001,-0.001);
\node[font=\scriptsize] at (90:4.25em) {$q$};
\draw[->, line width=.5pt] (110:3em) to[out=280, in=320, looseness=1.5] (130:3em);
\node[font=\scriptsize] at (120:3.2em) {$p_1$};
\draw[->, line width=.5pt] (150:3em) to[out=320, in=360, looseness=1.5] (170:3em);
\node[font=\scriptsize] at (160:3em) {$.$};
\node[font=\scriptsize] at (163:3em) {$.$};
\node[font=\scriptsize] at (157:3em) {$.$};
\draw[->, line width=.5pt] (190:3em) to[out=360, in=115, looseness=1] (205:2.3em);
\node[font=\scriptsize] at (199:3.1em) {$p_i$};
\draw[->, line width=.5pt] (205:2.3em) to[out=295, in=50, looseness=1] (220:3em);
\node[font=\scriptsize] at (208:3.7em) {$p_{i+1}$};
\draw[->, line width=.5pt] (240:3em) to[out=50, in=90, looseness=1.5] (260:3em);
\node[font=\scriptsize] at (244:3.4em) {$p_{i+2}$};
\draw[->, line width=.5pt] (280:3em) to[out=90, in=130, looseness=1.5] (300:3em);
\node[font=\scriptsize] at (290:3em) {$.$};
\node[font=\scriptsize] at (287:3em) {$.$};
\node[font=\scriptsize] at (293:3em) {$.$};
\draw[->, line width=.5pt] (320:3em) to[out=130, in=180, looseness=1.4] (350:3em);
\node[font=\scriptsize] at (339:3.5em) {$p_{i+j}$};
\draw[->, line width=.5pt] (10:3em) to[out=180, in=220, looseness=1.5] (30:3em);
\node[font=\scriptsize] at (20:3em) {$.$};
\node[font=\scriptsize] at (23:3em) {$.$};
\node[font=\scriptsize] at (17:3em) {$.$};
\draw[->, line width=.5pt] (50:3em) to[out=220, in=260, looseness=1.4] (70:3em);
\node[font=\scriptsize] at (57:3.3em) {$p_n$};
\node[font=\small, align=center, anchor=north] at (0,-3.5em) {$\mutimes_{n-j+1} \bullet_i \mucirc_j$ \strut \\ $\mutimes_{n-1} \bullet_{i-1} \mubar_2$ \strut};
\end{scope}
\begin{scope}[xshift=20em]
\node[font=\footnotesize,shape=circle,scale=.6] (X) at (90:3em) {};
\node[font=\footnotesize] at (90:3em) {$\times$};
\node[font=\scriptsize,color=stopcolour] at (90:2.5em) {$\bullet$};
\draw[line width=.75pt, color=arccolour, line cap=round] (70:3em) to (X) to (110:3em) (130:3em) to (150:3em) (170:3em) to (190:3em) (220:3em) to (240:3em) (260:3em) to (280:3em) (300:3em) to (320:3em) (350:3em) to (10:3em) (30:3em) to (50:3em) (205:2.3em) to (335:2.3em);
\draw[->, line width=.5pt] ($(90:3em)+(190:.75em)$) arc[start angle=190, end angle=-10, radius=.75em] to[in=92] ++(-0.001,-0.001);
\node[font=\scriptsize] at (90:4.25em) {$q$};
\draw[->, line width=.5pt] (110:3em) to[out=280, in=320, looseness=1.5] (130:3em);
\node[font=\scriptsize] at (120:3.2em) {$p_1$};
\draw[->, line width=.5pt] (150:3em) to[out=320, in=360, looseness=1.5] (170:3em);
\node[font=\scriptsize] at (160:3em) {$.$};
\node[font=\scriptsize] at (163:3em) {$.$};
\node[font=\scriptsize] at (157:3em) {$.$};
\draw[->, line width=.5pt] (190:3em) to[out=360, in=50, looseness=1.4] (220:3em);
\node[font=\scriptsize] at (202:3.3em) {$p_{i+1}$};
\draw[->, line width=.5pt] (240:3em) to[out=50, in=90, looseness=1.5] (260:3em);
\node[font=\scriptsize] at (244:3.4em) {$p_{i+2}$};
\draw[->, line width=.5pt] (280:3em) to[out=90, in=130, looseness=1.5] (300:3em);
\node[font=\scriptsize] at (290:3em) {$.$};
\node[font=\scriptsize] at (287:3em) {$.$};
\node[font=\scriptsize] at (293:3em) {$.$};
\draw[->, line width=.5pt] (320:3em) to[out=130, in=245, looseness=1] (335:2.3em);
\node[font=\scriptsize] at (330:3.9em) {$p_{i+j}$};
\draw[->, line width=.5pt] (335:2.3em) to[out=65, in=180, looseness=1] (350:3em);
\node[font=\scriptsize] at (343:4em) {$p_{i+j+1}$};
\draw[->, line width=.5pt] (10:3em) to[out=180, in=220, looseness=1.5] (30:3em);
\node[font=\scriptsize] at (20:3em) {$.$};
\node[font=\scriptsize] at (23:3em) {$.$};
\node[font=\scriptsize] at (17:3em) {$.$};
\draw[->, line width=.5pt] (50:3em) to[out=220, in=260, looseness=1.4] (70:3em);
\node[font=\scriptsize] at (57:3.3em) {$p_n$};
\node[font=\small, align=center, anchor=north] at (0,-3.5em) {$\mutimes_{n-j+1} \bullet_i \mucirc_j$ \strut \\ $\mutimes_{n-1} \bullet_{i+j-1} \mubar_2$ \strut};
\end{scope}
\begin{scope}[xshift=5em,yshift=-12em]
\node[font=\footnotesize,shape=circle,scale=.6] (X) at (90:3em) {};
\node[font=\footnotesize] at (90:3em) {$\times$};
\node[font=\scriptsize,color=stopcolour] at ($(90:3em)+(240:.75em)$) {$\bullet$};
\draw[line width=.75pt, color=arccolour, line cap=round] ($(150:3em)+(30:.75em)$) to (X);
\draw[line width=.75pt, color=arccolour, line cap=round] ($(150:3em)+(-90:.75em)$) to ($(210:3em)+(90:.75em)$);
\draw[line width=.75pt, color=arccolour, line cap=round] ($(210:3em)+(-30:.75em)$) to ($(270:3em)+(150:1em)$);
\draw[line width=.75pt, color=arccolour, line cap=round] ($(270:3em)+(30:1em)$) to ($(-30:3em)+(210:.75em)$);
\draw[line width=.75pt, color=arccolour, line cap=round] ($(-30:3em)+(90:.75em)$) to ($(30:3em)+(270:.75em)$);
\draw[line width=.75pt, color=arccolour, line cap=round] ($(30:3em)+(150:.75em)$) to (X);
\draw[line width=.75pt, color=arccolour, line cap=round] ($(270:3em)+(90:1em)$) to (X);
\draw[->, line width=.5pt] ($(90:3em)+(207:.75em)$) arc[start angle=207, end angle=-27, radius=.75em] to[in=76] ++(-0.001,-0.001);
\node[font=\scriptsize] at (90:4.25em) {$q$};
\node[font=\scriptsize] at ($(90:3em)+(-57:1.45em)$) {$p_n$};
\draw[->, line width=.5pt] ($(90:3em)+(-33:.75em)$) arc[start angle=-33, end angle=-87, radius=.75em];
\draw[->, line width=.5pt] ($(150:3em)+(30:.75em)$) arc[start angle=30, end angle=-90, radius=.75em] to[in=10] ++(-0.01,0);
\node[font=\scriptsize] at (150:2.9em) {$p_1$};
\draw[->, line width=.5pt] ($(210:3em)+(90:.75em)$) arc[start angle=90, end angle=-30, radius=.75em] to[in=70] ++(0,-0.001);
\node[font=\scriptsize] at (207:3em) {$.$};
\node[font=\scriptsize] at (210:3em) {$.$};
\node[font=\scriptsize] at (213:3em) {$.$};
\draw[->, line width=.5pt] ($(270:3em)+(150:1em)$) arc[start angle=150, end angle=30, radius=1em];
\node[font=\scriptsize] at (270:3em) {$p_{n-j+1}$};
\draw[->, line width=.5pt] ($(-30:3em)+(210:.75em)$) arc[start angle=210, end angle=90, radius=.75em] to[in=190] ++(0.005,0);
\node[font=\scriptsize] at (-27:3em) {$.$};
\node[font=\scriptsize] at (-30:3em) {$.$};
\node[font=\scriptsize] at (-33:3em) {$.$};
\draw[->, line width=.5pt] ($(30:3em)+(270:.75em)$) arc[start angle=270, end angle=150, radius=.75em] to[in=250] ++(0.001,0.001);
\node[font=\scriptsize] at (24:3.4em) {$p_{n-1}$};
\node[font=\small, align=center, anchor=north] at (0,-3.5em) {$\mutimes_{n-j+1} \bullet_{n-j} \mucirc_j$ \strut \\ $\mubar_2 \bullet_0 \mutimes_{n-1}$ \strut};
\end{scope}
\begin{scope}[xshift=15em,yshift=-12em]
\node[font=\footnotesize,shape=circle,scale=.6] (X) at (90:3em) {};
\node[font=\footnotesize] at (90:3em) {$\times$};
\node[font=\scriptsize,color=stopcolour] at ($(90:3em)+(-60:.75em)$) {$\bullet$};
\draw[line width=.75pt, color=arccolour, line cap=round] ($(150:3em)+(30:.75em)$) to (X);
\draw[line width=.75pt, color=arccolour, line cap=round] ($(150:3em)+(-90:.75em)$) to ($(210:3em)+(90:.75em)$);
\draw[line width=.75pt, color=arccolour, line cap=round] ($(210:3em)+(-30:.75em)$) to ($(270:3em)+(150:1em)$);
\draw[line width=.75pt, color=arccolour, line cap=round] ($(270:3em)+(30:1em)$) to ($(-30:3em)+(210:.75em)$);
\draw[line width=.75pt, color=arccolour, line cap=round] ($(-30:3em)+(90:.75em)$) to ($(30:3em)+(270:.75em)$);
\draw[line width=.75pt, color=arccolour, line cap=round] ($(30:3em)+(150:.75em)$) to (X);
\draw[line width=.75pt, color=arccolour, line cap=round] ($(270:3em)+(90:1em)$) to (X);
\draw[->, line width=.5pt] ($(90:3em)+(207:.75em)$) arc[start angle=207, end angle=-24, radius=.75em] to[in=76] ++(-0.001,-0.001);
\node[font=\scriptsize] at (90:4.25em) {$q$};
\draw[->, line width=.5pt] ($(90:3em)+(267:.75em)$) arc[start angle=267, end angle=213, radius=.75em] to[in=-45] ++(-0.001,0.001);
\node[font=\scriptsize] at ($(90:3em)+(240:1.35em)$) {$p_1$};
\draw[->, line width=.5pt] ($(150:3em)+(30:.75em)$) arc[start angle=30, end angle=-90, radius=.75em] to[in=10] ++(-0.01,0);
\node[font=\scriptsize] at (150:2.9em) {$p_2$};
\draw[->, line width=.5pt] ($(210:3em)+(90:.75em)$) arc[start angle=90, end angle=-30, radius=.75em] to[in=70] ++(0,-0.001);
\node[font=\scriptsize] at (207:3em) {$.$};
\node[font=\scriptsize] at (210:3em) {$.$};
\node[font=\scriptsize] at (213:3em) {$.$};
\draw[->, line width=.5pt] ($(270:3em)+(150:1em)$) arc[start angle=150, end angle=30, radius=1em];
\node[font=\scriptsize] at (270:2.7em) {$p_j$};
\draw[->, line width=.5pt] ($(-30:3em)+(210:.75em)$) arc[start angle=210, end angle=90, radius=.75em] to[in=190] ++(0.005,0);
\node[font=\scriptsize] at (-27:3em) {$.$};
\node[font=\scriptsize] at (-30:3em) {$.$};
\node[font=\scriptsize] at (-33:3em) {$.$};
\draw[->, line width=.5pt] ($(30:3em)+(270:.75em)$) arc[start angle=270, end angle=150, radius=.75em] to[in=250] ++(0.001,0.001);
\node[font=\scriptsize] at (27:3em) {$p_n$};
\node[font=\small, align=center, anchor=north] at (0,-3.5em) {$\mutimes_{n-j+1} \bullet_0 \mucirc_j$ \strut \\ $\mubar_2 \bullet_1 \mutimes_{n-1}$ \strut};
\end{scope}
\end{tikzpicture}
\caption{The remaining disks with nested higher multiplications cancelling pairwise in the A$_\infty$ equations for $n > 1$ when applied to $\s p_n \otimes \dotsb \otimes \s p_1$.}
\label{fig:cancel2}
\end{figure}
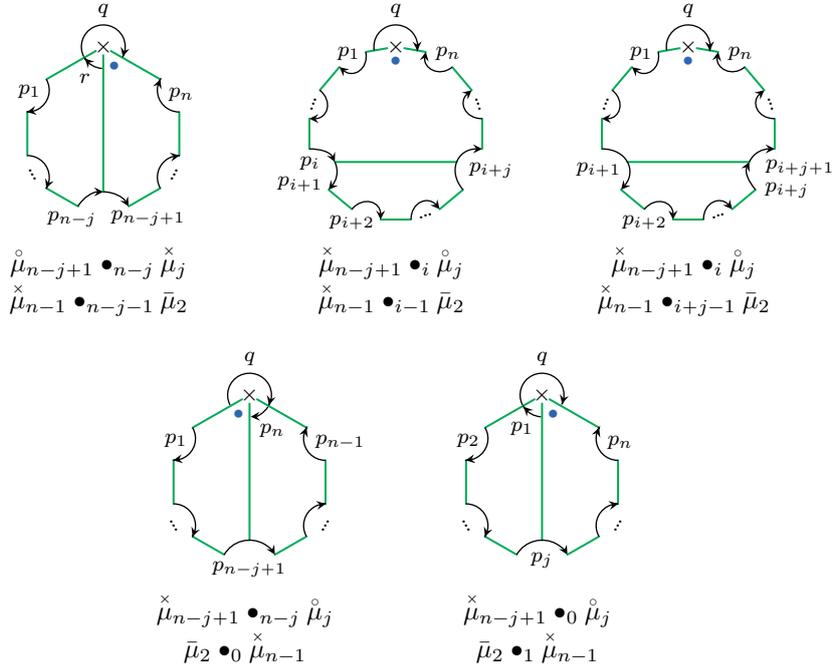

\begin{proof}
Let $p_1, \dotsc, p_n$ be a sequence of composable morphisms corresponding to (either boundary or orbifold) paths and recall that the higher products $\mu_n$ fall into three types --- $\mubar_2$, $\mucirc_{\geq 2}$ and $\mutimes_{\geq 1}$. We verify the A$_\infty$ relations \eqref{eq:ainfinityrelations} for $p_1, \dotsc, p_n$.  

For $n = 1$, let $p_1$ be a path from $\gamma_1$ to $\gamma_2$. We have $\mu_1 = \mutimes_1$ and if $\mutimes_1 (\s p_1) = \s q$ then $q$ is an orbifold path at an orbifold point $x$. Note that $\mutimes_1 (\s q) = 0$ since $q$ cannot lie in a $4$-gon with one missing relation. As a result, we have
\[
\mutimes_1 (\mutimes_1 (\s p_1)) = 0
\]
so that $\mu_1 \bullet \mu_1 = \mutimes_1 \bullet \mutimes_1 = 0$ and hence satisfying the A$_\infty$ relations \eqref{eq:ainfinityrelations} for $n = 1$.

For $n > 1$, the terms appearing in the A$_\infty$ relation \eqref{eq:ainfinityrelations} are all of the form
\begin{equation}
\label{eq:mumu}
\mu_{n-j+1} \bullet_i \mu_j.
\end{equation}
If neither $\mu_j$ nor $\mu_{n-j+1}$ is a higher product $\mutimes_k$ associated to an orbifold disk sequence, then the A$_\infty$ relations are satisfied by \cite[Proposition 3.1]{haidenkatzarkovkontsevich}.

Let us now consider the case where either $\mu_j = \mutimes_j$ or $\mu_{n-j+1} = \mutimes_{n-j+1}$. We first note that
\begin{align*}
\mutimes_{n-j+1} \bullet_i \mutimes_j &= 0 \qquad\text{for all $0 \leq i \leq n-j$} \\
\mucirc_{n-j+1} \bullet_i \mutimes_j &= 0 \qquad\text{for all $0 < i < n-j$.}
\end{align*}
This vanishing follows directly from the definition of admissible dissection since the maximal orbifold path at the orbifold point $x$ of the orbifold disk sequence $p_{i+1}, \dotsc, p_{i+j}$ lies in a polygon with one missing relation or stop.

The term \eqref{eq:mumu} may be nonzero only in one of the cases $\mucirc \bullet \mutimes$, $\mutimes \bullet \mucirc$, $\mutimes \bullet \mubar$ or $\mubar \bullet \mutimes$. The precise terms are
\begin{flalign*}
&&
\begin{gathered}
\mucirc_{n-j+1} \bullet_i \mutimes_j \\
\text{\footnotesize ($i = 0, n-j$)}
\end{gathered}
&&
\begin{gathered}
\mutimes_{n-j+1} \bullet_i \mucirc_j \\
\text{\footnotesize ($0 \leq i \leq n-j$)}
\end{gathered}
&&
\begin{gathered}
\mutimes_{n-1} \bullet_i \mubar_2 \\
\text{\footnotesize ($0 \leq i \leq n-2$)}
\end{gathered}
&&
\begin{gathered}
\mubar_2 \bullet_i \mutimes_{n-1} \\
\text{\footnotesize ($0 \leq i \leq 1$)}
\end{gathered}
&&
\end{flalign*}

We claim that all of these terms cancel in pairs. These pairs arise from cutting a polygon into two smaller polygons along an arc in $\Delta$. (In the Floer-theoretic picture, this is precisely the degeneration of a pseudo-holomorphic disk into two pseudo-holomorphic disks along a Lagrangian.) Up to signs, the term $\mucirc_{n-j+1} \bullet_0 \mutimes_j$ is the same as $\mutimes_{n-1} \bullet_{j-1} \mubar_2$ as illustrated in Fig.~\ref{fig:cancel}. Precisely, we have
\begin{align*}
\mucirc_{n-j+1} (\s p_n \otimes \dotsb \otimes \s p_{j+1} \otimes \undergroup{\mutimes_j (\s p_j \otimes \dotsb \otimes \s p_{1})}_{\s rq}) &= (-1)^{|q|} \s q \\
\mutimes_{n-1} (\s p_n \otimes \dotsb \otimes \s p_{j+2} \otimes \undergroup{\mubar_2 (\s p_{j+1} \otimes \s p_j)}_{(-1)^{|p_j|} \s p_{j+1} p_j} \otimes \dotsb \otimes \s p_{1})) &= (-1)^{|r|+ |p_j|+ |\s p_{j-1 \dotsc 1}|} \s q
\end{align*}
where the sign $(-1)^q$ in the first equality follows from \eqref{align:signmucircle} and the sign $(-1)^{|r|}$ in the second equality is due to \eqref{align:orbifolddiskproduct}. The above two terms cancel each other since $|q|+|r| = |\s p_{j \dotsc 1}| +2$.

The argument for the remaining cases is completely analogous. The relevant disks are illustrated in Fig.~\ref{fig:cancel2}.
\end{proof}

\begin{remark}\label{remark:withck}
An A$_\infty$ category $\mathcal F_\Gamma$ associated to a different kind of dissection $\Gamma$ (called {\it tagged arc system}) was defined independently by Cho and Kim in \cite{chokim} using a similar but slightly different approach. This category $\mathcal F_\Gamma$ appears to be Morita equivalent to the category $\mathbf A_\Delta$ defined above. One of the differences is that the category $\mathcal F_\Gamma$ has trivial differential, since in \cite{chokim}  no orbifold disk sequences of length $2$ exist. Another difference is that arcs connecting to orbifold points come in (isotopic) copies distinguished by a tagging which give rise to nonisomorphic objects in $\mathcal F_\Gamma$. In our setup, these nonisomorphic copies can be expressed as twisted complexes in $\mathbf A_\Delta$ (see Remark \ref{remark:tagging}). 
Furthermore, the A$_\infty$ products in the category $\mathcal F_\Gamma$ have some nontrivial coefficients involving powers of $\frac{1}{2}$ which do not appear in the higher products in $\mathbf A_\Delta$.
\end{remark}

\begin{remark}[(Tagged arcs via twisted complexes)]
\label{remark:tagging}
Let $\Delta$ be an admissible dissection and $\mathbf A_\Delta$ be the associated A$_\infty$ category. For any arc $\gamma$ connecting to an orbifold point in $\Delta$ we may obtain an object $\gamma'$ in $\tw (\mathbf A_\Delta)$ which is not isomorphic to $\gamma$ but which is represented by an isotopic arc. For example, given a minimal orbifold sequence
\[
\gamma \overset{p_1}{\smile} \gamma_1 \overset{p_2}{\smile} \dotsb\overset{p_{n-1}}{\smile} \gamma_{n-1} 
\]
Let $\gamma'$ be the twisted complex 
\[
\bigoplus_{i=1}^{n-1}  \s^{|\s p_{i\dotsc 1}|}\gamma_i 
\]
with differential $\delta = \sum_{i=2}^{n-1} p_i$. Then $\gamma'$ is not isomorphic to $\gamma$, but may be represented by an isotopic arc which is a way of representing the ``tagged'' version of $\gamma$ as a twisted complex. See also the proof of Lemma \ref{lemma:splitgenerator} for an explicit example.
\end{remark}

\subsection{Morita equivalences}

In this subsection we show that the A$_\infty$ categories $\mathbf A_\Delta$ are Morita equivalent (Definition \ref{definition:moritaequivalence}) for all admissible dissections $\Delta$ on a graded orbifold surface with stops.

The following result shows that the contraction of an edge in the ribbon complex connecting to a vertex of type $\vcirc$ preserves the Morita equivalence class of the associated A$_\infty$ category.

\begin{lemma}\label{lemma:edgecontraction}
Let $\mathbf S = (S, \Sigma, \eta)$ be a graded orbifold surface with stops. Let $\Delta$ be an admissible dissection of $\mathbf S$ whose associated ribbon complex is $\mathbb G (\Delta)$. Let $e_\gamma$ be an edge of $\mathbb G (\Delta)$ joining a vertex of type $\vcirc$ and a distinct vertex $v$. Denote by $\Delta'$ the dissection obtained from $\Delta$ by deleting the arc $\gamma$ dual to $e_\gamma$. 

Then $\Delta'$ is also admissible and the natural inclusion $F \colon \mathbf A_{\Delta'} \hookrightarrow \mathbf A_{\Delta}$ is a Morita equivalence.
\end{lemma}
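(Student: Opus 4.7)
The plan is to verify admissibility of $\Delta'$ first, and then invoke Lemma~\ref{lemma:twistedgenerators} to reduce the Morita equivalence statement to exhibiting the single ``extra'' object $\gamma$ of $\mathbf A_\Delta$ as a twisted complex built from the objects of $\mathbf A_{\Delta'}$.

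\textbf{Admissibility of $\Delta'$.} Let $\vcirc$ be the smooth vertex at one endpoint of $e_\gamma$ and $v$ the other endpoint. By \S\ref{subsubsection:operations}, contracting $e_\gamma$ in $\mathbb G(\Delta)$ yields a vertex $v'$ of the same type as $v$ in $\mathbb G(\Delta')$, so the type of each vertex outside $e_\gamma$ is preserved, and no orbifold $2$-cell acquires a boundary vertex worse than what $v$ already contributed. In particular, the conditions of Definition~\ref{definition:admissible} around each $d_x$ are either unchanged or improved (since $\val(v')\geq \val(v)$), so $\Delta'$ is admissible.

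\textbf{Reduction via Lemma~\ref{lemma:twistedgenerators}.} The inclusion $F\colon \mathbf A_{\Delta'}\hookrightarrow \mathbf A_\Delta$ is a strict A$_\infty$ functor: its image on the common arcs is tautological, and the higher products in $\mathbf A_{\Delta'}$ are precisely the restrictions of the higher products $\mubar_2$, $\mucirc_{\geq 2}$, $\mutimes_{\geq 1}$ in $\mathbf A_\Delta$ that do not involve $\gamma$. By Lemma~\ref{lemma:twistedgenerators} it suffices to show that the arc $\gamma$ lies in the image of $\H^0(\tw(F)^\natural)$, i.e.\ that $\gamma$ is isomorphic in $\H^0(\tw(\mathbf A_\Delta)^\natural)$ to a twisted complex on the arcs $\Delta\smallsetminus\{\gamma\}$.

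\textbf{Realising $\gamma$ as an iterated cone from the smooth disk sequence at $\vcirc$.} The polygon $P_\vcirc$ dual to the smooth vertex $\vcirc$ is a smooth disk with arcs $\gamma=\gamma_0, \gamma_1,\dotsc,\gamma_{n-1}$ in its boundary, joined by boundary paths $p_1,\dotsc,p_n$ as in Definition~\ref{definition:disksequence} (no orbifold points, no stops, no missing relation inside $P_\vcirc$). By \eqref{align:smoothdiskproduct} we have
\[
\mucirc_n(\s p_n\otimes \s p_{n-1}\otimes\dotsb\otimes \s p_1)=\s\,\id_\gamma,
\]
which by the $n=2$ discussion in Remark~\ref{remark:smoothorbifolddisklength12}(iii) and its iteration expresses $\gamma$ as the iterated mapping cone of the morphisms $p_1, p_2,\dotsc,p_{n-1}$ in $\tw(\mathbf A_\Delta)$. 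More precisely, set
\[
X^\bullet \;=\; \Bigl(\bigoplus_{i=1}^{n-1}\s^{m_i}\gamma_i,\;\delta\Bigr)
\]
with $m_i=|\s p_{i\dotsc 1}|$ and $\delta^{i+1,i}=\s^{m_{i+1}-m_i}p_{i+1}$ for $1\leq i\leq n-2$; the twisting equation $\mu^{\mathbb Z\mathbf A_\Delta}\bullet \mu^{\mathbb Z\mathbf A_\Delta}=0$ for $\delta$ reduces to the vanishing $\mu_2(\s p_{i+2}\otimes \s p_{i+1})=0$, which holds because consecutive boundary paths in $P_\vcirc$ are never concatenable. The higher product $\mucirc_n$ yields a closed degree-$0$ morphism $\gamma\to X^\bullet$ (respectively $X^\bullet\to\gamma$) in $\tw(\mathbf A_\Delta)$, and the two compositions are identities up to boundaries, again by an application of $\mucirc_n$. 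Hence $\gamma\simeq X^\bullet$ in $\H^0(\tw(\mathbf A_\Delta)^\natural)$, and $X^\bullet$ involves only objects of $\mathbf A_{\Delta'}$.

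\textbf{Main obstacle.} The only delicate step is checking that the explicit twisted complex $X^\bullet$ above is indeed an iterated cone isomorphic to $\gamma$ with the correct signs and shifts: one needs to verify both the twisting equation and the inverse isomorphism using the precise sign convention from Definition~\ref{definition:twisted}, together with the extension \eqref{align:signmucircle} of $\mucirc_n$ to inputs preceded or followed by a composable path, to cancel out error terms. Once this bookkeeping is in place, Lemma~\ref{lemma:twistedgenerators} gives the Morita equivalence immediately.
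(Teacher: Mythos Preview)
Your proposal is correct and follows essentially the same approach as the paper: verify admissibility by observing that contracting an edge at a $\vcirc$ vertex does not alter the count of $\vbullet,\vodot,\vtimes$ vertices on the boundary of any $2$-cell, note that $F$ is strict because disk sequences in $\Delta'$ are disk sequences in $\Delta$, and then apply Lemma~\ref{lemma:twistedgenerators} after exhibiting $\gamma$ as a twisted complex on the remaining arcs of $P_{\vcirc}$. The paper simply cites \cite[Lemma~3.2]{haidenkatzarkovkontsevich} for this last step, whereas you spell out the twisted complex $X^\bullet$ and the role of $\mucirc_n$ explicitly; both arguments are the same in substance.
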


\begin{proof}
Note that the ribbon complex $\mathbb G (\Delta')$ associated to $\Delta'$ is obtained from $\mathbb G (\Delta)$ by contracting the edge $e_\gamma$. Then the first assertion follows from Definition \ref{definition:admissible} since the above operation will not change the number of vertices of type $\vtimes, \vbullet, \vodot$ on the boundary of each $2$-cell. 

Let us prove the second assertion. Note that the inclusion $F$ is a strict A$_\infty$ functor since by definition any smooth (resp.\ orbifold) disk sequence in $\Delta'$ is also a smooth (resp.\ orbifold) disk sequence in $\Delta$. Similar to \cite[Lemma 3.2]{haidenkatzarkovkontsevich} we may show that the object $\gamma$ in $\mathbf A_\Delta$ is isomorphic to a twisted complex of objects (i.e.\ all the other arcs in $P_{\vcirc}$ different from $\gamma$) in $\mathbf A_{\Delta'}$. Then by Lemma \ref{lemma:twistedgenerators} we infer that $F$ is a Morita equivalence. 
\end{proof}

\begin{figure}[ht]
\begin{tikzpicture}[x=1em,y=1em,decoration={markings,mark=at position 0.55 with {\arrow[black]{Stealth[length=4.8pt]}}}]
\begin{scope}[xshift=0em]
\node[font=\footnotesize] at (-80:.8em) {$v$};
\node[color=ribboncolour] at (0,0) {$\bullet$};
\draw[-, line width=.85, color=ribboncolour, line cap=round] (0,0) to (150:2em) (0,0) to (120:2em) (0,0) to (240:2em) (0,0) to (210:2em) (0,0) to (0:3em);
\draw[fill=ribboncolour, color=ribboncolour] (180:.7em) circle(.15em);
\node[color=ribboncolour] at (0:3em) {$\bullet$};
\draw[-, line width=.85, color=ribboncolour, line cap=round] (0:3em) to ($(0:3em)+(-45:2em)$)  (0:3em) to ($(0:3em)+(45:2em)$);
\node[font=\footnotesize] at ($(0:3em)+(-90:.8em)$) {$\vcirc$};
\node[font=\footnotesize] at (1.5em,.5em) {$e_{\gamma}$};
\node at (7em,0) {$\rightsquigarrow$};
\node[font=\footnotesize]  at (1.5em, -2.5em) {$\mathbb G (\Delta)$};
\end{scope}
\begin{scope}[xshift=12em]
\node[font=\footnotesize] at (-80:.8em) {$v$};
\node[color=ribboncolour] at (0,0) {$\bullet$};
\draw[-, line width=.85, color=ribboncolour, line cap=round] (0,0) to (150:2em) (0,0) to (120:2em) (0,0) to (240:2em) (0,0) to (210:2em) (0,0) to (30:2em) (0,0) to (-30:2em);
\draw[fill=ribboncolour, color=ribboncolour] (180:.7em) circle(.15em);
\node[font=\footnotesize]  at (0em, -2.5em) {$\mathbb G (\Delta')$};
\end{scope}
\end{tikzpicture}
\caption{Contracting an edge $e_\gamma$ joining two vertices.}
\label{fig:contractingvcirc}
\end{figure}
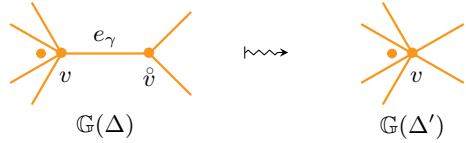

\begin{lemma}
\label{lemma:4gon}
Let $\mathbf S = (S, \Sigma, \eta)$ be a graded orbifold surface with stops and let $\Delta$ be an admissible dissection of $\mathbf S$. Then there exists an admissible dissection $\Delta_0$ of $\mathbf S$, whose ribbon graph is illustrated in Fig.~\ref{fig:ribbon}, such that $\mathbf A_{\Delta}$ is Morita equivalent to $\mathbf A_{\Delta_0}$.
\end{lemma}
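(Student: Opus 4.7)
The strategy is to use Lemma~\ref{lemma:edgecontraction} together with its converse as the elementary Morita-preserving moves on admissible dissections. Concretely, both operations
\begin{itemize}
\item \emph{edge contraction}: removing an arc $\gamma$ dual to an edge $e_\gamma$ that joins a vertex of type $\vcirc$ to a distinct vertex $v$, and
\item \emph{edge expansion}: inserting such an arc, which introduces a new vertex of type $\vcirc$,
\end{itemize}
preserve admissibility (as already noted in the proof of Lemma~\ref{lemma:edgecontraction}) and preserve the Morita equivalence class of $\mathbf A_\Delta$. Note that edge expansion is Morita-preserving by direct application of Lemma~\ref{lemma:edgecontraction} in reverse. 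The plan is therefore to exhibit, for any admissible $\Delta$, a finite sequence of such moves transforming $\Delta$ into a dissection whose ribbon complex is of the form displayed in Fig.~\ref{fig:ribbon}.

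First I would establish existence of at least one admissible dissection $\Delta_0$ with ribbon complex of the form in Fig.~\ref{fig:ribbon}. Since $\Sigma$ is required to contain at least one boundary stop, we may fix one such stop $\sigma_0$ and construct $\Delta_0$ directly by drawing, from the boundary point containing $\sigma_0$, one arc to every other boundary stop, to every orbifold point, to one point on every boundary component containing a full boundary stop, and $2g$ arcs to accommodate the genus, all pairwise nonintersecting except at $\sigma_0$ and at the orbifold points. A direct check using the local pictures in Fig.~\ref{fig:localribbon} shows that the boundary of each $2$-cell $d_x$ contains either $\vbullet_0$ or at least three vertices of type $\vtimes$ (because all orbifold arcs emanate from the same vertex $\vbullet_0$), hence $\Delta_0$ is admissible.

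Next I would reduce an arbitrary admissible $\Delta$ to $\Delta_0$ in two phases. In the \emph{contraction phase}, iteratively contract any edge joining a $\vcirc$-vertex to another vertex. This terminates because each contraction strictly decreases the number of vertices of type $\vcirc$, and by Lemma~\ref{lemma:edgecontraction} preserves both admissibility and the Morita class. The resulting dissection $\Delta^\flat$ has a ribbon complex whose vertices are only of types $\vbullet$, $\vodot$, $\vtimes$. In the \emph{rearrangement phase}, I would match $\Delta^\flat$ to $\Delta_0$ using a finite number of expansions followed by contractions. The key observation is that any two ribbon complexes on $\mathbf S$ with the same combinatorial data (number and types of vertices, and genus, determined by $\mathbf S$) are related by a finite sequence of expand/contract moves: given any edge $e$ of $\mathbb G(\Delta^\flat)$ that does not appear in $\mathbb G(\Delta_0)$ in the right place, one can first expand $e$ into a small subtree with fresh $\vcirc$-vertices so that rerouting becomes possible, then contract along a different subtree to achieve the desired local configuration. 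Admissibility is preserved throughout since expansions introduce only $\vcirc$-vertices (hence do not change the counts of $\vbullet$, $\vodot$, $\vtimes$ on any $2$-cell boundary) and contractions are controlled by Lemma~\ref{lemma:edgecontraction}.

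The main obstacle will be the rearrangement phase, where one must show that the moves really connect any two vertex-type-preserving ribbon complexes on $\mathbf S$: this is essentially a standard move-connectedness statement for ribbon graphs with a fixed combinatorial type, which in our orbifold setting requires a careful case analysis to ensure that expansions can always be performed without violating admissibility of the intermediate dissections (in particular, without making any $2$-cell boundary fall below the threshold required by Definition~\ref{definition:admissible}). Once this move-connectedness is in place, composing the two phases gives the required Morita equivalence $\mathbf A_\Delta \simeq \mathbf A_{\Delta_0}$.
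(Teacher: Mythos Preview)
Your overall shape is right—use edge expansion and contraction as Morita-preserving moves and steer any admissible $\Delta$ to the special $\Delta_0$—but the argument as written has a real gap in the rearrangement phase.

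First, the assertion that expansions ``do not change the counts of $\vbullet$, $\vodot$, $\vtimes$ on any $2$-cell boundary'' is not correct in general. An expansion at a vertex $v$ splits the polygon $P_v$ into two polygons, one dual to a new $\vcirc$ and one dual to $v'$ (of the same type as $v$). An orbifold point $x$ on the boundary of $P_v$ lies on exactly one side of the splitting arc, so after expansion the boundary of $d_x$ passes through \emph{either} $\vcirc$ \emph{or} $v'$—not both. If you are careless, the $\vtimes$ (or $\vbullet$, $\vodot$) you need can end up on the wrong side and disappear from $\partial d_x$, breaking admissibility. You must therefore choose each expansion so that the correct vertex stays adjacent to the orbifold $2$-cell; you cannot simply invoke that $\vcirc$ is ``added'' without displacing anything.

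Second, your rearrangement phase ultimately rests on an unproved move-connectedness statement which you yourself flag as the main obstacle. The paper avoids this entirely by giving three concrete elementary operations, each a single expand-then-contract zigzag that is explicitly checked to preserve admissibility: (1) decrease the valency of a non-hub $\vbullet$-vertex adjacent to the hub $\vbullet_0$; (2) decrease the valency of a $\vtimes$-vertex adjacent to $\vbullet_0$; (3) flip $\vtimes_x$ with its neighbour in $\partial d_x$ to move the hub closer to the orbifold $2$-cell. Iterating these forces every non-hub vertex to have small valency and attaches everything to $\vbullet_0$, landing directly on the shape in Fig.~\ref{fig:ribbon}. This constructive route replaces your abstract connectedness claim with a terminating algorithm, and handles admissibility locally at each step rather than as a global afterthought.
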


\begin{proof}
Let $\mathbb G (\Delta)$ be the ribbon complex associated to $\Delta$. Let us fix a vertex $\vbullet \in \mathbb G_0 (\Delta)$, which exists since we assume that there is at least one boundary stop on $\mathbf S$.

{\it Operation 1.} Assume that $e_{\gamma}$ is an edge joining $\vbullet$ and another vertex $\vbullet_1\neq \vbullet$. If $\val(\vbullet_1) >1$ then we may add an arc $\gamma'$ in $P_{\vbullet_1}$ to obtain a new admissible dissection $\Delta'$. Note that $\mathbb G (\Delta')$ is obtained from $\mathbb G (\Delta)$ by splitting the vertex $\vbullet_1$ into two vertices $\vcirc_1$ and $\vbullet_1'$.  See Fig.~\ref{fig:contractingvbullet}. (It is indeed possible to make $\Delta'$ admissible for instance if $\val(\vcirc) =3$ in $\Delta'$.) Note that $\val(\vbullet_1) > \val(\vbullet_1')$. Denote by $\Delta''$ the admissible dissection obtained from $\Delta'$ by deleting the arc $\gamma$. 

Note that $\mathbb G (\Delta)$ and $\mathbb G (\Delta'')$ are both obtained from $\mathbb G (\Delta')$ by edge contractions. By Lemma \ref{lemma:edgecontraction} we obtain Morita equivalences
\[
\mathbf A_\Delta \hookrightarrow \mathbf A_{\Delta'} \hookleftarrow \mathbf A_{\Delta''}.
\]
Iterating this operation we may decrease $\val(\vbullet_1)$ until it equals $1$. 

\begin{figure}[ht]
\begin{tikzpicture}[x=1em,y=1em,decoration={markings,mark=at position 0.55 with {\arrow[black]{Stealth[length=4.8pt]}}}]
\begin{scope}[xshift=0em]
\node[font=\footnotesize] at (.2em,-.8em) {$\vbullet$};
\node[color=ribboncolour] at (0,0) {$\bullet$};
\draw[-, line width=.85, color=ribboncolour, line cap=round] (0,0) to (150:2em) (0,0) to (120:2em) (0,0) to (240:2em) (0,0) to (210:2em) (0,0) to (0:3em);
\draw[fill=ribboncolour, color=ribboncolour] (180:.7em) circle(.15em);
\node[color=ribboncolour] at (0:3em) {$\bullet$};
\draw[-, line width=.85, color=ribboncolour, line cap=round] (0:3em) to ($(0:3em)+(-20:2em)$)  (0:3em) to ($(0:3em)+(20:2em)$) (0:3em) to ($(0:3em)+(-40:2em)$)  (0:3em) to ($(0:3em)+(40:2em)$) (0:3em) to ($(0:3em)+(60:2em)$) (0:3em) to ($(0:3em)+(-60:2em)$);
\node[font=\footnotesize] at ($(2.8em,-.8em)$) {$\vbullet_1$};
\draw[fill=ribboncolour, color=ribboncolour] (0:3.9em) circle(.15em);
\node[font=\scriptsize] at (1.5em,.5em) {$e_{\gamma}$};
\node at (7em,0) {$\leftsquigarrow$};
\node[font=\footnotesize]  at (1.5em, -2.5em) {$\mathbb G (\Delta)$};
\end{scope}
\begin{scope}[xshift=11em]
\node[font=\footnotesize] at (.2em,-.8em) {$\vbullet$};
\node[color=ribboncolour] at (0,0) {$\bullet$};
\draw[-, line width=.85, color=ribboncolour, line cap=round] (0,0) to (150:2em) (0,0) to (120:2em) (0,0) to (240:2em) (0,0) to (210:2em) (0,0) to (0:2.5em);
\node[color=ribboncolour] at (0:2.5em) {$\bullet$};
\node[font=\scriptsize] at (1.225em,.5em) {$e_{\gamma}$};
\draw[fill=ribboncolour, color=ribboncolour] (180:.7em) circle(.15em);
\node[color=ribboncolour] at (0:5em) {$\bullet$};
\node[font=\footnotesize] at (2.2em,-.8em) {$\vcirc$};
\node[font=\scriptsize] at (4.3em,.5em) {$e_{\gamma'}$};
\draw[-, line width=.85, color=ribboncolour, line cap=round] (0:5em) to (0:2.5em) (0:5em) to ($(0:5em)+(-20:2em)$)  (0:5em) to ($(0:5em)+(20:2em)$) (0:5em) to ($(0:5em)+(-40:2em)$);  \draw[-, line width=.85, color=ribboncolour, line cap=round] (0:2.5em) to ($(0:2.5em)+(40:2em)$) (0:2.5em) to ($(0:2.5em)+(60:2em)$) (0:2.5em) to ($(0:2.5em)+(-60:2em)$);
\node[font=\footnotesize] at (4.8em,-.8em) {$\vbullet_2$};
\node[color=ribboncolour] at (0:5em) {$\bullet$};
\draw[fill=ribboncolour, color=ribboncolour] (0:5.9em) circle(.15em);
\node at (9em,0) {$\rightsquigarrow$};
\node[font=\footnotesize]  at (2.5em, -2.5em) {$\mathbb G (\Delta')$};
\end{scope}
\begin{scope}[xshift=23em]
\node[font=\footnotesize] at (0em,-.8em) {$\vbullet$};
\node[color=ribboncolour] at (0,0) {$\bullet$};
\draw[-, line width=.85, color=ribboncolour, line cap=round] (0,0) to (150:2em) (0,0) to (120:2em) (0,0) to (240:2em) (0,0) to (210:2em) (0,0) to (0:3em);
\draw[fill=ribboncolour, color=ribboncolour] (180:.7em) circle(.15em);
\node[color=ribboncolour] at (0:3em) {$\bullet$};
\node[font=\scriptsize] at (2em,.5em) {$e_{\gamma'}$};
\draw[-, line width=.85, color=ribboncolour, line cap=round] (0:3em) to ($(0:3em)+(-20:2em)$)  (0:3em) to ($(0:3em)+(20:2em)$) (0:3em) to ($(0:3em)+(-40:2em)$);  
\draw[-, line width=.85, color=ribboncolour, line cap=round] (0:0em) to ($(0:0em)+(40:2em)$) (0:0em) to ($(0:0em)+(60:2em)$) (0:0em) to ($(0:0em)+(-60:2em)$);
\node[font=\footnotesize] at (2.8em,-.8em) {$\vbullet_2$};
\draw[fill=ribboncolour, color=ribboncolour] (0:3.9em) circle(.15em);
\node[font=\footnotesize]  at (1.5em, -2.5em) {$\mathbb G (\Delta'')$};
\end{scope}
\end{tikzpicture}
\caption{Contracting the edges $e_{\gamma}$ and $e_{\gamma'}$.}
\label{fig:contractingvbullet}
\end{figure}

{\it Operation 2.} Let $e_\gamma$ be an edge joining $\vbullet$ and a vertex $\vtimes$. Assume that $\val(\vtimes) > 2$. Denote by $\Delta'$ the admissible dissection obtained from $\Delta$ by adding an arc $\gamma'$ in $P_{\vtimes}$. Then the ribbon complex $\mathbb G (\Delta')$ is obtained from $\mathbb G (\Delta)$ by splitting the vertex $\vtimes$ into two vertices $\vcirc$ and $\vtimes'$. Note that $\val(\vtimes') < \val(\vtimes)$. Denote by $\Delta''$ the admissible dissection obtained from $\Delta'$ by deleting the arc $\gamma$, so that $\mathbb G (\Delta'')$ is obtained from $\mathbb G (\Delta')$ by contracting the edge $e_{\gamma}$ as illustrated in Fig.~\ref{fig:contractingvtimes}.

Then by Lemma \ref{lemma:edgecontraction} we obtain Morita equivalences $$\mathbf A_{\Delta}  \hookrightarrow \mathbf A_{\Delta'} \hookleftarrow \mathbf A_{\Delta''}.$$ Iterating this operation we  may decrease $\val(\vtimes)$ until it equals $2$, such that both edges join $\vtimes$ and $\vbullet$.
\begin{figure}[ht]
\begin{tikzpicture}[x=1em,y=1em,decoration={markings,mark=at position 0.55 with {\arrow[black]{Stealth[length=4.8pt]}}}]
\begin{scope}[xshift=0em]
\node[font=\footnotesize] at (.2em,-.8em) {$\vbullet$};
\node[color=ribboncolour] at (0,0) {$\bullet$};
\draw[-, line width=.85, color=ribboncolour, line cap=round] (0,0) to (150:2em) (0,0) to (120:2em) (0,0) to (240:2em) (0,0) to (210:2em) (0,0) to (0:3em);
\draw[fill=ribboncolour, color=ribboncolour] (180:.7em) circle(.15em);
\node[color=ribboncolour] at (0:3em) {$\bullet$};
\draw[-, line width=.85, color=ribboncolour, line cap=round] (0:3em) to ($(0:3em)+(-20:2em)$)  (0:3em) to ($(0:3em)+(20:2em)$) (0:3em) to ($(0:3em)+(-40:2em)$)  (0:3em) to ($(0:3em)+(40:2em)$) (0:3em) to ($(0:3em)+(60:2em)$) (0:3em) to ($(0:3em)+(-60:2em)$);
\node[font=\footnotesize] at ($(2.8em,-.8em)$) {$\vtimes$};
\draw[fill=ribboncolour, color=ribboncolour] (0:3.9em) circle(.15em);
\node[font=\scriptsize] at (1.5em,.5em) {$e_{\gamma}$};
\node at (7em,0) {$\leftsquigarrow$};
\node[font=\footnotesize]  at (1.5em, -2.5em) {$\mathbb G (\Delta)$};
\draw[line width=0, fill=ribboncolour, draw=ribboncolour, opacity=.5] ($(0:3em)+(-20:2em)$)  --(0:3em) -- ($(0:3em)+(20:2em)$) -- cycle;
\end{scope}
\begin{scope}[xshift=11em]
\node[font=\footnotesize] at (.2em,-.8em) {$\vbullet$};
\node[color=ribboncolour] at (0,0) {$\bullet$};
\draw[-, line width=.85, color=ribboncolour, line cap=round] (0,0) to (150:2em) (0,0) to (120:2em) (0,0) to (240:2em) (0,0) to (210:2em) (0,0) to (0:2.5em);
\node[color=ribboncolour] at (0:2.5em) {$\bullet$};
\node[font=\scriptsize] at (1.225em,.5em) {$e_{\gamma}$};
\draw[fill=ribboncolour, color=ribboncolour] (180:.7em) circle(.15em);
\node[color=ribboncolour] at (0:5em) {$\bullet$};
\node[font=\footnotesize] at (2.2em,-.8em) {$\vcirc$};
\node[font=\scriptsize] at (4.3em,.5em) {$e_{\gamma'}$};
\draw[-, line width=.85, color=ribboncolour, line cap=round] (0:5em) to (0:2.5em) (0:5em) to ($(0:5em)+(-20:2em)$)  (0:5em) to ($(0:5em)+(20:2em)$) (0:5em) to ($(0:5em)+(-40:2em)$);  \draw[-, line width=.85, color=ribboncolour, line cap=round] (0:2.5em) to ($(0:2.5em)+(40:2em)$) (0:2.5em) to ($(0:2.5em)+(60:2em)$) (0:2.5em) to ($(0:2.5em)+(-60:2em)$);
\node[font=\footnotesize] at (4.8em,-.8em) {$\vtimes'$};
\node[color=ribboncolour] at (0:5em) {$\bullet$};
\draw[fill=ribboncolour, color=ribboncolour] (0:5.9em) circle(.15em);
\node at (9em,0) {$\rightsquigarrow$};
\node[font=\footnotesize]  at (2.5em, -2.5em) {$\mathbb G (\Delta')$};
\draw[line width=0, fill=ribboncolour, draw=ribboncolour, opacity=.5] ($(0:5em)+(-20:2em)$)  --(0:5em) -- ($(0:5em)+(20:2em)$) -- cycle;
\end{scope}
\begin{scope}[xshift=23em]
\node[font=\footnotesize] at (0em,-.8em) {$\vbullet$};
\node[color=ribboncolour] at (0,0) {$\bullet$};
\draw[-, line width=.85, color=ribboncolour, line cap=round] (0,0) to (150:2em) (0,0) to (120:2em) (0,0) to (240:2em) (0,0) to (210:2em) (0,0) to (0:3em);
\draw[fill=ribboncolour, color=ribboncolour] (180:.7em) circle(.15em);
\node[color=ribboncolour] at (0:3em) {$\bullet$};
\node[font=\scriptsize] at (2em,.5em) {$e_{\gamma'}$};
\draw[-, line width=.85, color=ribboncolour, line cap=round] (0:3em) to ($(0:3em)+(-20:2em)$)  (0:3em) to ($(0:3em)+(20:2em)$) (0:3em) to ($(0:3em)+(-40:2em)$);  
\draw[-, line width=.85, color=ribboncolour, line cap=round] (0:0em) to ($(0:0em)+(40:2em)$) (0:0em) to ($(0:0em)+(60:2em)$) (0:0em) to ($(0:0em)+(-60:2em)$);
\node[font=\footnotesize] at (2.8em,-.8em) {$\vtimes'$};
\draw[fill=ribboncolour, color=ribboncolour] (0:3.9em) circle(.15em);
\node[font=\footnotesize]  at (1.5em, -2.5em) {$\mathbb G (\Delta'')$};
\draw[line width=0, fill=ribboncolour, draw=ribboncolour, opacity=.5] ($(0:3em)+(-20:2em)$)  --(0:3em) -- ($(0:3em)+(20:2em)$) -- cycle;
\end{scope}
\end{tikzpicture}
\caption{Contracting the edges $e_{\gamma}$ and $e_{\gamma'}$.}
\label{fig:contractingvtimes}
\end{figure}

{\it Operation 3.} Let $x \in \Sing (S)$ be an orbifold point such that $\val(\vtimes_x) =2$. Denote by $v$ one of the adjacent vertices of $\vtimes_x$ in $d_x$ as illustrated in Fig.~\ref{fig:operation4}. We may assume that $v$ is neither of type $\vcirc$ (otherwise we may use the first operation to contract the edge $e_\gamma$).  Let $d_x$ be the associated $2$-cell. Then Lemma \ref{lemma:edgecontraction} shows that the zigzag of edge contractions in Fig.~\ref{fig:operation4} induces a zigzag of A$_\infty$ quasi-equivalences between $\mathbf A_\Delta$ and $\mathbf A_{\Delta'}$. See Fig.~\ref{fig:zigzag} for the local behaviour of this operation in terms of dissections. 

\begin{figure}[ht]
\begin{tikzpicture}[x=1em,y=1em,decoration={markings,mark=at position 0.55 with {\arrow[black]{Stealth[length=4.8pt]}}}]
\begin{scope}[xshift=0em]
\node[font=\footnotesize] at (-180:.8em) {$\vtimes_x$};
\node[color=ribboncolour] at (0,0) {$\bullet$};
\draw[-, line width=.85, color=ribboncolour, line cap=round] (0,0) to (120:2em) (0,0) to (0:3em);
\draw[fill=ribboncolour, color=ribboncolour] (70:.5em) circle(.15em);
\node[color=ribboncolour] at (0:3em) {$\bullet$};
\draw[-, line width=.85, color=ribboncolour, line cap=round] (0:3em) to ($(0:3em)+(60:2em)$)(0:3em) to ($(0:3em)+(-70:1.5em)$)  (0:3em) to ($(0:3em)+(-110:1.5em)$);
\node[font=\footnotesize] at (3.8,0) {$v$};
\node[font=\footnotesize] at (1.5em,-.5em) {$e_{\gamma}$};
\node at (5.5em,0) {$\leftsquigarrow$};
\draw[fill=ribboncolour, color=ribboncolour] (3em,-.8em) circle(.15em);
\node[font=\footnotesize]  at (1.5em, -2.5em) {$\mathbb G (\Delta)$};
\draw[line width=0, fill=ribboncolour, draw=ribboncolour, opacity=.5] (120:2em)--(0,0) -- (3em,0) -- ($(3em,0)+(60:2em)$) -- cycle;
\end{scope}
\begin{scope}[xshift=8em]
\node[font=\footnotesize] at (-180:.8em) {$\vtimes_x$};
\node[color=ribboncolour] at (0,0) {$\bullet$};
\draw[-, line width=.85, color=ribboncolour, line cap=round] (0,0) to (120:2em) (0,0) to (0:3em);
\draw[fill=ribboncolour, color=ribboncolour] (70:.5em) circle(.15em);
\node[color=ribboncolour] at (0:3em) {$\bullet$};
\node[color=ribboncolour] at (3em,-1em) {$\bullet$};
\node[font=\footnotesize] at (3.8em, -1em) {$v$};
\draw[-, line width=.85, color=ribboncolour, line cap=round] (0:3em) to ($(0:3em)+(60:2em)$);
\draw[-, line width=.85, color=ribboncolour, line cap=round] (3em, -1em) to ($(3em, -1em)+(-70:1.5em)$) (3em,-1em) to ($(3em, -1em)+(-110:1.5em)$) (3em,-1em) to (0:3em);
\node[font=\footnotesize] at (3.8,0) {$\vcirc$};
\node[font=\footnotesize] at (1.5em,-.5em) {$e_{\gamma}$};
\node at (5.5em,0) {$\rightsquigarrow$};
\draw[fill=ribboncolour, color=ribboncolour] (3em,-1.8em) circle(.15em);
\draw[line width=0, fill=ribboncolour, draw=ribboncolour, opacity=.5] (120:2em)--(0,0) -- (3em,0) -- ($(3em,0)+(60:2em)$) -- cycle;
\end{scope}
\begin{scope}[xshift=16em]
\node[font=\footnotesize] at (-180:.8em) {$\vtimes_x$};
\node[color=ribboncolour] at (0,0) {$\bullet$};
\draw[-, line width=.85, color=ribboncolour, line cap=round] (0,0) to (120:2em);
\draw[fill=ribboncolour, color=ribboncolour] (90:.8em) circle(.15em);
\node[color=ribboncolour] at (0,-1em) {$\bullet$};
\node[font=\footnotesize] at (.8em, -1em) {$v$};
\draw[-, line width=.85, color=ribboncolour, line cap=round] (0:0em) to (60:2em);
\draw[-, line width=.85, color=ribboncolour, line cap=round] (0em, -1em) to ($(0em, -1em)+(-70:1.5em)$) (0em,-1em) to ($(0em, -1em)+(-110:1.5em)$) (0em,-1em) to (0:0em);
\draw[fill=ribboncolour, color=ribboncolour] (0em,-1.8em) circle(.15em);
\node at (2.5em,0) {$\leftsquigarrow$};
\draw[line width=0, fill=ribboncolour, draw=ribboncolour, opacity=.5] (120:2em)--(0,0) -- (60:2em) -- cycle;
\end{scope}
\begin{scope}[xshift=21em]
\node[font=\footnotesize] at (-180:.8em) {$\vcirc$};
\node[color=ribboncolour] at (0,0) {$\bullet$};
\draw[-, line width=.85, color=ribboncolour, line cap=round] (0,0) to (120:2em) (0,0) to (0:3em);
\draw[fill=ribboncolour, color=ribboncolour] ($(3em,0)+(110:.5em)$) circle(.15em);
\node[color=ribboncolour] at (0:3em) {$\bullet$};
\node[color=ribboncolour] at (0em,-1em) {$\bullet$};
\draw[-, line width=.85, color=ribboncolour, line cap=round] (0:3em) to ($(0:3em)+(60:2em)$);
\draw[-, line width=.85, color=ribboncolour, line cap=round] (0em, -1em) to ($(0em, -1em)+(-70:1.5em)$) (0em,-1em) to ($(0em, -1em)+(-110:1.5em)$) (0em,-1em) to (0:0em);
\node[font=\footnotesize] at (3.8,0) {$\vtimes_x$};
\node[font=\footnotesize] at (.8em, -1em) {$v$};
\node[font=\footnotesize] at (1.5em,-.5em) {$e_{\gamma'}$};
\draw[fill=ribboncolour, color=ribboncolour] (0em,-1.8em) circle(.15em);
\node at (5.5em,0) {$\rightsquigarrow$};
\draw[line width=0, fill=ribboncolour, draw=ribboncolour, opacity=.5] (120:2em)--(0,0) -- (3em,0) -- ($(3em,0)+(60:2em)$) -- cycle;
\end{scope}
\begin{scope}[xshift=29em]
\node[font=\footnotesize] at (-180:.8em) {$v$};
\node[color=ribboncolour] at (0,0) {$\bullet$};
\draw[-, line width=.85, color=ribboncolour, line cap=round] (0,0) to (120:2em) (0,0) to (0:3em);
\draw[fill=ribboncolour, color=ribboncolour] ($(3em,0)+(110:.5em)$) circle(.15em);
\node[color=ribboncolour] at (0:3em) {$\bullet$};
\draw[-, line width=.85, color=ribboncolour, line cap=round] (0:3em) to ($(0:3em)+(60:2em)$);
\draw[-, line width=.85, color=ribboncolour, line cap=round] (0em, 0em) to ($(0em, 0em)+(-70:1.5em)$) (0em,0em) to ($(0em, 0em)+(-110:1.5em)$);
\node[font=\footnotesize] at (3.8,0) {$\vtimes_x$};
\node[font=\footnotesize] at (1.5em,-.5em) {$e_{\gamma'}$};
\draw[fill=ribboncolour, color=ribboncolour] (0em,-.8em) circle(.15em);
\node[font=\footnotesize]  at (1.5em, -2.5em) {$\mathbb G (\Delta')$};
\draw[line width=0, fill=ribboncolour, draw=ribboncolour, opacity=.5] (120:2em)--(0,0) -- (3em,0) -- ($(3em,0)+(60:2em)$) -- cycle;
\end{scope}
\end{tikzpicture}
\caption{Flipping $\vtimes_x$ and its adjacent vertex in the $2$-cell $d_x$.}
\label{fig:operation4}
\end{figure}
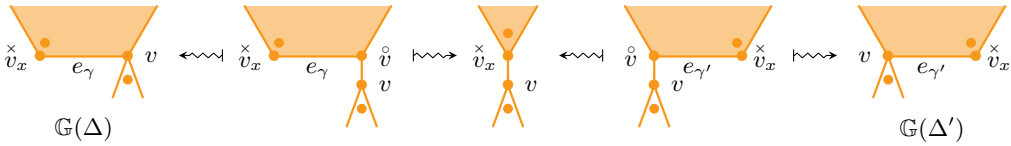

Iterating the above operation we may obtain an admissible dissection $\Delta_0$ whose ribbon complex is illustrated in Fig.~\ref{fig:ribbon}. 
\end{proof}

\newcommand{\flipdiagram}{%
\draw[line width=.5pt] (-4em,0) ++( 60:1.5em) arc[start angle= 60, end angle=-15, radius=1.5em];
\draw[line width=.5pt]  (4em,0) ++(195:1.5em) arc[start angle=195, end angle=120, radius=1.5em];
\node[font=\scriptsize,shape=circle,scale=.6] (X) at (0,4em) {};
\node[font=\scriptsize] at (0,4em) {$\times$};
\node[font=\scriptsize,color=stopcolour] at ($(0,4em)+(270:.65em)$) {$\bullet$};
\draw[line width=.75pt, color=arccolour, line cap=round] (-4em,0) ++( 45:1.5em) -- (X);
\draw[line width=.75pt, color=arccolour, line cap=round] ( 4em,0) ++(135:1.5em) -- (X);
\draw[line width=.75pt, color=arccolour, line cap=round] (X) -- ++(165:2em);
\draw[dash pattern=on 0pt off 1.3pt, line width=.8pt, line cap=round, color=arccolour] (X) ++(165:2.1em) -- ++(165:.4em);
\draw[line width=.75pt, color=arccolour, line cap=round] (X) -- ++(105:2em);
\draw[dash pattern=on 0pt off 1.3pt, line width=.8pt, line cap=round, color=arccolour] (X) ++(105:2.1em) -- ++(105:.4em);
\draw[line width=.75pt, color=arccolour, line cap=round] (X) -- ++(15:2em);
\draw[dash pattern=on 0pt off 1.3pt, line width=.8pt, line cap=round, color=arccolour] (X) ++(15:2.1em) -- ++(15:.4em);
\node[font=\scriptsize] at ($(0,4em)+(60:1em)$) {.};
\node[font=\scriptsize] at ($(0,4em)+(75:1em)$) {.};
\node[font=\scriptsize] at ($(0,4em)+(45:1em)$) {.};
}

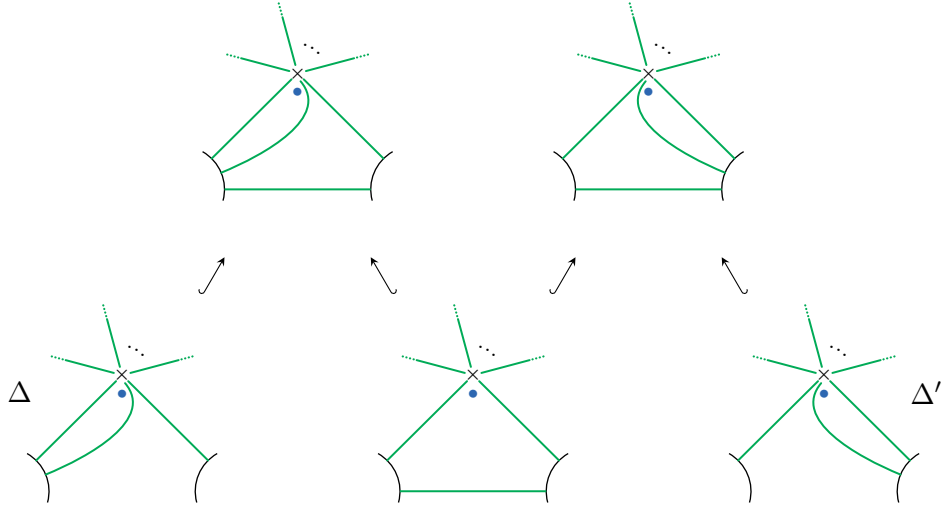
\begin{figure}
\begin{tikzpicture}[x=1em,y=1em,decoration={markings,mark=at position 0.55 with {\arrow[black]{Stealth[length=4.8pt]}}}]
\begin{scope}[xshift=-12em]
\flipdiagram
\draw[line width=.75pt, color=arccolour, line cap=round] (-4em,0) ++(22.5:1.5em) to[out=22.5, in=-50, looseness=1] (X.290);
\draw[{Hooks[right]}->, line width=.4pt] (0,2em) ++(60:5.5em) -- ++(60:1.5em);
\end{scope}
\begin{scope}[xshift=-6em,yshift=10.4em]
\flipdiagram
\node[]  at (-9.5em, -7em) {$\Delta$};
\draw[line width=.75pt, color=arccolour, line cap=round] (-4em,0) ++(22.5:1.5em) to[out=22.5, in=-50, looseness=1] (X.290);
\draw[line width=.75pt, color=arccolour] (-2.5em,0) -- (2.5em,0);
\end{scope}
\begin{scope}
\flipdiagram
\draw[line width=.75pt, color=arccolour] (-2.5em,0) -- (2.5em,0);
\draw[{Hooks[left]}->, line width=.4pt] (0,2em) ++(120:5.5em) -- ++(120:1.5em);
\draw[{Hooks[right]}->, line width=.4pt] (0,2em) ++(60:5.5em) -- ++(60:1.5em);
\end{scope}
\begin{scope}[xshift=6em,yshift=10.4em]
\flipdiagram
\node[]  at (9.5em, -7em) {$\Delta'$};
\draw[line width=.75pt, color=arccolour, line cap=round] ( 4em,0) ++(157.5:1.5em) to[out=157.5, in=-130, looseness=1] (X.250);
\draw[line width=.75pt, color=arccolour] (-2.5em,0) -- (2.5em,0);
\end{scope}
\begin{scope}[xshift=12em]
\flipdiagram
\draw[line width=.75pt, color=arccolour, line cap=round] ( 4em,0) ++(157.5:1.5em) to[out=157.5, in=-130, looseness=1] (X.250);
\draw[{Hooks[left]}->, line width=.4pt] (0,2em) ++(120:5.5em) -- ++(120:1.5em);
\end{scope}
\end{tikzpicture}
\caption{A zigzag of Morita equivalences induced by inclusions of dissections showing that a flip (from lower left to lower right) induces a Morita equivalence of the associated A$_\infty$ categories}
\label{fig:zigzag}
\end{figure}

\begin{theorem}
\label{theorem:moritaequivalenceadmissible}
Let $\mathbf S = (S, \Sigma, \eta)$ be a graded orbifold surface with stops. If $\Delta$ and $\Delta'$ are two admissible dissections of $\mathbf S$, then $\mathbf A_{\Delta}$ and $\mathbf A_{\Delta'}$ are Morita equivalent.
\end{theorem}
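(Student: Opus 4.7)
The plan is to combine Lemma \ref{lemma:4gon}, which reduces any admissible dissection to a \emph{special} admissible dissection $\Delta_0$ whose ribbon complex is of the form in Fig.~\ref{fig:ribbon}, with a final comparison showing that any two such special dissections of the same $\mathbf S$ are Morita equivalent. Applying Lemma \ref{lemma:4gon} separately to $\Delta$ and $\Delta'$ yields special admissible dissections $\Delta_0$ and $\Delta_0'$ together with zigzags of Morita equivalences $\mathbf A_{\Delta} \sim \mathbf A_{\Delta_0}$ and $\mathbf A_{\Delta'} \sim \mathbf A_{\Delta_0'}$. It therefore suffices to establish a Morita equivalence $\mathbf A_{\Delta_0} \sim \mathbf A_{\Delta_0'}$ for any pair of special admissible dissections of $\mathbf S$.

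A special dissection is determined by two pieces of discrete data: the choice of the distinguished boundary stop $\sigma_0$ at the central vertex $\vbullet_0$, and the cyclic ordering around $\vbullet_0$ of the ``spokes'' corresponding to the orbifold points, the other boundary stops (grouped by the boundary component containing them), the full boundary stops, and the handles. Since both $\Delta_0$ and $\Delta_0'$ are dissections of the same $\mathbf S$, the numerical data ($k$, $l+1$, $m$, genus) coincide, and $\Delta_0$ and $\Delta_0'$ differ only by such a choice and permutation. The plan is to show that any elementary transposition of two adjacent spokes at $\vbullet_0$, as well as any replacement of $\sigma_0$ by an adjacent boundary stop, can be realized by a zigzag of inclusions of admissible dissections of the type appearing in Fig.~\ref{fig:zigzag}: one first adds an auxiliary arc at $\vbullet_0$ to split it into a pair $\vbullet,\vcirc$ joined by a $\vcirc$-edge, and then contracts a \emph{different} $\vcirc$-edge obtained from the same diagram. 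By Lemma \ref{lemma:edgecontraction}, each such inclusion induces a Morita equivalence, and composing these zigzags produces the desired Morita equivalence between $\mathbf A_{\Delta_0}$ and $\mathbf A_{\Delta_0'}$.

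The main obstacle will be verifying that every intermediate dissection produced in these zigzags remains admissible in the sense of Definition \ref{definition:admissible}, i.e.\ that the conditions on the boundary of each orbifold $2$-cell (at least two boundary vertices and either one of type $\vbullet$/$\vodot$, or three of type $\vtimes$) continue to hold throughout. Since all intermediate modifications take place in a small neighbourhood of the central vertex $\vbullet_0$, and since $\vbullet_0$ itself is of type $\vbullet$ and lies on the boundary of every orbifold $2$-cell in a special dissection, the admissibility at each step reduces to a purely local combinatorial check analogous to the ones carried out in Operations 1--3 of the proof of Lemma \ref{lemma:4gon}. The grading data supplied by the line field $\eta$ is fixed throughout, so no further compatibility needs to be arranged.
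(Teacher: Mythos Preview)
Your proposal is correct and follows essentially the same approach as the paper's proof, which is considerably terser: the paper invokes Lemma~\ref{lemma:4gon} together with the edge contraction/expansion technique from its proof (referencing \cite[Proposition 3.3]{haidenkatzarkovkontsevich} for the analogous smooth case) to assert directly that a single special dissection $\Delta_0$ is reached from both $\Delta$ and $\Delta'$. Your version makes explicit the step the paper leaves to that reference, namely the comparison of two special dissections via zigzags realizing transpositions of adjacent spokes at $\vbullet_0$.
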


\begin{proof}
By Lemma \ref{lemma:4gon} as well as its proof there is an admissible DG dissection $\Delta_0$, which is related to $\Delta$ and $\Delta'$ by edge contractions and expansions, compare to the proof of  \cite[Proposition 3.3]{haidenkatzarkovkontsevich}. As a result, the DG category $\mathbf A_{\Delta_0}$ is Morita equivalent to both $\mathbf A_{\Delta}$ and $\mathbf A_{\Delta'}$. 
\end{proof}

\subsection{Universal property}

The following result now shows that the A$_\infty$ category $\mathbf A_\Delta$ (see Proposition \ref{proposition:ainfinity}) gives an explicit model of the partially wrapped Fukaya category $\mathcal W (\mathbf S)$.

\begin{theorem}\label{theorem:moritapartiallywrapped}
Let $\mathbf S = (S, \Sigma, \eta)$ be a graded orbifold surface with stops. If $\Delta$ is an admissible dissection of $\mathbf S$, then $\tw (\mathbf A_\Delta)^\natural$ is equivalent to the homotopy colimit $\mathbf \Gamma (\tw (\mathcal E_\Delta)^\natural) \simeq \tw (\mathbf \Gamma (\mathcal E_\Delta))^\natural$. In particular, $\mathbf A_\Delta$ is Morita equivalent to $\mathbf \Gamma (\mathcal E_\Delta)$ and
\[
\mathcal W (\mathbf S) = \H^0 (\tw (\mathbf \Gamma (\mathcal E_\Delta))^\natural) \simeq \H^0 (\tw (\mathbf A_\Delta)^\natural).
\]
\end{theorem}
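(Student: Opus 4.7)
My plan is to construct a Morita equivalence directly between $\mathbf A_\Delta$ and the homotopy colimit $\mathbf \Gamma(\mathcal E_\Delta)$, and then invoke that taking twisted complexes and idempotent completion preserves Morita equivalences. The starting observation is that the A$_\infty$ category $\mathbf A_\Delta$ is already assembled ``locally'' in precisely the same way as the cosheaf $\mathcal E_\Delta$: each arc of $\Delta$ belongs to the boundary of at most two polygons $P_v$, every boundary or orbifold path lies inside a single polygon, and each higher product $\mucirc_n$ or $\mutimes_{n-1}$ arises from a smooth or orbifold disk sequence which, after contracting all internal $\vcirc$ vertices in its subtree, sits inside a single (possibly enlarged) polygon.

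First I would construct for each vertex $v\in\mathbb G_0(\Delta)$ a strict A$_\infty$ functor $F_v\colon\mathcal E_\Delta(U_v)\to\mathbf A_\Delta$ sending objects to the arcs in $\partial P_v$ and sending the generating paths to the corresponding boundary/orbifold paths in $\mathbf A_\Delta$. By the definition of $\mathcal E_\Delta(U_v)$ given in \S\ref{section:cosheaves}, the only higher products on $\mathcal E_\Delta(U_v)$ are $\mucirc_{\val(v)}$ (types $\vcirc, \vodot$) or $\mutimes_{\val(v)-1}$ (type $\vtimes$), and these match the corresponding products in $\mathbf A_\Delta$ attached to the disk sequence $\partial P_v$ by construction (see \eqref{align:smoothdiskproduct} and \eqref{align:orbifolddiskproduct}). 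The functors $F_v$ agree on the triple intersections $\mathcal E_\Delta(U_u\cap U_v\cap U_w)$ and double intersections $\mathcal E_\Delta(U_u\cap U_v)$ since these are freely generated by an arc (possibly a pair of arcs with a single orbifold path), and both $F_u$ and $F_v$ restrict on the shared object(s) to the same inclusion in $\mathbf A_\Delta$. The universal property of the homotopy colimit then produces an A$_\infty$ functor $F\colon \mathbf\Gamma(\mathcal E_\Delta)\to\mathbf A_\Delta$.

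To verify that $F$ is an A$_\infty$-quasi-equivalence I would compute the homotopy colimit explicitly by the same cofibrant replacement argument employed in the proof of Theorem \ref{theorem:moritaequivalenceglobalsection}. Concretely, each $\mathcal E_\Delta(U_v)$ admits a Bardzell-type cofibrant resolution whose generators are higher overlaps $b_{i,j}$ of the quadratic monomial relations between consecutive paths, with differential encoding both the relations $p_{i+1}p_i=0$ and the unique higher product ($\mucirc$ or $\mutimes$) at $v$. Since the intersections $\mathcal E_\Delta(U_v\cap U_w)$ are already cofibrant (their underlying quivers are of type $\mathrm A_1$ or $\mathrm A_2$) and the inclusion functors glue objects without creating new relations, the naive pushout of the cofibrant replacements computes the homotopy colimit. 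On the other hand, the analogous Bardzell resolution of $\mathbf A_\Delta$ has generators indexed by overlaps of the zero relations across all of $\Delta$; using the admissibility condition, every such overlap corresponds to a subpath of a unique smooth or orbifold disk sequence and therefore already appears in the naive pushout, with the matching differential. This identifies the cofibrant replacement of $\mathbf\Gamma(\mathcal E_\Delta)$ with that of $\mathbf A_\Delta$, proving that $F$ is an A$_\infty$-quasi-equivalence.

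Passing to $\tw(-)^\natural$ and then to $\H^0$ yields the stated equivalences, with the identification $\tw(\mathbf\Gamma(\mathcal E_\Delta))^\natural\simeq\mathbf\Gamma(\tw(\mathcal E_\Delta)^\natural)$ following from the Morita equivalence between any A$_\infty$ category and its category of twisted complexes (see \cite[Lemma 3.34]{seidel2}) together with commutativity of $\tw(-)^\natural$ with homotopy colimits of pretriangulated A$_\infty$ categories. The main technical obstacle in the plan is the verification that all overlaps produced by the Bardzell resolution of $\mathbf A_\Delta$ genuinely come from subtrees of $\mathbb G(\Delta)$ whose contraction yields either a $\vcirc$ or a $\vtimes$ vertex; here the admissibility of $\Delta$ is crucial, as it ensures that the $\mutimes_n$ products do not interact with each other across distinct orbifold points and that no ``spurious'' overlaps arise from configurations excluded by Definition \ref{definition:admissible}.
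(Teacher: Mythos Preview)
Your approach is correct in outline but takes a genuinely different route from the paper. The paper does \emph{not} attempt a direct identification of $\mathbf\Gamma(\mathcal E_\Delta)$ with $\mathbf A_\Delta$ for arbitrary admissible $\Delta$. Instead it reduces to the special dissection $\Delta_0$ of \S\ref{subsection:special}: by Theorem~\ref{theorem:moritaequivalenceadmissible} one has $\mathbf A_\Delta$ Morita equivalent to $\mathbf A_{\Delta_0}$, and by Theorem~\ref{theorem:moritaequivalenceglobalsection} one has $\mathbf\Gamma(\mathcal E_\Delta)$ Morita equivalent to $\mathbf\Gamma(\mathcal E_{\Delta_0})$, so it suffices to check the single case $\Delta=\Delta_0$ (Proposition~\ref{proposition:globalsection}). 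For $\Delta_0$ the cosheaf is extremely simple: all local categories except $\mathcal E_{\Delta_0}(U_{\vbullet_0})$ are already cofibrant (type $\mathrm A_1$, $\mathrm A_2$, or the DG Kronecker quiver), so the naive colimit after a single cofibrant replacement computes the homotopy colimit and can be recognised directly as a Bardzell resolution of the DG category $\mathbf A_{\Delta_0}$.

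Your direct strategy---matching the global Bardzell resolution of $\mathbf A_\Delta$ against the naive pushout of local Bardzell resolutions for every admissible $\Delta$---should work, but it front-loads the combinatorics that the paper distributes between the two invariance theorems. In particular, the point you flag as the ``main technical obstacle'' is real: for a general $\Delta$ the higher products $\mucirc_n$ and $\mutimes_n$ in $\mathbf A_\Delta$ come from disk sequences spanning \emph{several} polygons $P_v$, and these do not sit inside any single $\mathcal E_\Delta(U_v)$; they must emerge from the gluing of overlaps across adjacent cofibrant replacements. This is exactly the phenomenon handled one edge contraction at a time in the proof of Theorem~\ref{theorem:moritaequivalenceglobalsection}, and doing it all at once requires a careful inductive bookkeeping of how overlaps in adjacent polygons merge. (One small slip: $\mathcal E_\Delta(U_v)$ for $v$ of type $\vodot$ carries \emph{no} higher product, not $\mucirc$.) The paper's route buys a much shorter argument by paying the price of the invariance theorems up front; your route would give a self-contained proof independent of those theorems, at the cost of a more intricate global resolution computation.
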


\begin{proof}
By Theorem \ref{theorem:moritaequivalenceadmissible} we obtain a Morita equivalence between $\mathbf A_\Delta$ and $\mathbf A_{\Delta_0}$ for a special dissection $\Delta_0$. That is, $\H^0(\tw(\mathbf A_{\Delta})^\natural) \simeq \H^0(\tw(\mathbf A_{\Delta_0})^\natural).$
From Proposition \ref{proposition:globalsection} below it follows that 
\[
\H^0 (\tw (\mathbf \Gamma (\mathcal E_{\Delta_0}))^\natural) \simeq \H^0(\tw(\mathbf A_{\Delta_0})^\natural).
\]
Then combining the above two equivalences with Theorem \ref{theorem:moritaequivalenceglobalsection} we obtain the desired equivalence. 
\end{proof}

Since the Morita equivalence class of $\mathbf \Gamma (\mathcal E_\Delta)$ resp.\ $\mathbf A_\Delta$ does not depend on the particular weakly admissible resp.\ admissible dissection (Theorems \ref{theorem:moritaequivalenceglobalsection} resp.\ \ref{theorem:moritaequivalenceadmissible}), it suffices to prove the theorem for any particular dissection.

We now show that for the (formal) admissible dissection $\Delta_0$ whose associated ribbon complex $\mathbb G (\Delta_0)$ is illustrated in Fig.~\ref{fig:ribbon}, the category $\mathbf \Gamma (\mathcal E_{\Delta_0})$ of global sections of $\mathcal E_{\Delta_0}$ is Morita equivalent to the A$_\infty$ category $\mathbf A_{\Delta_0}$.

Let us describe the cosheaf $\mathcal E_{\Delta_0}$. Under the above assumption, for each $x \in \Sing (S)$ the A$_\infty$ category $\mathcal E_{\Delta_0} (U_{\vtimes_x})$ is given by the following DG Kronecker quiver  
\begin{equation}
\label{eq:kronecker}
\begin{tikzpicture}[baseline=-2.6pt,description/.style={fill=white,inner sep=1pt,outer sep=0}]
\matrix (m) [matrix of math nodes, row sep=2.5em, text height=1.5ex, column sep=3em, text depth=0.25ex, ampersand replacement=\&, inner sep=3.5pt]
{
0 \& 1 \\
};
\path[->,line width=.4pt, font=\scriptsize, transform canvas={yshift=.5ex}] (m-1-1) edge node[above=.01ex] {$q_x$} (m-1-2);
\path[->,line width=.4pt, font=\scriptsize, transform canvas={yshift=-.5ex}] (m-1-1) edge node[below=.01ex] {$p_x$} (m-1-2);
\end{tikzpicture}
\end{equation}
with $|q_x| = |p_x| + 1$ and differential $\mutimes_1(\s p_x) = \s q_x$. The A$_\infty$ category $\mathcal E_{\Delta_0}(U_{\vtimes_x} \cap U_{\vbullet_0})$ is given by the $\mathrm A_2$ quiver 
\[
\begin{tikzpicture}[baseline=-2.6pt,description/.style={fill=white,inner sep=1pt,outer sep=0}]
\matrix (m) [matrix of math nodes, row sep=2.5em, text height=1.5ex, column sep=2.8em, text depth=0.25ex, ampersand replacement=\&, inner sep=3.5pt]
{
0 \& 1 \\
};
\path[->,line width=.4pt, font=\scriptsize] (m-1-1) edge node[above=-.4ex] {$q_x$} (m-1-2);
\end{tikzpicture}
\]

To the special vertex $\vbullet_0$ we associate the A$_\infty$ category $\mathcal E_{\Delta_0}(U_{\vbullet_0})$ given by the following graded quiver
\begin{equation}\label{equation:Anquvier}
\begin{tikzpicture}[x=1em,y=1em]
\begin{scope}
\node[shape=circle,inner sep=1.5pt] (0) at (0,0) {$\bullet$};
\node[shape=circle,inner sep=1.5pt] (1) at (4,0) {$\bullet$};
\node[shape=circle,inner sep=1.5pt] (d) at (8,0) {$...$};
\node[shape=circle,inner sep=1.5pt] (n) at (12,0) {$\bullet$};
\path[->, line width=.4pt] (0) edge node[font=\scriptsize, below=-.3ex] {$p_1$} (1) (1) edge node[font=\scriptsize, below=-.3ex] {$p_2$} (d) (d) edge node[font=\scriptsize, below=-.3ex] {$p_{n-1}$} (n);
\draw[dash pattern=on 0pt off 2pt, line width=.5pt, line cap=round] (1) ++(175:1.1em) arc[start angle=175, end angle=5, radius=1.1em] (7.8,0) ++(175:1.1em) arc[start angle=175, end angle=120, radius=1.1em] (8,0) ++(5:1.1em) arc[start angle=5, end angle=60, radius=1.1em];
\end{scope}
\end{tikzpicture}
\end{equation}
where $n = \val (\vbullet_0)$. For the other vertices $\vbullet_i$ with $i \neq 0$, we have $\mathcal E_{\Delta_0}(U_{\vbullet_i}) \simeq \Bbbk$.

Then the cosheaf $\mathcal E_{\Delta_0}$ is given as follows
\begin{equation}\label{equation:cosheafdelta0}
\prod_{u, v, w \in V} \!\! \mathcal E_{\Delta_0} (U_u \cap U_v \cap U_w) \triplerightarrow \! \prod_{v, w \in V} \mathcal E_{\Delta_0} (U_v \cap U_w) \doublerightarrow \prod_{v \in V} \mathcal E_{\Delta_0} (U_v)
\end{equation}
where the categories are either $\Bbbk$, the DG Kronecker quiver \eqref{eq:kronecker}, the A$_2$ quiver or the graded A$_n$ quiver with radical square zero \eqref{equation:Anquvier}.

We obtain the following result.

\begin{proposition}
\label{proposition:globalsection}
Let $\Delta_0$ be an admissible dissection such that the associated ribbon complex $\mathbb G (\Delta_0)$ is illustrated in  Fig.~\ref{fig:ribbon}. Then the category $\mathbf \Gamma (\mathcal E_{\Delta_0})$ of global sections of the cosheaf $\mathcal E_{\Delta_0}$ is Morita equivalent to $\mathbf A_{\Delta_0}$ as defined in Section \ref{section:ainfinity}.
\end{proposition}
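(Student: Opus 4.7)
The plan is to compute the homotopy colimit \eqref{equation:cosheafdelta0} explicitly using cofibrant replacements, following the strategy of the proof of Theorem~\ref{theorem:moritaequivalenceglobalsection}. The special feature of $\Delta_0$ that makes this tractable is that, after subdividing loops as in \S\ref{subsubsection:loops}, every edge of $\mathbb G(\Delta_0)$ is incident to $\vbullet_0$, and the only higher multiplications in $\mathbf A_{\Delta_0}$ are the differentials $\mutimes_1(\s p_x) = \s q_x$ at each orbifold point. Passing to a subdivided dissection $\Delta_0'$ gives a Morita equivalent $\mathbf A_{\Delta_0}$ by Lemma~\ref{lemma:edgecontraction}, and by Theorem~\ref{theorem:moritaequivalenceglobalsection} the global sections are also Morita equivalent, so I may work with $\Delta_0'$ throughout.

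First I would replace each $\mathcal E_{\Delta_0'}(U_v)$ by a cofibrant model. The categories at the $\vbullet_i$ and $\vodot_i$ vertices are (cyclic) $\mathrm A_{n_i}$-quivers with all quadratic monomial relations, and I would resolve them by the Bardzell resolution, freely adjoining for each composable chain $p_j \dotsb p_{i+1}$ an overlap generator $b_{i,j}$ of degree $|\s p_{j\dotsc i+1}|+1$ with differential $d(b_{i,j}) = \sum_{i<k<j}(-1)^{|b_{k,j}|}\, b_{k,j}\, b_{i,k}$, where $b_{i-1,i} := p_i$. The DG Kronecker category $\mathcal E_{\Delta_0'}(U_{\vtimes_x})$ is already cofibrant, and so are the $\mathrm A_2$ and $\Bbbk$ intersection categories. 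Taking the naive colimit $\mathbf B$ of the resulting cofibrant diagram computes the homotopy colimit; the gluing across intersections simply identifies objects (an arc $\gamma$ appears as an object of each $\mathcal E_{\Delta_0'}(U_v)$ whose polygon $P_v$ has $\gamma$ in its boundary) without imposing any new relations.

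Next I would exhibit a strict inclusion of DG categories $F \colon \mathbf A_{\Delta_0'} \hookrightarrow \mathbf B$: each arc is sent to its corresponding object, each boundary path to its generator inside the appropriate Bardzell resolution, and each orbifold path $q_x$ and its ``preimage'' $p_x$ to the generators of the Kronecker factor at $\vtimes_x$. This is a DG functor because the relations $p_{i+1} p_i = 0$ in $\mathbf A_{\Delta_0'}$ are witnessed by the image of $b_{i,i+1}$ under the Bardzell differential in $\mathbf B$, and the differential $\mutimes_1(\s p_x) = \s q_x$ on $\mathbf A_{\Delta_0'}$ matches the Kronecker differential on $\mathbf B$. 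The main step is then to verify that $F$ induces a quasi-isomorphism on each morphism complex $\mathbf B(\gamma,\gamma')$. I would filter $\mathbf B(\gamma,\gamma')$ by the total number of Bardzell overlap generators $b_{i,j}$ appearing in a word; the associated spectral sequence has $E_1$-page given by the Bardzell complexes tensored with the free category on the remaining paths, collapses by the standard acyclicity of the Bardzell resolution of a monomial algebra, and leaves only the cohomology classes represented by paths in $\mathbf A_{\Delta_0'}(\gamma,\gamma')$.

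The main obstacle is the careful bookkeeping of the gluing data, ensuring that the overlaps induced by Bardzell resolutions at different $\vbullet_i$ and $\vodot_i$ vertices do not interact in unexpected ways across intersection categories. The key observation is that distinct local Bardzell resolutions are completely decoupled: the only morphisms connecting them in $\mathbf B$ are identities of arcs (along $\Bbbk$ intersections) and single Kronecker morphisms at orbifold points, so no cross-overlap generators need to be introduced, and every word in $\mathbf B(\gamma,\gamma')$ factors through a well-defined sequence of local categories. Combined with the acyclicity of each individual Bardzell resolution, this yields the quasi-isomorphism induced by $F$ and completes the identification $\mathbf \Gamma(\mathcal E_{\Delta_0'}) \simeq \mathbf A_{\Delta_0'}$ up to Morita equivalence, which descends to the desired Morita equivalence $\mathbf \Gamma(\mathcal E_{\Delta_0}) \simeq \mathbf A_{\Delta_0}$.
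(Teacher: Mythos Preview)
Your overall strategy matches the paper's: take cofibrant replacements of the local categories, compute the homotopy colimit as a naive colimit $\mathbf B$, and identify $\mathbf B$ with $\mathbf A_{\Delta_0}$ up to quasi-equivalence. The paper is much terser---it simply observes that only $\mathcal E_{\Delta_0}(U_{\vbullet_0})$ needs replacing (the other local categories already being cofibrant in the special dissection), and then asserts that the resulting naive colimit is precisely the Bardzell cofibrant replacement of the DG category $\mathbf A_{\Delta_0}$, citing \cite{haidenkatzarkovkontsevich} and \cite{tamaroff}.

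There is, however, a genuine error in the direction of your comparison map. You claim a strict inclusion of DG categories $F \colon \mathbf A_{\Delta_0'} \hookrightarrow \mathbf B$ sending each $p_i$ to $p_i$, and justify functoriality by saying the relations $p_{i+1} p_i = 0$ are ``witnessed'' by $d(b_{i,i+1})$. But this is a confusion between \emph{zero} and \emph{exact}: in $\mathbf B$ the product $p_{i+1} p_i$ is a nonzero boundary, so $F(p_{i+1})F(p_i) \neq 0 = F(p_{i+1}p_i)$ and $F$ is \emph{not} a DG functor. No strict section exists, since $\mathbf B$ is free on its generators and the only way to kill $p_{i+1}p_i$ strictly would be to send some $p_i$ to zero.

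The fix is to reverse the arrow: define the projection $\pi \colon \mathbf B \to \mathbf A_{\Delta_0'}$ sending each overlap generator $b_{i,j}$ (with $j>i+1$) to zero and each $p_i$, $q_x$, $p_x$ to itself. This \emph{is} a strict DG functor, and your filtration/spectral-sequence argument (filter by number of overlap generators, use acyclicity of the Bardzell resolution on the associated graded) then shows $\pi$ is a quasi-isomorphism on each hom complex. That is exactly the statement that $\mathbf B$ is a cofibrant replacement of $\mathbf A_{\Delta_0'}$, which is how the paper phrases it. With this correction your argument goes through and is a more detailed version of the paper's proof.
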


\begin{proof}
Note that all the DG categories in \eqref{equation:cosheafdelta0} are cofibrant except $\mathcal E_{\Delta_0}(U_{\vbullet_0})$ in \eqref{equation:Anquvier} for the special vertex $\vbullet_0$. For this category we may take the cofibrant replacement $\widetilde{\mathcal E}_{\Delta_0}(U_{\vbullet_0})$ as in the proof of \cite[Proposition 3.7]{haidenkatzarkovkontsevich}, see also the proof of Proposition \ref{theorem:moritaequivalenceglobalsection} above.

Then the naive colimit $\mathbf \Gamma (\widetilde{\mathcal E}_{\Delta_0})$ of $\widetilde{\mathcal E}_{\Delta_0}$ is the homotopy colimit of $\widetilde{\mathcal E}_{\Delta_0}$ and thus of $\mathcal E_{\Delta_0}$.  Using the Bardzell resolution as in \cite{tamaroff}, the naive colimit $\mathbf \Gamma (\widetilde{\mathcal E}_{\Delta_0})$ is a cofibrant replacement of the DG category $\mathbf A_{\Delta_0}$, which are Morita equivalent. 
\end{proof}

\begin{remark}
\label{remark:deformationinterpretation}
Note that the underlying graded category of $\mathbf A_\Delta$ is a graded gentle category (algebra) $\Bbbk Q/I$, although it is defined on an orbifold surface. Moreover, if we denote by $\widehat{\mathbf A}_\Delta$ the underlying graded category together with the products $\mubar_2$ and $\mucirc_n$ (without $\mutimes_n$), then $\widehat{\mathbf A}_\Delta$ is an A$_\infty$ category as in \cite{haidenkatzarkovkontsevich}. Then $\widehat{\mathbf A}_\Delta$ describes the partially wrapped Fukaya category of the graded smooth surface $\widehat S$ which is obtained from the orbifold surface $S$ by replacing each orbifold points $x$ by a boundary component $\partial \mathbb D_x$ with one stop (see Remark \ref{remark:compactification}).

The A$_\infty$ category $\mathbf A_\Delta$ (with the additional higher multiplications $\mutimes_{\geq 1}$) can in fact be viewed as an A$_\infty$ deformation of $\widehat{\mathbf A}_\Delta$, since each minimal orbifold disk sequence $\theta$ of length $n$ corresponds to a $2$-cocycle
\begin{equation}
\label{eq:cocycle}
\mutimes_n^\theta \in \HH^2 (\widehat{\mathbf A}_\Delta, \widehat{\mathbf A}_\Delta) \simeq \HH^2 (\mathcal W (\widehat{\mathbf S}), \mathcal W (\widehat{\mathbf S})).
\end{equation}
Here the right-hand side is the (total) degree $2$ Hochschild cohomology of $\mathcal W (\widehat{\mathbf S})$ which, by definition, we take to be the Hochschild cohomology of its A$_\infty$ enhancement $\tw (\widehat{\mathbf A})$. From a geometric point of view, algebraic A$_\infty$ deformations of $\mathcal W (\widehat{\mathbf S})$ thus correspond to ``compactifications'' of certain boundary components with one stop and winding number $1$ into orbifolds points, see Fig.~\ref{fig:compactification}. The details of the full A$_\infty$ deformation theory of $\mathcal W (\widehat{\mathbf S})$ will be given in \cite{barmeierschrollwang}.
\end{remark}

\subsection{A$_\infty$ categories without admissibility}
\label{subsection:notstrong}

In the last subsection we briefly discuss how to obtain higher products for weakly admissible dissections which are not admissible.

Let $\mathbf S = (S, \Sigma, \eta)$ and let $\Delta$ be a weakly admissible dissection of $\mathbf S$ which is not admissible. Similar to the case of admissible dissections, we may define a graded category $\mathbf B_\Delta$ whose objects are given by the arcs in $\Delta$ and whose morphisms are given by the linear combinations of the boundary/orbifold paths (as well as the identity for each arc) as in \S\S\ref{subsection:objectsandmorphisms}--\ref{subsection:higher}. 

The notions of smooth/orbifold disk sequences (Definition \ref{definition:disksequence}) also make sense for weakly admissible dissections which are not admissible. Thus the smooth and orbifold disk sequences give rise to the higher products $\mutimes_n$ and $\mucirc_n$, respectively, similar to the admissible case. However, the following example shows that $\mathbf B_\Delta$ together with these higher products alone does {\it not} satisfy the A$_\infty$ relations if $\Delta$ is not admissible.

\begin{example}
\label{example:notstrong}
Let $\mathbf S = (S, \Sigma, \eta)$ be the orbifold disk with two orbifold points and two stops in the boundary and let $\eta$ be any line field on $S$. Consider the dissection $\Delta$ illustrated in Fig.~\ref{fig:nonadmissibledissection}. Then there are only two vertices in the boundary of the $2$-cell $d_x$ in $\mathbb G (\Delta)$ which are of type $\vtimes$, so that $\Delta$ is not admissible (cf.\ Definition \ref{definition:admissible}). 

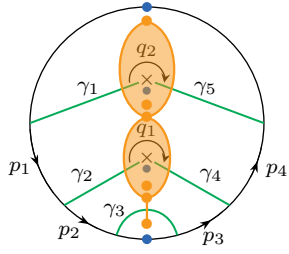
\begin{figure}
\begin{tikzpicture}[x=1em,y=1em,decoration={markings,mark=at position 0.55 with {\arrow[black]{Stealth[length=4.2pt]}}}]
\begin{scope}[xshift=0]
\draw[line width=.5pt] circle(4em);
\node[font=\scriptsize,shape=circle,scale=.6,fill=white] (X) at (90:1.5em) {};
\node[font=\scriptsize] at (90:1.5em) {$\times$};
\node[font=\scriptsize,shape=circle,scale=.6,fill=white] (Y) at (-90:1.2em) {};
\node[font=\scriptsize] at (-90:1.2em) {$\times$};
\draw[->, line width=.5pt] ($(-90:1.2em)+(195:.6em)$) arc[start angle=195, end angle=-15, radius=.6em];
\node[font=\scriptsize] at ($(-90:1.2em)+(90:.95em)$){$q_1$};
\node[font=\scriptsize,color=stopcolour] at (-90:1.6em) {$\bullet$};
\node[font=\scriptsize,color=stopcolour] at (90:1.1em) {$\bullet$};
\draw[->, line width=.5pt] ($(90:1.5em)+(190:.6em)$) arc[start angle=190, end angle=-10, radius=.6em];
\node[font=\scriptsize] at ($(90:1.5em)+(90:1em)$) {$q_2$};
\draw[line width=.75pt,color=arccolour] (-135:4em) to (Y);
\draw[line width=.75pt,color=arccolour] (-45:4em) to (Y);
\path[line width=.75pt,out=105,in=75,looseness=1.5,color=arccolour] (285:4em) edge (255:4em);
\draw[line width=.75pt,color=arccolour] (180:4em) to (X);
\draw[line width=.75pt,color=arccolour] (0:4em) to (X);
\draw[line width=0pt,postaction={decorate}] (200-17:4em) arc[start angle=200-17, end angle=200+17, radius=4em];
\node[font=\scriptsize] at (200:4.6em) {$p_1$};
\draw[line width=0pt,postaction={decorate}] (235:4em) arc[start angle=235, end angle=235+10, radius=4em];
\node[font=\scriptsize] at (235:4.6em) {$p_2$};
\draw[line width=0pt,postaction={decorate}] (305-10:4em) arc[start angle=305-10, end angle=305+5, radius=4em];
\node[font=\scriptsize] at (300:4.6em) {$p_3$};
\draw[line width=0pt,postaction={decorate}] (315+17:4em) arc[start angle=315+17, end angle=360-17+5, radius=4em];
\node[font=\scriptsize] at (340:4.75em) {$p_4$};
\node[font=\scriptsize] at (-2, 1.2em) {$\gamma_1$};
\node[font=\scriptsize] at (-2.2, -1.8em) {$\gamma_2$};
\node[font=\scriptsize] at (-1.1, -3em) {$\gamma_3$};
\node[font=\scriptsize] at (2.2, -1.8em) {$\gamma_4$};
\node[font=\scriptsize] at (2, 1.2em) {$\gamma_5$};
\draw[fill=stopcolour, color=stopcolour] (90:4em) circle(.15em);
\draw[fill=stopcolour, color=stopcolour] (-90:4em) circle(.15em);
\node[color=ribboncolour] at (90:.6em) {$\bullet$};
\node[color=ribboncolour] at (-90:2.2em) {$\bullet$};
\node[color=ribboncolour] at (0,.2em) {$\bullet$};
\node[color=ribboncolour] at (90:3.5em) {$\bullet$};
\node[color=ribboncolour] at (-90:2.6em) {$\bullet$};
\node[color=ribboncolour] at (-90:3.5em) {$\bullet$};
\draw[-, line width=.85, color=ribboncolour, line cap=round, bend left=75] (-90:2.6em) to (0,.2em);
\draw[-, line width=.85, color=ribboncolour, line cap=round, bend right=75] (-90:2.6em) to (0,.2em);
\draw[-, line width=.85, color=ribboncolour, line cap=round, bend left=75] (90:3.5em) to (0,.2em);
\draw[-, line width=.85, color=ribboncolour, line cap=round, bend right=75] (90:3.5em) to (0,.2em);
\draw[-, line width=.85, color=ribboncolour, line cap=round] (-90:2.6em) to (-90:3.5em);
\draw[line width=0, fill=ribboncolour, draw=ribboncolour, opacity=.5] (-90:2.6em) to[bend left=75] (0,.2em) to[bend left=75] (-90:2.6em);
\draw[line width=0, fill=ribboncolour, draw=ribboncolour, opacity=.5] (90:3.5em) to[bend left=75] (0,.2em) to[bend left=75] (90:3.5em);
\end{scope}
\end{tikzpicture}
\caption{A weakly admissible dissection which is not admissible.}
\label{fig:nonadmissibledissection}
\end{figure}

The only nontrivial compositions $\mubar_2$ on $\mathbf B_\Delta$ are given by
\begin{align*}
\mubar_2(\s p_2 \otimes \s p_1) = (-1)^{|p_1|} \s p_2p_1 \quad \text{and} \quad
\mubar_2(\s p_4 \otimes \s p_3) = (-1)^{|p_3|} \s p_4 p_3. 
\end{align*}
There are no smooth disk sequences, but there are two minimal orbifold disk sequences
\[
\gamma_2\overset{p_2}{\smile} \gamma_3 \overset{p_3}{\smile}\gamma_4 \quad \text{and} \quad \gamma_1\overset{p_1}{\smile} \gamma_2 \overset{q_1}{\smile} \gamma_4 \overset{p_4}{\smile} \gamma_5
\]
which give the following two higher products on $\mathbf B_\Delta$
\begin{align*}
\mutimes_2( \s p_3 \otimes \s p_2) = \s q_1 \quad \text{and} \quad
 \mutimes_3(\s p_4 \otimes \s q_1 \otimes \s p_1) = \s q_2.
\end{align*}
However, when equipped with these higher products alone, $\mathbf B_\Delta$ does not satisfy the A$_\infty$ relations. For instance, the terms appearing in the A$_\infty$ relation applied to $\s p_4 \otimes \s p_3 \otimes \s p_2 \otimes \s p_1$ are
\begin{align*}
 \mutimes_3(\s p_4 \otimes  \s p_3 \otimes \mubar_2(\s p_2 \otimes \s p_1)) & =(-1)^{|p_1|} \mutimes_3(\s p_4 \otimes  \s p_3 \otimes \s p_2 p_1) \\
 (-1)^{|\s p_1|}  \mutimes_3(\s p_4 \otimes \mutimes_2( \s p_3 \otimes \s p_2) \otimes \s p_1) &= (-1)^{|\s p_1|} \s q_2\\
(-1)^{|\s p_2|+|\s p_1|} \mutimes_3(\mubar_2(\s p_4 \otimes \s p_3) \otimes \s p_2 \otimes \s p_1) & =  (-1)^{|\s p_2|+|\s p_1|+|p_3|} \mutimes_3(\s p_4p_3 \otimes \s p_2 \otimes \s p_1).
\end{align*}
To obtain a well-defined A$_\infty$ structure on $\mathbf B_\Delta$ we need to define at least one additional higher product. For example, one may define
\begin{equation}
\label{eq:adhoc}
\begin{aligned}
\mutimes_3 (\s p_4 \otimes  \s p_3 \otimes \s p_2 p_1) &= \s q_2 \\
\mutimes_3 (\s p_4 p_3 \otimes \s p_2 \otimes \s p_1) &= 0
\end{aligned}
\end{equation}
which may be understood as the higher product associated to the gluing of the two minimal orbifold disk sequences. It is straightforward to check that with this additional higher product $\mathbf B_\Delta$ does satisfy the A$_\infty$ relations. Similar to the proof of Theorem \ref{theorem:moritaequivalenceadmissible} one may show that $\mathbf B_\Delta$ is Morita equivalent to $\mathbf A_{\Delta'}$ for any admissible dissection $\Delta'$ so that $\H^0 (\tw (\mathbf B_\Delta)^\natural) \simeq \mathcal W (\mathbf S)$.
\end{example}

Although in concrete examples such as Example \ref{example:notstrong}, it is possible to find explicit additional higher products such as \eqref{eq:adhoc} by hand, there is a natural systematic way to construct them via deformation theory. Recall from Remarks \ref{remark:compactification} and \ref{remark:deformationinterpretation} that we may view a graded orbifold surface with stops $\mathbf S$ as a ``compactification'' of a graded smooth surface $\widehat{\mathbf S}$ obtained from $\mathbf S$ by removing a small open disk $\mathbb D_x$ containing $x \in \Sing (S)$ for each orbifold point. 
Via an explicit homotopy transfer as in \cite{barmeierwang}, the cocycles $\mutimes_n^\theta$ \eqref{eq:cocycle} produce explicit A$_\infty$ products on $\mathbf B_\Delta$. This gives a way of defining an explicit A$_\infty$ category also for weakly admissible dissections which are not admissible. We refer to \cite{barmeierschrollwang} for more details.

\section{Formal generators and graded skew-gentle algebras}
\label{section:formal}

In this section, we study formal generators of the partially wrapped Fukaya category $\mathcal W (\mathbf S)$. In particular, we give further conditions on $\Delta$ which characterize when the A$_\infty$ category $\mathbf A_\Delta$ is a DG category and when it is formal (Definition \ref{definition:formal}). In the latter case $\mathbf A_\Delta$ is a formal generator of $\mathcal W (\mathbf S)$. We show that $\mathcal W (\mathbf S)$ always admits a formal generator whose cohomology is a graded skew-gentle algebra. This particular type of formal generator is closely related to surface dissections of orbifold surfaces studied for example in \cite{amiotbruestle,labardinifragososchrollvaldivieso,hezhouzhu,qiuzhangzhou}. However, our classification of formal generators produces many more algebras which are not skew-gentle themselves, but which are derived equivalent to skew-gentle algebras. In \S\ref{subsection:orbifolddiskthree} we use this to give new derived equivalences between associative algebras which are derived equivalent to skew-gentle algebras. Throughout this section, we consider weakly admissible dissections $\Delta$ containing no isotopic arcs as on the right of Fig.~\ref{fig:differential}, so that the resulting algebras are basic.

\subsection{DG and formal dissections}
\label{subsection:DGformal}

Let us introduce two further notions of weakly admissible dissections. A DG dissection $\Delta$ is such that the corresponding A$_\infty$ category $\mathbf A_\Delta$ is a DG category (Proposition \ref{proposition:dg}) and a formal dissection is a slightly stronger notion for which $\mathbf A_\Delta$ is a formal DG category (Theorem \ref{theorem:formaldg}).

\begin{definition}
\label{definition:DGformal}
Let $\mathbf S = (S, \Sigma, \eta)$ be a graded orbifold surface with stops. A {\it DG dissection} on $\mathbf S$ is a weakly admissible dissection $\Delta$ whose ribbon complex $\mathbb G (\Delta)$ contains no vertices of type $\vcirc$ and, moreover, $\val(\vtimes_x) \leq 3$ for each $x\in \Sing (S)$. The latter condition is equivalent to $\val(\vtimes_x)\in \{ 2, 3\}$ (cf.\ Remark \ref{remark:valency}). 

A {\it formal dissection} is a DG dissection $\Delta$ satisfying the following additional condition: 
If the orbifold path parallel to $p$ has length strictly greater than $1$, then $p$ cannot be concatenated with any other non-trivial path  as illustrated in Fig.~\ref{fig:dgformal}.  Dually, if $\val(\vtimes_x)=2$ and the number of vertices in the boundary of $d_x$ is strictly greater than $2$ then the missing relations at the two vertices (i.e.\ $u_1$ and $u_2$) adjacent to $\vtimes_x$ appear next to the edges connecting to $\vtimes_x$ as is also illustrated in Fig.~\ref{fig:dgformal}. See Example \ref{example:non-formaldg} for a DG dissection which is not formal. 
\end{definition}

\begin{figure}
\begin{tikzpicture}[x=1em,y=1em,decoration={markings,mark=at position 0.5 with {\arrow[black]{Stealth[length=4.8pt]}}}]
\begin{scope}
\draw[line width=.5pt,postaction={decorate}] ($(252:4.5)+(-1.5,0)$) -- ($(288:4.5)+(1.5,0)$);
\node[font=\scriptsize,color=stopcolour] at ($(252:4.5)+(-.75,0)$) {$\bullet$};
\node[font=\scriptsize,color=stopcolour] at ($(288:4.5)+(.75,0)$) {$\bullet$};
\node[font=\scriptsize] at (0,0) {$\times$};
\node[font=\scriptsize,shape=circle,scale=.6] (X) at (0,0) {};
\draw[line width=.75pt, color=arccolour, line cap=round] (X) -- (252:4.5);
\draw[line width=.75pt, color=arccolour, line cap=round] (X) -- (288:4.5);
\draw[line width=.75pt, color=arccolour, line cap=round] (X) -- (190:2em);
\draw[line width=.75pt, color=arccolour, line cap=round] (X) -- (128:2em);
\draw[line width=.75pt, color=arccolour, line cap=round] (X) -- (0:2em);
\node[font=\scriptsize] at (79:.7) {.};
\node[font=\scriptsize] at (59:.7) {.};
\node[font=\scriptsize] at (39:.7) {.};
\node[font=\scriptsize,left=-.3ex] at (252:3.5) {$\gamma_0$};
\node[font=\scriptsize,right=-.3ex] at (288:3.5) {$\gamma_1$};
\node[font=\scriptsize,color=stopcolour] at (270:1em) {$\bullet$};
\node[font=\scriptsize] at (270:5) {$p$};
\node[circle, fill=ribboncolour, minimum size=.3em, inner sep=0]  at (-90:3em) {};
\node[color=ribboncolour] at (-90:2.5em) {$\bullet$};
\node[font=\scriptsize] at ($(-35:2em)+(0:1em)$) {$u_2$};
\node[font=\scriptsize] at ($(220:2em)+(0:-1em)$) {$u_1$};
\node[color=ribboncolour] at ($(-35:2em) +(-90:.5em)$) {$\bullet$};
\node[color=ribboncolour] at ($(220:2em)+(-90:.5em)$) {$\bullet$};
\node[circle, fill=ribboncolour, minimum size=.3em, inner sep=0]  at (165:2em) {};
\node[circle, fill=ribboncolour, minimum size=.3em, inner sep=0]  at (220:2em) {};
\node[circle, fill=ribboncolour, minimum size=.3em, inner sep=0]  at (-35:2em) {};
\draw[-, line width=.85, color=ribboncolour, line cap=round] (220:2em) to ($(220:2em)+(230:1em)$);
\draw[-, line width=.85, color=ribboncolour, line cap=round] (220:2em) to ($(220:2em)+(145:1em)$);
\draw[-, line width=.85, color=ribboncolour, line cap=round] (-35:2em) to ($(-35:2em) +(45:1em)$);
\draw[-, line width=.85, color=ribboncolour, line cap=round] (-35:2em) to ($(-35:2em) +(-35:1em)$);
\draw[-, line width=.85, color=ribboncolour, line cap=round] (50:2em) to  (-35:2em) to (-90:3em)  to (220:2em) to (165:2em) to (90:2em);
\draw[line width=0, fill=ribboncolour, draw=ribboncolour, opacity=.5] (50:2em) to  (-35:2em) to (-90:3em)  to (220:2em) to (165:2em) to (90:2em) --cycle;
\end{scope}
\begin{scope}[xshift=12em]
\node[font=\scriptsize,color=stopcolour] at (-90:4.6) {$\bullet$};
\node[font=\scriptsize] at (0,0) {$\times$};
\node[font=\scriptsize,shape=circle,scale=.6] (X) at (0,0) {};
\node[font=\scriptsize,shape=circle,scale=.6] (Y) at (-90:4.2) {};
\node[font=\scriptsize] at (-90:4.2) {$\times$};
\draw[line width=.75pt, color=arccolour, line cap=round, bend left=45] (X) to (Y);
\draw[line width=.75pt, color=arccolour, line cap=round, bend right=45] (X) to (Y);
\draw[line width=.75pt, color=arccolour, line cap=round] (X) -- (190:2em);
\draw[line width=.75pt, color=arccolour, line cap=round] (X) -- (128:2em);
\draw[line width=.75pt, color=arccolour, line cap=round] (X) -- (0:2em);
\node[font=\scriptsize] at (79:.7) {.};
\node[font=\scriptsize] at (59:.7) {.};
\node[font=\scriptsize] at (39:.7) {.};
\node[font=\scriptsize,left=-.3ex] at (252:3.5) {$\gamma_0$};
\node[font=\scriptsize,right=-.3ex] at (288:3.5) {$\gamma_1$};
\node[font=\scriptsize,color=stopcolour] at (-90:.7em) {$\bullet$};
\node[font=\scriptsize] at (270:2.7) {$p$};
\draw[->, line width=.5pt] ($(-90:4.2)+(121:1)$) arc[start angle=121, end angle=55, radius=1];
\node[circle, fill=ribboncolour, minimum size=.3em, inner sep=0]  at (-90:2em) {};
\node[font=\scriptsize] at ($(-35:2em)+(0:1em)$) {$u_2$};
\node[font=\scriptsize] at ($(220:2em)+(0:-1em)$) {$u_1$};
\node[color=ribboncolour] at (-90:1.5em) {$\bullet$};
\node[color=ribboncolour] at ($(-35:2em) +(-90:.5em)$) {$\bullet$};
\node[color=ribboncolour] at ($(220:2em)+(-90:.5em)$) {$\bullet$};
\node[circle, fill=ribboncolour, minimum size=.3em, inner sep=0]  at (165:2em) {};
\node[circle, fill=ribboncolour, minimum size=.3em, inner sep=0]  at (220:2em) {};
\node[circle, fill=ribboncolour, minimum size=.3em, inner sep=0]  at (-35:2em) {};
\draw[-, line width=.85, color=ribboncolour, line cap=round] (220:2em) to ($(220:2em)+(230:1em)$);
\draw[-, line width=.85, color=ribboncolour, line cap=round] (220:2em) to ($(220:2em)+(150:1em)$);
\draw[-, line width=.85, color=ribboncolour, line cap=round] (-35:2em) to ($(-35:2em) +(45:1em)$);
\draw[-, line width=.85, color=ribboncolour, line cap=round] (-35:2em) to ($(-35:2em) +(-30:1em)$);
\draw[-, line width=.85, color=ribboncolour, line cap=round] (50:2em) to  (-35:2em) to (-90:2em)  to (220:2em) to (165:2em) to (90:2em);
\draw[line width=0, fill=ribboncolour, draw=ribboncolour, opacity=.5] (50:2em) to  (-35:2em) to (-90:2em)  to (220:2em) to (165:2em) to (90:2em) --cycle;
\end{scope}
\end{tikzpicture}
\caption{The local pictures in a formal dissection. The feature is that the path $p$ is maximal with respect to path concatenations.} 
\label{fig:dgformal}
\end{figure}
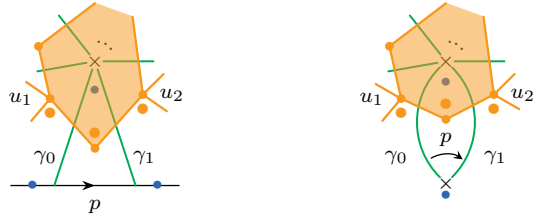

\begin{remark}
If $\Sing (S) = \varnothing$ so that $\mathbf S = (S, \Sigma, \eta)$ is a graded smooth surface with stops, then the condition on the vertices of type $\vtimes_x$ is vacuous and $\Delta$ is a formal dissection precisely when it dissects the surface into polygons which all contain exactly one boundary stop or one full boundary stop, i.e.\ when $\mathbb G (\Delta)$ only contains vertices of type $\vbullet$ and $\vodot$ and no vertices of type $\vcirc$. Definition \ref{definition:DGformal} thus generalizes the corresponding notion for smooth surfaces given in \cite[\S 3.4]{haidenkatzarkovkontsevich} (where this notion is called {\it full formal arc system}).
\end{remark}

\begin{remark}\label{remark:dgdissectiondg}
Let $\Delta$ be an admissible DG dissection. Since there are neither smooth disk sequences nor orbifold disk sequences of length $\geq 2$, see \S\ref{subsection:Aproduct}, it follows that there are no nonzero A$_\infty$ products $\mu_i$ with $i>2$ and thus $\mathbf A_\Delta$ is a DG category by Proposition \ref{proposition:ainfinity}. 
\end{remark}

\subsection{Explicit DG categories without admissibility}
\label{subsection:notstrongDG}

By Definition \ref{definition:DGformal} neither DG dissections nor formal dissections need be admissible. In fact, comparing to the definition of admissibility (Definition \ref{definition:admissible}) a DG dissection $\Delta$ is admissible if and only if the ribbon complex $\mathbb G (\Delta)$ contains no $2$-cells $d_x$ such that on the boundary of $d_x$ there are exactly $2$ vertices whose types are both $\vtimes$ and whose valencies are $2$ or $3$. The possible configurations of arcs which may be part of a formal dissection but are not part of an admissible dissection are illustrated in Fig.~\ref{fig:formalnonadmissible}.

\begin{figure}[ht]
\begin{tikzpicture}[x=1em,y=1em,decoration={markings,mark=at position 0.55 with {\arrow[black]{Stealth[length=4.8pt]}}}]
\begin{scope}
\draw[line width=.5pt] (-4em,0) ++(90:1.5em) arc[start angle=90, end angle=-15, radius=1.5em];
\draw[->, line width=.5pt] (-4em,0) ++(51:1.5em) arc[start angle=51, end angle=10, radius=1.5em];
\draw[->, line width=.5pt] (-4em,0) ++(70:1.5em) arc[start angle=70, end angle=50, radius=1.5em];
\draw[->, line width=.5pt] (3em,0) ++(83:1em) arc[start angle=83, end angle=-230, radius=1em];
\node[font=\scriptsize,shape=circle,scale=.6] (Y) at (3em,0) {};
\node[font=\scriptsize] at (3em, 0) {$\times$};
\node[font=\scriptsize,shape=circle,scale=.6] (X) at (0,4em) {};
\node[font=\scriptsize] at (0,4em) {$\times$};
\node[font=\scriptsize,color=stopcolour] at ($(0,4em)+(270:.65em)$) {$\bullet$};
\node[font=\scriptsize,color=stopcolour] at ($(3em,0)+(110:.8em)$) {$\bullet$};
\draw[->, line width=.5pt] ($(0,4em)+(225:.7em)$) arc[start angle=225, end angle=-45, radius=.7em];
\draw[line width=.75pt, color=arccolour, line cap=round] (-4em,0) ++(30:1.5em) -- (X);
\draw[line width=.75pt, color=arccolour, line cap=round] (Y) -- (X);
\draw[line width=.75pt, color=arccolour, line cap=round] (-4em,0) ++(0:1.5em) -- (Y);
\draw[line width=.75pt, color=arccolour, line cap=round] (-4em,0) ++(80:1.5em) to[out=85, in=85, looseness=2.8] (Y);
\draw[line width=.75pt, color=arccolour, line cap=round] (Y) -- ++(-75:1.5em);
\draw[line width=.75pt, color=arccolour, line cap=round] (Y) -- ++(10:1.5em);
\node[font=\scriptsize] at ($(0,4em)+(90:1.2em)$) {$q$};
\node[font=\scriptsize] at ($(-4em,0)+(75:.8em)$) {$r_1$};
\node[font=\scriptsize] at ($(-4em,0)+(5:.8em)$) {$r_2$};
\node[font=\scriptsize] at ($(3em,0)+(45:1.5em)$) {$p_1$};
\node[font=\scriptsize] at ($(3em,0)+(-35:1.5em)$) {$p_2$};
\node[font=\scriptsize] at ($(3em,0)+(160:1.5em)$) {$p_n$};
\node[font=\scriptsize] at ($(3em,0)+(-125:1.5em)$) {$.$};
\node[font=\scriptsize] at ($(3em,0)+(-131:1.5em)$) {$.$};
\node[font=\scriptsize] at ($(3em,0)+(-137:1.5em)$)  {$.$};

\node[font=\scriptsize] at (-.7em,2em) {$\vtimes$};
\node[font=\scriptsize] at (-.7em, 6em) {$\vtimes'$};
\node[color=ribboncolour] at (0,2.5em) {$\bullet$};
\node[color=ribboncolour] at (0,2em) {$\bullet$};
\node[color=ribboncolour] at (0,6em) {$\bullet$};
\node[color=ribboncolour] at (.5em,6em) {$\bullet$};
\node[color=ribboncolour] at (0,8em) {$\bullet$};
\node[color=ribboncolour] at (0,-1em) {$\bullet$};
\draw[-, line width=.85, color=ribboncolour, line cap=round, bend left=50] (0,2em) to (0, 6em);
\draw[-, line width=.85, color=ribboncolour, line cap=round, bend right=50] (0,2em) to (0, 6em);
\draw[-, line width=.85, color=ribboncolour, line cap=round] (0, 6em) to (0, 8em);
\draw[-, line width=.85, color=ribboncolour, line cap=round] (0, -1em) to (0, 2em);
\draw[line width=0, fill=ribboncolour, draw=ribboncolour, opacity=.5] (0,2em) to[bend left=50] (0,6em) to[bend left=50] (0,2em);
\end{scope}
\begin{scope}[xshift=22em]
\node[font=\scriptsize,shape=circle,scale=.6] (Y) at (0,0em) {};
\node[font=\scriptsize] at (0, 0) {$\times$};
\node[font=\scriptsize,shape=circle,scale=.6] (X) at (0,4em) {};
\node[font=\scriptsize] at (0,4em) {$\times$};
\node[font=\scriptsize,color=stopcolour] at ($(0,4em)+(270:.65em)$) {$\bullet$};
\node[font=\scriptsize,color=stopcolour] at ($(123:1.5em)$) {$\bullet$};
\draw[line width=.75pt, color=arccolour, line cap=round] (Y) to[out=60, in=-60, looseness=1] (X);
\draw[line width=.75pt, color=arccolour, line cap=round] (Y) to[out=120, in=-120, looseness=1] (X);
\draw[line width=.75pt, color=arccolour, line cap=round] (Y) to[out=35, in=145, looseness=75] (Y);
\draw[->, line width=.5pt] ($(0,4em)+(226:.6em)$) arc[start angle=226, end angle=-49, radius=.6em];
\node[font=\scriptsize] at ($(0,4em)+(90:1em)$) {$q$};
\draw[->, line width=.5pt] ($(0,0em)+(110:.8em)$) arc[start angle=110, end angle=-210, radius=.8em];
\node[font=\scriptsize] at ($(0,0em)+(90:1.3em)$) {$p_1$};
\node[font=\scriptsize] at ($(0,0em)+(50:1.7em)$) {$p_2$};
\node[font=\scriptsize] at ($(0,0em)+(-160:1.5em)$) {$p_n$};
\draw[line width=.75pt, color=arccolour, line cap=round] (Y) -- ++(-90:1.2em);
\node[font=\scriptsize] at ($(0,0)+(-20:1.2em)$) {$.$};
\node[font=\scriptsize] at ($(0,0)+(-29:1.2em)$) {$.$};
\node[font=\scriptsize] at ($(0,0)+(-38:1.2em)$)  {$.$};
\node[color=ribboncolour] at (-.5em,5.5em) {$\bullet$};
\node[color=ribboncolour] at (0,2.5em) {$\bullet$};
\node[color=ribboncolour] at (0,2em) {$\bullet$};
\node[color=ribboncolour] at (0,5.5em) {$\bullet$};
\node[color=ribboncolour] at (0,7em) {$\bullet$};
\draw[-, line width=.85, color=ribboncolour, line cap=round, bend left=50] (0,2em) to (0, 5.5em);
\draw[-, line width=.85, color=ribboncolour, line cap=round, bend right=50] (0,2em) to (0, 5.5em);
\draw[-, line width=.85, color=ribboncolour, line cap=round] (0, 5.5em) to (0, 7em);
\draw[line width=0, fill=ribboncolour, draw=ribboncolour, opacity=.5] (0,2em) to[bend left=50] (0,5.5em) to[bend left=50] (0,2em);
\end{scope}
\begin{scope}[xshift=12em]
\draw[line width=.5pt] (0,-2em) ++(160:2em) arc[start angle=160, end angle=20, radius=2em];
\draw[->, line width=.5pt] (0,-2em) ++(140:2em) arc[start angle=140, end angle=110, radius=2em];
\draw[->, line width=.5pt] (0,-2em) ++(90:2em) arc[start angle=90, end angle=60, radius=2em];\node[font=\scriptsize,shape=circle,scale=.6] (X) at (0,2.7em) {};
\node[font=\scriptsize] at (0,2.7em) {$\times$};
\node[font=\scriptsize,color=stopcolour] at (0, 3.1em) {$\bullet$};
\draw[line width=.75pt, color=arccolour, line cap=round] ($(0,2em)+(-90:2em)$) -- (X);
\draw[line width=.75pt, color=arccolour, line cap=round] ($(0,-2em) +(150:2em)$) to[out=85, in=95, looseness=5.8] ($(0,-2em)+(30:2em)$);
\node[color=ribboncolour] at (0,3.9em) {$\bullet$};
\node[font=\scriptsize] at (-.8em, -1em) {$q_1$};
\node[font=\scriptsize] at (.8em, -1em) {$q_2$};
\node[font=\scriptsize] at (.5em, 1em) {$\gamma$};
\node[color=ribboncolour] at (0,3.5em) {$\bullet$};
\draw[-, line width=.85, color=ribboncolour, line cap=round] (0,3em) circle(.9em);
\draw[line width=0, fill=ribboncolour, draw=ribboncolour, opacity=.5] (0,3em) circle(.9em);
\end{scope}

\end{tikzpicture}
\caption{Configurations of arcs in DG dissections which are not admissible. For the first and third configurations the mirrored configurations are also not admissible. The paths $r_1, r_2$ (left) can be either boundary or orbifold paths and the paths $q_1, q_2$ (middle) must be boundary paths.}
\label{fig:formalnonadmissible}
\end{figure}
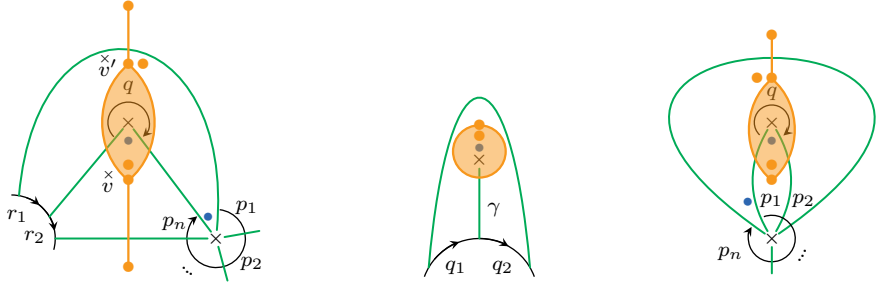

In Section \ref{section:ainfinity} we defined an explicit A$_\infty$ category $\mathbf A_\Delta$ for any admissible dissection $\Delta$. (See also \S\ref{subsection:notstrong} for a discussion of explicit A$_\infty$ categories without admissibility.) We now extend the definition of $\mathbf A_\Delta$ to also cover the case of DG dissections $\Delta$ which are not admissible.

If $\Delta$ is a DG dissection which is not admissible, we may also associate a DG category $\mathbf A_\Delta$ whose products are given by $\mubar_2$ and $\mutimes_i$ for $i = 1, 2$, similar to the admissible case, together with the additional differentials $\widehat \mu_1$ and products $\widehat \mu_2$ coming from the polygons in Fig.~\ref{fig:formalnonadmissible}.  Precisely, the polygons on the left of Fig.~\ref{fig:formalnonadmissible} contribute to $\mathbf A_\Delta$ the following products
\[
\mutimes_2(\s p_n \otimes \s r_2) =\s q, \quad \quad \mutimes_2(\s q \otimes \s r_1) = \s p_n \dotsb p_2p_1
\]
which correspond to the two orbifold disk sequences of length $2$,
and the additional product
\begin{align*}
 \widehat \mu_2(\s p_n \otimes \s r_2r_1) = \s p_n \dotsb p_2p_1
\end{align*}
which is needed to make the product of $\mathbf A_\Delta$ associative. Note that $\widehat \mu_2$ does not come from an orbifold or smooth disk sequence.

The second polygons contribute the products 
$$\widehat \mu_2(\s q_2 \otimes \s q_1) = \s \, \id_{\gamma}, \qquad \widehat \mu_2(\s q_2 \otimes \s q_1q_0) = (-1)^{|q_0|} \s q_0, \qquad \widehat \mu_2(\s q_3q_2 \otimes \s q_1) = \s q_3,$$
where the last two equations make sense if the boundary paths $q_0, q_3$ exist. 

The third polygons in Fig.~\ref{fig:formalnonadmissible} produce the following differential and products
\begin{align*}
\mutimes_1(\s p_1) = \s q, \qquad \mutimes_2 (\s p_2 \otimes \s  q) = \s p_n\dotsb p_2p_1,
\end{align*}
 from the two orbifold disk sequences and the additional differential 
 \[
\widehat \mu_1(\s p_2p_1) =(-1)^{|\s p_1|} \s p_n\dotsb p_2p_1.
 \]
which is needed to get a well-defined DG category. 

\begin{proposition}\label{proposition:dg}
Let $\Delta$ be a DG dissection (not necessarily admissible). Then the category $\mathbf A_\Delta$ with the products $\mubar_2$, $\mutimes_{\leq 2}$ and $\widehat \mu_{\leq 2}$ is a DG category.
\end{proposition}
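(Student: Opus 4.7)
The plan is to reduce the verification to checking the A$_\infty$ relations $\mu \bullet \mu = 0$ for all composable sequences of paths in $\Delta$, knowing that because $\Delta$ is a DG dissection only $\mu_1$ and $\mu_2$ are nonzero (so the A$_\infty$ relations collapse to the DG axioms: $\mu_1^2 = 0$, the graded Leibniz rule, and associativity of $\mu_2$). First I would dispose of the admissible case: if $\Delta$ is admissible, then Proposition~\ref{proposition:ainfinity} already yields an A$_\infty$ structure, and Remark~\ref{remark:dgdissectiondg} observes that no smooth disk sequence of length $\geq 2$ and no orbifold disk sequence of length $\geq 3$ can occur under the DG hypothesis ($\val(\vtimes_x)\leq 3$, no $\vcirc$-vertices), so $\widehat\mu_1 = \widehat\mu_2 = 0$ and all higher $\mu_n$ vanish, giving a DG category directly.

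For a DG dissection $\Delta$ that fails admissibility, by inspecting Definition~\ref{definition:admissible} the only possible obstructions are $2$-cells $d_x$ whose boundary contains exactly two vertices of type $\vtimes$ of valency $2$ or $3$, with no $\vbullet$ or $\vodot$ in the boundary. Up to symmetry these give exactly the three local pictures of Fig.~\ref{fig:formalnonadmissible}, and the extra operations $\widehat\mu_1$ and $\widehat\mu_2$ are supported precisely on tuples of composable paths that traverse such a configuration. The key observation is that any sequence of composable paths $p_1,\dots,p_n$ in $\Delta$ either lies entirely inside an ``admissible portion'' of $\Delta$ (in which case the relations are the ones already verified in Proposition~\ref{proposition:ainfinity}) or else it enters exactly one of the three non-admissible configurations of Fig.~\ref{fig:formalnonadmissible}; by locality I may therefore restrict to each of these three cases separately.

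Next I would check each of the three local models in turn. In each case one writes out all terms of the form $\mu_i \bullet_j \mu_k$ with $i,k \in \{1,2\}$ applied to every composable tuple $\s p_n \otimes \dotsb \otimes \s p_1$ that touches the bad configuration, and verifies they cancel in pairs. The cancellations fall into the same geometric pattern as in Proposition~\ref{proposition:ainfinity} (splitting a polygon along an internal arc), but now the two ``halves'' are an orbifold disk sequence of length~$2$ and the extra polygon carrying $\widehat\mu_1$ or $\widehat\mu_2$. Concretely: the left picture of Fig.~\ref{fig:formalnonadmissible} gives pairings $\mutimes_2 \bullet \mubar_2 + \widehat\mu_2 \bullet \mutimes_2 = 0$ and an associativity identity coupling $\widehat\mu_2$ with $\mubar_2$; the middle picture gives an associativity identity for $\widehat\mu_2$ together with $\mubar_2$ and the identity morphism $\id_\gamma$; and the right picture gives $\mutimes_1 \widehat\mu_1 + \mutimes_2 \bullet \mutimes_1 = 0$ together with the Leibniz identity $\widehat\mu_1 \mubar_2 + \mubar_2 (\widehat\mu_1 \otimes 1 + 1 \otimes \mutimes_1) = 0$.

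The main obstacle will be the sign bookkeeping in this last verification, since the degrees $|q|, |r_i|, |p_i|$ are constrained by the Poincaré--Hopf relation $|q| + |r| = |\s p_{n\dots 1}| + 2$ around each orbifold point, and the signs carried by $\mutimes_2$ (given by $(-1)^{|q''|}$ in~\eqref{align:orbifolddiskproduct}), by $\mubar_2$ (given by $(-1)^{|p|}$), and by the shifted Koszul rule must conspire to match those of the $\widehat\mu_i$ defined above. Once these signs are pinned down in the three local cases, the A$_\infty$ relations $\mu \bullet \mu = 0$ hold for every composable tuple, proving that $\mathbf A_\Delta$ is a DG category.
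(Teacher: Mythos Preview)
Your proposal is correct and follows essentially the same strategy as the paper: reduce to the admissible case via Remark~\ref{remark:dgdissectiondg}, then for each of the three non-admissible local configurations in Fig.~\ref{fig:formalnonadmissible} verify the DG axioms by a direct cancellation argument. One small slip: for the left configuration the pairing that the paper checks is $\mutimes_2 \bullet \mutimes_2$ (first $\mutimes_2(\s p_n \otimes \s r_2) = \s q$, then $\mutimes_2(\s q \otimes \s r_1)$) against $\widehat\mu_2 \bullet \mubar_2$ (first concatenate $r_2 r_1$, then apply $\widehat\mu_2$), not $\mutimes_2 \bullet \mubar_2 + \widehat\mu_2 \bullet \mutimes_2$ as you wrote; similarly for the right configuration the Leibniz check pairs $\widehat\mu_1 \circ \mubar_2$ with $\mutimes_2(\cdot \otimes \mutimes_1(\cdot))$. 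These are easily corrected once you write out the terms explicitly, and your anticipated sign bookkeeping via the constraint $|q| + |r| = |\s p_{n\dots 1}| + 2$ is exactly what is needed.
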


\begin{proof}
If $\Delta$ is admissible then it is clear that $\mathbf A_\Delta$ is a DG category, see Remark \ref{remark:dgdissectiondg}. Let us assume that $\Delta$ is not admissible. If $\Delta$ contains polygons as on the left of Fig.~\ref{fig:formalnonadmissible} then we have 
\begin{align*}
\mutimes_2(\mutimes_2(\s p_n \otimes \s r_2) \otimes \s r_1) &= \s p_n\dotsb p_2 p_1\\
\widehat \mu_2( \s p_n \otimes \mubar_2( \s r_2 \otimes \s r_1)) & = (-1)^{|r_1|} \s p_n \dotsb p_2 p_1,
\end{align*}
where the sign $(-1)^{|r_1|}$ comes from $\mubar_2( \s r_2 \otimes \s r_1))= (-1)^{|r_1|}\s  r_2r_1.$ This shows that $\mutimes_2, \widehat \mu_2$ are associative when acting on $\s p_n \otimes \s r_2 \otimes \s r_1$. The associativity and Leibniz rule for other elements shall follow from Proposition \ref{proposition:ainfinity}. It is clear that the product $\widehat \mu_2$ is associative for the polygons in the middle. 

Similarly, if $\Delta$ contains polygons as on the right of Fig.~\ref{fig:formalnonadmissible} then we have 
\begin{align*}
\mutimes_2 ( \s p_2 \otimes \mutimes_1(\s p_1)) &=\s p_n \dotsb p_2 p_1 \\
\widehat \mu_1( \mubar_2( \s p_2 \otimes \s p_1)) & = (-1)^{|p_1| + | \s p_1|} \s p_n \dotsb p_2 p_1
\end{align*}
This yields the Leibniz rule since the differential acting on $p_2$ vanishes. 
\end{proof}

\begin{theorem}\label{theorem:dgtriangleequivalent}
Let $\mathbf S = (S, \Sigma, \eta)$ be a graded orbifold surface and let $\Delta$ be a DG dissection (not necessarily admissible). Then we have a triangulated equivalence
\[
\mathcal W (\mathbf S) \simeq \per (\mathbf A_\Delta)
\]
where $\per (\mathbf A_\Delta)$ is the perfect derived category of the DG category $\mathbf A_\Delta$.
\end{theorem}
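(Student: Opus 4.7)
The plan is to split into two cases according to whether the DG dissection $\Delta$ is admissible. If $\Delta$ is admissible, the conditions defining a DG dissection (no vertices of type $\vcirc$ and $\val(\vtimes_x) \le 3$ for every $x \in \Sing(S)$) rule out all smooth disk sequences and all orbifold disk sequences of length $\ge 3$. Consequently, in the A$_\infty$ category $\mathbf A_\Delta$ of Section~\ref{section:ainfinity}, every $\mu_n$ with $n \ge 3$ vanishes, and $\mathbf A_\Delta$ is a DG category agreeing with the one defined in \S\ref{subsection:notstrongDG}. Combining Theorem~\ref{theorem:moritapartiallywrapped} with Remark~\ref{remark:perfectderived} then yields
\[
\mathcal W(\mathbf S) \simeq \H^0(\tw(\mathbf A_\Delta)^\natural) \simeq \per(\mathbf A_\Delta).
\]

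For a DG dissection $\Delta$ that is not admissible, the plan is to argue directly via the cosheaf $\mathcal E_\Delta$. Since $\Delta$ is weakly admissible, Definition~\ref{definition:fukayacategory} gives $\mathcal W(\mathbf S) \simeq \H^0(\tw(\mathbf \Gamma(\mathcal E_\Delta))^\natural)$, and Proposition~\ref{proposition:dg} provides the DG category $\mathbf A_\Delta$. It therefore suffices to establish a Morita equivalence $\mathbf A_\Delta \simeq \mathbf \Gamma(\mathcal E_\Delta)$, after which Remark~\ref{remark:perfectderived} finishes the proof. I would extend the argument of Proposition~\ref{proposition:globalsection}: take the Bardzell-type cofibrant replacements of the local pieces $\mathcal E_\Delta(U_v)$ at vertices of type $\vbullet$ and $\vodot$ exactly as in its proof, keep the already cofibrant Kronecker-type pieces at vertices of type $\vtimes$ (with $\val \in \{2,3\}$), and glue them along the pieces $\mathcal E_\Delta(U_v \cap U_w)$. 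The naive colimit of this replacement diagram is cofibrant and computes the homotopy colimit $\mathbf \Gamma(\mathcal E_\Delta)$, and one then identifies the resulting DG category with $\mathbf A_\Delta$.

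The main obstacle is that last identification at the configurations of Fig.~\ref{fig:formalnonadmissible}, where two $\vtimes$-pieces share the same orbifold $2$-cell. Here one must verify that the ``extra'' products $\widehat\mu_{\le 2}$ introduced by hand in \S\ref{subsection:notstrongDG} arise automatically from the Bardzell resolution: the overlaps between the zero relations supported on adjacent $\vtimes$-pieces produce precisely those auxiliary differentials and $2$-ary products, ensuring associativity and the Leibniz rule. An alternative route, which side-steps the cofibrant calculation, would be to choose an admissible dissection $\Delta'$ obtained from $\Delta$ by adding arcs that create vertices of type $\vcirc$ (breaking each non-admissible configuration), show as in Lemma~\ref{lemma:edgecontraction} via Lemma~\ref{lemma:twistedgenerators} that the strict A$_\infty$ inclusion $\mathbf A_\Delta \hookrightarrow \mathbf A_{\Delta'}$ is a Morita equivalence (each added arc being isomorphic in $\H^0(\tw(\mathbf A_{\Delta'}))$ to an iterated mapping cone over arcs in $\Delta$), and then apply Theorem~\ref{theorem:moritapartiallywrapped} to $\Delta'$; in this route the difficulty shifts to verifying that the transfer of the A$_\infty$ structure on $\mathbf A_{\Delta'}$ along this inclusion reproduces the explicit products $\widehat\mu_{\le 2}$ of \S\ref{subsection:notstrongDG}.
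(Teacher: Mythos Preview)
Your admissible case matches the paper's argument exactly (modulo a small slip: with $\val(\vtimes_x)\le 3$ you rule out orbifold disk sequences of length $\ge 4$, not $\ge 3$; length $3$ still occurs and gives $\mutimes_2$, which is part of $\mu_2$, so your conclusion that $\mu_{\ge 3}=0$ is unaffected).

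For the non-admissible case the paper takes your alternative route (b), not the cosheaf route (a). The execution is slightly different from what you outline, and the difference matters. You propose enlarging $\Delta$ to an \emph{admissible} $\Delta'$ and checking that $\mathbf A_\Delta \hookrightarrow \mathbf A_{\Delta'}$ is a strict A$_\infty$ functor. The paper instead builds a zigzag $\mathbf A_\Delta \hookrightarrow \mathbf A_{\Delta'} \hookleftarrow \mathbf A_{\Delta''}$: the intermediate $\Delta'$ is obtained from $\Delta$ by adding a single arc near each bad configuration of Fig.~\ref{fig:formalnonadmissible} (so $\Delta'$ is neither DG nor admissible), and the paper equips $\mathbf A_{\Delta'}$ with an ad hoc A$_\infty$ structure that carries \emph{both} the extra products $\widehat\mu_{\le 2}$ of $\mathbf A_\Delta$ \emph{and} the new $\mucirc_3,\mutimes_1,\mutimes_3$ coming from the added arc. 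This makes the left inclusion strict by fiat. The admissible $\Delta''$ is then obtained from $\Delta'$ by removing a different arc, and the right inclusion is strict because the Section~\ref{section:ainfinity} products on $\mathbf A_{\Delta''}$ are visibly a subset of those on $\mathbf A_{\Delta'}$. Both inclusions are Morita equivalences by Lemma~\ref{lemma:twistedgenerators}, and one finishes with Theorems~\ref{theorem:moritaequivalenceadmissible} and~\ref{theorem:moritapartiallywrapped}.

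The point of the zigzag is precisely to avoid the issue you flag at the end: if you go directly to an admissible $\Delta'$, the Section~\ref{section:ainfinity} structure on $\mathbf A_{\Delta'}$ has no $\widehat\mu$-type products, and the naive inclusion of $\mathbf A_\Delta$ need not be a strict A$_\infty$ functor. Your cosheaf route (a) is a genuinely different approach; it should work, but the paper does not pursue it.
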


\begin{proof}
If $\Delta$ is admissible, then $\mathbf A_\Delta$ is a DG category (cf.\ Remark \ref{remark:dgdissectiondg}) so that $\H^0 (\tw (\mathbf A_\Delta)^\natural) \simeq \per (\mathbf A_\Delta)$ and the result follows from Theorem \ref{theorem:moritapartiallywrapped}.

We thus need to consider the case where $\Delta$ is not admissible. If $\Delta$ contains polygons  on the left of Fig.~\ref{fig:formalnonadmissible}, then we introduce a new weakly admissible dissection $\Delta''$ obtained from $\Delta$ by adding an arc $\gamma$ as illustrated in Fig.~\ref{fig:formalnormalisation}. Similar to $\Delta$, we may associate to $\Delta''$  an A$_\infty$ category $\mathbf A_{\Delta''}$ whose objects are arcs in $\Delta''$ and morphisms are given by boundary and orbifold paths as before. The orbifold path $q$ in $\mathbf A_\Delta$ is divided into two paths by the arc $\gamma$, denoted by $q_1$ and $q_2$. Besides the products from the smooth and orbifold disk sequences, for instance, 
\begin{align*}
\mucirc_3( \s q_1 \otimes \s r_1 \otimes \s p_0) = \s e_{\gamma}, \quad \mutimes_1(\s q_2) = \s p_n \dotsb p_1p_0,\quad  \mutimes_3(\s p_n \otimes \s r_2r_1 \otimes \s p_0) = \s q_2,
\end{align*}
$\mathbf A_{\Delta''}$ carries the additional product
\[
\widehat \mu_2(\s p_n \otimes \s r_2r_1) = \s p_n \dotsb p_2p_1
\]
as in $\mathbf A_{\Delta}$. Here, $p_0$ is the new orbifold path starting at $\gamma$. Note that $\mathbf A_{\Delta''}$ with the above products is a well-defined A$_\infty$ category with $\mu_i =0$ for $i >3$.  The natural inclusions 
\[
\mathbf A_{\Delta}  \hookrightarrow \mathbf A_{\Delta'} \hookleftarrow \mathbf A_{\Delta''}
\]
are strict A$_\infty$ functors, which are Morita equivalences by Lemma \ref{lemma:twistedgenerators}.

\begin{figure}[ht]
\begin{tikzpicture}[x=1em,y=1em,decoration={markings,mark=at position 0.55 with {\arrow[black]{Stealth[length=4.8pt]}}}]
\begin{scope}
\draw[line width=.5pt] (-4em,0) ++(90:1.5em) arc[start angle=90, end angle=-15, radius=1.5em];
\draw[line width=.5pt] (-4em,0) ++(51:1.5em) arc[start angle=51, end angle=10, radius=1.5em];
\draw[line width=.5pt] (-4em,0) ++(70:1.5em) arc[start angle=70, end angle=50, radius=1.5em];
\node[font=\scriptsize,shape=circle,scale=.6] (Y) at (3em,0) {};
\node[font=\scriptsize] at (3em, 0) {$\times$};
\node[font=\scriptsize,shape=circle,scale=.6] (X) at (0,4em) {};
\node[font=\scriptsize] at (0,4em) {$\times$};
\node[font=\scriptsize,color=stopcolour] at ($(0,4em)+(270:.65em)$) {$\bullet$};
\node[font=\scriptsize,color=stopcolour] at ($(3em,0)+(110:.8em)$) {$\bullet$};
\draw[line width=.75pt, color=arccolour, line cap=round] (-4em,0) ++(30:1.5em) -- (X);
\draw[line width=.75pt, color=arccolour, line cap=round] (Y) -- (X);
\draw[line width=.75pt, color=arccolour, line cap=round] (-4em,0) ++(0:1.5em) -- (Y);
\draw[line width=.75pt, color=arccolour, line cap=round] (-4em,0) ++(80:1.5em) to[out=85, in=85, looseness=2.8] (Y);
\draw[line width=.75pt, color=arccolour, line cap=round] (Y) -- ++(-75:1.5em);
\draw[line width=.75pt, color=arccolour, line cap=round] (Y) -- ++(10:1.5em);
\node[font=\scriptsize] at ($(3em,0)+(-125:1.5em)$) {$.$};
\node[font=\scriptsize] at ($(3em,0)+(-131:1.5em)$) {$.$};
\node[font=\scriptsize] at ($(3em,0)+(-137:1.5em)$)  {$.$};
\node[color=ribboncolour] at (0,2.5em) {$\bullet$};
\node[color=ribboncolour] at (.4em,6em) {$\bullet$};
\draw[-, line width=.85, color=ribboncolour, line cap=round, bend left=50] (0,2em) to (0, 6em);
\draw[-, line width=.85, color=ribboncolour, line cap=round, bend right=50] (0,2em) to (0, 6em);
\draw[-, line width=.85, color=ribboncolour, line cap=round] (0, 6em) to (0, 8em);
\draw[-, line width=.85, color=ribboncolour, line cap=round] (0, -1em) to (0, 2em);
\draw[line width=0, fill=ribboncolour, draw=ribboncolour, opacity=.5] (0,2em) to[bend left=50] (0,6em) to[bend left=50] (0,2em);
\node[]  at (-3.5em, 6em) {$\Delta$};
\end{scope}
\begin{scope}[xshift=12em]
\node at (-6em,3em) {$\leftsquigarrow$};
\draw[line width=.5pt] (-4em,0) ++(90:1.5em) arc[start angle=90, end angle=-15, radius=1.5em];
\draw[line width=.5pt] (-4em,0) ++(51:1.5em) arc[start angle=51, end angle=10, radius=1.5em];
\draw[line width=.5pt] (-4em,0) ++(70:1.5em) arc[start angle=70, end angle=50, radius=1.5em];
\node[font=\scriptsize,shape=circle,scale=.6] (Y) at (3em,0) {};
\node[font=\scriptsize] at (3em, 0) {$\times$};
\node[font=\scriptsize,shape=circle,scale=.6] (X) at (0,4em) {};
\node[font=\scriptsize] at (0,4em) {$\times$};
\node[font=\scriptsize,color=stopcolour] at ($(0,4em)+(270:.65em)$) {$\bullet$};
\node[font=\scriptsize,color=stopcolour] at ($(3em,0)+(115:1.2em)$) {$\bullet$};
\draw[line width=.75pt, color=arccolour, line cap=round] (-4em,0) ++(30:1.5em) -- (X);
\draw[line width=.75pt, color=arccolour, line cap=round] (Y) -- (X);
\draw[line width=.75pt, color=arccolour, line cap=round] (-4em,0) ++(0:1.5em) -- (Y);
\draw[line width=.75pt, color=arccolour, line cap=round] (-4em,0) ++(80:1.5em) to[out=85, in=85, looseness=2.8] (Y);
\draw[line width=.75pt, color=arccolour, line cap=round] (3em,0) to[out=96, in=75, looseness=1] (X);
\node[font=\scriptsize] at ($(0,4)+(0:1.8em)$) {$\gamma$};
\draw[line width=.75pt, color=arccolour, line cap=round] (Y) -- ++(-75:1.5em);
\draw[line width=.75pt, color=arccolour, line cap=round] (Y) -- ++(10:1.5em);
\node[font=\scriptsize] at ($(3em,0)+(-125:1.5em)$) {$.$};
\node[font=\scriptsize] at ($(3em,0)+(-131:1.5em)$) {$.$};
\node[font=\scriptsize] at ($(3em,0)+(-137:1.5em)$)  {$.$};
\node[color=ribboncolour] at (0,2.5em) {$\bullet$};
\node[color=ribboncolour] at ($(0,4em)+(-40:2.5em)$) {$\bullet$};
\draw[-, line width=.85, color=ribboncolour, line cap=round] (0,2em) to ($(0,4em)+(-40:2em)$);
\draw[-, line width=.85, color=ribboncolour, line cap=round, bend left]  (0, 6em) to ($(0,4em)+(-40:2em)$);
\draw[-, line width=.85, color=ribboncolour, line cap=round, bend left=50] (0,2em) to (0, 6em);
\draw[-, line width=.85, color=ribboncolour, line cap=round] (0, 6em) to (0, 8em);
\draw[-, line width=.85, color=ribboncolour, line cap=round] (0, -1em) to (0, 2em);
\draw[line width=0, fill=ribboncolour, draw=ribboncolour, opacity=.5] (0,2em) to[bend left=50] (0,6em) to[bend left] ($(0,4em)+(-40:2em)$) to (0,2em);
\node[]  at (-3.5em, 6em) {$\Delta'$};
\end{scope}
\begin{scope}[xshift=24em]
\node at (-6em,3em) {$\rightsquigarrow$};
\draw[line width=.5pt] (-4em,0) ++(90:1.5em) arc[start angle=90, end angle=-15, radius=1.5em];
\draw[line width=.5pt] (-4em,0) ++(51:1.5em) arc[start angle=51, end angle=10, radius=1.5em];
\draw[line width=.5pt] (-4em,0) ++(70:1.5em) arc[start angle=70, end angle=50, radius=1.5em];
\node[font=\scriptsize,shape=circle,scale=.6] (Y) at (3em,0) {};
\node[font=\scriptsize] at (3em, 0) {$\times$};
\node[font=\scriptsize,shape=circle,scale=.6] (X) at (0,4em) {};
\node[font=\scriptsize] at (0,4em) {$\times$};
\node[font=\scriptsize,color=stopcolour] at ($(0,4em)+(270:.65em)$) {$\bullet$};
\node[font=\scriptsize,color=stopcolour] at ($(3em,0)+(115:1.2em)$) {$\bullet$};
\draw[line width=.75pt, color=arccolour, line cap=round] (-4em,0) ++(30:1.5em) -- (X);
\draw[line width=.75pt, color=arccolour, line cap=round] (Y) -- (X);
\draw[line width=.75pt, color=arccolour, line cap=round] (-4em,0) ++(0:1.5em) -- (Y);
\draw[line width=.75pt, color=arccolour, line cap=round] (3em,0) to[out=96, in=75, looseness=1] (X);
\draw[line width=.75pt, color=arccolour, line cap=round] (Y) -- ++(-75:1.5em);
\draw[line width=.75pt, color=arccolour, line cap=round] (Y) -- ++(10:1.5em);
\node[font=\scriptsize] at ($(3em,0)+(-125:1.5em)$) {$.$};
\node[font=\scriptsize] at ($(3em,0)+(-131:1.5em)$) {$.$};
\node[font=\scriptsize] at ($(3em,0)+(-137:1.5em)$)  {$.$};
\node[color=ribboncolour] at (0,2.5em) {$\bullet$};
\node[color=ribboncolour] at ($(0,4em)+(-40:2.5em)$) {$\bullet$};
\draw[-, line width=.85, color=ribboncolour, line cap=round] (0,2em) to ($(0,4em)+(-40:2em)$);
\draw[-, line width=.85, color=ribboncolour, line cap=round, bend left]  (0, 6em) to ($(0,4em)+(-40:2em)$);
\draw[-, line width=.85, color=ribboncolour, line cap=round, bend left=50] (0,2em) to (0, 6em);
\draw[-, line width=.85, color=ribboncolour, line cap=round] (0, 6em) to (1.5em, 7em);
\draw[-, line width=.85, color=ribboncolour, line cap=round] (0, 6em) to (-1.5em, 7em);
\node[font=\scriptsize] at ($(1.5em,6.7em)+(180:1.3em)$) {$.$};
\node[font=\scriptsize] at ($(1.5em,6.7em)+(180:1.5em)$) {$.$};
\node[font=\scriptsize] at ($(1.5em,6.7em)+(180:1.7em)$)  {$.$};
\draw[-, line width=.85, color=ribboncolour, line cap=round] (0, -1em) to (0, 2em);
\draw[line width=0, fill=ribboncolour, draw=ribboncolour, opacity=.5] (0,2em) to[bend left=50] (0,6em) to[bend left] ($(0,4em)+(-40:2em)$) to (0,2em);
\node[]  at (-3.5em, 6em) {$\Delta''$};
\end{scope}
\end{tikzpicture}
\caption{The dissection $\Delta''$ on the right is admissible which is related to $\Delta$ on the left by edge contractions.}
\label{fig:formalnormalisation}
\end{figure}
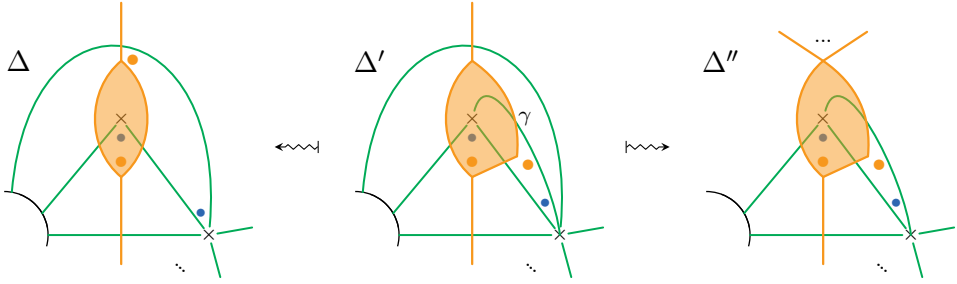

We may do the similar operations for the other cases  in Fig.~\ref{fig:formalnonadmissible}. Then the result follows from Theorems \ref{theorem:moritaequivalenceadmissible} and \ref{theorem:moritapartiallywrapped}.
\end{proof}

\subsection{Formal A$_\infty$ categories}
\label{subsection:formalexamples}

In this subsection, we show that the A$_\infty$ category $\mathbf A_\Delta$ is formal if and only if $\Delta$ is a formal dissection (Definition \ref{definition:DGformal}) --- except when $\mathbf S$ is the disk with two orbifold points and with one boundary stop in which case only the ``if'' part holds, see Remark \ref{remark:disktwoorbifoldonestop} below.

\begin{theorem}
\label{theorem:formaldg}
Let $\mathbf S = (S, \Sigma, \eta)$ be a graded orbifold surface with stops and let $\Delta$ be a weakly admissible dissection of $\mathbf S$.
\begin{enumerate}
\item If $\Delta$ is a formal dissection, then $\mathbf A_\Delta$ is a formal A$_\infty$ category, that is $\mathbf A_\Delta$ is A$_\infty$-quasi-isomorphic to its cohomology $\H^\bullet (\mathbf A_\Delta)$. \label{formaldg1}
\item Suppose that $\mathbf S$ is not the disk with two orbifold points and with one stop in the boundary. Then the converse of \textup{\ref{formaldg1}} also holds, that is $\mathbf A_\Delta$ is formal if and only if $\Delta$ is a formal dissection. \label{formaldg2}
\end{enumerate}
\end{theorem}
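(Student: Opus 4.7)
The plan is to prove part (1) via an explicit application of the homotopy transfer theorem and part (2) via a contrapositive Massey-product argument, with the exception of the named surface forming the technical core.

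For part (1), since $\Delta$ is a DG dissection, Proposition \ref{proposition:dg} gives $\mathbf A_\Delta$ as an honest DG category: $\mu_1 = \mutimes_1$, $\mu_2 = \mubar_2 + \mutimes_2 + \widehat\mu_2$, and $\mu_n = 0$ for $n \geq 3$. I would apply Kadeishvili's homotopy transfer theorem to produce a minimal $A_\infty$-model on $\H^\bullet (\mathbf A_\Delta)$. The key ingredient is a contracting homotopy $h$ tailored to $\Delta$: for each length-$2$ orbifold disk sequence with $\mu_1(\s p) = \s q$, set $h(\s q) = -\s p$ and extend by zero on a chosen complement of cocycles representing $\H^\bullet(\mathbf A_\Delta)$. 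The induced higher products $m_n$ ($n \geq 3$) are then computed by the standard tree-sum formula, and the formality hypothesis on $\Delta$ is engineered precisely so that every decorated binary tree evaluates to zero: a non-vanishing contribution would require, at some internal vertex of the tree, a factor $\mu_2(\s r \otimes \s p)$ or $\mu_2(\s p \otimes \s r)$ with $\mu_1(\s p) = \s q$ and $|q| \geq 2$, but the formality condition forbids $p$ from being concatenated with any non-trivial $r$ in exactly those circumstances. Hence $m_{\geq 3} = 0$, yielding the desired A$_\infty$-quasi-isomorphism $\mathbf A_\Delta \simeq \H^\bullet(\mathbf A_\Delta)$.

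For part (2), the ``if'' direction is (1). For the ``only if'' direction I argue by contrapositive, using Proposition \ref{proposition:formalcriterion}: if $\Delta$ is not a formal dissection, it suffices to exhibit a non-formal full A$_\infty$-subcategory of $\mathbf A_\Delta$. A weakly admissible $\Delta$ can fail to be formal in three ways: (a) $\mathbb G (\Delta)$ contains a vertex of type $\vcirc$; (b) some orbifold vertex has $\val(\vtimes_x) \geq 4$; (c) $\Delta$ is a DG dissection but the maximal-concatenation condition at some $\vtimes_x$ fails. In case (a), after absorbing $2$-gons via Remark \ref{remark:smoothorbifolddisklength12}\ref{remark:item-iii} (a Morita equivalence), we may assume the $\vcirc$-vertex has valency $n \geq 3$, and the full subcategory on its $n$ bounding arcs is the local smooth-disk model of \cite{haidenkatzarkovkontsevich}, for which $\mucirc_n$ gives a non-zero $n$-fold Massey product $\langle \s p_n, \dotsc, \s p_1 \rangle = \s\,\id_{\gamma_0}$. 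In case (b), the full subcategory on the arcs bounding $d_x$ is, by Corollary \ref{corollary:typeD} and Proposition \ref{proposition:orbifolddisk}, the orbifold-disk Fukaya category of type $\mathrm D_{n+1}$ with $n \geq 3$; there $\mutimes_{n-1}$ supplies a non-zero Massey product landing on an orbifold path.

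The remaining, and central, case is (c). In this configuration one has an orbifold vertex $\vtimes_x$ of valency at most $3$, together with a path $p$ (or short concatenation of paths) whose parallel orbifold path $q = q_2 q_1$ has length at least $2$, and, by the failure of the formality condition, a non-trivial path $r$ composable with $p$. Stacking the relevant orbifold disks inside the polygon at $\vtimes_x$ with the non-trivial composition $r \cdot p$ yields a well-defined ternary Massey product in $\H^\bullet(\mathbf A_\Delta)$ whose value is, up to sign, the cohomology class of $\s(r q_1)$. I expect the hard step to be verifying that no compensating relation in $\H^\bullet(\mathbf A_\Delta)$ annihilates this class; this is exactly where the excluded surface --- the disk with two orbifold points and one boundary stop --- intervenes, because there the global geometry simply does not allow for a non-trivial outside $r$, so the Massey product collapses and non-formality cannot be detected locally. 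The combinatorial case analysis of the configurations appearing in case (c), together with the verification that outside the exceptional surface a non-zero obstruction class can always be produced, forms the technical core of the theorem.
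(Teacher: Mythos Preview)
Your overall architecture matches the paper's: part (1) via a transfer argument exploiting that the formality condition makes the ``bad'' paths $p$ non-concatenable, and part (2) via Proposition~\ref{proposition:formalcriterion} by exhibiting a non-formal full subcategory. The paper's part (1) is slightly more direct---it writes down an explicit $A_\infty$-functor $F = (F_1, F_2, 0, \dotsc)$ from $\H^\bullet(\mathbf A_\Delta)$ to $\mathbf A_\Delta$ rather than computing transferred $m_{\geq 3}$---but this is a cosmetic difference. In part (2), where you use Massey products the paper compares $\HH^2$ of the candidate subcategory with $\HH^2$ of its cohomology; both are valid obstructions to formality.

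There are, however, two concrete errors in your case (c). First, the value of the ternary Massey product is wrong: in the relevant configuration one has $d(p) = q_2 q_1$ and $q_1 r = 0$, and the Massey product $\langle q_2, q_1, r\rangle$ evaluates to the class of the concatenation $pr$, not of $r q_1$ (which is zero in the DG category to begin with). The paper records this as $\mu_3(\s q_2 \otimes \s q_1 \otimes \s p_0) = \s p_{01}$ with $p_{01} = [p_1 p_0]$.

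Second, your explanation of the exceptional surface is incorrect. On the disk with two orbifold points and one stop there \emph{is} a non-trivial concatenable path $r = p_0$ (see Remark~\ref{remark:disktwoorbifoldonestop}); the obstruction vanishes not because $r$ fails to exist but because $q_1 p_0$ is \emph{nonzero}---it equals the orbifold path at the second orbifold point---so the Massey product $\langle q_2, q_1, p_0\rangle$ is not even defined, and one checks $\HH^2(\H^\bullet(\mathbf A_\Delta)) = 0$ so the category is intrinsically formal. Relatedly, the paper's case analysis has a second branch you do not mention: when the concatenable path $r$ is itself part of another orbifold disk sequence of length~$2$, one must enlarge the subcategory by an additional arc $\gamma$ before a non-vanishing obstruction can be produced. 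This secondary case analysis, and the verification that outside the exceptional surface such a $\gamma$ always exists, is where the actual work lies.
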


\begin{proof}
We first prove \ref{formaldg1}. Note that the morphism space of the cohomology $\H^\bullet(\mathbf A_\Delta)$ has a $\Bbbk$-linear basis given by the boundary and orbifold paths as in $\mathbf A_\Delta$, excluding the paths corresponding to  the orbifold disk sequences of length $2$ (i.e.\ the preimage and image of $\mutimes_1$ as illustrated on the left of Fig.~\ref{fig:differential}). 

We may construct an explicit A$_\infty$ quasi-equivalence 
\[
F \colon \H^\bullet  (\mathbf A_\Delta) \to \mathbf A_\Delta
\]
as follows. The functor $F$ is the identity on objects.  The map $F_1$ is the natural inclusion with respect to the basis.  Note that $F_1$ is not compatible with the multiplications, because for any orbifold disk sequence of length $2$ as illustrated below (where $n>1$ and $p_1$ can be either a boundary path or an orbifold path) and for any $1 \leq i < n$ we have 
\begin{align*}
F_1(\mu_2( \s q_n \dotsb q_{i+1} \otimes \s q_i \dotsb q_1)) &= 0\\
\mu_2 (F_1( \s q_n \dotsb q_{i+1}) \otimes F_1(\s q_i \dotsb q_1)) &=\s q_n \dotsb q_2 q_1 \neq 0.
\end{align*}

To resolve this issue, we need to introduce the higher morphism $F_2$, which is nontrivial only for 
\begin{align*}
F_2 (\s q_n \dotsb q_{i+1} \otimes \s q_i \dotsb q_1) &= -\s p_1 \quad  \text{for $1\leq i< n$}
\end{align*}
for any orbifold disk sequence of length $2$ with $n>1$.
 
\[
\begin{tikzpicture}[x=1em,y=1em,decoration={markings,mark=at position 0.5 with {\arrow[black]{Stealth[length=4.8pt]}}}]
\draw[line width=.5pt,postaction={decorate}] ($(252:4.5)+(-1.5,0)$) -- ($(288:4.5)+(1.5,0)$);
\node[font=\scriptsize] at (0,0) {$\times$};
\draw[->, line width=.5pt] (249:1.5em) arc[start angle=249, end angle=-69, radius=1.5em];
\node[font=\scriptsize,shape=circle,scale=.6] (X) at (0,0) {};
\node[font=\scriptsize,color=stopcolour] at ($(252:4.5)+(-.75,0)$) {$\bullet$};
\node[font=\scriptsize,color=stopcolour] at ($(288:4.5)+(.75,0)$) {$\bullet$};
\draw[line width=.75pt, color=arccolour, line cap=round] (X) -- (252:4.5);
\draw[line width=.75pt, color=arccolour, line cap=round] (X) -- (288:4.5);
\draw[line width=.75pt, color=arccolour, line cap=round] (X) -- (190:2em);
\draw[dash pattern=on 0pt off 1.3pt, line width=.8pt, line cap=round, color=arccolour] (190:2.1em) -- (190:2.5em);
\draw[line width=.75pt, color=arccolour, line cap=round] (X) -- (128:2em);
\draw[dash pattern=on 0pt off 1.3pt, line width=.8pt, line cap=round, color=arccolour] (128:2.1em) -- (128:2.5em);
\draw[line width=.75pt, color=arccolour, line cap=round] (X) -- (350:2em);
\draw[dash pattern=on 0pt off 1.3pt, line width=.8pt, line cap=round, color=arccolour] (350:2.1em) -- (350:2.5em);
\node[font=\scriptsize] at (79:.7) {.};
\node[font=\scriptsize] at (59:.7) {.};
\node[font=\scriptsize] at (39:.7) {.};
\node[font=\scriptsize,left=-.3ex] at (252:3) {$\gamma_0$};
\node[font=\scriptsize,right=-.3ex] at (288:3) {$\gamma_1$};
\node[font=\scriptsize,color=stopcolour] at (270:1em) {$\bullet$};
\node[font=\scriptsize] at (270:5) {$p_1$};
\node[font=\scriptsize] at (220:2.3) {$q_1$};
\node[font=\scriptsize] at (165:2.3) {$q_2$};
\node[font=\scriptsize] at (-35:2.3) {$q_n$};
\end{tikzpicture}
\]
Note that we have 
\begin{align*}
&\mu_1(F_2( \s q_n \dotsb q_{i+1} \otimes \s q_i \dotsb q_1)) + \mu_2 ( F_1(\s q_n \dotsb q_{i+1}) \otimes F_1(\s q_i \dotsb q_1)) \\
={}& -\mu_1(\s p_1) + \s q_n \dotsb q_1 \\
={}& 0.
\end{align*}
This shows that for any $a_1, a_2 \in \H^\bullet  (\mathbf A_\Delta)$ we have
\[
F_1(\mu_2(\s a_2 \otimes \s a_1)) = \mu_1 (F_2(\s a_2 \otimes \s a_1)) + \mu_2 (F_1(\s a_2) \otimes F_1(\s a_1)).
\]

Since $\Delta$ is formal, it follows from the definition that for each orbifold disk sequence of length $2$ with $n>1$ as illustrated above, the path $p_1$ is maximal with respect to path concatenations. In other words, $\mubar_2(\s p_1 \otimes \s a) = 0 = \mubar_2 (\s a \otimes \s p_1)$ for any non-identity path $a$. We infer the following identities
 \begin{align*}
 0 &= \mu_2(F_2(\s a_3 \otimes \s a_2) \otimes F_1(\s a_1)) + \mu_2(F_1(\s a_3) \otimes F_2( \s a_2 \otimes \s a_1))\\
0& = \mu_2(F_2(\s a_4 \otimes \s a_3) \otimes F_2(s a_2\otimes \s a_1)) 
\end{align*}
 for any morphisms $a_1, \dotsc, a_4$ in $\H^
\bullet  (\mathbf A_\Delta)$.  As a result, $F=(F_1,F_2)$ (i.e.\ $F_i =0$ for $i>2$) is an A$_\infty$ quasi-equivalence, compare Definition \ref{definition:ainfinityfunctorquasi}. 

We now prove \ref{formaldg2}. By Proposition \ref{proposition:formalcriterion} it suffices to show that for any non-formal weakly admissible dissection $\Delta$ there exists a non-formal full subcategory $\mathbf B$ in the A$_\infty$ category $\mathbf A_\Delta$. 

First we may assume that $\Delta$ is a DG dissection. Otherwise, each smooth (resp.\ orbifold) disk sequence of length $n>2$ gives rise to a non-formal full A$_\infty$ subcategory whose objects are given by the arcs in the sequence.

By Definition \ref{definition:DGformal}, a DG dissection $\Delta$ is {\it non-formal} if and only if $\Delta$ contains an orbifold disk sequence of length $2$ as illustrated below with $n >1$. Here, the paths $p_i$ can be either orbifold paths or boundary paths. We claim that any such orbifold disk sequence of length $2$ gives rise to a non-formal full DG subcategory.
\[
\begin{tikzpicture}[x=1em,y=1em,decoration={markings,mark=at position 0.5 with {\arrow[black]{Stealth[length=4.8pt]}}}]
\begin{scope}[xshift=-12em]
\draw[line width=.5pt] ($(252:4.5)+(-2.5,0)$) --(252:4.5);
\draw[line width=.5pt,postaction={decorate}] ($(252:4.5)+(-1.5,0)$) --(252:4.5);
\draw[line width=.5pt,postaction={decorate}] (252:4.5)--(288:4.5);
\draw[line width=.5pt] (288:4.5)-- ($(288:4.5)+(2.5,0)$);
\node[font=\scriptsize,color=stopcolour] at ($(288:4.5)+(1,0)$) {$\bullet$};
\node[font=\scriptsize] at (0,0) {$\times$};
\draw[->, line width=.5pt] (249:1.5em) arc[start angle=249, end angle=-69, radius=1.5em];
\node[font=\scriptsize,shape=circle,scale=.6] (X) at (0,0) {};
\draw[line width=.75pt, color=arccolour, line cap=round] (X) -- (252:4.5);
\draw[line width=.75pt, color=arccolour, line cap=round] (X) -- (288:4.5);
\draw[line width=.75pt, color=arccolour, line cap=round] (X) -- (190:2em);
\draw[line width=.75pt, color=arccolour, line cap=round] ($(252:4.5)+(-2,0)$) -- ($(252:4.5)+(-2,0)+(100:2)$) ;
\draw[dash pattern=on 0pt off 1.3pt,line width=.75pt, color=arccolour, line cap=round] ($(252:4.5)+(-2,0)+(100:2)$) -- ($(252:4.5)+(-2,0)+(100:2.4)$) ;
\draw[dash pattern=on 0pt off 1.3pt, line width=.8pt, line cap=round, color=arccolour] (190:2.1em) -- (190:2.5em);
\draw[line width=.75pt, color=arccolour, line cap=round] (X) -- (128:2em);
\draw[dash pattern=on 0pt off 1.3pt, line width=.8pt, line cap=round, color=arccolour] (128:2.1em) -- (128:2.5em);
\draw[line width=.75pt, color=arccolour, line cap=round] (X) -- (350:2em);
\draw[dash pattern=on 0pt off 1.3pt, line width=.8pt, line cap=round, color=arccolour] (350:2.1em) -- (350:2.5em);
\node[font=\scriptsize] at (79:.7) {.};
\node[font=\scriptsize] at (59:.7) {.};
\node[font=\scriptsize] at (39:.7) {.};
\node[font=\scriptsize,left=-.2ex] at ($(252:3)+(-2.8,0)$) {$\gamma_0$};
\node[font=\scriptsize,left=-.3ex] at (252:3) {$\gamma_1$};
\node[font=\scriptsize,left=-.2ex] at (190:2.8) {$\gamma_2$};
\node[font=\scriptsize,right=-.3ex] at (288:3) {$\gamma_{3}$};
\node[font=\scriptsize,color=stopcolour] at (270:1em) {$\bullet$};
\node[font=\scriptsize] at (270:5) {$p_1$};
\node[font=\scriptsize] at ($(270:5)+(-2.5,0)$) {$p_0$};
\node[font=\scriptsize] at (220:2.1) {$q_1$};
\node[font=\scriptsize] at (165:2.1) {$q_2$};
\node[font=\scriptsize] at (-35:2.1) {$q_n$};
\end{scope}
\begin{scope}
\draw[line width=.5pt] ($(252:4.5)+(-2.5,0)$) --(252:4.5);
\draw[line width=.5pt,postaction={decorate}] ($(252:4.5)+(-1.5,0)$) --(252:4.5);
\draw[line width=.5pt,postaction={decorate}] (252:4.5)--(288:4.5);
\draw[line width=.5pt,postaction={decorate}] (288:4.5)-- ($(288:4.5)+(2.5,0)$);
\node[font=\scriptsize] at (0,0) {$\times$};
\draw[->, line width=.5pt] (249:1.5em) arc[start angle=249, end angle=-69, radius=1.5em];
\node[font=\scriptsize,shape=circle,scale=.6] (X) at (0,0) {};
\draw[line width=.75pt, color=arccolour, line cap=round] (X) -- (252:4.5);
\draw[line width=.75pt, color=arccolour, line cap=round] (X) -- (288:4.5);
\draw[line width=.75pt, color=arccolour, line cap=round] (X) -- (190:2em);
\draw[line width=.75pt, color=arccolour, line cap=round] ($(252:4.5)+(-2,0)$) -- ($(252:4.5)+(-2,0)+(100:2)$) ;
\draw[line width=.75pt, color=arccolour, line cap=round] ($(288:4.5)+(2,0)$) -- ($(288:4.5)+(2,0)+(80:2)$);
\draw[dash pattern=on 0pt off 1.3pt,line width=.75pt, color=arccolour, line cap=round] ($(252:4.5)+(-2,0)+(100:2)$) -- ($(252:4.5)+(-2,0)+(100:2.4)$) ;
\draw[dash pattern=on 0pt off 1.3pt, line width=.75pt, color=arccolour, line cap=round] ($(288:4.5)+(2,0)+(80:2)$) --($(288:4.5)+(2,0)+(80:2.4)$) ;

\draw[dash pattern=on 0pt off 1.3pt, line width=.8pt, line cap=round, color=arccolour] (190:2.1em) -- (190:2.5em);
\draw[line width=.75pt, color=arccolour, line cap=round] (X) -- (128:2em);
\draw[dash pattern=on 0pt off 1.3pt, line width=.8pt, line cap=round, color=arccolour] (128:2.1em) -- (128:2.5em);
\draw[line width=.75pt, color=arccolour, line cap=round] (X) -- (350:2em);
\draw[dash pattern=on 0pt off 1.3pt, line width=.8pt, line cap=round, color=arccolour] (350:2.1em) -- (350:2.5em);
\node[font=\scriptsize] at (79:.7) {.};
\node[font=\scriptsize] at (59:.7) {.};
\node[font=\scriptsize] at (39:.7) {.};
\node[font=\scriptsize,color=stopcolour] at (270:1em) {$\bullet$};
\node[font=\scriptsize] at (270:5) {$p_1$};
\node[font=\scriptsize] at ($(270:5)+(-2.5,0)$) {$p_0$};
\node[font=\scriptsize] at ($(270:5)+(2.8,0)$) {$p_2$};
\node[font=\scriptsize] at (220:2.3) {$q_1$};
\node[font=\scriptsize] at (165:2.3) {$q_2$};
\node[font=\scriptsize] at (-35:2.3) {$q_n$};
\end{scope}
\begin{scope}[xshift=12em]
\draw[line width=.5pt] ($(252:4.5)+(-2.5,0)$) --(252:4.5);
\node[font=\scriptsize,color=stopcolour] at ($(252:4.5)+(-1,0)$)  {$\bullet$};
\draw[line width=.5pt,postaction={decorate}] (252:4.5)--(288:4.5);
\draw[line width=.5pt,postaction={decorate}] (288:4.5)-- ($(288:4.5)+(2.5,0)$);
\node[font=\scriptsize] at (0,0) {$\times$};
\draw[->, line width=.5pt] (249:1.5em) arc[start angle=249, end angle=-69, radius=1.5em];
\node[font=\scriptsize,shape=circle,scale=.6] (X) at (0,0) {};
\draw[line width=.75pt, color=arccolour, line cap=round] (X) -- (252:4.5);
\draw[line width=.75pt, color=arccolour, line cap=round] (X) -- (288:4.5);
\draw[line width=.75pt, color=arccolour, line cap=round] (X) -- (190:2em);
\draw[line width=.75pt, color=arccolour, line cap=round] ($(288:4.5)+(2,0)$) -- ($(288:4.5)+(2,0)+(80:2)$);
\draw[dash pattern=on 0pt off 1.3pt, line width=.75pt, color=arccolour, line cap=round] ($(288:4.5)+(2,0)+(80:2)$) --($(288:4.5)+(2,0)+(80:2.4)$) ;

\draw[dash pattern=on 0pt off 1.3pt, line width=.8pt, line cap=round, color=arccolour] (190:2.1em) -- (190:2.5em);
\draw[line width=.75pt, color=arccolour, line cap=round] (X) -- (128:2em);
\draw[dash pattern=on 0pt off 1.3pt, line width=.8pt, line cap=round, color=arccolour] (128:2.1em) -- (128:2.5em);
\draw[line width=.75pt, color=arccolour, line cap=round] (X) -- (350:2em);
\draw[dash pattern=on 0pt off 1.3pt, line width=.8pt, line cap=round, color=arccolour] (350:2.1em) -- (350:2.5em);
\node[font=\scriptsize] at (79:.7) {.};
\node[font=\scriptsize] at (59:.7) {.};
\node[font=\scriptsize] at (39:.7) {.};
\node[font=\scriptsize,color=stopcolour] at (270:1em) {$\bullet$};
\node[font=\scriptsize] at (270:5) {$p_1$};
\node[font=\scriptsize] at ($(270:5)+(2.8,0)$) {$p_2$};
\node[font=\scriptsize] at (220:2.3) {$q_1$};
\node[font=\scriptsize] at (165:2.3) {$q_2$};
\node[font=\scriptsize] at (-35:2.3) {$q_n$};
\end{scope}
\end{tikzpicture}
\]
Indeed, we need to consider the following cases. In the first case where the arrows $p_0$ and $q_1$ are not in an orbifold disk sequence of length $2$, the full DG subcategory $\mathbf B$ of $\mathbf A_\Delta$ consisting of the arcs $\gamma_0, \gamma_1, \gamma_2, \gamma_n$ is given by 

\[
\begin{tikzpicture}[baseline=-2.6pt,description/.style={fill=white,inner sep=1pt,outer sep=0}]
\matrix (m) [matrix of math nodes, row sep=.5em, text height=1.5ex, column sep=3em, text depth=0.25ex, ampersand replacement=\&, inner sep=3.5pt]
{
0 \& 1 \& 2 \&  3\\
};
\path[->,line width=.4pt, font=\scriptsize] (m-1-1) edge node[above=.02ex] {$p_0$} (m-1-2);
\path[->,line width=.4pt, font=\scriptsize] (m-1-2) edge node[above=.01ex] {$q_1$} (m-1-3);
\path[->,line width=.4pt, font=\scriptsize] (m-1-3) edge node[above=.01ex] {$q_{2}$} (m-1-4);
\path[->,line width=.4pt, font=\scriptsize,bend right=30] (m-1-2) edge node[below=.01ex] {$p_{1}$} (m-1-4);
\end{tikzpicture}
\]
with the relation $q_1p_0 = 0$ and the differential $d(p_1) = q_2q_1$. Note that the cohomology $\H^\bullet(\mathbf B)$ is given by the following graded quiver
\[
\begin{tikzpicture}[baseline=-2.6pt,description/.style={fill=white,inner sep=1pt,outer sep=0}]
\matrix (m) [matrix of math nodes, row sep=.5em, text height=1.5ex, column sep=3em, text depth=0.25ex, ampersand replacement=\&, inner sep=3.5pt]
{
0 \& 1 \& 2 \& 3\\
};
\path[->,line width=.4pt, font=\scriptsize] (m-1-1) edge node[above=.02ex] {$p_0$} (m-1-2);
\path[->,line width=.4pt, font=\scriptsize] (m-1-2) edge node[above=.01ex] {$q_1$} (m-1-3);
\path[->,line width=.4pt, font=\scriptsize] (m-1-3) edge node[below=.01ex] {$q_{2}$} (m-1-4);
\path[->,line width=.4pt, font=\scriptsize,bend right=30] (m-1-1) edge node[below=.01ex] {$p_{01}$} (m-1-4);
\end{tikzpicture}
\]
with the relations $q_1p_0 = q_2q_1=0$. Here, the arrow $p_{01}$ represents the cocycle $p_1 p_0$ and the relation $q_2 q_1 = 0$ in $\H^\bullet (\mathbf B)$ follows from $q_2 q_1 = d (p_1)$ being a coboundary and thus zero in cohomology. Note that $\HH^2(\H^\bullet(\mathbf B), \H^\bullet(\mathbf B))$ of the graded algebra $\H^\bullet(\mathbf B)$ is $1$-dimensional. (This may be computed straightforwardly using the Chouhy--Solotar bimodule resolution see \cite{chouhysolotar}, see also \cite{barmeierwang}.) On the other hand, $\HH^2 (\mathbf B, \mathbf B) = 0$. As a result, $\H^\bullet(\mathbf B)$ is not A$_\infty$-quasi-isomorphic to $\mathbf B$. 

By constructing an explicit homotopy deformation retract between $\mathbf B$ and $\H^\bullet(\mathbf B)$, we obtain a nontrivial higher product on $\H^\bullet(\mathbf B)$ given as follows
\begin{align*}
\mu_3(\s q_2\otimes \s q_1 \otimes \s p_0) &= \s p_{01}.
\end{align*}

In the second case where the arrows $p_0$ and $q_1$ lie in an orbifold disk sequence of length $2$, the local configuration is illustrated as follows

\[
\begin{tikzpicture}[x=1em,y=1em,decoration={markings,mark=at position 0.5 with {\arrow[black]{Stealth[length=4.8pt]}}}]
\begin{scope}[xshift=-12em]
\draw[line width=.5pt] ($(252:4.5)+(-2.5,0)$) --(252:4.5);
\draw[line width=.5pt,postaction={decorate}] ($(252:4.5)+(-1.5,0)$) --(252:4.5);
\draw[line width=.5pt,postaction={decorate}] (252:4.5)--(288:4.5);
\draw[line width=.5pt] (288:4.5)-- ($(288:4.5)+(2.5,0)$);
\node[font=\scriptsize,color=stopcolour] at ($(288:4.5)+(1,0)$) {$\bullet$};
\node[font=\scriptsize,color=stopcolour] at ($(190:4em)+(-40:.7em)$) {$\bullet$};
\node[font=\scriptsize] at (0,0) {$\times$};
\draw[->, line width=.5pt] (249:1em) arc[start angle=249, end angle=-69, radius=1em];
\draw[->, line width=.5pt] ($(190:4em)+(270:.7em)$) arc[start angle=270, end angle=15, radius=.7em];
\node[font=\scriptsize,shape=circle,scale=.6] (X) at (0,0) {};
\node[font=\scriptsize,shape=circle,scale=.6] (Y) at (190:4) {};
\node[font=\scriptsize] at (190:4) {$\times$};
\draw[line width=.75pt, color=arccolour, line cap=round] (X) -- (252:4.5);
\draw[line width=.75pt, color=arccolour, line cap=round] (X) -- (288:4.5);
\draw[line width=.75pt, color=arccolour, line cap=round] (X) -- (Y);
\draw[line width=.75pt, color=arccolour, line cap=round] ($(252:4.5)+(-2,0)$) -- (Y);
\node[font=\scriptsize,left=-.2ex] at ($(252:3)+(-2.6,0)$) {$\gamma_0$};
\node[font=\scriptsize,left=-.3ex] at (252:3) {$\gamma_1$};
\node[font=\scriptsize] at (178:2) {$\gamma_2$};
\node[font=\scriptsize] at ($(190:4em)+(150:1em)$) {$r$};
\node[font=\scriptsize,right=-.3ex] at (288:3) {$\gamma_{3}$};
\node[font=\scriptsize,color=stopcolour] at (270:1em) {$\bullet$};
\node[font=\scriptsize] at (270:5) {$p_1$};
\node[font=\scriptsize] at ($(270:5)+(-2.5,0)$) {$p_0$};
\node[font=\scriptsize] at (220:1.5) {$q_1$};
\node[font=\scriptsize] at (30:1.5) {$q_2$};
\end{scope}
\end{tikzpicture}
\]
Since the orbifold surface $S$ is not the disk with two orbifold points and with one stop on the boundary, there is at least one more arc $\gamma$ in $\Delta$ such that the morphism space between $\gamma$ and (the direct sum of) $\gamma_i$ is nonzero, and such that the possible local configurations are listed as follows. 
\[
\begin{tikzpicture}[x=1em,y=1em,decoration={markings,mark=at position 0.5 with {\arrow[black]{Stealth[length=4.8pt]}}}]
\begin{scope}
\draw[line width=.5pt] ($(252:4.5)+(-2.5,0)$) --(252:4.5);
\draw[line width=.5pt,postaction={decorate}] ($(252:4.5)+(-1.5,0)$) --(252:4.5);
\draw[line width=.5pt,postaction={decorate}] (252:4.5)--(288:4.5);
\draw[line width=.5pt] (288:4.5)-- ($(288:4.5)+(2,0)$);
\node[font=\scriptsize,color=stopcolour] at ($(288:4.5)+(1,0)$) {$\bullet$};
\node[font=\scriptsize,color=stopcolour] at ($(190:4em)+(-40:.7em)$) {$\bullet$};
\node[font=\scriptsize] at (0,0) {$\times$};
\draw[->, line width=.5pt] (249:1em) arc[start angle=249, end angle=-69, radius=1em];
\draw[->, line width=.5pt] ($(190:4em)+(270:.7em)$) arc[start angle=270, end angle=15, radius=.7em];
\node[font=\scriptsize,shape=circle,scale=.6] (X) at (0,0) {};
\node[font=\scriptsize,shape=circle,scale=.6] (Y) at (190:4) {};
\node[font=\scriptsize] at (190:4) {$\times$};
\draw[line width=.75pt, color=arccolour, line cap=round] (X) -- (252:4.5);
\draw[line width=.75pt, color=arccolour, line cap=round] (X) -- (288:4.5);
\draw[line width=.75pt, color=arccolour, line cap=round] (X) -- (60:2em);
\draw[line width=.75pt, color=arccolour, line cap=round] (X) -- (Y);
\draw[line width=.75pt, color=arccolour, line cap=round] ($(252:4.5)+(-2,0)$) -- (Y);
\node[font=\scriptsize,left=-.2ex] at ($(252:3)+(-2.6,0)$) {$\gamma_0$};
\node[font=\scriptsize,left=-.3ex] at (252:3) {$\gamma_1$};
\node[font=\scriptsize] at (178:2) {$\gamma_2$};
\node[font=\scriptsize] at (40:1.9) {$\gamma$};

\node[font=\scriptsize,right=-.3ex] at (288:3) {$\gamma_{3}$};
\node[font=\scriptsize,color=stopcolour] at (270:1em) {$\bullet$};
\end{scope}
\begin{scope}[xshift=11em]
\draw[line width=.5pt] ($(252:4.5)+(-4,0)$) --(252:4.5);
\draw[line width=.5pt,postaction={decorate}] ($(252:4.5)+(-1.5,0)$) --(252:4.5);
\draw[line width=.5pt,postaction={decorate}] ($(252:4.5)+(-4,0)$) --($(252:4.5)+(-1,0)$);
\draw[line width=.5pt,postaction={decorate}] (252:4.5)--(288:4.5);
\draw[line width=.5pt] (288:4.5)-- ($(288:4.5)+(2,0)$);
\node[font=\scriptsize,color=stopcolour] at ($(288:4.5)+(1,0)$) {$\bullet$};
\node[font=\scriptsize,color=stopcolour] at ($(190:4em)+(-40:.7em)$) {$\bullet$};
\node[font=\scriptsize] at (0,0) {$\times$};
\draw[->, line width=.5pt] (249:1em) arc[start angle=249, end angle=-69, radius=1em];
\draw[->, line width=.5pt] ($(190:4em)+(270:.7em)$) arc[start angle=270, end angle=15, radius=.7em];
\node[font=\scriptsize,shape=circle,scale=.6] (X) at (0,0) {};
\node[font=\scriptsize,shape=circle,scale=.6] (Y) at (190:4) {};
\node[font=\scriptsize] at (190:4) {$\times$};
\draw[line width=.75pt, color=arccolour, line cap=round] (X) -- (252:4.5);
\draw[line width=.75pt, color=arccolour, line cap=round] (X) -- (288:4.5);
\draw[line width=.75pt, color=arccolour, line cap=round] (X) -- (Y);
\draw[line width=.75pt, color=arccolour, line cap=round] ($(252:4.5)+(-3.5,0)$) -- ($(252:4.5)+(-3.5,0)+(120:2)$);
\draw[line width=.75pt, color=arccolour, line cap=round] ($(252:4.5)+(-2,0)$) -- (Y);
\node[font=\scriptsize,left=-.2ex] at ($(252:3)+(-2.6,0)$) {$\gamma_0$};
\node[font=\scriptsize,left=-.3ex] at (252:3) {$\gamma_1$};
\node[font=\scriptsize] at (178:2) {$\gamma_2$};
\node[font=\scriptsize] at  ($(252:4.5)+(-3.5,0)+(105:1.9)$) {$\gamma$};

\node[font=\scriptsize,right=-.3ex] at (288:3) {$\gamma_{3}$};
\node[font=\scriptsize,color=stopcolour] at (270:1em) {$\bullet$};
\end{scope}
\begin{scope}[xshift=22em]
\draw[line width=.5pt] ($(252:4.5)+(-2.5,0)$) --(252:4.5);
\draw[line width=.5pt,postaction={decorate}] ($(252:4.5)+(-1.5,0)$) --(252:4.5);
\draw[line width=.5pt,postaction={decorate}] (252:4.5)--(288:4.5);
\draw[line width=.5pt] (288:4.5)-- ($(288:4.5)+(2,0)$);
\node[font=\scriptsize,color=stopcolour] at ($(288:4.5)+(1,0)$) {$\bullet$};
\node[font=\scriptsize,color=stopcolour] at ($(190:4em)+(-40:.7em)$) {$\bullet$};
\node[font=\scriptsize] at (0,0) {$\times$};
\draw[->, line width=.5pt] (249:1em) arc[start angle=249, end angle=-69, radius=1em];
\draw[->, line width=.5pt] ($(190:4em)+(270:.7em)$) arc[start angle=270, end angle=15, radius=.7em];
\node[font=\scriptsize,shape=circle,scale=.6] (X) at (0,0) {};
\node[font=\scriptsize,shape=circle,scale=.6] (Y) at (190:4) {};
\node[font=\scriptsize] at (190:4) {$\times$};
\draw[line width=.75pt, color=arccolour, line cap=round] (X) -- (252:4.5);
\draw[line width=.75pt, color=arccolour, line cap=round] (X) -- (288:4.5);
\draw[line width=.75pt, color=arccolour, line cap=round] (Y) -- ($(190:4)+(120:2em)$);
\draw[line width=.75pt, color=arccolour, line cap=round] (X) -- (Y);
\draw[line width=.75pt, color=arccolour, line cap=round] ($(252:4.5)+(-2,0)$) -- (Y);
\node[font=\scriptsize,left=-.2ex] at ($(252:3)+(-2.6,0)$) {$\gamma_0$};
\node[font=\scriptsize,left=-.3ex] at (252:3) {$\gamma_1$};
\node[font=\scriptsize] at (178:2) {$\gamma_2$};
\node[font=\scriptsize] at  ($(190:4em)+(140:1.9em)$)  {$\gamma$};

\node[font=\scriptsize] at ($(190:4em)+(200:1.2em)$) {$u_1$};
\node[font=\scriptsize] at ($(190:4em)+(70:1.2em)$) {$u_2$};
\node[font=\scriptsize,right=-.3ex] at (288:3) {$\gamma_{3}$};
\node[font=\scriptsize,color=stopcolour] at (270:1em) {$\bullet$};
\node[font=\scriptsize] at (270:5) {$p_1$};
\node[font=\scriptsize] at ($(270:5)+(-2.5,0)$) {$p_0$};
\node[font=\scriptsize] at (220:1.5) {$q_1$};
\node[font=\scriptsize] at (30:1.5) {$q_2$};
\end{scope}
\end{tikzpicture}
\]

For the first  configuration we consider the full subcategory $\mathbf B$ consisting of the arcs $\gamma_0, \gamma_1, \gamma, \gamma_3$. It follows from the first case that $\mathbf B$ is non-formal. Similarly, for the second one the full subcategory consisting of the arcs $\gamma, \gamma_1,\gamma_2, \gamma_3$ is non-formal. For the third one the full subcategory $\mathbf B$ consisting of $\gamma, \gamma_0, \gamma_1, \gamma_2, \gamma_3$ is given by the following DG quiver 
\[
\begin{tikzpicture}[baseline=-2.6pt,description/.style={fill=white,inner sep=1pt,outer sep=0}]
\matrix (m) [matrix of math nodes, row sep=.5em, text height=1.5ex, column sep=3em, text depth=0.25ex, ampersand replacement=\&, inner sep=3.5pt]
{
0 \& 1 \& 2 \&  3\\
\& 4\\
};
\path[->,line width=.4pt, font=\scriptsize] (m-1-1) edge node[above=.02ex] {$p_0$} (m-1-2);
\path[->,line width=.4pt, font=\scriptsize] (m-1-2) edge node[above=.01ex] {$q_1$} (m-1-3);
\path[->,line width=.4pt, font=\scriptsize] (m-1-3) edge node[above=.01ex] {$q_{2}$} (m-1-4);
\path[->,line width=.4pt, font=\scriptsize] (m-1-1) edge node[below=.01ex] {$u_1$} (m-2-2);
\path[->,line width=.4pt, font=\scriptsize] (m-2-2) edge node[below=.01ex] {$u_2$} (m-1-3);
\path[->,line width=.4pt, font=\scriptsize,bend left=30] (m-1-2) edge node[above=.01ex] {$p_{1}$} (m-1-4);
\end{tikzpicture}
\]
with the relation $q_1p_0 = u_2u_1, q_2u_2=0$ and the differential $d(p_1) = q_2q_1$. The cohomology $\H^\bullet(\mathbf B)$ is given by the following graded quiver
\[
\begin{tikzpicture}[baseline=-2.6pt,description/.style={fill=white,inner sep=1pt,outer sep=0}]
\matrix (m) [matrix of math nodes, row sep=.5em, text height=1.5ex, column sep=3em, text depth=0.25ex, ampersand replacement=\&, inner sep=3.5pt]
{
0 \& 1 \& 2 \&  3\\
\& 4\\
};
\path[->,line width=.4pt, font=\scriptsize] (m-1-1) edge node[above=.02ex] {$p_0$} (m-1-2);
\path[->,line width=.4pt, font=\scriptsize] (m-1-2) edge node[above=.01ex] {$q_1$} (m-1-3);
\path[->,line width=.4pt, font=\scriptsize] (m-1-3) edge node[above=.01ex] {$q_{2}$} (m-1-4);
\path[->,line width=.4pt, font=\scriptsize] (m-1-1) edge node[below=.01ex] {$u_1$} (m-2-2);
\path[->,line width=.4pt, font=\scriptsize] (m-2-2) edge node[below=.01ex] {$u_2$} (m-1-3);
\path[->,line width=.4pt, font=\scriptsize,bend left=30] (m-1-1) edge node[above=.01ex] {$p_{01}$} (m-1-4);
\end{tikzpicture}
\]
with the relations $q_1p_0 = u_2u_1, q_2u_2=q_2q_1=0$. By comparing the Hochschild cohomology of $\H^\bullet(\mathbf B)$ and $\mathbf B$ we obtain that $\mathbf B$ is non-formal and moreover there is an A$_\infty$ structure on $\H^\bullet(\mathbf B)$ 
\[
\mu_3(\s q_2 \otimes \s q_1 \otimes \s p_0) = \s p_{01} = \mu_3(\s u_2 \otimes \s u_1 \otimes \s p_0).
\]
This proves the claim. 
\end{proof}

\begin{remark}\label{remark:disktwoorbifoldonestop}
Let $\mathbf S $ be the disk with two orbifold points and one stop in the boundary. The dissections $\Delta$ in Fig.~\ref{fig:nonformaldissectionformaldg} are not formal but the DG categories $\mathbf A_\Delta$ are formal. 
\begin{figure}[ht]
\begin{tikzpicture}[x=1em,y=1em,decoration={markings,mark=at position 0.5 with {\arrow[black]{Stealth[length=4.8pt]}}}]
\begin{scope}[transform shape, scale=.9]
\begin{scope}
\draw[line width=.5pt] circle(4.5em);
\node[font=\scriptsize,color=stopcolour] at (-90:1em) {$\bullet$};
\node[font=\scriptsize] at (0,0) {$\times$};
\draw[->, line width=.5pt] (249:1em) arc[start angle=249, end angle=-69, radius=1em];
\draw[->, line width=.5pt] ($(180:2.5em)+(200:.7em)$) arc[start angle=200, end angle=15, radius=.7em];
\node[font=\scriptsize,shape=circle,scale=.6] (X) at (0,0) {};
\node[font=\scriptsize,shape=circle,scale=.6] (Y) at (180:2.5) {};
\node[font=\scriptsize] at (180:2.5) {$\times$};
\node[font=\scriptsize,color=stopcolour] at ($(180:2.5)+(-60:1em)$) {$\bullet$};
\draw[line width=.75pt, color=arccolour, line cap=round] (X) -- (252:4.5);
\draw[line width=.75pt, color=arccolour, line cap=round] (X) -- (288:4.5);
\draw[line width=.75pt, color=arccolour, line cap=round] (X) -- (Y);
\draw[line width=.75pt, color=arccolour, line cap=round] (200:4.5) -- (Y);
\node[font=\scriptsize] at ($(180:2.5em)+(150:1em)$) {$r$};
\node[font=\scriptsize,color=stopcolour] at (90:4.5em) {$\bullet$};
\draw[line width=0pt,postaction={decorate}] (-95:4.5em) arc[start angle=-95, end angle=-85, radius=4.5em];
\node[font=\scriptsize] at (-90:5em) {$p_1$};
\draw[line width=0pt,postaction={decorate}] (-140:4.5em) arc[start angle=-140, end angle=-125, radius=4.5em];
\node[font=\scriptsize] at (-135:5em) {$p_0$};
\node[font=\scriptsize] at (220:1.5) {$q_1$};
\node[font=\scriptsize] at (30:1.5) {$q_2$};
\end{scope}
\begin{scope}[xshift=15em]
\draw[line width=.5pt] circle(4.5em);
\node[font=\scriptsize,color=stopcolour] at (-90:1em) {$\bullet$};
\node[font=\scriptsize] at (0,0) {$\times$};
\draw[->, line width=.5pt] (249:1em) arc[start angle=249, end angle=-69, radius=1em];
\draw[->, line width=.5pt] ($(180:2.5em)+(-5:.7em)$) arc[start angle=-5, end angle=-305, radius=.7em];
\node[font=\scriptsize,shape=circle,scale=.6] (X) at (0,0) {};
\node[font=\scriptsize,shape=circle,scale=.6] (Y) at (180:2.5) {};
\node[font=\scriptsize] at (180:2.5) {$\times$};
\node[font=\scriptsize,color=stopcolour] at ($(180:2.5)+(18:1em)$) {$\bullet$};
\draw[line width=.75pt, color=arccolour, line cap=round] (X) -- (252:4.5);
\draw[line width=.75pt, color=arccolour, line cap=round] (X) -- (288:4.5);
\draw[line width=.75pt, color=arccolour, line cap=round] (X) -- (Y);
\draw[line width=.75pt, color=arccolour, line cap=round] (60:4.5) -- (Y);
\node[font=\scriptsize] at ($(180:2.5em)+(190:1em)$) {$r$};
\node[font=\scriptsize,color=stopcolour] at (90:4.5em) {$\bullet$};
\draw[line width=0pt,postaction={decorate}] (-95:4.5em) arc[start angle=-95, end angle=-85, radius=4.5em];
\node[font=\scriptsize] at (-90:5em) {$p_1$};
\draw[line width=0pt,postaction={decorate}] (-10:4.5em) arc[start angle=-10, end angle=0, radius=4.5em];
\node[font=\scriptsize] at (-5:5.2em) {$p_2$};
\node[font=\scriptsize] at (220:1.5) {$q_1$};
\node[font=\scriptsize] at (30:1.5) {$q_2$};
\end{scope}
\end{scope}
\end{tikzpicture}
\caption{The non-formal DG dissections $\Delta$ on the disk with two orbifold points and with one stop give formal DG categories $\mathbf A_\Delta$.}
\label{fig:nonformaldissectionformaldg}
\end{figure}
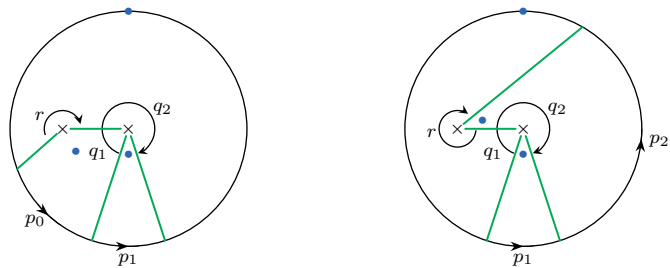

Let us only consider the left dissection. Note that the DG category $\mathbf A_\Delta$ is given by the DG quiver (left)  
\[
\begin{tikzpicture}[baseline=-2.6pt,description/.style={fill=white,inner sep=1pt,outer sep=0}]
\begin{scope}
\matrix (m) [matrix of math nodes, row sep=1.2em, text height=1.5ex, column sep=3em, text depth=0.25ex, ampersand replacement=\&, inner sep=3.5pt]
{
0 \& 1 \& 2 \\
\& 3\\
};
\path[->,line width=.4pt, font=\scriptsize] (m-1-1) edge node[above=.02ex] {$p_0$} (m-1-2);
\path[->,line width=.4pt, font=\scriptsize] (m-1-2) edge node[above=.01ex] {$p_1$} (m-1-3);
\path[->,line width=.4pt, font=\scriptsize] (m-1-1) edge node[below=.01ex] {$r$} (m-2-2);
\path[->,line width=.4pt, font=\scriptsize] (m-2-2) edge node[below=.01ex] {$q_2$} (m-1-3);
\path[->,line width=.4pt, font=\scriptsize] (m-1-2) edge node[left=.01ex] {$q_1$} (m-2-2);
\end{scope}
\begin{scope}[xshift=15em]
\matrix (m) [matrix of math nodes, row sep=1em, text height=1.5ex, column sep=3em, text depth=0.25ex, ampersand replacement=\&, inner sep=3.5pt]
{
0 \& 1 \& 2 \\
\& 3\\
};
\path[->,line width=.4pt, font=\scriptsize] (m-1-1) edge node[above=.02ex] {$p_0$} (m-1-2);
\path[->,line width=.4pt, font=\scriptsize] (m-2-2) edge node[below=.01ex] {$q_2$} (m-1-3);
\path[->,line width=.4pt, font=\scriptsize] (m-1-2) edge node[left=.01ex] {$q_1$} (m-2-2);
\path[->,line width=.4pt, font=\scriptsize,bend left=35] (m-1-1) edge node[above=.01ex] {$p_{01}$} (m-1-3);
\end{scope}
\end{tikzpicture}
\]
with the relations $r = q_1p_0$ and $q_2r=0$ and the differential $d (p_1) = q_2q_1$. The cohomology $\H^\bullet(\mathbf A_\Delta)$ is given by the quiver (right) with the relation 
$ q_2q_1=0$. We may show that $\mathbf A_\Delta$ and $\H^\bullet(\mathbf A_\Delta)$ are A$_\infty$ quasi-isomorphic, since $\HH^2(\H^\bullet(\mathbf A_\Delta),\H^\bullet(\mathbf A_\Delta)) = 0$ and thus $\H^\bullet(\mathbf A_\Delta)$ is intrinsically formal by \cite[Theorem 4.7]{seidelthomas}. 
\end{remark}

We now give a simple example of an admissible DG dissection $\Delta$ which is not formal. (See also the dissection $\Delta_5$ in \S\ref{subsection:orbifolddiskthree} for a further example.)

\begin{example}\label{example:non-formaldg}
Consider the orbifold disk with one orbifold point and with three stops on the boundary. Take the admissible dissection $\Delta$ as illustrated in Fig.~\ref{fig:nonformaldg}. 
 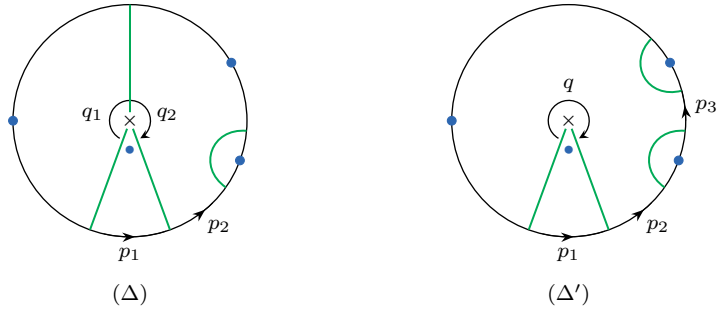
\begin{figure}[ht]
\begin{tikzpicture}[x=1em,y=1em,decoration={markings,mark=at position 0.5 with {\arrow[black]{Stealth[length=4.8pt]}}}]
\begin{scope}
\draw[line width=.5pt] circle(4em);
\node[font=\scriptsize,shape=circle,scale=.6,fill=white] (Y) at (0,0) {};
\node[font=\scriptsize] at (0,0) {$\times$};
\draw[->, line width=.5pt] (240:.7em) arc[start angle=240, end angle=-60, radius=.7em];
\node[font=\scriptsize] at (170:1.3em) {$q_1$};
\node[font=\scriptsize] at (10:1.3em) {$q_2$};
\node[font=\scriptsize,color=stopcolour] at (-90:1em) {$\bullet$};
\draw[line width=.75pt,color=arccolour] (-110:4em) to (Y);
\draw[line width=.75pt,color=arccolour] (90:4em) to (Y);
\draw[line width=.75pt,color=arccolour] (-70:4em) to (Y);
\path[line width=.75pt,out=175,in=145,looseness=1.5,color=arccolour] (-5:4em) edge (-35:4em);

\draw[line width=0pt,postaction={decorate}] (240+17:4em) arc[start angle=240+17, end angle=300-17+5, radius=4em];
\node[font=\scriptsize] at (270:4.6em) {$p_1$};
\draw[line width=0pt,postaction={decorate}] (-55:4em) arc[start angle=-55, end angle=-45, radius=4em];
\node[font=\scriptsize] at (-50:4.75em) {$p_2$};
\draw[fill=stopcolour, color=stopcolour] (-20:4em) circle(.15em);
\draw[fill=stopcolour, color=stopcolour] (180:4em) circle(.15em);
\draw[fill=stopcolour, color=stopcolour] (30:4em) circle(.15em);
\node[font=\scriptsize] at (-90:6em) {$(\Delta)$};
\end{scope}
\begin{scope}[xshift=15em]
\draw[line width=.5pt] circle(4em);
\node[font=\scriptsize,shape=circle,scale=.6,fill=white] (Y) at (0,0) {};
\node[font=\scriptsize] at (0,0) {$\times$};
\draw[->, line width=.5pt] (240:.7em) arc[start angle=240, end angle=-60, radius=.7em];
\node[font=\scriptsize] at (90:1.2em) {$q$};
\node[font=\scriptsize,color=stopcolour] at (-90:1em) {$\bullet$};
\draw[line width=.75pt,color=arccolour] (-110:4em) to (Y);
\draw[line width=.75pt,color=arccolour] (-70:4em) to (Y);
\path[line width=.75pt,out=175,in=145,looseness=1.5,color=arccolour] (-5:4em) edge (-35:4em);
\path[line width=.75pt,out=195,in=225,looseness=1.5,color=arccolour] (15:4em) edge (45:4em);
\draw[line width=0pt,postaction={decorate}] (240+17:4em) arc[start angle=240+17, end angle=300-17+5, radius=4em];
\node[font=\scriptsize] at (270:4.6em) {$p_1$};
\draw[line width=0pt,postaction={decorate}] (-55:4em) arc[start angle=-55, end angle=-45, radius=4em];
\node[font=\scriptsize] at (-50:4.75em) {$p_2$};
\draw[line width=0pt,postaction={decorate}] (0:4em) arc[start angle=0, end angle=15, radius=4em];
\node[font=\scriptsize] at (6:4.75em) {$p_3$};
\draw[fill=stopcolour, color=stopcolour] (-20:4em) circle(.15em);
\draw[fill=stopcolour, color=stopcolour] (180:4em) circle(.15em);
\draw[fill=stopcolour, color=stopcolour] (30:4em) circle(.15em);
\node[font=\scriptsize] at (-90:6em) {$(\Delta')$};
\end{scope}
\end{tikzpicture}
\caption{The DG dissection $\Delta$ (left) is  non-formal and $\Delta'$ (right) is formal.}
\label{fig:nonformaldg}
\end{figure}

The algebra (category) $\mathbf A_\Delta$ is a DG algebra given by the following quiver 
\[
\begin{tikzpicture}[baseline=-2.6pt,description/.style={fill=white,inner sep=1pt,outer sep=0}]
\matrix (m) [matrix of math nodes, row sep=2.5em, text height=1.5ex, column sep=3em, text depth=0.25ex, ampersand replacement=\&, inner sep=3.5pt]
{
0 \& 1 \& 2 \& 3 \\
};
\path[->,line width=.4pt, font=\scriptsize] (m-1-1) edge node[above=.01ex] {$q_1$} (m-1-2);
\path[->,line width=.4pt, font=\scriptsize] (m-1-2) edge node[above=.01ex] {$q_2$} (m-1-3);
\path[->,line width=.4pt, font=\scriptsize] (m-1-3) edge node[above=.01ex] {$p_2$} (m-1-4);
\path[->,line width=.4pt, font=\scriptsize,bend right=30] (m-1-1) edge node[below=.01ex] {$p_1$} (m-1-3);
\end{tikzpicture}
\]
with the relations $q_2p_2 = 0$ and the differential $d(p_1) = q_2q_1$. Here, we may choose a line field so that $|p_1|=-1$ and $|q_1|=|q_2|=|p_2|=0$. We claim that $\mathbf A_\Delta$ is {\it not} formal. Indeed, $\H^\bullet(\mathbf A_\Delta)$ is given by 
\[
\begin{tikzpicture}[baseline=-2.6pt,description/.style={fill=white,inner sep=1pt,outer sep=0}]
\matrix (m) [matrix of math nodes, row sep=2.5em, text height=1.5ex, column sep=3em, text depth=0.25ex, ampersand replacement=\&, inner sep=3.5pt]
{
0 \& 1 \& 2 \& 3 \\
};
\path[->,line width=.4pt, font=\scriptsize] (m-1-1) edge node[above=.01ex] {$q_1$} (m-1-2);
\path[->,line width=.4pt, font=\scriptsize] (m-1-2) edge node[above=.01ex] {$q_2$} (m-1-3);
\path[->,line width=.4pt, font=\scriptsize] (m-1-3) edge node[above=.01ex] {$p_2$} (m-1-4);
\path[->,line width=.4pt, font=\scriptsize,bend right=30] (m-1-1) edge node[below=.01ex] {$p_{12}$} (m-1-4);
\end{tikzpicture}
\]
with the relations $q_2q_1=0= p_2q_2$. Here, the arrow $p_{12}$ represents the nonzero cocycle $[p_2p_1]$ and thus $|p_{12}|=-1$. It follows from Theorem \ref{theorem:formaldg} that $\mathbf A_\Delta$ is non-formal and moreover the minimal A$_\infty$ structure on $\H^\bullet(\mathbf A_\Delta)$ is given by 
\[
\mu_3(\s p_2 \otimes \s q_2 \otimes \s q_1) = \s p_{12}.
\]

By Corollary \ref{corollary:typeD}, the minimal A$_\infty$ category $\H^\bullet(\mathbf A_\Delta)$ is Morita equivalent to the graded algebra $\H^\bullet(\mathbf A_{\Delta'})$ given by  the quiver of type D$_4$. 
\end{example}

\subsection{Graded skew-gentle algebras}

Let $\Bbbk Q/J$ be a graded gentle algebra. Pick a subset $\mathrm{Sp}$ of loops $\epsilon_i$ of degree $0$ in $Q$ such that $\epsilon_i^2 = 0$, i.e.\ 
\[
\mathrm{Sp}:= \{\epsilon_i \mid \epsilon_i ^ 2 \in J \}.
\]
The loop $\epsilon_i$ in $\mathrm{Sp}$ and the corresponding vertex $i$ are usually called {\it special}.
To $(Q, J, \mathrm{Sp})$ we may associate a graded algebra $A = \Bbbk Q/ I$ where the new ideal $I$ is obtained from $J$ by changing the relations $\epsilon_i^2 = 0$ at each loop $\epsilon_i \in \mathrm{Sp}$ into $\epsilon_i^2 = \epsilon_i$. The graded algebra $A$ is called a {\it graded skew-gentle} algebra, which was first introduced in \cite{geisspena}, see also \cite[\S 4]{bessenrodtholm}.

Since the relation $\epsilon_i^2 = \epsilon_i$ contains a linear term and is hence not admissible (in the sense of admissibility for relations in path algebras of quivers), we may split the idempotent $e_i$ into two primitive orthogonal idempotents $e_i - \epsilon_i$ and $\epsilon_i$. As a result, the algebra $A$ can be described by a quiver with admissible relations $(\bar Q, \bar I)$. That is, $A \simeq \Bbbk \bar Q/\bar I$. Roughly speaking, each special vertex $i$ splits into two vertices $i^+$ and $i^-$ in $\bar Q_0$, which corresponds to the idempotents $\epsilon_i$ and $e_i - \epsilon_i$. Then each arrow $p$ starting (resp.\ ending) at a special vertex $i$ in $Q$ also splits into two arrows starting (resp.\ ending) at $i^+$ and $i^-$, denoted by $p^+$ and $p^-$ (resp.\ ${^+}p$ and ${^-}p$), respectively. In particular, an arrow $p$ starting and ending at special vertices splits into four arrows ${^\pm} p ^\pm$. The degree of each new arrow equals the degree of the original one. 

The ideal $\bar I$ is generated by the following relations. Let $p_2p_1 = 0$ be a relation in $I$. If the ending vertex $i$ of $p_1$ is not special then we obtain relations   ${^\ast} p_2 p_1^{\ast}$ in $\bar I$ where $\ast= \emptyset, +, -$ depending on the existence of ${^\pm} p_2$ and $p_1^\pm$. Otherwise we obtain anti-commutative relations $$({^{\ast_2}} p_2^+) ({^+} p_1^{\ast_1}) + ({^{\ast_2}}p_2^-)({^-} p_1^{\ast_1} )= 0$$ where $\ast_1, \ast_2 \in \{\emptyset, +, -\}$.

\begin{theorem}\label{theorem:skewgentlealgebras}
Let $\mathbf S = (S, \Sigma, \eta)$ be a graded orbifold surface with stops. Then there exists a formal admissible dissection $\Delta$ of $\mathbf S$ such that there are exactly two arcs connecting to each orbifold point $x$ and $\val (\vtimes_x) = 2$.

Furthermore, $\H^\bullet (\mathbf A_\Delta)$ is isomorphic to a graded skew-gentle algebra and we have a triangulated equivalence
\[
\mathcal W (\mathbf S) \simeq \per (\H^\bullet (\mathbf A_\Delta)).
\]
\end{theorem}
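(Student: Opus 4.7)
The existence of the desired dissection is immediate from \S\ref{subsection:special}: take $\Delta$ to be any special admissible dissection $\Delta_0$ whose ribbon complex has the shape depicted in Fig.~\ref{fig:ribbon}. By construction each orbifold point $x$ corresponds to a vertex $\vtimes_x$ of valency two, both of whose half-edges connect to the distinguished vertex $\vbullet_0$, so exactly two arcs of $\Delta_0$ connect to $x$; admissibility follows since $\vbullet_0$ is of type $\vbullet$ and lies on the boundary of every orbifold $2$-cell. Since $\mathbb G(\Delta_0)$ has no vertex of type $\vcirc$ and $\val(\vtimes_x) = 2$, $\Delta_0$ is a DG dissection. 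Finally, the orbifold path around $x$ between the two arcs $\alpha_x, \beta_x$ has length one and the boundary of $d_x$ has only the two vertices $\vtimes_x$ and $\vbullet_0$, so both clauses of the formality condition in Definition \ref{definition:DGformal} are vacuously satisfied and $\Delta_0$ is formal.

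Setting $A := \H^\bullet(\mathbf A_{\Delta_0})$, Theorem \ref{theorem:formaldg} supplies an A$_\infty$-quasi-isomorphism $\mathbf A_{\Delta_0} \simeq A$, so that $A$ is a genuine graded associative algebra. Combining this with Theorem \ref{theorem:dgtriangleequivalent} yields
\[
\mathcal W(\mathbf S) \;\simeq\; \per(\mathbf A_{\Delta_0}) \;\simeq\; \per(A).
\]
It remains to identify $A$ as a graded skew-gentle algebra. My plan is to describe $A$ explicitly as a quiver with relations whose vertices are the arcs of $\Delta_0$ and whose arrows are the cohomology classes of boundary and orbifold paths between consecutive arcs. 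Away from the orbifold $2$-cells, the structure coincides with that of a graded gentle algebra exactly as in \cite{haidenkatzarkovkontsevich}. At each pair $\alpha_x, \beta_x$ of arcs incident to an orbifold point $x$, I expect anticommutative relations of the shape $(^\ast p^+)({}^+ u^{\ast'}) + (^\ast p^-)({}^- u^{\ast'}) = 0$; these are precisely the relations obtained by resolving the idempotent $e = \epsilon + (e - \epsilon)$ associated to a single special loop $\epsilon$ per orbifold point in the unresolved presentation of a graded skew-gentle algebra.

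The main obstacle is a clean verification of these anticommutative relations at the orbifold points. I expect the most transparent route is via the double cover: Theorem \ref{main:doublecoverorbitcategory} provides an admissible $\mathbb Z_2$-invariant lift $\widetilde\Delta_0$ of $\Delta_0$ to the smooth double cover $\widetilde{\mathbf S}$ together with an equivalence
\[
(\tw(\mathbf A_{\Delta_0}))^\natural \;\simeq\; (\tw(\mathbf A_{\widetilde\Delta_0})/\mathbb Z_2)^\natural,
\]
which by Proposition \ref{proposition:groupactiontriangulated} agrees with $\tw(\mathbf A_{\widetilde\Delta_0}/\mathbb Z_2)^\natural$. Since $\widetilde{\mathbf S}$ is smooth and $\widetilde\Delta_0$ is formal, $\H^\bullet(\mathbf A_{\widetilde\Delta_0})$ is a graded gentle algebra $\widetilde A$ by \cite{haidenkatzarkovkontsevich}. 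The induced $\mathbb Z_2$-action on $\widetilde A$ swaps the two preimages of every non-invariant arc and fixes precisely the invariant arcs running through orbifold points, mirroring the local analysis in Section \ref{section:disk}; at each such fixed vertex the standing assumption $\operatorname{char}\Bbbk \neq 2$ produces the primitive orthogonal idempotents $\frac{1}{2}(1 \pm \sigma)$ realizing the resolution $e = \epsilon + (e - \epsilon)$. Carrying out this idempotent splitting in the orbit algebra $\widetilde A / \mathbb Z_2$ therefore realizes $A$ (up to Morita equivalence) as a graded skew-gentle algebra in resolved form with one special loop per orbifold point, matching the explicit bases via the local computations of Proposition \ref{proposition:orbifolddisk} and Lemma \ref{lemma:morphisminvariant}.
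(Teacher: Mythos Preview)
Your first two paragraphs are correct and in fact more carefully spell out the existence and formality of the special dissection $\Delta_0$ than the paper does; the paper simply takes ``the assumption'' on $\Delta$ and moves straight to the cohomology computation.

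Where your approach diverges is the skew-gentle identification. The paper's argument is a direct local computation: the underlying graded category of $\mathbf A_{\Delta_0}$ is already gentle (Remark~\ref{remark:deformationinterpretation}), and at each orbifold point the DG structure is a Kronecker-type subquiver
\[
0 \toarg{p_0} 1 \mathrel{\substack{q_x \\[-0.3ex] \rightrightarrows \\[-0.5ex] p_x}} 2 \toarg{p_1} 3, \qquad d(p_x)=q_x, \quad q_x p_0 = p_1 q_x = 0.
\]
Taking cohomology of this two-term complex immediately gives a commutative square (with arrows $p_0^\pm$, $p_1^\pm$ represented by the cocycles $p_x p_0$, $p_1 p_x$, etc.), which the paper then matches by hand with the resolved presentation of a single special loop $\epsilon^2=\epsilon$. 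This takes only a few lines and gives an honest isomorphism of graded algebras.

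Your route through the double cover has two problems. First, as the paper is organised it is circular: the proof of Theorem~\ref{theorem:doublecoverorbitcategory} explicitly invokes Theorem~\ref{theorem:skewgentlealgebras} to know $\H^\bullet(\mathbf A_{\Delta_0})$ is skew-gentle before building the comparison functor $F\colon \mathbf A_{\widetilde\Delta}/\mathbb Z_2 \to \H^\bullet(\mathbf A_{\Delta_0})$. Second, even granting that one could restructure, your argument only concludes ``up to Morita equivalence'', whereas the statement claims an actual isomorphism $\H^\bullet(\mathbf A_\Delta)\simeq A$ of graded algebras. The idempotent-splitting picture you sketch is exactly what the direct computation produces anyway, so the detour through $\widetilde A/\mathbb Z_2$ and Section~\ref{section:disk} buys nothing---just compute $\H^\bullet$ of the local DG Kronecker quiver.
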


\begin{proof}
Recall that the underlying graded algebra of $\mathbf A_\Delta$ is a graded gentle algebra $\Bbbk Q/I$, see Remark \ref{remark:deformationinterpretation}. By the assumption, each orbifold point $x$ provides a subquiver in $Q$  (it is allowed to have no arrows $p_0, p_1$)
\begin{equation*}
\begin{tikzpicture}[baseline=-2.6pt,description/.style={fill=white,inner sep=1pt,outer sep=0}]
\matrix (m) [matrix of math nodes, row sep=2.5em, text height=1.5ex, column sep=3em, text depth=0.25ex, ampersand replacement=\&, inner sep=3.5pt]
{
0\& 1 \& 2 \&3 \\
};
\path[->,line width=.4pt, font=\scriptsize] (m-1-1) edge node[above=.01ex] {$p_0$} (m-1-2);
\path[->,line width=.4pt, font=\scriptsize, transform canvas={yshift=.5ex}] (m-1-2) edge node[above=.01ex] {$q_x$} (m-1-3);
\path[->,line width=.4pt, font=\scriptsize, transform canvas={yshift=-.5ex}] (m-1-2) edge node[below=.01ex] {$p_x$} (m-1-3);
\path[->,line width=.4pt, font=\scriptsize] (m-1-3) edge node[above=.01ex] {$p_1$} (m-1-4);
\end{tikzpicture}
\end{equation*}
with $d(p_x) = q_x$, where the relations are given by $q_xp_0= 0 = p_1q_x$ if the arrows $p_0, p_1$ exist.  Taking the cohomology we obtain the following quiver
\begin{equation*}
\begin{tikzpicture}[baseline=-2.6pt,description/.style={fill=white,inner sep=1pt,outer sep=0}]
\matrix (m) [matrix of math nodes, row sep=2.5em, text height=1.5ex, column sep=3em, text depth=0.25ex, ampersand replacement=\&, inner sep=3.5pt]
{
0\& 1 \& 2 \&3 \\
};
\path[->,line width=.4pt, font=\scriptsize, bend left] (m-1-1) edge node[above=.01ex] {$p_0^-$} (m-1-3);
\path[->,line width=.4pt, font=\scriptsize] (m-1-1) edge node[below=.01ex] {$p_0^+$} (m-1-2);
\path[->,line width=.4pt, font=\scriptsize] (m-1-3) edge node[above=.01ex] {$p_1^+$} (m-1-4);
\path[->,line width=.4pt, font=\scriptsize, bend right] (m-1-2) edge node[below=.01ex] {$p_1^-$} (m-1-4);
\end{tikzpicture}
\end{equation*}
with the commutative square relation $p_1^+p_0^- = p_1^- p_0^-$. By the description of graded skew-gentle algebras before this proposition, we note that the above algebra is isomorphic to the following one
\begin{equation*}
\begin{tikzpicture}[baseline=-2.6pt,description/.style={fill=white,inner sep=1pt,outer sep=0}]
\matrix (m) [matrix of math nodes, row sep=2.5em, text height=1.5ex, column sep=3em, text depth=0.25ex, ampersand replacement=\&, inner sep=3.5pt]
{
0\& 12 \&3 \\ 
};
\path[->,line width=.4pt, font=\scriptsize] (m-1-1) edge node[above=.01ex] {$p_0$} (m-1-2);
\path[->, line width=.5pt, font=\scriptsize, looseness=12,in=150,out=30] (m-1-2);
\path[->,line width=.4pt, font=\scriptsize, out=120, in=60, looseness=8] (m-1-2) edge node[below=.01ex]{$\epsilon$} (m-1-2);
\path[->,line width=.4pt, font=\scriptsize] (m-1-2) edge node[above=.01ex] {$p_1$} (m-1-3);
\end{tikzpicture}
\end{equation*}
with relations $\epsilon^2 = \epsilon$ and $p_1p_0=0$. In this way, we see that $\H^\bullet(\mathbf A_\Delta)$ is isomorphic to a graded skew-gentle algebra. 
\end{proof}

\begin{corollary}
\label{corollary:formalgenerator}
Let $\mathbf S = (S, \Sigma, \eta)$ be a graded orbifold surface with stops and $\Gamma$ be any formal generator of $\mathcal W (\mathbf S)$. Then the graded endomorphism algebra of $\Gamma$ in $\mathcal W (\mathbf S)$ is (perfect) derived equivalent to a graded skew-gentle algebra.
\end{corollary}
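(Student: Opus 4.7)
The plan is to combine the existence result for a skew-gentle formal dissection (Theorem~\ref{theorem:skewgentlealgebras}) with the standard fact that a formal generator reconstructs a triangulated category as the perfect derived category of its cohomology endomorphism algebra. There is essentially nothing geometric left to do: all of the geometric input has already been packaged into Theorem~\ref{theorem:skewgentlealgebras}, and the corollary reduces to a structural statement about formal generators.

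First, by Theorem~\ref{theorem:skewgentlealgebras} we may fix a formal admissible dissection $\Delta_0$ of $\mathbf S$ such that $A := \H^\bullet(\mathbf A_{\Delta_0})$ is a graded skew-gentle algebra and $\mathcal W(\mathbf S) \simeq \per(A)$. Next, for any formal generator $\Gamma$ of $\mathcal W(\mathbf S)$, I would lift $\Gamma$ to an object (still denoted $\Gamma$) in the pretriangulated A$_\infty$ enhancement $\tw(\mathbf A_{\Delta_0})^\natural$ of $\mathcal W(\mathbf S)$, and consider its A$_\infty$ endomorphism algebra $\mathbf E := \End_{\tw(\mathbf A_{\Delta_0})^\natural}(\Gamma)$. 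Formality of $\Gamma$ means that $\mathbf E$ is A$_\infty$-quasi-isomorphic to its cohomology $B := \H^\bullet(\mathbf E) = \bigoplus_{n \in \mathbb Z} \Hom_{\mathcal W(\mathbf S)}(\Gamma, \s^n \Gamma)$ equipped with trivial higher products, i.e.\ $\mathbf E$ is Morita equivalent to $B$ (Remark~\ref{remark:quasimoritaequivalence}).

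Now I would invoke the generation hypothesis: since $\Gamma$ split-generates $\mathcal W(\mathbf S)$, the Yoneda-type functor $\Hom(\Gamma, -)$ induces a triangulated equivalence $\mathcal W(\mathbf S) \simeq \per(\mathbf E)$. Combining this with the Morita equivalence $\mathbf E \simeq B$ (which induces $\per(\mathbf E) \simeq \per(B)$ by Remark~\ref{remark:perfectderived}) yields
\[
\per(B) \simeq \mathcal W(\mathbf S) \simeq \per(A).
\]
Hence $B$ is (perfect) derived equivalent to the graded skew-gentle algebra $A$, which is exactly the statement of the corollary. Since every step is either a direct invocation of a previously established result or a formal consequence of the definition of formality and generation, there is no genuine obstacle to the argument; the only subtlety is making sure that the notion of ``formal generator'' of the triangulated category $\mathcal W(\mathbf S)$ is interpreted at the level of its A$_\infty$ enhancement $\tw(\mathbf A_{\Delta_0})^\natural$, which is precisely the setting in which formality and Morita equivalence are controlled throughout the paper.
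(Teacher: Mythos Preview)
Your proposal is correct and follows essentially the same route as the paper's proof: both arguments combine Theorem~\ref{theorem:skewgentlealgebras} (to identify $\mathcal W(\mathbf S)$ with $\per(A)$ for a graded skew-gentle $A$) with the standard fact that a formal generator $\Gamma$ yields $\mathcal W(\mathbf S)\simeq\per(\H^\bullet(\mathbf E))$ via its A$_\infty$ endomorphism algebra $\mathbf E$. The only cosmetic difference is that you fix the enhancement $\tw(\mathbf A_{\Delta_0})^\natural$ explicitly before lifting $\Gamma$, whereas the paper speaks directly of the derived endomorphism A$_\infty$ algebra without naming a particular model.
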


\begin{proof}
If $\Gamma$ is any generator and $\mathbf A $ is its derived endomorphism A$_\infty$ algebra, then we have a triangulated equivalence
\[
\mathcal W (\mathbf S) \simeq \H^0 (\tw (\mathbf A)^\natural).
\]
Since $\Gamma$ is a formal generator, $\mathbf A$ is A$_\infty$-quasi-isomorphic to its cohomology algebra $\H^\bullet (\mathbf A)$ which is isomorphic to the graded endomorphism algebra of $\Gamma$ in $\mathcal W(\mathbf S)$. We thus have triangulated equivalences
\[
\per (\H^\bullet (\mathbf A)) \simeq \H^0 (\tw (\H^\bullet (\mathbf A))^\natural) \simeq \mathcal W (\mathbf S) \simeq \per (A)
\]
for some skew-gentle algebra $A$ associated to $\mathbf S$, where the last equivalence follows from Theorem \ref{theorem:skewgentlealgebras}.
\end{proof}

We formulate the following conjecture which can be viewed as a converse of Corollary \ref{corollary:formalgenerator}.

\begin{conjecture}
\label{conjecture:skewgentle}
Let $A$ be a graded skew-gentle algebra and let $\mathbf S = (S, \Sigma, \eta)$ be the graded orbifold surface associated to $A$.

For any graded associative algebra $B$ which is (perfect) derived equivalent to $A$ there exists a formal dissection $\Delta$ of $\mathbf S$ such that $B \simeq \H^\bullet (\mathbf A_\Delta)$ as graded associative algebras.

In particular, the formal dissections of $\mathbf S$ describe the complete class of graded associative algebras derived equivalent to $A$.
\end{conjecture}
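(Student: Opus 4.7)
The plan is to show that any graded associative algebra $B$ derived equivalent to $A$ is realised as $\H^\bullet(\mathbf A_\Delta)$ for some formal dissection $\Delta$ of $\mathbf S$. My approach proceeds in three stages. First, I translate the algebraic statement into geometry: by Theorem~\ref{theorem:skewgentlealgebras} we have $\per(A)\simeq\mathcal W(\mathbf S)$, so a derived equivalence $\per(B)\simeq\per(A)$ transports to a compact generator $T\in\mathcal W(\mathbf S)$ whose derived endomorphism A$_\infty$-algebra is quasi-isomorphic to $B$. Since $B$ is an associative graded algebra with trivial higher operations, $T$ must be a \emph{formal} generator of $\mathcal W(\mathbf S)$, and conversely every such formal generator yields a candidate $B$.

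Second, I represent $T$ geometrically. Using the description of $\mathcal W(\mathbf S)$ via twisted complexes in Section~\ref{section:ainfinity}, the generator $T$ can be written as a finite direct sum of twisted complexes in $\tw(\mathbf A_{\Delta_0})^\natural$ for some reference dissection $\Delta_0$. Each indecomposable summand of $T$ should be identifiable with a graded curve on $\mathbf S$ of a restricted form: either a graded arc, or an iterated mapping cone built from boundary and orbifold paths which realises a ``tagged'' arc connecting to an orbifold point, in the spirit of Remark~\ref{remark:tagging}. The goal is to show that these curves assemble into a weakly admissible dissection $\Delta$ of $\mathbf S$, with the tagged and untagged summands corresponding exactly to the twisted-complex arcs around orbifold points.

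Third, I would upgrade $\Delta$ to a formal dissection. Once $\Delta$ is constructed, the A$_\infty$ category $\mathbf A_\Delta$ from Section~\ref{section:ainfinity} is a model for $\R\End(T)$, and the formality of $T$ forces $\mathbf A_\Delta$ to be formal. Theorem~\ref{theorem:formaldg}, together with the exceptional case in Remark~\ref{remark:disktwoorbifoldonestop} handled directly, then forces $\Delta$ itself to be a formal dissection, so that $B\simeq\H^\bullet(\mathbf A_\Delta)$ as graded associative algebras.

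The main obstacle will be the second stage: showing that every formal compact generator actually corresponds to a family of curves that intersect only at orbifold points and partition $\mathbf S$ into pieces of the type required by Definition~\ref{definition:dissection}. A priori a generator is just an abstract collection of twisted complexes, and it is unclear that its summands can be represented by pairwise non-crossing curves rather than by curves whose transverse intersections conspire to cancel in cohomology. In the smooth gentle case this difficulty has been overcome using the combinatorial derived invariants of Avella-Alaminos--Geiss and the homotopy class of the line field (\cite{opperplamondonschroll,jinschrollwang,amiotplamondonschroll}); extending the argument to the orbifold setting requires (a) identifying derived invariants that record both the orbifold structure and the tagging/twisted-complex information, (b) proving that two formal dissections of $\mathbf S$ having the same invariants are connected by the ribbon-complex moves of \S\ref{subsubsection:operations} (edge contraction/expansion and the flips of Lemma~\ref{lemma:4gon}) which preserve formality, and (c) ruling out exotic formal generators not coming from any dissection, e.g.\ by showing that any such generator contains, as a full A$_\infty$ subcategory, a non-formal piece obstructed by an orbifold disk sequence of length $\geq 2$ and hence violating Proposition~\ref{proposition:formalcriterion}. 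A complementary difficulty is that one must control the choice of orbifold stops and linear orders globally, since the same underlying algebra $B$ can be presented by several non-isotopic but derived-equivalent dissections; this amounts to a uniqueness-up-to-moves statement that parallels, in the orbifold setting, the Schröer--Zimmermann invariance of the class of gentle algebras under derived equivalence.
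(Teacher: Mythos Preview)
The statement you are attempting to prove is explicitly labelled a \emph{conjecture} in the paper and carries no proof there; the authors present it as an open problem, motivated by the analogous (known) closure of gentle algebras under derived equivalence in the ungraded case \cite{schroeerzimmermann}. There is therefore no ``paper's own proof'' to compare your attempt against.

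Your proposal is not a proof but a strategy outline, and you are candid about this. Stages~1 and~3 are essentially correct reductions: a derived equivalence does yield a formal generator $T$ of $\mathcal W(\mathbf S)$, and if $T$ were known to arise from a weakly admissible dissection $\Delta$ then Theorem~\ref{theorem:formaldg} would indeed force $\Delta$ to be formal. The entire content of the conjecture, however, lies in your Stage~2, and there you do not supply an argument --- you list desiderata (a), (b), (c) without establishing any of them. In particular, the claim that ``each indecomposable summand of $T$ should be identifiable with a graded curve on $\mathbf S$ of a restricted form'' and that these curves ``assemble into a weakly admissible dissection'' is precisely what is unknown. Even in the smooth gentle case, the closure under derived equivalence is not proved by representing an arbitrary tilting object as an arc system; rather, one uses combinatorial or numerical invariants to constrain the target algebra directly. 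No analogue of those invariants sufficient to pin down the orbifold data is currently available, and your item~(c) --- ruling out exotic formal generators via Proposition~\ref{proposition:formalcriterion} --- would require exhibiting a non-formal full subcategory inside \emph{every} generator not coming from a dissection, which is far from automatic. As written, your proposal identifies the right obstacle but does not overcome it; it remains a conjecture.
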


It was shown in \cite[Theorem 3.8]{labardinifragososchrollvaldivieso} that, with the exception of the skew-gentle algebras given by one arrow between two vertices and either one or two special loops,
\[
\begin{tikzpicture}[baseline=-2.6pt,description/.style={fill=white,inner sep=1pt,outer sep=0}]
\matrix (m) [matrix of math nodes, row sep=2.5em, text height=1.5ex, column sep=3em, text depth=0.25ex, ampersand replacement=\&, inner sep=3.5pt]
{
0\& 1 \&  \& 0 \& 1 \&  \& 0 \& 1 \\ 
};
\path[->,line width=.4pt, font=\scriptsize] (m-1-1) edge node[above=.01ex] {} (m-1-2);
\path[->,line width=.4pt, font=\scriptsize, out=120, in=60, looseness=5] (m-1-2) edge node[below=.01ex]{} (m-1-2);
\path[->,line width=.4pt, font=\scriptsize] (m-1-4) edge node[above=.01ex] {} (m-1-5);
\path[->,line width=.4pt, font=\scriptsize, out=120, in=60, looseness=5] (m-1-4) edge node[below=.01ex]{} (m-1-4);
\path[->,line width=.4pt, font=\scriptsize] (m-1-7) edge node[above=.01ex] {} (m-1-8);
\path[->,line width=.4pt, font=\scriptsize, out=120, in=60, looseness=5] (m-1-7) edge node[below=.01ex]{} (m-1-7);
\path[->,line width=.4pt, font=\scriptsize, out=120, in=60, looseness=5] (m-1-8) edge node[below=.01ex]{} (m-1-8);
\end{tikzpicture}
\]
the orbifold surface associated to a skew-gentle algebra is unique up to diffeomorphism. These three cases are exactly those for which the skew-gentle algebra is isomorphic to a gentle algebra in which case the algebra also admits a smooth surface model, also see \cite[Lemma 3.6]{amiotbruestle}.

In Conjecture \ref{conjecture:skewgentle} the (orbifold) surface is thus unique up to diffeomorphism except for these three cases. In the case of one special loop, the orbifold surface model is a disk with one orbifold point and two stops (a Weinstein sector of type $\mathrm D_3$) and the smooth surface model (as a gentle algebra) is a disk with four stops (a Weinstein sector of type $\mathrm A_3$). The fact that there are two surface models can be viewed as an incarnation of the fact that the Dynkin diagrams of type $\mathrm A_3$ and $\mathrm D_3$ coincide. In the case of two special loops, the orbifold surface model is a disk with two orbifold points and one stop (see Remark \ref{remark:disktwoorbifoldonestop}) and the smooth surface model is an annulus with two stops on each boundary component.

In the case when $\mathbf S$ is a disk with one orbifold point and stops in the boundary, Conjecture \ref{conjecture:skewgentle} relates the dissections of $\mathbf S$ and the algebras derived equivalent to $\Bbbk Q$ where $Q$ is a quiver of type D. This case has recently been shown in \cite{amiotplamondon}.

Conjecture \ref{conjecture:skewgentle} generalizes the following folklore conjecture about graded gentle algebras.

\begin{conjecture}
\label{conjecture:gentle}
The class of graded gentle algebras is closed under derived equivalence. In particular, if $\mathbf S = (S, \Sigma, \eta)$ is a graded {\it smooth} surface with stops, then any formal generator of $\mathcal W (\mathbf S)$ is given by a formal dissection of $\mathbf S$.
\end{conjecture}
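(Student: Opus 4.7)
The plan is to reduce the conjecture to a classification of formal generators of $\mathcal W(\mathbf S)$ for a graded smooth surface with stops $\mathbf S$, thereby parallelling (in the graded surface setting) the strategy by which Schröer--Zimmermann established the ungraded case. Given a graded gentle algebra $A$, associate to it the graded smooth surface with stops $\mathbf S = \mathbf S(A)$ via a formal dissection $\Delta_A$ with $\H^\bullet(\mathbf A_{\Delta_A}) = A$; such $\mathbf S(A)$ exists and is unique up to diffeomorphism preserving the homotopy class of the line field (a fact that follows from the constructions in Opper--Plamondon--Schroll and Lekili--Polishchuk, see also \cite{jinschrollwang}). By the smooth-surface specialization of Theorem~\ref{theorem:formaldg}\ref{formaldg2}, we have a triangulated equivalence $\per(A) \simeq \mathcal W(\mathbf S(A))$. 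If $B$ is any graded associative algebra with $\per(B) \simeq \per(A)$, then $B$ is the graded endomorphism algebra of a compact generator of $\mathcal W(\mathbf S)$ whose derived endomorphism A$_\infty$ algebra is formal; the conjecture will follow once we know that every such formal generator is Morita equivalent to some $\mathbf A_{\Delta'}$ for a formal dissection $\Delta'$ of $\mathbf S$.

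The classification of formal generators is then the heart of the matter, which I would approach in two stages. First, invoke (or extend to the graded setting) the classification of indecomposable objects in $\mathcal W(\mathbf S)$ by graded curves on $\mathbf S$ with finite-dimensional local systems, as established by Haiden--Katzarkov--Kontsevich in the smooth case: up to isomorphism in $\mathcal W(\mathbf S)$ every indecomposable object is represented either by a graded arc (with endpoints in $\partial S \smallsetminus \Sigma$) carrying an indecomposable local system, or by a graded closed curve with such a local system. Second, exploit the formality hypothesis to exclude the non-arc cases and to restrict the arc configurations: a generator containing a closed curve will carry nontrivial higher products associated to the winding of the curve (the minimal A$_\infty$ algebra of a simple closed loop is known to be non-formal when the loop is essential), and a generator whose arcs bound an $n$-gon with $n \geq 3$ and no boundary or full boundary stop has a nonzero $\mucirc_n$ which, together with compositions $\mubar_2$ with any outgoing or incoming path, cannot be a gauge-equivalent to zero in any minimal model (this is the converse direction of Theorem~\ref{theorem:formaldg} applied to the smooth case and can be proved by a Hochschild cohomology obstruction argument as in Proposition~\ref{proposition:formalcriterion}). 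These two steps force the generating collection to consist of pairwise disjoint graded arcs cutting $\mathbf S$ into polygons each containing at most one (full) boundary stop, that is, into a formal dissection.

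A separate subtlety concerns generation: one needs the resulting arc collection $\Delta'$ to split-generate $\mathcal W(\mathbf S)$. This can be handled via topological arguments — an arc system which does not dissect $\mathbf S$ into polygons each with at most one stop fails to split-generate because a suitable ``dual'' object (a compact Lagrangian, or an arc in the missing region) is not in its thick closure. Combined with the classification above, this shows $\Delta'$ is a formal dissection and hence $B \simeq \H^\bullet(\mathbf A_{\Delta'})$ is a graded gentle algebra, proving the conjecture.

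The hard part will be the ``formality forces polygon structure'' step, which in the wrapped setting requires controlling the A$_\infty$ minimal model of the endomorphism algebra of a noncompact collection of curves; the obstruction theory for formality (via Hochschild cohomology in degree $2$ and the interpretation \eqref{eq:cocycle}) is the most promising route, since the cocycles $\mucirc_n^\theta$ associated to smooth $n$-gons without stops provide explicit nonvanishing obstructions, but establishing the absence of cancellations between different such cocycles in a minimal model for an arbitrary arc system appears to be the main technical challenge. A secondary difficulty, in light of Conjecture~\ref{conjecture:skewgentle}, is the handling of the exceptional small cases noted after Conjecture~\ref{conjecture:skewgentle} where the associated surface of a gentle algebra is not uniquely determined; these must be treated by hand.
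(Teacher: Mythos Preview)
The statement you are addressing is presented in the paper as an open \emph{conjecture}, not as a theorem with a proof. The paper does not attempt to prove Conjecture~\ref{conjecture:gentle}; it is stated immediately after Conjecture~\ref{conjecture:skewgentle} as a specialization to the smooth case, and the paper only remarks that the ungraded version of the first sentence is a theorem of Schröer--Zimmermann \cite{schroeerzimmermann}. There is therefore no proof in the paper against which to compare your proposal.

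What you have written is a reasonable research outline rather than a proof, and to your credit you acknowledge as much (``the hard part will be\dots'', ``appears to be the main technical challenge''). Your strategy --- reduce to classifying formal generators of $\mathcal W(\mathbf S)$, invoke the curve classification of indecomposables from \cite{haidenkatzarkovkontsevich}, and then argue that formality of the endomorphism A$_\infty$ algebra forces the underlying arc system to be a formal dissection --- is a plausible line of attack, and the obstruction-theoretic idea via $\HH^2$ and the cocycles $\mucirc_n^\theta$ is in the spirit of the paper's own arguments (cf.\ the proof of Theorem~\ref{theorem:formaldg}\ref{formaldg2}). However, the step you flag as hard is genuinely open: ruling out cancellations among different disk-sequence cocycles in a minimal model of an arbitrary arc collection is not something the paper establishes, and no argument in the literature currently closes this gap in the graded setting. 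Your proposal should therefore be read as a plan, not a proof; the conjecture remains open.
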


In the ungraded case, the first part of the statement of Conjecture \ref{conjecture:gentle} is a theorem of Schröer and Zimmermann \cite{schroeerzimmermann} who noted that ``[e]xamples of classes of algebras closed under derived equivalence are very rare and show that gentle algebras deserve much attention''. We also expect an ungraded version of Conjecture \ref{conjecture:skewgentle} to hold. Note that building on \cite{lekilipolishchuk2}, it is shown in  \cite{amiotplamondonschroll,jinschrollwang} that  the derived equivalence class of a (ungraded or graded) gentle algebra is determined by the homotopy class of the line field on the associated smooth surface and we expect an analogous result for orbifold surfaces.

\subsection{Examples of associative algebras derived equivalent to a skew-gentle algebra}\label{subsection:orbifolddiskthree}

In this subsection, we provide explicit examples of formal dissections on the orbifold disk with three orbifold points with one stop on the boundary. 

Let $\mathbf S = (S, \Sigma, \eta)$ be the graded orbifold disk with three orbifold points and with one stop on the boundary. Consider the five DG dissections $\Delta_1, \dotsc, \Delta_5$ on $\mathbf S$ illustrated in Figs.~\ref{fig:dgdissections} and \ref{fig:dgnonformaldissection}. Note that the first four are formal and the fifth is not formal (cf.\ Definition \ref{definition:DGformal}). 
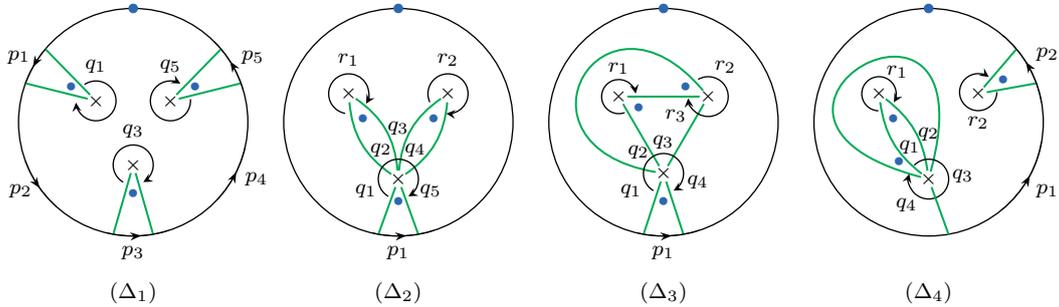
\begin{figure}[ht]
\begin{tikzpicture}[x=1em,y=1em,decoration={markings,mark=at position 0.55 with {\arrow[black]{Stealth[length=4.2pt]}}}]
\begin{scope}[scale=.98]
\begin{scope}[xshift=0]
\draw[line width=.5pt] circle(4em);
\node[font=\scriptsize,shape=circle,scale=.6,fill=white] (X) at (30:1.5em) {};
\node[font=\scriptsize] at (30:1.5em) {$\times$};
\node[font=\scriptsize,shape=circle,scale=.6,fill=white] (Y) at (-90:1.5em) {};
\node[font=\scriptsize] at (-90:1.5em) {$\times$};
\node[font=\scriptsize,shape=circle,scale=.6,fill=white] (Z) at (150:1.5em) {};
\node[font=\scriptsize] at (150:1.5em) {$\times$};
\draw[->, line width=.5pt] ($(30:1.5em)+(5:.7em)$) arc[start angle=5, end angle=-300, radius=.7em];
\node[font=\scriptsize] at ($(30:1.5em)+(90:1.2em)$) {$q_5$};
\node[font=\scriptsize,color=stopcolour] at (30:2.5em) {$\bullet$};
\draw[->, line width=.5pt] ($(-90:1.5em)+(240:.7em)$) arc[start angle=240, end angle=-60, radius=.7em];
\node[font=\scriptsize] at ($(-90:1.5em)+(90:1.2em)$){$q_3$};
\node[font=\scriptsize,color=stopcolour] at (-90:2.5em) {$\bullet$};
\draw[->, line width=.5pt] ($(150:1.5em)+(125:.7em)$) arc[start angle=125, end angle=-175, radius=.7em];
\node[font=\scriptsize] at ($(150:1.5em)+(90:1.2em)$) {$q_1$};
\node[font=\scriptsize,color=stopcolour] at (150:2.5em) {$\bullet$};
\draw[line width=.75pt,color=arccolour] (-100:4em) to (Y);
\draw[line width=.75pt,color=arccolour] (-80:4em) to (Y);
\draw[line width=.75pt,color=arccolour] (20:4em) to (X);
\draw[line width=.75pt,color=arccolour] (40:4em) to (X);
\draw[line width=.75pt,color=arccolour] (140:4em) to (Z);
\draw[line width=.75pt,color=arccolour] (160:4em) to (Z);
\draw[line width=0pt,postaction={decorate}] (140:4em) arc[start angle=140, end angle=160, radius=4em];
\node[font=\scriptsize] at (150:4.6em) {$p_1$};
\draw[line width=0pt,postaction={decorate}] (200:4em) arc[start angle=200, end angle=240-17+5, radius=4em];
\node[font=\scriptsize] at (211.5:4.6em) {$p_2$};
\draw[line width=0pt,postaction={decorate}] (240+17:4em) arc[start angle=240+17, end angle=300-17+5, radius=4em];
\node[font=\scriptsize] at (270:4.6em) {$p_3$};
\draw[line width=0pt,postaction={decorate}] (300+17:4em) arc[start angle=300+17, end angle=360-17+5, radius=4em];
\node[font=\scriptsize] at (335:4.75em) {$p_4$};
\draw[line width=0pt,postaction={decorate}] (17:4em) arc[start angle=17, end angle=40+5, radius=4em];
\node[font=\scriptsize] at (28.5:4.75em) {$p_5$};
\draw[fill=stopcolour, color=stopcolour] (90:4em) circle(.15em);
\node[font=\scriptsize] at (-90:6em) {$(\Delta_1)$};
\end{scope}
\begin{scope}[xshift=9.25em]
\draw[line width=.5pt] circle(4em);
\node[font=\scriptsize,shape=circle,scale=.6,fill=white] (X) at (30:2em) {};
\node[font=\scriptsize] at (30:2em) {$\times$};
\node[font=\scriptsize,shape=circle,scale=.6,fill=white] (Y) at (-90:2em) {};
\node[font=\scriptsize] at (-90:2em) {$\times$};
\node[font=\scriptsize,shape=circle,scale=.6,fill=white] (Z) at (150:2em) {};
\node[font=\scriptsize] at (150:2em) {$\times$};
\draw[-, line width=.75pt,color=arccolour, bend left=25] (Y) to (Z);
\draw[-,line width=.75pt,color=arccolour, bend right=25] (Y) to (Z);
\draw[-, line width=.75pt,color=arccolour, bend left=25] (X) to (Y);
\draw[-,line width=.75pt,color=arccolour, bend right=25] (X) to (Y);
\draw[->, line width=.5pt] ($(30:2em)+(205:.7em)$) arc[start angle=205, end angle=-95, radius=.7em];
\node[font=\scriptsize] at ($(30:2em)+(90:1.2em)$) {$r_2$};
\node[font=\scriptsize,color=stopcolour] at ($(30:2em)+(-120:1em)$) {$\bullet$};
\draw[->, line width=.5pt] ($(-90:2em)+(240:.7em)$) arc[start angle=240, end angle=-60, radius=.7em];
\node[font=\scriptsize] at ($(-90:2em)+(200:1.2em)$){$q_1$};
\node[font=\scriptsize] at ($(-90:2em)+(120:1.2em)$){$q_2$};
\node[font=\scriptsize] at ($(-90:2em)+(90:1.8em)$){$q_3$};
\node[font=\scriptsize] at ($(-90:2em)+(60:1.2em)$){$q_4$};
\node[font=\scriptsize] at ($(-90:2em)+(-20:1.2em)$){$q_5$};
\node[font=\scriptsize,color=stopcolour] at (-90:2.8em) {$\bullet$};
\draw[->, line width=.5pt] ($(150:2em)+(260:.7em)$) arc[start angle=260, end angle=-30, radius=.7em];
\node[font=\scriptsize] at ($(150:2em)+(90:1.2em)$) {$r_1$};
\node[font=\scriptsize,color=stopcolour] at ($(150:2em)+(-60:1em)$) {$\bullet$};
\draw[line width=.75pt,color=arccolour] (-100:4em) to (Y);
\draw[line width=.75pt,color=arccolour] (-80:4em) to (Y);
\draw[line width=0pt,postaction={decorate}] (240+17:4em) arc[start angle=240+17, end angle=300-17+5, radius=4em];
\node[font=\scriptsize] at (270:4.6em) {$p_1$};
\draw[fill=stopcolour, color=stopcolour] (90:4em) circle(.15em);
\node[font=\scriptsize] at (-90:6em) {$(\Delta_2)$};
\end{scope}
\begin{scope}[xshift=18.5em]
\draw[line width=.5pt] circle(4em);
\node[font=\scriptsize,shape=circle,scale=.6,fill=white] (X) at (30:1.8em) {};
\node[font=\scriptsize] at (30:1.8em) {$\times$};
\node[font=\scriptsize,shape=circle,scale=.6,fill=white] (Y) at (-90:1.8em) {};
\node[font=\scriptsize] at (-90:1.8em) {$\times$};
\node[font=\scriptsize,shape=circle,scale=.6,fill=white] (Z) at (150:1.8em) {};
\node[font=\scriptsize] at (150:1.8em) {$\times$};
\draw[line width=.75pt, color=arccolour, line cap=round] (X) to[out=130, in=170, looseness=4.2] (Y);
\draw[-, line width=.75pt,color=arccolour] (Y) to (Z);
\draw[-, line width=.75pt,color=arccolour] (X) to (Y);
\draw[-,line width=.75pt,color=arccolour] (Z) to (X);
\node[font=\scriptsize,color=stopcolour] at ($(150:2em)+(-30:1em)$) {$\bullet$};
\node[font=\scriptsize,color=stopcolour] at ($(30:2em)+(165:1em)$) {$\bullet$};
\draw[->, line width=.5pt] ($(30:1.8em)+(120:.7em)$) arc[start angle=120, end angle=-175, radius=.7em];
\node[font=\scriptsize] at ($(30:1.8em)+(60:1.2em)$) {$r_2$};
\node[font=\scriptsize] at ($(30:1.8em)+(-150:1.3em)$) {$r_3$};
\draw[->, line width=.5pt] ($(-90:1.8em)+(240:.7em)$) arc[start angle=240, end angle=-60, radius=.7em];
\node[font=\scriptsize] at ($(-90:2em)+(200:1.2em)$){$q_1$};
\node[font=\scriptsize] at ($(-90:2em)+(132:1.3em)$){$q_2$};
\node[font=\scriptsize] at ($(-90:2em)+(90:1.3em)$){$q_3$};
\node[font=\scriptsize] at ($(-90:2em)+(-5:1.2em)$){$q_4$};
\node[font=\scriptsize,color=stopcolour] at (-90:2.8em) {$\bullet$};
\draw[->, line width=.5pt] ($(150:1.8em)+(290:.6em)$) arc[start angle=290, end angle=10, radius=.6em];
\node[font=\scriptsize] at ($(150:1.8em)+(90:1em)$) {$r_1$};
\draw[line width=.75pt,color=arccolour] (-100:4em) to (Y);
\draw[line width=.75pt,color=arccolour] (-80:4em) to (Y);
\draw[line width=0pt,postaction={decorate}] (240+17:4em) arc[start angle=240+17, end angle=300-17+5, radius=4em];
\node[font=\scriptsize] at (270:4.6em) {$p_1$};
\draw[fill=stopcolour, color=stopcolour] (90:4em) circle(.15em);
\node[font=\scriptsize] at (-90:6em) {$(\Delta_3)$};
\end{scope}
\begin{scope}[xshift=27.75em]
\draw[line width=.5pt] circle(4em);
\node[font=\scriptsize,shape=circle,scale=.6,fill=white] (X) at (30:2em) {};
\node[font=\scriptsize] at (30:2em) {$\times$};
\node[font=\scriptsize,shape=circle,scale=.6,fill=white] (Y) at (-90:2em) {};
\node[font=\scriptsize] at (-90:2em) {$\times$};
\node[font=\scriptsize,shape=circle,scale=.6,fill=white] (Z) at (150:2em) {};
\node[font=\scriptsize] at (150:2em) {$\times$};
\draw[-, line width=.75pt,color=arccolour, bend left=22] (Y) to (Z);
\draw[-,line width=.75pt,color=arccolour, bend right=22] (Y) to (Z);
\draw[line width=.75pt, color=arccolour, line cap=round] (Y) to[out=165, in=75, looseness=50] (Y);
\draw[->, line width=.5pt] ($(30:2em)+(5:.65em)$) arc[start angle=0, end angle=-305, radius=.65em];
\node[font=\scriptsize] at ($(30:2em)+(-90:1em)$) {$r_2$};
\node[font=\scriptsize,color=stopcolour] at (30:3em) {$\bullet$};
\draw[->, line width=.5pt] ($(-90:2em)+(136:.7em)$) arc[start angle=136, end angle=-195, radius=.7em];
\node[font=\scriptsize] at ($(-90:2em)+(120:1.3em)$){$q_1$};
\node[font=\scriptsize] at ($(-90:2em)+(90:1.6em)$){$q_2$};
\node[font=\scriptsize] at ($(-90:2em)+(5:1.2em)$){$q_3$};
\node[font=\scriptsize] at ($(-90:2em)+(230:1.2em)$){$q_4$};
\draw[->, line width=.5pt] ($(150:2em)+(268:.6em)$) arc[start angle=268, end angle=-30, radius=.6em];
\node[font=\scriptsize] at ($(150:2em)+(50:1em)$) {$r_1$};
\node[font=\scriptsize,color=stopcolour] at ($(150:2em)+(-60:1em)$) {$\bullet$};
\node[font=\scriptsize,color=stopcolour] at ($(-90:2em)+(150:1.2em)$) {$\bullet$};
\draw[line width=.75pt,color=arccolour] (-80:4em) to (Y);
\draw[line width=.75pt,color=arccolour] (40:4em) to (X);
\draw[line width=.75pt,color=arccolour] (20:4em) to (X);
\draw[line width=0pt,postaction={decorate}] (30-20:4em) arc[start angle=30-20, end angle=30+20, radius=4em];
\node[font=\scriptsize] at (30:4.8em) {$p_2$};
\draw[line width=0pt,postaction={decorate}] (-30-20:4em) arc[start angle=-30-20, end angle=-30+20, radius=4em];
\node[font=\scriptsize] at (-30:4.8em) {$p_1$};
\draw[fill=stopcolour, color=stopcolour] (90:4em) circle(.15em);
\node[font=\scriptsize] at (-90:6em) {$(\Delta_4)$};
\end{scope}
\end{scope}
\end{tikzpicture}
\caption{Four formal dissections of a disk with three orbifold points and one boundary stop.}
\label{fig:dgdissections}
\end{figure}

Since the $\Delta_i$'s are DG dissections, it follows that the A$_\infty$ algebras (categories) $\mathbf A_{\Delta_i}$'s are DG algebras. We now describe the five DG algebras and their cohomology algebras explicitly. 

\noindent (1)\;
The DG algebra $\mathbf A_{\Delta_1}$ is given by the following DG quiver
\[
\begin{tikzpicture}[baseline=-2.6pt,description/.style={fill=white,inner sep=1pt,outer sep=0}]
\matrix (m) [matrix of math nodes, row sep=2.5em, text height=1.5ex, column sep=3em, text depth=0.25ex, ampersand replacement=\&, inner sep=3.5pt]
{
0 \& 1 \& 2 \& 3 \& 4 \& 5\\
};
\path[->,line width=.4pt, font=\scriptsize, transform canvas={yshift=.5ex}] (m-1-1) edge node[above=.01ex] {$q_1$} (m-1-2);
\path[->,line width=.4pt, font=\scriptsize, transform canvas={yshift=-.5ex}] (m-1-1) edge node[below=.01ex] {$p_1$} (m-1-2);
\path[->,line width=.4pt, font=\scriptsize] (m-1-2) edge node[below=.01ex] {$p_2$} (m-1-3);
\path[->,line width=.4pt, font=\scriptsize, transform canvas={yshift=.5ex}] (m-1-3) edge node[above=.01ex] {$q_3$} (m-1-4);
\path[->,line width=.4pt, font=\scriptsize, transform canvas={yshift=-.5ex}] (m-1-3) edge node[below=.01ex] {$p_3$} (m-1-4);
\path[->,line width=.4pt, font=\scriptsize] (m-1-4) edge node[below=.01ex] {$p_4$} (m-1-5);
\path[->,line width=.4pt, font=\scriptsize, transform canvas={yshift=.5ex}] (m-1-5) edge node[above=.01ex] {$q_5$} (m-1-6);
\path[->,line width=.4pt, font=\scriptsize, transform canvas={yshift=-.5ex}] (m-1-5) edge node[below=.01ex] {$p_5$} (m-1-6);
\end{tikzpicture}
\]
with the relations  $p_{2} q_1 = q_3p_2= p_4q_3 = q_5 p_4 = 0$ and the differential
\[
d(p_1) = q_1,\quad  d(p_3) = q_3, \quad d(p_5) = q_5.
\]
We choose a line field $\eta$ so that $|p_i|=0$ and $|q_j|=1$ for $1\leq i \leq 5$ and $j = 1, 3, 5$.

Note that the cohomology algebra $A_1 = \H^\bullet(\mathbf A_{\Delta_1})$ is isomorphic to the following ungraded skew-gentle algebra.
\[
\begin{tikzpicture}[baseline=-2.6pt,description/.style={fill=white,inner sep=1pt,outer sep=0}]
\matrix (m) [matrix of math nodes, row sep=2.5em, text height=1.5ex, column sep=3em, text depth=0.25ex, ampersand replacement=\&, inner sep=3.5pt]
{
0 \& 2 \& 4 \\
1 \& 3 \& 5\\
};
\path[->,line width=.4pt, font=\scriptsize] (m-1-1) edge node[above=.01ex] {$p_{12}$} (m-1-2);
\path[->,line width=.4pt, font=\scriptsize] (m-1-1) edge[] node[xshift=-4ex, yshift=1.5ex] {$p_{13}$} (m-2-2);
\path[->,line width=.4pt, font=\scriptsize] (m-2-1) edge[] node[xshift=-4ex, yshift=-1.5ex] {$p_2$} (m-1-2);
\path[->,line width=.4pt, font=\scriptsize] (m-2-1) edge node[below=.01ex] {$p_{23}$} (m-2-2);
\path[->,line width=.4pt, font=\scriptsize] (m-1-2) edge node[above=.01ex] {$p_{34}$} (m-1-3);
\path[->,line width=.4pt, font=\scriptsize] (m-1-2) edge[] node[xshift=-.5ex, yshift=-2.3ex] {$p_{4}$} (m-2-3);
\path[->,line width=.4pt, font=\scriptsize] (m-2-2) edge[] node[xshift=-.5ex, yshift=2.3ex] {$p_{35}$} (m-1-3);
\path[->,line width=.4pt, font=\scriptsize] (m-2-2) edge node[below=.01ex] {$p_{45}$} (m-2-3);
\end{tikzpicture}
\]
Namely, the relations are given by the four commutative squares. Here the arrow $p_{ij}$ represents the cocycle $[p_{j} \dotsb p_{i+1}p_i]$. 

\noindent{(2)}\; The DG algebra $\mathbf A_{\Delta_2}$ is given by the following DG quiver
\[
\begin{tikzpicture}[baseline=-2.6pt,description/.style={fill=white,inner sep=1pt,outer sep=0}]
\matrix (m) [matrix of math nodes, row sep=2.5em, text height=1.5ex, column sep=3em, text depth=0.25ex, ampersand replacement=\&, inner sep=3.5pt]
{
0 \& 1 \& 2 \& 3 \& 4 \& 5\\
};
\path[->,line width=.4pt, font=\scriptsize] (m-1-1) edge node[above=.01ex] {$q_1$} (m-1-2);
\path[->,line width=.4pt, font=\scriptsize, transform canvas={yshift=-.5ex}] (m-1-2) edge node[below=.01ex] {$q_2$} (m-1-3);
\path[->,line width=.4pt, font=\scriptsize, transform canvas={yshift=.5ex}] (m-1-2) edge node[above=.01ex] {$r_1$} (m-1-3);
\path[->,line width=.4pt, font=\scriptsize, transform canvas={yshift=.5ex}] (m-1-4) edge node[above=.01ex] {$r_2$} (m-1-5);
\path[->,line width=.4pt, font=\scriptsize, transform canvas={yshift=-.5ex}] (m-1-4) edge node[below=.01ex] {$q_4$} (m-1-5);
\path[->,line width=.4pt, font=\scriptsize] (m-1-3) edge node[below=.01ex] {$q_3$} (m-1-4);
\path[->,line width=.4pt, font=\scriptsize] (m-1-5) edge node[below=.01ex] {$q_5$} (m-1-6);
\path[->,line width=.4pt, font=\scriptsize,bend right=30] (m-1-1) edge node[below=.01ex] {$p_1$} (m-1-6);
\end{tikzpicture}
\]
with the relations  $r_1 q_1 = q_3 r_1= r_2q_3 = q_5 r_2 = 0$ and the differential
\[
d(p_1) = q_5\dotsb q_2 q_1,\quad  d(q_2) = r_1, \quad d(q_4) = r_2.
\]
Here, $|p_1|=-1$ and $|q_i|=0$ for $1\leq i \leq 4$ and $|r_1|=|r_2|=1$.

The cohomology algebra $A_2 = \H^\bullet(\mathbf A_{\Delta_2})$ is isomorphic to the following ungraded algebra
\[
\begin{tikzpicture}[baseline=-2.6pt,description/.style={fill=white,inner sep=1pt,outer sep=0}]
\matrix (m) [matrix of math nodes, row sep=1.2em, text height=1.5ex, column sep=3em, text depth=0.25ex, ampersand replacement=\&, inner sep=3.5pt]
{
\& 1 \& 3 \\
0 \& \& \&5 \\
 \& 2 \& 4\\
};
\path[->,line width=.4pt, font=\scriptsize] (m-2-1) edge node[above=.01ex] {$q_{1}$} (m-1-2);
\path[->,line width=.4pt, font=\scriptsize] (m-2-1) edge node[below=.01ex] {$q_{12}$ } (m-3-2);
\path[->,line width=.4pt, font=\scriptsize] (m-1-2) edge node[above=.01ex] {$q_{23}$} (m-1-3);
\path[->,line width=.4pt, font=\scriptsize] (m-1-2) edge[] node[xshift=-4ex, yshift=2.3ex] {$q_{24}$} (m-3-3);
\path[->,line width=.4pt, font=\scriptsize] (m-3-2) edge node[below=.01ex] {$q_{34}$} (m-3-3);
\path[->,line width=.4pt, font=\scriptsize] (m-3-2) edge[] node[xshift=-4ex, yshift=-2.3ex]  {$q_3$} (m-1-3);
\path[->,line width=.4pt, font=\scriptsize] (m-1-3) edge node[above=.01ex] {$q_{45}$} (m-2-4);
\path[->,line width=.4pt, font=\scriptsize] (m-3-3) edge node[below=.01ex] {$q_5$} (m-2-4);
\end{tikzpicture}
\] 
where the relations are given by that all the squares are commutative and nonzero, and that all the paths of length $3$ are zero (e.g.\ $q_{45} q_{23} q_1= 0$). Note that $A_2$ is not a skew-gentle algebra since there are cubic relations.

\noindent{(3)}\; The DG algebra $\mathbf A_{\Delta_3}$ is given by the following DG quiver
\[
\begin{tikzpicture}[baseline=-2.6pt,description/.style={fill=white,inner sep=1pt,outer sep=0}]
\matrix (m) [matrix of math nodes, row sep=1.5em, text height=1.5ex, column sep=3em, text depth=0.25ex, ampersand replacement=\&, inner sep=3.5pt]
{
\&\&\& 4 \&\\
0 \& 1 \& 2 \&3 \& 5\\
};
\path[->,line width=.4pt, font=\scriptsize] (m-2-1) edge node[above=.01ex] {$q_1$} (m-2-2);
\path[->,line width=.4pt, font=\scriptsize] (m-2-2) edge node[above=.01ex] {$q_2$} (m-2-3);
\path[->,line width=.4pt, font=\scriptsize] (m-2-3) edge node[above=.01ex] {$r_1$} (m-1-4);
\path[->,line width=.4pt, font=\scriptsize] (m-2-4) edge node[right=.01ex] {$r_3$} (m-1-4);
\path[->,line width=.4pt, font=\scriptsize] (m-2-3) edge node[above=.01ex] {$q_3$} (m-2-4);
\path[->,line width=.4pt, font=\scriptsize] (m-2-4) edge node[above=.01ex] {$q_4$} (m-2-5);
\path[->,line width=.4pt, font=\scriptsize,bend right=30] (m-2-2) edge node[below=.01ex] {$r_2$} (m-2-4);
\path[->,line width=.4pt, font=\scriptsize,bend right=30] (m-2-1) edge node[below=.01ex] {$p_1$} (m-2-5);
\end{tikzpicture}
\]
with the relations  $r_2 q_1 = q_4 r_2 = 0,  r_1q_2= r_3r_2, r_3q_3=r_1$ and the differential $d(p_1) = q_4q_3q_2 q_1.$
Here, $|p_1|=-1$ and $|q_i|=|r_j|=0$ for $1\leq i \leq 4$ and $1\leq j \leq 3$.

The cohomology algebra $A_3 = \H^\bullet(\mathbf A_{\Delta_3})$ is isomorphic to the following ungraded algebra 
\[
\begin{tikzpicture}[baseline=-2.6pt,description/.style={fill=white,inner sep=1pt,outer sep=0}]
\matrix (m) [matrix of math nodes, row sep=1.5em, text height=1.5ex, column sep=3em, text depth=0.25ex, ampersand replacement=\&, inner sep=3.5pt]
{
\&\&\& 4 \&\\
0 \& 1 \& 2 \&3 \& 5\\
};
\path[->,line width=.4pt, font=\scriptsize] (m-2-1) edge node[above=.01ex] {$q_1$} (m-2-2);
\path[->,line width=.4pt, font=\scriptsize] (m-2-2) edge node[above=.01ex] {$q_2$} (m-2-3);
\path[->,line width=.4pt, font=\scriptsize] (m-2-4) edge node[right=.01ex] {$r_3$} (m-1-4);
\path[->,line width=.4pt, font=\scriptsize] (m-2-3) edge node[above=.01ex] {$q_3$} (m-2-4);
\path[->,line width=.4pt, font=\scriptsize] (m-2-4) edge node[above=.01ex] {$q_4$} (m-2-5);
\path[->,line width=.4pt, font=\scriptsize,bend right=30] (m-2-2) edge node[below=.01ex] {$r_2$} (m-2-4);
\end{tikzpicture}
\]
with the relations $r_2 q_1 = q_4 r_2 = 0$, $r_3(q_3q_2-r_2)= 0$ and $q_4q_3q_2 q_1=0$. We note that here as in the previous case, the algebra $A_3$ is not skew-gentle. 

\noindent{(4)}\; The DG algebra $\mathbf A_{\Delta_4}$ is given by the following DG quiver
\[
\begin{tikzpicture}[baseline=-2.6pt,description/.style={fill=white,inner sep=1pt,outer sep=0}]
\matrix (m) [matrix of math nodes, row sep=2.5em, text height=1.5ex, column sep=3em, text depth=0.25ex, ampersand replacement=\&, inner sep=3.5pt]
{
0 \& 1 \& 2 \& 3 \& 4 \& 5\\
};
\path[->,line width=.4pt, font=\scriptsize, transform canvas={yshift=.5ex}] (m-1-1) edge node[above=.01ex] {$r_1$} (m-1-2);
\path[->,line width=.4pt, font=\scriptsize, transform canvas={yshift=-.5ex}] (m-1-1) edge node[below=.01ex] {$q_1$} (m-1-2);
\path[->,line width=.4pt, font=\scriptsize] (m-1-2) edge node[below=.01ex] {$q_2$} (m-1-3);
\path[->,line width=.4pt, font=\scriptsize,transform canvas={yshift=-.5ex}] (m-1-3) edge node[below=.01ex] {$q_3$} (m-1-4);
\path[->,line width=.4pt, font=\scriptsize, transform canvas={yshift=.5ex}] (m-1-4) edge node[above=.01ex] {$q_4$} (m-1-3);
\path[->,line width=.4pt, font=\scriptsize] (m-1-4) edge node[below=.01ex] {$p_1$} (m-1-5);
\path[->,line width=.4pt, font=\scriptsize, transform canvas={yshift=.5ex}] (m-1-5) edge node[above=.01ex] {$r_2$} (m-1-6);
\path[->,line width=.4pt, font=\scriptsize, transform canvas={yshift=-.5ex}] (m-1-5) edge node[below=.01ex] {$p_2$} (m-1-6);
\end{tikzpicture}
\] 
with the relations  $q_2 r_1 = q_4q_3q_2q_1, \  q_3 q_4 =p_1 q_3 = r_2 p_1 = 0$ and the differentials $$d(q_1) = r_1, \quad d(q_2q_1) = q_2r_1,\quad d(p_2) = r_2.$$ Here, $|r_1|=|r_2|=1$ and all the other arrows are of degree $0$.

The cohomology algebra $A_4 = \H^\bullet (\mathbf A_{\Delta_4})$ is isomorphic to the following ungraded algebra 
\[
\begin{tikzpicture}[baseline=-2.6pt,description/.style={fill=white,inner sep=1pt,outer sep=0}]
\matrix (m) [matrix of math nodes, row sep=2.5em, text height=1.5ex, column sep=3em, text depth=0.25ex, ampersand replacement=\&, inner sep=3.5pt]
{
0 \& 1 \& 2 \& 3 \& 4 \& 5\\
};
\path[->,line width=.4pt, font=\scriptsize, bend right=35] (m-1-1) edge node[below=.01ex] {$q_{13}$} (m-1-4);
\path[->,line width=.4pt, font=\scriptsize] (m-1-2) edge node[below=.01ex] {$q_2$} (m-1-3);
\path[->,line width=.4pt, font=\scriptsize,transform canvas={yshift=-.5ex}] (m-1-3) edge node[below=.01ex] {$q_3$} (m-1-4);
\path[->,line width=.4pt, font=\scriptsize, transform canvas={yshift=.5ex}] (m-1-4) edge node[above=.01ex] {$q_4$} (m-1-3);
\path[->,line width=.4pt, font=\scriptsize] (m-1-4) edge node[above=.01ex] {$p_1$} (m-1-5);
\path[->,line width=.4pt, font=\scriptsize, bend right=35] (m-1-4) edge node[below=.01ex] {$p_{12}$} (m-1-6);
\end{tikzpicture}
\] 
with the quadratic monomial relations 
\begin{align*}
q_4 q_{13} = p_{12} q_{13} = p_1 q_{13} = q_3q_4= p_1q_3=p_{12} q_3 =  0.
\end{align*}  
Here, the arrow $q_{13}$ (resp.\ $p_{12}$) represents the cocycle $q_3q_2q_1$ (resp.\ $p_2p_1$) and the relation $q_4q_{13}=0$ follows since $q_4q_3q_2q_1 =q_2r_1$ is a coboundary. 

Since the above four dissections are formal, it follows from Theorem \ref{theorem:formaldg} and Theorem \ref{theorem:dgtriangleequivalent} that
\[
\per (A_1) \simeq \per (A_2) \simeq \per (A_3) \simeq \per (A_4) \simeq \mathcal W (\mathbf S).
\]
In particular, the ungraded algebras $A_1, A_2, A_3$ and $A_4$ are (perfect) derived equivalent.

\noindent{(5)}\; Finally, we also give an example of a DG dissection of $\mathbf S$ which is not formal. Let $\Delta_5$ be the dissection illustrated in Fig.~\ref{fig:dgnonformaldissection}. 

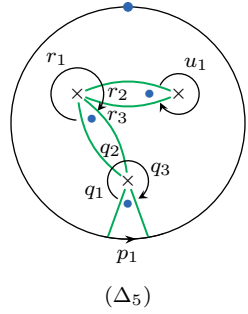
\begin{figure}[ht]
\begin{tikzpicture}[x=1em,y=1em,decoration={markings,mark=at position 0.55 with {\arrow[black]{Stealth[length=4.2pt]}}}]
\draw[line width=.5pt] circle(4em);
\node[font=\scriptsize,shape=circle,scale=.6,fill=white] (X) at (30:2em) {};
\node[font=\scriptsize] at (30:2em) {$\times$};
\node[font=\scriptsize,shape=circle,scale=.6,fill=white] (Y) at (-90:2em) {};
\node[font=\scriptsize] at (-90:2em) {$\times$};
\node[font=\scriptsize,shape=circle,scale=.6,fill=white] (Z) at (150:2em) {};
\node[font=\scriptsize] at (150:2em) {$\times$};
\draw[-, line width=.75pt,color=arccolour, bend left=22] (Y) to (Z);
\draw[-,line width=.75pt,color=arccolour, bend right=22] (Y) to (Z);
\draw[-, line width=.75pt,color=arccolour, bend left=22] (X) to (Z);
\draw[-,line width=.75pt,color=arccolour, bend right=22] (X) to (Z);
\draw[->, line width=.5pt] ($(30:2em)+(155:.7em)$) arc[start angle=155, end angle=-155, radius=.7em];
\node[font=\scriptsize] at ($(30:2em)+(60:1.2em)$) {$u_1$};
\node[font=\scriptsize,color=stopcolour] at ($(30:2em)+(180:1em)$) {$\bullet$};
\draw[->, line width=.5pt] ($(-90:2em)+(240:.7em)$) arc[start angle=240, end angle=-60, radius=.7em];
\node[font=\scriptsize] at ($(-90:2em)+(200:1.2em)$){$q_1$};
\node[font=\scriptsize] at ($(-90:2em)+(120:1.2em)$){$q_2$};
\node[font=\scriptsize] at ($(-90:2em)+(20:1.2em)$){$q_3$};
\node[font=\scriptsize,color=stopcolour] at (-90:2.8em) {$\bullet$};
\draw[->, line width=.5pt] ($(150:2em)+(268:.9em)$) arc[start angle=268, end angle=-40, radius=.9em];
\node[font=\scriptsize] at ($(150:2em)+(120:1.4em)$) {$r_1$};
\node[font=\scriptsize] at ($(150:2em)+(0:1.4em)$) {$r_2$};
\node[font=\scriptsize] at ($(150:2em)+(-30:1.6em)$) {$r_3$};
\node[font=\scriptsize,color=stopcolour] at ($(150:2em)+(-60:1em)$) {$\bullet$};
\draw[line width=.75pt,color=arccolour] (-100:4em) to (Y);
\draw[line width=.75pt,color=arccolour] (-80:4em) to (Y);
\draw[line width=0pt,postaction={decorate}] (240+17:4em) arc[start angle=240+17, end angle=300-17+5, radius=4em];
\node[font=\scriptsize] at (270:4.6em) {$p_1$};
\draw[fill=stopcolour, color=stopcolour] (90:4em) circle(.15em);
\node[font=\scriptsize] at (-90:6em) {$(\Delta_5)$};
\end{tikzpicture}
\caption{A DG and non-formal dissection.}
\label{fig:dgnonformaldissection}
\end{figure}

The DG algebra $\mathbf A_{\Delta_5}$ is given by the following DG quiver
\[
\begin{tikzpicture}[baseline=-2.6pt,description/.style={fill=white,inner sep=1pt,outer sep=0}]
\matrix (m) [matrix of math nodes, row sep=1.5em, text height=1.5ex, column sep=3em, text depth=0.25ex, ampersand replacement=\&, inner sep=3.5pt]
{
  \&   \& 2 \& 3 \&        \\
0 \& 1 \&   \&   \& 4 \& 5 \\
};
\path[->,line width=.4pt, font=\scriptsize] (m-2-1) edge node[above=.01ex] {$q_1$} (m-2-2);
\path[->,line width=.4pt, font=\scriptsize] (m-2-2) edge node[above=.01ex] {$q_2$} (m-2-5);
\path[->,line width=.4pt, font=\scriptsize] (m-2-2) edge node[above=.01ex] {$r_1$} (m-1-3);
\path[->,line width=.4pt, font=\scriptsize] (m-1-3) edge node[below=.01ex] {$r_2$} (m-1-4);
\path[->,line width=.4pt, font=\scriptsize] (m-1-4) edge node[above=.01ex] {$r_3$} (m-2-5);
\path[->,line width=.4pt, font=\scriptsize] (m-2-5) edge node[above=.01ex] {$q_3$} (m-2-6);
\path[->,line width=.4pt, font=\scriptsize,transform canvas={yshift=.5ex}] (m-1-3) edge node[above=.01ex] {$u_1$} (m-1-4);
\path[->,line width=.4pt, font=\scriptsize,bend right=20] (m-2-1) edge node[below=.01ex] {$p_1$} (m-2-6);
\end{tikzpicture}
\]
with the relations  $r_1 q_1 = u_1 r_1= r_3 u_1 = q_3 r_3 = 0$ and the differentials $$d(p_1) = q_3q_2 q_1, \quad d(q_3) = r_3r_2r_1,\quad d(r_2) = u_1.$$ Here, $|p_1|=-1, |r_2|=1, |u_1|=2$ and all the other arrows are of degree $0$.

The cohomology algebra $A_5 = \H^\bullet(\mathbf A_{\Delta_5})$ is isomorphic to the following graded algebra 
\[
\begin{tikzpicture}[baseline=-2.6pt,description/.style={fill=white,inner sep=1pt,outer sep=0}]
\matrix (m) [matrix of math nodes, row sep=1.5em, text height=1.5ex, column sep=3em, text depth=0.25ex, ampersand replacement=\&, inner sep=3.5pt]
{
  \&   \& 2 \& 3 \&        \\
0 \& 1 \&   \&   \& 4 \& 5 \\
};
\path[->,line width=.4pt, font=\scriptsize] (m-2-1) edge node[above=.01ex] {$q_1$} (m-2-2);
\path[->,line width=.4pt, font=\scriptsize] (m-2-2) edge node[above=.01ex] {$r_1$} (m-1-3);
\path[->,line width=.4pt, font=\scriptsize] (m-1-3) edge node[below=.01ex] {$r_{23}$} (m-2-5);
\path[->,line width=.4pt, font=\scriptsize] (m-1-4) edge node[above=.01ex] {$r_3$} (m-2-5);
\path[->,line width=.4pt, font=\scriptsize] (m-2-5) edge node[above=.01ex] {$q_3$} (m-2-6);
\path[->,line width=.4pt, font=\scriptsize,transform canvas={yshift=.5ex}] (m-2-2) edge node[below=.01ex] {$r_{12}$} (m-1-4);
\path[->,line width=.4pt, font=\scriptsize,bend right=20] (m-2-1) edge node[left=.03ex, below=.01ex] {$q_{12}$} (m-2-5);
\path[->,line width=.4pt, font=\scriptsize,bend right=20] (m-2-2) edge node[right=.03ex, below=.01ex] {$q_{23}$} (m-2-6);
\end{tikzpicture}
\]
where the relations are given by 
\begin{align*}
r_{3}r_{12}-r_{23}r_1=0, \quad r_1 q_1 = r_{12} q_1= q_3 r_{23} = q_3 r_3 = q_{23}q_1=q_3q_{12}=0
\end{align*}  
Note that $|r_{12}|=1=|r_{23}|$ and all the other arrows are of degree $0$. It follows from Theorem \ref{theorem:formaldg} that $\mathbf A_{\Delta_5}$ is not formal and we obtain an A$_\infty$ structure on $A_5$ given by 
\begin{align*}
(-1)^{|\s q_1|+|\s r_{12}|} \mu_3(\s r_3\otimes \s r_{12}\otimes \s q_1) &= \s q_{12}=(-1)^{|\s q_1|+|\s r_{1}|}\mu_3(\s r_{23}\otimes \s r_1\otimes \s q_1)\\
(-1)^{|\s r_{3}|}\mu_3(\s q_3\otimes\s r_3\otimes \s r_{12}) &= \s q_{23} =(-1)^{|\s r_{23}|} \mu_3(\s q_3\otimes \s r_{23}\otimes \s r_1). 
\end{align*}
With this higher structure $A_5$ is (perfect) derived equivalent to the ungraded algebras $A_1, A_2, A_3, A_4$.

\begin{remark}
The algebras $A_2, A_3$ and $A_4$ 
are not skew-gentle and (as far as we can tell) they do not appear to belong to a specific known class of (derived tame) associative algebras. However, they arise from different dissections of the same graded orbifold surface and we conjecture that algebras obtained from formal dissections of orbifold surfaces form a new class of associative algebras which is closed under derived equivalence (Conjecture \ref{conjecture:skewgentle}).
\end{remark}

\section{Partially wrapped Fukaya categories as A$_\infty$ orbit categories}
\label{section:doublecover}

In this final section, we show that the partially wrapped Fukaya categories of graded orbifold surfaces with stops are equivalent to {\it A$_\infty$ orbit categories} (as defined in \S\ref{subsection:orbitcategories}) of any double cover.

Let $\mathbf S = (S, \Sigma, \eta)$ be a graded orbifold surface with stops. Recall that a {\it double cover} $\widetilde{\mathbf S} = (\widetilde S, \widetilde \Sigma, \widetilde \eta)$ of $\mathbf S$ is a graded smooth surface  consisting of a smooth surface with boundary $\widetilde S$ with an orientation-preserving almost free $\mathbb Z_2$-action such that, a $\mathbb Z_2$-invariant set $\widetilde \Sigma$ of stops and a $\mathbb Z_2$-invariant line fieldwith stops $(\widetilde S, \widetilde \Sigma, \widetilde \eta)$ together with an orientation-preserving $\mathbb Z_2$-action such that $\widetilde{\mathbf S}/\mathbb Z_2 \simeq \mathbf S$, compare \cite[\S 4]{thurston}. (The special case of the disk with one orbifold point was treated in Section \ref{section:disk}.)

Consider the partially wrapped Fukaya category $\mathcal W(\widetilde{\mathbf S})$ of $\widetilde{\mathbf S}$ introduced in \cite{haidenkatzarkovkontsevich}. Let $\widetilde \Delta$ be an admissible dissection of $\widetilde{\mathbf S}$. Denote by $\mathbf A_{\widetilde \Delta}$ the associated graded gentle algebra (viewed as a $\Bbbk$-linear graded category) so that
$$
\mathcal W(\widetilde{\mathbf S}) \simeq \H^0(\tw(\mathbf A_{\widetilde \Delta})).
$$
If $\widetilde \Delta$ is $\mathbb Z_2$-invariant then it induces a $\mathbb Z_2$-action on $\mathbf A_{\widetilde \Delta}$ and thus on $\mathcal W(\widetilde{\mathbf S})$. 

We now show that the partially wrapped Fukaya category $\mathcal W(\mathbf S)$ can be described as the orbit category of the category $\mathcal W(\widetilde{\mathbf S})$ with respect to the $\mathbb Z_2$-action. The following theorem can be viewed as a global-to-local counterpart to the local-to-global construction of the partially wrapped Fukaya category given in Section \ref{section:cosheaves}.

\begin{theorem}
\label{theorem:doublecoverorbitcategory}
Let $\mathbf S = (S, \Sigma, \eta)$ be a graded orbifold surface with stops and let $\widetilde{\mathbf S}$ be a double cover of $\mathbf S$. There exists a weakly admissible dissection $\Delta$ of $\mathbf S$ and $\mathbb Z_2$-invariant dissection $\widetilde \Delta$ of $\widetilde{\mathbf S}$ such that
\[
(\tw (\mathbf A_\Delta))^\natural \simeq (\tw (\mathbf A_{\widetilde \Delta}) / \mathbb Z_2)^\natural \simeq (\tw (\mathbf A_{\widetilde \Delta} / \mathbb Z_2))^\natural
\]
are equivalences of pretriangulated A$_\infty$ categories, the middle term being an A$_\infty$ orbit category.

Moreover, this induces a triangulated equivalence
\[
\mathcal W (\mathbf S) \simeq (\mathcal W (\widetilde{\mathbf S}) / \mathbb Z_2)^\natural.
\]
\end{theorem}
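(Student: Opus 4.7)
The plan is to establish the two equivalences separately, with the second following almost for free from the general machinery of Section 2. For the equivalence $(\tw(\mathbf A_{\widetilde\Delta})/\mathbb Z_2)^\natural \simeq (\tw(\mathbf A_{\widetilde\Delta}/\mathbb Z_2))^\natural$, I would apply Proposition \ref{proposition:groupactiontriangulated} to the pretriangulated A$_\infty$ category $\tw(\mathbf A_{\widetilde\Delta})$ (using that $\mathrm{char}\,\Bbbk \neq 2$ so that $|G| = 2$ is invertible), noting that $\tw(\tw(\mathbf A_{\widetilde\Delta}/\mathbb Z_2))$ is Morita equivalent to $\tw(\mathbf A_{\widetilde\Delta}/\mathbb Z_2)$ since twisted complexes are idempotent-absorbent under iterated application.

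The first equivalence is the substantive content. I would begin by choosing particularly convenient dissections: start from an admissible dissection $\Delta$ of $\mathbf S$ of the special type considered in \S\ref{subsection:special}, in which each arc connecting to an orbifold point $x$ is one of the two arcs forming the boundary of the minimal orbifold $2$-cell at $\vtimes_x$. Its preimage $\widetilde\Delta$ under the double cover projection $\widetilde{\mathbf S} \to \mathbf S$ is naturally $\mathbb Z_2$-invariant: arcs of $\Delta$ that avoid the orbifold points lift to pairs $\{\widetilde\gamma, -1\cdot\widetilde\gamma\}$, while each arc ending at an orbifold point $x$ lifts to a single $\mathbb Z_2$-invariant arc $\widetilde\gamma$ passing through the fixed point over $x$. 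A short topological verification (analogous to \S\ref{subsection:orbifolddisk}) shows that $\widetilde\Delta$ is an admissible dissection of $\widetilde{\mathbf S}$, so $\mathcal W(\widetilde{\mathbf S}) \simeq \H^0(\tw(\mathbf A_{\widetilde\Delta})^\natural)$ by \cite{haidenkatzarkovkontsevich}.

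Next, I would construct a strict A$_\infty$ functor
\[
F \colon \mathbf A_\Delta \longrightarrow (\tw(\mathbf A_{\widetilde\Delta})/\mathbb Z_2)^\natural
\]
by copying the local computation of Section \ref{section:disk}. On objects, a pair $\{\widetilde\gamma, -1\cdot\widetilde\gamma\}$ is sent to the single object $\widetilde\gamma$ (which becomes isomorphic to $-1\cdot\widetilde\gamma$ in the orbit category via the natural identity morphism), whereas an invariant arc $\widetilde\gamma$ carries the primitive orthogonal idempotents $\mathrm{id}_{\widetilde\gamma}^\pm = \tfrac12(\mathrm{id}_{\widetilde\gamma},\,\pm\mathrm{id}_{\widetilde\gamma})$; one sends the corresponding arc of $\Delta$ (connecting to the orbifold point) to the direct summand $\widetilde\gamma^+$ singled out by the orbifold stop. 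Boundary paths of $\mathbf A_\Delta$ lift to boundary paths upstairs and thus correspond, via \eqref{align:compositionorbit}, to explicit morphisms in the orbit category; orbifold paths correspond to invariant pairs of intersection morphisms at fixed points, exactly as in Lemma \ref{lemma:morphisminvariant}. Faithfulness of $F$ is a direct morphism-space calculation, and fullness follows after idempotent completion by the same summand decomposition.

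Once $F$ is defined, I would verify that it respects the higher products. The associative composition $\mubar_2$ and the smooth disk products $\mucirc_n$ correspond literally to the analogous products in $\mathbf A_{\widetilde\Delta}$ (they take place in smooth polygons which lift to pairs of isomorphic smooth polygons in $\widetilde{\mathbf S}$). The crucial compatibility is with the orbifold disk products $\mutimes_n$: here the polygon in $\mathbf S$ lifts to a single $\mathbb Z_2$-invariant polygon in $\widetilde{\mathbf S}$ cut out by the invariant arcs through the fixed point. Proposition \ref{proposition:orbifolddisk} computes precisely the unique nontrivial higher product produced by homotopy transfer in this local situation, and this is exactly $\mutimes_n$ under $F$. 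Globally, each orbifold disk sequence in $\Delta$ reduces to the local orbifold disk model, so the global $\mutimes_n$ arises from the corresponding local computation in $(\tw(\mathbf A_{\widetilde\Delta})/\mathbb Z_2)^\natural$.

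Finally, I would apply Lemma \ref{lemma:twistedgenerators}: every object of $\tw(\mathbf A_{\widetilde\Delta})/\mathbb Z_2$ decomposes after idempotent completion into summands lying in the image of $\H^0(\tw(F)^\natural)$ (pairs $\{\widetilde\gamma,-1\cdot\widetilde\gamma\}$ become single objects, and invariant arcs split into $\widetilde\gamma^+\oplus\widetilde\gamma^-$ with both summands recoverable as cones of morphisms already present in the image, cf.\ Lemma \ref{lemma:splitgenerator}). Hence $F$ is a Morita equivalence. Passing to $\H^0$ and combining with the already established second equivalence yields the triangulated equivalence $\mathcal W(\mathbf S) \simeq (\mathcal W(\widetilde{\mathbf S})/\mathbb Z_2)^\natural$.

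The main obstacle I anticipate is the explicit verification that $F$ matches the higher products $\mutimes_n$ globally and compatibly at every orbifold point. While the local model of Section \ref{section:disk} is completely transparent, gluing these local A$_\infty$ data into a single strict functor across the whole surface requires careful choices of signs and idempotents consistent with the A$_\infty$ orbit category composition rule \eqref{align:compositionorbit}; one may need to replace the naive $F$ by an A$_\infty$-isomorphic non-strict functor to accommodate Koszul signs, as was already the case in the proof of Proposition \ref{proposition:orbifolddisk}.
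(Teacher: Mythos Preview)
Your overall strategy is workable, but there is one concrete gap and one significant simplification you are missing compared to the paper.

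\textbf{The gap.} Your description of $\widetilde\Delta$ as the full preimage of $\Delta$ is not quite right. In the special dissection $\Delta_0$ there are \emph{two} arcs $\alpha,\beta$ meeting at each orbifold point $x$, and each lifts to a $\mathbb Z_2$-invariant arc $\widetilde\alpha,\widetilde\beta$ through the fixed point over $x$. These two lifts intersect transversely at that fixed point, so the preimage is \emph{not} an admissible dissection of the smooth surface $\widetilde{\mathbf S}$ in the sense of \cite{haidenkatzarkovkontsevich}. The paper handles this by observing that $\widetilde\alpha$ and $\widetilde\beta$ are isotopic in $\widetilde{\mathbf S}$ and simply removes one of them from each such pair; the resulting $\widetilde\Delta$ is then genuinely admissible, and the discarded arc reappears as a twisted complex (as $\widetilde\beta$ does in Section~\ref{section:disk}). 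Your functor $F$ can still be defined after this correction, sending the two arcs of $\Delta$ at $x$ to the summands $\widetilde\alpha^+$ and $\widetilde\beta^-$ as in Lemma~\ref{lemma:splitgenerator}, but the sentence ``a short topological verification shows that $\widetilde\Delta$ is an admissible dissection'' is false as written.

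\textbf{The comparison.} The paper takes a genuinely shorter route by exploiting \emph{formality}. Since the special dissection $\Delta_0$ is formal (Theorem~\ref{theorem:formaldg}), $\mathbf A_{\Delta_0}$ is A$_\infty$-quasi-isomorphic to the graded skew-gentle algebra $\H^\bullet(\mathbf A_{\Delta_0})$. On the other side, $\mathbf A_{\widetilde\Delta}/\mathbb Z_2$ is itself a graded skew-gentle algebra, and the paper writes down an explicit equivalence of \emph{ordinary graded categories}
\[
F\colon \mathbf A_{\widetilde\Delta}/\mathbb Z_2 \;\longrightarrow\; \H^\bullet(\mathbf A_{\Delta_0}),\qquad
F(\widetilde\gamma)=\begin{cases}\gamma & \text{$\widetilde\gamma$ not $\mathbb Z_2$-invariant}\\ \gamma_x^+\oplus\gamma_x^- & \text{$\widetilde\gamma$ invariant.}\end{cases}
\]
No higher products need to be matched at all. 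Your approach instead constructs an A$_\infty$ functor into $(\tw(\mathbf A_{\widetilde\Delta})/\mathbb Z_2)^\natural$ and checks compatibility with $\mutimes_n$ via the local model of Proposition~\ref{proposition:orbifolddisk}. This is correct in principle, but for $\Delta_0$ the only orbifold product present is the differential $\mutimes_1$, so the ``main obstacle'' you anticipate largely disappears; and once you notice that, you are essentially re-deriving the formality statement by hand. The paper's route buys a two-line proof at the cost of invoking Theorems~\ref{theorem:formaldg} and~\ref{theorem:skewgentlealgebras}; yours would give a more self-contained argument tied directly to the disk computation, at the cost of the bookkeeping you correctly flag.
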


\begin{proof}
By Lemma \ref{lemma:4gon} (see also \S\ref{subsection:special}) there is an admissible dissection $\Delta_0$ of $\mathbf S$ such that there are exactly two arcs connecting to each orbifold point $x$ and this dissection is also formal (cf.\ Definition \ref{definition:DGformal}). Since $\Delta_0$ is formal it follows from Proposition \ref{theorem:formaldg} that $\mathbf A_{\Delta_0}$ is A$_\infty$ quasi-isomorphic to $\H^\bullet(\mathbf A_{\Delta_0})$, which is a graded skew-gentle algebra by Theorem \ref{theorem:skewgentlealgebras}. As a result, we obtain a triangle equivalence 
\[
\mathcal W (\mathbf S) \simeq \H^0( \tw(\mathbf A_{\Delta_0}))^\natural \simeq \H^0 (\tw (\H^\bullet (\mathbf A_{\Delta_0})))^\natural.
\]

Consider the lifting $\widetilde \Delta_0$ of $\Delta_0$ to the double cover $\widetilde{\mathbf S}$. Note that the two arcs connecting to an orbifold point $x$ lift to two isotopic arcs through a fixed point, which represent the same object of $\mathcal W(\widetilde{\mathbf S})$. Therefore for each orbifold point we remove one of the two arcs from $\widetilde \Delta_0$ so that the resulting arc system $\widetilde\Delta$ is an  admissible dissection of $\widetilde{\mathbf S}$. Then by \cite{haidenkatzarkovkontsevich} we obtain a triangle equivalence
\begin{align*}
\mathcal W (\widetilde{\mathbf S}) \simeq \H^0 (\tw (\mathbf A_{\widetilde \Delta})).
\end{align*}

Note that $\mathbf A_{\widetilde \Delta}/\mathbb Z_2$ is also a graded skew-gentle algebra (cf.\ \cite[Proposition 4.3]{amiotbruestle}) and moreover, there is a natural functor (morphism) $F\colon \mathbf A_{\widetilde \Delta} / \mathbb Z_2 \to \H^\bullet(\mathbf A_{\Delta_0})$ given by
$$
F(\widetilde\gamma) = \begin{cases} \gamma  & \text{if $\widetilde \gamma$ is a lift of $\gamma$ which is not $\mathbb Z_2$-invariant}\\ \gamma_x^+ \oplus \gamma_x^- & \text{if $\widetilde \gamma$ is a $\mathbb Z_2$-invariant lift of $\gamma$} \end{cases}
$$
which is an equivalence of categories. Here, $\gamma_x^+$ and $\gamma_x^-$ are the two arcs in $\Delta_0$ connecting to the orbifold point $x$. In particular, we obtain triangle equivalences 
\[
\H^0( \tw(\mathbf A_{\widetilde \Delta}) / \mathbb Z_2)^\natural \simeq \H^0( \tw(\mathbf A_{\widetilde \Delta} / \mathbb Z_2))^\natural \simeq  \H^0( \tw(\H^\bullet(\mathbf A_{\Delta_0}))^\natural,
\]
where the first equivalence follows from \eqref{eq:idempotent} and Propositions \ref{proposition:groupactiontriangulated} and \ref{proposition:triangulatedgroupaction} and the second from the isomorphism $F$. Combining the above triangle equivalences we obtain the desired one. 
\end{proof}

\begin{remark}
Recall from \cite{haidenkatzarkovkontsevich} that the indecomposable objects in $\mathcal W(\widetilde{\mathbf S})$ are described by homotopy classes of graded curves with local system on $\widetilde{\mathbf S}$. Theorem \ref{theorem:doublecoverorbitcategory} allows us to describe the indecomposable objects of $\mathcal W(\mathbf S)$ in terms of certain graded curves with local system on $\mathbf S$, see also \cite{qiuzhangzhou} for the graded case and \cite{labardinifragososchrollvaldivieso,amiot23} for the ungraded case. 
\end{remark}

\subsection*{Acknowledgements}

When preparing this manuscript, we greatly benefitted from conversations with Xiaofa Chen, Xiao-Wu Chen, Sheel Ganatra, Fabian Haiden, Bernhard Keller, Chris Kuo, Wenyuan Li, Alexander Polishchuk, Alekos Robotis and Yilin Wu whom we all warmly thank for discussions, comments and explanations. We are also indebted to Denis Auroux for his illuminating Fall 2016 Eilenberg Lectures at Columbia University made available online \cite{aurouxlectures}.

This work originated during the Junior Trimester Program ``New Trends in Representation Theory'' at the Hausdorff Research Institute for Mathematics in Bonn, funded by the Deutsche Forschungsgemeinschaft (DFG, German Research Foundation) under Germany's Excellence Strategy -- EXC-2047/1 -- 390685813. This material is also based upon work supported by the National Science Foundation under Grant No.~DMS-1928930 and by the Alfred P.~Sloan Foundation under grant G-2021-16778 while S.\,B.\ was in residence at the Simons Laufer Mathematical Sciences Institute (formerly MSRI) in Berkeley, California, during the Spring 2024 semester. We thank both institutes for their welcoming atmosphere and the excellent working environments that greatly contributed to this work. In addition, this work was partially supported by the DFG through the project SFB/TRR 191 Symplectic Structures in Geometry, Algebra and Dynamics (Projektnummer 281071066-TRR191) and through the grant WA 5157/1-1, and by the National Natural Science Foundation of China (Grant Nos.\ 12371043 and 12071137).

\end{document}